\begin{document}

\title{Elliptic Partial Differential Equations with Complex Coefficients}
\author{Ariel Barton}
\date{June 2010}
\department{Mathematics}
\division{Physical Sciences}
\degree{Doctor of Philosophy}

{  \makeatletter
   \pagenumbering{roman}   
   \begin{titlepage}
   \begin{center}
   \doublespacing 
     \vspace*{0.0in} 
     \vfill
      \MakeTextUppercase{\@title}\\
     \vspace*{1in} 
      BY\\
      \MakeTextUppercase{\@author}\\
      \vspace*{1in} 
      \MakeTextUppercase{November 2009}\\
      \vfill
     \vfill
   \end{center}
   \end{titlepage}
   \markright{}
   \addtocounter{page}{1}
}



\raggedbottom

\setcounter{tocdepth}{1}
\tableofcontents

\setcounter{tocdepth}{2}






\topmatter{Abstract}
In this paper I investigate elliptic partial differential equations on Lipschitz domains in $\R^2$ whose coefficient matrices have small (but possibly nonzero) imaginary parts and depend only on one of the two coordinates.

I show that for Dirichlet boundary data in $L^p$ for $p$ large enough, solutions exist and are controlled by the $L^p$-norm of the boundary data. 

Similarly, for Neumann boundary data in $L^q$, or for Dirichlet boundary data whose tangential derivative is in $L^q$ (``regularity'' boundary data), for $q$ small enough, I show that solutions exist and are controlled by the $L^q$-norm of the boundary data. 

I prove similar results for Neumann or regularity boundary data in $H^1$, and for Dirichlet boundary data in $L^\infty$ or $BMO$. Finally, I show some converses: if the solutions are controlled in some sense, then Dirichlet, Neumann, or regularity boundary data must exist.


\topmatter{Acknowledgements} I would like to thank my advisor Carlos Kenig, and my parents.

\ifdim\overfullrule=0pt \mainmatter \fi


\chapter{Introduction}
Let $A$ be a uniformly elliptic matrix-valued function defined on $\R^2$. That is, assume that there exist constants $\Lambda>\lambda>0$ (called the ellipticity constants of $A$) such that
\begin{equation}\label{eqn:genelliptic}
\lambda|\eta|^2\leq \Re \bar \eta\cdot A(X)\eta,\quad |\xi\cdot A(X)\eta|\leq\Lambda|\eta| |\xi|
\end{equation}
for every $X\in\R^2$ and every $\xi,\eta\in\C^2$.

Let $1< p\leq\infty$. We say that $(D)^A_p$, $(N)^A_p$, or $(R)^A_p$ hold in the Lipschitz domain $\V$ if, for every $f\in L^p(\partial\V)$, there is a function $u$ such that
\begin{align*}
(D)_p^A&
\begin{cases}
\div A\nabla u=0&\text{ in }\V,\\
u=f&\text{ on }\partial\V,\\
\|Nu\|_{L^p}\leq C(p)\|f\|_{L^p}
\end{cases}
\\
(N)_p^A&
\begin{cases}
\div A\nabla u=0&\text{ in }\V,\\
\nu\cdot A\nabla u=g&\text{ on }\partial\V,\\
\|N(\nabla u)\|_{L^p}\leq C(p)\|g\|_{L^p}
\end{cases}
\\
(R)_p^A&
\begin{cases}
\div A\nabla u=0&\text{ in }\V,\\
u=f&\text{ on }\partial\V,\\
\|N(\nabla u)\|_{L^p}\leq C(p)\|\partial_\tau f\|_{L^p}
\end{cases}
\end{align*}
Furthermore, $u$ is unique among functions with $\div A\nabla u\equiv 0$ and either $Nu\in L^p$ (for $(D)_p^A$) or $N(\nabla u)\in L^p$ (for $(N)_p^A$ or $(R)_p^A$).

Here $\nu,\tau$ are the unit normal and tangent vectors to $\partial\V$. In the case $(N)_p^A$, $(R)_p^A$, we in addition require that $g\in H^1$ or $f\in W^{1,p}$, respectively.

In this paper, we will instead be concerned with complex matrix-valued functions $A$ on $\R^2$ which satisfy
\begin{equation}\label{eqn:elliptic}
\lambda|\eta|^2\leq \Re \bar \eta\cdot A(x,t)\eta,\quad |\xi\cdot A(x,t)\eta|\leq\Lambda|\eta| |\xi|,\quad A(x,t)=A(x,s)
\end{equation}
for all $x$, $t$, $s\in\R$ and all $\eta$, $\xi\in\C^2$.

In \cite{Rule} and \cite{rule2}, the following theorem was proven:

\begin{thm}\label{thm:rule} Suppose that $A_0:\R^2\mapsto\R^{2\times 2}$ is real-valued and satisfies (\ref{eqn:elliptic}). Let $\V$ be a Lipschitz domain which is either bounded or special, and let its Lipschitz constants be at most~$k_i$. (See \dfnref{domain}.) 

Let $1/p_0+1/p_0'=1$. Then if $(D)_{p_0'}^{A_0^t}$, $(D)_{p_0'}^{A_0/\det A_0}$ hold in $\V$ with constants at most $C_\V(p_0)$, then $(N)^{A_0}_{p_0}$ and $(R)^{A_0}_{p_0}$ hold in the Lipschitz domain $\V$, with constants depending only on $p_0,\lambda,\Lambda$, $k_i$, and $C_\V(p_0)$.
\end{thm}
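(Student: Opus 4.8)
\noindent\emph{Strategy.} The plan is to solve $(R)^{A_0}_{p_0}$ and $(N)^{A_0}_{p_0}$ by boundary layer potentials, consuming the hypothesized Dirichlet estimates to invert the relevant boundary integral operators, and using the planar conjugate function to move between the regularity and Neumann problems. Recall the elementary duality of divergence-form operators in $\R^2$: if $\div A_0\nabla u=0$ in the simply connected $\V$, then $A_0\nabla u$ is divergence-free, so, writing $J$ for the rotation $(\xi_1,\xi_2)\mapsto(-\xi_2,\xi_1)$, the field $JA_0\nabla u$ is curl-free and hence equals $\nabla\tilde u$ for a conjugate function $\tilde u$. A direct computation gives $\div(\tilde A_0\nabla\tilde u)=0$ with $\tilde A_0:=A_0^t/\det A_0$; moreover $\tilde A_0$ is again real and satisfies \eqref{eqn:elliptic} with ellipticity constants controlled by $\lambda,\Lambda$, one has $(\tilde A_0)^t=A_0/\det A_0$, and on $\partial\V$
\begin{equation*}
\partial_\tau\tilde u=-\nu\cdot A_0\nabla u,\qquad \nu\cdot\tilde A_0\nabla\tilde u=\partial_\tau u ,
\end{equation*}
so conjugation interchanges the co-normal and tangential derivatives. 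In particular $(N)^{A_0}_{p_0}$ and $(R)^{\tilde A_0}_{p_0}$ are equivalent, with comparable constants. It therefore suffices to prove the single statement: \emph{if $M$ is real and satisfies \eqref{eqn:elliptic}, and $(D)^{M^t}_{q'}$ holds in $\V$ with constant at most $C_\V(q)$, then $(R)^M_q$ holds in $\V$ with constant depending only on $q,\lambda,\Lambda,k_i,C_\V(q)$.} Granting it, taking $(M,q)=(A_0,p_0)$ yields $(R)^{A_0}_{p_0}$ from the hypothesis $(D)^{A_0^t}_{p_0'}$, while taking $(M,q)=(\tilde A_0,p_0)$ yields $(R)^{\tilde A_0}_{p_0}$ from $(D)^{(\tilde A_0)^t}_{p_0'}=(D)^{A_0/\det A_0}_{p_0'}$, hence $(N)^{A_0}_{p_0}$ by the conjugate-function correspondence. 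Uniqueness for $(R)$ and $(N)$ will come out of a Caccioppoli and Green's-formula argument together with the maximum principle for real coefficients (and transfers through the conjugate function).

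\noindent\emph{Reduction to a boundary invertibility.} To prove the displayed statement I would construct the fundamental solution $\Gamma^M(X,Y)$ of the operator $\div M\nabla$ and the single-layer potential $\mathcal S^M g(X)=\int_{\partial\V}\Gamma^M(X,Y)\,g(Y)\,d\sigma(Y)$. Using interior De Giorgi--Nash--Moser estimates --- available precisely because $M$ is real --- together with the $L^2$ layer-potential theory for real coefficients, Calder\'on--Zygmund theory, and good-$\lambda$ arguments, one establishes, for $q$ in the relevant range, the nontangential maximal estimate $\|N(\nabla\mathcal S^M g)\|_{L^q(\partial\V)}\le C\|g\|_{L^q(\partial\V)}$, the jump relations for $\nu\cdot M\nabla\mathcal S^M g$ across $\partial\V$, and the continuity of $g\mapsto\mathcal S^M g|_{\partial\V}$ into $W^{1,q}(\partial\V)$, with constants of the allowed form. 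One then solves $(R)^M_q$ with data $f$ by setting $u=\mathcal S^M g$, where $g\in L^q(\partial\V)$ is chosen so that $\partial_\tau(\mathcal S^M g)=\partial_\tau f$ on $\partial\V$: then $u=f$ on $\partial\V$ up to an additive constant, and $\|N(\nabla u)\|_{L^q}\le C\|g\|_{L^q}$, so the whole problem collapses to the bounded invertibility on $L^q(\partial\V)$ of the boundary operator $g\mapsto\partial_\tau(\mathcal S^M g)$. Because $\Gamma^M$ has logarithmic growth in two dimensions, $\mathcal S^M$ carries a one-dimensional defect; this is handled in the customary way, by working modulo constants on mean-zero data, or by a harmless additive normalization of $\mathcal S^M$.

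\noindent\emph{The crux.} The heart of the matter --- and the step I expect to be the main obstacle --- is the invertibility of $g\mapsto\partial_\tau(\mathcal S^M g)$ on $L^q(\partial\V)$, which is where the Dirichlet hypothesis gets used. Since $\Gamma^M(X,Y)=\Gamma^{M^t}(Y,X)$, the single layers for $M$ and $M^t$ are formal adjoints, so the $L^q$-adjoint of $g\mapsto\partial_\tau(\mathcal S^M g)$ is, up to sign, a boundary operator built from $\Gamma^{M^t}$; its invertibility on $L^{q'}(\partial\V)$ is governed by solvability and uniqueness of $(D)^{M^t}_{q'}$, through the standard dictionary relating solvability of Dirichlet problems to invertibility of boundary layer operators (and to $A_\infty$/reverse-H\"older properties of elliptic measure), combined with the algebraic identities among $\mathcal S$, $\nu\cdot M\nabla\mathcal S$, the double layer and its transpose, and $\partial_\tau\mathcal S$ supplied by Green's formula and the conjugate function. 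This yields injectivity and dense range of the adjoint; a Fredholm argument --- the operator being a compact perturbation of an invertible one after the logarithmic normalization --- upgrades this to invertibility, and dualizing returns the invertibility of $\partial_\tau\mathcal S^M$ on $L^q$. For a special, unbounded graph domain the same scheme applies, the $L^q$ bounds being transported by a localization argument and the usual care being taken with normalization at infinity; throughout, one tracks the quantitative dependence of every constant on $q,\lambda,\Lambda,k_i$ and $C_\V(q)$.
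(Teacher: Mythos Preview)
This theorem is not proved in the paper; it is quoted from \cite{Rule} and \cite{rule2}. Your overall architecture---the conjugate-function reduction of $(N)^{A_0}_{p_0}$ to $(R)^{\tilde A_0}_{p_0}$ with $\tilde A_0=A_0^t/\det A_0$, the identification $(\tilde A_0)^t=A_0/\det A_0$, and the plan to solve $(R)^M_q$ by inverting $g\mapsto\partial_\tau\mathcal S^M g$ on $L^q$---is exactly the Kenig--Rule scheme, and that part is fine.

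The genuine gap is in your ``crux'' paragraph. You write that injectivity and dense range of the adjoint, together with ``a Fredholm argument---the operator being a compact perturbation of an invertible one after the logarithmic normalization,'' yields invertibility. On a Lipschitz boundary this step fails: the boundary operators $\partial_\tau\mathcal S$, $\mathcal K$, $\mathcal K^t$ are \emph{not} compact (nor compact perturbations of anything invertible); compactness is a $C^1$ phenomenon (Fabes--Jodeit--Rivi\`ere) and is precisely what is lost in the Lipschitz setting. The logarithmic normalization only repairs a finite-dimensional defect of $\mathcal S$; it does not produce compactness of $\partial_\tau\mathcal S$.

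What actually replaces Fredholm theory is a Rellich-type estimate plus the Verchota jump-relation trick and the method of continuity. Concretely, one proves directly, for solutions $u$ of $\div M\nabla u=0$ in $\V_\pm$, the two-sided comparability $\|\nu\cdot M\nabla u\|_{L^q(\partial\V)}\approx\|\partial_\tau u\|_{L^q(\partial\V)}$; for non-symmetric real $M$ this is obtained by passing to the conjugate $\tilde u$ and invoking the assumed $(D)^{M^t}_{q'}$ (and, on the other side, $(D)^{(\tilde M)^t}_{q'}$) via a localized ``$N\lesssim S$'' or Poisson-kernel/reverse-H\"older argument, not by abstract duality. Applied to $u=\mathcal S^M g$ on $\V_+$ and $\V_-$ this gives $\|(\mathcal K^M_\pm)^t g\|_{L^q}\approx\|\partial_\tau\mathcal S^M g\|_{L^q}$, and then the jump relation $(\mathcal K^M_+)^t-(\mathcal K^M_-)^t=I$ yields the coercivity bound $\|g\|_{L^q}\le C\|\partial_\tau\mathcal S^M g\|_{L^q}$ (closed range, injectivity). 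Surjectivity is then obtained by a continuity path $A_s=(1-s)I+sM$ from the Laplacian, where invertibility is known. Your sketch conflates this with a compactness argument; as written, the invertibility step does not go through.
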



In \cite{Dp'}, it was shown that
\begin{thm}\label{thm:Dp'} Let $A_0(x)$ be as in \thmref{rule}, and suppose that $\V$ is a bounded or special Lipschitz domain with Lipschitz constants at most~$k_i$. Then there is some (possibly large) $p_0'=p_0'(\lambda,\Lambda,k_i)$ and some $C(\lambda,\Lambda,k_i)$ such that $(D)^{A_0}_{p_0'}$ holds in $\V$ with constant at most~$C$.
\end{thm}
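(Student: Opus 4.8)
The plan is to reduce the theorem to the statement that the elliptic measure $\omega=\omega^{X_0}$ of $\div A_0\nabla\,\cdot$ on $\V$ lies in $A_\infty(\sigma)=\bigcup_{q>1}B_q(\sigma)$, where $\sigma$ is surface measure on $\partial\V$ and $B_q$ is the reverse Hölder class, with $A_\infty$ constants controlled by $\lambda,\Lambda,k_i$. Since $A_0$ is real, solutions of $\div A_0\nabla u=0$ obey the maximum principle, satisfy the interior De~Giorgi--Nash--Moser estimates, and are unique in the class $Nu\in L^p$; combined with the classical equivalence ``$(D)^{A_0}_{p'}$ solvable with constant $C\iff$ the elliptic kernel $d\omega/d\sigma$ belongs to $B_p(\sigma)$ with comparable constant'', membership $\omega\in A_\infty(\sigma)$ immediately yields $(D)^{A_0}_{q'}$ for the (possibly large) exponent $q'$ paired with whatever $q>1$ occurs, which is exactly the assertion.

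First I would flatten the boundary. For a special Lipschitz domain $\V=\{(x,t):t>\phi(x)\}$, the map $\rho(x,t)=(x,t-\phi(x))$ sends $\V$ onto $\R^2_+$ and pulls the equation back to $\div\tilde A\nabla v=0$ with $\tilde A(x)=\begin{pmatrix}1&0\\-\phi'(x)&1\end{pmatrix}A_0(x)\begin{pmatrix}1&-\phi'(x)\\0&1\end{pmatrix}$; because $D\rho$ depends on $x$ alone and $A_0$ is $t$-independent, $\tilde A$ is again real, uniformly elliptic with constants depending on $\lambda,\Lambda,k_i$, and still $t$-independent. For a bounded $\V$ one covers $\partial\V$ by finitely many such graph pieces and glues the local $A_\infty$ statements using Harnack chains. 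Thus it suffices to prove $\omega\in A_\infty(dx)$ on $\R^2_+$ for a real, $t$-independent, uniformly elliptic matrix.

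On $\R^2_+$ the geometry is NTA, so the comparison principle---valid here because the coefficients are real (no symmetry is needed)---gives that $\omega$ is doubling. The crux is a Carleson-measure estimate: there is $C=C(\lambda,\Lambda)$ so that every bounded solution $u$ of $\div\tilde A\nabla u=0$ and every boundary cube $Q$ satisfy
\[
\frac1{|Q|}\iint_{Q\times(0,\ell(Q))}|\nabla u(x,t)|^2\,t\,dx\,dt\le C\,\|u\|_{L^\infty}^2 .
\]
This is where $t$-independence is indispensable: since $\tilde A$ does not depend on $t$, $\partial_t u$ is again a solution, and one may integrate by parts in the $t$-variable against $\tilde A$ without creating derivative-of-coefficient terms---the mechanism behind the Jerison--Kenig and Kenig--Koch--Pipher--Toro treatment of $t$-independent operators. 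For symmetric $\tilde A$ the estimate comes from a Rellich identity; the general real case uses in addition the special structure of second-order equations in the plane, namely the div--curl (``conjugate function'') pairing already exploited in \thmref{rule}, which sends $\div\tilde A\nabla u=0$ to a conjugate $t$-independent equation and reduces the non-symmetric Carleson bound to an essentially symmetrizable one. With the Carleson estimate in hand, the standard chain ``Carleson estimate for all bounded solutions $\Rightarrow$ $\varepsilon$-approximability of solutions $\Rightarrow$ $\omega\in A_\infty(\sigma)$'' (as carried out in Kenig--Koch--Pipher--Toro) finishes the argument.

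I expect that Carleson-measure estimate to be the main obstacle: bounding the square function of an arbitrary bounded solution by its supremum norm, uniformly over all Carleson boxes, is precisely the nontrivial analytic input, and the non-symmetry of $\tilde A$ is what blocks a one-line Rellich proof---circumventing it is exactly where two-dimensionality and the conjugate-function device enter. The remaining ingredients---the change of variables, the doubling of $\omega$, the equivalence of $A_\infty$ with solvability for some large $p$, and the localization from graph domains to bounded domains---are routine consequences of the real-coefficient theory and the NTA structure of Lipschitz domains.
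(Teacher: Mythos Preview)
The paper does not prove this theorem at all: it is quoted verbatim as a result of Kenig--Koch--Pipher--Toro (\cite{Dp'} in the paper's bibliography), and no argument is given in the paper itself. Your outline is an accurate sketch of the KKPT strategy---reduce to $t$-independent coefficients on $\R^2_+$, establish a Carleson-measure/square-function bound for bounded solutions, pass through $\varepsilon$-approximability to $\omega\in A_\infty(\sigma)$, and conclude $(D)_{p'}$ for some large $p'$---so your proposal is essentially the proof the paper is citing. One small correction: KKPT does not need the two-dimensional conjugate-function device to handle the non-symmetric Carleson estimate; their argument works directly for real non-symmetric $t$-independent coefficients (and in higher dimensions), so your appeal to the div--curl pairing, while not wrong in $\R^2$, is an unnecessary detour rather than the mechanism actually used in the cited reference.
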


In this paper, we intend to prove the following theorem:

\begin{thm}\label{thm:big} Suppose that $A_0$ satisfies the conditions of \thmref{rule}. Suppose that $A$ satisfies the same conditions, except that $A(x)$ is allowed to be a complex-valued matrix instead of a real-valued matrix. Let $\V$ be a good Lipschitz domain, as defined in Definiton~\ref{dfn:domain}.

Then there is some $\epsilon>0$, $p_0>1$ depending only on $\lambda$, $\Lambda$ and the Lipschitz constants of $\V$, such that if $\|A-A_0\|_{L^\infty}<\epsilon$ and $1<p\leq p_0$, then $(N)^{A}_p$, $(R)^{A}_p$ and $(D)_{p'}^{A}$ hold in the Lipschitz domain~$\V$.

Furthermore, if $f\in H^1(\partial\V)$, then there are two functions $u_N, u_R$, unique among $u$ with $\div A\nabla u\equiv 0$ and $N(\nabla u)\in L^1$, such that
\begin{align}
(N)_1^A&\begin{cases}
\div A\nabla u_N=0&\text{ in }\Omega,\\
\nu\cdot A\nabla u_N=f&\text{ on }\partial\Omega,\\
\|N(\nabla u_N)\|_{L^1}\leq C\|f\|_{H^1},\end{cases}
\displaybreak[0]\\(R)_1^A&\begin{cases}
\div A\nabla u_R=0&\text{ in }\Omega,\\
\partial_\tau u_R=f&\text{ on }\partial\Omega,\\
\|N(\nabla u_R)\|_{L^1}\leq C\|f\|_{H^1},
\end{cases}
\end{align}
\end{thm}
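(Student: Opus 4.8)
The plan is to obtain the complex case as a perturbation of the real case handled by Theorems \ref{thm:rule} and \ref{thm:Dp'}, combined with the by-now-standard $L^p$ and Hardy-space theory of layer potentials. First I would fix the real matrix $A_0$ and invoke Theorem \ref{thm:Dp'} to get some (large) exponent $p_0'$ and a constant $C=C(\lambda,\Lambda,k_i)$ so that $(D)^{A_0}_{p_0'}$ holds in $\V$; applying the same result to the transpose matrix $A_0^t$ and to $A_0/\det A_0$ (both of which are real, elliptic, and $t$-independent with the same constants) I would likewise get $(D)^{A_0^t}_{q'}$ and $(D)^{A_0/\det A_0}_{q'}$ for some $q'$. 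Feeding these into Theorem \ref{thm:rule} then yields $(N)^{A_0}_{q}$ and $(R)^{A_0}_{q}$ for the conjugate exponent $q$. So the real matrix $A_0$ itself satisfies $(D)_{p'}$, $(N)_q$, $(R)_q$ for a pair of fixed exponents; the next task is to propagate this to every $A$ with $\|A-A_0\|_{L^\infty}<\epsilon$ and to every $p$ in a whole interval $1<p\le p_0$.

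For the perturbation step I would set up the layer potentials $\mathcal{D}^A$, $\mathcal{S}^A$ associated to the operator $\div A\nabla$, using the fundamental solution / Green's function for $t$-independent coefficients in $\R^2$; the key analytic input is that these operators, and the associated boundary integral operators $K^A$, $\widetilde K^A$, $\pm\frac12 I + K^A$, etc., are bounded on $L^p(\partial\V)$ (and have the right mapping properties involving the nontangential maximal function) with bounds that depend continuously — in fact Lipschitz-continuously — on $A$ in $L^\infty$. Since invertibility of the relevant boundary operator on $L^{p'}$ (resp. $L^q$) is exactly equivalent to solvability of $(D)^A_{p'}$ (resp. $(N)^A_q$, $(R)^A_q$), and invertible operators form an open set in the operator norm topology, the solvability of the three problems for $A_0$ at the exponents $p'$, $q$ persists for all $A$ with $\|A-A_0\|_{L^\infty}$ small. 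To upgrade from the single exponents $p'$, $q$ to an interval $1<p\le p_0$: for $(N)$ and $(R)$ one combines the $L^q$ bound with the $H^1$ estimate (established separately, see below) and interpolates, using that $N(\nabla u)\in L^1$ is controlled by the $H^1$ norm of the data while $N(\nabla u)\in L^q$ is controlled by the $L^q$ norm, so real interpolation of the solution operator gives $(N)_p$, $(R)_p$ for $1<p\le q$; dually, $(D)_{p'}$ holds for all $p'\ge q'$, i.e. for all $p$ with $1<p\le q$. Shrinking if necessary, take $p_0=q$.

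For the $H^1$ endpoint I would argue directly with atoms. Given $f\in H^1(\partial\V)$, decompose $f=\sum\lambda_j a_j$ into $H^1$-atoms; for a single atom $a$ supported in a surface ball $\Delta$ of radius $r$, solve $(N)^A$ or $(R)^A$ with data $a$ using the $L^q$ theory just established (an atom lies in $L^q$), obtaining a solution $u_a$ with $\|N(\nabla u_a)\|_{L^q}\lesssim \|a\|_{L^q}\lesssim r^{1/q-1}\,|\Delta|$ — wait, more precisely $\|a\|_{L^q}\le |\Delta|^{1/q-1}$, so $\|N(\nabla u_a)\|_{L^q(2\Delta)}\lesssim |\Delta|^{1/q-1}$ and by Hölder $\|N(\nabla u_a)\|_{L^1(2\Delta)}\lesssim 1$. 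Away from $\Delta$ one uses the cancellation of the atom (zero mean) together with interior/boundary Caccioppoli and the Hölder-type decay estimates for solutions of $\div A\nabla u=0$ with $t$-independent complex coefficients (these hold in $\R^2$ for any elliptic complex matrix, by De Giorgi–Nash–Moser-type local estimates valid here because we are in two dimensions and perturbing a real matrix) to get $\int_{2^{k+1}\Delta\setminus 2^k\Delta}N(\nabla u_a)\lesssim 2^{-k\alpha}$ for some $\alpha>0$, which sums. Adding over atoms gives $\|N(\nabla u)\|_{L^1}\lesssim \|f\|_{H^1}$, and $u=\sum\lambda_j u_{a_j}$ solves the problem with the stated estimate. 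Uniqueness in each class follows from the corresponding uniqueness statement for $A_0$ propagated by the same perturbation argument, or directly from an $L^q$-type uniqueness result applied to the difference of two solutions.

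The main obstacle I expect is the perturbation/continuity step: establishing that the boundary layer potential operators for $\div A\nabla$ with merely $L^\infty$, $t$-independent complex coefficients are bounded on $L^p(\partial\V)$ with norms depending continuously on $A$, and that the solvability $\Leftrightarrow$ invertibility dictionary really does transfer the real results of Theorems \ref{thm:rule}–\ref{thm:Dp'} to the complex setting. This requires care because complex coefficients destroy maximum-principle and positivity arguments available in the real case, so the square-function and nontangential-maximal-function estimates, as well as the "Rellich-type" identities underlying $(N)$ and $(R)$, must be re-derived (or their $A_0$-versions must be perturbed) in a purely quantitative, $L^\infty$-stable way; the $H^1$ decay estimates, while true in $2$D, also need to be stated with $A$-uniform constants so the atomic estimate is genuinely uniform over the $\epsilon$-ball. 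Everything else — interpolation, the atomic decomposition bookkeeping, duality between $(D)$ and $(N)$/$(R)$ — is routine once that analytic core is in place.
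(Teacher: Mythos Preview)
Your plan matches the paper's approach closely: layer potentials, uniform boundedness and Lipschitz dependence on $A$, perturbation of invertibility, atomic $H^1$ estimates, and interpolation. The one substantive point you gloss over is the direction ``solvability $\Rightarrow$ invertibility of the boundary operators'' for $A_0$: this is not an abstract equivalence, and the paper does not treat it as one. From solvability of $(N)^{A_0}_p$ and $(R)^{A_0}_p$ one gets the comparison $\|(\K^0_+)^t f\|\approx\|(\J^0)^t f\|\approx\|(\K^0_-)^t f\|$ (apply both estimates to $u=\S^T_0 f$), and then the jump relation $\K^t_+-\K^t_-=I$ yields the injectivity bound $\|f\|\le C\|(\K^0_\pm)^t f\|$. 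Surjectivity, however, is obtained by a method-of-continuity argument: one deforms along the real path $A_s=sI+(1-s)A_0$, verifies explicitly that $(\K^I_+)^t$ is onto (a change of variables reduces to the half-plane, where the double layer is $\tfrac12 I$), and propagates surjectivity along the path using the uniform injectivity bound and operator-norm continuity in~$s$. Only after that does your ``invertible operators form an open set'' argument apply to pass from $A_0$ to nearby complex~$A$. Your $H^1$ endpoint via atoms plus decay $\int_{2^{k+1}\Delta\setminus 2^k\Delta}N(\nabla u_a)\lesssim 2^{-k\alpha}$ is exactly the content of the paper's Theorem~\ref{thm:h1}; the paper packages it as the weighted bound $\int N(\nabla u_a)(1+|X-X_0|/r)^\alpha\,d\sigma\le C$ and uses it to prove $H^1$-invertibility of $\K^t_\pm,\J^t$ rather than to build the solution directly, but the two organizations are equivalent.
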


We will also prove results for boundary data in $BMO$ and $L^\infty$:

\begin{thm}\label{thm:BMO} Suppose that $A$ and $\V$ satisfy the conditions of \thmref{big}. 
If $f\in BMO(\partial\V)$, the dual to $H^1(\partial\V)$, then there is a unique function $u$ such that $u=f$ on~$\partial\V$, $\div A\nabla u=0$ in $\V$, and such that $|\nabla u|^2\dist(\cdot,\partial\V)$ is a Carleson measure, that is, for any $X_0\in\partial\V$ and any $\V>0$,
\begin{equation}\label{eqn:uCarleson}
\frac{1}{\sigma(B(X_0,R)\cap\partial\V)}
\int_{B(X_0,R)\cap\V} |\nabla u(X)|^2\dist(X,\partial\V) \,dX \leq \tilde C
\end{equation}
where $\tilde C=C\|f\|_{BMO}$ and $C$ depends only on $\lambda$, $\Lambda$, the Lipschitz constants of $\V$, and the $H^1$ constants in \thmref{big}. Furthermore, this function is unique among functions $u$ with $\div A\nabla u=0$ in $\V$, $u=f$ on $\partial\V$, and which satisfy (\ref{eqn:uCarleson}) for some constant~$\tilde C$.
\end{thm}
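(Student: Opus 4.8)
The plan is to obtain the $BMO$ solution by duality from the $H^1$ theory established in \thmref{big}, in the spirit of the classical Fabes--Jerison--Kenig / Fefferman--Kenig--Pipher approach to the Carleson measure estimate. First I would construct the solution. Given $f \in BMO(\partial\V)$, approximate $f$ by nice functions $f_j$ (for instance truncations, or $f$ mollified), solve the Dirichlet problem for each $f_j$ using $(D)^A_{p'}$ from \thmref{big} to get $u_j$ with $u_j = f_j$ on $\partial\V$, and show that the $u_j$ converge locally uniformly in $\V$ to a solution $u$ of $\div A\nabla u = 0$; the limit will be shown to take boundary data $f$ in the appropriate nontangential-almost-everywhere / weak-$*$ sense once the Carleson bound is in hand. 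The analytic heart is therefore the a priori estimate \eqref{eqn:uCarleson}.

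To prove the Carleson estimate I would argue by duality against $H^1$. Fix $X_0 \in \partial\V$ and $R > 0$, and let $\Delta = B(X_0,R)\cap\partial\V$. Using the John--Nirenberg decomposition write $f = c + f_{\mathrm{loc}} + f_{\mathrm{far}}$ on a slightly enlarged surface ball, where $c$ is a constant (which contributes nothing to $\nabla u$), $f_{\mathrm{loc}} = (f-c)\mathbf 1_{2\Delta}$ has $L^2(\partial\V)$ norm controlled by $R^{1/2}\|f\|_{BMO}$, and $f_{\mathrm{far}}$ is supported away from $2\Delta$. For the local part, solve $(D)^A_2$ — available since $2 \le p_0'$ makes $p_0' \ge 2$ is not automatic, so instead interpolate or use that $(D)^A_{p'}$ holds for the relevant exponent and the square-function/nontangential-maximal-function equivalence — to get $\int_{B(X_0,R)\cap\V} |\nabla u_{\mathrm{loc}}|^2 \dist(X,\partial\V)\,dX \lesssim \|f_{\mathrm{loc}}\|_{L^2}^2 \lesssim R\,\|f\|_{BMO}^2$, which is exactly \eqref{eqn:uCarleson}. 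For the far part, interior estimates for solutions of $\div A\nabla u = 0$ (De Giorgi--Nash--Moser, valid here since $A$ is bounded and elliptic — or rather its real part governs regularity, and the Caccioppoli inequality always holds) give pointwise decay of $|\nabla u_{\mathrm{far}}|$ inside $B(X_0,R)$ in terms of $\|f_{\mathrm{far}}\|_{BMO}$ on annuli, and summing the resulting geometric series yields the Carleson bound again. Adding the two pieces gives \eqref{eqn:uCarleson} with $\tilde C = C\|f\|_{BMO}$.

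For uniqueness I would suppose $u$ solves $\div A\nabla u = 0$ in $\V$, has $u = 0$ on $\partial\V$ (in the relevant sense), and satisfies \eqref{eqn:uCarleson} for some $\tilde C$; the goal is $u\equiv 0$. The tool is the pairing identity $\int_{\partial\V} (\nu\cdot A\nabla u)\,\bar v = \int_\V A\nabla u\cdot\nabla\bar v$ against test solutions $v$ arising from the $H^1$-Neumann and $H^1$-regularity problems of \thmref{big}: for $g \in H^1(\partial\V)$ solve $(N)_1^A$ or $(R)_1^A$ to get $v$ with $N(\nabla v) \in L^1$, then the Carleson bound on $\nabla u$ against $N(\nabla v) \in L^1$ lets one justify the integration by parts (this is the standard $H^1$--$\mathrm{Carleson}$ duality, e.g.\ a $T(b)$-type or good-$\lambda$ argument), conclude the boundary pairing vanishes for all such $g$, and hence that the relevant conormal/tangential trace of $u$ is zero; combined with $u = 0$ on $\partial\V$ this forces $u \equiv 0$ by the uniqueness clauses already in \thmref{big}.

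The main obstacle I expect is \emph{justifying the integration-by-parts/duality pairing rigorously} when one side is only in $L^1$-type ($N(\nabla v)$) and the other is controlled only through a Carleson measure — the boundary terms are delicate because $u$ need not have a well-defined trace in any strong sense, and $\nabla u$ is not assumed nontangentially bounded, only square-Carleson. Handling this requires a careful limiting argument (truncating to $\{\dist(\cdot,\partial\V) > \delta\}$, integrating by parts on the smooth region, and controlling the error terms on $\{\dist = \delta\}$ using the Carleson bound together with a Cauchy--Schwarz/Fubini argument as $\delta \to 0$), and it is here that the one-dimensional ($t$-independent) structure of $A$ and the smallness $\|A - A_0\|_\infty < \epsilon$ — which propagates all the good $L^p$, $H^1$ bounds from \thmref{rule}, \thmref{Dp'} and \thmref{big} — are used most heavily.
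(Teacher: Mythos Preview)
Your overall architecture (local/far decomposition for the Carleson bound, $H^1$--Carleson duality for uniqueness) is reasonable, but it diverges from the paper in a way that leaves a real gap at the analytic core.

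For existence, the paper does \emph{not} build $u$ by approximating $f$ and solving $(D)^A_{p'}$. Instead it uses the layer potential directly: since $\K^t_+$ is bounded and invertible on $H^1(\partial\V)$ (from \thmref{big}), by duality $\K_+$ is bounded and invertible on $BMO(\partial\V)$, so one sets $u=\D(\K_+^{-1}f)$. The Carleson estimate then reduces to
\[
\frac{1}{R}\int_{B(X_0,R)\cap\V}|\nabla\D g(X)|^2\dist(X,\partial\V)\,dX\leq C\|g\|_{BMO}^2,
\]
and this is obtained from the $L^2$ square-function bound $\int_\V|\nabla\D g|^2\dist(\cdot,\partial\V)\leq C\|g\|_{L^2}^2$ via exactly the local/far splitting you describe (\lemref{squaretoBMO}). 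The point is that the $L^2$ bound is proved by a matrix-valued local $T(b)$ theorem (\thmref{CZmatrixB}) applied to the explicit kernel $\nabla_X\nabla_Y\Gamma^T_X(Y)$, using the same accretive test matrix $B_1$ from \autoref{sec:B1accretive}. Your proposal needs this $L^2$ estimate for the local piece $f_{\mathrm{loc}}$, and you correctly flag that $(D)^A_2$ is not available; your fallback to ``the square-function/nontangential-maximal-function equivalence'' is precisely what is \emph{not} known a priori for complex $A$---that equivalence is a theorem for real $A_0$ (harmonic-measure methods) and getting it for $A$ near $A_0$ is essentially the content of the chapter. Working with the layer-potential kernel gives access to $T(b)$ technology and the pointwise kernel decay $|J(X,Y)|\lesssim|X-Y|^{-2}$ needed for the far piece; working with abstract $(D)^A_{p'}$ solutions does not.

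For uniqueness, the paper again takes a different route: it shows (\thmref{Carlesonmax}, using the known real-coefficient square-function/$N$ equivalence from \cite{Dp'} plus analytic perturbation in $A$) that any $u$ satisfying \eqref{eqn:uCarleson} has $N(u-u(X_0))\in L^p$ on bounded subdomains, hence $u|_{\partial\V}\in BMO$ and, if $u|_{\partial\V}=0$, then $u$ is bounded; a bounded solution with zero boundary data is then shown to vanish (\lemref{bddmeansconst}) via $(D)^A_p$ uniqueness on exhausting subdomains. Your $H^1$-pairing approach is in principle viable, but the integration-by-parts you worry about is exactly the step the paper sidesteps by reducing to $L^p$ theory; making your pairing rigorous would require something like \thmref{N1conv}/\lemref{BMOhasCarleson} (Carleson extensions of $BMO$ test functions) together with a trace argument, which is substantially more work than you indicate.
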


In \cite{BMODp}, it was shown that for real coefficent matrices $A_0$ on bounded Lipschitz domains~$\V$, this theorem is equivalent to an $A_\infty$ condition on harmonic measure; this $A_\infty$ condition implies that $(D)^{A_0}_p$ holds in $\V$ for some $1<p<\infty$.

\begin{thm}\label{thm:maxprinciple} If $\V$ is bounded, then the maximum principle holds; that is, if $f\in L^\infty(\partial\V)$, then there exists a unique $u$ with $\div A\nabla u=0$ in~$\V$, $u=f$ on $\partial\V$ and $\|u\|_{L^\infty(\V)}\leq C \|f\|_{L^\infty(\partial\V)}$.
\end{thm}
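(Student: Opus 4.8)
The plan is to deduce the maximum principle (Theorem~\ref{thm:maxprinciple}) from the $BMO$ result (Theorem~\ref{thm:BMO}) together with uniqueness, since $L^\infty(\partial\V)\subset BMO(\partial\V)$ with $\|f\|_{BMO}\leq C\|f\|_{L^\infty}$ on the bounded domain~$\V$. Given $f\in L^\infty(\partial\V)$, Theorem~\ref{thm:BMO} produces a solution $u$ with $\div A\nabla u=0$, $u=f$ on $\partial\V$, and $|\nabla u|^2\dist(\cdot,\partial\V)$ a Carleson measure with constant $C\|f\|_{BMO}\leq C\|f\|_{L^\infty}$. So the content to be proven is the pointwise bound $\|u\|_{L^\infty(\V)}\leq C\|f\|_{L^\infty(\partial\V)}$ and uniqueness in the class of bounded solutions.

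First I would establish the $L^\infty$ bound on $u$. The natural route is to approximate $f$ by nicer data (say continuous $f_n\to f$ in an appropriate weak sense with $\|f_n\|_{L^\infty}\leq\|f\|_{L^\infty}$) for which one already has a solution $u_n$ attaining the boundary data continuously, and for such solutions invoke the interior maximum-principle-type estimates available for divergence-form equations with bounded measurable complex coefficients in two dimensions (via De Giorgi--Nash--Moser for the real part combined with the perturbation $\|A-A_0\|_{L^\infty}<\epsilon$, or directly through the representation of $u$ via the $L^{p'}$ Dirichlet solvability $(D)^A_{p'}$ from Theorem~\ref{thm:big}). Concretely, since $(D)^A_{p'}$ holds, for $f\in L^\infty\subset L^{p'}$ we get a solution with $\|Nu\|_{L^{p'}}\leq C\|f\|_{L^{p'}}$; the $L^\infty$ bound should then follow by testing against $L^\infty$ data localized near a point and using the Carleson measure estimate to control the oscillation of $u$ up to the boundary, or by a direct maximum-principle argument on the level sets $\{|u|>\|f\|_{L^\infty}+\delta\}$ showing they are empty. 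I expect the cleanest argument is: fix $X\in\V$, use the Poisson-type representation coming from $(D)^A_{p'}$ solvability to write $u(X)$ against the $A$-harmonic measure $\omega^X$ (which is a probability measure, or has total mass controlled, because constants are solutions when $A$ has constant column sums — here one must be slightly careful since complex $A$ need not fix constants, but $A_0$ does and the perturbation is small, giving $\omega^X(\partial\V)$ close to~$1$), hence $|u(X)|\leq\|f\|_{L^\infty}\,\omega^X(\partial\V)\leq C\|f\|_{L^\infty}$.

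Second, I would prove uniqueness. Suppose $u_1,u_2$ both solve the problem with data $f$ and both lie in $L^\infty(\V)$; then $w=u_1-u_2$ satisfies $\div A\nabla w=0$ in $\V$, $w=0$ on $\partial\V$, $\|w\|_{L^\infty(\V)}<\infty$. I would show $w\equiv0$ by reducing to the uniqueness already asserted in Theorem~\ref{thm:big} or Theorem~\ref{thm:BMO}: a bounded solution with zero boundary data should have $Nw\in L^{p'}$ (bounded implies nontangentially bounded, hence $Nw\in L^\infty(\partial\V)\subset L^{p'}(\partial\V)$ since $\partial\V$ has finite measure), so the uniqueness clause in $(D)^A_{p'}$ from Theorem~\ref{thm:big} forces $w\equiv0$. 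Alternatively one can use the $BMO$ uniqueness clause, noting $w$ bounded implies $|\nabla w|^2\dist(\cdot,\partial\V)$ is Carleson (by the local Caccioppoli inequality together with the De Giorgi--Nash bound on $w$).

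The main obstacle I anticipate is the $L^\infty$ bound itself, specifically controlling the total mass of the elliptic measure: for complex coefficients, constants are not $A$-solutions in general, so $\omega^X(\partial\V)\neq1$, and one must use $\|A-A_0\|_{L^\infty}<\epsilon$ to show $u_0\equiv1$ is an \emph{approximate} solution and hence $\omega^X(\partial\V)=1+O(\epsilon)$, or else argue via the real part and the structure of two-dimensional equations. Once the mass of elliptic measure (or the operator norm $L^{p'}(\partial\V)\to L^\infty(\V)$, i.e.\ the $L^1$--$L^\infty$ duality applied to $(D)^A_{p'}$) is under control, both the bound and the uniqueness follow by the duality between $H^1$ and $BMO$ already exploited in Theorem~\ref{thm:BMO} together with the uniqueness statements already in hand, so the remaining work is essentially bookkeeping of constants and the approximation argument near the boundary.
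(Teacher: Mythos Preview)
Your uniqueness argument is fine and matches the paper: a bounded $w$ with $w|_{\partial\V}=0$ has $Nw\in L^\infty(\partial\V)\subset L^{p'}(\partial\V)$, so $(D)^A_{p'}$-uniqueness kills it.

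The existence/$L^\infty$ bound, however, has a genuine gap, and the obstacle you identify is not the right one. Constants are \emph{always} solutions of $\div A\nabla u=0$, for any $A$, since $\nabla(\mathrm{const})=0$; no condition on column sums is needed, and no perturbation argument is needed to get $\omega^X(\partial\V)=1$. The real problem is that for complex $A$ the Green's function $G_X$ is complex-valued, so the ``elliptic measure'' density $\nu\cdot A\nabla G_X$ is a complex function, not a positive measure. From the representation $u(X)=\int_{\partial\V} f\,\nu\cdot A\nabla G_X\,d\sigma$ you only get
\[
|u(X)|\leq \|f\|_{L^\infty}\int_{\partial\V}|\nu\cdot A\nabla G_X|\,d\sigma,
\]
and knowing the total integral is $1$ tells you nothing about the total variation $\|\nu\cdot A\nabla G_X\|_{L^1(\partial\V)}$. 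Bounding this $L^1$ norm uniformly in $X$ is the entire content of the theorem, and none of your suggested routes (Carleson control of oscillation, $L^{p'}$ nontangential bounds, level-set arguments) produces it.

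The paper proceeds exactly by proving $\|\nu\cdot A\nabla G_X\|_{L^1(\partial\V)}\leq C$. Write $G_X=\Gamma_X-\Phi_X$ with $\Phi_X$ the $(R)^A_{p'}$ solution having boundary data $\Gamma_X|_{\partial\V}$; then $\|N(\nabla\Phi_X)\|_{L^{p'}}\leq CR^{1/p'-1}$ where $R=\dist(X,\partial\V)$. For $R\gtrsim\sigma(\partial\V)$ this and H\"older already give the $L^1$ bound. For $X$ near the boundary, excise a box $\W_r\ni X$ of scale $r\approx R$. On $\partial\V\cap\partial\W_r$ the $L^{p'}$ bound and H\"older suffice. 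On $\U_r=\V\setminus\overline{\W_r}$, $G_X$ is a genuine solution, so one may invoke the $H^1$ regularity estimate $(R)^A_1$ in $\U_r$: $\|\nabla G_X\|_{L^1(\partial\U_r)}\leq C\|\partial_\tau G_X\|_{H^1(\partial\U_r)}$. Since $G_X=0$ on $\partial\V$, the function $\partial_\tau G_X$ is supported on the short arc $\partial\U_r\setminus\partial\V$ of length $\approx R$ and has mean zero, hence is (up to its $L^\infty$ norm) an $H^1$ atom. Its $L^\infty$ norm there is controlled by $|\nabla\Gamma_X|\leq C/R$ plus $N(\nabla\Phi_X)(Y^r_\pm)$ at the two corner points; averaging over $r\in(R,2R)$ converts the pointwise values $N(\nabla\Phi_X)(Y^r_\pm)$ into an $L^1$ average dominated by $\|N(\nabla\Phi_X)\|_{L^{p'}}\leq CR^{1/p'-1}$, closing the estimate. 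This use of the $H^1$ endpoint regularity result is the key idea your proposal is missing.
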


Finally, we will prove some converses:
\begin{thm}\label{thm:converse} Suppose that $A,\V$ satisfy the conditions of \thmref{big}. Suppose further that $\div A\nabla u=0$ in $\V$.

If $N(\nabla u)\in L^1(\partial\V)$, then the boundary values $\nu\cdot A\nabla u$ and $\tau\cdot \nabla u$ exist and are in $H^1(\partial\V)$ with $H^1$-norm at most $C\|N(\nabla u)\|_{L^1}$.

If $|\nabla u(X)|^2\dist(X,\partial\V)$ satisfies (\ref{eqn:uCarleson}) for some $\tilde C$, then $u|_{\partial\V}$ exists and is in $BMO(\partial\V)$ with $BMO$ norm at most~$C\tilde C$. Furthermore, if $f=u|_{\partial\Omega}\in L^\infty$, then $u\in L^\infty$; functions in $L^\infty$ are unique, and so $(D)^A_\infty$ holds.
\end{thm}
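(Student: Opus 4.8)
The plan is to treat the three assertions in turn, using the preceding existence theorems (Theorems~\ref{thm:big} and~\ref{thm:BMO}) as black boxes together with a duality/uniqueness argument.

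For the first statement, suppose $\div A\nabla u=0$ in $\V$ and $N(\nabla u)\in L^1(\partial\V)$. First I would establish that the nontangential limits of $\nu\cdot A\nabla u$ and $\tau\cdot\nabla u$ exist $\sigma$-a.e.\ on $\partial\V$; this is a standard consequence of interior estimates for solutions of $\div A\nabla u=0$ (local boundedness and De~Giorgi--Nash--Moser-type Hölder continuity of $\nabla u$, valid here because we are in $\R^2$ with $t$-independent coefficients) combined with $N(\nabla u)\in L^1$, exactly as in the real-coefficient theory. Call these boundary functions $g=\nu\cdot A\nabla u$ and $h=\tau\cdot\nabla u$. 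To see $g,h\in H^1(\partial\V)$ with the claimed bound, I would use the atomic/$H^1$--$BMO$ duality: it suffices to show $|\int_{\partial\V} g\,\phi\,d\sigma|\leq C\|N(\nabla u)\|_{L^1}\|\phi\|_{BMO}$ for all nice $\phi$. Given $\phi\in BMO(\partial\V)$ (or rather a dense subclass), let $w$ be the solution furnished by Theorem~\ref{thm:BMO} for the matrix $A^t/\det A$ or $A^t$ (whichever is dual to the pairing), so that $w|_{\partial\V}=\phi$ and $|\nabla w|^2\dist(\cdot,\partial\V)$ is Carleson with constant $\lesssim\|\phi\|_{BMO}$. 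Then a Green's-type identity on $\V$ (integrating $\div$ of a suitable bilinear expression in $\nabla u$ and $\nabla w$, which vanishes in the interior because both are null solutions of conjugate equations) converts $\int g\phi$ into $\int h\,\partial_\tau\phi$-type boundary terms and a solid integral $\int_\V \nabla u\cdot(\text{something})\nabla w$; this solid integral is controlled by $\int N(\nabla u)\,(\text{square function of }w)$ and hence, by the Carleson-measure bound and the $L^1$-boundedness of the relevant square function against $N(\nabla u)$, by $\|N(\nabla u)\|_{L^1}\|\phi\|_{BMO}$. The tangential-derivative statement $h\in H^1$ follows in parallel (or is in fact immediate once $u|_{\partial\V}$ is shown to lie in the right space), and the compatibility identity $\partial_\tau u=h$ is just the fundamental theorem of calculus along $\partial\V$ applied to the nontangential limit.

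For the second statement, assume $|\nabla u|^2\dist(\cdot,\partial\V)$ satisfies~(\ref{eqn:uCarleson}) with constant $\tilde C$. Again interior Hölder continuity of $u$ together with the Carleson bound gives that $u$ has nontangential limits $\sigma$-a.e.; call this $f$. To show $f\in BMO(\partial\V)$ with norm $\lesssim\tilde C$, I would test against $H^1$ atoms: for an atom $a$ supported in $B(X_0,R)\cap\partial\V$ with $\|a\|_\infty\leq 1/\sigma(B(X_0,R)\cap\partial\V)$ and mean zero, solve the dual Neumann (or regularity) problem of Theorem~\ref{thm:big} with datum $a\in H^1$, obtaining $v$ with $N(\nabla v)\in L^1$ and $\|N(\nabla v)\|_{L^1}\lesssim\|a\|_{H^1}\lesssim 1$; then a Green's identity pairs $f$ against $a$ and rewrites $\int f\,a\,d\sigma$ as a solid integral $\int_\V \nabla u\cdot(\dots)\nabla v$ plus lower-order boundary terms, and Cauchy--Schwarz splits this as $\bigl(\int_\V|\nabla u|^2\dist\,\bigr)^{1/2}\bigl(\int_\V|\nabla v|^2\dist^{-1}\,\bigr)^{1/2}$-type estimate localized near the support of $a$; the first factor is $\lesssim\tilde C^{1/2}$ by the Carleson bound, the second is $\lesssim\|N(\nabla v)\|_{L^1}^{1/2}$ by a standard square-function/nontangential-maximal-function comparison, giving $|\int f a\,d\sigma|\lesssim\tilde C$ uniformly in atoms, i.e.\ $\|f\|_{BMO}\lesssim\tilde C$. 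The final clause, that $f\in L^\infty$ forces $u\in L^\infty$ and hence (with uniqueness) $(D)^A_\infty$, I would get from Theorem~\ref{thm:maxprinciple}: $f\in L^\infty(\partial\V)\subset BMO(\partial\V)$, so by Theorem~\ref{thm:BMO} there is a Carleson-type solution $\tilde u$ with $\tilde u|_{\partial\V}=f$, and by the uniqueness clause of Theorem~\ref{thm:BMO} (any solution satisfying~(\ref{eqn:uCarleson}) for some constant with the same boundary data coincides) we have $u=\tilde u$; but Theorem~\ref{thm:maxprinciple} provides a solution with $u|_{\partial\V}=f$ and $\|u\|_{L^\infty(\V)}\lesssim\|f\|_{L^\infty(\partial\V)}$, and identifying it with $u$ (again by a uniqueness argument, since an $L^\infty$ solution satisfies a Carleson bound) gives the conclusion; uniqueness in $L^\infty$ is precisely the uniqueness clause of Theorem~\ref{thm:maxprinciple}, so $(D)^A_\infty$ holds.

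The main obstacle I expect is the rigorous justification of the Green's/Rellich-type identities in the complex, $t$-independent setting on a Lipschitz domain: one must integrate by parts pairing $u$ (with only $N(\nabla u)\in L^1$ or a Carleson bound, so limited regularity up to the boundary) against the dual solution $v$ or $w$, controlling all boundary terms and showing the solid integrals converge; this requires approximation from the interior (on sawtooth subdomains $\V_\epsilon\uparrow\V$) and careful passage to the limit using the nontangential-maximal-function and square-function bounds, and it is here that $t$-independence and the two-dimensionality are used to guarantee the necessary interior regularity of $\nabla u$. A secondary subtlety is matching the correct dual matrix ($A^t$ versus $A/\det A$) in each pairing so that the bulk integrand is a genuine null-Lagrangian; this bookkeeping is already present in Theorems~\ref{thm:rule} and~\ref{thm:big} and I would follow the same conventions.
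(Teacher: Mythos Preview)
Your overall architecture is reasonable, but both halves diverge from the paper's route and the second has a genuine gap.

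For the $H^1$ converse, the paper does use $H^1$--$BMO$ duality, but it does \emph{not} extend the test function $\phi\in BMO$ as a PDE solution via Theorem~\ref{thm:BMO}. Instead it invokes a Varopoulos-type extension (\lemref{BMOhasCarleson}): there is a Lipschitz $F$ on $\V$ with $F\to\phi$ nontangentially and with $|\nabla F|$ (first power, not $|\nabla F|^2\dist$) a Carleson measure. Then the weak definition gives $\int_{\partial\V}\phi\,\nu\cdot A\nabla u=\int_\V\nabla F\cdot A\nabla u$, and the classical Carleson--nontangential pairing $\int_\V|\nabla F|\,|\nabla u|\leq C\|\nabla F\|_{\mathcal C}\|N(\nabla u)\|_{L^1}$ finishes it. Your proposal, using the BMO solution $w$, only gives $|\nabla w|^2\dist$ Carleson; to pair that against $\nabla u$ you would need a tent-space bound of the form $\|A(\nabla u)\|_{L^1}\lesssim\|N(\nabla u)\|_{L^1}$ (an $N\approx S$ estimate for $\nabla u$ in this complex, $t$-independent setting), which you have not established and which the paper never proves. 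For $\tau\cdot\nabla u\in H^1$, the paper does not run a parallel duality argument at all: it passes to the conjugate $\tilde u$ (so $\partial_\tau u=-\nu\cdot\tilde A\nabla\tilde u$) and reduces to the Neumann case already handled.

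For the $BMO$ converse, the paper's argument is \emph{not} the atom-testing duality you propose; it is a perturbation from the real case. For real $A_0$ one has (from \cite{Dp'}) the reverse square-function estimate $\int_{\partial\W}N(v-v(X_0))^2\lesssim\int_\W|\nabla v|^2\dist$ on bounded subdomains, which bounds $\|\K_0 f\|_{BMO}$ by the Carleson integral of $|\nabla\D_0 f|^2$. Analyticity in $A$ then gives $\|\K f\|_{BMO}^2\leq C\tilde C^2+C\|A-A_0\|_{L^\infty}\|\K f\|_{BMO}^2$, and one absorbs. Your Cauchy--Schwarz split $(\int|\nabla u|^2\dist)^{1/2}(\int|\nabla v|^2\dist^{-1})^{1/2}$ fails as written: localized to $B(X_0,R)$ the first factor is $\lesssim(\tilde C R)^{1/2}$, not $\tilde C^{1/2}$, and the second factor $\int|\nabla v|^2\dist^{-1}$ is \emph{not} controlled by $\|N(\nabla v)\|_{L^1}$---indeed, bounding $|\nabla v|$ by $N(\nabla v)$ and integrating $\int_0^R t^{-1}\,dt$ diverges. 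There is no standard ``square-function/NTM comparison'' that yields this weighted bound at $p=1$. Your $L^\infty$ clause is essentially what the paper does (via the maximum principle and \lemref{bddmeansconst}), so that part is fine.
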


\thmref{maxprinciple} will be proven in \autoref{chap:maximum}. Uniqueness and converses for Theorems~\ref{thm:big} and~\ref{thm:BMO}  will be proven in \autoref{chap:converse}. Existence for smooth coefficients for \thmref{BMO} will be proven in \autoref{chap:square}, and we will pass to arbitrary elliptic (rough) coefficients in Theorems~\ref{thm:apriori:BMOcpt} and~\ref{thm:apriori:BMO}.

We now outline the proof of existence for \thmref{big}; resolving the details will form the bulk of this work. 


\begin{proof}[Proof of \thmref{big}] 

If $f:\partial\V\mapsto\C$ is in $L^p$ for $1< p<\infty$, then we can define (\dfnref{layers}) functions $\D f,$ $\S^T f$ such that if $X\in\R^2-\partial\V$, then $\D f(X)$ and $\S^T f(X)$ are well-defined complex numbers, and $\div A\nabla(\D f)=0$, $\div A^T\nabla(\S^T f)=0$ in $\R^2-\partial\V$. (Here $A^T$ is the matrix transpose of~$A$.)

We will show that, if $\K$, $\J$ are bounded then
\[\|N(\D f)\|_{L^p(\partial\V)}\leq C(p)\|f\|_{L^p(\partial\V)},\quad \|N(\nabla S^T f)\|_{L^p(\partial\V)}\leq C(p)\|f\|_{L^p(\partial\V)}\]
for all $1<p<\infty$. (\thmref{NDf}.) Furthermore, if $f\in H^1_{at}$, then $\|N(\nabla \S^T f)\|_{L^1}\leq C\|f\|_{H^1}$, and so by density we may extend $\S$ to all of $H^1$. (\corref{NSf}.)

There exist operators $\K_\pm,\J$ on $L^p(\partial\V)$ such that
\begin{align*}
\K_\pm f&=\D f|_{\partial\V_\pm},\quad
   \J^t f=\tau\cdot \nabla \S^T f|_{\partial\V},\\
\K_\pm^t f&=\nu\cdot A^T\nabla\S^T f|_{\partial\V_{\mp}}
\end{align*}
in appropriate weak senses, where $\V_+=\V$, $\V_-=\bar\V^C$.

If $\K_+$ is invertible with bounded inverse on some $L^p(\partial\V)$, then for every $g\in L^p(\partial\V)$, we may let $u=\D (\K_+^{-1}g)$. Then  
\[\div A\nabla u=0,\quad u|_{\partial\V}=g,\quad \|Nu\|_{L^p}\leq C_p\|\K_+^{-1}g\|_{L^p}\leq C_p\|\K_+^{-1}\|_{L^p\mapsto L^p}\|g\|_{L^p}\]
and so $u$ is a solution to $(D)_p^A$.

Similarly, if $\K_-^t$ or $\J^t$ is bounded and invertible on $L^p$ or $H^1$, then $u=\S^T((\K_-^t)^{-1} g)$ or $u=\S^T((\J^t)^{-1} g)$ is a solution to $(N)_p^{A^T}$, $(R)_p^{A^T}$, $(N)_1^{A^T}$, or $(R)_1^{A^T}$.

So we need only show that the layer potentials $\K_\pm^t,$ $\J^t$ are bounded and invertible on $L^p$ or $H^1$. (Invertibility of $\K_+$ follows from invertibility of its transpose $\K_+^t$.)

If $A$ is smooth, and
\[\Omega=\{X\in\R^2:\phi(X\cdot\e^\perp)<X\cdot \e\}\]
for some Lipschitz function $\phi$ (which we may assume, a priori, to be in $C^\infty_0$) and some unit vector~$\e$, then we can prove that $\K,\J$ are bounded $H^1(\partial\Omega)\mapsto H^1(\partial\Omega)$ and $L^p(\partial\Omega)\mapsto L^p(\partial\Omega)$ for $1<p<\infty$.
We can then extend this to arbitrary Lipschitz domains $\V$. (\thmref{Tbounded}, \thmref{patching} and \thmref{H1patching}.)

There is some $p_0>1$ (in fact, the ${p_0}$ in \thmref{rule}) and some $\epsilon>0$ such that if $\|A-A_0\|<\epsilon$, then $\K_\pm$ has a bounded inverse on $L^{p_0'}(\partial\V)$ and $\J^t$, $\K^t_\pm$ have bounded inverses on $L^{p_0}(\partial\V)$ and $H^1(\partial\V)$. (Chapters~\ref{chap:invertibility} and~\ref{chap:H1}.)

Using standard interpolation techniques (\autoref{chap:interpolation}), we can show that $\K^t_\pm,\J^t$ have bounded inverses on $L^p$ for $1<p<p_0$, and therefore $\K_\pm$ has a bounded inverse on $L^{p'}$ for $p_0'<p'<\infty$.

Thus, we have that $(N)^A_p$, $(R)^A_p$, $(D)^A_{p'}$ hold if $A$ is smooth, for $p>1$ small. We pass to arbitrary (rough) $A$ in \autoref{chap:apriori}.
\end{proof}

\chapter{Definitions}
\label{chap:dfn}
\allowdisplaybreaks[1] If $u\in W^{1,2}_{loc}(\V)$, then we define
$\div A\nabla u=0$ in $\V$ in the weak sense, that is, 
\[\int A\nabla u\cdot\nabla\eta=0\]
for all $\eta\in C^\infty_0(\V)$. Similarly, we define $\nu\cdot A\nabla u =g$ on $\partial\V$ in the weak sense, that is, 
\[\int_\V A\nabla u\cdot\nabla \eta=\int_{\partial\V} g\eta,\]
for all $\eta\in C^\infty_0(\R^2)$.

We say that $u=f$ on $\partial\V$ if $f$ is the non-tangential limit of $u$ a.e., that is, if
\[\lim_{\eta\to 0}\sup\{|u(Y)-f(X)|: |X-Y|<(1+a)\dist(Y,\partial\V)<\eta\}\]
holds for almost every $X\in\partial\V$. 

We fix some notation. If $\U$ is a domain, then $\U=\U_+$, $\bar\U^C=\U_-$. The inner product between $L^p$ and $L^{p'}$ will be given by
\[\langle G, F\rangle=\int G(x)^t F(x)\,dx.\]
(This notation is more convenient than the usual inner product $\int \overline{G^t(x)} F(x)\,dx$.) A superscript of $t$ will denote the transpose of a matrix or the adjoint of an operator with respect to this inner product. (So if $f,g$ happen to be scalar-valued functions, then $f=f^t$ and $\langle f,g\rangle=\int fg$, and if $P$ is an operator, then $\langle F,PG\rangle=\langle G,P^t F\rangle^t$.)

If I have a function or operator defined in terms of $A$, then a superscript of $T$ will denote the corresponding function or operator for $A^t$. (So $A^t=A^T$; however, $P^t\neq P^T$ for most operators defined in terms of~$A$.)

If $\U$ is a domain, then if $X\in\partial\U$ and $f$ is a function defined on $\partial\U$, we let $Mf(X)$ be the generalization of the Littlewood-Paley maximal function given by $Mf(X)=\sup\{\dashint_I|g|:I\owns X, I\subset\partial\V$ connected$\}$.

If $f$ is defined on $\U$, we define the non-tangential cones $\gamma$ and non-tangential maximal function $N$ by
\begin{equation}\label{eqn:dfn:nontang}
\gamma_{\U,a}(X)=\{Y\in\U:|X-Y|<(1+a)\dist(Y,\partial\U)\},\quad
N_{\U,a} f(X)=\sup_{\gamma_{\U,a}(X)}|f(Y)|
\end{equation}
for some number $a>0$. When no ambiguity will arise we suppress the subscripts~$\U$ or~$a$; we let $\gamma_\pm(X)=\gamma_{\U_\pm}(X)$.

\begin{dfn} We say that the domain $\Omega$ is a \dfnemph{special Lipschitz domain} if, for some Lipschitz function $\phi$ and unit vector $\e$,\label{dfn:domain}
\[\Omega=\{X\in\R^2:\phi(X\cdot\e^\perp)<X\cdot\e\}.\]
We refer to $\|\phi'\|_{L^\infty}$ as the \dfnemph{Lipschitz constant of~$\Omega$.}

We say that $\U$ is a \dfnemph{Lipschitz domain} with \dfnemph{Lipschitz constant $k_1$} if there is some $k_1>0$ such that, for every $X\in\partial\U$, there is some neighborhood $\W$ of $X$ such that $\U\cap \W=\Omega_X\cap\W$ for some special Lipschitz domain $\Omega_X$ with Lipschitz constant at most~$k_1$.

If $\U$ is a special Lipschitz domain, then let $k_2=k_3=1$. Otherwise, let $k_2,$ $k_3>1$ be numbers such that $\partial\U$ may be covered (with overlaps) by at most $k_2$ such neighborhoods whose size varies by at most $k_3$.

That is, there is a constant $r$ such that
\[\partial\U\subset\bigcup_{j=1}^{k_2} B(X_j,r_j)\]
for some $X_j\in\partial\U$ and $r_j\in(r/k_3, rk_3)$. We further require that
there exist unit vectors $\e_j$ and Lipschitz functions $\phi_j$, with $\|\phi_j'\|_{L^\infty}\leq k_1$, such that if 
\begin{align*}
\Omega_j&=\{X\in\R^2:\phi_j((X-X_j)\cdot\e_j^\perp)<(X-X_j)\cdot\e_j\},
\\ 
R_j&=\{X\in\R^2:|(X-X_j)\cdot\e_j^\perp|<2r_j,|(X-X_j)\cdot\e_j|<(2+2k_1) r_j\},\end{align*}
then
\[\U\cap R_j=\Omega_j\cap R_j.\]

We refer to $k_1$, $k_2$, $k_3$ as the \dfnemph{Lipschitz constants} of $\U$. For simplicity, when we write $k_i$, we mean $k_1$,~$k_2$, $k_3$.

If the $k_i$ are finite, and $\partial\U$ is connected, then we call $\U$ a \dfnemph{good Lipschitz domain}.
\end{dfn}

We will reserve $\V$ for good Lipschitz domains and $\Omega$ for special Lipschitz domains. Note that if $\V$ is a good Lipschitz domain, then either $\V$ is special or $\partial\V$ is bounded.

Note that we do not care about the value of the positive constant $r$ in the definition of $k_2$,~$k_3$; this is because $(D)_p^A$, $(N)_p^A$, $(R)_p^A$ are scale-invariant.

A number depending on these constants is very important. Let $k_4$ be such that, if $X_0\in\partial\U$ and $r>0$, then $\sigma(\partial\U\cap B(X_0,r))\leq k_4 r$. We refer to $k_4$ as the \dfnemph{Ahlfors-David constant} of $\U$.


We can bound some integrals which will be needed later:
\begin{equation}\label{eqn:goodCZbound}
\int_{\partial\U\backslash B(X_0,r)}\frac{r^{p-1}}{|X-X_0|^p}\,d\sigma(X)
\leq \sum_{j=0}^\infty \frac{r^{p-1}}{(b^{j}r)^p}\sigma(B(X_0,rb^{j+1}))
\leq \frac{k_4}{p-1}\end{equation}
where $b=p^{1/(p-1)}$. In particular, if $X_0\notin\partial\U$ then
\[\int_{\partial\U}\frac{1}{|X-X_0|^p}\,d\sigma(X)\leq \frac{k_4}{p-1}\dist(X_0,\partial\U)^{1-p}.\]

The letter $C$ will always represent a positive constant, whose value may change from line to line, but which depends only on the ellipticity constants $\lambda$,~$\Lambda$ of $A$,~$A_0$, the positive constant $a$ in the definition of non-tangential maximal function, and the Lipschitz constants of whatever domain we are dealing with. If a particular constant depends on another parameter, it will be indicated explicitly. We will occasionally use the symbols $\lesssim$,~$\gtrsim$ to indicate inequality up to a multiplicative constant (e.g.\ $\|\T f\|\lesssim \|f\|$ as shorthand for $\|\T f\|\leq C\|f\|$); we will use $\approx$ to mean that $\lesssim$ and~$\gtrsim$ both hold.

\begin{lem}\label{lem:fundsoln:exists} Let $A:\R^2\mapsto \C^{2\times2}$ be an elliptic matrix-valued function. Then, for each $X\in\R^2$, there is a function $\Gamma_X\in W^{1,1}_{loc}(\R^2)$, unique up to an additive constant, such that
\[|\Gamma_X(Y)|\leq C+C|\log|X-Y| |\] for every $Y\in\R^2,$ and
\begin{equation}\label{eqn:fundsoln}
\int_{\R^2} A(Y) \nabla \Gamma_X(Y) \cdot \nabla \eta(Y) \, dy
=-\eta(X)\end{equation}
for every $\eta\in C^\infty_0(\R^2)$.\end{lem}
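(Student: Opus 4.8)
The plan is to construct the fundamental solution $\Gamma_X$ by a limiting procedure from fundamental solutions of regularized operators, and then extract the logarithmic bound from Cacciopoli-type estimates together with a normalization. Concretely, I would first mollify $A$ to get smooth uniformly elliptic matrices $A_\delta$ with the same ellipticity constants, and for each $\delta$ solve $\div A_\delta \nabla \Gamma^\delta_X = -\delta_X$ in a large ball $B(X,R)$ with zero boundary data (which exists by standard theory for smooth coefficients in two dimensions, where the fundamental solution is only logarithmically singular). The key analytic input is the De Giorgi--Nash--Moser theory: although $A$ is complex-valued, in two dimensions solutions of $\div A \nabla u = 0$ with merely bounded measurable complex coefficients still enjoy interior Hölder continuity and Harnack-type bounds (this is a theorem of Morrey, valid specifically in dimension $n = 2$), and these bounds are uniform in $\delta$. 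This is what lets us pass to the limit.

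The main steps, in order: \textbf{(1)} Establish the a priori logarithmic bound. Away from $X$, the function $\Gamma^\delta_X$ solves $\div A_\delta \nabla \Gamma^\delta_X = 0$, so on dyadic annuli $\{2^j \le |Y-X| \le 2^{j+1}\}$ one uses Cacciopoli plus the two-dimensional Sobolev/Moser estimate to compare the sup of $|\Gamma^\delta_X|$ on successive annuli; since the operator is $2$-dimensional, testing equation \eqref{eqn:fundsoln} against a cutoff shows the "flux" is normalized, and one gets an oscillation decrease of a fixed additive amount $\sim C$ per dyadic scale, yielding $|\Gamma^\delta_X(Y)| \le C + C|\log|X-Y||$ after fixing the additive constant by, say, requiring the average of $\Gamma^\delta_X$ over $\partial B(X,1)$ to vanish. \textbf{(2)} Uniform local energy bounds: from \eqref{eqn:fundsoln} and the logarithmic pointwise bound, $\Gamma^\delta_X$ is bounded in $W^{1,1}_{loc}$ (indeed in $W^{1,q}_{loc}$ for $q < 2$ by a hole-filling/duality argument), uniformly in $\delta$ and $R$. \textbf{(3)} Pass to the limit $\delta \to 0$, $R \to \infty$ along a subsequence using weak-$W^{1,1}_{loc}$ compactness plus local uniform (Hölder) convergence away from $X$; the limit $\Gamma_X$ inherits the logarithmic bound and satisfies \eqref{eqn:fundsoln} because $A_\delta \to A$ in, e.g., every $L^q_{loc}$ and $\nabla \Gamma^\delta_X \to \nabla \Gamma_X$ weakly. \textbf{(4)} Uniqueness up to a constant: if $\Gamma_X$ and $\tilde\Gamma_X$ both satisfy the conclusion, their difference $w$ solves $\div A \nabla w = 0$ in all of $\R^2$ with $|w(Y)| \le C + C|\log|Y||$; a Liouville-type argument (Cacciopoli on large balls, exploiting that the logarithmic growth is killed by the $1/R^2$ from the cutoff gradient) forces $\nabla w \equiv 0$.

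The step I expect to be the main obstacle is \textbf{(1)}, obtaining the logarithmic bound with a \emph{clean} additive structure and constants depending only on $\lambda$, $\Lambda$ (and dimension, which is $2$). The subtlety is that for complex coefficients one cannot invoke the real De Giorgi--Nash--Moser machinery verbatim; one must instead use the dimension-two result (Morrey's theorem, via quasiconformal-type arguments or direct energy methods in the plane) that solutions are locally bounded and Hölder continuous with constants depending only on the ellipticity ratio. Once that is in hand, the annular iteration is routine, but one has to be careful that the "$-\eta(X)$" normalization in \eqref{eqn:fundsoln} is correctly converted into the statement that the jump in the logarithmically-averaged value of $\Gamma^\delta_X$ across scale $1$ is bounded below and above — i.e., that $\Gamma_X$ genuinely has a logarithmic singularity rather than, say, a bounded one, though the lemma as stated only asserts the upper bound $|\Gamma_X(Y)| \le C + C|\log|X-Y||$, so only the comparatively easier direction of that estimate is actually needed here.

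Secondarily, one should double-check that the exceptional nature of dimension $2$ is genuinely being used: in higher dimensions the analogous statement (with $|X-Y|^{2-n}$ in place of the log) can fail for complex coefficients, so the restriction $A : \R^2 \to \C^{2\times 2}$ is essential, and the proof must at some point invoke a two-dimensional-specific regularity or integrability fact (the Sobolev embedding $W^{1,q} \hookrightarrow$ continuous functions fails at $q = n$, but the relevant point is rather that $\Gamma_X \in W^{1,q}_{loc}$ for all $q < 2 = n$ with the logarithmic borderline, which is exactly what makes the construction go through).
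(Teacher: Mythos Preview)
Your approach is valid but genuinely different from the paper's. The paper constructs $\Gamma_X$ via the heat semigroup: it cites Auscher--Tchamitchian for Gaussian upper bounds and H\"older continuity of the Schwarz kernel $\check K_t(X,Y)$ of $e^{-tL}$ (which hold for complex coefficients in dimension~$2$), then sets $\Gamma_Y(X) = C(r) + \int_0^\infty J_t(X,Y)\,dt$ where $J_t$ is $\check K_t$ minus an annular average. The Gaussian bounds give convergence of the time integral and the $L^1$ gradient bound $\int_{|X-Y|\le r}|\nabla\Gamma_Y|\le Cr$, from which the logarithmic pointwise bound follows directly; no mollification, compactness, or annular iteration is needed. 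Your approximation-and-limit scheme is more hands-on and closer to classical PDE constructions; the heat-kernel route is slicker here and, crucially, later buys the paper analyticity of $\nabla\Gamma^{A_z}_Y(X)$ in the coefficient parameter~$z$ essentially for free (since $A\mapsto \check K^A_t$ is analytic), which is used repeatedly in the perturbation arguments.

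One small correction to your uniqueness step: Cacciopoli alone does \emph{not} kill logarithmic growth, since $\int_{B_R}|\nabla w|^2 \le CR^{-2}\int_{B_{2R}}|w|^2 \lesssim (\log R)^2$, which does not tend to zero. What works (and what the paper does) is to invoke the H\"older estimate you already have: $|w(Y)-w(Z)|\le C|Y-Z|^\alpha R^{-1-\alpha}\|w\|_{L^2(B_R)} \lesssim |Y-Z|^\alpha R^{-\alpha}\log R \to 0$, forcing $w$ constant. Since you explicitly flagged the 2D Morrey/De Giorgi--Nash--Moser input, you have this estimate available; just use it here rather than raw Cacciopoli.
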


We refer to this function as the \dfnemph{fundamental solution for $\div A\nabla$ with pole at~$X$.}

This lemma will be proven in \autoref{chap:fundsoln}.
By $\nabla \Gamma_X(Y)$ we mean the gradient in $Y$. We will sometimes wish to refer to the gradient in $X$; we will then write $\nabla_X\Gamma_X(Y)$.

$H^1(\R)$ is defined to be $\big\{f:\R\mapsto\C\big| \|\sup_t f*\Phi_t(x)\|_{L^1}$ is finite$\big\}$ for some Schwarz function $\Phi$ with $\int\Phi=1$, where $\Phi_t(x)=\frac{1}{t}\Phi(x/t)$. It can be shown that if $f\in H^1$, then $f=\sum_k \lambda_k a_k$, where $\lambda_k\in\C$, $\int a_k=0,$ $\|a_k\|_{L^\infty}\leq 1/r_k$, $\supp a_k\subset B(x_k,r_k)$ for some $r_k>0$, and $\sum_k |\lambda_k|\approx \|f\|_{H^1}$. Functions $a$ satisfying these conditions are called \dfnemph{atoms}.

We may extend the definition of $H^1$ to $H^1(\partial\U)$, where $\U$ is a Lipschitz domain. We say that $f\in H^1(\partial\U)$ if $f=\sum_k \lambda_k a_k$, where the $\lambda_k$ are complex numbers, and $\int_{\partial\U}a_k\,d\sigma=0$, $\supp a_k\subset\Delta_k$ for some $\Delta_k\subset\partial\U$ is connected, and $\|a_k\|_{L^\infty}\leq 1/\sigma(\Delta_k)$. The norm is the smallest $\sum_k|\lambda_k|$ among all such representations of~$f$.

If $\U$ is a good Lipschitz domain, then this is equivalent to defining $H^1$ atoms to be functions $a$ which satisfy $\int_{\partial\U}a\,d\sigma=0$, $\supp a\subset B(X,R)\cap\partial\U$ for some $X\in\partial\U$, and $\|a\|_{L^\infty}\leq 1/R$.

\ifmulticonnect Note that this means that $f|_{\gamma}\in H^1(\gamma)$ for all connected components $\gamma$ of $\partial\U$. In particular, $\int_\gamma f\,d\sigma=0$. For the regularity problem, this condition is obviously necessary. For the Neumann problem, we might like to relax this condition to functions $f$ such that $\int_{\partial\U} f\,d\sigma=0$. This generalization will not be needed to prove $(N)_p$ for $p>1$.\fi

We consider $BMO(\partial\U)$ to be the dual of $H^1(\partial\U)$. This means that
\begin{align*}\|f\|_{BMO(\partial\U)}
&=\sup_{\Delta\subset\partial\U\text{ connected}}\frac{1}{\sigma(\Delta)}\int_{\Delta} \left|f-{\textstyle\dashint_\Delta} f\right|\,d\sigma
\\&\approx
\sup_{X\in\partial\V,\>R>0} \inf_C\frac{1}{R}\int_{B(X,R)\cap\partial\U} \left|f-C \right|\,d\sigma
.\end{align*}

\ifmulticonnect
If $\partial\U$ is not connected, then we require $H^1$ functions to integrate to 0 on each component of $\partial\U$, not on all of $\partial\U$. Consequently, if $f$ is constant on each connected component of $\partial\U$, then $\|f\|_{BMO}=0$, even if $f$ is not constant on all of $\partial\U$. 
\else

Multiply connected domains are beyond the scope of this paper. However, many lemmas and theorems in this paper have obvious generalizations to multiply connected domains. For the most part, these require that $H^1$ functions integrate to 0 on each connected component of~$\partial\V$.
\fi

\begin{dfn}\label{dfn:layers} Let $\V$ be a Lipschitz domain with unit outward normal and tangent vectors $\nu$ and~$\tau$.
If $f:\partial\V\mapsto \C$ is a function, we define
\begin{align}
\label{eqn:S}
\nabla\S f(X) &=\nabla \S_\V f(X)=\int_{\partial\V} \nabla_X\Gamma_{X}^T(Y) f(Y)\,d\sigma(Y)
\\
\label{eqn:D}
\D f(X)&= \D_\V f(X)=\int_{\partial\V} \nu(Y)\cdot A^T(Y)\nabla\Gamma_X^T(Y) f(Y)\,d\sigma(Y).
\end{align} 
This defines $\S f$ up to an additive constant.

We define the layer potentials $\K_\pm$, $\J$ via
\begin{align}
\label{eqn:K}
\K_\pm f(X)&=\K_{\V,\pm}^A f(X)=\lim_{Z\to X, \> Z\in \gamma_\pm(X)}\int_{\partial\V}
\nu(Y)\cdot A^T(Y) \nabla\Gamma_{Z}^{A^T}(Y) f(Y) \,d\sigma(Y)
\\
\label{eqn:J}
\J f(X)&=\J_\V^A f(X)=\lim_{Z\to X, \> Z\in\gamma_+(X)\cup\gamma_-(X)}\int_{\partial\V} 
\tau(Y)\cdot \nabla\Gamma_{Z}^{A^T}(Y) f(Y) \,d\sigma(Y).
\end{align}
\end{dfn}
When no confusion will arise we omit the subscripts and superscripts. By (\ref{eqn:gradfundsoln}), $|\nabla\Gamma_X(Y)|\leq C/|X-Y|$; hence, $\D f$ and $\nabla\S f$ converge for $X\notin\partial\V$ and $f\in L^p$, $1\leq p<\infty$.

We will show that the limits in the definition of $\K$ and $\J$ are well-defined for $f\in L^p$ smooth, $1\leq p<\infty$. (\lemref{Kwelldef}.)

$\K_+-\K_-=I$ as operators on $L^p$ (\lemref{Kpm}), so $\K_+\neq \K_-$; however, if $f\in L^p$ with $1<p<\infty$, then at any point where the obvious analogies $\J_{\pm}f$ exist, they are equal. (\lemref{Sfcts}).

It will be shown that
\[\K^t_\pm f = \nu\cdot A^T\nabla \S^T f|_{\partial\V_{\mp}},\quad
\J^t f = \tau\cdot \nabla\S^T f.\]
(\lemref{Ktranspose}).


We also need the following definitions:
\begin{align}
\label{dfn:bigdefstart}
A(X)&=\Matrix{a_{11}(X)&a_{12}(X)\\ a_{21}(X)&a_{22}(X)},\quad
A_0(X)=\Matrix{a_{11}^0(X)&a_{12}^0(X)\\ a_{21}^0(X)&a_{22}^0(X)}\\
\nabla\tilde\Gamma_X^T(Y)&=\Matrix{0&-1\\1&0}A(Y)\nabla \Gamma_X^T(Y)
\\
B_6^A(X)&=\Matrix{a_{11}(X)&a_{21}(X)\\0&1}\\
B_7^A(X)&=\left(B_6(X)^{t}\right)^{-1}\Matrix{A(X)\nu(X)&\tau(X)}\\
K^A(X,Y) &= \Matrix{B_6^A(Y)\nabla \Gamma^{A^T}_{X}(Y) 
     &B_6^A(Y)\nabla \Gamma^{A^T}_{X}(Y) }^t\label{dfn:kernelgen}\\
\tilde K^A(X,Y)&= B_6^A(X)\Matrix{
     \nabla_X\tilde \Gamma^{A^T}_{X}(Y)&
     \nabla_X\tilde \Gamma^{A^T}_{X}(Y)}
     \\
\T_\V^A F(X) &=\lim_{Z\to X\text{ n.t.},Z\in\V} \int_{\partial\V} K^A(Z,Y) F(Y)\,d\sigma(Y)\label{dfn:Tgen}
\\
\tilde\T_\V F(X) &=\lim_{Z\to X\text{ n.t.},Z\in\V} \int_{\partial\V} \tilde K(Z,Y) F(Y)\,d\sigma(Y)
\end{align}

When considering special Lipschitz domains, we will need some
terminology:
\begin{align}
\e&=\Matrix{e_1\\e_2},\quad \e^\perp=\Matrix{e_2\\-e_1}\\
\psi(x) &= x\e^\perp +\phi(x)\e\in\partial\Omega,\\
\label{eqn:psi}
\psi(x,h)&=\psi(x)+h\e=
\Matrix{xe_2+(\phi(x)+h)e_1\\-xe_1+(\phi(x)+h)e_2},\\
\nonumber
\lefteqn{\text{ so }(x,t) =\psi(e_2 x-e_1 t, e_1 x+ e_2 t -\phi(e_2 x-e_1 t)),}
\\
\langle G, T_\pm F\rangle &=\lim_{h\to 0^\pm} \int_{\R^2} G(x)^t K_h(x,y) F(y)\,dy\,dx 	\label{dfn:T}\\
\langle G, \tilde T_\pm F\rangle &= \lim_{h\to 0^\mp}
     \int_{\R^2} G(x)^t \tilde K_h(x,y) F(y)\,dy\,dx
\\
K_h(x,y) &=\Matrix{B_6(\psi(y))\nabla \Gamma^T_{\psi(x,h)}(\psi(y)) 
     &B_6(\psi(y))\nabla \Gamma^T_{\psi(x,h)}(\psi(y)) }^t\label{dfn:kernel}\\
\nonumber&= \Matrix{
	\nabla \Gamma^T_{{\psi(x,h)}}(\psi(y))^t \\
	\nabla \Gamma^T_{{\psi(x,h)}}(\psi(y))^t }B_6(\psi(y))^t
	= K(\psi(x,h),\psi(y))
\\
\tilde K_h(x,y)&= B_6(\psi(x))\Matrix{
     \nabla_X\tilde \Gamma^T_{{\psi(x)}}(\psi(y,h))&
     \nabla_X\tilde \Gamma^T_{{\psi(x)}}(\psi(y,h))}
\\
\nu(x)&=\frac{1}{\sqrt{1+\phi'(x)^2}}(\phi'(x)\e^\perp-\e)=\frac{1}{\sqrt{1+\phi'(x)^2}}\Matrix{-e_1+e_2\phi'(x)\\-e_2-e_1\phi'(x)}\\
\tau(x)&=\frac{1}{\sqrt{1+\phi'(x)^2}}(\e^\perp+\phi'(x)\e)=\frac{1}{\sqrt{1+\phi'(x)^2}}\Matrix{e_2+e_1\phi'(x)\\-e_1+e_2\phi'(x)}
\\
\nonumber 
B_1(x)&=B_1(\psi(x)),\\
B_1(\psi(x,h))&=\left(B_6(\psi(x,h))^{t}\right)^{-1} 
     \sqrt{1+\phi'(x)^2}
     \Matrix{A(\psi(x,h))\nu(x)&\tau(x)}.\label{eqn:B1def}
\end{align}
Note that $\Omega=\{X:\phi(X\cdot\e^\perp)<X\cdot\e\}=\{\psi(x,h):x\in\R,h>0\}.$

We will occasionally want slightly different forms of $K,T$:
\begin{align}
K'_h(x,y) &= \Matrix{B_6(\psi(y,h))\nabla \Gamma^T_{\psi(x)}(\psi(y,h)) 
     &B_6(\psi(y,h))\nabla \Gamma^T_{\psi(x)}(\psi(y,h)) }^t\\
\nonumber&= \Matrix{
	\nabla \Gamma^T_{{\psi(x)}}(\psi(y,h))^t \\
	\nabla \Gamma^T_{{\psi(x)}}(\psi(y,h))^t }B_6(\psi(y,h))^t\\
\tilde K'_h(x,y)&= B_6(\psi(x,h))\Matrix{
     \nabla_X\tilde \Gamma^T_{{\psi(x,h)}}(\psi(y))&
     \nabla_X\tilde \Gamma^T_{{\psi(x,h)}}(\psi(y))}
\\
\langle G, T'_\pm F\rangle &=\lim_{h\to 0^\pm} \int_{\R^2} G(x)^t K'_h(x,y) F(y)\,dy\,dx 
	\label{dfn:T'}\\
\langle G, \tilde T'_\pm F\rangle &= \lim_{h\to 0^\pm}
     \int_{\R^2} G(x)^t \tilde K'_h(x,y) F(y)\,dy\,dx
\label{dfn:bigdefend}
\end{align}
We will later show (\autoref{sec:analytic}) that if $A-I\in C^\infty_0(\R\mapsto \C^{2\times 2})$, then $T_\pm=T'_\mp$ on $C^\infty_0$. These requirements will be dealt with in \autoref{sec:buildup} and \autoref{chap:apriori}.

Note that, if $f$ is a function defined on $\partial\Omega$, we will often use $f(x)$ as shorthand for $f(\psi(x))$.

So $\T_{\Omega_\pm} F(\psi(x))=T_{\pm} (\sqrt{1+(\phi')^2}F\circ\psi)(x)$, and
\begin{align*}
T_\pm(B_1 f)(x)&=\T_{\Omega_\pm}(B_7 f)(\psi(x))=\lim_{h\to 0^\pm}
\int_\R K_h(x,y) B_1(y)f(y)\,dy
\\&=\lim_{h\to 0^\pm}\int_\R \Matrix{
	\nabla \Gamma^T_{{\psi(x,h)}}(\psi(\cdot))^t \\
	\nabla \Gamma^T_{{\psi(x,h)}}(\psi(\cdot))^t }
     \sqrt{1+(\phi')^2}
     \Matrix{A(\psi(\cdot))\nu&\tau}	
	f
\\&=\Matrix{\K_\pm f(\psi(x))&\J f(\psi(x))\\
\K_\pm f(\psi(x))&\J f(\psi(x))}
\end{align*}
and $B_1$ is bounded with a bounded inverse; thus, we need only show that $T_\pm$ is bounded from $L^p(\R\mapsto \C^{2\times 2})$ to itself to show that the layer potentials are bounded.
\allowdisplaybreaks[0]

\chapter{Useful theorems}
\label{chap:PDEbasic}
First, we start with a lemma about non-tangential maximal functions:
\begin{lem}\label{lem:NTM}
Recall that $N_a f(X)=\sup\{|f(Y)|:Y\in\V,\>|X-Y|\leq (1+a)\dist(Y,\partial\V)\}$ where we omit the $\V$ subscript.
Suppose that $0<a<b$ and $\V$ is a good Lipschitz domain. Then for all $1\leq p\leq\infty$,
\[\|N_b f\|_{L^p(\partial\V)}\leq C\|N_a f\|_{L^p(\partial\V)}\]
for some constant $C$ depending only on $a$, $b$ and the Lipschitz constants of~$\V$.
\end{lem}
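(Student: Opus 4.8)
The plan is to reduce the lemma to a single distribution-function estimate: for every $\lambda>0$,
\[\sigma\big(\{X\in\partial\V:N_b f(X)>\lambda\}\big)\leq C\,\sigma\big(\{X\in\partial\V:N_a f(X)>\lambda\}\big),\qquad C=C(a,b,k_i).\]
Granting this, the case $1\leq p<\infty$ follows by writing $\|N_b f\|_{L^p}^p=p\int_0^\infty\lambda^{p-1}\sigma(\{N_b f>\lambda\})\,d\lambda$, applying the estimate, and recognizing $p\int_0^\infty\lambda^{p-1}\sigma(\{N_a f>\lambda\})\,d\lambda=\|N_a f\|_{L^p}^p$; the case $p=\infty$ is handled directly below.

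First I would record the elementary geometric fact that drives everything. Let $Y\in\V$, $d=\dist(Y,\partial\V)$, and choose $Z\in\partial\V$ with $|Y-Z|=d$ (the infimum is attained since $\partial\V$ is closed). If $X'\in\partial\V$ and $|X'-Z|<ad$, then $|X'-Y|\leq|X'-Z|+|Z-Y|<(1+a)d$, so $Y\in\gamma_a(X')$ (recall~(\ref{eqn:dfn:nontang})) and hence $N_a f(X')\geq|f(Y)|$. Thus $B(Z,ad)\cap\partial\V\subset\{X'\in\partial\V:N_a f(X')\geq|f(Y)|\}$. Moreover, if $X\in\partial\V$ and $Y\in\gamma_b(X)$, then $|X-Z|\leq|X-Y|+|Y-Z|<(2+b)d$.

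Now fix $\lambda>0$ and put $E=\{N_b f>\lambda\}$, $F=\{N_a f>\lambda\}$; we may assume $\sigma(F)<\infty$. For each $X\in E$ pick $Y_X\in\gamma_b(X)$ with $|f(Y_X)|>\lambda$, and let $d_X,Z_X$ be as above, so $X\in B(Z_X,(2+b)d_X)$ and $B(Z_X,ad_X)\cap\partial\V\subset F$. The radii $d_X$ are bounded: if $\V$ is special then $\partial\V$ is a Lipschitz graph, so $\sigma(B(Z_X,ad_X)\cap\partial\V)\gtrsim ad_X$, which together with $B(Z_X,ad_X)\cap\partial\V\subset F$ forces $ad_X\lesssim\sigma(F)$; if $\V$ is bounded and some $d_X>\mathrm{diam}(\partial\V)/a$ then $B(Z_X,ad_X)\supset\partial\V$, so $F=\partial\V$ and $\sigma(E)\leq\sigma(F)$ trivially, while otherwise $d_X\leq\mathrm{diam}(\partial\V)/a$. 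With bounded radii, apply the Vitali covering lemma to $\{B(Z_X,(2+b)d_X):X\in E\}$ to extract a pairwise disjoint subfamily $\{B_j=B(Z_j,(2+b)d_j)\}_j$ with $E\subset\bigcup_j 5B_j$, where $5B_j=B(Z_j,5(2+b)d_j)$. Then, using the Ahlfors--David bound $\sigma(5B_j\cap\partial\V)\leq 5(2+b)k_4 d_j$ and the lower bound $\sigma(B(Z_j,ad_j)\cap\partial\V)\geq c\,ad_j$ with $c=c(k_i)$ (valid since $ad_j\leq\mathrm{diam}(\partial\V)$ when $\V$ is bounded, and for all radii when $\V$ is special),
\[\sigma(E)\leq\sum_j\sigma(5B_j\cap\partial\V)\leq 5(2+b)k_4\sum_j d_j\leq\frac{5(2+b)k_4}{ca}\sum_j\sigma\big(B(Z_j,ad_j)\cap\partial\V\big)\leq\frac{5(2+b)k_4}{ca}\,\sigma(F),\]
the last step because the sets $B(Z_j,ad_j)\cap\partial\V$ are disjoint subsets of $F$ (the $B_j$ being disjoint and $ad_j<(2+b)d_j$). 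For $p=\infty$: with $M=\|N_a f\|_{L^\infty}$, any $Y\in\gamma_b(X)$ has a surface ball $B(Z,ad)\cap\partial\V$ of positive measure on which $N_a f(X')\geq|f(Y)|$, so some $X'$ there has $N_a f(X')\leq M$, giving $|f(Y)|\leq M$; hence $N_b f(X)\leq M$ for every $X$.

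I expect the main obstacle to be organizational rather than conceptual: the one genuine input beyond the $k_4$ bound already available is the \emph{lower} Ahlfors--David regularity $\sigma(B(X_0,r)\cap\partial\V)\geq c(k_i)\min(r,\mathrm{diam}(\partial\V))$, with no diameter cutoff when $\V$ is special, which I would deduce from Definition~\ref{dfn:domain} by noting that near $X_0$ the set $\partial\V$ is a Lipschitz graph over a segment and so projects onto an interval of length comparable to $r$. The remaining care goes into justifying the Vitali selection (the unbounded-radius case, disposed of above) and the $p=\infty$ endpoint; everything else reduces to the one-line inclusion $B(Z,ad)\cap\partial\V\subset\{X':Y\in\gamma_a(X')\}$.
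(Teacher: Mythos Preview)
Your proof is correct and follows essentially the same route as the paper's: reduce to the distribution-function inequality $\sigma(\{N_b f>\lambda\})\leq C\sigma(\{N_a f>\lambda\})$, use a Vitali covering argument, and compare the measures of large and small boundary balls. The only cosmetic differences are that the paper centers its Vitali balls at the interior points $Y$ themselves (rather than at the nearest boundary points $Z_X$), and it obtains the lower measure bound from connectedness of $\partial\V$ rather than from the local Lipschitz-graph structure; neither change affects the substance of the argument.
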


\begin{proof} It is obvious for $p=\infty$. Suppose that $N_a f\in L^p(\partial\V)$ for $1\leq p<\infty$. Define \[\delta(Y)=\dist(Y,\partial\Omega),\quad
E_a(\alpha)=\bigcup_{|f(Y)|>\alpha} B(Y,(1+a)\delta(Y)).\]

Recall that
\[ \|N_a f\|_{L^p(\partial\V)}^p 
=\int_0^\infty p\alpha^{p-1} \sigma\{X\in\partial\V: N_a f(X)>\alpha\}\,d\alpha.
\]

So we need only show that $\sigma(E_b(\alpha)\cap\partial\V)\leq C\sigma(E_a(\alpha)\cap\partial\V)$ for all $\alpha>0$ to complete the proof. In fact, if $\partial\V$ is bounded, we need this result only for $\alpha>2\|N_a f\|_{L^p}\sigma(\partial\V)^{-1/p}$.

If $|f(Y)|>\alpha$, then $\alpha^p\sigma(B(Y,(1+a)\delta(Y))\cap\partial\V)\leq \|N_a f\|_{L^p}^p$. If $\V=\Omega$ is a special Lipschitz domain, then $\delta(Y)\leq \frac{1}{2a} \|Nf\|_{L^p}^p/\alpha^p$. If $\partial\V$ is bounded, and if $\sigma(\partial\V)\alpha^p>2\|N_a f\|_{L^p}^p$, we must have that some $X\in\partial\V$ is not in $B(Y,(1+a)\delta(Y))$; therefore, $\delta(Y)\leq \diam(\partial\V)/a$.

In either case $E_a(\alpha)$ is a union of balls with bounded radii. So by the Vitali lemma there is a countable set $\{Y_i\}_{i=1}^\infty$ of points such that the $B(Y_i,(1+a)\delta(Y))$ are pairwise disjoint and such that $E_a(\alpha)\subset \cup_i B(Y_i,C_1 (1+a)\delta(Y_i))$ where $C_1$ is a fixed constant (i.e.\ depends on nothing except the dimension of the ambient space $\R^2$).

Note that if $|f(Y)|>\alpha$, then $Y\in B(Y_i, C_1(1+a)\delta(Y_i))$ for some $i$. Then $\delta(Y_i)\geq \delta(Y)-|Y-Y_i| \geq \delta(Y)-C_1(1+a)\delta(Y_i)$, so $\delta(Y)\leq (C_1+C_1 a+1)\delta(Y_i)$. 

Thus, $B(Y,(1+b)\delta(Y))\subset B(Y_i,(1+b)\delta(Y)+|Y-Y_i|)$
and so
$E_b(\alpha)\subset \cup_i B(Y_i, C_2\delta(Y_i))$ where $C_2>1+a$ depends only on $a$, $b$ and~$C_1$.

Let $Y=Y_i$ for some $i$, and suppose that $Y^*\in\partial\V$ with $|Y-Y^*|=\dist(Y,\partial\V)$. 
\ifmulticonnect Let $\omega$ be the connected component of $\partial\V$ containing $Y^*$.\else \def\omega{\partial\V}\fi
Then either $\omega\not\subset B(Y,(1+a)\delta(Y)$, so 
\[\sigma\left(B\left(Y,C\delta(Y)\right)\cap\partial\V\right)
\leq
C\delta(Y)
\leq
C(2a\delta(Y))
\leq
C\sigma(B(Y,(1+a)\delta(Y))\cap \partial\V)
\]
or $\omega\subset B(Y,(1+a)\delta(Y))$, so
\[\sigma\left(B\left(Y,C\delta(Y)\right)\cap\partial\V\right)
\leq \sigma(\partial\V)
\ifmulticonnect\leq k_5\sigma(\omega)\fi
\leq C\sigma\left(B\left(Y,(1+a)\delta(Y)\right)\cap\partial\V\right)
\]

Thus,
\begin{align*}
\sigma(E_b(\alpha)\cap\partial\V)
&\leq 
\sum_i \sigma(B(Y_i, C\delta(Y_i))\cap\partial\V)
\leq 
\sum_i C\sigma(B(Y_i, (1+a)\delta(Y_i))\cap\partial\V)
\\&\leq 
C\sigma(E_a(\alpha)\cap\partial\V)
.\end{align*}
This completes the proof.
\end{proof}

We now turn to lemmas about solutions to elliptic PDE.
Suppose that $\div A\nabla u=0$ in~$\U$ where $A$ satisfies~(\ref{eqn:genelliptic}). Let $B_r\subset\R^n$ be a ball of radius $r$, $B_{r/2}$ be the concentric ball of radius $r/2$, such that either $u=0$ on $\partial\U\cap B_r$ or $\nu\cdot A\nabla u=0$ on $\partial\U\cap B_r$. (If $B_r\subset U$, then these conditions are both trivially true, and in fact it is this case which is used most often.)

In $\R^n$, the following lemmas are well-studied. They hold in all dimensions whenever $A$ is real; they all hold even for complex $A$ when working in $\R^2$ (but not in $\R^3$ and higher dimensions).

\begin{lem}\label{lem:PDE1} For some constant $C$ depending only on $\lambda,\Lambda$,
\[\int_{\U\cap B_{r/2}}|\nabla u|^2\leq \frac{C}{r^2}\int_{\U\cap B_r} |u|^2.\]
\end{lem}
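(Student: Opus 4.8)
The plan is to prove this as the standard Caccioppoli (reverse Poincaré) inequality, by testing the equation against a localized multiple of $\bar u$. Fix a cutoff $\eta\in C^\infty_0(B_r)$ with $\eta\equiv 1$ on $B_{r/2}$, $0\le\eta\le 1$, and $|\nabla\eta|\le C/r$. The crucial point is that $\phi=\eta^2\bar u$ is an admissible test function for the weak formulation of $\div A\nabla u=0$: when $B_r\subset\U$ this follows from $u\in W^{1,2}_{loc}(\U)$ and density of $C^\infty_0(\U)$; when $u=0$ on $\partial\U\cap B_r$, $\eta^2\bar u$ lies in the $W^{1,2}$-closure of $C^\infty_0(\U\cap B_r)$ by the zero-trace hypothesis; and when $\nu\cdot A\nabla u=0$ on $\partial\U\cap B_r$, $\eta^2\bar u$ is already of the form admitted by the weak Neumann formulation (test functions in $C^\infty_0(\R^2)$ vanishing near $\partial\U\setminus B_r$, extended by linearity to complex-valued functions). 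In all cases we obtain
\[\int_\U A\nabla u\cdot\nabla(\eta^2\bar u)=0.\]

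Expanding $\nabla(\eta^2\bar u)=\eta^2\nabla\bar u+2\eta\bar u\,\nabla\eta$ and rearranging gives
\[\int_\U \eta^2\,\overline{\nabla u}\cdot A\nabla u=-2\int_\U \eta\,\bar u\,(A\nabla u)\cdot\nabla\eta.\]
Taking real parts, the ellipticity bound (\ref{eqn:genelliptic}) yields $\Re\,\overline{\nabla u}\cdot A\nabla u\ge\lambda|\nabla u|^2$ pointwise on the left, and $|(A\nabla u)\cdot\nabla\eta|\le\Lambda|\nabla u|\,|\nabla\eta|$ on the right, so
\[\lambda\int_\U \eta^2|\nabla u|^2\le 2\Lambda\int_\U \eta\,|u|\,|\nabla u|\,|\nabla\eta|.\]
Applying Young's inequality in the form $2\Lambda\,\eta|u|\,|\nabla u|\,|\nabla\eta|\le\tfrac{\lambda}{2}\eta^2|\nabla u|^2+\tfrac{2\Lambda^2}{\lambda}|u|^2|\nabla\eta|^2$ and absorbing the first term into the left side gives
\[\frac{\lambda}{2}\int_\U \eta^2|\nabla u|^2\le\frac{2\Lambda^2}{\lambda}\int_\U |u|^2|\nabla\eta|^2\le\frac{2\Lambda^2}{\lambda}\cdot\frac{C}{r^2}\int_{\U\cap B_r}|u|^2,\]
and since $\eta\equiv 1$ on $B_{r/2}$ this is the claimed inequality with a constant depending only on $\lambda$ and $\Lambda$.

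There is no deep obstacle here. The only steps needing care are the justification that $\eta^2\bar u$ is an admissible test function in each of the two boundary scenarios — a routine trace/density argument — and the observation that, unlike De Giorgi–Nash–Moser interior regularity, this estimate uses only the Gårding-type real-part inequality from (\ref{eqn:genelliptic}) and the Cauchy–Schwarz bound $|\xi\cdot A\eta|\le\Lambda|\xi||\eta|$; it therefore holds for complex-valued $A$ in every dimension, which is exactly why it is listed among the lemmas valid for complex coefficients.
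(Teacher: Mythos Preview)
Your proof is correct; it is exactly the standard Caccioppoli argument. The paper does not actually prove this lemma: it simply lists Lemmas~\ref{lem:PDE1}--\ref{lem:PDE4} as well-studied facts and remarks that they hold for real $A$ in all dimensions and for complex $A$ in $\R^2$. Your closing observation is a useful sharpening of that remark: unlike the De Giorgi--Nash--Moser lemmas that follow, the Caccioppoli inequality requires only the G\aa rding lower bound $\Re\,\bar\eta\cdot A\eta\ge\lambda|\eta|^2$ and the upper bound $|\xi\cdot A\eta|\le\Lambda|\xi||\eta|$, so it is valid for complex coefficients in every dimension, not just $\R^2$.
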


\begin{lem}\label{lem:PDE2}
For some $C>0$, $p>2$ depending only on $\lambda,\Lambda$,
\[\left(\frac{1}{r^n}\int_{B_{r/2}\cap\U}|\nabla u|^p\right)^{1/p}
\leq C\left(\frac{1}{r^n}\int_{B_{r}\cap\U}|\nabla u|^2\right)^{1/2}
.\]
\end{lem}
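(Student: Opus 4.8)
The plan is to prove this as the classical Meyers higher-integrability estimate: deduce it from a reverse Hölder inequality for $|\nabla u|$ and then apply Gehring's self-improvement lemma. I would first reduce to the interior case $B_r\subset\U$. If instead $u=0$ or $\nu\cdot A\nabla u=0$ on $\partial\U\cap B_r$, I would flatten $\partial\U\cap B_r$ by a bi-Lipschitz change of variables $\Phi$ whose bi-Lipschitz constant is controlled by the Lipschitz constants of $\U$; this turns $\div A\nabla u=0$ into $\div\tilde A\nabla\tilde u=0$ for a matrix $\tilde A=|\det D\Phi|^{-1}(D\Phi)\,(A\circ\Phi^{-1})\,(D\Phi)^t$ that is bounded and elliptic with constants depending only on $\lambda$, $\Lambda$ and the bi-Lipschitz constant. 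I would then extend $\tilde u$ across the flat part of the boundary by odd reflection (Dirichlet case) or even reflection (conormal case); in both cases the reflected function solves a divergence-form equation with a new bounded elliptic matrix (the reflected coefficients jump across the hyperplane but stay uniformly elliptic), so it is enough to prove the estimate when $B_r$ lies in the interior of the domain of a solution.

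For the interior case, I would fix a ball $B_\rho=B_\rho(x_0)$ with $B_{2\rho}\subset B_r\cap\U$ and run the argument proving \lemref{PDE1} with the cutoff testing function $\zeta^2(\bar u-\bar c)$ for a constant $c$, taking the real part of the resulting identity and absorbing via ellipticity, to obtain the Caccioppoli inequality $\int_{B_\rho}|\nabla u|^2\leq C\rho^{-2}\int_{B_{2\rho}}|u-c|^2$. Taking $c=\dashint_{B_{2\rho}}u$ and invoking the Sobolev--Poincar\'e inequality with exponent $2_*=\frac{2n}{n+2}$ (so $2_*=1$ when $n=2$) gives
\[\Big(\dashint_{B_\rho}|\nabla u|^2\Big)^{1/2}\leq C\Big(\dashint_{B_{2\rho}}|\nabla u|^{2_*}\Big)^{1/2_*}.\]
Setting $g=|\nabla u|^{2_*}$ and $q=2/2_*>1$, this is precisely a reverse Hölder inequality $\big(\dashint_{B_\rho}g^q\big)^{1/q}\leq C\dashint_{B_{2\rho}}g$ on every ball with $B_{2\rho}\subset B_r\cap\U$. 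In the Dirichlet subcase I would instead extend $u$ by zero across $\partial\U\cap B_r$ and use the plain Sobolev inequality, so that no constant needs to be subtracted.

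Gehring's lemma then yields $\delta=\delta(n,\lambda,\Lambda)>0$ with $g\in L^{q+\delta}_{loc}$ and $\big(\dashint_{B_\rho}g^{q+\delta}\big)^{1/(q+\delta)}\leq C\dashint_{B_{2\rho}}g$; unwinding the substitution, covering $B_{r/2}\cap\U$ by finitely many such balls (Vitali), and summing gives the claimed estimate with $p=(q+\delta)2_*=2+\delta\cdot 2_*>2$. I expect the only real work to be the boundary reduction in the first step --- verifying that flattening plus reflection genuinely produces a bounded, uniformly elliptic divergence-form operator and that all constants depend only on $\lambda$, $\Lambda$ (and, near $\partial\U$, the Lipschitz constants of $\U$) --- together with quoting the sharp form of Gehring's lemma. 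The complex-valuedness of $A$ is harmless: the Caccioppoli estimate uses only the real part of the ellipticity inequality, and Gehring's lemma is a purely real-variable statement. Alternatively, one may simply cite Meyers' original paper, whose interior argument is dimension-independent and valid for complex coefficients, together with its boundary versions.
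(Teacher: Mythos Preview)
The paper does not prove this lemma at all: it is stated, together with Lemmas~\ref{lem:PDE1}, \ref{lem:PDE3} and~\ref{lem:PDE4}, as a ``well-studied'' result and simply quoted without proof. Your proposal supplies exactly the standard argument one would expect the paper to be citing, namely Meyers' higher-integrability theorem: Caccioppoli plus Sobolev--Poincar\'e gives a reverse H\"older inequality for $|\nabla u|$, and Gehring's lemma upgrades the exponent from $2$ to some $p>2$. The boundary reduction by flattening and odd/even reflection is also the standard device. So your sketch is correct and is precisely the classical proof that the paper is implicitly invoking.

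One small point worth flagging: the lemma as stated says $C$ and $p$ depend only on $\lambda,\Lambda$, but in the boundary case your reflection argument (correctly) picks up dependence on the Lipschitz constants of $\U$ through the bi-Lipschitz flattening map. This is an imprecision in the paper's statement rather than a flaw in your argument; the paper itself notes that the interior case $B_r\subset\U$ ``is used most often,'' and there the constants genuinely depend only on $\lambda,\Lambda$.
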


\begin{lem}\label{lem:PDE3}
For all $p\geq 2$, there is a constant $C(p)$ depending only on $\lambda,\Lambda$ and $p$, such that
\[\sup_{B_{r/2}\cap\U}|u|\leq C(p)\frac{|B_r|}{|\U\cap B_r|} \left(\frac{1}{r^2}\int_{B_r\cap\U}|u|^p\right)^{1/p}.\]
\end{lem}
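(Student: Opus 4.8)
The plan is to combine the interior (and up-to-the-boundary) reverse Hölder estimate of \lemref{PDE2} with the De~Giorgi--Nash--Moser iteration scheme, exploiting the fact that in two dimensions the gradient of a solution lies in $L^p$ for some $p>2$ even for complex coefficients. First I would reduce to the case $r=1$ by the usual rescaling $u(X)\mapsto u(rX)$ and $A(X)\mapsto A(rX)$, which preserves ellipticity and the boundary conditions $u=0$ or $\nu\cdot A\nabla u=0$; all the integral quantities in the statement are scale-invariant under this normalization. I would also reduce the case $p>2$ to $p=2$ at the very end by Hölder's inequality, so the heart of the argument is to bound $\sup_{B_{1/2}\cap\U}|u|$ by a multiple of $(|B_1|/|\U\cap B_1|)\,(\int_{B_1\cap\U}|u|^2)^{1/2}$.

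The main step is a Moser-type iteration. Fix a chain of radii $r_k = \tfrac12 + 2^{-k-1}$ decreasing to $\tfrac12$, and on the annular shells choose cutoffs $\eta_k\in C^\infty_0(B_{r_k})$ with $\eta_k\equiv 1$ on $B_{r_{k+1}}$ and $|\nabla\eta_k|\lesssim 2^k$. Testing the equation $\div A\nabla u = 0$ against $\eta_k^2 |u|^{2\beta-2}u$ (with the boundary terms vanishing because either $u=0$ or $\nu\cdot A\nabla u=0$ on $\partial\U\cap B_{r_k}$), and using ellipticity~(\ref{eqn:genelliptic}) together with Cauchy--Schwarz, yields a Caccioppoli inequality of the form
\[
\int_{B_{r_{k+1}}\cap\U} |\nabla(|u|^{\beta-1}u)|^2 \;\le\; C\,\beta^2\,2^{2k}\int_{B_{r_k}\cap\U} |u|^{2\beta}.
\]
Now I would feed this into a Sobolev inequality. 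In $\R^2$ the natural Sobolev embedding is borderline, so rather than using $W^{1,2}\hookrightarrow L^{\infty}$ directly I would instead apply \lemref{PDE2} to the solution $u$ on $B_{r_k}$: since $\nabla u\in L^{p}$ locally with $p>2$ fixed by ellipticity, $W^{1,p}\hookrightarrow L^\infty$ in $\R^2$, and combining this with the Caccioppoli bound one gets a genuine gain of integrability at each step. Concretely, Sobolev gives $\big(\int_{B_{r_{k+1}}\cap\U}|u|^{2\beta\chi}\big)^{1/\chi}\le C\beta^2 2^{2k}\int_{B_{r_k}\cap\U}|u|^{2\beta}$ for some fixed $\chi>1$ depending only on $\lambda,\Lambda$. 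Iterating with $\beta=\chi^k$ and multiplying the resulting inequalities, the product of the constants $\prod_k (C\chi^{2k}2^{2k})^{1/\chi^k}$ converges, and one obtains $\sup_{B_{1/2}\cap\U}|u|\le C\big(\int_{B_1\cap\U}|u|^2\big)^{1/2}$, up to the measure-ratio factor. The factor $|B_1|/|\U\cap B_1|$ enters because at the first step the Sobolev/Poincaré inequality on the possibly small set $\U\cap B_1$ must be normalized by its measure; tracking this constant through the iteration (it only appears once, since subsequent shells $B_{r_k}$ all satisfy $|\U\cap B_{r_k}|\gtrsim 1$ once we are close to $B_{1/2}$) produces exactly the stated prefactor.

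The step I expect to be the main obstacle is justifying the Caccioppoli estimate for complex coefficients when $\beta\neq 1$: the test function $\eta^2|u|^{2\beta-2}u$ is not smooth where $u=0$, so one must argue by approximation (replacing $|u|$ by $(|u|^2+\delta^2)^{1/2}$ and letting $\delta\to 0$), and one must check that the quadratic-form term $\Re\int A\nabla u\cdot\overline{\nabla(\eta^2|u|^{2\beta-2}u)}$ is controlled below by a positive multiple of $\int|\nabla(|u|^{\beta-1}u)|^2$ using only~(\ref{eqn:genelliptic}) and not any structural (e.g. symmetry) hypothesis on $A$. This is where the restriction to $\R^2$ is essential — it is exactly the place where the two-dimensional theory (via \lemref{PDE2}, whose proof also uses $n=2$) buys regularity that fails in higher dimensions. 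Once the Caccioppoli inequality is in hand, the remaining iteration is routine bookkeeping.
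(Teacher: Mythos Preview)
The paper does not prove this lemma; it lists Lemmas~\ref{lem:PDE1}--\ref{lem:PDE4} together as ``well-studied'' results, with the complex two-dimensional case implicitly referred to~\cite{AusTcha}. So there is no proof in the paper to compare against, but your proposal has a real gap worth naming.

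Your Moser iteration breaks at exactly the step you flag as the obstacle, and it is not repairable by the route you suggest. Testing with $\eta^2|u|^{2\beta-2}\bar u$ and taking real parts produces, besides the coercive term $\beta\int\eta^2|u|^{2\beta-2}\,\Re(\overline{\nabla u}\cdot A\nabla u)$, the cross term $(\beta-1)\int\eta^2|u|^{2\beta-4}\,\Re\bigl(\bar u^{\,2}\,\nabla u\cdot A\nabla u\bigr)$. Ellipticity~(\ref{eqn:genelliptic}) says nothing about the sign of $\nabla u\cdot A\nabla u$ for complex $u$; one can only bound it by $\Lambda|\nabla u|^2$ in modulus, and absorbing it forces $\beta<\Lambda/(\Lambda-\lambda)$. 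The iteration therefore runs for only finitely many steps and never reaches $L^\infty$. This is not a technicality: it is precisely why De~Giorgi--Nash--Moser fails for complex coefficients in every dimension, and dimension two does not rescue \emph{this} step. Your appeal to \lemref{PDE2} does not help here either, since that lemma applies to the solution $u$, not to $|u|^{\beta-1}u$, which is not a solution.

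What dimension two actually buys is that no iteration is needed. The Caccioppoli inequality with $\beta=1$ (\lemref{PDE1}) is unproblematic for complex $A$; feeding it into \lemref{PDE2} gives $\nabla u\in L^p$ on the smaller ball for some fixed $p>2=n$; and then Morrey's embedding $W^{1,p}\hookrightarrow C^{0,1-2/p}\subset L^\infty$ gives local boundedness in one shot. You mention this embedding but then try to thread it through an iteration on powers, which is both unnecessary and, as above, illegitimate. Alternatively, the Gaussian heat-kernel bounds of~\cite{AusTcha} in two dimensions yield the $L^2\to L^\infty$ estimate directly, and that is the source the paper is implicitly citing.
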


\begin{lem}\label{lem:PDE4}
If $\div A\nabla u=0$ in all of $B_r$, then for some $C,\alpha>0$ depending only on $\lambda,\Lambda$,
\[\sup_{X,Y\in B_{r/2}}|u(X)-u(Y)|\leq C\frac{|X-Y|^\alpha}{r^{1+\alpha}}\|u\|_{L^2(B_r)}.\]
\end{lem}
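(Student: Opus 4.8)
The plan is to derive the pointwise Hölder bound from an oscillation-decay estimate obtained via iteration of the Caccioppoli inequality (Lemma~\ref{lem:PDE1}) together with a reverse Hölder / sup bound (Lemmas~\ref{lem:PDE2} and~\ref{lem:PDE3}). First I would reduce to the case $r=1$ and $B_r=B_1(0)$ by the scaling $u(X)\mapsto u(rX)$, which preserves $\div A\nabla u=0$ (with a rescaled but equally elliptic matrix $A(rX)$) and under which the claimed inequality is scale-invariant. It then suffices to prove $\sup_{X,Y\in B_{1/2}}|u(X)-u(Y)|\leq C\|u\|_{L^2(B_1)}$ with the full Hölder modulus following from the same argument applied on every interior ball.

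The core step is the oscillation decay: there is a constant $\theta\in(0,1)$, depending only on $\lambda,\Lambda$, such that for every ball $B_\rho(X_0)$ with $B_{2\rho}(X_0)\subset B_1$ one has $\osc_{B_{\rho/2}(X_0)}u\leq \theta\,\osc_{B_{2\rho}(X_0)}u$, where $\osc_E u=\sup_E u-\inf_E u$ when $u$ is real. For complex-valued $u$ in $\R^2$ — which is the case we actually need — I would argue via the real and imaginary parts, noting that although they do not individually solve a scalar equation, the pair solves an elliptic system to which the $\R^2$ De Giorgi–Nash–Moser theory of Morrey (planar systems) applies; alternatively, and more in the spirit of the surrounding text, one obtains the oscillation decay by combining Lemma~\ref{lem:PDE1} and a Poincaré inequality to get decay of the normalized energy $\rho^{-n}\int_{B_\rho}|\nabla u|^2$, i.e. a Morrey/Campanato-type estimate $\int_{B_\rho(X_0)}|\nabla u|^2\leq C\rho^{2\alpha}\|u\|_{L^2(B_1)}^2$ for some $\alpha>0$; feeding this into the Campanato characterization of Hölder spaces yields exactly the stated bound. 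Iterating the decay across dyadic scales between $\rho$ and a fixed radius produces $\osc_{B_\rho(X_0)}u\leq C\rho^\alpha\|u\|_{L^2(B_1)}$, which for $X,Y\in B_{1/2}$ with $\rho=|X-Y|$ gives $|u(X)-u(Y)|\leq C|X-Y|^\alpha\|u\|_{L^2(B_1)}$; undoing the scaling restores the factor $r^{-1-\alpha}$ (the $r^{-1}$ coming from $\|u\|_{L^2(B_r)}$ scaling like $r^{n/2}=r$ in $\R^2$, the $r^{-\alpha}$ from the Hölder scaling).

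The main obstacle is establishing the oscillation decay / Campanato estimate for \emph{complex} coefficients: the usual De Giorgi truncation argument relies on the maximum-principle-friendly structure of real scalar equations and does not transfer verbatim. In dimension two this is precisely the point flagged in the remark preceding the lemma — the estimate survives for complex $A$ only because we are in $\R^2$ — and the honest resolution is to invoke the planar regularity theory for elliptic systems (the result that $W^{1,2}$ solutions of $\div A\nabla u=0$ in a planar domain are Hölder continuous whenever $A$ is merely bounded measurable and elliptic, a two-dimensional phenomenon due to the borderline Sobolev embedding $W^{1,2}\hookrightarrow\mathrm{BMO}$ and higher integrability of the gradient from Lemma~\ref{lem:PDE2}). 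Concretely: Lemma~\ref{lem:PDE2} upgrades $\nabla u\in L^2$ to $\nabla u\in L^p_{loc}$ for some $p>2$, hence $u$ lies in a Morrey space that embeds into $C^\alpha$ with $\alpha=1-n/p=1-2/p$, and chasing the constants through this embedding together with the self-improving nature of Lemma~\ref{lem:PDE2} applied at all scales gives the decay with the quantitative constant we need. The remaining steps — the dyadic iteration and the bookkeeping of the scaling exponents — are routine.
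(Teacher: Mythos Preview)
The paper does not actually prove Lemma~\ref{lem:PDE4}; it is listed among the ``well-studied'' facts in Chapter~\ref{chap:PDEbasic} and simply asserted to hold for complex coefficients in $\R^2$. So there is no proof in the paper to compare against, and your task is really to supply one.

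Your proposal eventually lands on the correct argument, but the route is circuitous. The oscillation-decay iteration you describe first is the De Giorgi--Nash--Moser machinery for \emph{real} scalar equations, and as you yourself note, it does not apply here because truncation fails for complex $u$. The ``concretely'' paragraph at the end is the actual proof: Lemma~\ref{lem:PDE2} gives $\nabla u\in L^p(B_{r/2})$ for some $p>2$ depending only on $\lambda,\Lambda$, with the quantitative bound
\[
\|\nabla u\|_{L^p(B_{r/2})}\leq C r^{2/p-1}\|\nabla u\|_{L^2(B_{3r/4})}\leq C r^{2/p-2}\|u\|_{L^2(B_r)},
\]
the last step by Lemma~\ref{lem:PDE1}. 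Since $p>2=n$, the Morrey--Sobolev embedding $W^{1,p}\hookrightarrow C^{0,1-2/p}$ applied on $B_{r/2}$ gives directly
\[
|u(X)-u(Y)|\leq C|X-Y|^{1-2/p}\|\nabla u\|_{L^p(B_{r/2})}\leq C\frac{|X-Y|^{\alpha}}{r^{1+\alpha}}\|u\|_{L^2(B_r)}
\]
with $\alpha=1-2/p$. No iteration or Campanato characterization is needed; one application of Morrey embedding on the fixed ball suffices. You should present this argument directly and drop the oscillation-decay discussion, which is a red herring in the complex-coefficient planar setting. The only point to be careful about is that Lemma~\ref{lem:PDE2} is proved independently of Lemma~\ref{lem:PDE4} (via Caccioppoli, Sobolev--Poincar\'e, and Gehring's lemma), so there is no circularity.
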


Now, assume that $A$ satisfies (\ref{eqn:elliptic}), so $A(x,t)=A(x)$. Then $u_t$ is also a solution, and so Lemmas~\ref{lem:PDE1}\mbox{--}\ref{lem:PDE4} hold with~$u$ replaced by~$u_t$. 



In \autoref{sec:conjugate}, we will construct for each such $u$ a function $\tilde u$ such that $u_x=\frac{1}{a_{11}}\tilde u_t+\frac{a_{12}}{a_{11}}u_t$ and $\div \frac{1}{\det A} A^T \nabla \tilde u=0$.

So
\begin{align}
\label{eqn:gradu}
\sup_{B_{r/2}}|\nabla u|
&\leq \sup_{B_{r/2}}|u_x|+\sup_{B_{r/2}}|u_t|
\leq C\sup_{B_{r/2}}|\tilde u_t|+C\sup_{B_{r/2}}|u_t|
\\\nonumber
&\leq C(p)\left(\dashint_{B_r}|\tilde u_t|^p\right)^{1/p} +C(p)\left(\dashint_{B_r}|u_t|^p\right)^{1/p}
\\\nonumber
&\leq C(p)\left(\dashint_{B_r}|\nabla u|^p\right)^{1/p} \end{align}
So \lemref{PDE3} holds for $\nabla u$ as well as $u$ if $A(x,t)=A(x)$ and $B_r\subset\U$.

\chapter{The fundamental solution}
\label{chap:fundsoln}
\section{A fundamental solution exists}
\label{sec:fundsolnexists}

Recall that \lemref{fundsoln:exists} states that there is a function $\Gamma_X^A(Y)$, unique up to an additive constant, called the fundamental solution of the operator
$L=\div A\nabla $, such that
\[\int_{\R^2} A(Y) \nabla \Gamma_X^A(Y) \cdot \nabla \eta(Y) \, dY
=-\eta(X)\]
for every $\eta\in C^\infty_0(\R^2)$, and where $|\Gamma_X(Y)|\leq C(1+|\log |X-Y| |)$ for some constant $C$. We will let $\Gamma=\Gamma^A,$ $\Gamma^0=\Gamma^{A_0}$, and $\Gamma^T=\Gamma^{A^T}$.

We now prove this lemma. From \cite[pp.~29--31]{AusTcha}, I know that there is a function\footnote{They refer to it as $K$; I use $\check K$ to differentiate it from the $K$ in (\ref{eqn:K}).} $\check K_t(X,Y)$ such that, for all $\eta\in C^\infty_0(\R^2)$,
\[\int \eta(X)\partial_t \check K_t(X,Y)\, dX = \int A(X) \nabla_X \check K(X,Y) \cdot
\nabla \eta(X) \, dX.\]
Furthermore, there is some $\beta$, $\mu$, $C>0$ depending only on $\lambda$,~$\Lambda$ such that
\begin{align*}
|\check K_t(X,Y)|&\leq \frac{C}{t}\exp\left\{-\frac{\beta|X-Y|^2}{t}\right\}
\\
|\check K_t(X,Y)-\check K_t(X',Y)|&\leq \frac{C}{t}\left(\frac{|X-X'|}{\sqrt{t}+|X-Y|}\right)^\mu\exp\left\{-\frac{\beta|X-Y|^2}{t}\right\}
\\
|\check K_t(X,Y)-\check K_t(X,Y')|&\leq \frac{C}{t}\left(\frac{|Y-Y'|}{\sqrt{t}+|X-Y|}\right)^\mu\exp\left\{-\frac{\beta|X-Y|^2}{t}\right\}
\end{align*}
whenever $|X-X'|$, $|Y-Y'|<\frac{1}{2}(\sqrt{t}+|X-Y|)$. This $\check K_t$ is called the Schwarz kernel of the operator $e^{-tL}$.

Let $J_t(X,Y)=\check K_t(X,Y)-\dashint_{r\leq |Z-Y|\leq 2r} \check K_t(Z,Y)\,dZ$, so that if $r<|X-Y|<2r$,
\begin{align*}|J_t(X,Y)|
&=\left|\check K_t(X,Y)-\dashint_{r\leq |Z-Y|\leq 2r} \check K_t(Z,Y)\,dZ\right|
\\&=\dashint_{r\leq |Z-Y|\leq 2r} \left|\check K_t(X,Y)-\check K_t(Z,Y)\right|\,dZ
\\&\leq \dashint_{r\leq |Z-Y|\leq 2r}
\frac{C}{t}\left(\frac{|X-Z|}{\sqrt{t}+|X-Y|}\right)^\mu\exp\left\{-\frac{\beta|X-Y|^2}{t}\right\}\,dZ
\\&\leq
\frac{C}{t}\left(\frac{|X-Y|}{\sqrt{t}+|X-Y|}\right)^\mu\exp\left\{-\frac{\beta|X-Y|^2}{t}\right\}
\end{align*}

So $J_t\in L^1_t$.

From  \cite[p.~54]{AusTcha}, there is some $\epsilon, \beta, c>0$,
such that
\[\left(\int_{r\leq |X-Y|\leq 2r} |\nabla_X K_t(X,Y)|^2\,dX\right)^{1/2}
\leq \frac{c}{t}\left(\frac{r^2}{t}\right)^\epsilon e^{-\beta r^2/t}.\]
So, by H\"older's inequality,
\begin{align*}\int_{r\leq |X-Y|\leq 2r} |\nabla_X K_t(X,Y)|\,dX
&\leq \sqrt{3\pi} r
\left(\int_{r\leq |X-Y|\leq 2r} |\nabla_X K_t(X,Y)|^2\,dX\right)^{1/2}
\\&\leq
\frac{Cr}{t}\left(\frac{r^2}{t}\right)^\epsilon e^{-\beta r^2/t}.
\end{align*}
So
\begin{align*}
\int_0^\infty \int_{r\leq |X-Y|\leq 2r} |\nabla_X K_t(X,Y)|\,dX\,dt
&\leq
 \int_0^\infty Cr\frac{r^{2\epsilon}}{t^{1+\epsilon}} e^{-\beta r^2/t}\,dt
\\&=
 \int_0^\infty Cr\frac{r^{2\epsilon}}{r^{2+2\epsilon}t^{1+\epsilon}} e^{-\beta /t}r^2\,dt
\\&=
Cr\int_0^\infty \frac{1}{t^{1+\epsilon}} e^{-\beta/t}\,dt
\end{align*}
Since that integral converges, we know that
\[\int_0^\infty \nabla_X K_t(X,Y)\,dt\]
converges almost everywhere in $B(Y,2r)-B(Y,r)$.

Define
\[\Gamma_Y(X) = C(r)+\int_0^\infty J_t(X,Y)\,dt\]
so
\[\nabla\Gamma_Y(X)=\int_0^\infty \nabla_X J_t(X,Y)\,dt=\int_0^\infty \nabla_X K_t(X,Y)\,dt\]
where $C(r)$ is chosen such that the values of $\Gamma$ on different annuli agree.
Then
\begin{align*}
{\int A\nabla\Gamma_Y \cdot \nabla \eta}
&=\int A(X)\int_0^\infty \nabla_X K_t(X,Y)\,dt\cdot \nabla\eta(X)\,dX\\
&=\int_0^\infty\int A(X)\nabla_X K_t(X,Y)\cdot \nabla\eta(X)\,dX\,dt\\
&=\int_0^\infty\int \partial_t K_t(X,Y)\eta(X)\,dX\,dt\\
&=\lim_{t\to\infty}\int K_t(X,Y)\eta(X)\,dX
-\lim_{t\to 0^+}\int K_t(X,Y)\eta(X)\,dX\\
&=0-\eta(Y)
\end{align*}
whenever $\eta\in C^\infty_0$.

So we have constructed a fundamental solution.

Furthermore, we have the following bound on its gradient:
\[\int_{|X-Y|\leq r}|\nabla \Gamma_Y(X)|\,dX\leq Cr. \]
This allows us to put an upper bound $C|\log|X-Y| |+C$ on $|\Gamma_Y(X)|$. 
Furthermore, by (\ref{eqn:gradu}), we have that
\begin{equation} \label{eqn:gradfundsoln}
|\nabla \Gamma_Y(X)|\leq\frac{C}{|X-Y|}.
\end{equation}

\section{Uniqueness of the fundamental solution} \label{sec:fundunique}

Let $u=\Gamma_X$, and assume that $|v(Y)|\leq C(X)(1+|\log |Y| |)$, and $\int A\nabla v\cdot\nabla\eta=-\eta(X)$ for all $\eta\in C^\infty_0$. (We will need $v$ to be this general later.)
Then $w=u-v$ is a solution to $Lw=0$
which satisfies $|w(Y)|\leq C(X)+C(X)|\log|Y| |$; \lemref{PDE4} allows us to conclude that
\[|w(Y)-w(Z)|\leq \frac{C|Y-Z|^\alpha}{r^{1+\alpha}} \|w\|_{L^2(B(X,r))}
\leq C(X)|Y-Z|^\alpha\frac{\log r}{r^\alpha}\]
for any $r$ large enough. This implies that $w$ is a constant; thus $\Gamma_X$ is unique up to an additive constant. (So $\nabla\Gamma_X$ is actually unique.)

If $A(y,s)=A(y)$, let $\zeta(X)=\eta(X-(0,t))$. Then
\begin{align*}
-\eta(X)&=-\zeta(X+(0,t))=\int A(y,s)\nabla \Gamma_{X+(0,t)}(y,s) \cdot \nabla\zeta(y,s)\,ds\,dy
\\&=\int A(y,s-t)\nabla \Gamma_{X+(0,t)}(y,s) \cdot \nabla\eta(y,s-t)\,ds\,dy
\\&=\int A(y,s)\nabla \Gamma_{X+(0,t)}(y,s+t) \cdot \nabla\eta(y,s)\,ds\,dy
\end{align*}

and so by uniqueness,
\begin{equation}\label{eqn:GammaTshift}
\nabla\Gamma_{X+(0,t)}(Y+(0,t))=\nabla\Gamma_X(Y).
\end{equation}

\section{Switching variables} \label{sec:symm}

We wish to show that
\begin{equation}
\label{eqn:symm}
\Gamma_X^T(Y)=\Gamma_Y(X).
\end{equation}

Let $\Gamma_X^T$ be the fundamental solution of the operator $\div A^T\nabla$. We follow the developement in \cite[Lemma~2.7]{Rule}.

Let $\eta\in C^\infty_0$ be 1 on a neighborhood of $\overline{B(0,R)}$. If $R\gg|X|$, $|Y|$, then
\begin{align*}
\Gamma^{T}_Y(X) - \Gamma_X(Y)
&=
\int_{\R^2} \nabla (\eta\Gamma_X)\cdot A^T\nabla \Gamma_Y^T
- \nabla (\eta\Gamma_Y^T) A\nabla \Gamma_X
\\&=
\int_{B(0,R)} \nabla \Gamma_X\cdot A^T\nabla \Gamma_Y^T
- \nabla \Gamma_Y^T\cdot A\nabla \Gamma_X
\\&\qquad
+\int_{B(0,R)^C} \nabla (\eta\Gamma_X) A^T\nabla \Gamma_Y^T
- \nabla (\eta\Gamma_Y^T) A\nabla \Gamma_X
\\&= 0+
\int_{\partial B(0,R)^C} \Gamma_X \nu\cdot A^T\nabla \Gamma_Y^T
- \Gamma_Y^T \nu\cdot A\nabla \Gamma_X\,d\sigma.
\end{align*}

So we need only prove that the last integral goes to zero. This will be easier if we work with $A^r$ and $\Gamma^r$ instead of $A$, $\Gamma$, where $A^r=I$ on $B(0,2r)^C$ and $A^r=A$ on $B(0,r)$; we will then show that $\lim_{r\to\infty}\Gamma^r=\Gamma$.

Consider only $r>2|X|+2|Y|$. Then $\Gamma_X^r$ is harmonic in $B(0,2r)^C$.

Think of $\R^2$ as the complex plane, and let $u(Z)=\Gamma_X^r(Z)$. There is some bounded harmonic function $\omega(Z)$ on $B(0,2r)^C$ such that $u(Z)=\omega(Z)$ on $\partial B(0,2r)$; let $v(Z)=\Gamma_X^r(e^Z)-\omega(e^Z)$.

Then $v$ is harmonic in a half-plane, and $v=0$ on the boundary of that half-plane; by the reflection principle we may extend $v$ to an entire function. Furthermore, $|v(Z)|\leq C+C\Re Z$, so $v$ is linear.

So $\Gamma_X^r(e^Z)=\omega(e^Z)+C_1+C_2 \Re Z$; thus,
$\Gamma_X^r(Z)=\omega(Z)+C_1+C_2 \log |Z|$ for some bounded $\omega$ and constants
$C_1, C_2$. Using a test function which is 1 on $B(0,3r)$, we see that
$C_2=\frac{1}{2\pi}$. We thus have a standard normalization: we choose additive constants such that
$\Gamma_X^r(Z)=\omega(Z)+\Gamma^I(Z)$, where $\Gamma^I(Z)=\frac{1}{2\pi}\log |Z|$ and $\lim_{|Z|\to\infty}\omega(Z)=0$.

Since $\omega$ is bounded and harmonic on $B(0,2r)^C$, the function
$f(Z)=\omega(1/Z)$ is bounded and harmonic in a disk; thus, so are its partial derivatives. By our normalization, $f(0)=\lim_{|Z|\to\infty}\omega(Z)=0$. Then
$|\omega(Z)|=|f(1/Z)|\leq C(r)/|Z|$ on $B(0,3r)^C$, and $\omega'(Z)=-f'(Z)/Z^2$, so
$|\nabla \omega(Z)|\leq C(r)/|Z|^2$ on $B(0,3r)^C$.

Let $R>3r$. Then on $\partial B(0,R)$,
\begin{align*}
\left|
\nu\cdot\Gamma^r_X\nabla\Gamma_Y^{r,T}
-
\nu\cdot\Gamma^I\nabla\Gamma^I\,d\sigma
\right|
&\leq
\left|\Gamma^r_X-\Gamma^I\right|
\left|\nabla\Gamma_Y^{r,T}\right|
+\Gamma^I\left|\nabla\Gamma_Y^{r,T}-\nabla\Gamma^I\right|
\\&\leq
\frac{C(r)}{R}\frac{C}{R}
+C\frac{\log R}{R} \frac{C}{R}
\end{align*}
and so
\[\left|\int_{\partial B(0,R)}
\nu\cdot\Gamma^r_X\nabla\Gamma_Y^{r,T}
-
\nu\cdot\Gamma^I\nabla\Gamma^I\,d\sigma
\right|\leq \frac{C(r)}{R}
+C\frac{\log R}{R}.\]

So since $A^r=A^{r,T}=I$ on $B(0,r)^C$,
\begin{align*}
|\Gamma^r_X(Y)-\Gamma^{r,T}_Y(X)|
&=\left|
\int_{\partial B(0,R)}
\nu\cdot\left(\Gamma^r_X\nabla\Gamma_Y^{r,T}
-\Gamma^{r,T}_Y\nabla\Gamma_X^{r}\right)
\,d\sigma\right|
\\&=\left|
\int_{\partial B(0,R)}
\nu\cdot(\Gamma^r_X\nabla\Gamma_Y^{r,T}-\Gamma^I\nabla\Gamma^I)
-\nu\cdot(\Gamma^{r,T}_Y\nabla\Gamma_X^{r}-\Gamma^I\nabla\Gamma^I)
\,d\sigma\right|
\\&\leq C\frac{1+\log R}{R}
\end{align*}
which goes to $0$ as $R\to\infty$.
So $\Gamma_X^{r,T}(Y)=\Gamma_Y^r(X)$.

Let $u_r(Z)=\Gamma_Y^r(Z)-\Gamma_Y(Z)$. Then
\[\int A(Z)\nabla u_r(Z)\cdot\nabla\eta(Z) \,dZ=0\]
for all $\eta\in C^\infty_0(B(0,r))$.

So if $|Z|,|Z_0|<r/3$, then by \lemref{PDE4}
\[
|u_r(Z)-u_r(Z_0)|\leq\frac{C}{r^{1+\alpha}}\|u_r\|_{L^2(B(0,2r/3))} |Z-Z_0|^\alpha
\]
and
\begin{align*}
\int_{B(0,r)} |u_r|^2 &\leq \int_{B(Y,2r)}|u_r|^2
\leq C\int_{B(Y,2r)}\left(1+|\log|X-Y| |\right)^2 \,dX
\leq  C^2r^2\log^2 r
\end{align*}
for $r$ large enough. (Note that this is independent of $Y$ provided $|Y|<r$.)

Thus,
\[|\Gamma_Y(Z)-\Gamma_Y^r(Z)-\Gamma_Y(Z_0)+\Gamma_Y^r(Z_0)|\leq C\frac{\log r}{r^{\alpha}}|Z-Z_0|^\alpha\]
provided $r$ is sufficiently large compared to $|Y|,|Z|,|Z_0|$.

Now, we choose a normalization that will set $\Gamma_X(Y)=\Gamma_Y^T(X)$.

Fix some choice of $\Gamma_0^T(X)$. Normalize each $\Gamma_X$ such that
$\Gamma_X(0)=\Gamma_0^T(X)$. Now, fix some $Y$. We want to show that $f(X)=\Gamma_X(Y)$ satisfies the conditions of
$\Gamma_Y^T$.

First, if $\eta\in C^\infty_0(\R^2)$,
\[\int A^T(X)\nabla_X(\Gamma_X(0))\cdot \nabla \eta(X)\,dX
=\int A^T(X)\nabla\Gamma_0^T(X)\cdot \nabla \eta(X)\,dX=-\eta(0)
\]
and so, since
$\Gamma^r_X(Y)-\Gamma^r_X(0)\to \Gamma_X(Y)-\Gamma_X(0)$ almost uniformly in $X$,
\begin{multline*}
{\int A^T(X)\nabla f(X)\cdot \nabla \eta(X)\,dX }
\\\begin{aligned}
&=
\int \nabla_X( \Gamma_X(Y)-\Gamma_X(0))\cdot A(X)\nabla \eta(X)\,dX
+\int \nabla_X\Gamma_X(0)\cdot A(X)\nabla \eta(X)\,dX
\\ &=
\int \lim_{r\to\infty}\nabla_X(\Gamma^r_X(Y)-\Gamma^r_X(0))
\cdot A(X)\nabla\eta(X)\,dX
-\eta(0)
\\ &=
\lim_{r\to\infty}\int \nabla_X( \Gamma^{r,T}_Y(X)-\Gamma^{r,T}_0(X))
\cdot A(X)\nabla \eta(X)\,dX
-\eta(0)
\\ &=
-\eta(Y)+\eta(0)-\eta(0)
\end{aligned}\end{multline*}
as desired.

Next,
\[|f(X)|=|\Gamma_X(Y)-\Gamma_X(0)+\Gamma_0^T(X)|\leq C(1+|\log |X| |+|\log |X-Y| |).\]
This growth condition suffices to establish the inequality in \autoref{sec:fundunique}, so by uniqueness $f(X)=\Gamma_Y^T(X)$.

\section{Conjugates to solutions}
\label{sec:conjugate}

Suppose that $u$ satisfies $\div A\nabla u=0$ in some simply connected domain $U\subset\R^2$. Then if $Y_0,Y\in\U$, the integral
\[\int_{Y_0}^Y \Matrix{0&-1\\1&0}A(Z)\nabla u(Z)\cdot dl(Z)
=\int_{Y_0}^Y \nu(Z)\cdot A(Z)\nabla u(Z)\, dl(Z)\]
is path-independent, where $\nu$ is the unit normal to the path from $Y_0$ to~$Y$.

There is then a family of functions $\{\tilde u\}$ which satisfy
\[\Matrix{0&1\\-1&0}\nabla \tilde u = A\nabla u.\]
We call such a function the conjugate to $u$ in $\U$; it is unique up to an additive constant. Any such $\tilde u$ may be written as
\[\tilde u(Y)=C+\int_{Y_0}^Y \nu(Z)\cdot A(Z)\nabla u(Z)\, dl(Z).\]

Now, $\div\tilde A \nabla\tilde u=0$, where
$\tilde A=\frac{1}{\det A}A^T$ is also an elliptic matrix.

In particular, we may define the conjugate $\tilde\Gamma_{X}$
to the fundamental solution $\Gamma_{X}$ on any simply connected domain not containing $X$.
Note that $\nabla\tilde\Gamma_X(Y)$ is defined on $\R^2\setminus\{X\}$, even though $\tilde\Gamma_X$ itself is necessarily undefined on a ray.

%
%

We now require that $A(x,t)=A(x)$, which allows us to use~(\ref{eqn:gradu}).

Letting $u=\partial_{x_i}\Gamma_X(Y)$, we have that by (\ref{eqn:gradfundsoln}) $|u(Y)|\leq C/|X-Y|$ and so
\begin{equation}
\label{eqn:doublegradfundsoln}
|\partial_{x_i}\partial_{y_j}\Gamma_X(Y)|\leq
|\nabla u(Y)|
\leq \frac{C}{|X-Y|}\left(\dashint_{(B(Y,|X-Y|/2)}|u|^2\right)^{1/2}
\leq \frac{C}{|X-Y|^2}.
\end{equation}

Now, since
\[\tilde\Gamma_X(Y)=\zeta(X)
+\int_{Y_0}^Y \nu(Z)\cdot A(Z)\nabla \Gamma_X(Z)\,dl(Z)\]
we have that
\[\nabla_X\tilde\Gamma_X(Y)
=\nabla \zeta(X)
+\int_{Y_0}^Y  \nu(Z)\cdot A(Z)\nabla (\nabla_X\Gamma_X(Z))\, dl(Z)\]
We can choose $\zeta$ so that
\[\nabla_X\tilde\Gamma_X(Y)
=\int_{\infty}^Y  \nu(Z)\cdot A(Z)\nabla (\nabla_X\Gamma_X(Z))\, dl(Z).\]
Since $|\partial_{x_i}\partial_{z_j}\Gamma_X(Z)|\leq C/|X-Z|^2$, this integral converges. Furthermore, it converges to the same value, no matter which direction we use to go off to infinity.


Now, if $\eta\in C^\infty_0(\W)$ for some bounded simply connected domain $\W$ with $Y\notin\W$,
\begin{multline*}
\int_\W A^T(X)\nabla_X\tilde\Gamma_X(Y)\cdot \nabla\eta(X)\,dX
\\\begin{aligned}
&=
\int_\W A^T(X)
\int_{\infty}^Y A(Z)\nu(Z)\cdot\nabla_Z(\nabla_X\Gamma_X(Z))\,dl(Z)
\cdot \nabla\eta(X)\,dX
\\ &=
\int_{\infty}^Y A(Z)\nu(Z)\cdot\nabla_Z
\int_\W A^T(X)\nabla\Gamma^T_Z(X)\cdot \nabla\eta(X)\,dX\,dl(Z)
\\ &= -\int_{\infty}^Y A(Z)\nu(Z)\cdot \nabla_Z \eta(Z)\,dl(Z)
= 0.
\end{aligned}
\end{multline*}
Thus,
\begin{equation} \label{eqn:LTtildeGamma}
\div{}_X A^T(X)\nabla_X\tilde\Gamma_X(Y)=0
\end{equation}
whenever $X\neq Y$.

\section{\CZ\ Kernels}

Recall that $B_6(y)=\Matrix{a_{11}(y)&a_{21}(y)\\0&1}$. We have that
\[\Matrix{\partial_s \tilde\Gamma_X^T(y,s)\\\rule{0pt}{2ex}\partial_s\Gamma_X^T(y,s)}
=B_6(y)\nabla \Gamma_X^T(y,s),\]
and because $\partial_s\tilde\Gamma_X^T(y,s)$ and $\partial_s\Gamma_X^T(y,s)$
are solutions to elliptic equations away from $X$, we have that for some
$\alpha>0$,
\begin{align*}
|B_6(Y)\nabla\Gamma_X^T(Y)-B_6(Y')\nabla\Gamma_X^T(Y')|
&\leq \frac{C|Y-Y'|^\alpha}{|X-Y|^{1+\alpha}}\|\nabla \Gamma_X^T\|_{L^2(B(Y,\frac{3}{4}|X-Y|))}
\\&\leq \frac{C|Y-Y'|^\alpha}{|X-Y|^{1+\alpha}}
\end{align*}
for $|Y-Y'|\leq |X-Y|/2$.

Since we have a bound on $\partial_{x_i}\partial_{y_j}\Gamma_{(x_1,x_2)}(y_1,y_2),$
we have that
\[|B_6(Y)\nabla\Gamma_X^T(Y)-B_6(Y)\nabla\Gamma_{X'}^T(Y)|\leq C\frac{|X-X'|^\alpha}{|X-Y|^{1+\alpha}}\]
for $|X-X'|\leq |X-Y|/2$.
(Actually, in this case we have it for $\alpha=1$.)

Recalling that
\[K(X,Y)=\Matrix{B_6(Y)\nabla \Gamma^T_{X}(Y)
   &B_6(Y)\nabla \Gamma_{X}^T(Y) }^t\]
we have that $K(X,Y)$ satisfies the \CZ\ kernel conditions
\begin{align}
\label{eqn:KisCZ}
|K(X,Y)|&\leq\frac{C}{|X-Y|},\\
\nonumber
|K(X,Y)-K(X',Y)|&\leq C\frac{|X-X'|^\alpha}
{|X-Y|^{1+\alpha}}
\\
\nonumber
|K(X,Y)-K(X,Y')|&\leq C\frac{|Y-Y'|^\alpha}
{|X-Y|^{1+\alpha}}
\end{align}
provided $|X-X'|,|Y-Y'|<\frac{|X-Y|}{2}$. (These conditions are clearly equivalent to those in \lemref{Tstar}.)

Now, recall that
\[\nabla_X\tilde\Gamma_X(Y)
=\int_{\infty}^Y  \nu(Z)\cdot A(Z)\nabla (\nabla_X\Gamma_X(Z))\, dl(Z).\]
Since $|\nabla_Z (\nabla_X \Gamma_X(Z))|\leq C/|X-Z|^2$ and the integral is path-independent, we have that
\[|\nabla_X\tilde\Gamma_X(Y)-\nabla_X\tilde\Gamma_{X}(Y')|
\leq \int_Y^{Y'}\frac{C}{|X-Z|^2}\,dl(Z)
\leq C\frac{|Y-Y'|}{|X-Y|^2}\]
whenever $|Y-Y'|\leq |X-Y|/2$.

Now, we wish to show that $\tilde K$ is $C^\alpha$ in $X$ as well as~$Y$. Let
\[u(Z)=B_6(X)\nabla_X\Gamma_X^T(Z)-B_6(X')\nabla_X\Gamma_{X'}^T(Z)
=B_6(X)\nabla\Gamma_Z(X)-B_6(X')\nabla\Gamma_{Z}(X').\]
Then $\div A^T\nabla u=0$ away from $X,X'$, and
$|u(Z)|\leq C\frac{|X-X'|^\alpha}{|X-Z|^{1+\alpha}}$ provided $|X-X'|\leq|X-Z|/2$.

So, by (\ref{eqn:gradu}),
\[|\nabla u(Z)|
\leq\frac{C}{|X-Z|}\left(\dashint_{B(Z,|X-Z|/4)}|u|^2\right)^{1/2}
\leq C\frac{|X-X'|^\alpha}{|X-Z|^{2+\alpha}}.\]

Then
\begin{align*}
|B_6(X)\nabla_X\tilde\Gamma_X^T(Y)-B_6(X')\nabla_X\tilde\Gamma_{X'}^T(Y)|
&=
\left|\int_{\infty}^Y \nu(Z)\cdot A(Z)\nabla u(Z)\,dl(Z)\right|
\\&\leq
\int_\infty^Y\frac{|X-X'|^\alpha}{|X-Z|^{2+\alpha}}
\leq C\frac{|X-X'|^\alpha}{|X-Y|^{1+\alpha}}.
\end{align*}
provided $|X-X'|$ is small compared to $|X-Y|$.

So, recalling that
\[
\tilde K(X,Y)= B_6(X)\Matrix{
     \nabla_X\tilde \Gamma_{X}(Y)&
     \nabla_X\tilde \Gamma_{X}(Y)}\\
\]
we have that, if $|X-X'|,|Y-Y'|<\frac{1}{2}|X-Y|$,
\begin{align}
\label{eqn:tildeKisCZ}
|\tilde K(X,Y)|&\leq\frac{C}{|X-Y|},\\
\nonumber
|\tilde K(X,Y)-\tilde K(X',Y)|&\leq C\frac{|X-X'|^\alpha}
{|X-Y|^{1+\alpha}}
\\
\nonumber
|\tilde K(X,Y)-\tilde K(X,Y')|&\leq C\frac{|Y-Y'|^\alpha}
{|X-Y|^{1+\alpha}}.
\end{align}

\section{Analyticity}\label{sec:analytic}

We will eventually want to compare the fundamental solutions (and related operators) for a real matrix $A_0$ and a nearby complex matrix~$A$. We can explore this using analytic function theory. Let $z\mapsto A_z$ be an analytic function from $\C$ to $L^\infty(\R^2\mapsto\C^{2\times 2}).$ (As a particularly useful example, take $A_z=A_0+z(\lambda/2\epsilon)(A-A_0).$) Assume that $A_z$ is uniformly elliptic in some neighborhood of $0$, say $B(0,1)$.

Let $L_z=\div A_z\nabla$. Since we are working in $\R^2$, we know from \cite{AusTcha} that the operator $e^{-tL_z}$ has a Schwarz kernel $\check K^z_t(X,Y)=\check K^{A_z}_t(X,Y)$. Furthermore, we have that (\cite[p.~57]{AusTcha}) the map $A\mapsto K^A$ is analytic.

Fix some $Y$. Recall that, as in \autoref{sec:fundsolnexists},
\[\nabla\Gamma_Y^{A_z}(X)=\int_0^\infty \nabla \check K_t^{z}(X,Y)\,dt.\]

Since $\nabla \check K_t^z\in L^1_t$, uniformly in $z$, we have that by e.g.\ the Cauchy integral formula, $\nabla\Gamma_Y^z(X)$ is analytic in~$z$.

This lets us compute many useful inequalities.

If $\omega$ is a simple closed curve lying in $B(0,1)$, then
\begin{align*}
\nabla \Gamma^{A_z}_Y(X)-\nabla \Gamma^{A_0}_Y(X)
&=
\frac{1}{2\pi i}\oint_\omega \nabla\Gamma^{A_\zeta}_Y(X)\left(\frac{1}{\zeta-z}-\frac{1}{\zeta}\right)\,d\zeta
\\&=
\frac{1}{2\pi i}\oint_\omega \nabla\Gamma^{A_\zeta}_Y(X)\left(\frac{z}{\zeta(\zeta-z)}\right)\,d\zeta
\end{align*}
which has norm at most $|z|\frac{C}{|X-Y|}$ if $|z|<1/2$; taking $A_z=A_0+z(\lambda/2\epsilon)(A-A_0)$ and then applying this equation to $z=2\epsilon/\lambda$, we get that
\begin{equation}\label{eqn:nearfundsolns}
|\nabla\Gamma_Y(X)-\nabla\Gamma_Y^0(X)|<\frac{C\epsilon}{|X-Y|}.
\end{equation}
Similarly,
\begin{align*}
|\nabla \Gamma_Y(X)-\nabla\Gamma_Y^0(X)-\nabla \Gamma_Y(X')+\nabla\Gamma_Y^0(X')|&\leq\frac{C\epsilon|X-X'|^\alpha}{|X-Y|^{1+\alpha}},\\
|\nabla \Gamma_Y(X)-\nabla\Gamma_Y^0(X)-\nabla \Gamma_{Y'}(X)+\nabla\Gamma_{Y'}^0(X)|&\leq\frac{C\epsilon|Y-Y'|^\alpha}{|X-Y|^{1+\alpha}},
\end{align*}
provided that $|X-X'|$, $|Y-Y'|$ are less than $\frac{1}{2}|X-Y|$.

Suppose that $\mathcal{J}^z$ is a \CZ\ operator whose kernel $K^z(X,Y)$ is analytic in $z$, and suppose that $\mathcal{J}^z$ is uniformly bounded on $L^p$ in some neighborhood of $z=0$. Then
\[\mathcal{J}^z f(X)-\mathcal{J}^0 f(X) =\frac{1}{2\pi i}\oint_\omega \mathcal{J}^\zeta f(X) \left(\frac{z}{\zeta(\zeta-z)}\right)\,d\zeta\]
and so
\begin{equation}\label{eqn:CZanalytic}
\|\mathcal{J}^z f-\mathcal{J}^0 f\|_{L^p}
\leq \frac{1}{2\pi}\oint_\omega \|\mathcal{J}^\zeta f\|_{L^p} \left|\frac{z}{\zeta(\zeta-z)}\right|\,d\zeta
\leq C|z|\, \|f\|_{L^p}.
\end{equation}

We now assume that $A$ is smooth, and complete the argument (referenced in \autoref{chap:dfn}) that $T_\pm=T'_\mp$.
Recall that $\Gamma_X(Y)^I=\frac{1}{2\pi} \ln |X-Y|$, and so $\Gamma_{X+\e}^I(Y)=\Gamma_X^I(Y-\e)$ for all $X$, $Y$ and~$\e$. We would like a similar result for more general $\Gamma$.
We know from (\ref{eqn:GammaTshift}) that $\Gamma_{X+(0,t)}(Y)=\Gamma_X(Y-(0,t))$ for any real~$t$.

If we define $A_z(X)=A(X)+z(A(X+\xi)-A(X))$, then $A_z$ is elliptic for all $|z| < \lambda/4\Lambda$, and so
\[|\nabla\Gamma_X^{A_1}(Y)-\nabla\Gamma_X(Y)|
\leq  \frac{1}{|X-Y|} \frac{C \|A'\|_{L^\infty}}{\lambda}|\xi|.\]

If $A(x)$ is smooth in $x$ and $A=I$ for large $x$, then $\|A'\|_{L^\infty(B(x,R))}$ is finite (if large).

But $\Gamma_{X+\xi}(Y+\xi) = \Gamma_{X}^{A_1}(Y)$ by uniqueness:
\begin{align*}
\eta(X) &= \int A(Y) \nabla \Gamma_{X+\xi}(Y)\nabla\eta(Y-\xi)\,dY
= \int A_1(Y-\xi) \nabla \Gamma_{X+\xi}(Y)\nabla\eta(Y-\xi)\,dY
\\&= \int A_1(Y) \nabla \Gamma_{X+\xi}(Y+\xi)\nabla\eta(Y)\,dY.
\end{align*}
So while we do not have that $\Gamma_{Y+\e}(X)=\Gamma_Y(X-\e)$, we do have that
\begin{equation}\label{eqn:smallshiftgamma}
|\nabla\Gamma_{Y+\e}(X)-\nabla\Gamma_{Y}(X-\e)|
\leq \frac{C \|A'\|_{L^\infty}}{|X-Y|} \|\e\|.
\end{equation}

This equation is not useful for establishing bounds on the constants in the definitions of $(D)_p$, $(N)_p$, $(R)_p$, since we wish those bounds to be independent of $A'$; however, this equation is useful in showing that certain limits are equal to other limits.

Recall that
\begin{align*}
T_\pm F(x) &= \lim_{h\to 0^\pm}\int_\R\Matrix{\nabla\Gamma^T_{\phi(x,h)}(\phi(y))^t\\\nabla\Gamma^T_{\phi(x,h)}(\phi(y))^t} B_6(\phi(y))^t F(y)\,dy,\\
T'_\pm F(X) &= \lim_{h\to 0^\pm}\int_\R\Matrix{\nabla\Gamma^T_{\phi(x)}(\phi(y,h))^t\\\nabla\Gamma^T_{\phi(x)}(\phi(y,h))^t} B_6(\phi(y,h))^t F(y)\,dy.
\end{align*}

If $A$ is smooth, then $\lim_{h\to 0^\pm}\nabla\Gamma^T_{\phi(x)}(\phi(y,h))=\lim_{h\to 0^\mp}\nabla\Gamma^T_{\phi(x,h)}(\phi(y))$; and so we have that $T_\pm F=T_\mp' F$ for sufficiently well-behaved~$F$.

\chapter{Layer potentials}
The general idea is as follows. Suppose we wish to solve the  Neumann, Dirichlet, or regularity problem in some domain~$\V$. In the special case where $A\equiv I$, $\V=\R^2_+$, it turns out that 
\[\lim_{t\to 0} \D f(x,t)=\frac{1}{2}f(x),\quad\lim_{t\to 0} \partial_t \S g(x,t)=\frac{1}{2}g(x)\]
at any Lipschitz point $x$ of $f$ or $g$, provided that $f$,~$g\in L^p$ for some $1\leq p\leq\infty$. Thus, $u=\S(2g)$ and $u=\D(2f)$ solve the Neumann and Dirichlet or regularity problem.

More generally, we solve the Neumann, Dirichlet, or regularity problem by showing that there exist some functions $a,$ $b,$ $c$ defined on $\partial\V$ such that $\|a\|_{L^p}\leq C\|g\|_{L^p}$, $\|b\|_{L^p}\leq C\|f\|_{L^p}$, $\|c\|_{L^p}\leq C\|\partial_\tau f\|_{L^p}$ and for each $X\in\partial\V$,
\[\lim_{Z\to X\text{n.t.}}\D b(Z)=f(X),\quad\lim_{Z\to X\text{n.t.}} \S c(Z)= f(X),\quad\nu\cdot A\nabla \S a=g.\]

We will want the following properties of layer potentials. From (\ref{eqn:fundsoln}),~(\ref{eqn:gradfundsoln}), and~(\ref{eqn:symm}), we have that
\begin{itemize}

\item If $X\notin\partial\V$, then $\D f(X)$ and $\nabla\S f(X)$ are well-defined (the integrals converge) for $f\in L^p(\partial\V)$, $1\leq p<\infty$.

\item If $u=\D f$ or $u=\S f$, with $f$ as above, then $\div A\nabla u=0$ in $\R^2\backslash\partial\V$.

\item If $\partial\V$ is compact and $f\in L^1(\partial\V)$, then $\lim_{|X|\to\infty} \D f(X)=0$. Furthermore, $\lim_{|X|\to\infty} \S f(X)-\Gamma_X^T(0)\int_{\partial\V} f\,d\sigma=0$, so if $f\in H^1$ then $\lim_{|X|\to\infty} \S f=0$.

\item
If $\V$ is bounded, $\div A\nabla u=0$ inside $\V$, and $u$ is continuous and $\nabla u$ is bounded on $\R^2_+$, then if we define $f=u$, $g=\nu\cdot A\nabla u$ on $\partial\V$, letting $\eta\in C^\infty_0(\R^2)$ with $\eta\equiv 1$ near $\V$ gives us that
\begin{equation}
\label{eqn:Df-Sa}
\begin{split} 
u(X)&=-\int_{\R^2} A^T\nabla\Gamma_X^T\cdot\nabla(u\eta)
=
-\int_{\V} \nabla\Gamma_X^T\cdot A\nabla u
-\int_{\V^C} A^T\nabla\Gamma_X^T\cdot\nabla(u\eta)
\\
&=
-\int_{\partial\V}\Gamma_X^T\nu\cdot A\nabla u\,d\sigma
+\int_{\partial\V}\nu\cdot A^T\nabla\Gamma_X^T u\,d\sigma
=
\D f(X)-\S g(X)
\end{split}
\end{equation}
for all $X\in\V$. (Some of these conditions may be relaxed; however, it is obvious when they all hold.)
In particular, $\int_{\partial\V}\nu\cdot A^T\nabla\Gamma_X^T\,d\sigma=1$.

\item If $\V$ is bounded, then $\D 1(X)=1$ in $\V$ and $0$ in $\V^C$. If $\V=\Omega$ is a special Lipschitz domain and $X$, $X_0\in\Omega$, then we may define $\D 1(X)-\D 1(X_0)$ in the obvious way:
\begin{align*}
\D 1(X)-\D 1(X_0) &= \int_{\partial\Omega} \nu\cdot A^T (\nabla\Gamma_X^T-\nabla\Gamma_{X_0}^T) \,d\sigma
\\&=\int_{\partial\Omega\setminus B(X_0,R)}
+\int_{\partial(\Omega\cap B(X_0,R))}
+\int_{(\partial B(X_0,R))\cap\Omega}
\end{align*}
If $R>2|X-X_0|$, the first and third integrals are at most
$C|X-X_0|^\alpha/R^\alpha$, and the second integral is equal to 0 since $\Omega\cap B(X_0,R)$ is bounded.

So again, $\D 1(X)$ is a constant on $\Omega$.

\item If $f\in BMO(\partial\V)$, I claim that $\D f$ is well-defined up to a constant. Pick some $X_0$, $X\in\V$. Let $R=2|X-X_0|+2\dist(X_0,\partial\V)$, and let $X^*\in\partial\V$ with $\dist(X_0,\partial\V)=|X_0-X^*|$. Then 
\[\D f(X)-\D f(X_0) = \D f(X) -\D f_R(X)-\D f(X_0)+\D f_R(X_0),\] where $f_R$ is any constant, so without loss of generality $\dashint_{B(X^*,R)\cap\partial\V} f=0$. By basic $BMO$ theory, this means that $\dashint_{\partial\V\cap B(X^*,2^kR)}|f|\leq C(k+1)\|f\|_{BMO}$.
Then
\begin{align*}
|\D f(X)-\D f(X_0)|
&\leq\int_{\partial\V}|\nu\cdot A(\nabla\Gamma_X^T-\nabla\Gamma_{X_0}^T)| |f|\,d\sigma
\\&\leq
\int_{\partial\V\cap B(X^*,R)} \left(\frac{C}{|X-Y|}+\frac{C}{|X_0-Y|}\right) |f|\,d\sigma
\\&\qquad
+\int_{\partial\V\setminus B(X^*,R)}\frac{C|X-X_0|^\alpha}{|X-Y|^{1+\alpha}} |f|\,d\sigma
\end{align*}
The first term is at most
\[C R\|f\|_{BMO}
\left(\frac{C}{\dist(X,\partial\V)}+\frac{C}{\dist(X_0,\partial\V)}\right) \]
while the second term is at most
\begin{multline*}\sum_{k=0}^\infty\int_{\partial\V\cap B(X^*,2^{k+1}R)\setminus B(X^*,2^k R)}\frac{C|X-X_0|^\alpha}{|X-Y|^{1+\alpha}} |f|\,d\sigma
\\\begin{aligned}
&\leq
\sum_{k=0}^\infty
\frac{C|X-X_0|^\alpha}{2^{k+k\alpha}R^{\alpha}}
\dashint_{\partial\V\cap B(X^*,2^{k+1}R)} |f|\,d\sigma
\\&\leq
\frac{C|X-X_0|^\alpha}{R^{\alpha}} \|f\|_{BMO}
\sum_{k=0}^\infty
\frac{k+1}{2^{k+k\alpha}}
\end{aligned}\end{multline*}

These are both finite, so $\D f$ is well-defined up to a constant.

\end{itemize}

We will need a number of lemmas:

\begin{lem}\label{lem:Kwelldef} If  $f\in L^p(\partial\V)$ for some $1\leq p\leq\infty$, or if $f\in BMO\supset L^\infty$, and if $M(\partial_\tau f)(X)$ is finite, then the limits in the definitons of $\K$, $\J$ exist at~$X$. (If $f\in BMO$, the limits exist once we have fixed our choice of additive constant for $\D f$.)

Similarly, if $F\in L^p(\partial\V\mapsto\C^{2\times 2})$ and $M(\partial_\tau B_7^{-1}F)(X)$ is finite, then $\T F(X)$ is well-defined.
\end{lem}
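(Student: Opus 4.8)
I want to show that the principal-value limits defining $\K_\pm f(X)$, $\J f(X)$ (and the analogous limit for $\T F(X)$) exist whenever $f$ lies in $L^p$, $1\le p\le\infty$, or in $BMO$, and the Littlewood--Paley maximal function $M(\partial_\tau f)(X)$ is finite at the given point. The key structural fact I would exploit is the one recorded just before the definitions: since $\partial_t\tilde\Gamma_X^T$ and $\partial_t\Gamma_X^T$ solve elliptic equations away from $X$ and $A$ depends only on one variable, the vector $B_6(Y)\nabla\Gamma_X^T(Y)$ — and hence the kernels $K^A(X,Y)$, $\tilde K^A(X,Y)$, as well as $\tau\cdot\nabla\Gamma_X^T(Y)$ and $\nu\cdot A^T\nabla\Gamma_X^T(Y)$ appearing in $\D$, $\K_\pm$, $\J$ — satisfy the Calderón--Zygmund kernel bounds \eqref{eqn:KisCZ} and \eqref{eqn:tildeKisCZ}, together with $|\nabla\Gamma_X(Y)|\le C/|X-Y|$ from \eqref{eqn:gradfundsoln}.

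**Main step: reduce the n.t.\ limit to an integration by parts on $\partial\V$.** Fix $X\in\partial\V$ and $Z\in\gamma_\pm(X)$. The idea is to split the integral $\int_{\partial\V}\nu\cdot A^T\nabla\Gamma_Z^T(Y)\,f(Y)\,d\sigma(Y)$ into a piece over a small surface ball $\Delta_r=B(X,r)\cap\partial\V$ and its complement; away from $\Delta_r$ the integrand is continuous in $Z$ up to $Z=X$ by the kernel Hölder estimate $|K(Z,Y)-K(X,Y)|\le C|Z-X|^\alpha/|X-Y|^{1+\alpha}$, so that piece converges trivially, and the tail integral converges absolutely because $f\in L^p$ (or $f\in BMO$, using the logarithmic growth of the $BMO$ averages against the $|X-Y|^{-1-\alpha}$ decay, exactly as in the last bullet before \lemref{Kwelldef}). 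The delicate piece is the local one. Here I would integrate by parts in the tangential variable: writing $f$ near $X$ as $f(X)+\int_X^Y\partial_\tau f$, the constant part is handled by the explicit computation that $\int\nu\cdot A^T\nabla\Gamma_Z^T\,d\sigma$ is $\D 1(Z)$, which equals a constant ($\tfrac12$-type jump) and has a limit by \eqref{eqn:Df-Sa} and the $\D 1$ bullet above; the remaining part involves $\int_{\Delta_r}(\text{antiderivative of the kernel along }\partial\V)\cdot\partial_\tau f\,d\sigma$, and the antiderivative gains a power, making the integrand $O(1)\cdot|\partial_\tau f|$, which is dominated by $r\,M(\partial_\tau f)(X)$ and hence goes to $0$ with $r$ uniformly in $Z$. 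For $\J$ one uses that $\tau\cdot\nabla\Gamma_Z^T = \partial_\tau(\Gamma_Z^T\circ\psi)$ along the boundary, so the tangential antiderivative of the kernel is essentially $\Gamma_Z^T$ itself, which is only logarithmically singular and converges as $Z\to X$; again integrating by parts moves the derivative onto $f$ and the resulting local term is $O(r\,M(\partial_\tau f)(X))$. The case of $\T F$ is identical once one notes $\T$ is built from the same kernel $K^A$ and $F=B_7\cdot(\text{something})$ with $\partial_\tau B_7^{-1}F$ playing the role of $\partial_\tau f$.

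**Assembling.** Combining the three pieces: for any $\epsilon>0$ choose $r$ so the local term is $<\epsilon$ for all $Z$ near $X$ in the cone, then choose $Z$ close enough to $X$ that the (finitely many, continuous) non-local and $\D 1$ contributions are within $\epsilon$ of their limits; this is a Cauchy criterion showing the limit defining $\K_\pm f(X)$ (resp.\ $\J f(X)$, $\T F(X)$) exists. The $BMO$ bracket in the statement is only a matter of recording that, after the additive constant for $\D f$ is fixed, the same argument applies verbatim with the $L^p$ tail estimate replaced by the $BMO$ tail estimate already carried out above.

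**Main obstacle.** The real work is the local estimate near $X$: one must verify that the relevant tangential antiderivatives of the Calderón--Zygmund kernels (the primitive of $\nu\cdot A^T\nabla\Gamma_Z^T$ and of $\tau\cdot\nabla\Gamma_Z^T$ along $\partial\V$) are genuinely bounded — equivalently, that after the integration by parts no new singularity is introduced and the remainder is controlled by $M(\partial_\tau f)$ uniformly in $Z\in\gamma_\pm(X)$, including when $Z$ is at distance comparable to $r$ from the boundary. Handling this uniformly, and correctly isolating the constant (jump) term via $\D 1$, is where all the care goes; the rest is routine splitting and the kernel estimates \eqref{eqn:KisCZ}, \eqref{eqn:tildeKisCZ}, \eqref{eqn:gradfundsoln}.
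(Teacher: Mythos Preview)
Your overall architecture---split into a near ball $\Delta_r$ and its complement, subtract $f(X)$ on the near piece, handle the far piece via the kernel H\"older estimate \eqref{eqn:KisCZ} and an $L^p$ or $BMO$ tail bound, and handle the $f(X)$ constant via the fact that $\D 1$ is constant on each side---matches the paper's proof. The paper also uses a Cauchy criterion, comparing $\D f(X+\e)$ with $\D f(X+\check\e)$, and treats the middle term $f(X)\int_{\Delta(\rho)}\nu\cdot A^T(\nabla\Gamma_{X+\e}^T-\nabla\Gamma_{X+\check\e}^T)$ by using $\int_{\partial(B(X,\rho)\cap\V)}\nu\cdot A^T\nabla\Gamma_{X+\e}^T=1$ to convert it to an integral over $\partial B(X,\rho)\cap\V$, which is $O(|\e-\check\e|^\alpha/\rho^\alpha)$; your $\D 1$ argument is essentially the same idea.

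Where you diverge from the paper is the treatment of the local remainder $\int_{\Delta_r}(\text{kernel})(f-f(X))$, and here there is a genuine slip. You propose to integrate by parts and claim the tangential primitive of the kernel is $O(1)$. It is not: for $\J$ the primitive is $\Gamma_Z^T(W)$, and for $\K$ (via the conjugate) it is $\tilde\Gamma_Z^T(W)$, both of which are only logarithmically bounded, $\sim |\log|Z-W||$. With the cone geometry $|Z-W|\ge|X-W|/(2+a)$ one can rescue the argument---the primitive is controlled by $C(1+|\log r|+|\log|X-W||)$, and a dyadic summation in $|X-W|$ then gives a bound $Cr(1+|\log r|)M(\partial_\tau f)(X)$, which still goes to~$0$---but this is not what you wrote, and the ``$O(1)\cdot|\partial_\tau f|$, dominated by $r\,M(\partial_\tau f)(X)$'' claim as stated is false.

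The paper avoids this detour entirely. It simply observes that $M(\partial_\tau f)(X)<\infty$ gives $|f(Y)-f(X)|\le C(X)|X-Y|$ for $Y$ near $X$, and then bounds the local remainder directly:
\[
\int_{\Delta(\rho)}\frac{C}{|X+\e-Y|}\,C(X)|X-Y|\,d\sigma(Y)\le C(X)\rho,
\]
using $|X-Y|\le(2+a)|X+\e-Y|$ from the cone condition. No integration by parts, no primitive, no logarithm. This is both shorter and uses exactly the hypothesis $M(\partial_\tau f)(X)<\infty$ in its natural form; I would recommend you replace the integration-by-parts step with this direct estimate.
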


Recall that $Mf$ is the maximal function.


\begin{proof}
Recall the definitions of $\K,\J,\T$:
\begin{align*}
\K_\pm f(X)&=\lim_{Z\to X,Z\in\V_\pm} \D f(Z)
=\lim_{Z\to X,Z\in\V_\pm}\int_{\partial\V}
\nu(Y)\cdot A^T(Y) \nabla\Gamma_{Z}^T(Y) f(Y) \,d\sigma(Y)
\\
\J f(X)&=\lim_{Z\to X}\int_{\partial\V} 
\tau(Y)\cdot \nabla\Gamma_Z^T(Y) f(Y) \,d\sigma(Y)
\\
\T_\pm F(X)&=\lim_{Z\to X,Z\in\V_\pm}\int_{\partial\V} 
K(Z,Y) F(Y) \,d\sigma(Y)
\end{align*}
Note that if $F=\Matrix{f_1&f_2\\f_3&f_4}$, then
\[\T_\pm B_7F (X) = \Matrix
{\K_\pm f_1(X)+\J f_3(X)&\K_\pm f_2(X)+\J f_4(X)\\
 \K_\pm f_1(X)+\J f_3(X)&\K_\pm f_2(X)+\J f_4(X)}.\]
So our result for $\T_\pm$ will follow from our results for $\K_\pm$,~$\J$.

By (\ref{eqn:KisCZ}), if $|Z-Z'|<\frac{1}{2}|Z-Y|$, then
\[|\nabla\Gamma_Z(Y)-\nabla\Gamma_{Z'}(Y)|\leq C\frac{|Z-Z'|^\alpha}
{|Z-Y|^{1+\alpha}}.\]

Suppose $\e,$ $\check\e$ are two small vectors such that $X+\e$, $X+\check\e$ are both in $\V$, and that $\dist(X+\e,\partial\V)<(1+a)|\e|$, $\dist(X+\check\e,\partial\V)<(1+a)|\check\e|$. Define $\Delta(\rho)=B(\rho)\cap\partial\V$.
Then, if $\rho>2|\e|+2|\check\e|$,
\begin{align*}
\left|\D f(X+\e)
-\D f(X+\check\e)\right|
&=
\left|\int_{\partial\V}
\nu \cdot A^T (\nabla\Gamma_{X+\e}^T -\nabla\Gamma_{X+\check\e}^T ) f \,d\sigma \right|
\\&\leq
\left|\int_{\Delta(\rho)}
\nu\cdot A^T (\nabla\Gamma_{X+\e}^T-\nabla\Gamma_{X+\check\e}^T) (f-f(X)) \,d\sigma\right|
\\&\qquad
+\left|f(X)\int_{\Delta(\rho)}
\nu \cdot A^T (\nabla\Gamma_{X+\e}^T -\nabla\Gamma_{X+\check\e}^T ) \,d\sigma \right|
\\&\qquad
+\left|\int_{\partial\V\setminus\Delta(\rho)}
\nu \cdot A^T (\nabla\Gamma_{X+\e}^T -\nabla\Gamma_{X+\check\e}^T ) f \,d\sigma \right|.
\end{align*}

If $\V$ is a good Lipschitz domain and $f\in L^p$ for $p<\infty$, then by (\ref{eqn:goodCZbound}), the third term is at most
\begin{align*}
\int_{\partial\V\setminus\Delta(\rho)} \frac{C|\e-\check\e|^\alpha}{|Y-X|^{1+\alpha}}
|f|\,d\sigma
\leq 
\|f\|_{L^p}
\left\|\frac{C|\e-\check\e|^\alpha}{|Y-X|^{1+\alpha}}\right\|_{L^{p'}({\partial\V\setminus\Delta(\rho)})}
&\leq \|f\|_{L^p} \frac{C|\e-\check\e|^\alpha}{\rho^{\alpha+1/p}}.
\end{align*}

If $f\in BMO$, and $X$, $X_0\in \V$, then since $\D 1=0$ we may assume without loss of generality that $\dashint_{\zeta(\rho)} f=0$.
Then
\begin{align*}
\int_{\partial\V\setminus\Delta(\rho)} \frac{C|\e-\check\e|^\alpha}{|Y-X|^{1+\alpha}}
|f|\,d\sigma
&\leq 
\sum_{k=1}^\infty
\int_{\Delta(2^k\rho)\setminus\Delta(2^{k-1}\rho)}
\frac{C|\e-\check\e|^\alpha}{|X-Y|^{1+\alpha}} |f(Y)|\,d\sigma
\\&\leq 
\sum_{k=1}^\infty\frac{C|\e-\check\e|^\alpha}{2^{k+k\alpha}\rho^{1+\alpha}}
\int_{\Delta(2^k\rho)\setminus\Delta(2^{k-1}\rho)}
\left|f(Y)\right|\,d\sigma
\\&\leq 
\sum_{k=1}^\infty\frac{C|\e-\check\e|^\alpha}{2^{k+k\alpha}\rho^{1+\alpha}}
2^k\rho k\|f\|_{BMO}
\\&\leq 
\frac{C|\e-\check\e|^\alpha}{\rho^{\alpha}}
\|f\|_{BMO}\sum_{k=1}^\infty\frac{k}{2^{k\alpha}}.
\end{align*}

To control the first term, recall that $M(\partial_\tau f)(X)$ is finite, so there is some $C(X)$ such that, if $|X-Y|$ is small, then $|f(X)-f(Y)|< C(X) |X-Y|$. So
\begin{align*}
{\left|\int_{\Delta(\rho)}
\nu\cdot A^T \nabla\Gamma_{X+\e}^T (f-f(X)) \,d\sigma\right|}
&\leq \int_{\Delta(\rho)} \frac{C}{|X+\e-Y|} C(X)|X-Y|\,d\sigma(Y)
\end{align*}
Since $\dist(X+\e,\partial\V)\leq (1+a)|\e|$, we have that $|\e|\leq(1+a)|X+\e-Y|$ and so
\[{\left|\int_{\Delta(\rho)}
\nu\cdot A^T \nabla\Gamma_{X+\e}^T (f-f(X)) \,d\sigma\right|}
\leq \int_{\Delta(\rho)} \frac{C}{|X-Y|}  C(X)|X-Y|\,d\sigma(Y)
\leq C(X)\rho .
\]

To control the middle term, note that 
\[\int_{\partial (B(X,\rho)\cap\V)}\nu\cdot A^T \nabla\Gamma_{X+\e}^T  \,d\sigma = 1
=\int_{\partial (B(X,\rho)\cap\V)}\nu\cdot A^T \nabla\Gamma_{X+\check\e}^T  \,d\sigma
\]
and so
\begin{multline*}
{\left|f(X)\int_{B(X,\rho)\cap\partial\V}
\nu \cdot A^T (\nabla\Gamma_{X+\e}^T -\nabla\Gamma_{X+\check\e}^T ) \,d\sigma \right|}
\\\begin{aligned}
&=
\left|f(X)\int_{\partial B(X,\rho)\cap\V}
\nu \cdot A^T (\nabla\Gamma_{X+\e}^T -\nabla\Gamma_{X+\check\e}^T ) \,d\sigma \right|
\\&\leq
\left|f(X)\right|\int_{\partial B(X,\rho)}
C\frac{|\e-\check \e|^\alpha}{|X-Y|^{1+\alpha}} \,d\sigma 
\leq C|f(X)|\frac{\|\e-\check \e\|^\alpha}{\rho^\alpha}.
\end{aligned}
\end{multline*}

We may control all three terms by first making $\rho$ small and then making $|\e|$ and $|\check\e|$ small.

Similarly, 
\[\int_{\partial (B(X,\rho)\cap\V)} \tau\cdot\nabla \Gamma_{X+\e}^T\,d\sigma=\int_{\partial (B(X,\rho)\cap\V)} \tau\cdot\nabla \Gamma_{X+\check\e}^T\,d\sigma=0,\]
and so
\begin{align*}
\left|\int_{\partial\V}
\tau\cdot 
\nabla\Gamma_{X+\e}^T f \,d\sigma
-\int_{\partial\V}
\tau\cdot \nabla\Gamma_{X+\check\e}^T f \,d\sigma\right|
\end{align*}
goes to 0 as $|\e|,|\check\e|\to 0$.
\end{proof}

\begin{lem}\label{lem:Ktranspose} If $f\in L^p$, $1<p<\infty$, then we have the following equations for the transposes of $\K$ and $\J$:
\begin{align*}
\K^t_\pm f &= \mp\nu\cdot A^T\nabla\S^T f|_{\partial\V_\mp},\qquad
\J^t f(X) = \partial_\tau\S^T f(Z).
\end{align*}
\end{lem}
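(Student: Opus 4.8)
The plan is to derive the transpose identities by pairing the layer potentials against test functions and using Fubini together with the symmetry relation $\Gamma_X^T(Y)=\Gamma_Y(X)$ of \eqref{eqn:symm}. Fix $f,g\in L^p\cap L^{p'}$ smooth (say compactly supported if $\partial\V$ is unbounded), with $M(\partial_\tau f)$, $M(\partial_\tau g)$ finite so that all the relevant non-tangential limits exist by \lemref{Kwelldef}. First I would compute $\langle g,\K_\pm f\rangle$. Writing $\K_\pm f(X)=\lim_{Z\to X,\,Z\in\V_\pm}\int_{\partial\V}\nu(Y)\cdot A^T(Y)\nabla\Gamma_Z^T(Y)f(Y)\,d\sigma(Y)$, I would like to move the limit outside the $X$-integration (justified by the non-tangential convergence and a dominated-convergence argument based on the uniform bound $N(\D f)\in L^p$, or rather by approximating $Z$ along a cone and using the kernel bounds), interchange the order of integration in $X$ and $Y$, and then recognize the inner $X$-integral: $\int_{\partial\V}\nu(Y)\cdot A^T(Y)\nabla\Gamma_Z^T(Y)\,g(X)\,d\sigma(X)$ is not quite right — rather, after Fubini the variable that survives is $Y$, and the $X$-integral $\int g(X)\,d\sigma(X)$ is weighted by $\nu(Y)\cdot A^T(Y)\nabla\Gamma_Z^T(Y)$ evaluated with $Z$ near $X$. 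This is the delicate point.

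The cleaner route, which I would actually follow, is to use the $X$-variable symmetry from \autoref{sec:symm}: since $\nabla\Gamma_Z^T(Y)=\nabla_Y\Gamma_Y^{}(Z)$ is not literally what we need, instead recall $\Gamma_Z^T(Y)=\Gamma_Y(Z)$, so $\nabla_X$ acting on $\Gamma_X^T(Y)$ corresponds to $\nabla_X$ acting on $\Gamma_Y(X)$. Thus in the definition of $\nabla\S^T f(X)=\int_{\partial\V}\nabla_X\Gamma_X^{(A^T)^T}(Y)f(Y)\,d\sigma(Y)=\int_{\partial\V}\nabla_X\Gamma_X^A(Y)f(Y)\,d\sigma(Y)$ — wait, $\S^T$ is built from $A^t$, so $\nabla\S^T f(X)=\int\nabla_X\Gamma_X^{A}(Y)f(Y)d\sigma(Y)$ using that the fundamental solution attached to $(A^T)^T=A$ appears. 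Pairing $\langle \nu\cdot A^T\nabla\S^T f,\ g\rangle$ over $\partial\V_\mp$ and using Fubini, the kernel becomes $\nu(X)\cdot A^T(X)\nabla_X\Gamma_X^A(Y)$; by symmetry $\Gamma_X^A(Y)=\Gamma_Y^{A^T}(X)$, so this is $\nu(X)\cdot A^T(X)\nabla_X\Gamma_Y^{A^T}(X)$, which is exactly (the boundary trace of) the kernel of $\D$ acting in the $Y$ variable — i.e.\ $\langle g,\K_\pm f\rangle$. Matching the side of the boundary ($\V_\pm$ versus $\V_\mp$) and the sign requires tracking which non-tangential approach region is used in \eqref{eqn:K} versus in the trace of $\nabla\S^T$; the sign flip $\mp$ comes from the orientation of $\nu$ relative to $\V_+$ versus $\V_-$. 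For $\J^t$, the same computation with $\tau\cdot\nabla$ in place of $\nu\cdot A^T\nabla$ gives $\J^t f=\partial_\tau\S^T f$ with no sign ambiguity because $\J$ is defined by a limit from both sides that agree (as noted after \dfnref{layers}).

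I would organize the write-up as: (i) reduce to smooth, compactly supported $f,g$ with $M(\partial_\tau f),M(\partial_\tau g)$ finite and invoke density plus the $L^p$-boundedness of all four operators to extend; (ii) justify pulling the non-tangential limit through the $\langle\cdot,\cdot\rangle$ pairing via the Lebesgue dominated convergence theorem, using $|\nabla\Gamma_Z^T(Y)|\le C/|Z-Y|$ and the bound $\int_{\partial\V}|X-Z|^{-1}\,d\sigma$ together with the finiteness of the maximal functions to produce an integrable majorant uniformly in $Z$ along the cone; (iii) apply Fubini and substitute $\Gamma_X^A(Y)=\Gamma_Y^{A^T}(X)$ and its $X$-gradient version \eqref{eqn:doublegradfundsoln}/\eqref{eqn:symm} to rewrite the kernel; (iv) read off $\K_\pm^t$ and $\J^t$, being careful with the side of $\partial\V$ and the sign of $\nu$.

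The main obstacle I expect is step (ii): interchanging the non-tangential limit with the double integral over $\partial\V\times\partial\V$. Near the diagonal the kernel $\nu(Y)\cdot A^T(Y)\nabla\Gamma_Z^T(Y)$ is only $O(1/|Z-Y|)$, which is not integrable against a general $L^{p'}$ function uniformly in $Z$; this is precisely why the hypothesis $M(\partial_\tau f)(X)<\infty$ (equivalently the $|f(X)-f(Y)|\le C(X)|X-Y|$ estimate used in \lemref{Kwelldef}) is needed — one subtracts off the value $f(X)$, uses the cancellation $\int_{\partial(B(X,\rho)\cap\V)}\nu\cdot A^T\nabla\Gamma_Z^T\,d\sigma=1$ to handle the constant piece, and then the remaining integrand is $O(1)$ near the diagonal. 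So the honest argument reuses the three-term splitting from the proof of \lemref{Kwelldef} rather than a naive dominated convergence, and the bookkeeping of which boundary piece each term lands on is where care is required.
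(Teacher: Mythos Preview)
Your proposal has a genuine gap for $\K^t$. In this paper the conormal $\nu\cdot A^T\nabla\S^T f|_{\partial\V_\mp}$ is defined \emph{weakly} (see \autoref{chap:dfn}): one must verify an identity of the form $\int_{\V_\mp}A^T\nabla\S^T f\cdot\nabla\eta=\pm\int_{\partial\V}(\K^t_\pm f)\,\eta\,d\sigma$ for every $\eta\in C^\infty_0(\R^2)$. Your ``cleaner route'' instead pairs against a boundary function $g$ and presupposes a pointwise non-tangential trace of $\nu\cdot A^T\nabla\S^T f$, which is not available at this point in the paper (the $L^p$ control on $N(\nabla\S^T f)$ comes only via \thmref{NDf}, after $\T$ is shown bounded). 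Even granting such a trace, you would still owe the identification of the pointwise trace with the weak conormal derivative---so as written your argument is not proving the statement as it is meant.

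The paper's argument is different and sidesteps the obstacle you correctly flag. It tests against $\eta\in C^\infty_0(\R^2)$, writes $\int_{\partial\V}\eta\,\K^t_+f\,d\sigma=\int_{\partial\V}f\,\K_+\eta\,d\sigma$, and then converts $\D\eta(Z)=\int_{\partial\V}\nu\cdot A^T\nabla\Gamma_Z^T\,\eta\,d\sigma$ into the \emph{solid} integral $-\int_{\V^C}\nabla\eta\cdot A^T\nabla\Gamma_Z^T$, using that $\div A^T\nabla\Gamma_Z^T=0$ in $\V^C$ when $Z\in\V_+$. This single move removes the diagonal singularity: the limit $Z\to Y$ now sits inside an integral over a two-dimensional region where $|\nabla\Gamma_Z^T-\nabla\Gamma_Y^T|$ is locally integrable and easy to bound, and the subsequent Fubini on $\V^C\times\partial\V$ is harmless. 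No three-term splitting from \lemref{Kwelldef} is needed for $\K^t$. For $\J^t$ the paper does stay on the boundary, using $\oint_{\partial\W}\tau\cdot\nabla\Gamma_Z^T\,d\sigma=0$ to subtract $\eta(X)$ and make the kernel $O(1)$ near the diagonal---essentially the cancellation trick you describe at the end---so your intuition there is on target.
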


\begin{proof} Consider $\K^t_+$ first. Pick some $\eta\in C^\infty_0(\R^2)$. By the weak definition of $\nu\cdot A\nabla$, we need to show that $\int_{\partial\V} \eta \K^t_+ f \,d\sigma = -\int_{\V^C}\nabla\eta \cdot A^T\nabla\S^T f$.

If $f\in L^p$, $1<p<\infty$, then by (\ref{eqn:goodCZbound}),
\[|\nabla \S^T f(X)|\leq \int_{\partial\V} |\nabla_X\Gamma_X f|\,d\sigma
\leq \|f\|_{L^p}\left\|\frac{C}{\cdot-X} \right\|_{L^{p'}(\partial\V)}
\leq C_p\|f\|_{L^p} \dist(X,\partial\V)^{-1/p}\]
which is in $L^1_{loc}$, so the right-hand integral converges.

By definition,
\begin{align*}
\int_{\partial\V} \eta \K^t_+ f \,d\sigma 
&= \int_{\partial\V} \K_+\eta f \,d\sigma
\\&= \int_{\partial\V} \lim_{Z\to Y,\>Z\in\gamma_+} \int_{\partial\V} \nu\cdot A^T \nabla\Gamma_Z^T(X) \eta(X)\,d\sigma(X)\, f(Y) \,d\sigma(Y)
\\&= -\int_{\partial\V} \lim_{Z\to Y,\>Z\in\gamma_+} 
\int_{\V^C} \nabla\eta(X)\cdot A^T \nabla\Gamma_Z^T(X)\,dX
\, f(Y) \,d\sigma(Y)
\end{align*}
while
\begin{align*}
\int_{\V^C}\nabla\eta \cdot A^T\nabla\S^T f
&=\int_{\V^C}\nabla\eta(X) \cdot A^T(X)\int_{\partial\V}\nabla_X\Gamma_X(Y) f(Y)\,d\sigma(Y)\,dX
\\&=\int_{\partial\V}\int_{\V^C}\nabla\eta(X) \cdot A^T(X)\nabla\Gamma_Y^T(X) \,dX \,f(Y)\,d\sigma(Y).
\end{align*}

I claim that
\[\lim_{Z\to Y,\>Z\in\gamma_+} 
\int_{\V^C} \nabla\eta(X)\cdot A^T \nabla\Gamma_Z^T(X)\,dX
=
\int_{\V^C} \nabla\eta(X)\cdot A^T \nabla\Gamma_Y^T(X)\,dX.\]

Let $\delta=2|X-Y|$, $\supp\eta\subset B(Y,R)$. Then
\begin{multline*}
\left|\int_{\V^C} \nabla\eta(X)\cdot A^T \nabla\Gamma_Z^T(X)\,dX
-
\int_{\V^C} \nabla\eta(X)\cdot A^T \nabla\Gamma_Y^T(X)\,dX\right|
\\\begin{aligned}
&=
\left|\int_{\V^C} \nabla\eta(X)\cdot A^T (\nabla\Gamma_Z^T(X)- \nabla\Gamma_Y^T(X))\,dX\right|
\\&\leq C(\eta)
\int_{|X-Y|\leq R} |\nabla\Gamma_Z^T(X)- \nabla\Gamma_Y^T(X)|\,dX
\\&\leq 
C(\eta)\int_{\delta<|X-Y|\leq R} \frac{|Z-Y|^\alpha}{|X-Y|^{1+\alpha}}\,dX
+C(\eta)\int_{|X-Y|\leq \delta} \frac{C}{|X-Y|}+\frac{C}{|X-Z|}\,dX
\\&\leq 
C(\eta)|Z-Y|^\alpha R^{1-\alpha}+C(\eta)\delta
=C(\eta)|Z-Y|^\alpha R^{1-\alpha}+C(\eta)|Z-Y|.
\end{aligned}
\end{multline*}
This clearly goes to 0 as $Z\to Y$; thus $\K^t_+ f=\nu\cdot A^T\nabla \S^T f|_{\partial\V_-}$. Similarly, $\K^t_- f=\nu\cdot A^T\nabla \S^T f|_{\partial\V_+}$.

Now we come to $\J f$. Let $\eta\in C^\infty_0(\R^2)$ again. Then
\begin{align*}
\int_{\partial\V} \eta \J^t f\,d\sigma
&=\int_{\partial\V} f\J \eta \,d\sigma
=\int_{\partial\V} f(X)\lim_{Z\to X}\int_{\partial\V} \tau(Y)\cdot\nabla\Gamma_Z^T(Y)\eta(Y)\,d\sigma(Y) \,d\sigma(X).
\end{align*}
If $\partial\V$ is bouned, let $\W=\V$. If $\V$ is a special Lipschitz domain, then let $\W=\psi((-R,R)\times(0,R))$ where $R$ is large enough that $\partial\V\cap\supp\eta=\partial\W\cap\supp\eta$. Then $\W$ is bounded, and $Z\notin\partial\W$.

So $\int_{\partial\W}\tau(Y)\cdot\nabla\Gamma_Z^T(Y)\,d\sigma(Y)=0$, and so
\begin{align*}
\int_{\partial\V} \eta \J^t f\,d\sigma
&=\int_{\partial\V} f(X)\lim_{Z\to X}\int_{\partial\W} \tau(Y)\cdot\nabla\Gamma_Z^T(Y)(\eta(Y)-\eta(X))\,d\sigma(Y)\,d\sigma(X) .
\end{align*}
But
\[\lim_{Z\to X}\int_{\partial\W} \tau(Y)\cdot\nabla\Gamma_Z^T(Y)(\eta(Y)-\eta(X))\,d\sigma(Y)
=\int_{\partial\W} \tau(Y)\cdot\nabla\Gamma_X^T(Y)(\eta(Y)-\eta(X))\,d\sigma(Y)\]
since for all $X\in \partial\V$ and all $Z\in \gamma_\pm(X)$ with $|X-Z|=\delta/2$,
\begin{multline*}
\int_{\partial\W} |\tau(Y)\cdot(\nabla\Gamma_Z^T(Y) -\nabla\Gamma_X^T(Y))(\eta(Y)-\eta(X))|\,d\sigma(Y)
\\
\begin{aligned}
&\leq
\int_{\partial\W\cap B(X,\delta)} \left(\frac{C}{|X-Y|} +\frac{C}{|Z-Y|}\right)\|\eta'\|_{L^\infty} |X-Y|\,d\sigma(Y)
\\&\qquad+
\int_{\partial\W\setminus B(X,\delta)} \frac{C|X-Z|^\alpha}{|X-Y|^{1+\alpha}}\|\eta\|_{L^\infty}\,d\sigma(Y)
\\&\leq
C\|\eta'\|_{L^\infty}\delta + C|X-Z|^\alpha\|\eta\|_{L^\infty}\delta^\alpha
\end{aligned}
\end{multline*}
which goes to 0 as $|X-Z|\to 0$. So
\begin{align*}
\int_{\partial\V} \eta \J^t f\,d\sigma
&=\int_{\partial\V} f(X)\int_{\partial\W} \tau(Y)\cdot\nabla\Gamma_X^T(Y)(\eta(Y)-\eta(X))\,d\sigma(Y)\,d\sigma(X) \\&=\int_{\partial\W}(\eta(Y)-\eta(X)) \tau(Y)\cdot
\int_{\partial\V} f(X)\nabla_Y\Gamma_Y(X)\,d\sigma(X) \,d\sigma(Y)
\\&=\int_{\partial\W}(\eta(Y)-\eta(X)) \tau(Y)\cdot
\nabla\S^T f(Y)\,d\sigma(Y)
\\&=\int_{\partial\W}\eta(Y) \tau(Y)\cdot
\nabla\S^T f(Y)\,d\sigma(Y)=\int_{\partial\V}\eta(Y) \tau(Y)\cdot
\nabla\S^T f(Y)\,d\sigma(Y).
\end{align*}
This concludes the proof.
\end{proof}

\begin{lem}\label{lem:Tstar} Assume that $K(X,Y)$ satisfies the \CZ\ kernel conditions
\begin{align*}
|K(X,Y)|&\leq\frac{C}{|X-Y|},
\\
|K(X,Y)-K(X',Y)|&\leq \frac{C|X-X'|^\alpha}{\min(|X-Y|,|X'-Y|)^{1+\alpha}},
\\
|K(X,Y)-K(X,Y')|&\leq \frac{C|Y-Y'|^\alpha}{\min(|X-Y|,|X-Y'|)^{1+\alpha}}
\end{align*}
for some $C,\alpha>0$.

Then if we define
\[\T F(X)=\int_{\partial\V} K(X,Y) F(Y)\,d\sigma(Y),\]
then for any $X\in\V,$ $X^*\in\partial\V$ with $|X-X^*|\leq (1+a)\dist(X,\partial\V)$,
\[|\T F(X)|\leq C MF(X^*) + \T_* F(X^*).\]
\end{lem}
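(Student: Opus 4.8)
The plan is to compare the full value $\T F(X)$ at an interior point with the truncated singular integral at the boundary point $X^*$, using the Calderón–Zygmund kernel bounds together with the Ahlfors–David regularity of $\partial\V$. Set $\delta = \dist(X,\partial\V)$, so that $|X-X^*|\le(1+a)\delta$. Split $\partial\V$ into the ``near'' piece $\Delta = B(X^*,\rho)\cap\partial\V$ and the ``far'' piece $\partial\V\setminus\Delta$, where $\rho$ is a fixed multiple of $\delta$ (say $\rho = 2(1+a)\delta$, large enough that $X$ is comparable to the center of $\Delta$). Write
\[
\T F(X) = \int_{\Delta} K(X,Y)F(Y)\,d\sigma(Y) + \int_{\partial\V\setminus\Delta} \bigl(K(X,Y)-K(X^*,Y)\bigr)F(Y)\,d\sigma(Y) + \int_{\partial\V\setminus\Delta} K(X^*,Y)F(Y)\,d\sigma(Y).
\]
The last term is exactly (a truncation of) $\T_* F(X^*)$, so up to identifying the truncation with the maximal truncated operator it contributes the $\T_* F(X^*)$ on the right-hand side.

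\textbf{Step 1 (near piece).} On $\Delta$ we use only the size bound $|K(X,Y)|\le C/|X-Y|$. Since $X$ is at distance $\gtrsim\delta$ from $\partial\V$ and $Y\in B(X^*,\rho)$ with $\rho\approx\delta$, we have $|X-Y|\gtrsim\max(\delta,|X^*-Y|)$ for all such $Y$ — more precisely, decompose $\Delta$ into dyadic annuli $\{2^{-j-1}\rho < |X^*-Y| \le 2^{-j}\rho\}$ for $j\ge 0$ plus the innermost ball $|X^*-Y|\le$ (comparable to $\delta$, which is all of $\Delta$ once $\rho\approx\delta$). On each annulus $|X-Y|\gtrsim 2^{-j}\rho + \delta$, and the Ahlfors–David bound gives $\sigma$ of the annulus $\lesssim 2^{-j}\rho$; summing a geometric series in $j$ bounds $\int_\Delta |K(X,Y)||F(Y)|\,d\sigma$ by $C\,MF(X^*)$, since each dyadic average of $|F|$ over $B(X^*,2^{-j}\rho)\cap\partial\V$ is at most $MF(X^*)$.

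\textbf{Step 2 (far piece, kernel difference).} On $\partial\V\setminus\Delta$ we have $|X-Y|\ge\rho$ and $|X^*-Y|\ge\rho$, and $|X-X^*|\le(1+a)\delta\le\tfrac12\rho\le\tfrac12\min(|X-Y|,|X^*-Y|)$, so the Hölder-in-the-first-variable kernel bound applies:
\[
|K(X,Y)-K(X^*,Y)| \le \frac{C|X-X^*|^\alpha}{|X^*-Y|^{1+\alpha}} \le \frac{C\delta^\alpha}{|X^*-Y|^{1+\alpha}}.
\]
Again decompose $\partial\V\setminus\Delta$ into dyadic annuli $\{2^k\rho < |X^*-Y|\le 2^{k+1}\rho\}$, $k\ge 0$; on the $k$-th annulus the kernel difference is $\lesssim \delta^\alpha/(2^k\rho)^{1+\alpha}$, the surface measure is $\lesssim 2^k\rho$ by Ahlfors–David, and $\delta\lesssim\rho$, so the $k$-th term is $\lesssim 2^{-k\alpha}\,MF(X^*)$; summing the geometric series gives $C\,MF(X^*)$. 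Combining Steps 1 and 2 with the identification of the third term yields $|\T F(X)| \le C\,MF(X^*) + \T_* F(X^*)$.

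\textbf{Main obstacle.} The only genuinely delicate point is bookkeeping the relation between the interior point $X$ and the truncation scale in $\T_* F(X^*)$: one must choose $\rho$ so that $B(X^*,\rho)$ both contains the ``bad'' region near $X$ (where no cancellation is available) and equals the truncation radius used in the definition of $\T_*$ that makes the third integral literally a truncated operator value dominated by $\T_* F(X^*)$. This requires that the truncation radius in the definition of $\T_*$ be allowed to be any value comparable to $\dist(X,\partial\V)$, which is exactly the flexibility built into the maximal truncated operator. Everything else is routine dyadic summation against the Ahlfors–David bound \eqref{eqn:goodCZbound} and the definition of $M$.
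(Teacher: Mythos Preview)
Your proposal is correct and follows essentially the same approach as the paper: split at radius comparable to $\delta=\dist(X,\partial\V)$, bound the near piece by $MF(X^*)$ using the size of the kernel, bound the far kernel difference by the H\"older estimate and sum dyadically, and identify the remaining truncated integral with $\T_*F(X^*)$. The paper's version is a bit more streamlined: it takes the truncation radius to be exactly $h=\delta$, and on the near piece simply uses $|X-Y|\ge\delta$ (since $Y\in\partial\V$) to get $|K(X,Y)|\le C/\delta$, so no dyadic decomposition is needed there---your annuli in Step~1 collapse to a single estimate.
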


Here if $\T$ is an operator on $L^2(\partial\V)$ with \CZ\ kernel $K(X,Y)$, then 
\[\T_* f(X)=\sup_{\epsilon>0}\left| \int_{Y\in\partial\V,|X-Y|>\epsilon} K(X,Y) f(Y)\,d\sigma(Y)\right|\]
is the standard truncated maximal operator associated with $\T$. Note that $\T_+$, $\T_-$ have the same truncated maximal operator.

It is well-known (see, for example, \cite{grafakos} and \cite[p.~34]{stein}) that if $\partial\V=\R$, then $\T_*$ is bounded $L^2\mapsto L^2$ if and only if $\T_\pm$ is. It is easy to see that this remains true if $\V$ is any good Lipschitz domain.

\begin{proof}
Define 
\[
\T_h F(X)=\int_{|Y-X|>h,Y\in\partial\V} K_0(X,Y) F(Y)\,d\sigma(Y).\]

Now, if $h=\dist(X,\partial\Omega)$, then
\begin{align*}
|\T F(X)-\T_{h} F(X^*)|&\leq
\left|\int_{|Y-X^*|>h} (K(X,Y)-K(X^*,Y)) F(Y)\,d\sigma(Y)
\right|
\\&\qquad
+\left|
\int_{|Y-X^*|<h} K(X,Y) F(Y)\,d\sigma(Y)\right|
\\
&\leq \int_{|Y-X^*|>h} \frac{Ch^\alpha}{|Y-X^*|^{1+\alpha}} |F(Y)|d\sigma(Y)
\\&\qquad
+\int_{|Y-X^*|<h} \frac{C}{h} |F(Y)|d\sigma(Y)
\\&\leq C MF(X^*)
\end{align*}
where $M$ is the standard maximal function and our constants depend on the Lipschitz constants of $\V$.

But $|\T_h F(X)|\leq \T_* F(X)$, and so we are done.\end{proof}

\begin{thm}\label{thm:NDf} Let $\V$ be a good Lipschitz domain. If $\T_\pm$ is bounded from $L^p$ to itself and $L^{p'}$ to itself, for $1<p<\infty$, and the conditions on $K$ in \lemref{Tstar} hold, then 
\[\|N_{\V_\pm}(\D f)\|_{L^{p}}\leq C(p) \|f\|_{L^{p}},\quad
\|N_{\V_\mp}(\nabla\S^T g)\|_{L^{p}}\leq C(p) \|g\|_{L^{p}}.\]
If $\tilde\T_\pm$ is bounded on $L^p$ and $L^{p'}$, then
\[\|N_{\V_\pm}(\nabla \D f)\|_{L^p}\leq C(p)\|\partial_\tau f\|_{L^p}.\]
\end{thm}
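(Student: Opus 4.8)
All three bounds come from a single principle. Suppose $\mathcal J F(Z)=\int_{\partial\V}k(Z,Y)F(Y)\,d\sigma(Y)$ for $Z\in\V$, where $k$ obeys the Calderón--Zygmund conditions of \lemref{Tstar} and where the associated boundary operators $\mathcal J_\pm$ are bounded on $L^p(\partial\V)$ and on $L^{p'}(\partial\V)$. Then $\|N_{\V_\pm}(\mathcal J F)\|_{L^p}\le C\|F\|_{L^p}$: by \lemref{Tstar}, $N_{\V_\pm}(\mathcal J F)(X^*)\le C\,MF(X^*)+\mathcal J_*F(X^*)$ for every $X^*\in\partial\V$; the Hardy--Littlewood maximal operator $M$ is bounded on $L^p(\partial\V)$ since $p>1$; and $\mathcal J_*$ is bounded on $L^p$ because interpolating the $L^p$ and $L^{p'}$ hypotheses yields boundedness on $L^2$, whence by the equivalence quoted after \lemref{Tstar} together with standard Calderón--Zygmund theory $\mathcal J_*$ is bounded on every $L^r$, $1<r<\infty$. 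So the task is to realize $\D f$, $\nabla\S^T g$ and $\nabla\D f$ as entries of such operators $\mathcal J$, applied to functions $F$ whose $L^p$ norm is controlled by the appropriate right-hand side.

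\textbf{The double layer and the single layer.} For $Z\notin\partial\V$ and scalar $f$ one has, by the computation that underlies the formula for $T_\pm(B_1 f)$ in \autoref{chap:dfn},
\[\int_{\partial\V}K(Z,Y)\,B_7(Y)\Matrix{f(Y)&0\\0&0}\,d\sigma(Y)=\Matrix{\D f(Z)&0\\ \D f(Z)&0},\]
since $(B_6\nabla\Gamma_Z^T)^t(B_6^t)^{-1}\Matrix{A\nu&\tau}=\Matrix{\nu\cdot A^T\nabla\Gamma_Z^T&\tau\cdot\nabla\Gamma_Z^T}$. Hence $\D f(Z)$ is an entry of $\int_{\partial\V}K(Z,Y)F(Y)\,d\sigma(Y)$ with $F=B_7\Matrix{f&0\\0&0}$ and $\|F\|_{L^p}\le C\|f\|_{L^p}$ since $B_7$ is bounded; the kernel $K$ satisfies the conditions of \lemref{Tstar} by (\ref{eqn:KisCZ}); and the relevant boundary operator is $\T_\pm$, bounded on $L^p$ and $L^{p'}$ by assumption. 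The principle then gives $\|N_{\V_\pm}(\D f)\|_{L^p}\le C\|f\|_{L^p}$. For $\nabla\S^T g$ the argument is identical with $K$ replaced by the ``primed'' kernel $\nabla_X\Gamma_Y^{T}(X)$ (pole on $\partial\V$, gradient in the interior variable): its Calderón--Zygmund bounds follow from (\ref{eqn:gradfundsoln}) and (\ref{eqn:doublegradfundsoln}), and the relevant operator is the adjoint of $\T_\pm$ — its boundary traces being $\K^t_\pm$ and $\J^t$, by \lemref{Ktranspose} — which is bounded on $L^p$ precisely because $\T_\pm$ is bounded on $L^{p'}$; this accounts for the sign switch to $\V_\mp$.

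\textbf{The gradient of the double layer.} The naive kernel $\nabla_X\bigl(\nu\cdot A^T\nabla\Gamma_X^T\bigr)(Y)$ is $O(|X-Y|^{-2})$ and hence not of Calderón--Zygmund type, so the point is to move a derivative onto $f$ first. Using the conjugate function of \autoref{sec:conjugate}, along $\partial\V$ the double-layer kernel is $\partial_\tau$ of a conjugate of $\Gamma_X^T$, so integration by parts on $\partial\V$ gives $\D f(X)=-\int_{\partial\V}(\text{conjugate of }\Gamma_X^T)(Y)\,\partial_\tau f(Y)\,d\sigma(Y)$ — the boundary terms vanishing because $\partial\V$ is a closed curve, or, when $\V=\Omega$ is special, after exhausting $\partial\Omega$ by bounded arcs and using the decay of $\nabla\tilde\Gamma$. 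Differentiating in $X$ now produces the kernel $\nabla_X\tilde\Gamma_X^T(Y)$, which after multiplication by $B_6$ is exactly the kernel $\tilde K$ of $\tilde\T_\pm$; this kernel satisfies the Calderón--Zygmund conditions by (\ref{eqn:tildeKisCZ}). Applying the principle above to $\tilde\T_\pm$ (bounded on $L^p$ and $L^{p'}$ by hypothesis) and to $F=(\text{bounded matrix})\cdot\partial_\tau f$ yields $\|N_{\V_\pm}(\nabla\D f)\|_{L^p}\le C\|\partial_\tau f\|_{L^p}$.

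\textbf{Main obstacle.} The kernel bookkeeping with $B_6$, $B_7$ and the maximal-function estimates are routine once the first principle is in place. The step that needs genuine care is the integration by parts for $\nabla\D f$, specifically the vanishing of the boundary contributions on a special Lipschitz domain, where the conjugate of $\Gamma_X^T$ is only single-valued off a ray, so the manipulation must be carried out on bounded pieces of $\partial\Omega$ and passed to the limit. A secondary point worth recording is the chain ``$L^p$ and $L^{p'}$ boundedness of $\T_\pm$ (resp.\ $\tilde\T_\pm$) $\Rightarrow$ $L^2$ boundedness $\Rightarrow$ $L^p$ boundedness of the truncated maximal operator'' for the matrix-valued Calderón--Zygmund kernels in play.
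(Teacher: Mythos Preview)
Your proposal is correct and follows essentially the same route as the paper: reduce via \lemref{Tstar} to $M$ plus the truncated maximal operator, realize $\D f$ as an entry of $\T(B_7\,\cdot)$, realize $\nabla\S^T g$ through the transpose kernel $K^t(Y,X)$, and handle $\nabla\D f$ by the conjugate identity $\nu\cdot A^T\nabla\Gamma_X^T=\tau\cdot\nabla\tilde\Gamma_X^T$ followed by integration by parts to land on~$\tilde\T$. Two remarks: the paper bypasses your interpolation step by invoking directly that $\T_\pm$ bounded on $L^p$ implies $\T_*$ bounded on $L^p$ (Cotlar-type inequality), and your worry about single-valuedness of the conjugate on a ray is defused because it is $\nabla_X\tilde\Gamma_X^T(Y)$---which \autoref{sec:conjugate} shows is globally well-defined---that actually appears in the integration by parts once the $X$-gradient is taken first.
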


\begin{proof}
Since $\T_\pm$ is bounded on $L^p$, so is $\T_*$; if $1<p<\infty$, then $M$ is bounded on $L^p$. Therefore, by \lemref{Tstar}, we have that $F\mapsto N(\T F)$ is bounded $L^p\mapsto L^p$ and $L^{p'}\mapsto L^{p'}$.

But
\[\T (B_7 f)(Z)
=\Matrix{
\D f(Z)& \S (\partial_\tau f)(Z)\\
\D f(Z)& \S (\partial_\tau f)(Z)}.
\]
This completes the proof for $N(\D f)$.

Note that
\begin{align*}
\nabla \D f(X) &=\nabla_X\int_{\partial\V} \nu\cdot A^T\nabla\Gamma^T_X f\,d\sigma
=\nabla_X\int_{\partial\V} \tau\cdot \nabla\tilde\Gamma^T_X f\,d\sigma
\\&=-\nabla_X\int_{\partial\V} \tilde\Gamma^T_X \partial_\tau f\,d\sigma
=-\int_{\partial\V} \nabla_X\tilde\Gamma^T_X \partial_\tau f\,d\sigma
\end{align*}
and so, taking our kernel to be $\tilde K(X,Y)=B_6(X)\Matrix{\nabla_X\Gamma_X^T(Y) & \nabla_X\Gamma_X^T(Y)}$
we may use \lemref{Tstar} to bound $\|N(\nabla \D f)\|_{L^p}$.

We now turn to $N(\nabla \S^T g)$. Recall that
\[\nabla \S^T g(Z)=\int_{\partial\V} \nabla_Z\Gamma_Z(Y) g(Y)\,d\sigma(Y).\]

Define $\T_h$ as before, by
\begin{align*}
\T_h F(X) &= \int_{|X-Y|>h} K(X,Y) F(Y)\,d\sigma(Y)
\\&=\int_{|X-Y|>h} \Matrix{\nabla\Gamma_X^T(Y) &\nabla\Gamma_X^T(Y)}^t B_6(Y)^t F(Y)\,d\sigma(Y).\end{align*}
Then
\begin{align*}
\T_h^t F(X) &= \int_{|X-Y|>h}K^t(Y,X) F(Y)\,d\sigma(Y)
\\&=B_6(X)\int_{|X-Y|>h} \Matrix{\nabla_X\Gamma_X(Y) &\nabla_X\Gamma_X(Y)}  F(Y)\,d\sigma(Y)
\end{align*}
If $\T$ is bounded on $L^p$, then so is $\T_*$, and so $\T_*^t$ is bounded on $L^{p'}$. But by our expression for $\T_h$ above and by \lemref{Tstar}, $N(\nabla\S^T f)(X)\leq CMf(X) + C\T^t_* f(X)$. This completes the proof.
%
%
%
\end{proof}

\begin{cor} \label{cor:NDminusD0} We have that $\|N( \D f-\D^0 f)\|_{L^p}\leq C\|f\|_{L^p}\|A-A_0\|_{L^\infty}$.
\end{cor}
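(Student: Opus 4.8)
The plan is to regard $\D-\D^0$ as a single integral operator whose kernel is \emph{small}, of size comparable to $\|A-A_0\|_{L^\infty}$, and then to run it through the machinery of \lemref{Tstar} and \thmref{NDf}. Writing
\[L(X,Y)=\nu(Y)\cdot\bigl(A^T(Y)\nabla\Gamma^{A^T}_X(Y)-A_0^T(Y)\nabla\Gamma^{A_0^T}_X(Y)\bigr),\]
we have $(\D f-\D^0 f)(X)=\int_{\partial\V}L(X,Y)f(Y)\,d\sigma(Y)$ for $X\notin\partial\V$, so it is enough to show two things: (i) $L$ obeys the Calder\'on--Zygmund kernel conditions of \lemref{Tstar} with the constant $C$ replaced by $C\|A-A_0\|_{L^\infty}$; and (ii) the boundary traces $\K^A_\pm-\K^{A_0}_\pm=(\D-\D^0)|_{\partial\V_\pm}$ are bounded on $L^p$ and $L^{p'}$ with operator norm at most $C\|A-A_0\|_{L^\infty}$. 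Granting (i) and (ii), re-running the proof of \thmref{NDf} with $\D$ replaced by $\D-\D^0$ — and tracking that the constant produced there is linear in the kernel constant and in the $L^p,L^{p'}$ operator norms of the boundary traces — yields $\|N(\D f-\D^0 f)\|_{L^p}\le C\|A-A_0\|_{L^\infty}\|f\|_{L^p}$.

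For (i), split $A^T\nabla\Gamma^{A^T}_X-A_0^T\nabla\Gamma^{A_0^T}_X=(A^T-A_0^T)\nabla\Gamma^{A^T}_X+A_0^T(\nabla\Gamma^{A^T}_X-\nabla\Gamma^{A_0^T}_X)$. The first summand is $\le C\|A-A_0\|_{L^\infty}/|X-Y|$ by (\ref{eqn:gradfundsoln}); the second is $\le C\|A-A_0\|_{L^\infty}/|X-Y|$ by (\ref{eqn:nearfundsolns}) applied with $A^T,A_0^T$ in place of $A,A_0$ and with the role of $\epsilon$ played by $\|A-A_0\|_{L^\infty}$ — legitimate because the analytic family $A_0+z\frac{\lambda}{2\|A-A_0\|_{L^\infty}}(A-A_0)$ stays within $2\|A-A_0\|_{L^\infty}$ of $A_0$, and hence uniformly elliptic, on a disc of radius $\approx\|A-A_0\|_{L^\infty}$ about the origin, which already contains the point $z_0$ at which it equals $A$, so the Cauchy-estimate computation of \autoref{sec:analytic} goes through. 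The H\"older-in-$X$ and H\"older-in-$Y$ conditions on $L$ come out of the same splitting, using the two displayed inequalities immediately following (\ref{eqn:nearfundsolns}) together with the H\"older bounds on $\nabla\Gamma^{A^T}$ and the boundedness of $A^T$ and $A_0^T$.

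For (ii), apply (\ref{eqn:CZanalytic}) to the analytic family $\mathcal J^z=\K^{A_z}_\pm$, $A_z=A_0+z\frac{\lambda}{2\|A-A_0\|_{L^\infty}}(A-A_0)$. Its kernels $\nu\cdot A_z^T\nabla\Gamma^{A_z^T}_X$ are analytic in $z$ by \autoref{sec:analytic}, and the family is uniformly bounded on $L^p$ and $L^{p'}$ by \thmref{NDf}, whose constants depend only on $\lambda$, $\Lambda$ and the Lipschitz constants of $\V$ and so are uniform in $z$ as long as $A_z$ stays uniformly elliptic. Hence $\|\K^{A_z}_\pm f-\K^{A_0}_\pm f\|_{L^p}\le C|z|\,\|f\|_{L^p}$; evaluating at $z=z_0$ (where $A_{z_0}=A$ and $|z_0|\approx\|A-A_0\|_{L^\infty}$) gives (ii), and the same argument works on $L^{p'}$.

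The bulk of the work is the kernel verification (i); it is routine but slightly laborious. The single conceptual point worth flagging is that smallness of $N(\D f-\D^0 f)$ genuinely requires \emph{both} ingredients. In \thmref{NDf} the non-tangential maximal function is dominated, via \lemref{Tstar}, by $Mf$ plus the truncated maximal operator of the kernel, and that truncated maximal operator is in turn dominated, by Cotlar's inequality, by $M$ of the principal-value boundary operator plus a kernel-constant multiple of $Mf$; since $M$ is bounded on $L^p$ for $1<p<\infty$, both terms inherit the factor $\|A-A_0\|_{L^\infty}$ precisely because of (i) and (ii) — Cotlar's inequality on its own, without (ii), would not give the smallness.
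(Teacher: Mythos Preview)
Your approach is correct and is essentially the paper's own argument: the paper likewise applies \lemref{Tstar} to $\check\T=\T-\T^0$ using the small-kernel bound (\ref{eqn:nearfundsolns}), invokes (\ref{eqn:CZanalytic}) to control the boundary operator with norm $C\|A-A_0\|_{L^\infty}$, and then concludes exactly as in the proof of \thmref{NDf}. One small correction to your exposition: the analytic family $A_z=A_0+z\frac{\lambda}{2\|A-A_0\|_{L^\infty}}(A-A_0)$ is uniformly elliptic on a disc of radius $\approx 1$ (not $\approx\|A-A_0\|_{L^\infty}$), and it is precisely this \emph{large} contour in the Cauchy integral that yields the factor $|z_0|\approx\|A-A_0\|_{L^\infty}$ --- a contour of radius $\approx\|A-A_0\|_{L^\infty}$ would give no gain.
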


\begin{proof}
Consider $\check\T = \T-\T^0$, and recall that $\|A-A_0\|_{L^\infty}=\epsilon$. Then by (\ref{eqn:nearfundsolns}) and \lemref{Tstar}, we know that
\[|\check \T f(Z)|\leq C\epsilon Mf(X)+C\check T_* f(X)\]
for any $X\in\partial\V$ with $|X-Z|<(1+a)\dist(Z,\partial\V)$.

But by (\ref{eqn:CZanalytic}), $\|\check T\|_{L^p\mapsto L^p}\leq C_p \epsilon$. So as in the proof of \thmref{NDf}, $\|N(\D f-\D^0 f)\|_{L^p}\leq C\epsilon \|f\|_{L^p}$, as desired.
\end{proof}

\begin{cor}\label{cor:NSf} If $g$ is a $H^1$ atom, then $\|N(\nabla \S g)\|_{L^1}\leq C$.
\end{cor}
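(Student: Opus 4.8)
The plan is to exploit the atom's size, support, and cancellation, together with the $L^2$-boundedness of $N(\nabla\S g)$ already supplied by \thmref{NDf}. Let $g$ be an atom with $\supp g\subset\Delta=B(X_0,R)\cap\partial\V$, $\|g\|_{L^\infty}\leq 1/R$, and $\int_{\partial\V} g\,d\sigma=0$. Split $\partial\V$ into the ``local'' piece $2\Delta=B(X_0,2R)\cap\partial\V$ and the ``far'' piece $\partial\V\setminus 2\Delta$, and estimate $\int_{2\Delta} N(\nabla\S g)\,d\sigma$ and $\int_{\partial\V\setminus 2\Delta} N(\nabla\S g)\,d\sigma$ separately.

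For the local piece, I would use Cauchy--Schwarz: $\int_{2\Delta} N(\nabla\S g)\,d\sigma\leq \sigma(2\Delta)^{1/2}\,\|N(\nabla\S g)\|_{L^2(\partial\V)}$. By \thmref{NDf} (applied with $p=2$, using that the relevant kernel $K$ satisfies the \CZ\ conditions verified in \eqref{eqn:KisCZ} and that $\T_\pm$ is $L^2$-bounded — this is exactly what the ambient argument establishes), $\|N(\nabla\S g)\|_{L^2}\leq C\|g\|_{L^2}\leq C\,\sigma(\Delta)^{1/2}\|g\|_{L^\infty}\leq C\,(k_4 R)^{1/2}(1/R)$. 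Since $\sigma(2\Delta)\leq 2k_4 R$ by the Ahlfors--David bound, the local contribution is $\leq C(k_4 R)^{1/2}\cdot C(k_4 R)^{1/2}(1/R)=C$, a bound independent of $R$ and $X_0$ as required.

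For the far piece, fix $X\in\partial\V$ with $|X-X_0|>2R$ and $Z\in\gamma(X)$. Using the cancellation $\int g\,d\sigma=0$, write
\[
\nabla_Z\S g(Z)=\int_{\Delta}\big(\nabla_Z\Gamma_Z(Y)-\nabla_Z\Gamma_Z(X_0)\big)g(Y)\,d\sigma(Y),
\]
and apply the Hölder-continuity estimate for $\nabla\Gamma$ in the $Y$-variable from \eqref{eqn:KisCZ} (equivalently \eqref{eqn:doublegradfundsoln}/\eqref{eqn:gradfundsoln}): for $Y\in\Delta$ and $Z$ near $X$ we get $|\nabla_Z\Gamma_Z(Y)-\nabla_Z\Gamma_Z(X_0)|\leq C R^\alpha/|X-X_0|^{1+\alpha}$, uniformly over $Z\in\gamma(X)$ (here one uses $|Z-X|\lesssim\dist(Z,\partial\V)\leq|Z-X_0|\approx|X-X_0|$, so the cone does not interfere). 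Hence $N(\nabla\S g)(X)\leq C R^\alpha |X-X_0|^{-1-\alpha}\|g\|_{L^1}\leq C R^\alpha |X-X_0|^{-1-\alpha}$, and integrating over $\partial\V\setminus 2\Delta$ via \eqref{eqn:goodCZbound} (with the roles of $r$ and $X_0$ there taken as $R$ and $X_0$) yields $\int_{\partial\V\setminus 2\Delta} N(\nabla\S g)\,d\sigma\leq C$. Adding the two pieces gives $\|N(\nabla\S g)\|_{L^1}\leq C$.

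The only subtlety — the ``hard part'' — is making the off-diagonal Hölder estimate genuinely uniform over the whole non-tangential cone $\gamma(X)$, rather than just at the single point $X$: one must check that for every $Z\in\gamma(X)$ the quantities $|Z-Y|$ and $|Z-X_0|$ are comparable to $|X-X_0|$ when $|X-X_0|>2R$ and $Y\in\Delta$, so that $|Z-Z'|<\tfrac12|Z-Y|$-type hypotheses in \eqref{eqn:KisCZ} are satisfied with constants depending only on $a$ and the Lipschitz character of $\V$. This is a routine geometric estimate, essentially the same one used inside \lemref{Tstar}, so I expect no real difficulty; everything else is Cauchy--Schwarz, the $L^2$ theory, and \eqref{eqn:goodCZbound}. (An analogous argument, with $\tilde K$ in place of $K$, would give the corresponding $H^1$-atom bound for $N(\nabla\D f)$ when $\partial_\tau f$ is an atom, but that is not what is asked here.)
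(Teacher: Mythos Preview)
Your proposal is correct and follows essentially the same two-piece strategy as the paper: Cauchy--Schwarz plus the $L^2$ bound from \thmref{NDf} on the near region, and cancellation plus the H\"older estimate for $\nabla\Gamma$ on the far region, integrated via~\eqref{eqn:goodCZbound}.

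One quantitative detail you flagged but did not pin down: the split at radius $2R$ is too tight. For $Z\in\gamma(X)$ the paper shows (by the same two-case argument you allude to) only that $|Z-X_0|\geq |X-X_0|/(2+2a)$, not $|Z-X_0|\approx|X-X_0|$; so with $|X-X_0|>2R$ one gets merely $|Z-X_0|>R/(1+a)$, which does \emph{not} ensure $|Y-X_0|<\tfrac12|Z-X_0|$ for $Y\in\Delta$. The paper resolves this by enlarging the local region to $B(X_0,bR)$ with $b=4+4a$, after which the H\"older hypothesis is satisfied on the complement and the local Cauchy--Schwarz argument goes through unchanged. This is exactly the ``routine geometric estimate'' you anticipated, and once the split radius is adjusted your argument matches the paper's.
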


\begin{proof} Suppose that $g$ is a $H^1$ atom supported in some connected set $\Delta\subset\partial\V$ with $\sigma(\Delta)=R$ and $X_0\in\Delta$. Then \[\|g\|_{L^\infty}\leq \frac{1}{\sigma(\Delta)}=\frac{1}{R},\] and so $\|g\|_{L^2}\leq 1/\sqrt R$.

Therefore, 
\begin{align*}
\int_{B(X_0,bR)\cap\partial\V} |N(\nabla\S g)|
&\leq b k_4 R \left(\dashint_{B(X_0,2R)\cap\partial\V} |N(\nabla\S g)|^2\right)^{1/2}
\\&\leq b k_4 \sqrt{R} C\|g\|_{L^2}
\leq bC 
\end{align*}
by H\"older's inequality.

We need to bound $\int_{\partial\V\setminus B(X_0,bR)}|N(\nabla\S g)|$.
If $|X-X_0|>2R$, and $Y\in \gamma(X)$, then either $|X-Y|\leq |X_0-X|/2$ and so $|X_0-Y|\geq |X_0-X|/2$, or $|X-Y|>|X_0-X|/2$ and so 
\[|X_0-Y|\geq\dist(Y,\partial\V)\geq\frac{1}{1+a}|Y-X|> \frac{|X_0-X|}{2+2a}.\] 
In any case, 
\[N(\nabla \S g)(X)\leq \sup\left\{|\nabla\S g(Y)|:|Y-X_0|>\frac{|X_0-X|}{2+2a}\right\}.\]

If $|X-X_0|>(2+2a)R$, then we care only about $|Y-X_0|>R$. In this case,
\begin{align*}
|\nabla\S g(Y)|
&\leq \left|\int_{B(X_0,R)\cap\V}\nabla_Y\Gamma_Y(Z) g(Z)\,d\sigma(Z)\right|
\\&\leq \left|\int_{B(X_0,R)\cap\V}(\nabla_Y\Gamma_Y(Z)-\nabla_Y\Gamma_Y(X_0)) g(Z)\,d\sigma(Z)\right|
\\&\leq \int_{B(X_0,R)\cap\V}\frac{|Z-X_0|^\alpha}{(|Y-X_0|-R)^{1+\alpha}}\frac{C}{R}\,d\sigma(Z)
\\&\leq C\frac{R^\alpha}{(|Y-X_0|-R)^{1+\alpha}}.
\end{align*}
Therefore, if $|X-X_0|>(2+2a)R$, then
\[N(\nabla \S g)(X)\leq 
C\frac{R^\alpha}{(|X-X_0|-R(2+2a))^{1+\alpha}}.\]
and so if we choose $b=4+4a$, then
\[\|N(\nabla \S g)\|_{L^1(\partial\V\setminus B(X_0,bR))}\leq C.\]
This completes the proof.
\end{proof}

\begin{cor}\label{cor:Kmorewelldef} If $\T$ and $\T^t$ are bounded on $L^p$ and $L^{p'}$ for some $1<p<\infty$, so \thmref{NDf} holds, then the limits in the definition of $\K_\pm f$, $\J f$ exist pointwise a.e.\ for $f\in L^p$, even if $f$ is not smooth.\end{cor}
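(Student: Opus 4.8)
The plan is the classical density-plus-maximal-function argument for a.e.\ convergence of singular integrals. Fix $f\in L^p(\partial\V)$ with $1<p<\infty$ and consider $\K_+ f$; the cases $\K_-$ and $\J$ differ only cosmetically and are disposed of at the end. For $Z\notin\partial\V$ the integral $\D f(Z)=\int_{\partial\V}\nu\cdot A^T\nabla\Gamma^T_Z\, f\,d\sigma$ converges (as $|\nabla\Gamma_Z|\le C/|Z-\cdot|$ and $f\in L^p$), and I would measure its non-tangential oscillation by
\[\Theta f(X)=\lim_{r\to0^+}\,\sup\big\{\,|\D f(Z)-\D f(Z')|:Z,Z'\in\gamma_+(X),\ |Z-X|<r,\ |Z'-X|<r\,\big\}.\]
By the Cauchy criterion, the limit $\K_+ f(X)=\lim_{Z\to X,\,Z\in\gamma_+(X)}\D f(Z)$ exists precisely when $\Theta f(X)=0$. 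Two elementary observations drive everything: $\Theta$ is subadditive, $\Theta(f_1+f_2)\le\Theta f_1+\Theta f_2$, since $\D$ is linear; and $\Theta f(X)\le 2\,N_{\V_+}(\D f)(X)$, which by \thmref{NDf} (whose hypotheses hold by assumption) lies in $L^p(\partial\V)$, hence is finite a.e.

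Now fix $\lambda>0$. Given $\delta>0$, choose a Lipschitz function $g$ with compact support and $\|f-g\|_{L^p}<\delta$ (such functions are dense in $L^p(\partial\V)$). Since $\partial_\tau g\in L^\infty$, the maximal function $M(\partial_\tau g)$ is finite at every point, so \lemref{Kwelldef} gives that $\K_+ g(X)$ exists for every $X$, i.e.\ $\Theta g\equiv0$; subadditivity then yields $\Theta f\le\Theta(f-g)\le 2\,N_{\V_+}(\D(f-g))$ pointwise. By Chebyshev's inequality and the $L^p$ bound of \thmref{NDf},
\[\sigma\{X\in\partial\V:\Theta f(X)>\lambda\}\le\sigma\{N_{\V_+}(\D(f-g))>\lambda/2\}\le\frac{2^p}{\lambda^p}\,\|N_{\V_+}(\D(f-g))\|_{L^p}^p\le\frac{(2C)^p}{\lambda^p}\,\delta^p.\]
Letting $\delta\to0$ shows $\sigma\{\Theta f>\lambda\}=0$; as $\lambda>0$ was arbitrary, $\Theta f=0$ a.e., so $\K_+ f(X)$ exists for a.e.\ $X$. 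Replacing $\V_+$ by $\V_-$ throughout gives the same for $\K_- f$. For $\J f$, the defining integral at $Z\notin\partial\V$ is $v(Z)=\int_{\partial\V}\tau\cdot\nabla\Gamma^T_Z\, f\,d\sigma$, which, as in the proof of \thmref{NDf}, is an entry of the $2\times2$ matrix $\int_{\partial\V}K(Z,Y)\,(fB_7)(Y)\,d\sigma(Y)$; since $B_7$ is bounded and the map $F\mapsto N_{\V_\pm}\!\big(\int K(\cdot,Y)F(Y)\,d\sigma\big)$ is bounded on $L^p$ (which is exactly what the proof of \thmref{NDf} establishes via \lemref{Tstar}), one has $|v(Z)|\le N_{\V_\pm}(\cdots)(X)$ for $Z\in\gamma_\pm(X)$ with $L^p$-norm $\le C\|f\|_{L^p}$, and the same oscillation-and-density argument runs verbatim, using the scalar case of \lemref{Kwelldef} for the Lipschitz approximants.

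The subadditivity and Chebyshev steps are routine bookkeeping; the one point deserving care is the last paragraph — recognizing the pre-limit integrands of $\K_\pm f$ and $\J f$ as entries of the Calder\'on--Zygmund operator(s) whose non-tangential maximal functions are already controlled by (the proof of) \thmref{NDf}, with the passage through the bounded, boundedly invertible matrices $B_6,B_7$ being harmless at the level of $L^p$ estimates — together with the complementary check that the dense class of Lipschitz compactly supported functions genuinely satisfies the hypothesis $M(\partial_\tau g)<\infty$ of \lemref{Kwelldef}. Everything else is the standard a.e.-convergence machinery for singular integrals.
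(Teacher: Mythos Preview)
Your proof is correct and follows essentially the same approach as the paper: both use \lemref{Kwelldef} to get convergence on a dense class (Lipschitz functions), the $L^p$-boundedness of the nontangential maximal function from \thmref{NDf}, and a density argument to conclude. The only cosmetic difference is that you package the density step via the oscillation functional $\Theta$ and Chebyshev, whereas the paper writes out an explicit Borel--Cantelli argument with a sequence $f_n$ satisfying $\|f-f_n\|_{L^p}\le 4^{-n}$; these are standard equivalent presentations of the same idea.
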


\begin{proof} We work with $\J$ only; the proof for $\K_\pm$ is identical. Let $f_n\in L^p$ be smooth and such that $\|f_n-f\|_{L^p}\leq 4^{-n}$. Then for each $X\in\partial\V$, $\lim_{Y\to X\text{ n.t.}} \D f_n (Y)$ exists. Since $\J$ is bounded on $L^p$, $\lim_{n\to\infty} \J f_n =\J f$ exists in $L^p$.

Let $E_n=\{X\in\partial\V:|\J f(X)-\J f_n(X)|>2^{-n}$ or $N(\D (f_n-f))(X)>2^{-n}\}$; then $\sigma(E_n)\leq C 2^{-n}$. Thus, $\sigma\left(\cup_{m=n}^\infty E_n\right)\leq C2^{-n}$, so
\[E=\bigcap_{n=1}^\infty \bigcup_{m=n}^\infty E_n\]
has measure 0.

Suppose $X\in\partial\V$, $X\notin E$. So there is some $N>0$ such that, if $n>N$, then
\begin{align*}
|\D f(Y)- \J f(X)|
&\leq |\D f(Y)-\D f_n(Y)|+|\D f_n(Y)-\J f_n(X)|+|\J f_n(X)-\J f(X)|
\\&\leq N(\D(f-f_n))(X) +|\D f_n(Y)-\J f_n(X)|+|\J f_n(X)-\J f(X)|
\\&\leq C 2^{-n} +|\D f_n(Y)-\J f_n(X)|.
\end{align*}
So for every $\epsilon>0$, there is some $n>N$ such that $C 2^{-n}<\epsilon/2$, and some $\delta>0$ such that $|\D f_n(Y)-\J f_n(X)|<\epsilon/2$ provided $Y\in\gamma(X)$ and $|X-Y|<\delta$; thus, $|\D f(Y)-\J f(X)|<\epsilon$ if $|X-Y|<\delta$, and so the non-tangential limit exists at $X$, as desired.\end{proof}

Similar techniques may be used if we know that we can solve $(D)^A_p$ for smooth boundary data and would like to show that solutions exist for arbitrary $L^p$ (or $BMO$) boundary data.

\chapter{Boundedness of layer potentials on~$L^p$}
\label{chap:Tbounded}

\begin{thm}\label{thm:Tbounded} 

Let $A_0,$ $A$ be matrices defined on $\R^2$ which are independent of the $t$-variable, that is, $A(x,t)=A(x,s)=A(x)$, $A_0(x,t)=A_0(x,s)=A_0(x)$ for all $x,t,s\in\R$.

Assume that $A_0$ is uniformly elliptic (that is, satisfies (\ref{eqn:elliptic})) and $A_0(x)\in\R^{2\times 2}$ for all~$x$. We make the a priori assumptions that $A$,~$A_0$ are smooth.

Then there exists an $\epsilon_0=\epsilon_0(\lambda,\Lambda)>0$ such that, if $\|A-A_0\|_{L^\infty}\leq\epsilon_0$, then if $\V$ is a bounded or special Lipschitz domain, the layer potentials $\T$ and~$\tilde\T$ defined by (\ref{dfn:Tgen}) are $L^p$-bounded for any $1<p<\infty$, with bounds depending only on $\lambda$,~$\Lambda$,~$p$, and the Lipschitz constants of~$\V$ (as defined in \dfnref{domain}).
\end{thm}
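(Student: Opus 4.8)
The plan is to reduce the statement to a single $L^2(\R)$ estimate, prove that estimate first for real matrices, and then perturb. For the reduction: if $\partial\V$ is compact, cover it by the finitely many graph neighborhoods $R_j$ of \dfnref{domain}; the Calderón--Zygmund kernel bounds \eqref{eqn:KisCZ} and \eqref{eqn:tildeKisCZ} (which hold for complex $A$ with $A(x,t)=A(x)$ in $\R^2$ thanks to \lemref{PDE4} and its consequence \eqref{eqn:gradu}) make the interaction between distinct pieces harmless, so $L^p$-boundedness of $\T$, $\tilde\T$ on $\partial\V$ follows from $L^p$-boundedness on each $\partial\Omega_j$. For a special Lipschitz domain $\Omega$ I flatten via $\psi$, turning $\T_{\Omega_\pm}$ into $T_\pm$ and $\tilde\T_{\Omega_\pm}$ into $\tilde T_\pm$ on $L^p(\R\mapsto\C^{2\times 2})$, modulo the bounded, boundedly invertible factors $B_1$, $B_6$, $B_7$ (existence of the principal-value limits being part of the §\ref{sec:buildup} set-up). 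Using \lemref{Tstar} and the standard equivalence between $L^p$-boundedness of $T_\pm$ and of the maximal truncation $T_*$, Calderón--Zygmund theory then shows it is enough to prove $\|T_\pm\|_{L^2(\R)\to L^2(\R)}+\|\tilde T_\pm\|_{L^2(\R)\to L^2(\R)}\le C(\lambda,\Lambda,k_i)$.

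For the base case $A=A_0$ real I would exploit the conjugate-function construction of \autoref{sec:conjugate}: the real elliptic equation $\div A_0\nabla u=0$ in $\R^2$ has a conjugate $\tilde u$ solving $\div\tilde A_0\nabla\tilde u=0$ with $\tilde A_0=A_0^T/\det A_0$ again real and uniformly elliptic, the kernel of $T^{A_0}_\pm$ is built from $\nabla\Gamma^{A_0^T}$, and that of $\tilde T^{A_0}_\pm$ from $\nabla_X\tilde\Gamma^{A_0^T}_X$; so both reduce to $L^2$-bounds for the single- and double-layer potentials of the real elliptic operators $\div A_0\nabla$ and $\div\tilde A_0\nabla$ on the graph $\partial\Omega$. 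These I would obtain from the $\R^2$, real, $t$-independent theory --- either by citing \cite{Rule} (the source of \thmref{rule}), or by a Rellich-type integration-by-parts identity on $\partial\Omega$, the realness of $A_0$ being exactly what lets the Rellich identity control the tangential and conormal traces of a solution by one another in $L^2$. (Equivalently, after flattening the kernels are, up to bounded factors, of Cauchy-integral type on a Lipschitz graph, so Coifman--McIntosh--Meyer together with a $T(b)$ argument applies.) This yields $\|T^{A_0}_\pm\|_{2\to 2},\|\tilde T^{A_0}_\pm\|_{2\to 2}\le C$.

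For complex $A$ with $\|A-A_0\|_{L^\infty}=\epsilon\le\epsilon_0$, choosing $\epsilon_0=\epsilon_0(\lambda,\Lambda)$ small keeps $A$ elliptic with constants controlled by $\lambda,\Lambda$; I write $T^A_\pm=T^{A_0}_\pm+R$. By \eqref{eqn:nearfundsolns} and its two companion Hölder-difference estimates, together with $|B_6^A-B_6^{A_0}|\lesssim\epsilon$, the kernel of $R$ satisfies the Calderón--Zygmund conditions of \lemref{Tstar} with constant $\le C\epsilon$. To upgrade this to $\|R\|_{2\to 2}\le C\epsilon$ I apply the $T(1)$ theorem: the weak boundedness property with constant $C\epsilon$ is immediate from the size bound, and $R(1),R^t(1)\in BMO$ with norm $\le C\epsilon$ because the leading, non-$BMO$ parts of $T^A_\pm$ and $T^{A_0}_\pm$ cancel in the difference --- the function $\D 1$ is locally constant on $\Omega$, and the identities $\int_{\partial(B\cap\Omega)}\nu\cdot A^T\nabla\Gamma^T\,d\sigma=1$, $\int_{\partial(B\cap\Omega)}\tau\cdot\nabla\Gamma^T\,d\sigma=0$ hold with $A$-independent right-hand sides, so only the $O(\epsilon)$ oscillation furnished by the Hölder-difference estimates survives. (Morally this is the content of \eqref{eqn:CZanalytic}: analyticity of $A\mapsto\nabla\Gamma^A$ forces the difference of two bounded layer potentials to have operator norm $O(\epsilon)$.) The identical argument handles $\tilde T_\pm$ via $\tilde K$. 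Hence $\|T^A_\pm\|_{2\to 2}\le C+C\epsilon\le C$, and with the reductions above the theorem follows.

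I expect the main obstacle to be the real base case: producing the $L^2$-bound for the layer potentials of $\div A_0\nabla$ and its conjugate on the \emph{unbounded} graph $\partial\Omega$ with constants depending only on $\lambda,\Lambda$ and the Lipschitz character --- that is, making the Rellich identity (or the $T(b)$ argument) genuinely uniform. The $T(1)$ bookkeeping for $R(1),R^t(1)\in BMO$ in the perturbation step is a secondary technical point, since on an infinite graph the relevant quantities (e.g.\ $\nabla\S^T 1$) are controlled only in $BMO$-type norms and the cancellation must be exploited with care.
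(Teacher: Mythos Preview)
Your reduction to $L^2(\R)$ and your real base case are fine (the Kenig--Rule paper \cite{Rule} does supply the $L^2$ layer-potential bounds for real $t$-independent $A_0$ on a Lipschitz graph), but the perturbation step has a real gap. The $T(1)$ theorem on $R=T^A-T^{A_0}$ needs \emph{both} $R(1)$ and $R^t(1)$ in $BMO$. Your cancellation argument --- that $\D 1$ is locally constant and that $\int_{\partial(B\cap\Omega)}\nu\cdot A^T\nabla\Gamma^T\,d\sigma=1$, $\int_{\partial(B\cap\Omega)}\tau\cdot\nabla\Gamma^T\,d\sigma=0$ have $A$-independent right-hand sides --- is precisely the computation behind $T^A(B_1^A)\in BMO$ (the paper's \corref{Tdirect}); it controls $R(b)$ but says nothing about $R^t(b)$. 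The adjoint direction involves $\int_{\partial\Omega}B_6(X)\nabla_X\Gamma_X^T(Y)\,d\sigma(Y)$, i.e.\ $\nabla\S^T 1$, for which there is no analogous $A$-independent identity; this is exactly the ``hard direction'' that forces the paper into the transpose inequalities of \autoref{sec:Ttranspose}. Your parenthetical appeal to \eqref{eqn:CZanalytic} does not rescue this: that inequality presupposes uniform $L^2$-boundedness of $T^{A_z}$ over a complex neighborhood, which is the very thing to be proved. (Also, the weak boundedness property with constant $C\epsilon$ is not ``immediate from the size bound'' alone; it needs the Hölder regularity too, though that is available.)

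The paper takes a different route that avoids perturbing from the real case altogether: it applies the matrix $T(b)$ theorem (\thmref{useful}) directly to $T^A$, using the accretive matrix $B_1$ of \autoref{sec:B1accretive}. The easy direction $T(B_1)\in BMO$ comes from \corref{Tdirect}; the hard direction $T^t(B)\in BMO$ is handled by the transpose inequalities of \autoref{sec:Ttranspose}, which relate $\|T^t B_6\|_{BMO}$ to $\|\tilde T B_5\|_{BMO}$ and $\|\tilde T B_3\|_{BMO}$ to $\|T^t B_4\|_{BMO}$, with $B_5$ small when both $\|A-A_0\|_\infty$ and $\|\phi'\|_\infty$ are small. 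This yields a self-improving estimate $\|T\|_{2\to2}\le C+C\|B_5\|_\infty\|T\|_{2\to2}$, closed by first establishing (\autoref{sec:Tfinite}) that $\|T\|_{2\to2}$ is finite under the a~priori smoothness assumptions. The restriction to small $\|\phi'\|_\infty$ is then removed by the David buildup (\lemref{buildup}), and the upper-triangular normalization on $A_0$ by a change of variables. So the smallness of $\|A-A_0\|_\infty$ enters not through a perturbation of operators but through making $B_1,\beta_3,\beta_6$ accretive and $\beta_5$ small; the argument treats real and complex $A$ on the same footing.
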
 

We assume $\epsilon_0<\lambda/2$; this will ensure that $A$ is elliptic as well.

If I can prove that $\T$ is bounded $L^2\mapsto L^2$, then I will know that $\T$ is bounded $L^p\mapsto L^p$ for $1<p<\infty$. This follows from basic \CZ\ theory (e.g.\
\mbox{\cite[I.7]{stein}}). 

We will start with special cases:

\begin{thm}\label{thm:special} Suppose that we are working in a special Lipschitz domain $\Omega$ (so that we may consider $T$ instead of $\T$). We may write that $\Omega=\{X:\phi(X\cdot\e^\perp)<X\cdot\e\}$ for some $\e,\e^\perp$ and some Lipschitz function $\phi$.

Then \thmref{Tbounded} holds.
\end{thm}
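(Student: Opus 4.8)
The plan is to reduce \thmref{special} to the $L^2$-boundedness of a Cauchy-type singular integral on a Lipschitz curve. First I would reduce to $L^2$: the boundary $\partial\Omega$ is bi-Lipschitz to $\R$, the operators $T_\pm$ and $\tilde T_\pm$ have already been written as singular integrals on $L^p(\R\mapsto\C^{2\times 2})$, and their kernels $K_h$, $\tilde K_h$ satisfy the \CZ\ kernel estimates (\ref{eqn:KisCZ}), (\ref{eqn:tildeKisCZ}) with constants depending only on $\lambda$, $\Lambda$, $\|\phi'\|_{L^\infty}$; so by standard \CZ\ theory (\cite[I.7]{stein}) it is enough to bound $T_\pm$ and $\tilde T_\pm$ on $L^2$ with a constant of this form, and since $A$, $A_0$ are a priori smooth the operators are a priori bounded --- the whole point is that the bound not depend on any quantitative smoothness. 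It also suffices to treat $T_\pm$ alone: $\tilde K_h$ has the same shape as $K_h$ with the two variables (and $\Gamma^T$, $\tilde\Gamma^T$) interchanged, and $T_\pm = T'_\mp$ on test functions by \autoref{sec:analytic}, so modulo the bounded, boundedly invertible factor $B_6$ the operator $\tilde T_\pm$ is of adjoint type and is bounded once $T_\pm$ is. Finally, since $T_\pm(B_1 f) = \Matrix{\K_\pm f & \J f\\ \K_\pm f & \J f}$ with $B_1$ bounded and boundedly invertible, $L^2$-boundedness of $T_\pm$ is equivalent to $L^2$-boundedness of the scalar layer potentials $\K_\pm$ and $\J$ on $L^2(\partial\Omega)$.

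For the core estimate I would use the two-dimensional, $t$-independent structure. By \autoref{sec:conjugate} every solution of $\div A\nabla u = 0$ has a conjugate $\tilde u$, and $B_6(y)\nabla\Gamma^T_X(y,s) = \Matrix{\partial_s\tilde\Gamma^T_X(y,s)\\ \partial_s\Gamma^T_X(y,s)}$, so the kernels of $\K_\pm$ and $\J$ are $s$-derivatives of the pair $(\tilde\Gamma^T_X,\Gamma^T_X)$; in $\R^2$ this pair solves a first-order Beltrami-type system whose dilatation is controlled by $\lambda$ and $\Lambda$, so (after the normalization fixed in \autoref{sec:symm}) $\Gamma^T_X + i\,c\,\tilde\Gamma^T_X$ is, in complex notation, $\frac{1}{2\pi}\log$ composed with a quasiconformal homeomorphism $\chi$ of the plane whose dilatation depends only on $\lambda$, $\Lambda$. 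Taking $s$-derivatives turns the kernels of $\K_\pm$, $\J$ into Cauchy-type kernels adapted to this quasiconformal structure, and the resulting operators on the Lipschitz curve $\partial\Omega$ should be bounded on $L^2$ by the Coifman--McIntosh--Meyer / Calder\'on-commutator machinery, with constant depending only on $\lambda$, $\Lambda$ and $\|\phi'\|_{L^\infty}$. For real $A_0$ this is essentially the route of \cite{Rule}, \cite{rule2}; for complex $A$ the point of the hypothesis $\|A-A_0\|_{L^\infty}\le\epsilon_0$ is exactly to keep the complex part of the Beltrami coefficient inside the admissible range, so that $\chi$ exists with the stated uniform bounds. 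Lemmas~\ref{lem:PDE1}--\ref{lem:PDE4} and (\ref{eqn:gradu}), which hold for complex $A$ precisely because we are in $\R^2$, are what keep every constant in this argument free of the coefficient smoothness.

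The step I expect to be the main obstacle is this last reduction: making the quasiconformal (equivalently, generalized-analytic-function) representation of $\Gamma^{A^T}_X$ precise and verifying that, after the change of variables, the layer potentials are genuinely Cauchy-type operators on a curve to which the Coifman--McIntosh--Meyer / David theory applies with constants depending only on $\lambda$, $\Lambda$ and $\|\phi'\|_{L^\infty}$ --- in particular that the image of $\partial\Omega$ remains Ahlfors regular. A route that avoids quasiconformal maps is a direct $T(b)$ argument: choose an accretive matrix system $b$ adapted to $A$ (built from $B_6$, $B_7$, or from the Neumann trace $\nu\cdot A\nabla\S b$), and verify the hypotheses of a local $T(b)$ theorem using Rellich-type identities together with the square-function/Carleson estimate $\int_0^\infty\int_\R |\partial_t u(x,t)|^2\, t\,dx\,dt \lesssim \|u(\cdot,0)\|_{L^2}^2$, which itself rests on $\partial_t u$ again solving $\div A\nabla(\partial_t u) = 0$; in that approach the square-function estimate is the crux, and it is again where the $t$-independence and the two-dimensionality are used.
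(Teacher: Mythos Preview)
Your reduction to $L^2$ via \CZ\ theory is correct, and you correctly identify that a $T(b)$ argument is one natural route. But your primary strategy --- the quasiconformal/Beltrami representation --- has a real gap for complex $A$. The factorization ``$\Gamma^T_X + ic\,\tilde\Gamma^T_X = \frac{1}{2\pi}\log\circ\chi$ with $\chi$ quasiconformal'' is a genuine theorem for \emph{real} elliptic $A_0$, but for complex $A$ the associated Beltrami coefficient is complex-valued and need not have modulus bounded away from $1$; the measurable Riemann mapping theorem does not apply, and there is no homeomorphism $\chi$ to change variables by. Saying that $\|A-A_0\|_{L^\infty}\le\epsilon_0$ ``keeps the complex part of the Beltrami coefficient inside the admissible range'' does not make this precise: smallness of the imaginary part of $A$ does not translate into a usable structural statement about solutions of $\div A\nabla u=0$ at this level. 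So the Coifman--McIntosh--Meyer step has no curve to be applied on.

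Your fallback $T(b)$ sketch is in the right family but misses the actual mechanism. The paper does use the matrix $T(b)$ theorem of \cite{DJS} (\thmref{useful}), with the accretive matrix $B_1$ of (\ref{eqn:B1def}); weak boundedness and $\|TB_1\|_{BMO}\le C$ are checked directly (\autoref{sec:Tcts}). The nontrivial point is $\|T^t B\|_{BMO}$, and it is \emph{not} handled by Rellich identities or square-function estimates. Instead the paper exploits a closed loop between $T$ and $\tilde T$: by choosing test functions $f(x,t)=(\xi t+m(y))\rho(y)\rho(s)$ with $m'$ adapted to $A$, one produces bounded matrices $B_3,\dots,B_6$ with $B_3,B_6$ accretive and $\|B_5\|_{L^\infty}$ small (controlled by $\epsilon_0$ and $\|\phi'\|_{L^\infty}$) such that
\[
\|\tilde T B_3\|_{BMO}\le C+C\|T^t B_4\|_{BMO},\qquad
\|T^t B_6\|_{BMO}\le C+C\|\tilde T B_5\|_{BMO}.
\]
Feeding these into the $T(b)$ theorem for $T$ and $\tilde T$ separately yields
$\|T\|_{L^2\to L^2}\le C + C\|B_5\|_{L^\infty}\|T\|_{L^2\to L^2}$,
which is where the smallness hypotheses on $\epsilon_0$ and $\|\phi'\|_{L^\infty}$ actually enter. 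To make this absorption legitimate one first needs $\|T\|_{L^2\to L^2}<\infty$ a priori, which is obtained in \autoref{sec:Tfinite} with a bad constant depending on $A'$, $\phi''$, etc.\ --- this is the only place the smoothness of $A$ and the assumption $\phi\in C^\infty_0$ are used. The buildup from small $\|\phi'\|_{L^\infty}$ to arbitrary Lipschitz constant, and the removal of the upper-triangular normalization on $A_0$, are then done by the David--Journ\'e--Semmes good-$\lambda$ machinery and a change of variables (\autoref{sec:buildup}), not by any analytic continuation in the coefficients.
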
 

We will pass from \thmref{special} to \thmref{Tbounded} in \autoref{sec:patchingT}.

We will want to start with an even more specialized case:
\begin{thm} \label{thm:flat} Suppose that the $\phi$ in the definition of $\Omega$ above is smooth and compactly supported. Further assume that 
\begin{equation}\label{eqn:Atriangular}
A_0(x)=\Matrix{1&a_{12}^0(x)\\0&a_{22}^0(x)}
\end{equation}
for some functions $a_{12}^0$, $a_{22}^0$ which leave $A_0$ uniformly elliptic, and that for some $R_0$ large,
\begin{equation}\label{eqn:aprioriint}
A(x)=A_0(x)=I\text{ for }|x|>R_0,\quad 
\dashint_{-R_0}^{R_0} \frac{a_{21}(y)}{a_{11}(y)}\,dy=0,\text{ and }\enspace \dashint_{-R_0}^{R_0} \frac{1}{a_{11}(y)}\,dy=1.
\end{equation}

Then there is an $\delta_0=\delta_0(\lambda,\Lambda)>0$ such that if $\|\phi'\|_{L^\infty}<\delta_0$, then $T$ and $\tilde T$ are $L^2$-bounded. 
\end{thm}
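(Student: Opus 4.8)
First I would reduce the whole statement to $L^2(\R)$-boundedness of $T_+$ and $\tilde T_+$. By \autoref{chap:fundsoln} the kernels $K$ and $\tilde K$ satisfy the Calderón--Zygmund bounds (\ref{eqn:KisCZ}) and (\ref{eqn:tildeKisCZ}), and $T_\pm$, $\tilde T_\pm$ are the associated principal-value singular integrals, with the truncated maximal operators of \lemref{Tstar}; so, as remarked after \thmref{Tbounded}, $L^2$-boundedness upgrades to $L^p$ for all $1<p<\infty$ with constants of the asserted form. Next I would dispose of the model case $A\equiv I$, $\phi\equiv0$: there $\Gamma_X(Y)=\frac1{2\pi}\log|X-Y|$, so $K_h(x,y)$ is, up to bounded constant matrices, the Cauchy kernel $1/(x-y)$ and $T_\pm$ is a constant multiple of the Hilbert transform; an affine change of variables handles any constant elliptic $A$. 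The work is then to reinstate the variable matrix and the graph $\phi$.

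For the variable matrix I would use the conjugate-function structure of \autoref{sec:conjugate}: the pair $(\Gamma_X,\tilde\Gamma_X)$ solves a first-order elliptic system there, so $G_X:=\Gamma_X+i\tilde\Gamma_X$ satisfies a generalized Beltrami equation in the plane whose coefficients are bounded in terms of $\lambda,\Lambda$ and, since $A=I$ for $|x|>R_0$, vanish near infinity. For the real upper-triangular $A_0$ this is uniformized by a global quasiconformal homeomorphism $\Phi$ of $\R^2$, equal to the identity near infinity, with $G_X(Y)=\frac1{2\pi}\log\bigl(\Phi(Y)-\Phi(X)\bigr)$ plus a controlled correction; this reduces boundedness of $T,\tilde T$ for $A_0$ to boundedness of the Cauchy integral on the image curve $\Phi(\partial\Omega)$, where the Coifman--McIntosh--Meyer/David theorem applies. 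The small complex perturbation $A-A_0$ would then be absorbed by perturbing the Beltrami coefficients --- the relevant Beurling-transform operator is invertible for the real data and invertibility is stable under the $O(\epsilon_0)$ change --- with the analyticity of \autoref{sec:analytic} and (\ref{eqn:CZanalytic}) organizing the estimates. (Alternatively, leaning directly on the heat-kernel tools of \autoref{chap:fundsoln}, one can bound $T$ via square-function/tent-space estimates for the semigroup $e^{-tL_A}$, known for real $A$ and extended to nearby complex $A$, plus a Rellich-type identity on the graph.)

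With the half-plane case settled I would reinstate the graph as a perturbation in $\|\phi'\|_{L^\infty}$: expand the operator for $\psi(x)=x\e^\perp+\phi(x)\e$ as a Calderón-commutator (Neumann) series in powers of $\phi'$ about the flat case --- equivalently, differentiate the kernel along $\phi\mapsto s\phi$ and bound $\partial_s T$ uniformly --- each term bounded with norm at most polynomial in its order, so the series converges once $\|\phi'\|_{L^\infty}<\delta_0$ for $\delta_0=\delta_0(\lambda,\Lambda)$ small. This also explains why boundedness is only claimed for small Lipschitz constant: both the commutator series and the requirement that $\Phi(\partial\Omega)$ remain rectifiable with controlled constants need $\phi'$ small.

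The hard part will be the second paragraph: $A_0$ is a genuinely variable matrix, not a small perturbation of a constant, so $\Phi$ is not close to affine, and one must show that $\Phi(\partial\Omega)$ is chord-arc with constants depending only on $\lambda,\Lambda$ and $\|\phi'\|_{L^\infty}$ and that the correction term in the formula for $G_X$ yields a bounded operator --- all without using the a-priori smoothness of $A$ quantitatively. (On the square-function route the corresponding difficulty is the tent-space estimate for $e^{-tL_{A_0}}$ and the Rellich identity on a Lipschitz graph.) Everything else --- the constant-coefficient model case, the stability of the Beltrami/Beurling inversion under the small complex perturbation, and the commutator expansion in $\phi'$ --- should be routine given the Calderón--Zygmund kernel bounds and the analyticity already established.
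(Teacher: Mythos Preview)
Your approach is entirely different from the paper's and has a genuine gap at its core step. The paper does not touch quasiconformal maps, Beltrami equations, or Calder\'on commutator series. It proves \thmref{flat} via the matrix $T(b)$ theorem of David--Journ\'e--Semmes (\thmref{useful}): one takes the explicit matrix $B_1$ of (\ref{eqn:B1def}), verifies its accretivity directly from ellipticity (\autoref{sec:B1accretive}), and checks weak boundedness and $T(B_1),\tilde T^t(B_1)\in BMO$ by direct integration by parts (\autoref{sec:Tcts}, \corref{Tdirect}). The transpose condition is handled by a bootstrap. First an ad-hoc test function exploiting the a-priori hypotheses (\ref{eqn:aprioriint}) shows $\|T\|_{L^2\to L^2}$ is \emph{finite}, with bad constants depending on $A',\phi'',R_0$ (\autoref{sec:Tfinite}); then a Green-type identity (\ref{eqn:TBinBMO:useful}) produces scalar test functions $\beta_3,\dots,\beta_6$ with $\|T^t(I\beta_6)\|_{BMO}\le C+C\|\tilde T(I\beta_5)\|_{BMO}$ and $\|\tilde T(I\beta_3)\|_{BMO}\le C+C\|T^t(I\beta_4)\|_{BMO}$, where $\Re\beta_3,\Re\beta_6$ are bounded below and $|\beta_5|$ is small when $\|\phi'\|_\infty$ and $\|A-A_0\|_\infty$ are (\autoref{sec:Ttranspose}). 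Chaining these through two applications of $T(b)$ gives $\|T\|\le C+C\|\beta_5\|_\infty\|T\|$, and the small term is absorbed. The smallness of $\delta_0,\epsilon_0$ enters only through $\beta_5$ and the accretivity of $\beta_3,\beta_6$ --- not through any series expansion in $\phi'$.

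The gap in your route is the claim that $G_X=\Gamma_X+i\tilde\Gamma_X$ admits a representation $\tfrac{1}{2\pi}\log(\Phi(Y)-\Phi(X))$ plus a controlled remainder for a \emph{single} quasiconformal $\Phi$ independent of $X$. Even for the upper-triangular real $A_0$ of (\ref{eqn:Atriangular}), the first-order system of \autoref{sec:conjugate} yields a \emph{second-kind} Beltrami equation $\bar\partial f=\mu\,\partial f+\nu\,\overline{\partial f}$ with both dilatations present, and the factorization theory for such equations does not give $f=F\circ\chi$ with $F$ holomorphic and $\chi$ depending only on $A_0$; the reduced equation for $F$ is itself nontrivial, so you do not land on the Cauchy kernel and Coifman--McIntosh--Meyer is not directly available. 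Even granting that step, the Calder\'on-commutator expansion in $\phi'$ is a feature of the algebraic kernel $1/(z-w)$: for a variable-coefficient fundamental solution the estimates (\ref{eqn:KisCZ}) give H\"older control of first differences only, not the $C^n$ bounds on iterated $\phi$-derivatives a convergent Neumann series would require. Your alternative heat-kernel/Rellich suggestion is too schematic to rescue this --- the Rellich identities in \cite{Rule} are used downstream to pass from layer-potential boundedness to $(N)_p,(R)_p$, not to establish the boundedness itself.
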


We will pass from this theorem to \thmref{special} in \autoref{sec:buildup}, and will prove this theorem in Sections~\mbox{\ref{sec:proveflat}--\ref{sec:Tfinite}}.

We let $O(p_1,p_2,\ldots p_n)$ denote a term which, while not a constant, may be bounded by a constant depending only on $p_1,p_2,\ldots p_n$; thus, for example, $\Lambda \sin (x)+\lambda^2=O(\lambda,\Lambda)$. 

\section{Buildup to arbitrary special Lipschitz domains}\label{sec:buildup}

We will prove \thmref{flat} in Sections~\ref{sec:B1accretive}--\ref{sec:Tfinite}; in this section, we will assume it. We wish to prove \thmref{special}. We work only with $T$; the proof for $\tilde T$ is identical. 

\begin{thm} \label{thm:small} \thmref{flat} holds if we relax the condition (\ref{eqn:aprioriint}) on $A$ and the requirement that $\phi\in C^\infty_0$, and replace the requirement that $\|\phi'\|_{L^\infty}<\delta_0$ with the requirement that $\|\phi'-\gamma\|_{L^\infty}<\delta_0$ for some $\gamma\in\R$, and permit $\delta_0$ to depend on $\gamma$ as well as $\lambda,\Lambda$.
\end{thm}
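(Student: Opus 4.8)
The plan is to deduce \thmref{small} from \thmref{flat} by separating the relaxed hypotheses into two groups: the ``global slope'' $\gamma$, which I remove by a linear change of variables, and the a priori normalizations in~(\ref{eqn:aprioriint}) together with the requirement $\phi\in C^\infty_0$, which I remove by truncating and passing to a limit. In each reduction the $L^2$-boundedness we want is converted into an instance of \thmref{flat}, at the price of letting $\delta_0$ and the implied constants depend on $\gamma$.

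\emph{The slope.} Suppose $\|\phi'-\gamma\|_{L^\infty}<\delta_0$. Let $\Phi$ be the linear map of $\R^2$ that shears the line of slope $\gamma$ through the origin (in the $\e,\e^\perp$ frame) onto a horizontal line. Then $\Phi(\Omega)$ is again a special Lipschitz domain $\hat\Omega$ whose graph function has $L^\infty$-derivative at most $C(\gamma)\,\delta_0$, and $\div A\nabla u=0$ transforms into $\div\hat A\nabla\hat u=0$ with $\hat A=\frac{1}{|\det D\Phi|}\,D\Phi\,A\,(D\Phi)^t\circ\Phi^{-1}$, still $t$-independent, smooth, and uniformly elliptic with ellipticity constants depending on $\gamma,\lambda,\Lambda$ --- which is exactly why $\delta_0$ is permitted to depend on $\gamma$. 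Since $\Phi$ is linear and bi-Lipschitz it induces an isomorphism $L^2(\partial\Omega)\cong L^2(\partial\hat\Omega)$ under which the \CZ\ kernels $K_h$ and $\hat K_h$ correspond up to bounded factors, so $T_\Omega$ is $L^2$-bounded iff $T_{\hat\Omega}$ is. The one point needing care is that $\hat A_0$ is in general no longer triangular as in~(\ref{eqn:Atriangular}); one restores this by composing with a further change of variables of ``horizontal stretch plus vertical shear'' type that can be taken small enough to move the (already nearly flat) boundary only by $O(\delta_0)$, or one observes that the proof of \thmref{flat} (\autoref{sec:B1accretive} onward) uses only that $A_0$ is real and elliptic. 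After this step we may assume $\gamma=0$.

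\emph{Truncation and the limit.} Fix $R$ large. Choose $\phi_R\in C^\infty_0$ with $\phi_R=\phi$ on $(-R,R)$ and $\|\phi_R'\|_{L^\infty}\le\|\phi'\|_{L^\infty}$, and smooth elliptic $A_R,A_{0,R}$ agreeing with $A,A_0$ on $(-R,R)$, equal to $I$ for $|x|>2R$, with $a_{11}^{0,R}\equiv1$, $a_{21}^{0,R}\equiv0$ (absorbing the mismatch near $|x|\sim R$ into $a_{12}^{0,R}$), and with $\dashint_{-2R}^{2R}a_{21}^{R}/a_{11}^{R}=0$ and $\dashint_{-2R}^{2R}1/a_{11}^{R}=1$ (which costs only an $O(1/R)$ horizontal dilation, since $A$ is close to $I$ on average). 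Then \thmref{flat} gives $\|T^{A_R}_{\Omega_R}\|_{L^2\to L^2}\le C$ with $C$ independent of $R$. Now let $R\to\infty$: for $X\in B(0,R/2)$ the difference $\Gamma^{A_R}_X-\Gamma^{A}_X$ solves $\div A\nabla w=0$ in $B(0,R)$, so the interior estimates of \autoref{chap:PDEbasic} together with the uniform logarithmic bound on fundamental solutions force $\nabla\Gamma^{A_R}_X(Y)\to\nabla\Gamma^{A}_X(Y)$ locally uniformly off the diagonal; hence the truncated kernels converge and, against the uniform bound $|K_h|\le C/|x-y|$, $T^{A_R}_h f\to T^{A}_h f$ for $f\in C^\infty_0$. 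With the uniform $L^2$-bounds this forces $\|T^{A}\|_{L^2\to L^2}\le C$, and \corref{Kmorewelldef} then makes $T^{A}$ well defined on all of $L^2$. The argument for $\tilde T$ is identical.

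\emph{Main obstacle.} The delicate step is the passage $R\to\infty$: unlike the perturbations treated by analyticity in \autoref{sec:analytic}, here $\|A_R-A\|_{L^\infty}\not\to0$, so one cannot invoke~(\ref{eqn:CZanalytic}) or \corref{NDminusD0}; the $L^2$-bound must instead be extracted from pointwise convergence of the truncated operators together with the uniform bounds. Comparably fiddly is verifying that all of~(\ref{eqn:aprioriint}) can be arranged simultaneously with~(\ref{eqn:Atriangular}) through the cutoffs without any loss in the ellipticity or Lipschitz constants. The slope reduction, by contrast, is routine once one accepts that the constants may degrade with $\gamma$.
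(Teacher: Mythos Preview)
Your truncation-and-limit argument is essentially the paper's, just collapsed into one step; the paper separates it into two passes (first approximating $A$ by an $A^\delta$ that agrees with $A$ on $(-R^2,R^2)$ and satisfies~(\ref{eqn:aprioriint}), then approximating $\phi$ by a $C^\infty_0$ function close in $L^\infty$ on $(-R,R)$), but the mechanism---pointwise convergence of $\nabla\Gamma^\delta_X$ to $\nabla\Gamma_X$ on compacta via the interior estimates, plus the uniform kernel bound $C/|x-y|$---is the same as what you describe.

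The slope reduction is where your proposal diverges, and there is a real issue. You apply a linear shear $\Phi$ of $\R^2$, which changes the domain and hence the coefficients to $\hat A=\frac{1}{|\det D\Phi|}D\Phi\,A\,(D\Phi)^t\circ\Phi^{-1}$; as you note, $\hat A_0$ is no longer upper triangular in the sense of~(\ref{eqn:Atriangular}). Your second proposed remedy---that the proof of \thmref{flat} ``uses only that $A_0$ is real and elliptic''---is false: both \autoref{sec:B1accretive} (the accretivity computation for $B_0$) and \autoref{sec:Ttranspose} (the estimates on $\beta_3,\ldots,\beta_6$) rely explicitly on $a_{11}^0\equiv1$, $a_{21}^0\equiv0$. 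Your first remedy, a further ``horizontal stretch plus vertical shear'' to re-triangularize, can be made to work but is exactly the change of variables the paper performs \emph{after} the buildup to arbitrary Lipschitz constant, where one no longer needs to keep the boundary flat.

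The paper's approach is simpler: it leaves both $\Omega$ and $A$ untouched and merely changes the \emph{parametrization direction}. Given $\|\phi'-\gamma\|_{L^\infty}<\delta_0$, set $\check\e=(\e-\gamma\e^\perp)/\sqrt{1+\gamma^2}$; for $\delta_0$ small relative to $|\gamma|$ the same set $\Omega$ can be written as $\{X:\check\phi(X\cdot\check\e^\perp)<X\cdot\check\e\}$ with $\|\check\phi'\|_{L^\infty}$ small. Since the intrinsic layer potentials $\T_\pm$ depend only on $\Omega$ and $A$, not on the choice of $\e$, boundedness of $\check T_\pm$ (obtained from the $\gamma=0$ case already established) transfers immediately to $T_\pm$. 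No new coefficients, no triangularity to restore.
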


\begin{proof} We first look at the requirements (\ref{eqn:aprioriint}) and $\phi\in C^\infty_0$. Recall that $T_\pm F(x)=\lim_{h\to 0^\pm} T_h F(x)$, where
\[T_h F(x)=\int_\R K(x\e^\perp+(\phi(x)+h)\e,y\e^\perp+\phi(y)\e) F(y)\,dy.\]
We need only show that the $T_h$ are uniformly bounded on $L^2$, so we may consider some fixed choice of~$h$.

Pick some $F\in L^2(\R)$. Then for each $\mu>0$, there is some positive number $R>0$ with $\|F\|_{L^2(\R\setminus(-R,R))}<\mu$. If $T_h^\delta$ is an operator similar to $T$, but based on a different elliptic matrix $A$ or Lipschitz function $\phi$, then
\begin{align*}
|T_h^\delta F(x)-T_h F(x)| 
&\leq
\left|\int_{-R}^{R}F(y)(K_h^\delta(x,y)-K_h(x,y)),dy\right|
+
\int_{|y|>R} |F(y)|\frac{C}{|x-y|+|h|}\,dy
\end{align*}

Then
\begin{align*}
|T_h^\delta F(x)-T_h F(x)| &\leq
\|F\|_{L^2(\R\setminus(-R,R))}\left\|\frac{C}{y}\right\|_{L^2_y(\R\setminus(-R,R))}
+\left|(T_h^\delta-T_h)(F\chi_{(-R,R)})\right|
\\&\leq
\mu \frac{C}{\sqrt{R}}
+ \left|(T_h^\delta-T_h)(F\chi_{(-R,R)})\right|
\end{align*}
provided that $|x|<R$.

If we choose $T^\delta$ to satisfy the requirements of \thmref{flat}, and such that \[\|T_h^\delta-T_h\|_{L^2(-R,R)\mapsto L^2(-R,R)}<\theta,\] where $\theta\to 0$ as $\delta\to 0$ and $R\to\infty$, then we will have that by \lemref{Tstar},
\[\|T_h F\|_{L^2(-R,R)}\leq \|T_h^\delta F\|_{L^2(-R,R)}+C\mu + \theta\|F\|_{L^2}
\leq (C+\theta) \|F\|_{L^2}+C\mu.\]
By letting $R\to\infty$ and $\delta$,~$\mu\to 0$, we will be able to achieve our desired result.

We first remove the requirement (\ref{eqn:aprioriint}). Pick some $A$ satisfying the other a priori assumptions (i.e.\ smooth and elliptic). 
For $R>1$, let $A^\delta(x)=A(x)$ on $(-R^2,R^2)$, $A^\delta(x)=I$ if $|x|>2R^2$, and assume that $A^\delta$ is smooth and satisfies~(\ref{eqn:aprioriint}).
Let $\Gamma^\delta$, $K^\delta$, $T_h^\delta$ be $\Gamma$, $K$ and $T_h$ for the elliptic matrix $A^\delta$ instead of $A$.

Then $\div A\nabla(\Gamma_X(Y)-\Gamma_X^\delta(Y))=0$ for $Y=(y,s)$ with $|y|<R^2$; therefore, $|\Gamma_X(Y)-\Gamma_X^\delta(Y)|\leq C\log R$ if $|X|$,~$|Y|<R^2/2$, and so 
\[|\nabla \Gamma_X(Y)-\nabla\Gamma_X^\delta(Y)|<\frac{C\log R}{R^2}\text{ if }|X|, |Y|<R/4.\]

So $K_h^\delta(x,y)-K_h(x,y) <C\frac{\log R}{R^2}$ if $|x|,|y|<R/4$ and $h$ is small. Therefore, 
\[\|K_h^\delta(x,y)-K_h(x,y)\|_{L^2(-R,R)}\leq \frac{C\log R}{R^{3/2}}\] and so $\theta=\frac{C\log R}{R}$ which goes to 0 as $R\to\infty$; thus we are done.

We next remove the requirement that $\phi\in C^\infty_0$. Assume instead that $\|\phi'\|_{L^\infty}<\delta_0$. Choose $\phi_\delta$ compactly supported and smooth with $\|\phi_\delta-\phi\|_{L^\infty}<\delta$ on $(-R,R)$. Then 
\[\left|K_h(x,y)-K_h^\delta(x,y)\right|\leq \frac{|\phi(x)-\phi_\delta(x)|^\alpha+|\phi(y)-\phi_\delta(y)|^\alpha}{|x-y|^{1+\alpha}+|h|^{1+\alpha}}\]
which has $L^2_y(-R,R)$ norm at most $\frac{\delta^\alpha}{h^{\alpha+1/2}}$ if $x\in(-R,R)$; thus by forcing $\delta$ very small in comparison with $h$,~$R$, we are done.

Finally, we relax from $\|\phi'\|<\delta_0$ to $\|\phi'-\gamma\|<\delta_0(\gamma)$ for some real number $\gamma$. Fix some choice of $\e$, $\phi$ and $\gamma$. Then $\Omega=\{X\in\R^2:\phi(X\cdot \e^\perp)<X\cdot\e\}$.

Define $\check \e=\frac{\e-\gamma\e^\perp}{\sqrt{1+\gamma^2}}$. If $\|\phi'-\gamma\|_{L^\infty}$ is small enough, relative to $|\gamma|$, then there is some $\check\phi$ such that
\[\phi(X\cdot\e^\perp) < X\cdot\e \text{ if and only if }
\check\phi(X\cdot\check \e^\perp) < X\cdot\check\e.\]

Applying \thmref{flat} to $\check\phi$, we see that the layer potentials $\check T_\pm$ are bounded $L^2\mapsto L^2$. Therefore by \lemref{Kwelldef}, so are the potentials $\check\T_\pm$. But $\T_\pm=\check\T_\pm$, and therefore $T_\pm$ is bounded $L^2\mapsto L^2$.
\end{proof}

Now we wish to remove the assumption that $\phi'-\gamma$ must be small. This may be done exactly as in \cite[Section~5]{Rule}.
Let 
\[\Lambda^k(\delta_0)=\big\{\phi:\R\mapsto\R \big|
\text{there is a constant }\gamma\in(-k,k)\text{ such that }\|\phi'-\gamma\|_{L^\infty}<\delta_0\big\}.\]
We require $0<\delta_0\leq k$.

\begin{lem}\label{lem:buildup} Suppose that for some fixed choice of $\e$ and $\e^\perp$, we have that for every $k>0$ there is a $\delta_0(k)>0$ such that $T_\pm$ is bounded for every $\phi\in\Lambda^k(\delta_0)$ with bounds depending only on $\lambda,\Lambda$ and~$k$.

Then $T_\pm$ is bounded on $L^2$ for any Lipschitz function $\phi$ with bounds depending on $\lambda,\Lambda$ and $\|\phi'\|_{L^\infty}$.
\end{lem}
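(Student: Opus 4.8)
The goal is to drop the hypothesis that $\phi'$ stay within $\delta_0$ of a single constant, obtaining boundedness for arbitrary Lipschitz $\phi$ (with bounds depending only on $\lambda,\Lambda,\|\phi'\|_{L^\infty}$). I would follow David's scheme for singular integrals on Lipschitz graphs, exactly as in \cite[Section~5]{Rule}. Two reductions come first. Since the kernel $K_h(x,y)$ depends on $\phi$ only through the values $\phi(x),\phi(y)$ and is H\"older in those values, approximating $\phi$ uniformly by smooth functions with no larger Lipschitz constant (as in the proof of \thmref{small}) lets me assume $\phi$ is smooth; and since $\partial\Omega$ is parametrized by $\psi$ with $\sqrt{1+(\phi')^2}$ bounded above and below, boundedness of $T_\pm$ on $L^2(\R)$ is equivalent to boundedness of $\T_{\Omega,\pm}$ on $L^2(\partial\Omega)$, which by the remark after \lemref{Tstar} is in turn equivalent to boundedness of the truncated maximal operator $T_*$ on $L^2(\R)$. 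Finally I induct on $M=\|\phi'\|_{L^\infty}$: it suffices to show that if $T_\pm$ is $L^2$-bounded (with norm $\le C(M)$) for every special domain of Lipschitz constant $\le M$, then the same holds for constant $\le M+\eta$, with $\eta:=\tfrac12\min\bigl(1,\delta_0(M+2)\bigr)>0$. Since $\eta$ is bounded below on any bounded $M$-range, finitely many steps reach any finite Lipschitz constant, starting from the base case $M<\delta_0(1)$, which is the hypothesis with $\gamma=0$.

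\textbf{The inductive step.} Fix $\phi$ with $\|\phi'\|_{L^\infty}\le M+\eta$, set $k:=M+2$ (so $\|\phi'\|_{L^\infty}<k$ and $\eta\le\tfrac12\delta_0(k)$), and fix $f\in L^2(\R)$. The heart of the matter is a good-$\lambda$ inequality for $T_*$ relative to the Hardy--Littlewood maximal function $M_{HL}$:
\[
\sigma\bigl(\{x:T_*f(x)>2\lambda,\ M_{HL}f(x)\le\beta\lambda\}\bigr)\le C\beta\,\sigma\bigl(\{x:T_*f(x)>\lambda\}\bigr)\qquad(\beta\text{ small}),
\]
which, iterated, gives $\|T_*f\|_{L^2}\le C\|M_{HL}f\|_{L^2}\le C\|f\|_{L^2}$. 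To prove it, perform a Calder\'on--Zygmund stopping-time decomposition of $\{T_*f>\lambda\}$ into disjoint intervals, and over each such interval run a second stopping time relative to $\phi'$ at threshold $\tfrac12\delta_0(k)$: on the ``good'' part $\phi'$ stays within $\delta_0(k)$ of a constant of modulus $<k$, so that $\phi$ there is a $\delta_0(k)$-perturbation of a line and \thmref{small} applies directly to the corresponding globally-extended graph; on each stopped subinterval the extra oscillation has been localized, so that after the standard modification $\phi$ is replaced by a function of Lipschitz constant $\le M$ to which the inductive hypothesis applies. The pieces are reassembled using the Calder\'on--Zygmund kernel bounds (\ref{eqn:KisCZ}) and (\ref{eqn:tildeKisCZ}), which supply the decay controlling contributions far from each stopped interval, together with the pointwise estimate of \lemref{Tstar} and \lemref{NTM}; the comparison between $T$ and its frozen-slope model on matched scales uses (\ref{eqn:nearfundsolns}) and the $C^\alpha$ estimates of \autoref{sec:analytic}. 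The argument for $\tilde T_\pm$ is identical.

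\textbf{Main obstacle.} The one genuinely substantial ingredient is the packing estimate behind the good-$\lambda$ inequality: one must show that the intervals selected by the $\phi'$-stopping time satisfy a Carleson packing condition with constant $\lesssim\delta_0(k)^{-2}$, and that the corresponding error terms add in square---this is precisely the good-$\lambda$/Carleson-measure core of David's theorem that the Cauchy integral is $L^2$-bounded on an arbitrary Lipschitz graph once it is bounded on graphs of small constant. Transplanting it to the present variable-coefficient setting is routine once one knows that the layer-potential kernel differs from the frozen-coefficient (essentially Cauchy-type) kernel by a genuine Calder\'on--Zygmund perturbation, which is exactly what (\ref{eqn:nearfundsolns}) and the analyticity estimates of \autoref{sec:analytic} provide. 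Everything else---the base case, the reductions to smooth $\phi$ and to $T_*$, and the off-diagonal gluing---is standard.
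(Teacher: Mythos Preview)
Your instinct to invoke David's build-up from \cite[Section~5]{Rule} is right, and that is exactly what the paper does---but your description of the mechanism is garbled, and the inductive step as you state it does not close.

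The paper's argument is much shorter than your sketch. It quotes two black-box results from \cite{Rule}: a rising-sun lemma (their Theorem~5.2) saying that for $\phi\in\Lambda^k(\delta)$ and any interval $I$ there exist $E\subset I$ of proportional measure and $\psi\in\Lambda^{k+\delta/10}(\tfrac{9}{10}\delta)$ with $\phi=\psi$ on $E$; and a comparison theorem (their Theorem~5.3) saying that if for each $I$ such a $\phi_I$ exists with $T_I^*$ uniformly bounded, then $T^*$ is bounded. One application of this pair promotes boundedness on $\Lambda^k(\delta)$ to boundedness on $\Lambda^{k-\delta/9}(\tfrac{10}{9}\delta)$; iterating until $(10/9)^n\delta_0\sim k$ covers every Lipschitz function of constant $\lesssim k$. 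The iteration parameter is the \emph{oscillation} $\delta$, not the Lipschitz bound $M$, and no separate treatment of ``stopped subintervals'' is needed---the comparison theorem already absorbs the complement of $E$ through its proportional-measure hypothesis, so there is no second place where an inductive hypothesis on $M$ could enter.

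Your specific gap: the claim that on stopped subintervals ``$\phi$ is replaced by a function of Lipschitz constant $\le M$'' has no justification---a rising sun on $\phi'$ produces a comparison function of \emph{smaller oscillation about some $\gamma$}, not smaller $\|\phi'\|_\infty$. Likewise, a single stopping-time pass cannot take the oscillation from order $M$ all the way down to $\delta_0(k)$ on a set of uniformly positive proportion; this is precisely why the paper must iterate, gaining only a factor $9/10$ per step. Finally, the references to (\ref{eqn:nearfundsolns}) and the analyticity estimates of \autoref{sec:analytic} are irrelevant here: those concern perturbations of the coefficient matrix $A$, whereas this lemma varies only the graph $\phi$ with $A$ fixed. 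The only kernel inputs required are the Calder\'on--Zygmund bounds (\ref{eqn:KisCZ}).
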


\begin{proof}

We have the following useful theorems:
\begin{thm}\label{thm:buildup} \cite[Theorem~5.2]{Rule}
Suppose that $\delta$, $k>0$, $\phi\in\Lambda^k(\delta)$, and $I\subset \R$ is an interval. Then there is a compact subset $E\subset I$ and a function $\psi\in\Lambda^{k+\frac{\delta}{10}}\left(\frac{9}{10}\delta\right)$ such that
\begin{itemize}
\item $|E|\geq \frac{1}{3\sqrt{1+(k+\delta)^2}} |I|,$
\item $\phi(x)=\psi(x)$ for all $x\in E,$ and
\item Either $-\frac{4}{5}\delta\leq\psi'(x)\leq\delta$ a.e., or $-\delta\leq\psi'(x)\leq\frac{4}{5}\delta$ a.e.
\end{itemize}
\end{thm}
\begin{thm}\label{thm:buildup:CZ} \cite[Theorem~5.3]{Rule}
Suppose that $K:\R^2\mapsto \C^{2\times 2}$ is a \CZ\ kernel with constants no more than $C_6$. Suppose that there is a constant $\theta\in(0,1]$ such that for any interval $I\subset\R$, there is a \CZ\ kernel $K_I$ and a compact subset $E\subset I$ such that
\begin{itemize}
\item $|E|>\theta |I|,$
\item For all $x,y\in E$ we have that $K_I(x,y)=K(x,y)$, and
\item $\|T_I^*\|_{L^2\mapsto L^2}\leq C_6$
\end{itemize}
where $T_I^*$ is the maximal singular integral operator associated to $K_I$. Then \[\|T^*\|_{L^2\mapsto L^2}\leq C(\theta) C_6.\]
\end{thm}
From \lemref{Tstar} and following remarks, $T$ is bounded from $L^2$ to $L^2$ if and only if $T_*$ is.

Pick some $\phi\in\Lambda^{k-\delta/9}\left(\frac{10}{9}\delta\right)$.
If $I\subset\R$ is an interval, then by \thmref{buildup} there is an $E$ and a $\phi_I\in \Lambda^{k}(\delta)$ such that if $T_I$ is as in (\ref{dfn:T}) with $\phi$ replaced by $\phi_I$, then
\begin{itemize}
\item $|E|\geq \frac{1}{3\sqrt{1+(k+\delta)^2}} |I|$,
\item $K_I(x,y)=K(x,y)$ for all $x,y\in E$.
\end{itemize}
So by \thmref{buildup:CZ}, if $T_I^*$ is bounded on $L^2$ for all $I$, then so is $T^*$. 

Thus, if \thmref{special} holds for all functions $\phi\in\Lambda^k(\delta)$, then it holds for all $\phi\in \Lambda^{k-\delta/9}\left(\frac{10}{9}\delta\right)$.

Let $a_n=\frac{1}{9}(1+(10/9)+\ldots+(10/9)^{n-1})=(10/9)^n-1$.
So if \thmref{special} holds for all $\phi\in\Lambda^{k-\delta a_{n}}((10/9)^{n}\delta)$, it also holds for all $\phi\in\Lambda^{k-\delta a_{n+1}}((10/9)^{n+1}\delta)$, provided that $k-\delta a_{n+1}\geq (10/9)^{n+1} \delta$.

Pick any $k>0$. Let $\delta(k)$ be the $\delta_0$ given by \thmref{small}, so
\thmref{special} holds for all $\phi\in\Lambda^k(\delta_0)$. Let 
$n(k)$ be the positive integer such that 
$k\geq 2\delta(k) (10/9)^{n(k)}> (9/10) k$. Then \thmref{special} 
holds for all
\[\phi\in\Lambda^{k-\delta a_{n(k)}}((10/9)^{n(k)}\delta)
\supset \Lambda^{k/2}\left(\frac{9}{20}k\right).\]

Thus, for \emph{any} $k$, \thmref{special} holds for any 
$\|\phi'\|_{L^\infty}\leq \frac{9}{20}k$ (with constants depending on~$k$). Thus, it holds for any Lipschitz function $\phi$, as desired.
\end{proof}

Finally, we deal with the assumption that $A_0$ is upper triangular with $a_{11}^0\equiv 1$. 

Consider the mapping $J:(x,t)\mapsto (f(x),t+g(x))$ where $\lambda<f'<\Lambda$. Then $\check\Gamma_X(Y)=\Gamma_{J(X)}(J(Y))$ is the fundamental solution with pole at $X$ associated with the elliptic matrix
\[\check A(y)=\frac{1}{f'(y)}\Matrix{1 &0 \\ -g'(y) & f'(y)} A(f(y))\Matrix{1 & -g'(y)\\0 & f'(y)}.\]

If we choose $f'(y)=\frac{1}{a_{11}^0(f(y))}$ (or, put another way, choose $f^{-1}(y)=\int_0^y a_{11}^0$), then we will have $\check a_{11}^0=1$, and if we choose $g'(y)=\frac{f(y)a_{21}^0(f(y))}{a_{11}^0(f(y))}$, then we will have $\check a_{21}^0=0$.

But if $\phi\in \Lambda^k(\delta_0)$ for $\delta_0$ sufficiently small (depending on $\lambda$, $\Lambda$ and~$k$), where $\phi$ is the Lipschitz function in the definition of $\Omega$, then $J (\Omega)$ will also be a special Lipschitz domain, and so $\check T_\pm$ will be bounded on it; thus, $T_\pm$ wil be bounded on $\Omega$, and so by \lemref{buildup} we may build up to arbitrary Lipschitz domains.

\section{Patching: \texorpdfstring{$\T$}{T} is bounded on good Lipschitz domains}
\label{sec:patchingT}

Recall that we wish to prove \thmref{Tbounded}:
\begin{thm} \label{thm:patching} If\/ $\V$ is a Lipschitz domain with Lipschitz constants~$k_i$, then $\T$ and~$\tilde\T$ are bounded $L^p(\partial\V)\mapsto L^p(\partial\V)$ for $1<p<\infty$, with bounds depending only on $\lambda,\Lambda,p,$ and the Lipschitz constants of~$\V$. 
\end{thm}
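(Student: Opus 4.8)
The plan is to deduce the general case from the special-Lipschitz-domain case already established (\thmref{special}) by a finite partition of unity adapted to the covering in \dfnref{domain}. The decisive point is that the kernels $K^A(X,Y)$ and $\tilde K^A(X,Y)$ are \emph{global}, being assembled from the global fundamental solution $\Gamma^{A^T}$: for any two domains $\U_1,\U_2$ the operators $\T_{\U_1}$ and $\T_{\U_2}$ use literally the same kernel, and differ only in the surface over which one integrates and the non-tangential cone through which one passes to the limit. If $\V$ is itself a special Lipschitz domain there is nothing to prove, so I would assume $\partial\V$ is bounded, in which case the Ahlfors--David bound together with the finiteness of the $k_i$ gives $\sigma(\partial\V)\le Cr$. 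By \lemref{Kwelldef} all the limits in (\ref{dfn:Tgen}) exist when $F$ is Lipschitz, so it is enough to prove the a priori estimate $\|\T_\V F\|_{L^p(\partial\V)}\le C\|F\|_{L^p(\partial\V)}$ for such $F$ and then pass to general $L^p$ data by density; the argument for $\tilde\T$ is identical, with $\tilde K^A$ and (\ref{eqn:tildeKisCZ}) replacing $K^A$ and (\ref{eqn:KisCZ}).

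First I would fix a cover $\partial\V\subset\bigcup_{j=1}^{k_2}B(X_j,r_j)$, special Lipschitz domains $\Omega_j$ with Lipschitz constant at most $k_1$, and regions $R_j\supset B(X_j,r_j)$ with $\V\cap R_j=\Omega_j\cap R_j$, as in \dfnref{domain}, together with a Lipschitz partition of unity $\{\eta_j\}_{j=1}^{k_2}$ satisfying $0\le\eta_j\le1$, $\sum_j\eta_j\equiv1$ on $\partial\V$, and $\supp\eta_j\subset B(X_j,r_j)$ (after a harmless enlargement of the $r_j$ if the cover is tight). Since $\supp(\eta_jF)\subset B(X_j,r_j)\subset R_j$ and $\partial\V\cap R_j=\partial\Omega_j\cap R_j$, each $\eta_jF$ extends by zero to a function on $\partial\Omega_j$ with the same $L^p$ norm. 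As $\T_\V F=\sum_{j=1}^{k_2}\T_\V(\eta_jF)$, it then suffices to bound $\|\T_\V(\eta_jF)\|_{L^p(\partial\V)}$ by $C\|\eta_jF\|_{L^p(\partial\V)}$ for each $j$.

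For fixed $j$ I would split $\partial\V$ into $U_j=\{X\in\partial\V:\dist(X,\supp\eta_j)<r_j\}$ and its complement $U_j'$. On $U_j'$ one has $|X-Y|\ge r_j\ge r/k_3$ for every $Y\in\supp\eta_j$, so the integrand in (\ref{dfn:Tgen}) is continuous in $Z$ near $X$ and $\T_\V(\eta_jF)(X)=\int_{\partial\V}K^A(X,Y)(\eta_jF)(Y)\,d\sigma(Y)$; by (\ref{eqn:KisCZ}), H\"older, and $\sigma(\supp\eta_j)\le k_4r_j$ this is at most $\frac{C}{r}\|\eta_jF\|_{L^1(\partial\V)}\le Cr^{-1/p}\|\eta_jF\|_{L^p(\partial\V)}$ pointwise, whence $\|\T_\V(\eta_jF)\|_{L^p(U_j')}\le C\|\eta_jF\|_{L^p(\partial\V)}$ because $\sigma(U_j')\le\sigma(\partial\V)\le Cr$. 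On $U_j$, which lies inside $B(X_j,2r_j)\subset R_j$, I would show $\T_\V(\eta_jF)(X)=\T_{\Omega_j}(\eta_jF)(X)$ for a.e.\ $X$: indeed $X\in\partial\V\cap R_j=\partial\Omega_j\cap R_j$, and for $Z$ in a small enough ball about $X$ contained in $R_j$ one has $Z\in\V\iff Z\in\Omega_j$ and $\dist(Z,\partial\V)=\dist(Z,\partial\Omega_j)$, so the cones $\gamma_{\V,a}(X)$ and $\gamma_{\Omega_j,a}(X)$ coincide near $X$, while $\int_{\partial\V}K^A(Z,Y)(\eta_jF)(Y)\,d\sigma(Y)=\int_{\partial\Omega_j}K^A(Z,Y)(\eta_jF)(Y)\,d\sigma(Y)$ since $\supp(\eta_jF)\subset R_j$ --- so the two limits defining $\T_\V(\eta_jF)(X)$ and $\T_{\Omega_j}(\eta_jF)(X)$ are limits of one and the same function of $Z$ through one and the same cone. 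Therefore $\|\T_\V(\eta_jF)\|_{L^p(U_j)}\le\|\T_{\Omega_j}(\eta_jF)\|_{L^p(\partial\Omega_j)}\le C\|\eta_jF\|_{L^p(\partial\Omega_j)}=C\|\eta_jF\|_{L^p(\partial\V)}$ by \thmref{special}. Summing the two contributions over $j$ gives $\|\T_\V F\|_{L^p(\partial\V)}\le k_2C\|F\|_{L^p(\partial\V)}$, with $C$ depending only on $\lambda,\Lambda,p$ and $k_1,k_2,k_3,k_4$.

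The one step that genuinely requires care is the identity $\T_\V(\eta_jF)=\T_{\Omega_j}(\eta_jF)$ on $U_j$: one must verify that the non-tangential cones for $\V$ and for $\Omega_j$ really coincide on a neighbourhood of each point of $U_j$, and that $\supp(\eta_jF)$ stays inside the large box $R_j$ on which $\partial\V$ and $\partial\Omega_j$ literally agree, so that the two singular integrals are \emph{equal} and not merely close. Everything else is the routine Schur-type bound for the far interaction (available precisely because $\partial\V$ is bounded and Ahlfors--David regular) and the standard density argument reducing to Lipschitz $F$.
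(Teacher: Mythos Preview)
Your proof is correct and follows essentially the same route as the paper's: a partition of unity subordinate to the cover in \dfnref{domain}, identification of $\T_\V(\eta_jF)$ with $\T_{\Omega_j}(\eta_jF)$ on the near region (the paper states this as the one-line observation that $\T_\U F=\T_\V F$ on $\partial\U\cap\partial\V$ when $\supp F\subset\partial\U\cap\partial\V$, whereas you spell out the cone-matching carefully), and a direct kernel bound on the far region. Your far-field estimate is slightly cruder than the paper's --- you use the crude pointwise bound $|K^A(X,Y)|\le C/r$ together with $\sigma(\partial\V)\le Cr$, while the paper keeps the decay $C/|X_i-Y|$ and integrates via (\ref{eqn:goodCZbound}) --- but since you have already reduced to the bounded-boundary case this makes no difference to the outcome.
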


In the following sections, we will show that $\T$ and $\tilde\T$ are bounded on special Lipschitz domains~$\Omega$ (that is, we will prove \thmref{special}). In this section, we pass to arbitrary good Lipschitz domains.

As in \autoref{sec:buildup}, we work with $\T$; the proof for $\tilde\T$ is identical.

\begin{proof}
Note that if $\supp F\subset\partial\U\cap\partial\V$, then $\T_\U F=\T_\V F$ on $\partial\U\cap\partial\V$.

From \dfnref{domain}, we may partition $\partial\V$ as follows: there are $k_2$ points $X_i\in\partial\V$ with associated numbers $r_i>0$, such that $\partial\V\subset\cup_i B(X_i,r_i)$ and $B(X_i,2r_i)\cap\V=B(X_i,2r_i)\cap\Omega_i$ for some special Lipschitz domains $\Omega_i$. Let $\sum_i\eta_i$ be a partition of unity with $\supp\eta_i\subset\partial\V\cap B(X_i,r_i)$, and let $F_i=F\eta_i$.

Then
\[\|\T_\V F\|_{L^p}\leq \sum_{i=1}^{k_2} \|\T_\V F_i\|_{L^p}.\]

But
\[\|\T_\V F_i\|_{L^p(\partial\V)}^p=
\|\T_\V F_i\|^p_{L^p(\partial\V\cap B(X_i,2r_i))}
+\|\T_\V F_i\|^p_{L^p(\partial\V\setminus B(X_i,2r_i))}\]
and
\[\|\T_\V F_i\|_{L^p(\partial\V\cap B(X_i,2r_i))}
=\|\T_{\Omega_i} F_i\|_{L^p(\partial\V\cap B(X_i,2r_i))}
\leq\|\T_{\Omega_i} F_i\|_{L^p(\partial\Omega_i)}
\leq C \|F_i\|_{L^p}.
\]

But if $|X_i-Y|>2r_i$, then
\begin{align*}
|\T F_i(Y)|
&=\left|\int_{\partial\V} K(Y,Z) F_i(Z)\,d\sigma(Z)\right|
\\&\leq \frac{C}{|X_i-Y|}\int_{\partial\V} |F_i(Z)|\,d\sigma(Z)
\leq \frac{Cr_i^{1/p'}}{|X_i-Y|}\|F_i\|_{L^{p}}.
\end{align*}


But then
\[\|\T F_i\|_{L^p}^p\leq \int_{|X_i-Y|>2r_i}\frac{C r_i^{p-1}}{|X_i-Y|^p}\|F_i\|_{L^p}^p\,d\sigma + \int_{|X_i-Y|<2r_i}|\T_{\Omega_i}F_i|^p\,d\sigma
\leq C \|F_i\|_{L^p}^p,\]
where $C$ depends on the Lipschitz constants of~$\V$.

Therefore,
\[\|\T_\V F_i\|_{L^p}^p\leq C_p^p\|F_i\|_{L^p}^p\] and so
\[\|\T_\V F\|_{L^p}^p\leq C_{p,k_2}\sum_{i=1}^{k_2} \|\T_\V F_i\|_{L^p}^p
\leq \sum_i C_{p,k_2}\|F_i\|_{L^p}^p\leq C_{p,k_2}\|F\|_{L^p}^p.\]
\end{proof}

Note that $C_{p,k_2}$ does \emph{not} depend on $\diam \V$.

\section{Proving \thmref{flat}: preliminary remarks} \label{sec:proveflat}

From \cite[p.~42]{DJS}, we have the following useful theorem:
\begin{thm}\label{thm:useful} 
Suppose that $B_1$, $B_2:\R\mapsto \C^{2\times 2}$ are invertible matrices at all points, and suppose that $\|B_1\|_{L^\infty},$ $\|B_2\|_{L^\infty}\leq C_1.$ 

Assume that 
there exist nonnegative real smooth functions $v_i$ with $\supp v_i\subset[-1,1]$, $\int v_i=1$, and $\|v_i\|_{L^\infty}$, $\|v_i'\|_{L^\infty}\leq C_2$, and such that for all $x\in\R$ and all $t>0$,
\begin{equation}\label{eqn:accretive}
\left|\left(\int \frac{1}{t} v_i\left(\frac{x-y}{t}\right)B_i(y)\,dy\right)^{-1}\right|\leq C_3.
\end{equation}

Suppose that the operator $T:B_1\S\mapsto (B_2\S)'$ has a \CZ\ kernel, 
the oper\-ator $f\mapsto B_2^tT(B_1f)$ is weakly bounded, and that the constants in the definition of weak boundedness and \CZ\ kernel are no more than 
$C_4$. 

Suppose finally that $T(B_1)$ and $T^t(B_2)$ have BMO norm no more than 
$C_5$. 

Then $T$ has a continuous extension to $L^2$, and its norm may be bounded by a constant depending only on $C_1$,~$C_2$, $C_3$, $C_4$ and~$C_5$.\end{thm}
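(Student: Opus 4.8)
The plan is to recognize Theorem~\ref{thm:useful} as a matrix-valued, two-function version of the $Tb$ theorem of David, Journ\'e and Semmes \cite{DJS}; the cleanest route is to quote their result once one checks that the hypotheses translate, but I sketch the ingredients of the proof because keeping track of the constants is part of the claim.

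First I would observe that hypothesis (\ref{eqn:accretive}), together with the pointwise invertibility of the $B_i$ and the bound $\|B_i\|_{L^\infty}\leq C_1$, is exactly the statement that $B_1$ and $B_2$ are para-accretive. From this one builds, for $i=1,2$, $B_i$-adapted smoothing operators $S_k^i$ (manufactured from the mollifiers $v_i$ and multiplication by $B_i$) and martingale-type differences $D_k^i=S_k^i-S_{k-1}^i$, so that $\sum_k D_k^i=I$ strongly on a dense class, $D_k^i(B_i)=0$, and each $D_k^i$ has a kernel with the usual size, regularity and cancellation estimates at scale $2^{-k}$, with constants depending only on $C_1$, $C_2$, $C_3$. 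This construction is the technical core of \cite{DJS}, and it is insensitive to the matrix-valued and $B_1\neq B_2$ features once (\ref{eqn:accretive}) is available.

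Next I would reduce to the case $T(B_1)=0=T^t(B_2)$. Because $T(B_1)$ and $T^t(B_2)$ lie in $BMO$ with norm $\leq C_5$, para-accretivity of $B_1,B_2$ lets one construct paraproduct operators $\Pi_1,\Pi_2$, each with a \CZ\ kernel, each obeying the weak-boundedness condition imposed on $T$, and with $L^2$ operator norm bounded (via Carleson's embedding theorem) by a constant depending only on $C_1,C_2,C_3,C_5$, such that $\Pi_1(B_1)=T(B_1)$, $\Pi_1^t(B_2)=0$, $\Pi_2^t(B_2)=T^t(B_2)$, $\Pi_2(B_1)=0$. Then $T-\Pi_1-\Pi_2$ satisfies all the remaining hypotheses (with slightly enlarged constants) and is annihilated by $B_1$ and $B_2$ from both sides, so it suffices to bound it. For that operator I would write $T=\sum_{j,k}D_j^2\,T\,D_k^1$ and estimate each block: for $k\leq j$, absorb $D_k^1$ into the kernel of $T$, use $D_k^1(B_1)=0$ to subtract the local average, and exploit the \CZ\ regularity of $K$ in the second slot against the regularity of the kernel of $D_j^2$ to gain a factor $2^{-(j-k)\epsilon}$; for $j\leq k$ argue symmetrically with $(D_j^2)^t(B_2)=0$, the weak boundedness hypothesis handling the near-diagonal blocks $|j-k|\lesssim 1$. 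This gives $\|D_j^2 T D_k^1\|_{L^2\to L^2}\leq C2^{-|j-k|\epsilon}$ with $C=C(C_1,\ldots,C_4)$, and a Cotlar--Stein almost-orthogonality argument (using that the $D_j^i$ behave like Littlewood--Paley pieces) sums these to $\|T\|_{L^2\to L^2}\leq C(C_1,\ldots,C_5)$; undoing the reduction finishes the proof.

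The step I expect to be the real obstacle is the first one: verifying rigorously that the para-accretivity hypothesis (\ref{eqn:accretive}) suffices to construct the $B_i$-adapted operators $D_k^i$ in the matrix-valued setting, and that the two paraproducts can be split off independently of one another, together with the chore of confirming that every constant that appears is a function of $C_1,\ldots,C_5$ alone. None of this is genuinely new --- it is the content of \cite{DJS} --- so in the end ``the proof'' amounts to citing that theorem and checking that its hypotheses are precisely (\ref{eqn:accretive}) together with the weak boundedness, \CZ\ kernel, and $BMO$ conditions listed here.
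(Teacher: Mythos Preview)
Your proposal is correct and matches the paper's approach: the paper does not prove this theorem at all but simply quotes it from \cite[p.~42]{DJS}. Your sketch of the underlying DJS argument is accurate and more than the paper itself provides, and your closing remark that the proof ``amounts to citing that theorem'' is exactly what the paper does.
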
 

Note that if $B(x)=\beta(x)I$ for some scalar-valued function $\beta$, and if $\nu\leq\Re\beta(x)\leq|\beta(x)|\leq N$ for some constants $\nu$,~$N>0$, then $B$ satisfies~(\ref{eqn:accretive}). 

$T B\in BMO$ is defined as follows: if $M_0$ is a smooth $H^1$ atom (that is, $\supp M_0 \subseteq [x_0-R,x_0+R]$, 
$\|M_0\|_{L^\infty}\leq \frac{1}{R}$, and $\int M_0=0$), then
\[
\langle M_0, T B\rangle = \langle M_0, T(\eta B)\rangle
+\int_\R \int_\R M_0(x) (K_0(x,y)-K_0(x_0,y))\, dx \, (1-\eta(y)) B(y) \, dy
\]
whenever $\eta\in C^\infty_0$, $0\leq\eta\leq 1$, and $\eta\equiv 1$ on $[x_0-2R,x_0+2R]$.

If $B\in L^\infty$, then assuming $\supp \eta\subset 3R$, and taking $x_0=0$ for convenience, we see that
\begin{multline}
\left|\int_\R \int_\R M_0(x) (K_0(x,y)-K_0(0,y))\, dx \, (1-\eta(y)) B(y) \, dy\right|
\\\leq
\int_\R |M_0(x)|  \int_{|y|>2R}\frac{|x|^\alpha}{|y|^{1+\alpha}}\, dx \, \|B\|_{L^\infty} \, dy
\leq
C\|B\|_{L^\infty}
\label{eqn:TonLinfty}
\end{multline}

If $T$ is bounded $L^2\mapsto L^2$, then since $\|M_0\|_{L^2}\leq \sqrt {2/R}$, $\|\eta B\|_{L^2}\leq \|B\|_{L^\infty} \sqrt{6R}$,
\begin{equation}
|\langle M_0,TB\rangle|
\leq \leq
(C+C\|T\|_{L^2\mapsto L^2})\|B\|_{L^\infty}.
\end{equation}

\begin{proof}[Proof of \thmref{flat}]
In \autoref{sec:B1accretive}, we will find a matrix $B_1$ which is  bounded, invertible, and satisfies~(\ref{eqn:accretive}). 
By Lemmas~\ref{lem:Tcts} and~\ref{lem:tildeTcts}, $f\mapsto B_0^t T(B_1 f)$ and $f\mapsto B_0^t \tilde T^t(B_1 f)$ are weakly bounded for \emph{any} bounded matrix~$B_0$.
By \corref{Tdirect}, $\|TB_1\|_{BMO}$ and $\|\tilde T^t B_1\|_{BMO}$ are bounded.

By \thmref{useful}, if $B$ satisfies~(\ref{eqn:accretive}), then
\begin{align*}
\|T\|_{L^2\mapsto L^2}&\leq C+C\|TB_1\|_{BMO}+C\|T^t B\|_{BMO}\leq C+C\|T^t B\|_{BMO}, \\
\|\tilde T\|_{L^2\mapsto L^2}&\leq C+C\|\tilde T^tB_1\|_{BMO}+C\|\tilde T B\|_{BMO}\leq C+C\|\tilde T B\|_{BMO}.
\end{align*}

In \autoref{sec:Tfinite}, we will find a $B_8$ such that $\|T^t(B_8)\|_{BMO}$ is finite, and so we will know that $\|T\|_{L^2\mapsto L^2}$ is finite. Unfortunately, our bound on $T^t(B_8)$ will depend on such terms as $A'$; therefore, we will seek a better bound on $\|T\|_{L^2\mapsto L^2}$.

In \autoref{sec:Ttranspose}, we will find bounded matrices $B_3,\ldots,B_6$ such that $\|\tilde TB_3\|_{BMO}\leq C\|T^t B_4\|_{BMO}$ and $\|T^t B_6\|_{BMO}\leq C\|\tilde T B_5\|_{BMO}$, where $B_3$,~$B_6$ satisfy~\ref{eqn:accretive} and $B_5$ is small (depending on $\|A-A_0\|_{L^\infty}=\epsilon_0$, $\|\phi'\|_{L^\infty}=\delta_0$). This implies that
\begin{align*}
\|T\|_{L^2\mapsto L^2}&\leq C+C\|T^t B_6\|_{BMO}
\leq C+C\|\tilde T B_5\|_{BMO}
\leq C+C\|\tilde T\|_{L^2\mapsto L^2}\| B_5\|_{L^\infty}
\\&\leq C+\left(C+C\|\tilde T B_3\|_{BMO}\right)\| B_5\|_{L^\infty}
\leq C+\left(C+C\| T^t B_4\|_{BMO}\right)\| B_5\|_{L^\infty}
\\&\leq C+\left(C+C\|T\|_{L^2\mapsto L^2} \| B_4\|_{L^\infty}\right)\| B_5\|_{L^\infty}
\\&\leq C + C\|T\|_{L^2\mapsto L^2}\|B_4\|_{L^\infty}\|B_5\|_{L^\infty}.
\end{align*}
Since $\|T\|_{L^2\mapsto L^2}$ is finite, if $\|B_5\|$ is small enough then $\|T\|_{L^2\mapsto L^2}\leq C$.

Furthermore,
\begin{align*}
\|\tilde T\|_{L^2\mapsto L^2}
&\leq C+C\|\tilde T B_3\|_{BMO}
\leq C+C\|T^t\|_{L^2\mapsto L^2} \|B_4\|_{L^\infty}
\leq C
\end{align*}
as desired.
\end{proof}
The remainder of this chapter will be devoted to establishing the facts used in this proof. We will consider only $T_+$; the case for $T_-$ is similar.

\section{\texorpdfstring{A $B$ for our $TB$ theorem}{A B for our TB theorem}}\label{sec:B1accretive}

Recall (\ref{eqn:B1def}):
\[B_1(y)=\Matrix{a_{11}(\psi(y))&0\\a_{21}(\psi(y))&1}^{-1}\Matrix{A(\psi(y))\nu(y) &\tau(y)}\sqrt{1+\phi'(y)^2}\]
where $\psi(y)=y\e^\perp +\phi(y)\e$, 
\[\tau(y)=\frac{1}{\sqrt{1+\phi'(y)^2}}\Matrix{\psi_1'(y)\\\psi_2'(y)}, \quad
\nu(y)=\frac{1}{\sqrt{1+\phi'(y)^2}}\Matrix{-\psi_2'(y)\\\psi_1'(y)}.\]
In this section, we will show that $B_1$ satisfies~(\ref{eqn:accretive}).

Recall that we have changed variables such that $a_{11}^0\equiv 1$, $a_{21}^0\equiv 0$. Let 
\[B_0 = \Matrix{A_0(\psi(y))\nu(y) &\tau(y)}\sqrt{1+\phi'(y)^2}
=\Matrix{-\psi_2'(y)+a_{12}^0(\psi_1(y))\psi_1'(y) &\psi_1'(y)\\
a_{22}^0(\psi_1(y)) \psi_1'(y) &\psi_2'(y)}
.\]
Then $B_0$ is near $B_1$. We first show that, for any interval~$I$, $\dashint_I B_0$ is invertible with bounded inverse. If $\|A-A_0\|$ is sufficiently small, this will imply that $\dashint_I B_1$ is as well.

If $A$ is elliptic, then
\[\Re a_{11}>\lambda,\quad\Re a_{22}>\lambda,\quad
|a_{12}+\overline{a_{21}}|\leq 2\sqrt{(\Re a_{11}-\lambda)(\Re a_{22}-\lambda)}\]
and so
\[|a_{12}^0|\leq2\sqrt{(1-\lambda)(a_{22}^0-\lambda)}.\]

Then, if $I=(a,b)$, we have that
\begin{align*}
\det \int_I  B_0&= \det\int_a^b \Matrix{-\psi_2'(y)+a_{12}^0(\psi_1(y))\psi_1'(y) &\psi_1'(y)\\
a_{22}^0(\psi_1(y)) \psi_1'(y) &\psi_2'(y)}\,dy
\\&=
\int_a^b\psi_2'(y)\,dy\int_a^b a_{12}^0(\psi_1(y))\psi_1'(y)\,dy
-\left(\int_a^b \psi_2'(y)\,dy\right)^2
\\&\phantom{=}
-\int_a^b \psi_1'(y)\,dy\int_a^b a_{22}^0(\psi_1(y))\psi_1'(y)\,dy
\displaybreak[0]
\\&=
\left(\psi_2(b)-\psi_2(a)\right)\int_{\psi_1(a)}^{\psi_1(b)} a_{12}^0(y)\,dy
-\left(\psi_2(b)-\psi_2(a)\right)^2
\\&\phantom{=}
-\left(\psi_1(b)-\psi_1(a)\right)\int_{\psi_1(a)}^{\psi_1(b)} a_{22}^0(y)\,dy
\displaybreak[0]
\\&\leq
\frac{1}{2\theta}\left(\int_{\psi_1(a)}^{\psi_1(b)} a_{12}^0(y)\,dy\right)^2
-\left(1-\frac{\theta}{2}\right)\left(\psi_2(b)-\psi_2(a)\right)^2
\\&\phantom{=}
-\lambda\left(\psi_1(b)-\psi_1(a)\right)^2
-\left(\psi_1(b)-\psi_1(a)\right)\int_{\psi_1(a)}^{\psi_1(b)} (a_{22}^0(y)-\lambda)\,dy
\displaybreak[0]
\\&\leq
-\left(1-\frac{\theta}{2}\right)\left(\psi_2(b)-\psi_2(a)\right)^2
-\lambda\left(\psi_1(b)-\psi_1(a)\right)^2
\\&\phantom{=}
+\frac{1}{2\theta}\left(\int_{\psi_1(a)}^{\psi_1(b)} a_{12}^0(y)\,dy\right)^2
-\left(\int_{\psi_1(a)}^{\psi_1(b)} \sqrt{a_{22}^0(y)-\lambda}\,dy\right)^2
\end{align*}
We have that
\[
\int_{\psi_1(a)}^{\psi_1(b)} |a_{12}^0(y)|\,dy
\leq  
\int_{\psi_1(a)}^{\psi_1(b)}2\sqrt{(1-\lambda)(a_{22}^0-\lambda)}\,dy
\leq 
{\sqrt{4-4\lambda}}\int_{\psi_1(a)}^{\psi_1(b)}\sqrt{a_{22}^0-\lambda}\,dy
\]
and so choosing $2\theta={4-4\lambda}$, we have that
\begin{align*}
\det \int_I  B_0&\leq
-\lambda\left(\psi_2(b)-\psi_2(a)\right)^2
-\lambda\left(\psi_1(b)-\psi_1(a)\right)^2 = -\lambda|\psi(b)-\psi(a)|^2\leq -\lambda |I|^2.
\end{align*}

Note that
$\det(M+N) = \det M+\det N + O(|M|*|N|)$. So if $\|A-A_0\|_{L^\infty}<\epsilon_0$, then $\Re \det \dashint_I B_1\leq -\lambda+C\epsilon_0$.

So $\left|\det\dashint_I B_1\right|\geq \lambda-C\epsilon_0$. But (\ref{eqn:accretive}) requires a \emph{smooth} truncation of $B_1$. 

\begin{lem} \label{lem:Bisaccretive}
Suppose that $B:\R\mapsto\C^{2\times 2}$ satisfies $|\xi\cdot B(x)\eta|\leq \Lambda|\eta| |\xi|$ for all $x\in\R^2$, $\eta,\xi\in C^{2}$. Assume further that
\begin{equation}\label{eqn:matrixaccretive}
\left|\det\dashint_I B(x)\,dx\right|\geq \lambda
\end{equation}
for all intervals $I\subset\R$.

Then there is a smooth real function $v$, with $0\leq v\leq 1$, $\int v = 1,$ $\supp v\subset\left[-1,1\right]$, and $\|v'\|\leq C(\lambda,\Lambda)$, such that if $v_t(x)=\frac{1}{t} v\left(\frac{x}{t}\right)$, then $\left|\det B* v_t (x)\right|\geq \lambda/2$ for all $t>0$ and all $x\in\R$.

\end{lem}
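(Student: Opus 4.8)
The plan is to build $v$ by smoothing the indicator function of an interval, and then to control the error introduced by the smoothing using the boundedness hypothesis on $B$. First I would set $w=\frac12\chi_{[-1,1]}$, so that $B*w_t(x)=\dashint_{[x-t,x+t]}B$ and hence $\left|\det B*w_t(x)\right|\geq\lambda$ for every $x$ and every $t>0$ by the hypothesis \eqref{eqn:matrixaccretive}. The idea is that a smooth $v$ which is $L^1$-close to $w$ will give $B*v_t$ close to $B*w_t$ in sup norm (uniformly in $t$), and since $\det$ is Lipschitz on bounded sets (the relevant matrices all have norm at most $\Lambda$), the determinant will move by only a small amount.

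Concretely, I would fix a standard mollifier $\rho\in C^\infty_0$ supported in $[-1/2,1/2]$ with $\int\rho=1$, and set $v=w*\rho_\delta$ for a small $\delta=\delta(\lambda,\Lambda)>0$ to be chosen; then $v$ is smooth, nonnegative, $\int v=1$, $\supp v\subset[-1-\delta/2,1+\delta/2]\subset[-2,2]$ (rescaling afterward to land in $[-1,1]$ and adjusting constants), and $\|v'\|_{L^\infty}\leq\|\rho'_\delta\|_{L^1}\cdot\|w\|_{L^\infty}\leq C(\delta)$, which is a constant depending only on $\lambda,\Lambda$ once $\delta$ is fixed in terms of those. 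The key computation is
\[
B*v_t(x)-B*w_t(x) = B*(v-w)_t(x) = \int B(x-ty)\,(v-w)(y)\,dy,
\]
and since $\|B\|_{L^\infty}\leq\Lambda$ and $\|v-w\|_{L^1}\leq C\delta$, this difference has norm at most $C\Lambda\delta$, uniformly in $x$ and $t$. Hence
\[
\left|\det B*v_t(x)-\det B*w_t(x)\right|\leq C(\Lambda)\,\|B*v_t(x)-B*w_t(x)\|\leq C(\Lambda)\,\delta,
\]
using that $\det M-\det N=O(\|M\|+\|N\|)\cdot\|M-N\|$ when $\|M\|,\|N\|\leq\Lambda$. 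Choosing $\delta$ small enough that $C(\Lambda)\delta\leq\lambda/2$ gives $\left|\det B*v_t(x)\right|\geq\lambda-\lambda/2=\lambda/2$, as required.

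The only genuine subtlety — and the step I'd flag as the main thing to get right rather than a real obstacle — is bookkeeping the support and the normalization: mollifying widens the support, so one must start from a slightly narrower interval (say $\frac12\chi_{[-1+\delta,1-\delta]}$ renormalized, or just rescale the final $v$ by a factor close to $1$) so that $\supp v\subset[-1,1]$ exactly, and the renormalization constant is $1+O(\delta)$, which does not affect the estimates. I should also note that $v\leq 1$ follows because $v=w*\rho_\delta$ is an average of the values of $w\leq 1$ (after the minor renormalization, $v\leq 1+O(\delta)$, which can be absorbed by taking $\delta$ yet smaller or replacing the bound $1$ with a harmless constant — but in fact choosing the base interval to have length slightly less than $2$ keeps $v\leq 1$ honestly). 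Everything else is routine, and no use of the $L^\infty$ bound beyond $\|B\|\leq\Lambda$ nor of ellipticity beyond what is packaged in \eqref{eqn:matrixaccretive} is needed.
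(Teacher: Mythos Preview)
Your proposal is correct and follows essentially the same approach as the paper: the paper also builds $v$ as a smooth approximation to (a rescaling of) the indicator of an interval, writes $v_t*B(x)$ as $\dashint_{x-t/2}^{x+t/2}B$ plus an error of size $O(\mu\|B\|_{L^\infty})$ coming from the transition region of width $\mu$, and then uses the Lipschitz bound on $\det$ at scale $\Lambda$ to conclude. The only cosmetic difference is that the paper constructs $v$ directly (equal to $1$ on $(-1/2,1/2)$, supported in $[-1/2-\mu,1/2+\mu]$) rather than by mollifying, which sidesteps the support/normalization bookkeeping you flagged.
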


\begin{proof}

Choose some $v$ such that
$v\equiv 1$ on $\left(-\frac{1}{2},\frac{1}{2}\right)$, $\supp v\subset \left[-\frac{1}{2}-\mu,\frac{1}{2}+\mu\right]$, and $\int v=1$, $\|v'\|_{L^\infty}\leq\frac{2}{\mu}$ for some positive real number $\mu<1/2$ to be determined.

Then
\begin{align*}
v_t* B(x)
&=\int v_t(x-y) B(y)\,dy
\\&=
\frac{1}{t}\int_{x-t/2}^{x+t/2} B(y)\,dy
+\frac{1}{t}\int_{t/2<|x-y|<t/2+t\mu} B(y)v\left(\frac{x-y}{t}\right)\,dy
\end{align*}
and so 
\begin{align*}
\left|v_t* B(x)-\dashint_{x-t/2}^{x+t/2} B(y)\,dy\right|
&\leq 2\mu \|B\|_{L^\infty}.
\end{align*}
Therefore, 
\[|\det v_t* B(x)|\geq \lambda - C \mu\|B\|_{L^\infty}^2.\]
So if $\mu < \lambda/C\Lambda^2$, then $\Re \det v_t* B(x)\geq \lambda/2$ for all~$x$.
\end{proof}

\section{\texorpdfstring{$T$ and $\tilde T^t$ are continuous and weakly bounded on $B_1\S$}{T is continuous and weakly bounded}} 
\label{sec:Tcts} 

For real $A$, this is shown in \cite{Rule}, in Lemmas~4.3, 4.7, 4.8, and~4.10. 
We now prove weak boundedness in the complex case.

\begin{dfn}\label{dfn:weakbdd} A function $F$ is called a \dfnemph{normalized bump function} if $\supp F\subset B(X_0,10)$ for some $X_0$, and 
\[|\partial^\alpha F(X)|\leq 1\] 
for any multiindex $\alpha$ with $|\alpha|\leq 2$. For such a function $F$, 
let $F_R(X)=\frac{1}{R}F(X/R)$. 

If there is a constant $C$ such that, for any $R>0$ and any normalized bump functions $F$ and $G$, the equation 
\[|\langle G_R, PF_R\rangle| \leq \frac{C}{R}\] 
holds, then we say the operator $P$ is \dfnemph{weakly bounded}.
\end{dfn} 

\begin{lem}\label{lem:Tcts} The operator $T$ is a continuous linear operator from $B_1\S$ to $(B_0\S)'$ for any bounded $B_0$. The operator $F\mapsto B_0^t T(B_1F)$ is weakly bounded; in fact, $\|T(B_1F_R)\|_{L^\infty}\leq C/R$ for any normalized bump function~$F$.
\end{lem}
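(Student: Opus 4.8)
The plan is to reduce everything to a single pointwise estimate: for any normalized bump function $F$ supported in $B(X_0,10)$ (written in the $\psi$-coordinates on $\partial\Omega$), I claim $\|T(B_1 F_R)\|_{L^\infty}\leq C/R$, with $C$ depending only on $\lambda$, $\Lambda$, the Lipschitz constant of $\Omega$, and the fixed aperture $a$. Granting this, continuity of $T\colon B_1\S\to(B_0\S)'$ is immediate by duality, since $\S$ embeds in $L^\infty$ with norm controlled by the bump constants; and weak boundedness of $f\mapsto B_0^tT(B_1f)$ follows because $|\langle G_R,B_0^tT(B_1F_R)\rangle|\leq \|G_R\|_{L^1}\,\|B_0\|_{L^\infty}\,\|T(B_1F_R)\|_{L^\infty}\leq (C/R)\|G_R\|_{L^1}\leq C'/R$, using $\|G_R\|_{L^1}\lesssim 1$ for a normalized bump $G$. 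So the whole lemma is the sup-norm bound on $T(B_1F_R)$.

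To prove that bound, first recall $T_\pm F(x)=\lim_{h\to 0^\pm}\int_\R K_h(x,y)F(y)\,dy$ where, via (\ref{dfn:kernel}) and (\ref{eqn:B1def}), $K_h(x,y)B_1(y)$ is (up to the harmless Jacobian factor $\sqrt{1+\phi'(y)^2}$) a column pair built out of $\nabla\Gamma^T_{\psi(x,h)}(\psi(y))$ contracted against $A(\psi(y))\nu(y)$ and $\tau(y)$; equivalently $T(B_1F)(x)=\bigl(\K_+F(\psi(x)),\J F(\psi(x))\bigr)$ stacked, as recorded in the displayed computation in Chapter \ref{chap:dfn}. So it suffices to bound $\D(F_R\circ\psi)$ and $\nabla\S^T(\partial_\tau F_R\circ\psi)$ — or more directly, to bound $\int_{\partial\Omega}\nu\cdot A^T\nabla\Gamma^T_Z\,F_R\,d\sigma$ and $\int_{\partial\Omega}\tau\cdot\nabla\Gamma^T_Z\,F_R\,d\sigma$ uniformly over $Z\in\gamma_+(X)$ for each $X\in\partial\Omega$. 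Here I use $|\nabla\Gamma_Z(Y)|\leq C/|Z-Y|$ from (\ref{eqn:gradfundsoln}), plus the Hölder estimates on $B_6\nabla\Gamma^T$ from (\ref{eqn:KisCZ}). Split into the cases $\dist(X,X_0)\leq 100R$ and $\dist(X,X_0)>100R$. In the far case, $|F_R|\leq 1/R$ on a set of measure $\lesssim R$ which is at distance $\gtrsim\dist(X,X_0)$ from $Z$, so the integral is $\lesssim (1/R)\cdot R/\dist(X,X_0)\lesssim 1/R$; and here actually it decays. In the near case, one cannot simply put absolute values inside (that would give $\log$ divergence near $Z$), so I use the cancellation $\int_{\partial\Omega} F_R\,d\sigma=0$ — wait, a normalized bump need not have mean zero; instead use the standard trick of subtracting the value at a nearby boundary point and exploiting $|\nabla\Gamma^T_Z(Y)-\nabla\Gamma^T_Z(Y')|\le C|Y-Y'|^\alpha/|Z-Y|^{1+\alpha}$ together with $|F_R(Y)-F_R(Y')|\le |Y-Y'|/R^2$ from the bump bounds, exactly as in the proof of \lemref{Kwelldef}; partitioning $B(X_0,CR)$ dyadically around $Z$ then yields a geometric series summing to $\lesssim 1/R$. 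For the $\tau\cdot\nabla\Gamma^T_Z$ piece one uses in addition $\int_{\partial(B(X,\rho)\cap\Omega)}\tau\cdot\nabla\Gamma^T_Z\,d\sigma=0$ (as in \lemref{Kwelldef}) to kill the near-diagonal part cleanly.

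The main obstacle is the near-diagonal case: making the cancellation argument uniform in the approach point $Z\in\gamma_+(X)$ and simultaneously uniform over all $X\in\partial\Omega$ and all scales $R$, so that the resulting constant depends only on $\lambda,\Lambda$ and the Lipschitz constants. The key mechanism that makes it work is that $\dist(Z,\partial\Omega)\approx|Z-X|$ with constants from the aperture $a$, so that the "bad" region $\{Y:|Y-Z|\lesssim\dist(Z,\partial\Omega)\}$ has $\sigma$-measure comparable to $\dist(Z,\partial\Omega)$ and the bump's Lipschitz bound $1/R^2$ kicks in whenever this region sits inside $\supp F_R$; combined with the $C^\alpha$ kernel bound this gives the claimed $1/R$. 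After that, collecting the $\K_+$ and $\J$ estimates into the matrix form gives $\|T(B_1F_R)\|_{L^\infty}\leq C/R$, and the lemma follows.
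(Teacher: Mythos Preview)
Your approach is workable but differs from the paper's, and your near-diagonal description is slightly muddled. The paper's key observation is that every entry of $K_h(x,y)B_1(y)$ is an \emph{exact $y$-derivative}: the $\tau$-column gives $\frac{d}{dy}\Gamma^T_{\psi(x,h)}(\psi(y))$ by the chain rule, and via the conjugate identity $\nu\cdot A^T\nabla\Gamma^T=\tau\cdot\nabla\tilde\Gamma^T$ the $\nu$-column gives $\frac{d}{dy}\tilde\Gamma^T_{\psi(x,h)}(\psi(y))$. This lets the paper treat both columns uniformly: for $\int f(y)\,\partial_y G_h(x,y)\,dy$ with $G=\Gamma^T$ or $\tilde\Gamma^T$, split into $|x-y|>2R$ and $|x-y|\le 2R$, subtract $f(x)$ on the near piece, and evaluate the constant-times-integral term by the fundamental theorem of calculus as $|G_h(\psi(x+2R))-G_h(\psi(x-2R))|\le C$ (using a path staying distance $\sim R$ from $\psi(x,h)$). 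The three resulting terms are bounded by $C\|f\|_{L^\infty}$, $CR\|f'\|_{L^\infty}$, and $C\|f\|_{L^1}/R$, which for a bump gives $C/R$ and for general $f\in\S$ (take $R=1$) gives $C\|f\|_\S$.

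Your route avoids $\tilde\Gamma$ and instead treats $\K$ and $\J$ separately via the \lemref{Kwelldef} machinery; this works, but you mix up which estimate does what. The kernel H\"older bound $|\nabla\Gamma^T_Z(Y)-\nabla\Gamma^T_Z(Y')|\le C|Y-Y'|^\alpha/|Z-Y|^{1+\alpha}$ is \emph{not} what kills the near-diagonal logarithm---after subtracting $F_R(X)$, the $(F_R-F_R(X))$ piece uses only the size bound $|\nabla\Gamma^T_Z|\le C/|Z-Y|$ against $|F_R(Y)-F_R(X)|\le |Y-X|/R^2$, and the remaining $F_R(X)\int_{\Delta(\rho)}$ piece needs the closed-contour identity $\int_{\partial(B(X,\rho)\cap\Omega)}\nu\cdot A^T\nabla\Gamma^T_Z\,d\sigma=1$ (for $\K$) or $=0$ (for $\J$), which is indeed in the proof of \lemref{Kwelldef} but which you never name for the $\nu$-part. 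Also, your claim that the $L^\infty$ bound on bumps alone yields continuity on all of $B_1\S$ is too quick: Schwartz functions are not finite superpositions of bumps, and you need the same near/far calculation run with a general $f\in\S$ (as the paper does) to produce a bound by the Schwartz norm.
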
 

\begin{proof} We know that $T_\pm F(x)$ is pointwise well-defined from \lemref{Kwelldef} and that if $f\in L^\infty$, $f'\in L^\infty$ and $f\in L^2$ then the limits in the definition converge uniformly. Recall that \[B_1(y)=(B_6(\psi(y))^t)^{-1}\Matrix{A(\psi(y))\nu(y)&\tau(y)}\sqrt{1+\phi'(y)^2}.\]
Choose some $F,G\in\S^{2\times 2}$ 
(that is, matrix-valued functions whose components are all in $\S$). 
We wish to show that $\langle B_0G, T(B_1F)\rangle$ 
exists, and that 
\[|\langle B_0G, T(B_1F)\rangle|\leq C\|F\|_\S\|G\|_\S.\] 

Recall that 
\begin{align*}{\langle B_0G, T(B_1F)\rangle} 
=
\lim_{h\to 0^+}\int_\R \int_\R G(x)^t B_0(x)^t K_h(x,y)B_1(y)F(y)\,dy\,dx 
\end{align*} 
Now, the components of 
\[K_h(x,y)B_1(y)=\Matrix 
{\nabla\Gamma^T_{\psi(x,h)}(\psi(y))^t\\ 
 \nabla\Gamma^T_{\psi(x,h)}(\psi(y))^t} 
\Matrix{A(\psi(y))\nu(y)&\tau(y)}\sqrt{1+\phi'(y)^2}\] 
are 
\[\frac{d}{dy}\Gamma^T_{\psi(x,h)}(\psi(y))\text{ and } 
\frac{d}{dy}\tilde\Gamma^T_{\psi(x,h)}(\psi(y)).\] 

Let $f$ be a component of $F$. We need only show that 
\[\lim_{h\to 0^+} \int_\R f(y)\frac{d}{dy}\Gamma_{\psi(x,h)}(\psi(y))\,dy,\quad
\lim_{h\to 0^+} \int_\R f(y)\frac{d}{dy}\tilde\Gamma_{\psi(x,h)}(\psi(y))\,dy\] 
are bounded by $C\|F\|_\S$ and converge uniformly in $x$, and that if 
$F=\bar F_R$ for some normalized bump function $\bar F$, then those integrals are at most $C/R.$ 

Now, for any number $R>0$, we have that
\begin{multline*}
{\left|\int_\R f(y)\frac{d}{dy} \Gamma_{\psi(x,h)}(\psi(y))\,dy\right|}
\\\begin{aligned}
&\leq
\left|\int_{|x-y|>2R}f(y) \frac{d}{dy} \Gamma_{\psi(x,h)}(\psi(y))\,dy\right|
+
\left|\int_{x-2R}^{x+2R}f(y) \frac{d}{dy} \Gamma_{\psi(x,h)}(\psi(y))\,dy\right|
\\&\leq
\left|\int_{|x-y|>2R}f(y) \frac{d}{dy} \Gamma_{\psi(x,h)}(\psi(y))\,dy\right|
+\left|f(x)\int_{x-2R}^{x+2R} \frac{d}{dy} \Gamma_{\psi(x,h)}(\psi(y))\,dy\right|
\\&\phantom{\leq}
+\left|\int_{x-2R}^{x+2R} (f(y)-f(x))\frac{d}{dy} \Gamma_{\psi(x,h)}(\psi(y))\,dy\right|
\\&\leq
\|f\|_{L^\infty}
\left|\Gamma_{\psi(x,h)}(\psi(x+2R)) -\Gamma_{\psi(x,h)}(\psi(x-2R))\right|
\\&\phantom{\leq}
+\|f'\|_{L^\infty}\int_{x-2R}^{x+2R} \left|\frac{d}{dy} \Gamma_{\psi(x,h)}(\psi(y))\right| |x-y|\,dy
+\|f\|_{L^1} \frac{C}{R}
\\&\leq
C\|f\|_{L^\infty}
+C R \|f'\|_{L^\infty}
+\|f\|_{L^1} \frac{C}{R}
\end{aligned}\end{multline*}
Picking $R=1$, we see that this is bounded by the Schwarz norm of $f$, and if $F=\check F_R$  where $\check F$ is a normalized bump function, this is clearly at most $C/R$.


Similarly, 
\[\lim_{h\to 0^+} \int_\R 
\frac{d}{dy}\tilde\Gamma_{\psi(x,h)}(\psi(y))f(y)\,dy\] 
exists, converges uniformly in $x$, and is at most $C/R$ provided $F=\check F_R$ for some normalized bump function~$\check F$.
\end{proof}

\begin{lem}\label{lem:tildeTcts} 
The operator $\tilde T^t$ is a continuous linear operator from 
$B_1\S$ to $(B_0\S)'$ for any bounded $B_0$. Also, 
$F\mapsto B_0^t \tilde T^t B_1F$ is weakly bounded; in fact, $\|\tilde T^t(B_1F_R)\|_{L^\infty}\leq C/R$ for any normalized bump function~$F$. 
\end{lem}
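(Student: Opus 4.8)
The plan is to mirror exactly the argument given for \lemref{Tcts}, exploiting the structural symmetry between $\tilde T^t$ and $T$ that comes from the definition of $\tilde K$ in terms of $\nabla_X\tilde\Gamma^T$ rather than $\nabla\Gamma^T$. First I would unwind the definition: by \eqref{dfn:bigdefend} and the adjoint relation, $\langle G, \tilde T^t_\pm F\rangle = \langle \tilde T_\pm G, F\rangle$, and the kernel of $\tilde T_\pm$ is $\tilde K_h(x,y) = B_6(\psi(x))\Matrix{\nabla_X\tilde\Gamma^T_{\psi(x)}(\psi(y,h)) & \nabla_X\tilde\Gamma^T_{\psi(x)}(\psi(y,h))}$. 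So $\langle B_0 G, \tilde T^t(B_1 F)\rangle = \lim_{h\to 0^\mp}\int_\R\int_\R F(y)^t B_1(y)^t \tilde K_h(x,y)^t B_0(x) G(x)\,dy\,dx$, and after transposing, the components of the relevant kernel acting against $F$ reduce — just as in \lemref{Tcts}, where $K_h B_1$ had components $\tfrac{d}{dy}\Gamma^T_{\psi(x,h)}(\psi(y))$ and $\tfrac{d}{dy}\tilde\Gamma^T_{\psi(x,h)}(\psi(y))$ — to expressions of the form $\tfrac{d}{dx}\tilde\Gamma^T_{\psi(x,h)}(\psi(y))$ and $\tfrac{d}{dx}\Gamma^T_{\psi(x,h)}(\psi(y))$, using that $B_6(\psi(x))\nabla_X\tilde\Gamma^T_X(\psi(y)) = \Matrix{\partial_x\tilde\Gamma^T_X(\psi(y))\\ \partial_x\Gamma^T_X(\psi(y))}$ together with the definition of $B_1$ in \eqref{eqn:B1def}.

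Next, for a scalar component $f$ of $F$ I would estimate, for an arbitrary $R>0$,
\begin{align*}
\left|\int_\R f(y)\tfrac{d}{dx}\Gamma_{\psi(x,h)}(\psi(y))\,dy\right|
&\leq \left|\int_{|x-y|>2R} f(y)\tfrac{d}{dx}\Gamma_{\psi(x,h)}(\psi(y))\,dy\right|
+\left|f(x)\int_{x-2R}^{x+2R}\tfrac{d}{dx}\Gamma_{\psi(x,h)}(\psi(y))\,dy\right|\\
&\qquad+\left|\int_{x-2R}^{x+2R}(f(y)-f(x))\tfrac{d}{dx}\Gamma_{\psi(x,h)}(\psi(y))\,dy\right|,
\end{align*}
and bound each term exactly as in \lemref{Tcts}: the far term by $\|f\|_{L^1}C/R$ using $|\nabla_X\Gamma_X(Y)|\leq C/|X-Y|$ from \eqref{eqn:doublegradfundsoln} (or the analog of \eqref{eqn:gradfundsoln} for $\nabla_X$, which follows from \eqref{eqn:doublegradfundsoln}); the middle terms by $C\|f\|_{L^\infty} + CR\|f'\|_{L^\infty}$, where for the first of these I integrate $\tfrac{d}{dx}\Gamma_{\psi(x,h)}(\psi(y))$ — note this is a derivative in the pole variable, so unlike the $T$ case it is not a perfect $y$-derivative, but one still has $|\partial_x\Gamma_{\psi(x,h)}(\psi(y))|\leq C/(|x-y|+h)$ away from the diagonal and, after integrating over $y\in(x-2R,x+2R)$, the bound $C R\cdot C/R = C$ ; wait — more carefully, one splits $(x-2R,x+2R)$ further into $|x-y|<h$ and $|x-y|>h$ and uses $|\partial_x\nabla\Gamma|$-type bounds, exactly as the buildup estimates in \autoref{sec:symm} and \lemref{Kwelldef} do. Taking $R=1$ gives the bound by the Schwarz norm; taking $F = \check F_R$ for a normalized bump function and optimizing gives $C/R$, which yields weak boundedness and in fact the stated $L^\infty$ bound. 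The same computation applies verbatim to the component $\tfrac{d}{dx}\tilde\Gamma_{\psi(x,h)}(\psi(y))$, using \eqref{eqn:tildeKisCZ} in place of \eqref{eqn:KisCZ} and the path-integral representation of $\nabla_X\tilde\Gamma_X$ from \autoref{sec:conjugate} to get the needed pointwise and Hölder bounds.

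The main obstacle, and the only real departure from \lemref{Tcts}, is that the kernel now involves $\nabla_X$ (the pole derivative) rather than $\nabla_Y$, so the integrand $\tfrac{d}{dx}\Gamma_{\psi(x,h)}(\psi(y))$ is \emph{not} an exact derivative of anything in $y$; consequently the clean telescoping $\int_{x-2R}^{x+2R}\tfrac{d}{dy}\Gamma = \Gamma|_{x-2R}^{x+2R}$ used for $T$ is unavailable, and the ``middle'' term where $f$ is replaced by $f(x)$ must instead be handled by a direct size estimate on $\int_{x-2R}^{x+2R}|\partial_x\Gamma_{\psi(x,h)}(\psi(y))|\,dy$. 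This is controlled using $|\partial_x\nabla_Y\Gamma_X(Y)|\leq C/|X-Y|^2$ (equation \eqref{eqn:doublegradfundsoln}) combined with the identity $\nabla_X\tilde\Gamma_X(Y) = \int_\infty^Y \nu\cdot A\nabla_Z(\nabla_X\Gamma_X(Z))\,dl(Z)$ to convert pole-derivative bounds into the needed $1/|x-y|$-type decay after one integration; alternatively one passes through $\Gamma^T_X(Y) = \Gamma_Y(X)$ (equation \eqref{eqn:symm}) to trade the $X$-derivative for a $Y$-derivative of $\Gamma^T$ and then argue exactly as before. Everything else — uniform convergence of the $h\to 0^\mp$ limit, continuity from $B_1\S$ to $(B_0\S)'$ — follows from the dominated-convergence and equicontinuity arguments already used for $T$ in \lemref{Tcts} and \lemref{Kwelldef}.
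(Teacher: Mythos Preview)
There is a genuine gap. After transposing, the columns of $\tilde K_h(y,x)^t B_1(y)$ are (up to the factor $\sqrt{1+\phi'^2}$) the contractions of $\nabla_Y\tilde\Gamma^T_{\psi(y)}(\psi(x,h))$ with $\tau(y)$ and with $A(\psi(y))\nu(y)$; the $\tau$-column is $\tfrac{d}{dy}\bigl[\tilde\Gamma^T_{\psi(y)}(\psi(x,h))\bigr]$ and is indeed handled exactly as in \lemref{Tcts}. The $\nu$-column, $\nu\cdot A^T(\psi(y))\nabla_Y\tilde\Gamma^T_{\psi(y)}(\psi(x,h))$, is not a derivative in the integration variable $y$, and your direct size estimate for the ``middle term'' fails: the only pointwise bound available is $C/(|x-y|+h)$, so $\int_{|y-x|<2R}\frac{C\,dy}{|x-y|+h}\approx C\log(R/h)$, which diverges as $h\to 0$. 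The switching identity $\Gamma^T_X(Y)=\Gamma_Y(X)$ does not rescue this either: it converts a pole-derivative of $\Gamma$ into an argument-derivative of $\Gamma^T$ at the point $X$, still not a $y$-derivative, and there is no analogous switching identity for $\tilde\Gamma$.

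The idea you are missing is the one the paper uses: extend $f$ off $\partial\Omega$ to a compactly supported Lipschitz function $u$ on $\Omega_\mp$ (for instance $u(\psi(y,t))=f(y)\,m(t+\phi(y))$ for a smooth cutoff $m$), and apply the divergence theorem together with the fact, from \eqref{eqn:LTtildeGamma}, that $Y\mapsto\tilde\Gamma^T_Y(\psi(x,h))$ solves a divergence-form elliptic equation away from $\psi(x,h)$. This converts the boundary integral into the volume integral $\mp\int_{\Omega_\mp} A^T(Y)\nabla_Y\tilde\Gamma^T_Y(\psi(x,h))\cdot\nabla u(Y)\,dY$. Since $|\nabla_Y\tilde\Gamma^T_Y(X)|\leq C/|X-Y|$ is locally integrable in $Y$ and $\nabla u$ is bounded with compact support, the limit as $h\to 0$ exists uniformly in $x$; for $F=\check F_R$ a normalized bump one has $|\supp\nabla u|\leq CR^2$ and $|\nabla u|\leq C/R^2$, and integrating $C/|X-Y|$ over a disk of radius $CR$ gives $CR$, yielding the required $C/R$ bound.
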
 

\begin{proof} Fix $F,G\in\S^{2\times 2}$. 
Again, we wish to show that $\langle B_0G, \tilde T^t(B_1F)\rangle$ exists and is bounded, and that if $F=\bar F_R,G=\bar G_R$ for some normalized bump functions $\bar F,\bar G$, then 
$|\langle B_0G, \tilde T^t(B_1F)\rangle|\leq 1/R.$ 

Now, 
\begin{align*} 
\langle B_0G, \tilde T_\pm^t(B_1F)\rangle 
&= \lim_{h\to 0^\pm} 
\int_\R G(x)B_0(x)\int_\R \tilde K_h(y,x)^t B_1(y) F(y)\,dy\,dx 
\end{align*} 
But 
\[\tilde K_h(y,x)^t B_1(y) 
=\Matrix{\nabla_Y\tilde\Gamma^T_{\psi(y)}(\psi(x,h))^t 
\\\nabla_Y\tilde\Gamma^T_{\psi(y)}(\psi(x,h))^t} 
\Matrix{A(\psi(y))\nu(y)&\tau(y)}\sqrt{1+\phi'(y)^2}.\] 

We may deal with the 
$\sqrt{1+\phi'(y)^2}\nabla_Y\tilde\Gamma^T_{\psi(y)}(\psi(x,h))\cdot\tau (y)
=\frac{d}{dy}\tilde\Gamma^T_{\psi(y)}(\psi(x,h))$ as before. We are left trying to show that 
\[ 
\lim_{h\to 0^\pm} 
\int_\R 
\nu\cdot A^T(\psi(y))\nabla_Y\tilde\Gamma^T_{\psi(y)}(\psi(x,h)) f(y) 
\sqrt{1+\phi'(y)^2}\,dy 
\] 
is bounded and converges uniformly in $x$, for any $f$ a component of~$F$. 

But this integral is 
\[\int_{\partial \Omega}\nu(Y)\cdot A^T(Y) 
\nabla_Y\tilde\Gamma_{Y}^T(\psi(x,h)) f(\psi^{-1}(Y))d\sigma(Y).\] 

Let $m\in C^\infty_0(\R)$ with $m\equiv 1$ on $(-R-R\|\phi'\|_{L^\infty},R+R\|\phi'\|_{L^\infty})$,
and $0\leq m\leq 1$,
$\supp m\subset (-CR,CR)$,
and $|m'|<C/R$.
Let $u(\psi(y,t))=f(y)m(t+\phi(y)),$ so $u(y\e^\perp+t\e)=f(y)m(t)$. Then if $\pm h>0$, then 
\begin{multline*} 
{\int_{\partial \Omega}\nu(Y)\cdot A^T(Y) 
\nabla_Y\tilde\Gamma^T_{Y}(\psi(x,h)) f(\psi^{-1}(Y))d\sigma(Y)}
\\\begin{aligned}
&=  
\mp\int_{\Omega_\mp}\nabla\cdot(A^T(Y)\nabla_Y\tilde\Gamma^T_Y(\psi(x,h))u(Y))\,dY 
\\ &=  
\mp\int_{\Omega_\mp}A^T(Y)\nabla_Y\tilde\Gamma^T_Y(\psi(x,h))\cdot\nabla u(Y)\,dY 
\end{aligned}\end{multline*}
Since $|\nabla u|$ is bounded and in $L^1$, and since $\nabla_Y \Gamma_Y^T(X)\in L^1(B(X,R))$, $\nabla_Y \Gamma_Y^T(X)\in L^\infty(B(X,R)^C)$, this integral is also bounded, and the limit as $h\to 0^+$ converges uniformly in $x$. 
Thus, we have shown that $\tilde T^t$ is continuous on $B_1\S$. 

Furthermore, if $f$ is a component of a normalized bump function, then $|\supp f|\leq CR$, $|f|\leq C/R$, $|f'|\leq C/R^2$, and so
\begin{multline*} 
{\left|\int_{\partial \Omega}\nu(Y)\cdot A^T(Y) 
\nabla_Y\tilde\Gamma_{Y}(\psi(x,h)) f(\psi^{-1}(Y))d\sigma(Y)\right|}
\\\begin{aligned}
&=
\left|\int_{\Omega}A^T(Y) 
\nabla_Y\tilde\Gamma_{Y}(\psi(x,h)) \cdot \nabla u(Y)\,dY\right|
\\
&\leq
\int_{\supp \nabla u}\frac{C}{|Y-\psi(x,h)|}\frac{C}{R^2}\,dY
\leq
\int_{0}^{CR}\frac{C}{R^2\rho}2\pi\rho \,d\rho
\leq C/R
\end{aligned}\end{multline*}
and so if $F=\check F_R$ where $\check F$ is a normalized bump function, then $|\tilde T^t (B_1 F)(x)|\leq C/R$.

So $F\mapsto B_0^t \tilde T^t(B_1 F)$ is weakly bounded as well.
\end{proof}

\begin{cor}\label{cor:Tdirect}
$\|T(B_1)\|_{BMO}\leq C,$ $\|\tilde T^t(B_1)\|_{BMO}\leq C$.
\end{cor}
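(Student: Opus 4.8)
The plan is to read off both bounds straight from the definition of $T(B_1)\in BMO$ recorded above, using only that $B_1$ is bounded together with the sharp $L^\infty$-on-bumps estimate from \lemref{Tcts} (and its analogue \lemref{tildeTcts}). First I would fix a smooth $H^1$ atom $M_0$, say $\supp M_0\subset[x_0-R,x_0+R]$, $\|M_0\|_{L^\infty}\le 1/R$, $\int M_0=0$, and take the cutoff $\eta\in C^\infty_0(\R)$ entering the definition so that $0\le\eta\le1$, $\eta\equiv1$ on $[x_0-2R,x_0+2R]$, $\supp\eta\subset[x_0-3R,x_0+3R]$, $|\eta'|\le C/R$ and $|\eta''|\le C/R^2$. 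Then
\[
\langle M_0,T(B_1)\rangle=\langle M_0,T(\eta B_1)\rangle
+\int_\R\int_\R M_0(x)\bigl(K_0(x,y)-K_0(x_0,y)\bigr)\,dx\,(1-\eta(y))B_1(y)\,dy .
\]
Since $\|B_1\|_{L^\infty}<\infty$ (immediate from (\ref{eqn:B1def}), as $A$ and $\phi'$ are bounded), the second term is at most $C\|B_1\|_{L^\infty}\le C$ by the Calder\'on--Zygmund kernel computation (\ref{eqn:TonLinfty}).

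\textbf{Main term and conclusion.} For the first term, note that $\eta$ is scalar, so $\eta B_1=B_1(\eta I)$ with $\eta I\in\S^{2\times2}$; moreover, writing $\eta(x)=\bar\eta((x-x_0)/3R)$ and dividing by an absolute constant so that all derivatives of order $\le 2$ are bounded by $1$, we have $\eta I=c_0(3R)\,(F_0)_{3R}$ for a normalized bump function $F_0$ in the sense of \dfnref{weakbdd}. By \lemref{Tcts}, $\|T(B_1(F_0)_{3R})\|_{L^\infty}\le C/(3R)$, hence by linearity $\|T(\eta B_1)\|_{L^\infty}\le C$; since $\|M_0\|_{L^1}\lesssim 1$ this gives $|\langle M_0,T(\eta B_1)\rangle|\le C$. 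Combining the two terms, $|\langle M_0,T(B_1)\rangle|\le C$ for every smooth $H^1$ atom $M_0$; as finite linear combinations of such atoms are norm-dense in $H^1$, this says exactly that $T(B_1)$ extends to a bounded functional on $H^1$, i.e.\ $\|T(B_1)\|_{BMO}\le C$. The estimate for $\tilde T^t(B_1)$ is word-for-word identical: $\tilde T$, and hence $\tilde T^t$, has a Calder\'on--Zygmund kernel by (\ref{eqn:tildeKisCZ}), so the tail term is again controlled by (\ref{eqn:TonLinfty}) (with the transposed kernel), and \lemref{tildeTcts} supplies $\|\tilde T^t(B_1(F_0)_{3R})\|_{L^\infty}\le C/(3R)$ in place of the bound from \lemref{Tcts}.

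\textbf{Where the difficulty is.} I do not expect a genuine obstacle; the one point needing care is the rescaling step — verifying that the cutoff $\eta$ really is, up to an absolute constant and a recentering, $3R$ times the $3R$-dilate of a normalized bump function, so that the scale-invariant estimate of \lemref{Tcts} is used with the correct power of $R$ and the resulting $R$ cancels $\|M_0\|_{L^1}$. One should also bear in mind that $\langle M_0,T(B_1)\rangle$ is (by the standard theory behind the definition) independent of the admissible choice of $\eta$, so that no compatibility issue arises in passing from the defining formula to the $BMO$ bound.
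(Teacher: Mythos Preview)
Your proposal is correct and follows essentially the same approach as the paper: split via the defining formula for $T(B_1)\in BMO$, bound the tail by (\ref{eqn:TonLinfty}) using $\|B_1\|_{L^\infty}<\infty$, and handle the main term by recognizing $\eta$ as (a constant times $R$ times) the $R$-dilate of a normalized bump function so that the $L^\infty$-on-bumps bound from \lemref{Tcts} (resp.\ \lemref{tildeTcts}) applies. The paper is slightly terser on the rescaling step you flagged, simply writing $\eta=RF_R$ for a normalized bump function $F$, but this is exactly the observation you spell out.
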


\begin{proof}
Recall that
\begin{align*}
T_\pm F(x) &= \lim_{h\to 0^\pm}\int \Matrix{\nabla\Gamma^T_{\psi(x,h)}(\psi(y))^t \\ \nabla\Gamma^T_{\psi(x,h)}(\psi(y))^t} B_6(\psi(y))^t F(y)\,dy,
\end{align*}
and that if $M_0$ is a smooth $H^1$ atom and $\eta\in C^\infty_0$ is 1 in a neighborhood of its support, then
\[
\langle M_0, T B\rangle = \langle M_0, T(\eta B)\rangle
+\int_\R \int_\R M_0(x) (K_0(x,y)-K_0(x_0,y))\, dx \, (1-\eta(y)) B(y) \, dy.
\]

By (\ref{eqn:TonLinfty}), the second term on the right-hand side is bounded in terms of $\lambda$ and $\Lambda$, so we need only bound $\langle M_0, T(\eta B_1)\rangle$ and $\langle M_0, \tilde T^t(\eta B_1)\rangle$. 

In fact, we need only bound $\|T(\eta B_1)\|_{L^\infty}$ and $\|\tilde T^t(\eta B_1)\|_{L^\infty}$. If $\supp\eta\subset B(x_0,R)$, then $\eta=RF_R$ for some normalized bump function $F$, and so this follows immediately from \lemref{Tcts} and \lemref{tildeTcts}.
\end{proof}

\section{The transpose inequalities\texorpdfstring{: controlling $\|T_\pm^t(B_2)\|_{BMO}$}{}} 
\label{sec:Ttranspose}

In the proof of \thmref{flat} in \autoref{sec:proveflat},
we promised to produce functions $B_3$, $B_4$, $B_5$, $B_6$ such that
$B_3$,~$B_6$ satisfy~(\ref{eqn:accretive}), $B_4$ is bounded, and $B_5$ is small provided $\epsilon_0$, $\delta_0$ are, and such that
\[\|\tilde TB_3\|_{BMO}\leq C\|T^t B_4\|_{BMO},\quad
\|T^t B_6\|_{BMO}\leq C\|\tilde T B_5\|_{BMO}.\]
 
Recall that
\begin{align*}
T_\pm F(x) &= \lim_{h\to 0^\pm}\int \Matrix{\nabla\Gamma^T_{\psi(x,h)}(\psi(y))^t \\ \nabla\Gamma^T_{\psi(x,h)}(\psi(y))^t} B_6(\psi(y))^t F(y)\,dy,\\
T_\pm' F(x) &= \lim_{h\to 0^\pm}\int \Matrix{\nabla\Gamma^T_{\psi(x)}(\psi(y,h))^t \\ \nabla\Gamma^T_{\psi(x)}(\psi(y,h))^t} B_6(\psi(y,h))^t F(y)\,dy, 
\end{align*}
By (\ref{eqn:smallshiftgamma}), $T_\pm = T'_\mp$ and $\tilde T_\pm=\tilde T'_\mp$; it will be easier to work with $(T')^t$ and $\tilde T'$, not~$T^t$.

Suppose that $B_2(x)=\beta(x)I$ for some scalar-valued function $\beta$ with $1/C<|\beta|<C$. 
Then to show that $\|(T')^t (B_2)\|_{BMO}\leq C$, we need only show that for each $R>0$, there is some $\eta\in C^\infty_0(\R)$ with $\eta\equiv 1$ on $(-R,R)$, such that
\[\int K_{-h}'(y,x)^t \beta(y)\eta(y)\,dy\]
is bounded for arbitrary $h\neq 0$ and $|x|<R/2$.

But $K_h'(y,x)^t=B_6(\psi(x,h))
\Matrix{\nabla\Gamma_{\psi(y)}^T(\psi(x,h))& \nabla\Gamma_{\psi(y)}^T(\psi(x,h))}$
and so we need only bound
\[\int \nabla\Gamma_{\psi(y)}^T(\psi(x,-h)) \beta(y)\eta(y)\,dy = \int\nabla_X\Gamma_{\psi(x,-h)}(\psi(y)) \beta(y)\eta(y)\,dy.\]

Similarly, to bound $\|\tilde T'(\beta I)\|_{BMO}$, we need only bound
\[\int \nabla_X \tilde\Gamma^T_{\psi(x,h)}(\psi(y)) \beta(y) \eta(y)\,dy.\]

If $X\in\U$ and $f\in C^\infty_0(\R^2)$, then the divergence theorem tells us that for any $\U\subset\R^2$,
\begin{align*}
{-f(X)+\int_{\U} 
\Gamma_X \nabla\cdot (A^T \nabla f)}
&=  
\int_{\U} 
\Gamma_X \nabla\cdot (A^T \nabla f)
+\int_{\U^C}f\nabla\cdot A\nabla\Gamma_X
\\&\phantom{=} 
+\int_{\U}\nabla f\cdot A\nabla\Gamma_X
+\int_{\U^C}\nabla f\cdot A\nabla\Gamma_X
\\  &=  
\int_{\U} \nabla\cdot(\Gamma_X A^T\nabla f)
+\int_{\U^C} \nabla\cdot(fA \nabla\Gamma_X )
\end{align*}
and so
\begin{equation}\label{eqn:green1} 
{-f(X)+\int_{\U} 
\Gamma_X \nabla\cdot (A^T \nabla f)}=  
\int_{\partial \U} \Gamma_X\nu\cdot A^T \nabla f\, d\sigma 
-\int_{\partial \U} f \nu\cdot A \nabla \Gamma_X\, d\sigma .
\end{equation}

Let $X=\psi(x,-h)$, with $|x|<R$. 
Then, by taking the gradient in $X$ of (\ref{eqn:green1}), 
we have that 
\begin{multline*} 
{\nabla_X 
\int_{\Omega^C} \Gamma_X(Y) \nabla\cdot (A^T(Y) \nabla f(Y))\, dY 
-\nabla f(X)}
\\=  
\nabla_X\int_{\partial \Omega} \Gamma_X(Y) \nu\cdot A^T(Y) \nabla f(Y) 
\,d\sigma(Y) 
-\nabla_X\int_{\partial \Omega} f(Y)\nu\cdot (A(Y) \nabla \Gamma_X(Y)) 
\,d\sigma(Y) 
\end{multline*}

We may simplify this: 
\begin{align*} 
{\nabla_X\int_{\partial \Omega} f\,\nu\cdot A \nabla \Gamma_X \, d\sigma} 
&= 
\nabla_X\int_{\R} 
f(\psi(y))\nu(y)\cdot(A(\psi(y)) \nabla \Gamma_X(\psi(y))\sqrt{1+\phi'(y)^2}\, dy 
\\ &= -
\nabla_X\int_{\R} 
f(\psi(y))\tau(y)\cdot\nabla \tilde\Gamma_X(\psi(y))\sqrt{1+\phi'(y)^2}\, dy 
\\&=  
\int_{\R} \nabla_X\tilde\Gamma_X(\psi(y))\frac{d}{dy}f(\psi(y))\, dy 
\end{align*} 
and 
\begin{align*} 
\int_{\partial \Omega} \nabla_X\Gamma_X \nu\cdot A^T \nabla f\, d\sigma
=  
\int_{\R} 
\nabla_X\Gamma_X(\psi(y)) \nu(y)\cdot A^T(\psi(y)) \nabla f(\psi(y))\sqrt{1+\phi'(y)^2}\, dy 
\end{align*} 

So 
\begin{multline}\label{eqn:TBinBMO:useful} 
\int_{\Omega^C}
\nabla_X  \Gamma_X(Y) \nabla\cdot (A^T(Y) \nabla f(Y))\, dY 
-\nabla f(X)
\\\begin{aligned}&=
\int_{\R} 
\nabla_X\Gamma_X(\psi(y)) \nu(y)\cdot A^T(\psi(y)) \nabla f(\psi(y))\sqrt{1+\phi'(y)^2}\, dy 
\\&\phantom{=}
- 
\int_{\R} \nabla_X\tilde\Gamma_X(\psi(y))\frac{d}{dy}f(\psi(y))\, dy 
\end{aligned}\end{multline}

Let $f(y,s)=(\xi s+m(y))\rho(y)\rho(s)$, where $m'(y)=\frac{\zeta-\xi a_{21}(y)}{a_{11}(y)}$, for some constants $\xi$,~$\zeta$, where $\rho\equiv 1$ on $(-R,R)$, $\rho\in C^\infty(-R-1,R+1)$, and $|\rho'|<C$, $|\rho''|<C$.

Then
\[\nabla f(y,s)=\Matrix{
\frac{\zeta-\xi a_{21}(y)}{a_{11}(y)}\\
\xi}
+
(\xi s+m(y))\Matrix{
\rho'(y)\rho(s)
\\
\rho(y)\rho'(s)
}
\]
and so
\begin{align*}
A^T(y)\nabla f(y,s)&=
\Matrix{
\zeta
\\
a_{12}(y)\frac{\zeta-\xi a_{21}(y)}{a_{11}(y)}+\xi a_{22}(y)
}\rho(y)\rho(s)
\\&\phantom{=}
+(\xi s+m(y))\Matrix{
a_{11}(y)\rho'(y)\rho(s)+a_{21}(y)\rho(y)\rho'(s)
\\
a_{12}(y)\rho'(y)\rho(s)+a_{22}(y)\rho(y)\rho'(s)
}
\end{align*}
and 
\begin{align*}
f(\psi(y))&=f(ye_2+\phi(y)e_1,-ye_1+\phi(x)e_2)
\\&=\big(\xi\phi(y)e_2-\xi ye_1+m(ye_2+\phi(y)e_1)\big) \eta(ye_2+\phi(y)e_1)\eta(\phi(y)e_2-ye_1).
\end{align*}

Thus, $\nabla\cdot A^T(Y)\nabla f(Y)=0$ except on $\U_R$, 
where $\U_R=\{(y,s):R<|s|<R+1$ or $R<|y|<R+1\}$.

So 
\begin{align*}
\lefteqn{\int_{\Omega^C} \partial_{x_i} \Gamma_X(Y) \nabla\cdot (A^T(Y)\nabla f(Y))\,dY}
\hspace{3em}&\\&=
\int_{\U_R\cap\Omega^C} \partial_{x_i} \Gamma_X(Y) \Matrix{
\zeta
\\
a_{12}(y)\frac{\zeta-\xi a_{21}(y)}{a_{11}(y)}+\xi a_{22}(y)
}\cdot
\Matrix{\rho'(y)\rho(s)\\\rho(y)\rho'(s)}
\,dy\,ds
\\&+
\int_{\U_R\cap\Omega^C} \partial_{x_i} \Gamma_X(Y)  
\Matrix{\frac{\zeta-\xi a_{21}(y)}{a_{11}(y)}\\\xi}\cdot
A^T(y)\Matrix{\rho'(y)\rho(s)\\\rho(y)\rho'(s)}
\,dy\,ds
\\&+
\int_{\U_R\cap\Omega^C} \partial_{x_i} \Gamma_X(Y) 
(\xi s+m(y))\nabla\cdot A^T(y)\Matrix{\rho'(y)\rho(s)\\\rho(y)\rho'(s)}
\,dy\,ds
\displaybreak[0]\\&=
O(\lambda,\Lambda,\zeta,\xi)
\\&\phantom{=}+
\int_{\U_R\cap\Omega^C} 
\nabla\cdot\left(
\partial_{x_i} \Gamma_X(Y) 
(\xi s+m(y))A^T(y)\Matrix{\rho'(y)\rho(s)\\\rho(y)\rho'(s)}\right)
\,dy\,ds
\\&\phantom{=}
-\int_{\U_R\cap\Omega^C} 
\nabla\left(
\partial_{x_i} \Gamma_X(Y) (\xi s+m(y))\right)
\cdot
A^T(y)\Matrix{\rho'(y)\rho(s)\\\rho(y)\rho'(s)}
\,dy\,ds
\displaybreak[0]\\&=
O(\lambda,\Lambda,\zeta,\xi)
\\&\phantom{=}
-
\int_{\U_R\cap\partial\Omega} 
\partial_{x_i} \Gamma_X(Y) 
(\xi s+m(y))\nu\cdot A^T(y)\Matrix{\rho'(y)\rho(s)\\\rho(y)\rho'(s)}\,dy\,ds
\\&\phantom{=}
-\int_{\U_R} 
\nabla\left(
\partial_{x_i} \Gamma_X(Y) (\xi s+m(y))\right)
\cdot
A^T(y)\Matrix{\rho'(y)\rho(s)\\\rho(y)\rho'(s)}
\,dy\,ds
\\&=O(\lambda,\Lambda,\zeta,\xi)
\end{align*}
since $|\nabla_X\Gamma_X(Y)|\leq C/|X-Y|$, $|\nabla_Y\partial_{x_i}\Gamma_X(Y)|\leq C/|X-Y|^2$.

Let $\eta(y)=\rho(\psi_1(y))\rho(\psi_2(y))$.

Then
\begin{align*}
\Matrix{
\frac{\zeta-\xi a_{21}(y)}{a_{11}(y)}\\
\xi}
&=O(\lambda,\Lambda,\zeta,\xi)
+\int_{\R} \nabla_X\tilde\Gamma_X(\psi(y))\frac{d}{dy}f(\psi(y))\, dy
\\&\phantom{=}
- \int_{\R} 
\nabla_X\Gamma_X(\psi(y)) \nu(y)\cdot A^T(\psi(y)) \nabla f(\psi(y))\sqrt{1+\phi'(y)^2}\, dy 
\\&=
O(\lambda,\Lambda,\zeta,\xi)
+\int_{\R} \nabla_X\tilde\Gamma_X(\psi(y))
\beta_1(y,\zeta,\xi)
\, dy 
\\&\phantom{=}
+ \int_{\R} 
\nabla_X\Gamma_X(\psi(y)) 
\beta_2(y,\zeta,\xi)
\, dy 
\end{align*}
where
\begin{align*}
\beta_1(y,\zeta,\xi)&=\xi \phi'(y)e_2-\xi e_1+\frac{\zeta-\xi a_{21}(\psi(y))}{a_{11}(\psi(y))}(\phi'(y)e_1+e_2),\\
\beta_2(y,\zeta,\xi)&=\zeta(e_1-\phi'(y)e_2)+
\frac{\zeta a_{12}(\psi(y))+\xi \det A(\psi(y))}{a_{11}(\psi(y))}(\phi'(y)e_1+e_2)
\end{align*}

Define $\beta_3,\ldots,\beta_6$ as folows:
\begin{align*}
\beta_3(y)&=\beta_1\left(y,\frac{\Lambda^4}{\lambda^3}e_2,-e_1\right)=
(e_1)^2
-\phi'(y)e_1 e_2
+\frac{\frac{\Lambda^4}{\lambda^3} e_2+e_1 a_{21}(\check y)}{a_{11}(\check y)}
(\phi'(y)e_1+e_2)
\\
\beta_4(y)&=\beta_2\left(y,\frac{\Lambda^4}{\lambda^3}e_2,-e_1\right)
\\&=
-\frac{\Lambda^4}{\lambda^3} \phi'(y)(e_2)^2
+\frac{\Lambda^4}{\lambda^3} e_1e_2
+
\frac{\frac{\Lambda^4}{\lambda^3} e_2 a_{12}(\check y)-e_1 \det A(\check y)}{a_{11}(\check y)}(\phi'(y)e_1+e_2)
\\
\beta_5(y)&=\beta_1(y,e_1,e_2)=
\phi'(y)e_2^2-e_2 e_1+\frac{e_1-e_2 a_{21}(\check y)}{a_{11}(\check y)}(\phi'(y)e_1+e_2)
\\
\beta_6(y)&=\beta_1(y,e_1,e_2)=
e_1^2-\phi'(y)e_1e_2
+
\frac{e_1 a_{12}(\check y)+e_2 \det A(\check y)}{a_{11}(\check y)}(\phi'(y)e_1+e_2)
\end{align*}
where $\check y=\psi_1(y)$.

Then if $B_i=I\beta_i$,
\[\|\tilde T'B_3\|_{BMO}\leq C+C\|(T^t)' B_4\|_{BMO},
\quad
\|(T^t)' B_6\|_{BMO}\leq C+C\|\tilde T'B_5\|_{BMO}\]
as desired; we need only show that $B_3$,~$B_6$ satisfy~(\ref{eqn:accretive}), $B_5$ is small and all the $B_i$ are bounded. 

Note the following:
\begin{itemize}
\item $\beta_3,\ldots,\beta_6$ are all bounded by a constant depending only on $\lambda,\Lambda$.

\item If $\|\phi'\|_{L^\infty}$ is small enough, then 
\begin{align*}
\beta_3(y)\approx
(e_1)^2
+\frac{\frac{\Lambda^4}{\lambda^3} (e_2)^2+e_1e_2 a_{21}(\check y)}{a_{11}(\check y)}
\end{align*}
and so 
\begin{align*}
\Re \beta_3(y)&\gtrsim 
(e_1)^2 
+\frac{\Lambda^2}{\lambda^2}(e_2)^2
-|e_1e_2|\frac{\Lambda}{\lambda}
\geq \frac{1}{2}
\end{align*}
since $e_1^2+e_2^2=1$.

\item Similarly, if $\|\phi'\|_{L^\infty}$ is small enough, then
\begin{align*}
\beta_6(y)\approx \frac{1}{a_{11}(\check y)}
\left(
a_{11}(\check y)(e_1)^2
+a_{12}(\check y)(e_1e_2)
+(e_2)^2\det A(\check y)
\right)
\end{align*}
and if $\|A-A_0\|$ is small enough, so that $a_{11}\approx 1$, $a_{21}\approx 0$, then 
\begin{align*}
\beta_6(y)\approx \frac{1}{a_{11}(\check y)}
\Matrix{e_1 &e_2} A(\check y) \Matrix{e_1\\e_2}
\end{align*}
so $\Re \beta_6(y)\geq \frac{\lambda^2}{\Lambda^2}$.

\item Finally, if $\|\phi'\|_{L^\infty}$ is small, and if $\|A-A_0\|$ is small, so that $a_{11}\approx 1$, $a_{21}\approx 0$, then 
\[\beta_5(y)=
\frac{1}{{a_{11}}}\left(
e_1e_2 (1-a_{11})
+\phi'(e_2^2a_{11}+e_1^2-e_1e_2a_{21})
-e_2^2 a_{21}
\right)
\]
is also small.

\end{itemize}


\section{\texorpdfstring{$\|T\|_{L^2\mapsto L^2}$ is finite}{T is bounded on L2}} \label{sec:Tfinite}

In this section, we establish that $\|T\|_{L^2\mapsto L^2}$ is finite; this allows us to use results of the form
$\|T\|_{L^2\mapsto L^2}\leq C(\lambda,\Lambda)+\epsilon\|T\|_{L^2\mapsto L^2}$ to show that $\|T\|_{L^2\mapsto L^2}\leq C(\lambda,\Lambda).$

Note that we can only prove this under our a priori assumptions that $A$ and $\phi$ are smooth, and that for some (large) $R_0$, $A(y)=I$ and $\phi(y)=0$ for $|y|>R_0$. While the following analysis will yield a bound on $\|T\|_{L^2\mapsto L^2}$, the bound will depend on $A'$, $\phi''$, $R_0$, and (if $e_2\neq 0$) $1/e_2$; thus, the previous analysis was necessary (it yields a bound independent of these ugly quantities). 

We assume $\delta_0<1/2$, so that $\|\phi\|_{L^\infty}\leq R_0/2$.

First, we consider the case where $e_2=0$ (that is, we are working in a domain that looks like the left or right half-planes). 
Then either $e_1=1$ or $e_1=-1$; we consider only the case where $e_1=1$. Let
\[f(x,t)=\int_{\eta(x)\phi(t)}^{x}\frac{1}{a_{11}(w)}dw
\]
where $\eta\equiv 1$ on $(-R_0,R_0)$, $\eta\equiv 0$ on $(-2R_0,2R_0)^C$, and $|\eta'|,|\eta''|\leq C(R_0)$.
Then
\[f(\psi(y,s))=f(\phi(y)+s,y)=
\int_{\eta(\phi(y)+s)\phi(y)}^{\phi(y)+s}\frac{1}{a_{11}(w)}dw
\]
and
\[\nabla f(x,t)=
\Matrix{
\frac{1}{a_{11}(x)}-\frac{\eta'(x)\phi(t)}{a_{11}(\eta(x)\phi(t))}
\\
-\frac{\eta(x)\phi'(t)}{a_{11}(\eta(x)\phi(t))}
}
\]
\[
\nabla f(\psi(y,s))=
\Matrix{
\frac{1}{a_{11}(\phi(y)+s)}-\frac{\eta'(\phi(y)+s)\phi(y)}{a_{11}(\eta(\phi(y)+s)\phi(y))}
\\
-\frac{\eta(\phi(y)+s)\phi'(y)}{a_{11}(\eta(\phi(y)+s)\phi(y))}
}
\]

Note that if $|t|>R_0$ or $|x|>2R_0$, then $\nabla f(y,s)=\Matrix{1/a_{11}(y) &0}$.

So 
\[A^T(x)\nabla f(x,t)=
\Matrix{a_{11}(x)&a_{21}(x)\\a_{12}(x)&a_{22}(x)}
\Matrix{
\frac{1}{a_{11}(x)}-\frac{\eta'(x)\phi(t)}{a_{11}(\eta(x)\phi(t))}
\\
-\frac{\eta(x)\phi'(t)}{a_{11}(\eta(x)\phi(t))}
}
\]
Therefore, $|\nabla\cdot A^T\nabla f|\leq C(R_0,\|\phi''\|_{L^\infty},\|A'\|_{L^\infty}),$ and if $|s|>R_0$ or $|y|>2R_0$, then $\nabla\cdot A^T(y)\nabla f(y,s)=0$.
So
\begin{align*}{ 
\left|\int_{\Omega_h^C} \nabla_X\Gamma_X(Y) \nabla\cdot (A^T(Y) \nabla f(Y))\, dY 
-\nabla f(X) \right|
} \leq C(R_0,\|\phi''\|_{L^\infty},\|A'\|_{L^\infty})
\end{align*} 

Now, recall that 
\begin{multline*} 
{\int_{\Omega} 
\nabla_X\Gamma_X(Y) \nabla\cdot (A^T(Y) \nabla f(Y))\, dY 
-\nabla f(X)}
\\\begin{aligned}&=
\int_{\R} 
\nabla_X\Gamma_X(\psi(y)) \nu(y)\cdot A^T(\psi(y)) \nabla f(\psi(y))\sqrt{1+\phi'(y)^2}\, dy 
\\&\phantom{=}
- 
\int_{\R} \nabla_X\tilde\Gamma_X(\psi(y))\frac{d}{dy}f(\psi(y))\, dy 
\end{aligned}\end{multline*}
Then
\begin{align*} 
\int_{\R} \nabla_X\tilde\Gamma_X(\psi(y))\frac{d}{dy}f(\psi(y))\, dy 
 &=  0
\end{align*} 
and  
\begin{multline*}
\int_{\R} 
\nabla_X\Gamma_X(\psi(y)) 
\nu(y)\cdot A^T(\psi(y)) \nabla f(\psi(y))\sqrt{1+\phi'(y)^2}\, dy 
\\=  
\int_{\R} 
\nabla_X\Gamma_X(\psi(y)) \frac{1}{a_{11}(\phi(y))}\Matrix{-1 \\ \phi'(y)}\cdot 
A^T(\psi(y)) 
\Matrix{
1
\\
-\phi'(y)
}
\, dy 
\end{multline*}

So
\[\|T^t(B_8)\|_{BMO}\leq C(\lambda,\Lambda,R_0,\|A'\|_{L^\infty},\|\phi''\|_{L^\infty})\] 
where \[B_8(y)= \Matrix{1&0\\0&1}
\frac{1}{a_{11}(\phi(y))}\Matrix{-1 \\ \phi'(y)}\cdot 
A^T(\psi(y)) \Matrix{1\\-\phi'(y)}
.\] 
Thus, under our a priori smoothness assumptions, $\|T\|_{L^2\mapsto L^2}$ is finite. 

Now, consider the case where $e_2\neq 0$. Note that if $|x|>R_0/|e_2|$, then $\phi(x)=0$ and $\psi(x)=(xe_2,-xe_1)$ and $|\psi_1(x)|=|xe_2|>R_0$, so $A(\psi(x))=I$.

Assume that $R_1\geq R_0/|e_2|$ is so large that if $|x|<R_0/|e_2|$ and $|t|<R_0$, then $|\psi(x,t)|<R_1$, and also that if $|\psi(x,t)|>R_1$ for some $|t|<1$, then $|\psi_1(x)|>R_0.$

Choose $\eta\in C^\infty$ such that $\eta\equiv 1$ on $(-R_1,R_1)$, $\eta \equiv 0 $ on $(-2R_1,2R_1)^C$, and $|\eta'|,|\eta''|<C(R_1)$.

Let
\begin{align*}
f(x,t)&=e_2 t
+(1-\eta(t))\left(\int_{-R_0}^x \frac{e_1-e_2 a_{21}(w)}{a_{11}(w)}dw\right)
\\&\phantom{=}+\eta(t) \left(e_1 x-\phi(e_2 x-e_1 t)\right)
+\eta(t)e_1R_0.
\end{align*}
Note that if $|x|>R_0$, our a priori assumption $\int_{-R_0}^{R_0}\frac{e_1-e_2 a_{21}}{a_{11}}=2R_0 e_1$ means that
\[f(x,t)=e_2 t+e_1 x+e_1 R_0
-\eta(t) \phi(e_2 x-e_1 t)
\]

So if $|y|\geq R_0/|e_2|$, then $\phi(y)=0$, so
\[f(\psi(y))=f(-ye_2,ye_1)=e_1 R_0-\eta(ye_1)\phi(y)=e_1 R_0.\]

Conversely, if $|y|< R_0/|e_2|$, then $|\psi(y)|<R_1$, and so $\eta(\psi_2(y))=1$, and so
\begin{align*}
f(\psi(y))&=f(ye_2+\phi(y)e_1,-ye_1+\phi(y) e_2)
\\&=
e_2 (-ye_1+\phi(y) e_2)
+e_1 (ye_2+\phi(y)e_1)-\phi(y)
+e_1R_0
=e_1 R_0.
\end{align*}

So $f$ is constant on $\partial\Omega$.

Now, consider 
\begin{align*}
\nabla f(x,t)&=
\Matrix{
(1-\eta(t))\frac{e_1-e_2 a_{21}(x)}{a_{11}(x)}
+e_1\eta(t) 
+e_2\eta(t)\phi'(e_2 x-e_1 t)
\\
e_2
-e_1\eta(t)\phi'(e_2 x-e_1 t)
}
\\&\phantom{=}+
\eta'(t)\Matrix{
0\\
-\int_{-R_0}^x \frac{e_1-e_2 a_{21}(w)}{a_{11}(w)}dw
+e_1 x-\phi(e_2 x-e_1 t)+e_1 R_0
}
\end{align*}

If $|t|>2R_1$, or if $|x|,|e_1 t-e_2 x|>R_0$, then 
\begin{align*}
\nabla f(x,t)&=
\Matrix{
\frac{e_1-e_2 a_{21}(x)}{a_{11}(x)}
\\
e_2
}
\end{align*}
and so $\div A^T(x)\nabla f(x,t)=0$. So $\div A^T\nabla f$ is zero outside a bounded set. Clearly, it is bounded in this set by a constant which depends on $A',\phi'', R_0,\lambda,\Lambda.$

If $|t|<R_1$ or $|x|,|e_1 t-e_2 x|>R_0$, then
\[\nabla f(x,t)
=\Matrix
{
e_1+e_2\phi'(e_2 x-e_1 t)
\\
e_2-e_1\phi'(e_2 x-e_1 t)
}=\e+\phi'(e_2 x-e_1 t)\e^\perp.
\]
Note that by definition of $R_1$ if $(x,t)=\psi(y,s)$ for any $|s|<1$, then either $|t|<R_1$ or $|x|,|e_1 t-e_2 x|>R_0$. So this is true near $\partial\Omega$.

Now, recall that
\begin{multline*}
{\int_{\Omega^C} 
\nabla_X\Gamma_X(Y) \nabla\cdot (A^T(Y) \nabla f(Y))\, dY 
-\nabla f(X)}
\\\begin{aligned}&=  
\int_{\R} 
\nabla_X\Gamma_X(\psi(y)) \nu(y)\cdot A^T(\psi(y)) \nabla f(\psi(y))\sqrt{1+\phi'(y)^2}\, dy 
\\&\phantom{=}
- 
\int_{\R} \nabla_X\tilde\Gamma_X(\psi(y))\frac{d}{dy}f(\psi(y))\, dy 
\end{aligned}\end{multline*}

Since $|\nabla_X \Gamma_X(Y)|\leq C/|X-Y|$, $\frac{d}{dy} f(\psi(y))=0$, and $\div A^T\nabla f$ is bounded with compact support, we have that
\begin{multline*}
{\left|\int_\R \nabla_X \Gamma_X(\psi(y))
(\e+\phi'(y)\e^\perp)
\cdot 
A^T(\psi(y))(\e+\phi'(y)\e^\perp)\,dy\right|}
\\
\leq C(\lambda,\Lambda, R_0, 1/|e_2|, \|A'\|_{L^\infty},\|\phi''\|_{L^\infty})
\end{multline*}

Therefore, \[\|T^t(B_8)\|_{BMO}\leq C(\lambda,\Lambda, R_0, 1/|e_2|, \|A'\|_{L^\infty},\|\phi''\|_{L^\infty})\]
where $B_8(y)=I((\e+\phi'(y)\e^\perp)
\cdot 
A^T(\psi(y))(\e+\phi'(y)\e^\perp))$ is bounded with bounded inverse for $\|\phi'\|_{L^\infty}$ small enough. Thus, $\|T\|_{L^2\mapsto L^2}$ is finite.

\chapter[Layer potentials on $H^1$]{Useful theorems involving layer potentials on\texorpdfstring{~$H^1$}{ H1}}
\label{chap:H1useful}

\ifmulticonnect
	\newcommand\boundary{\omega}
	\newcommand\boundaryO{{\omega_0}}
\else
	\newcommand\boundary{{\partial\V}}
	\newcommand\boundaryO{{\partial\V}}
\fi

We have now shown that $\T$ is bounded on $L^p(\partial\Omega)$. By \thmref{NDf}, 
\[\|N (\D f)\|_{L^p}\leq \|f\|_{L^p},\quad \|N(\nabla\S g)\|_{L^p}\leq \|g\|_{L^p}.\]

We will eventually find ourselves wanting to interpolate from $L^p$ to $H^1$; thus, we would also like to show that $\T$, $\K$ and $\J$ are bounded on~$H^1$.

\section{Boundedness of layer potentials on\texorpdfstring{~$H^1$}{H1}}

We begin by proving a lemma:
\begin{lem} \label{lem:inH1}
Suppose that $f\in L^1(\boundary)$, \ifmulticonnect where $\boundary$ is a connected component of $\partial\V$, \fi $\V$ is good Lipschitz domain, and assume that for some $C_3,R,\alpha>0$, $p>1$,
\[\int_{\partial\V} f\,d\sigma=0,
\quad
\|f\|_{L^p(\partial\V)}\leq \frac{C_3}{R^{1-1/p}},
\quad
\int_{\partial\V} f(X) (1+|X|/R)^\alpha\,d\sigma(X)\leq C.
\]
Then $f$ is in $H^1(\partial\V)$ with $H^1$ norm depending only on $C_3$, $p$, $\alpha$ and the Lipschitz constants of~$\V$. (Specifically, not on~$R$.)
\end{lem}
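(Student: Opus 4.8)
The plan is to build an atomic decomposition of $f$ by hand, using two conventions. First, I will work with $L^r$-atoms for a fixed $r\in(1,p)$ to be chosen close to $1$: for any $r>1$, the space spanned by functions $a$ supported on connected $\Delta\subset\partial\V$ with $\int_{\partial\V}a\,d\sigma=0$ and $\|a\|_{L^r}\leq\sigma(\Delta)^{1/r-1}$ coincides with $H^1(\partial\V)$, with equivalent norm, the constants depending only on $r$ and the Lipschitz constants of $\V$ (standard $L^r$-atom theory on spaces of homogeneous type). Second, I read the third hypothesis as $\int_{\partial\V}|f(X)|(1+|X|/R)^\alpha\,d\sigma\leq C$, and after translating I assume $0\in\partial\V$. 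Decompose $\partial\V$ dyadically: $\Delta_k=\partial\V\cap B(0,2^kR)$, $A_k=\Delta_k\setminus\Delta_{k-1}$ for $k\geq1$, $A_0=\Delta_0$, discarding empty pieces. The Ahlfors--David bound gives $\sigma(\Delta_k)\leq k_4\,2^kR$, and since $\partial\V$ is connected with finite Lipschitz constants there is a matching lower bound $\sigma(\Delta_k)\geq c_0\min(2^kR,\diam\partial\V)$; together these yield $\sigma(\Delta_{k+1})\leq C\sigma(\Delta_k)$ whenever $A_{k+1}\neq\emptyset$. The weighted hypothesis gives the decay $\int_{A_k}|f|\,d\sigma\leq 2^{(1-k)\alpha}C$ for every $k$ (the weight is $\geq2^{(k-1)\alpha}$ on $A_k$ for $k\geq1$, and $\geq1$ for $k=0$).

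Next, set $\epsilon_k=\int_{A_k}f\,d\sigma$, so $\sum_k\epsilon_k=\int_{\partial\V}f=0$, write $S_k=\chi_{\Delta_k}/\sigma(\Delta_k)$, and telescope:
\[
f=\sum_k\bigl(f\chi_{A_k}-\epsilon_k S_k\bigr)+\sum_k\epsilon_k S_k
 =\sum_k h_k+\sum_k E_k\,(S_{k+1}-S_k),
\]
where $E_k=\sum_{j>k}\epsilon_j$ and the last equality is Abel summation (the boundary term vanishes since $\sum_j\epsilon_j=0$; the rearrangement converges in $L^1$ because $|E_k|\leq\sum_{j>k}\int_{A_j}|f|\lesssim C\,2^{-k\alpha}$, with constant depending on $\alpha$). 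Each $h_k$ is supported in $\Delta_k$ with $\int h_k=0$, and each $S_{k+1}-S_k$ is supported in $\Delta_{k+1}$ with integral $0$, so each is a scalar multiple of an $L^r$-atom; it remains to sum the multipliers.

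The constant pieces are immediate: $\|S_{k+1}-S_k\|_{L^r}\leq\sigma(\Delta_{k+1})^{-1/r'}+\sigma(\Delta_k)^{-1/r'}\lesssim\sigma(\Delta_{k+1})^{-1/r'}$ by comparability of consecutive $\sigma(\Delta_k)$, hence $S_{k+1}-S_k$ is $\lesssim1$ times an atom, and $\sum_k|E_k|\lesssim C$. For $h_k$, bound $\|h_k\|_{L^r}\leq\|f\chi_{A_k}\|_{L^r}+|\epsilon_k|\sigma(\Delta_k)^{1/r-1}$; after multiplying by the normalizing factor $\sigma(\Delta_k)^{1/r'}$ the second term contributes $|\epsilon_k|\leq\int_{A_k}|f|$, and $\sum_k\int_{A_k}|f|\leq\int_{\partial\V}|f|\leq C$. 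For the first term — the one place both hypotheses are used together — log-convexity of $L^s$-norms gives, with $\theta=p'/r'\in(0,1)$,
\[
\|f\chi_{A_k}\|_{L^r}\leq\|f\chi_{A_k}\|_{L^1}^{1-\theta}\|f\chi_{A_k}\|_{L^p}^{\theta}
 \leq\bigl(2^{(1-k)\alpha}C\bigr)^{1-\theta}\bigl(C_3R^{1/p-1}\bigr)^{\theta},
\]
so that, using $\sigma(\Delta_k)\leq k_4\,2^kR$,
\[
\|f\chi_{A_k}\|_{L^r}\,\sigma(\Delta_k)^{1/r'}
 \lesssim C^{1-\theta}C_3^{\theta}\;2^{\,k/r'-k\alpha(1-\theta)}\;R^{(1/p-1)\theta+1/r'}.
\]
Since $\theta=p'/r'$, the $R$-exponent $(1/p-1)\theta+1/r'=-\theta/p'+1/r'=0$ vanishes identically — this is why the bound is independent of $R$ — and the $2^k$-exponent is $1/r'-\alpha(1-\theta)=\frac{1+\alpha p'}{r'}-\alpha$, which is negative as soon as $r'>p'+1/\alpha$. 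Fixing $r'=p'+2/\alpha$ (so $1<r<p$) makes all three series geometric, and adding them gives $\|f\|_{H^1(\partial\V)}\lesssim C+C_3$, with constants depending only on $p$, $\alpha$, and the Lipschitz constants of $\V$.

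The genuine obstacle is the step just described: the $L^p$-smallness of $f$ does \emph{not} improve on dyadic annuli, so neither $L^\infty$- nor $L^p$-atoms can be used directly; one must trade down to $L^r$-atoms with $r$ near $1$ and interpolate the flatness bound against the $L^1$-decay bound, the payoff being the exact cancellation of the $R$-powers. The rest — the Abel-summation bookkeeping for the leftover constant pieces, and the lower Ahlfors regularity of connected Lipschitz boundaries used for $\sigma(\Delta_{k+1})\lesssim\sigma(\Delta_k)$ — is routine.
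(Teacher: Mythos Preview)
Your proof is correct but follows a genuinely different route from the paper. The paper parameterizes $\partial\V$ by arc length to reduce to $\R$, and then works with the grand-maximal-function characterization of $H^1$: it bounds $\int\sup_t|f*\varphi_t|$ directly, handling $|x|<R$ via the $L^p$ bound on $Mf$, and for $|x|>R$ it subtracts $\varphi_t(x/t)$ (using $\int f=0$) and splits into $|y|<|x|/2$, $|y|>2|x|$, and the middle annulus. The first two regions are controlled pointwise by $CR^\alpha/|x|^{1+\alpha}$; the delicate piece is the middle region, which is handled by a dyadic decomposition in $x$ and, for each scale, a distributional estimate on $Mf_k$ that interpolates the weak-$(1,1)$ bound (with the $L^1$ decay on $A_k$) against the strong-$(p,p)$ bound (with the global $L^p$ hypothesis), summing three regimes of $\beta$ to get a $k2^{-k\alpha}$ tail. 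Your argument bypasses the maximal function entirely by building an explicit $L^r$-atomic decomposition with $r$ chosen near $1$, the key step being the log-convexity bound $\|f\chi_{A_k}\|_{L^r}\leq\|f\chi_{A_k}\|_{L^1}^{1-\theta}\|f\chi_{A_k}\|_{L^p}^{\theta}$ with $\theta=p'/r'$, which makes the $R$-cancellation transparent. Your approach is cleaner and shows more directly why the bound is $R$-independent; the paper's is more self-contained in that it does not invoke the $L^r$-atom equivalence on spaces of homogeneous type, and stays on $\R$ throughout.
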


\begin{proof} Since we can parameterize $\boundary$ by arc length, it suffices to prove this in the case where $\partial\V=\R$.

Let $\varphi$ be a Schwarz function with $\int\varphi\neq 0$. Recall that $f\in H^1$ if
\[\int \sup_t \left|\int f(y)\frac{1}{t}\varphi\left(\frac{x-y}{t}\right)\,dy\right|\,dx\]
is finite, and that its $H^1$ norm is comparable to the value of this integral (with comparability constants depending only on~$\varphi$).

Now, note that the inner integral is at most $C_\varphi Mf(x)$. So
\begin{align*}
\int_{|x|<R} \sup_t \left|\int f(y)\frac{1}{t}\varphi\left(\frac{x-y}{t}\right)\,dy\right|\,dx
&\leq
C\int_{|x|<R} Mf(x)\,dx
\leq C(2R)^{1/p'}\|Mf\|_{L^p}
\\&\leq C_p R^{1-1/p} \|f\|_{L^p}
\leq C_p.
\end{align*}

For simplicity, assume that $\varphi(x)\geq 0$ for all $x$. Then if $|x|>R$,
\begin{align*}
\int f(y)\frac{1}{t}\varphi\left(\frac{x-y}{t}\right)\,dy
&=
\int f(y)\frac{1}{t}\left(\varphi\left(\frac{x-y}{t}\right) -\varphi\left(\frac{x}{t}\right)\right)\,dy
\\&= 
\int_{|y|<|x|/2} 
+
\int_{|x|/2<|y|<2|x|} 
+
\int_{2|x|<|y|} 
\end{align*}
Now,
\begin{multline*}
\left|\int_{|y|<|x|/2} f(y)\frac{1}{t}\left(\varphi\left(\frac{x-y}{t}\right) -\varphi\left(\frac{x}{t}\right)\right)\,dy\right|
\leq
\int_{|y|<|x|/2} |f(y)|\frac{1}{t}\frac{|y|}{t}\frac{C}{(|x|/t)^2}
\,dy
\\=
C\int_{|y|<|x|/2} |f(y)| |y|^\alpha\frac{|y|^{1-\alpha}}{|x|^2}
\,dy
\leq
\frac{C}{|x|^{1+\alpha}}\int |f(y)| |y|^\alpha
\,dy
\leq
\frac{C R^\alpha}{|x|^{1+\alpha}}
\end{multline*}
and
\begin{multline*}
\left|\int_{2|x|<|y|} f(y)\frac{1}{t}\left(\varphi\left(\frac{x-y}{t}\right) -\varphi\left(\frac{x}{t}\right)\right)\,dy\right|
\\\leq
\int_{2|x|<|y|} |f(y)|\frac{1}{t}\left(\frac{Ct}{|y-x|}+\frac{Ct}{|x|}\right)\,dy
\leq
\int_{2|x|<|y|} |f(y)| |y|^\alpha\frac{C}{|x|^{1+\alpha}}\,dy
\leq
\frac{C R^\alpha}{|x|^{1+\alpha}}
\end{multline*}
and similarly
\begin{align*}
\left|\int_{|x|/2<|y|<2|x|} f(y)\frac{1}{t}\varphi\left(\frac{x}{t}\right)\,dy\right|
&\leq
\frac{C R^\alpha}{|x|^{1+\alpha}}.
\end{align*}

But $\int_{|x|>R} \frac{C R^\alpha}{|x|^{1+\alpha}}\,dx\leq C$.

So we need only bound
\[\int_{|x|>R}\sup_t\left|\int_{|x|/2<|y|<2|x|} f(y)\frac{1}{t}\varphi\left(\frac{x-y}{t}\right)\,dy\right|\,dx.\]
But this is equal to
\begin{multline*}
{\sum_{k=1}^\infty
\int_{2^{k-1}R<|x|<2^k R}\sup_t\left|\int_{|x|/2<|y|<2|x|} f(y)\frac{1}{t}\varphi\left(\frac{x-y}{t}\right)\,dy\right|\,dx}
\\\begin{aligned}&
\leq\sum_{k=1}^\infty
\int_{2^{k-1}R<|x|<2^k R}\sup_t\int_{2^{k-2}R<|y|<2^{k+1}R} |f(y)|\frac{1}{t}\varphi\left(\frac{x-y}{t}\right)\,dy\,dx
\\&
\leq\sum_{k=1}^\infty
\int_{2^{k-1}R<|x|<2^k R}
C M f_k(x)
\,dx
\end{aligned}\end{multline*} 

where $f_k(x)=f(x)$ if $2^{k-2}R<|x|<2^{k+1}R$ and $f_k(x)=0$ otherwise.


Now,
\[\int_{2^{k-1}R<|x|<2^k R}
M f_k(x)
\,dx
=
\int_0^\infty \lambda(\beta)\,d\beta\]
where $\lambda(\beta)=|\{x:2^{k-1}R<|x|<2^k R, Mf_k(x)<\beta\}|$.

Note the following three upper bounds on $\lambda(\beta)$:
\begin{itemize}
\item $\lambda(\beta)\leq 2^{k+1}R$.
\item Since $f\mapsto Mf$ is weak $(1,1)$-bounded,
$\lambda(\beta)\leq \frac{C\|f_k\|_{L^1}}{\beta} \leq\frac{C}{2^{k\alpha}\beta}.$
\item Since $f\mapsto Mf$ is strong $(p,p)$-bounded,
$\lambda(\beta)\leq \frac{C\|f_k\|_{L^p}^p}{\beta^p}\leq\frac{C}{R^{p-1}\beta^p}.$
\end{itemize}
So
\begin{align*}
\int_0^\infty \lambda(\beta)\,d\beta
&=
\int_0^{1/2^{k+k\alpha}R} \lambda(\beta)\,d\beta
+
\int_{1/2^{k+k\alpha}R}^{2^{k\alpha/(p-1)}/R} \lambda(\beta)\,d\beta
+
\int_{2^{k\alpha/(p-1)}/R}^\infty \lambda(\beta)\,d\beta
\\&\leq
\int_0^{1/2^{k+k\alpha}R} 2^{k+1}R\,d\beta
+
\int_{1/2^{k+k\alpha}R}^{2^{k\alpha/(p-1)}/R} \frac{C}{2^{k\alpha}\beta}\,d\beta
\\&\qquad+
\int_{2^{k\alpha/(p-1)}/R}^\infty \frac{C}{R^{p-1}\beta^p}\,d\beta
\\&=
\frac{2}{2^{k\alpha}}
+
\frac{C}{2^{k\alpha}} \ln(2^{k+k\alpha+k\alpha/(p-1)})
+
\frac{C}{p-1}2^{-k\alpha}
\leq C_p (1+k)2^{-k\alpha}
\end{align*}
Summing from $k=1$ to infinity gives a finite number depending only on $p$ and $\alpha$, as desired.
\end{proof}

We use this lemma to show that $\K^t$, $\J^t$ are bounded on $H^1$.

\begin{thm} \label{thm:H1patching} If $\V$ is a good Lipschitz domain, then $\K^t_\pm$ and $\J^t$ are bounded $H^1(\boundary)\mapsto H^1(\partial\V)$, \ifmulticonnect where $\boundary$ is any connected component of $\partial\V$, \fi with bounds depending only on $\lambda,\Lambda$ and the Lipschitz constants of~$\V$.
\end{thm}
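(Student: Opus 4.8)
The plan is to reduce to atoms and apply \lemref{inH1}. Since any $f\in H^1(\partial\V)$ decomposes as $f=\sum_k\lambda_k a_k$ with atoms $a_k$ and $\sum_k|\lambda_k|\approx\|f\|_{H^1}$, and since $\K^t_\pm$ and $\J^t$ are already known to be bounded on $L^p(\partial\V)$ for every $1<p<\infty$ (this follows from the $L^p$-boundedness of $\T$ on good Lipschitz domains, \thmref{patching}, together with \lemref{Kwelldef} and the fact that $\K^t_\pm$, $\J^t$ are transposes of operators built from $\T$), it is enough to show that a single $H^1(\partial\V)$-atom $a$ is mapped by $\K^t_\pm$ and by $\J^t$ into $H^1(\partial\V)$ with norm bounded by a constant depending only on $\lambda$, $\Lambda$ and the Lipschitz constants of $\V$. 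The passage to general $f$ then follows because $\sum_k\lambda_k\K^t_\pm a_k$ converges absolutely in $H^1(\partial\V)$ while the partial sums also converge in $L^1(\partial\V)$ (using $\|\K^t_\pm a_k\|_{L^1}\le C$ from \corref{NSf}), so the $H^1$ limit coincides with $\K^t_\pm f$; likewise for $\J^t$. Note that no separate ``special domain'' step and partition-of-unity patching is possible here, since cutting off an $H^1$ function destroys the mean-zero property; the atomic route via \lemref{inH1} handles good domains directly.

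So fix an atom $a$ supported in $\Delta=B(X_0,R)\cap\partial\V$ with $\|a\|_{L^\infty}\le1/R$ and $\int_{\partial\V}a\,d\sigma=0$. By \lemref{Ktranspose}, $\K^t_\pm a$ is, up to sign, the nontangential boundary value of $\nu\cdot A^T\nabla\S^T a$ from one side of $\partial\V$, and $\J^t a=\partial_\tau\S^T a$; I will check the three hypotheses of \lemref{inH1} for $g=\K^t_\pm a$, the argument for $g=\J^t a$ being identical. First, $\int_{\partial\V}g\,d\sigma=0$: for $\K^t_\pm a$ this is the divergence theorem applied to $\S^T a$, which solves $\div A^T\nabla\S^T a=0$ off $\partial\V$, the flux through circles of radius $\rho$ tending to $0$ because $|\nabla\S^T a(X)|\le CR^\alpha|X-X_0|^{-1-\alpha}$ far from $\Delta$ by the computation in the proof of \corref{NSf}; for $\J^t a=\partial_\tau\S^T a$ it is automatic, since $\S^T a$ has a single-valued continuous restriction to $\partial\V$ which vanishes at infinity when $\partial\V$ is unbounded. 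Second, fixing $p=2$, say, one has $\|a\|_{L^2}\le\|a\|_{L^\infty}\sigma(\Delta)^{1/2}\le CR^{-1/2}$ by Ahlfors--David regularity, so $\|g\|_{L^2}\le C\|a\|_{L^2}\le CR^{-1/2}$, which is the required $L^p$-size bound. Third, writing $b$ for a suitably large constant, on $B(X_0,bR)\cap\partial\V$ the weight $(1+|X-X_0|/R)^{\alpha'}$ is $O(1)$, so $\int_{B(X_0,bR)}|g|\,d\sigma\le C\sigma(B(X_0,bR))^{1/2}\|g\|_{L^2}\le C$; on $\partial\V\setminus B(X_0,bR)$ the decay estimate from \corref{NSf} gives $|g(X)|\le N(\nabla\S^T a)(X)\cdot O(1)\le CR^\alpha|X-X_0|^{-1-\alpha}$, so choosing the weight exponent $\alpha'=\alpha/2<\alpha$ and invoking the Carleson-type bound (\ref{eqn:goodCZbound}) yields $\int_{|X-X_0|>bR}CR^\alpha|X-X_0|^{-1-\alpha}(|X-X_0|/R)^{\alpha'}\,d\sigma\le CR^{\alpha-\alpha'}(bR)^{-(\alpha-\alpha')}=C$. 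With all three hypotheses verified, \lemref{inH1} gives $\|\K^t_\pm a\|_{H^1}$, $\|\J^t a\|_{H^1}\le C$, completing the reduction.

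The main difficulty is the third hypothesis: one has to pair the off-diagonal decay of $\nabla\S^T a$ (which is really the cancellation $\int a=0$ at work, exactly as in \corref{NSf}) against a weight whose exponent is \emph{strictly} below the Hölder exponent of the kernel $\nabla\Gamma$, so that the tail integral converges even on an unbounded boundary; getting this bookkeeping right, with all constants independent of $R$ and of $\diam\V$, is where the real work lies. A secondary, more routine point is the termwise action of $\K^t_\pm$ and $\J^t$ on atomic decompositions, which is handled by the simultaneous $L^1$- and $H^1$-convergence noted above; one should also double-check that the divergence-theorem argument for the mean-zero property is legitimate in both the bounded and the special (unbounded) Lipschitz cases, which again reduces to the decay of $\nabla\S^T a$ at infinity.
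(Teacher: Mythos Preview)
Your proposal is correct and follows essentially the same route as the paper: reduce to a single atom and verify the three hypotheses of \lemref{inH1} (the $L^p$ size bound via the $L^p$-boundedness of the layer potentials, the weighted-tail bound via the $CR^\alpha/|X-X_0|^{1+\alpha}$ decay of $\nabla\S^T a$, and the mean-zero property via the divergence theorem or the fundamental theorem of calculus for $\partial_\tau$). The paper handles the mean-zero verification for $\K^t_\pm$ a bit more carefully than your sketch, splitting into the bounded-boundary case (where one uses $\K^t_+-\K^t_-=I$ to transfer from $\K^t_-$ to $\K^t_+$) and the special-domain case (where a cutoff $\eta\in C^\infty_0(B(X_0,2r))$ is inserted and the resulting integral is shown to be $O(R^\alpha/r^\alpha)$), but your remark that this ``reduces to the decay of $\nabla\S^T a$ at infinity'' is exactly right.
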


\begin{proof}
Suppose that $f$ is an atom in $H^1(\boundaryO)$. 

Then $\int f\,d\sigma=0$, $\supp f\subset B(X_0,R)$, $\sigma(\supp f)<2R$, $\|f\|_{L^\infty}\leq 1/R$ for some $X_0\in\R^2$ and $R>0$; without loss of generality we let $X_0=0$.

We intend to use \lemref{inH1} to show that $\K_\pm^t f,$ $\J^t f$ is in $H^1(\boundary)$\ifmulticonnect\ for any connected component of $\partial\V$ (including~$\boundaryO$)\fi.

First: if $1<p<\infty$, then
\[\|\K_{\V}^t f\|_{L^p(\V)},\>\|\K_{\V_-}^t f\|_{L^p(\V)},\>\|\J_{\V}^t f\|_{L^p(\V)}
\leq C_p \|f\|_{L^p}\leq C_p R^{1/p-1}.\]

Next, if $|X|>2R$, then
\begin{align*}|\nabla \S^T f(X)|
&=\left|\int_{\boundaryO} \nabla_X \Gamma_X(Y) f(Y)\,d\sigma(Y)\right|
\\&=\left|\int_{\boundaryO} \nabla_X (\Gamma_X(Y)-\Gamma_X(0)) f(Y)\,d\sigma(Y)\right|
\leq \frac{C R^\alpha}{|X|^{1+\alpha}}\end{align*}

So \ifmulticonnect if $\boundary$ is a connected component of $\partial\V$, then\fi
\begin{align*}
{\int_\boundary |\K^t_\V f(X)| (1+|X|/R)^{\alpha/2} \,d\sigma(X)}
&\leq
\int_{\boundary\setminus B(0,2R)}\frac{C R^\alpha}{|X|^{1+\alpha}} (|X|/R)^{\alpha/2} \,d\sigma(X)
\\&\qquad
+C\int_{\boundary\cap B(0,2R)}|\K^t_\V f(X)| \,d\sigma(X)
\\&\leq
C+C R^{1-1/p} \|\K^t_\V f\|_{L^p}\leq C.
\end{align*}
Similar results hold for $\K^t_{\V_-},\J^t$.

We need only show that $\int \K^t f\,d\sigma=\int \J^t f\,d\sigma=0$. For bounded domains, this is simple:
\[\int_\boundary \J^t_{\V} f(X)\,d\sigma(X)=\int_\boundary\tau\cdot\nabla \S^T f(X)\,d\sigma(X).\]
By continuity of $\S^T f$ this must be zero.

Also,
\[\int_\boundary \K^t_{-} f(X)\,d\sigma(X)=-\int_\boundary\nu\cdot A^T\nabla \S^T f(X)\,d\sigma(X).\]
But this must be zero, since $\div A^T\nabla\S^T f=0$ on $\R^2-\boundaryO\supset\V$. 

By \lemref{Kpm}, $\K^t_+f-\K^t_-f=f$, so 
$\int_\boundary \K^t_+f \,d\sigma = 0$ if $f\in H^1(\partial\V)$. A similar argument holds if $\V^C$ is bounded.

Conversely, if $\V=\Omega$ is a special Lipschitz domain, then
\[\S^T f(X)
=\int_{\partial\Omega}\Gamma_X (Y) f(Y)\,d\sigma(Y)
=\int_{\partial\Omega}(\Gamma_X (Y)-\Gamma_{X}(X_0)) f(Y)\,d\sigma(Y)\]
and so if $|X-X_0|$ is large enough, then
\[|\S^T f(X)|\leq 
\int_{\supp f}\frac{|X-Y|}{|X-X_0|} \frac{1}{R}\,d\sigma(Y)
\leq
\frac{CR}{|X-X_0|}\]
and so $\int_{\partial\Omega}\J^t f(X)\,d\sigma(X)=\int_{\partial\Omega}\tau\cdot\nabla\S^T f(X)\,d\sigma(X)=0$.

Now, let $\eta\in C^\infty_0(B(X_0,2r))$ with $\eta\equiv 1$ on $B(X_0,r)$ and $|\nabla\eta|<C/r$. Then
\begin{align*}
\int_{\partial\Omega} \eta(X)\K^t_{\pm} f(X)\,d\sigma(X) 
&= \mp\int_{\partial\Omega_\mp}\eta(X)\nu\cdot A^T\nabla \S^T f(X)\,d\sigma(X)
\\&= \mp\int_{\Omega_\mp}\nabla \eta(X)\cdot A^T\nabla \S^T f(X)\,dX
\\&= \mp\int_{\Omega_\mp\cap B(X_0,2r)\setminus B(X_0,r)}\nabla \eta(X)\cdot A^T\nabla \S^T f(X)\,dX
\end{align*}
So
\begin{align*}
\left|\int_{\partial\Omega} \eta(X)\K^t_{\pm} f(X)\,d\sigma(X) \right|
&\leq \int_{B(X_0,2r)\setminus B(X_0,r)} \frac{CR^\alpha}{r|X|^{1+\alpha}}\,dX
\leq \frac{CR^\alpha}{r^{\alpha}}
\end{align*}
since $|\nabla \S^T f(X)|\leq \frac{CR^\alpha}{|X|^{1+\alpha}}$ for $|X|>2R$. Since this goes to $0$ as $r\to\infty$, we know $\int_{\partial\Omega}\K^t_\pm f\,d\sigma=0$.

So, applying \lemref{inH1}, we are done.
\end{proof}

\section{Nearness to the real case}

\begin{thm} \label{thm:nearT} Let $\Gamma^0$ be the fundamental solution for $A_0$, and let $K^0,$ $\T^0=\T^0_\V$ be defined as in (\ref{dfn:kernelgen}) and~(\ref{dfn:Tgen}), but with $\Gamma,A$ replaed with $\Gamma^0,A_0$.

Then if $\|A-A_0\|_{L^\infty}$ is small enough, then 
$\|\T -T^0 \|_{L^p\mapsto L^p}\leq C_p\|A-A_0\|_{L^\infty},$ $\|\T^t -(\T^0)^t\|_{H^1\mapsto L^1}\leq C\|A-A_0\|_{L^\infty},$ where the constants $C_p,C$ depend only on $p$ and the ellipticity constants of $A,A_0$.
\end{thm}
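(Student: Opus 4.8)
\textbf{Proof strategy for \thmref{nearT}.} The plan is to exploit the analyticity machinery from \autoref{sec:analytic} together with the Calderón--Zygmund estimates of Chapters~\ref{chap:PDEbasic} and~\ref{chap:Tbounded}. The starting point is estimate~(\ref{eqn:nearfundsolns}) and the two difference estimates immediately following it, which say that the kernels $K^A$ and $K^{A_0}$ of $\T$ and $\T^0$ differ pointwise by $O(\epsilon/|X-Y|)$ and that their first differences in $X$ and in $Y$ are controlled by $C\epsilon|X-X'|^\alpha/|X-Y|^{1+\alpha}$ and $C\epsilon|Y-Y'|^\alpha/|X-Y|^{1+\alpha}$ respectively, where $\epsilon=\|A-A_0\|_{L^\infty}$. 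In other words, $\check K=K^A-K^{A_0}$ is a Calderón--Zygmund kernel whose constants are all $O(\epsilon)$.

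For the $L^p$ bound, I would introduce the analytic family $A_z=A_0+z(\lambda/2\epsilon)(A-A_0)$ as in \autoref{sec:analytic}, so that $z\mapsto K^{A_z}$ is analytic (this is the content of the remark citing \cite[p.~57]{AusTcha} and the $L^1_t$ bound on $\nabla\check K_t^z$). By \thmref{Tbounded} (more precisely its special/good-domain versions \thmref{special} and \thmref{patching}), for $\epsilon_0$ small enough each $\T^{A_z}$ with $|z|<1$ is bounded on $L^p$ uniformly in $z$, with bound depending only on $p$, the ellipticity constants, and the Lipschitz constants of $\V$. Then the Cauchy-integral argument producing~(\ref{eqn:CZanalytic}) applies verbatim to $\mathcal{J}^z=\T^{A_z}$: writing $\T f-\T^0 f = \frac{1}{2\pi i}\oint_\omega \T^{A_\zeta} f\left(\frac{z}{\zeta(\zeta-z)}\right)d\zeta$ and evaluating at $z=2\epsilon/\lambda$ gives $\|\T f-\T^0 f\|_{L^p}\leq C_p\,\epsilon\,\|f\|_{L^p}$, which is exactly the first claim.

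For the $H^1\mapsto L^1$ bound on the transposes, I would argue atom by atom. Fix an $H^1(\partial\V)$ atom $f$ supported in $\Delta=B(X_0,R)\cap\partial\V$ with $\|f\|_{L^\infty}\leq 1/R$ and $\int f\,d\sigma=0$; take $X_0=0$. On the local piece $B(0,2R)\cap\partial\V$ use Hölder and the $L^p\mapsto L^p$ bound just proved for $\check\T^t=\T^t-(\T^0)^t$ (valid since $\check\T$ is bounded $L^{p'}\mapsto L^{p'}$ with norm $\leq C_{p'}\epsilon$, hence its transpose is bounded $L^p\mapsto L^p$ with the same norm): this gives
\[
\int_{B(0,2R)\cap\partial\V} |\check\T^t f|\,d\sigma
\leq C R^{1-1/p}\,\|\check\T^t f\|_{L^p}
\leq C R^{1-1/p}\cdot\epsilon R^{1/p-1}
= C\epsilon.
\]
Away from the support, for $|X|>2R$ I subtract the kernel at $0$: using $\int f=0$ and the second difference estimate following~(\ref{eqn:nearfundsolns}) (the one with $\epsilon|Y-Y'|^\alpha$) with $Y'=0$,
\[
|\check\T^t f(X)|
\leq \int_{\supp f}\bigl|\check K^t(Y,X)-\check K^t(0,X)\bigr|\,|f(Y)|\,d\sigma(Y)
\leq \frac{C\epsilon R^\alpha}{|X|^{1+\alpha}}\cdot\frac{1}{R}\cdot R
= \frac{C\epsilon R^\alpha}{|X|^{1+\alpha}},
\]
and $\int_{|X|>2R}\frac{C\epsilon R^\alpha}{|X|^{1+\alpha}}\,d\sigma(X)\leq C\epsilon$ by~(\ref{eqn:goodCZbound}). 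Summing the two pieces and then summing over atoms (with the $\ell^1$ sum of coefficients comparable to $\|f\|_{H^1}$) yields $\|\check\T^t f\|_{L^1}\leq C\epsilon\|f\|_{H^1}$.

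\textbf{Main obstacle.} The only delicate point is making sure the analyticity hypotheses needed for~(\ref{eqn:CZanalytic}) genuinely hold for $\T^{A_z}$ on the given Lipschitz domain: one needs both that $z\mapsto K^{A_z}(X,Y)$ is analytic with uniform-in-$z$ Calderón--Zygmund constants (supplied by \autoref{sec:analytic}) \emph{and} that $\T^{A_z}$ is uniformly bounded on $L^p$ for $|z|$ in a fixed neighbourhood of $0$ — which requires that $A_z$ remain uniformly elliptic and within $\epsilon_0$ of the \emph{real} matrix $A_0$ for all such $z$, so that \thmref{Tbounded} applies. Choosing the radius of the contour $\omega$ so that $\|A_\zeta-A_0\|_{L^\infty}<\epsilon_0$ for all $\zeta\in\omega$ (e.g.\ $|\zeta|\le 1$ forces $\|A_\zeta-A_0\|_{L^\infty}\le (\lambda/2\epsilon)\cdot\epsilon=\lambda/2$, which we may assume is $\le\epsilon_0$ after possibly shrinking $\epsilon_0$) handles this; everything else is the routine bookkeeping sketched above.
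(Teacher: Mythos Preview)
Your proposal is correct and follows essentially the same route as the paper: both use the analytic family $A_z$ and the Cauchy-integral bound (\ref{eqn:CZanalytic}) for the $L^p$ estimate, and both handle $H^1\to L^1$ atom by atom via a local $L^p$ piece plus a far piece controlled by the $O(\epsilon)$ Calder\'on--Zygmund difference estimates following~(\ref{eqn:nearfundsolns}). The only cosmetic difference is that the paper runs the Cauchy-integral argument through the nontangential maximal function $N(\T^\zeta F)$ (which incidentally recovers \corref{NDminusD0}) and then defers the $H^1$ part to the machinery of \lemref{inH1} and \thmref{H1patching}, whereas you cite (\ref{eqn:CZanalytic}) directly and spell out the atomic decomposition by hand; the underlying estimates are identical.
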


\begin{proof}
If $F\in L^p(\partial\V)$, then by \thmref{NDf}, we have that so is 
$N(\T F)$.

Let $A_z(x)=(1-z)A_0+z A_1(x)=A_0(x)+z(A_1(x)-A_0(x))$, where $\|A_0-A_1\|_{L^\infty}=\frac{1}{2}\epsilon_0$, where $\epsilon_0$ is the constant in \thmref{Tbounded}, and $A=A_\epsilon$ for some real number $\epsilon\in(0,1)$. It is easy to see that $A_z(x)$ is uniformly elliptic in both $z$ and $x$ for $x\in\R$, $|z|<2$. Note also that $A_z$ is analytic in $z$.

We may define $\Gamma^z,K^z,\T^z$ by analogy with $\Gamma^0,K^0,\T^0$.

In \autoref{sec:analytic}, we showed that $\Gamma^z_Y(X)$ is analytic in $z$ provided $A_z$ is. Therefore, so is $K^z$, and therefore, if $F\in L^2$, so is
$\T^z(X)=\int_{\partial\V} K^z(X,Y) F(Y)\,d\sigma(Y).$


%

So
\[(\T^z F(X)-\T^w F(X))
=(z-w)\frac{1}{2\pi i}\oint_\boundary \T^\zeta F(X)\left(\frac{1}{(\zeta-z)(\zeta-w)}\right)\,d\zeta
\]
and so
\begin{multline*}
{\left\|\sup_{X\in\gamma(Z)}|\T^z F(X)\vphantom{\sup_X}-\T^w F(X)|\right\|_{L^p_Z(\partial\V)}}
\\\begin{aligned}
&=\left\|\sup_{X\in\gamma(Z)}
\left|(z-w)\frac{1}{2\pi i}\oint_\boundary \T^\zeta F(X)\left(\frac{1}{(\zeta-z)(\zeta-w)}\right)\,d\zeta\right|
\right\|_{L^p_Z(\partial\V)}
\\&\leq|z-w|
\left\|
\frac{C}{2\pi}\oint_\boundary \sup_{X\in\gamma(Z)}|\T^\zeta F(X)|\,d\zeta
\right\|_{L^p_Z(\partial\V)}
\\&\leq|z-w|
\frac{C}{2\pi}\oint_\boundary \left\|\sup_{X\in\gamma(Z)}|\T^\zeta F(X)|
\right\|_{L^p_Z(\partial\V)}\,d\zeta
\\&\leq|z-w| C_p \|F\|_{L^p}.
\end{aligned}
\end{multline*}

Therefore, $\|\T^z-\T^w\|_{L^p\mapsto L^p}\leq C|z-w|$. But $A=A_{2\|A-A_0\|_{L^\infty}/\epsilon_0}$, so
\[\|\T-\T^0\|_{L^p\mapsto L^p}\leq C\left|\frac{2\|A-A_0\|_{L^\infty}}{\epsilon_0}\right|.\]

Similarly, $\int (T_h-T_h^0) F(x-x_0) (1+(x-x_0)/R)^\alpha\,dx\leq C$ if $F$ is an atom supported on $(x_0-R,x_0+R)$, and so as in the proofs of \thmref{patching} and \thmref{H1patching}, we have that
\begin{multline*}\|\K-\K_0\|_{L^p\mapsto L^p},\ 
\|\J-\J_0\|_{L^p\mapsto L^p},\ 
\|\K^t-\K_0^t\|_{H^1\mapsto H^1},\
\|\J^t-\J_0^t\|_{H^1\mapsto H^1}
\\\leq C \|A-A_0\|_{L^\infty}
\end{multline*}
provided $\|A-A_0\|_{L^\infty}$ is small enough.
\end{proof}

\chapter{Invertibility of layer potentials}
\label{chap:invertibility}
\newcommand\p{p} 

Recall our goal: to prove that if $p$ is small enough, then for any $g\in L^p\cap H^1(\partial\V)$, there is some $u$ such that
\begin{equation}\label{eqn:goal}
\div A\nabla u=0,\quad
\|N (\nabla u)\|\leq C_p \|g\|,
\quad\text{and}\quad
\nu\cdot A\nabla u=g\text{ or }\tau\cdot\nabla u=g\text{ on }\partial\V
\end{equation}
with norms taken in either $L^p$ or $H^1$.

By \thmref{NDf}, \corref{NSf}, \thmref{patching} and \thmref{H1patching}, if $1<p<\infty$, then
\[\|N(\D f)\|_{L^p}\leq C_p \|f\|_{L^p},\quad \|N(\nabla \S^T f)\|_{L^p} \leq C_p\|f\|_{L^p},\quad \|N(\nabla \S^T f)\|_{L^1} \leq C_p\|f\|_{H^1}.\]

Recall that $\nu\cdot A^T\nabla \S^T f  = -\K^t_- f$, $\tau\cdot\nabla \S^T f =\J^t f$. So if we could find a $f\in L^p\cap H^1$ such that $\|f\|\leq C\|g\|$ and $g=-\K^t_- f$ or $g=\J^t f$, then we would have a $u=\S f$ which satisfied~(\ref{eqn:goal}).

Thus, if $\K^t_\pm$, $\J^t$ are invertible on $L^p$, then solutions to $(N)^A_p$, $(R)^A_p$ in $\V$ and $\bar\V^C$ exist.

By duality this will imply that $\K_\pm$ is invertible on $L^p$ for sufficiently {large} $p$, which will imply that solutions to $(D)^A_p$ exist.

This chapter will be devoted to the proof of the following theorem:
\begin{thm} \label{thm:invertibility}
Let $1<p<\infty$, and let $\epsilon_0>0$. Let $\V$ be a Lipschitz domain with Lipschitz constants $k_i$. Let $A_0$ be a real elliptic matrix, and let $A_s=sI + (1-s)A_0$. Assume that
\begin{myenumerate}


\item \label{invertcondI} $(\K^A_\pm)^t$ and $(\J^A)^t$ are bounded linear operators on $H^1(\partial\V)$ and $L^{p}(\partial\V)$, for all matrix-valued functions $A:\R\mapsto\C^{2\times 2}$ such that either $A$ is a complex-valued matrix with $\|A-A_0\|_{L^\infty}\leq\epsilon_0$, or $A=A_s$, $0\leq s\leq 1$,
\item \label{invertcondII} The layer potential $(\K^I_+)^t:L^p\cap H^1(\partial\V)\mapsto L^p\cap H^1(\partial\V)$ is onto,
\item \label{invertcondIII}  $\smash{(N)^{A_s}_{p}}$, $\smash{(R)^{A_s}_{p}}$ hold in $\V=\V_+$ and $\bar\V^C=\V_-$ with constants at most $C_1$, for all $0\leq s\leq 1$.
\end{myenumerate}

Then there is some $\epsilon>0$ such that, if $\|A-A_0\|_{L^\infty}<\epsilon$, then $\K^t_\pm,\J^t$ are invertible on $L^{p}_0\cap H^1(\partial\V)$, where $L^p_0=H^1\cap L^p$ and $\|f\|_{L^p_0}=\|f\|_{L^p}$. Furthermore,
\[1/\epsilon,\ \|(\K^t_\pm)^{-1}\|,\ \|(\J^t)^{-1}\|\leq C(\lambda,\Lambda,k_i,C_1,p,\epsilon_0).\] 

If in addition
\begin{myenumerate}
\item \label{invertcondIV} There exists a number $C_1>0$ such that, if $a$ is a $H^1(\partial\V)$ atom with support in some $B(X_0,R)\cap\partial\V$ and $H^1$ norm 1, then
\begin{equation}\label{eqn:NinL1}\int_{\partial\V} N(\nabla u)(X)(1+|X|/R)^\alpha\,d\sigma(X)\leq C_1\end{equation}
provided $\div A_0\nabla u=0$ in $\V$, and $\nu\cdot A_0\nabla u=a$ or $\partial_\tau u=a$ on $\partial\V$
\end{myenumerate}
then $\K^t_\pm$, $\J^t$ are invertible with bounded inverse on $H^1(\partial\V)$ as well.
\end{thm}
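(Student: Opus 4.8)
The plan is to prove invertibility of $\K^t_\pm$ and $\J^t$ by a continuity (method of continuity / Sherman-lemma) argument along the family $A_s = sI + (1-s)A_0$, followed by an analytic-perturbation step to pass from $A_0$ to a nearby complex matrix $A$. First I would set up the target space $L^p_0 = H^1 \cap L^p$ with norm $\|f\|_{L^p_0} = \|f\|_{L^p}$, and observe that by hypothesis~\ref{invertcondI} all the operators in play are bounded on $H^1$ and $L^p$, hence on $L^p_0$. The base case $s=1$, i.e. $A = I$: here $\K^I_+$ is onto by~\ref{invertcondII}, and I would check injectivity directly from the explicit harmonic layer potentials on the half-plane (or by the jump relation $\K_+ - \K_- = I$ together with the classical fact that $\K^I_-$ has trivial kernel on the relevant spaces), so $(\K^I_+)^t$ is invertible; the same for $(\J^I)^t$.

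Next, the continuity step along $s \in [0,1]$ for real $A_s$. The key is that invertibility is an open condition in operator norm, so the set $S = \{s : (\K^{A_s}_\pm)^t, (\J^{A_s})^t \text{ invertible on } L^p_0\}$ is open in $[0,1]$; it is nonempty since $1 \in S$. To show it is closed I need a \emph{uniform} lower bound on $\|f\|_{L^p}$ in terms of $\|(\K^{A_s}_\pm)^t f\|_{L^p}$ and $\|(\J^{A_s})^t f\|_{L^p}$, valid for all $s \in [0,1]$. This is exactly where hypothesis~\ref{invertcondIII} enters: if $(\K^{A_s}_-)^t f = g$ then $u = \S^{A_s} f$ solves $\div A_s \nabla u = 0$ with $\nu \cdot A_s^T \nabla u = -g$ on the boundary, and since $(N)^{A_s}_p$ holds in both $\V_\pm$ with constant $\le C_1$, uniqueness in the Neumann problem forces $f$ to be controlled: more precisely, the solvability with uniform constants of $(N)^{A_s^T}_p$ and $(R)^{A_s^T}_p$ gives surjectivity of $\K^t_\pm, \J^t$ up to the already-known invertibility at one point, and combined with the openness and the a priori bound we get invertibility for all $s$, with $\|(\K^{A_s}_\pm)^t)^{-1}\|, \|((\J^{A_s})^t)^{-1}\|$ bounded uniformly in $s$ by $C(\lambda,\Lambda,k_i,C_1,p,\epsilon_0)$. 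The quantitative threshold $1/\epsilon$ comes out of this uniform bound.

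Then the perturbation step: given the uniform invertibility at $A_0$ (the $s=0$ endpoint), use \thmref{nearT}, which says $\|\K^t_\pm - (\K^0_\pm)^t\|_{L^p \to L^p}$, $\|\J^t - (\J^0)^t\|_{L^p \to L^p}$, and the corresponding $H^1$ differences, are all $\le C\|A - A_0\|_{L^\infty}$. A Neumann series argument then shows that for $\|A - A_0\|_{L^\infty} < \epsilon$ with $\epsilon$ small enough (depending only on the listed constants), $\K^t_\pm$ and $\J^t$ are invertible on $L^p_0$ with norm of the inverse at most twice the norm at $A_0$. This proves the main assertion.

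For the final sentence — invertibility with bounded inverse on $H^1(\partial\V)$ — I would repeat the perturbation argument but now measuring everything in the $H^1$ norm, using the $H^1 \to H^1$ bound from \thmref{nearT}. The extra hypothesis~\ref{invertcondIV} is what makes this work: it supplies, for the real matrix $A_0$, the estimate $\int_{\partial\V} N(\nabla u)(1+|X|/R)^\alpha\,d\sigma \le C_1$ for $u$ the solution of the Neumann or regularity problem with atomic data. By the argument of \lemref{inH1} (the weighted-decay-plus-$L^p$ criterion for membership in $H^1$), this implies $(\K^0_\pm)^t$ and $(\J^0)^t$ map $H^1$ atoms to functions with controlled $H^1$ norm \emph{and} are bounded below on $H^1$ — i.e. invertible on $H^1(\partial\V)$ — and then the $H^1 \to H^1$ perturbation estimate from \thmref{nearT} upgrades this to $\K^t_\pm, \J^t$ for $A$ near $A_0$. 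The main obstacle I expect is establishing the uniform-in-$s$ lower bound in the continuity step — threading the solvability hypotheses~\ref{invertcondIII} (and~\ref{invertcondIV} for the $H^1$ version) through the identities $\K^t_\pm = \mp \nu \cdot A^T \nabla \S^T|_{\partial\V_\mp}$, $\J^t = \partial_\tau \S^T$ to convert an a priori estimate for solutions into an a priori estimate for boundary densities — since the openness and the perturbation steps are routine once that bound is in hand.
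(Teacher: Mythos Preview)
Your overall architecture — continuity along $A_s$ to transport invertibility from $A=I$ to $A_0$, then \thmref{nearT} plus a Neumann-series step (the paper's \thmref{nearonto}) to perturb to complex $A$ — matches the paper exactly. Two points in your account need sharpening, and both concern the step you correctly flag as the main obstacle.

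First, the uniform lower bound $\|f\|\le C\|(\K^s_\pm)^t f\|$, $\|f\|\le C\|(\J^s)^t f\|$. Your sentence ``uniqueness in the Neumann problem forces $f$ to be controlled'' is not the mechanism: uniqueness only identifies the solution $u$, it does not by itself recover the density~$f$. The paper's route (\lemref{Kpm}, \lemref{Sfcts}, \lemref{compareLp}, \lemref{compareH1}) is: the jump relation gives $\|f\|\le\|(\K^s_+)^t f\|+\|(\K^s_-)^t f\|$; then, with $u=\S^{T,s}f$, hypotheses~\ref{invertcondIII} (resp.~\ref{invertcondIV}) give $\|\nu\cdot A_s^T\nabla u\|\approx\|N(\nabla u)\|\approx\|\tau\cdot\nabla u\|$ on each of $\partial\V_\pm$ in $L^p$ (resp.~$H^1$); and since $\tau\cdot\nabla u=(\J^s)^t f$ is the \emph{same} from both sides (continuity of $\S^T f$ across $\partial\V$), all three of $\|(\K^s_+)^t f\|$, $\|(\K^s_-)^t f\|$, $\|(\J^s)^t f\|$ are comparable. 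Chaining these gives the lower bound for all $s$ simultaneously — so injectivity at $s=1$ is a corollary, not a separate check, and your appeal to ``explicit harmonic layer potentials on the half-plane'' is both unnecessary and inapplicable to a general Lipschitz~$\V$.

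Second, surjectivity of $(\J^s)^t$. Condition~\ref{invertcondII} only concerns $(\K^I_+)^t$, so your ``the same for $(\J^I)^t$'' is unjustified, and the continuity argument as you run it yields invertibility only of $(\K^0_\pm)^t$. The paper then deduces surjectivity of $(\J^0)^t$ from that: given $f\in L^p\cap H^1$, solve $(R)^{A_0^T}_p$ with boundary data $\int f$ to get $u$; set $g=\nu\cdot A_0^T\nabla u$ and $h=((\K^0_+)^t)^{-1}g$; then $\S_0^T h$ and $u$ share Neumann data, so by uniqueness $\S_0^T h=u$ and hence $(\J^0)^t h=\partial_\tau u=f$. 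This is the one place where uniqueness really does the work.
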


The $\epsilon$ produced by this theorem depends on~$p$. Therefore, we intend to use this theorem only for some fixed $p_0>1$ and $H^1$, and interpolate to get an $\epsilon$ that will work for all $p$ with $1< p\leq p_0$. 

We use the notational shorthand that $\K^s=\K^{A_s}$, $\J^s=\J^{A_s}$ for any $0\leq s\leq 1$.

\begin{proof} By \lemref{Kpm}, $\K_{+}^A-\K_{-}^A=I$ on $L^\p$ and on $H^1$ for any matrix $A$; so in $L^\p$ or $H^1$,
\[\|f\|=\|(\K^0_+)^t f -(\K^0_-)^t f\|
\leq
\|(\K^0_+)^t f\|
+\|(\K^0_-)^t f\|.\]

By \lemref{compareLp} and \lemref{compareH1}, if (\ref{invertcondIII}) or (\ref{invertcondIV}) holds then
$\|(\K^0_\V)^t f\|\approx
\|(\J^0_\V)^t  f\|$ where norms are in $L^p$ or $H^1$, with constants that may depend on~$p$. By \lemref{Sfcts}, $\J_{\V_+}^t =\J_{\V_-}^t $ on $L^\p$ and $H^1$.

In either case,
\begin{equation}
\label{eqn:f<Kf}
\|f\|\leq \|(\K^0_+)^t f\|+\|(\K^0_-)^t f\|
\leq C(\p)\|\J_{0}^t f\|\leq  C\|(\K^0_\pm)^t f\|.
\end{equation}

This implies the following:
\begin{itemize}
\item $(\J^0)^t$ and $(\K^0_\pm)^t $ are one-to-one.
\item If $(\J^0)^t$ and $(\K^0_\pm)^t $ are onto, then their inverses have norms at most $C(\p)$.
\end{itemize}

Recall \thmref{nearT}: if $A_0$ is a real uniformly elliptic matrix and the layer potential $\T^A_\V$ is bounded uniformly for all $A$ near $A_0$, then 
\[\|\T^A-\T^{A_0}\|_{L^p\mapsto L^p}\leq C_p \|A-A_0\|_{L^\infty}, \quad \|(\T^A)^t-(\T^{A_0})^t\|_{H^1\mapsto L^1}\leq C \|A-A_0\|_{L^\infty}\]
for all $A$ sufficiently near $A_0$. Since the components of $\T^A(B_1 f)$ are $\K^A f$ and $\J^A f$, these inequalities apply to $\K^A$, $\J^A$ as well.

In \thmref{nearonto}, we show that if $G$ is onto and satisfies $\|f\|\leq C\|G f\|$ and $\|G-G'\|$ is small, then $G'$ is also onto and satisfies $\|f\|\leq C\|G' f\|$. So to show that $(\K^A)^t$ or $(\J^A)^t$ is invertible for all $A$ near $A_0$, we need only show that $(\K^0)^t$ or $(\J^0)^t$ is onto.

Consider $\K^0$ first. Let $A_s=(1-s)I+sA_0$. Then $\|f\|\leq C\|(\K^s)^t f\|$ uniformly in~$s$; applying \thmref{nearonto} to $A_s$ and $A_{s-\eta}$ for sufficiently small $\eta$ and repeating, we see that since $(\K^1)^t=(\K^I)^t$ is onto, $(\K^0)^t$ is as well.

Now consider $\J^0$. Let $f\in L^\p\cap H^1$; this set is dense in $L^\p$ and~$H^1$.
Since $(R)^{A_0}_\p$ holds, there is some $u$ with $\div A_0^T\nabla u=0$ in $\V$, $u=\int f$ on $\partial\V$ and $\|N(\nabla u)\|_{L^\p}\leq C\|f\|_{L^\p}$.

Then there is some $g\in L^\p$ with $g=\nu\cdot A_0^T\nabla u$. Let $h=((\K^0_+)^t)^{-1} g\in L^\p$.

Then by uniqueness $\S_0^T h=u$, and so $(\J^0)^t h=f$, as desired. Thus $(\J^0)^t$ as well as $(\K^0)^t$ is invertible on $L^\p$ and on~$H^1$.
\end{proof}

\section{Domains to which \thmref{invertibility} applies}

\thmref{invertibility} has three conditions.

By \thmref{Tbounded}, if $\V$ is a good Lipschitz domain, then~(\ref{invertcondI}) holds.

By \thmref{h1}, which we will prove in the next chapter, if $\V$ is a good Lipschitz domain and there exists a $p>1$ such that (\ref{invertcondIII}) holds in all domains with Lipschitz constants at most $C=C(\V)$, then (\ref{invertcondIV}) holds. (We will need the results of this chapter to prove it.) 

Conditions (\ref{invertcondII}) and (\ref{invertcondIII}) are more complicated. I claim that, for any $K$, we can find a (possibly small) $p=p(K)>1$ such that, if $\V$ is a good Lipschitz domain with constants at most $K$, then (\ref{invertcondIII}) holds in~$\V$. 

By \cite{Rule} and \cite{rule2}, if $\V$ is a special or bounded Lipschitz domain, then $(N)_{\p}^{A_0}$,~$(R)^{A_0}_{\p}$ hold in~$\V$ for some $\p>1$. $\p$ and the constants in the definition of $(N)^{A_0}_{\p}$, $(R)^{A_0}_{\p}$ depend only on $\lambda$, $\Lambda$ and the Lipschitz constants of~$\V$. Since the complement of a special Lipschitz domain is also a special Lipschitz domain, we need only show that $(N)^{A_0}_\p$ or $(R)^{A_0}_\p$ holds in the complement of a bounded Lipschitz domain to have (\ref{invertcondIII}) hold for all good Lipschitz domains. Before doing this, we will deal with Condition~(\ref{invertcondII}).

From \cite[Theorem~4.2 and Corollary~4.4]{verchota}, Condition~(\ref{invertcondII}) holds if $\V$ is a bounded, simply connected Lipschitz domain. It also holds if $\V=\Omega$ is a special Lipschitz domain:

\begin{lem} If $\Omega$ is a special Lipschitz domain, then $(\K^I)^t$ and $(\J^I)^t$ are surjective on $H^1(\partial\Omega)$ and $L^p(\partial\Omega)$ for $p$ small enough.
\end{lem}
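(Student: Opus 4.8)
The plan is to build the preimages by hand out of solutions to the Neumann and regularity problems for the Laplacian, using that $\bar\Omega^C$ is again a special Lipschitz domain. By \cite{Rule} and \cite{rule2} there is a $p>1$ for which $(N)^I_p$ and $(R)^I_p$ hold in both $\Omega$ and $\bar\Omega^C$, and it is classical (see also \cite{Rule}) that $(N)^I_1$ and $(R)^I_1$, with Neumann and regularity data in $H^1$, do as well; from \corref{NSf} and the estimates behind it, $\S^I f$ and its gradient decay at infinity whenever $f\in H^1(\partial\Omega)$.

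To see that $(\K^I_+)^t$ is onto, take $g\in L^p\cap H^1(\partial\Omega)$ and solve the Neumann problem in $\bar\Omega^C$ with data $g$ (relative to the normal $\nu$ outward to $\Omega$), obtaining a harmonic $v^-$ with $\|N(\nabla v^-)\|_{L^p}\lesssim\|g\|_{L^p}$ and $\|N(\nabla v^-)\|_{L^1}\lesssim\|g\|_{H^1}$, normalized to vanish at infinity. Then $\partial_\tau(v^-|_{\partial\Omega})=\tau\cdot\nabla v^-|_{\partial\Omega}$ lies in $L^p\cap H^1$ (membership in $L^p$ from $N(\nabla v^-)\in L^p$, in $H^1$ from the classical converse for the Laplacian, since $N(\nabla v^-)\in L^1$), so I may solve $(R)^I_p$ and $(R)^I_1$ in $\Omega$ with boundary data $v^-|_{\partial\Omega}$ to get a harmonic $v^+$ with $v^+|_{\partial\Omega}=v^-|_{\partial\Omega}$ and the same size bounds on $N(\nabla v^+)$. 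Set $u=v^+$ on $\Omega$, $u=v^-$ on $\bar\Omega^C$; this is harmonic off $\partial\Omega$, continuous across $\partial\Omega$, vanishes at infinity, and has $N(\nabla u)\in L^p\cap L^1$. Let $f$ be the jump $\nu\cdot\nabla u|_{\bar\Omega^C}-\nu\cdot\nabla u|_{\Omega}$ of its conormal derivative; by the converse again $f\in L^p\cap H^1$ with $\|f\|\lesssim\|g\|$. Since $\K^t_+-\K^t_-=I$ (\lemref{Kpm}), $\S^I f$ has precisely the same trace-continuity and conormal-jump data across $\partial\Omega$ as $u$ and also vanishes at infinity, so $u-\S^I f$ extends to a harmonic function on all of $\R^2$ that decays, hence vanishes by Liouville. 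Thus $u=\S^I f$ and $(\K^I_+)^t f=\nu\cdot\nabla\S^I f|_{\bar\Omega^C}=\nu\cdot\nabla v^-|_{\partial\Omega}=g$; surjectivity of $(\K^I_-)^t$ follows by interchanging $\Omega$ and $\bar\Omega^C$.

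The argument for $(\J^I)^t$ is parallel, using the regularity problem on both sides. Given $g\in L^p\cap H^1(\partial\Omega)$, the hypothesis $g\in H^1$ forces $\int_{\partial\Omega}g=0$, so the arclength primitive $G$ of $g$ is well-defined on $\partial\Omega$, tends to a common limit at its two ends (which I subtract), and has $\partial_\tau G=g\in L^p\cap H^1$. Solving $(R)^I_p$ and $(R)^I_1$ in $\Omega$ and in $\bar\Omega^C$ with this single boundary datum $G$ gives $w^\pm$ with $w^\pm|_{\partial\Omega}=G$ and controlled $N(\nabla w^\pm)$; gluing yields a $u$ continuous across $\partial\Omega$ with trace $G$, vanishing at infinity, and with $N(\nabla u)\in L^p\cap L^1$, and as above $u=\S^I f$ where $f$ is the conormal jump, $f\in L^p\cap H^1$. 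Then $(\J^I)^t f=\tau\cdot\nabla\S^I f|_{\partial\Omega}=\partial_\tau(u|_{\partial\Omega})=\partial_\tau G=g$. For surjectivity on all of $L^p(\partial\Omega)$ rather than $L^p\cap H^1$ one argues in the same way, imposing on the relevant data the analogue of the compatibility condition $\int g=0$.

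The main obstacle is the behavior at infinity. Because $\Omega$ and $\bar\Omega^C$ are unbounded, I must ensure the Neumann and regularity solutions are normalized so that $u$ genuinely decays, and must justify the jump manipulations (in particular that the constructed $f$ integrates to zero, so that $\S^I f$ itself decays) despite the non-compactness of $\partial\Omega$; this is exactly where the decay estimates for solutions on special Lipschitz domains in \cite{Rule} and \cite{rule2}, together with the restriction to small $p$, are needed to make the Liouville identification $u=\S^I f$ go through. The remaining points — tracking signs in the jump relations, checking that $\partial_\tau v^-|_{\partial\Omega}$ and $\partial_\tau G$ are admissible regularity data, and bookkeeping the $L^p$ and $H^1$ norms — are routine.
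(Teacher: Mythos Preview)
Your approach is valid but takes a genuinely different route from the paper's. You construct preimages by solving Neumann/regularity problems on both sides of $\partial\Omega$, gluing, and identifying the result with a single-layer potential via a Liouville argument. The paper instead exhibits an explicit real $t$-independent matrix
\[
B(x)=\begin{pmatrix}1&-\phi'(x)\\-\phi'(x)&1+\phi'(x)^2\end{pmatrix}
\]
for which the change of variables $(x,t)\mapsto(x,t-\phi(x))$ identifies $\Gamma^B$ with $\Gamma^I$ on the half-plane, and a direct computation then gives $\K^B_\Omega=\tfrac12 I$, hence $(\K^B)^t=\tfrac12 I$ is trivially invertible. Since conditions~(\ref{invertcondI}) and~(\ref{invertcondIII}) of \thmref{invertibility} hold for special Lipschitz domains along the real path joining $B$ to $I$, the continuity method already developed in the proof of \thmref{invertibility} transports invertibility from $(\K^B)^t$ to $(\K^I)^t$ (and thence to $(\J^I)^t$); a symmetry argument removes the restriction $\e=(0,1)$.

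What each buys: the paper's argument is short and entirely internal---it reuses the perturbation machinery and sidesteps every decay-at-infinity issue, since nothing is ever glued across $\partial\Omega$. Your argument is more constructive and closer in spirit to Verchota's bounded-domain approach, but you correctly identify that the serious work lies in justifying the Liouville step on an unbounded domain: you must know that the Neumann and regularity solutions can be normalized to decay, that the conormal jump $f$ lies in $H^1$ (so integrates to zero and $\S^I f$ decays), and that continuity plus matching conormal data across $\partial\Omega$ yields global harmonicity. These are all true for the Laplacian on special Lipschitz domains, but making them precise essentially reproduces pieces of \autoref{chap:H1} and \autoref{chap:converse}. Your final remark about extending to all of $L^p$ is unnecessary here: condition~(\ref{invertcondII}) only asks for surjectivity on $L^p\cap H^1$, and the passage to full $L^p$ is handled later by interpolation.
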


\begin{proof}
$\Gamma^I_X(Y)=\frac{1}{2\pi}\log |X-Y|$. If $B=\Matrix{1&-\phi'(x)\\-\phi'(x)&1+\phi'(x)^2}$ for some Lipschitz function $\phi$, then it is easy to check that
\[\Gamma^B_{(x,t)}(y,s)=\Gamma^I_{(x,t-\phi(x))}(y,s-\phi(y)).\]

So if $\Omega=\{(x,t):t>\phi(x)\}$, then
\begin{multline*}
\K^{B}_\Omega f(x,\phi(x))
=\lim_{t\to 0^+}
\int_{\partial\Omega} \nu(Y)\cdot B\nabla\Gamma_{(x,t+\phi(x))}^B(Y) f(Y)\, d\sigma(Y)
\\\begin{aligned}
&=\lim_{t\to 0^+}
\int_{\R} \Matrix{\phi'\\-1}\cdot \Matrix{1&\phi'(y)\\\phi'(y)&1+\phi'(y)^2}\nabla\Gamma_{(x,t+\phi(x))}^B(y,\phi(y)) f(y)\,dy
\\&=\lim_{t\to 0^+}
\int_{\R} \Matrix{\phi'\\-1}\cdot \Matrix{1&\phi'(y)\\\phi'(y)&1+\phi'(y)^2}
\Matrix{1&-\phi'(y)\\0&1}\nabla\Gamma_{(x,t)}^I(y,0)f(y)\,dy
\\&=\lim_{t\to 0^+}
\int_{\R} \Matrix{0\\-1}\cdot \nabla\Gamma_{(x,t)}^I(y,0)f(y)\,dy
\\&=\lim_{t\to 0^+}
\int_{\R} \frac{t}{2\pi(t^2+(x-y)^2)}f(y)\,dy=\frac{1}{2}f(x)
\end{aligned}
\end{multline*}
for any good function $f$; thus, if $\Omega$ is a special Lipschitz domain with $\e=(0,1)$, then for some real matrix $B$, $\K^B_\Omega=\frac{1}{2}$, and so $(\K_\Omega^B)^t=\frac{1}{2}$ is also invertible. 

Since (\ref{invertcondI}) and (\ref{invertcondIII}) hold for special Lipschitz domains, we may proceed as in the proof of \thmref{invertibility} to see that $(\K^I)^t$ and $(\J^I)^t$ are invertible on $L^p(\partial\Omega)\cap H^1(\partial\Omega)$ for all special Lipschitz domains $\Omega$ with $\e=(0,1)$; by symmetry, this must hold for any special Lipschitz domain~$\Omega$, as desired.
\end{proof}

We now complete Condition (\ref{invertcondIII}) for bounded simply connected domains.
This is fairly simple:

\begin{thm}\label{thm:complement} 
Let $\V$ be a good Lipschitz domain with compact boundary. There is some $C_2$ depending only on $\lambda$, $\Lambda$ and the Lipschitz constants of $\V$ such that, if $(N)^A_p$ holds in all bounded simply connected Lipschitz domains $\U$ with Lipschitz constants at most $C_2$, then 
$(N)^A_p$, $(R)^{\tilde A}_p$ hold in $\V$ with constants depending on the same quantities.
\end{thm}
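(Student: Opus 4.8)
The plan is to localize the problem on $\partial\V$, transfer each piece to a bounded simply connected Lipschitz domain, invoke the hypothesis there, and patch the local solutions together. Let $g\in L^p(\partial\V)$ with $\int_{\partial\V}g\,d\sigma=0$ (for $(N)^A_p$) or $f\in W^{1,p}(\partial\V)$ with $\partial_\tau f\in L^p$ (for $(R)^{\tilde A}_p$). Since $\partial\V$ is compact, \dfnref{domain} gives a finite cover $\partial\V\subset\bigcup_{j=1}^{k_2}B(X_j,r_j)$ with $\V\cap R_j=\Omega_j\cap R_j$ for special Lipschitz domains $\Omega_j$, together with a subordinate partition of unity $\sum_j\eta_j\equiv 1$ on $\partial\V$ with $\supp\eta_j\subset B(X_j,r_j)\cap\partial\V$. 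The first step is to choose, for each $j$, a \emph{bounded} simply connected Lipschitz domain $\U_j$ with Lipschitz constants controlled by the $k_i$ of $\V$, such that $\U_j\cap B(X_j,2r_j)=\V\cap B(X_j,2r_j)$; this is routine since $\Omega_j$ agrees with $\V$ on $R_j$, and one simply truncates $\Omega_j$ far away and caps it off smoothly. Set $C_2$ to be the largest Lipschitz constant arising this way.

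The second step is to solve the boundary value problem in each $\U_j$. For the Neumann problem, $g\eta_j$ is supported in $B(X_j,r_j)\cap\partial\V=B(X_j,r_j)\cap\partial\U_j$, but it need not have mean zero on $\partial\U_j$; to fix this I subtract a smooth bump $b_j$ on $\partial\U_j$ with $\int_{\partial\U_j}b_j=\int_{\partial\U_j}g\eta_j$, chosen with $\|b_j\|_{L^p(\partial\U_j)}\le C r_j^{1/p-1}\bigl|\int g\eta_j\bigr|\le C\|g\|_{L^p}$ (this uses $\int_{\partial\V}g=0$ to control the total correction across all $j$ in the usual way). Then $g\eta_j-b_j$ has mean zero on the bounded simply connected domain $\U_j$, so the hypothesis $(N)^A_p$ in $\U_j$ produces $u_j$ with $\div A\nabla u_j=0$ in $\U_j$, $\nu\cdot A\nabla u_j=g\eta_j-b_j$ on $\partial\U_j$, and $\|N_{\U_j}(\nabla u_j)\|_{L^p(\partial\U_j)}\le C\|g\|_{L^p(\partial\V)}$. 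For the regularity problem one argues similarly with $f\eta_j$ in place of $g\eta_j$, using that $\partial_\tau(f\eta_j)=\eta_j\partial_\tau f+f\partial_\tau\eta_j$ is controlled in $L^p$, and solving $(R)^{\tilde A}_p$ in $\U_j$ (note the passage $A\mapsto\tilde A$ is exactly the conjugation of Section~\ref{sec:conjugate}, which preserves ellipticity and Lipschitz-domain structure, so the hypothesis in the form stated applies).

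The third step is to patch. Define $u=\sum_j\zeta_j u_j$ where $\zeta_j$ is a smooth cutoff, $\zeta_j\equiv1$ on a neighborhood of $\supp\eta_j$ in $\overline\V$ and $\supp\zeta_j\subset B(X_j,2r_j)$. Inside $\V$, $\div A\nabla u$ is supported in $\bigcup_j(\supp\nabla\zeta_j\cap\V)$, which lies away from $\partial\V$; the error term $e:=\div A\nabla u=\sum_j\bigl(\nabla\zeta_j\cdot A\nabla u_j+\div(A\nabla\zeta_j\,u_j)\bigr)$ is supported in a fixed compact subset of $\V$ and has $\|e\|$ controlled by $\sum_j\|N(\nabla u_j)\|_{L^p}\lesssim\|g\|_{L^p}$ via interior estimates (Lemmas~\ref{lem:PDE1}--\ref{lem:PDE3} applied on balls contained in $\V$). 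To kill $e$, solve $\div A\nabla w=-e$ in $\V$ with, say, $\nu\cdot A\nabla w=0$; since $e$ is compactly supported and smooth in $\V$, $w$ and $\nabla w$ are bounded up to $\partial\V$ (again by the interior and boundary Caccioppoli/De Giorgi estimates in $\R^2$, valid for complex $A$), with $N(\nabla w)$ and its boundary trace controlled by $\|e\|$, hence by $\|g\|_{L^p}$; adding back a constant handles the compatibility $\int_{\partial\V}\nu\cdot A\nabla w=\int_\V e$, and here one uses the partition of unity relation $\sum_j b_j$ sums to zero on $\partial\V$ in the appropriate sense so that the Neumann data $\nu\cdot A\nabla(u+w)=\sum_j\eta_j g - \sum_j b_j + 0 = g$ on $\partial\V$. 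Then $u+w$ is the desired solution, and $\|N(\nabla(u+w))\|_{L^p(\partial\V)}\le C\|g\|_{L^p(\partial\V)}$ follows by summing the $k_2$ local estimates together with the decay estimate $|N(\nabla(\zeta_j u_j))(Y)|\le C r_j^{1/p'}|X_j-Y|^{-1}\|g\|_{L^p}$ for $|Y-X_j|>2r_j$ (which gives an $L^p(\partial\V\setminus B(X_j,2r_j))$ bound by $(\ref{eqn:goodCZbound})$), exactly as in the proof of \thmref{patching}.

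The main obstacle is the patching step: one must verify that gluing solutions from overlapping $\U_j$ produces a genuine solution in $\V$ with the right boundary data, and in particular that the mean-zero corrections $b_j$ introduced to make the local Neumann data admissible sum up correctly and do not spoil the global Neumann datum. This is where the hypothesis $\int_{\partial\V}g=0$ and the structure of the partition of unity are essential, and it is the part that requires care rather than the routine localization and interior estimates.
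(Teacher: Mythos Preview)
Your localization-and-patching strategy is a different route from the paper's, and the patching step has a genuine gap. When you form $u=\sum_j\zeta_j u_j$, the Neumann trace on $\partial\V$ is
\[
\nu\cdot A\nabla u\big|_{\partial\V}=\sum_j\bigl(\zeta_j\,\nu\cdot A\nabla u_j+u_j\,\nu\cdot A\nabla\zeta_j\bigr),
\]
and there is no reason for the cross terms $u_j\,\nu\cdot A\nabla\zeta_j$ to vanish on $\partial\V$, nor for the corrections $b_j$ (which live on $\partial\U_j$, not on $\partial\V$) to cancel when restricted there. Your auxiliary $w$ would therefore have to absorb a nonzero, non-smooth Neumann datum on $\partial\V$, not merely a compactly supported interior source; but solving a Neumann problem in $\V$ with $N(\nabla w)\in L^p$ is precisely the statement you are proving. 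The remark that ``adding back a constant handles the compatibility'' does not help, since constants do not change $\nu\cdot A\nabla w$.

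The paper avoids patching entirely. It first produces a \emph{global} weak solution $u$ in $\V$ with $\|\nabla u\|_{L^2(\V)}\le C\|g\|_{H^1(\partial\V)}$ by the variational method (\cite[Lemma~1.2]{Rule}), so existence is settled and only the nontangential estimate remains. Far from $\partial\V$ this follows from the pointwise bound $|\nabla u(Y)|\le C\|g\|_{H^1}\dist(Y,\partial\V)^{-1}$ via~(\ref{eqn:gradu}). Near $\partial\V$ one works in the bounded simply connected boxes $\Q(r)=\psi((-r,r)\times(0,(1+k_1)r))$, applies the hypothesis $(N)^A_p$ in $\Q(r)$ to the already-existing $u$, and then \emph{averages over $r\in[\tfrac32 r_j,2r_j]$} so that the contribution of $|\nu\cdot A\nabla u|^p$ on the artificial sides of $\partial\Q(r)$ becomes the solid integral $\int_{\Q(2r_j)}|\nabla u|^p$, which H\"older (with $p\le 2$) controls by $\|\nabla u\|_{L^2}^p$. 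No gluing, no mean-zero corrections, no auxiliary $w$.

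For $(R)^{\tilde A}_p$ the paper's argument is also global: once $(N)^A_p$ holds in $\V$, given tangential data $g$ one solves $\nu\cdot A\nabla u=g$ in $\V$, verifies that the conjugate $\tilde u$ is single-valued on $\V$ (the circulation of $\nu\cdot A\nabla u$ over any Jordan curve in $\V$ vanishes because $\int_\omega g=0$ on each boundary component), and reads off $\partial_\tau\tilde u=\nu\cdot A\nabla u=g$ with $N(\nabla\tilde u)\approx N(\nabla u)$. You never need to solve local regularity problems in the $\U_j$, and in particular you do not need $(R)^{\tilde A}_p$ as a hypothesis in the subdomains.
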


Note that in this lemma, we do not assume that $\partial\V$ is connected.

\begin{proof} Uniqueness (with conditions) follows from \thmref{NRunique}.

First, we show that $(N)^A_p$ holds in~$\V$.
Given a $g\in C^\infty(\partial\V)$ with $\int_\omega g\,d\sigma=0$ for every connected component $\omega$ of $\partial\V$, we can construct a $u$ in $\V$ such that $\nu\cdot A\nabla u=g$ on $\partial\V$ and $\div A\nabla u=0$ in $\V$ in the weak sense, as is done in \cite[Lemma~1.2]{Rule}. Then $\nabla u\in L^2$ with $\|\nabla u\|_{L^2}\leq C\|g\|_{H^1}.$

By (\ref{eqn:gradu}), we have that if $1<p<\infty$, then
\begin{align*}
|\nabla u(Y)|
&\leq \frac{C}{\dist(Y,\partial\V)}\|\nabla u\|_{L^2}
\leq \frac{C}{\dist(Y,\partial\V)}\|g\|_{H^1(\partial\V)}
\leq \smash{\frac{C_p\|g\|_{L^p(\partial\V)}
\sigma(\partial\V)^{1/p'}}{\dist(Y,\partial\V)}}
.\end{align*}

Now, let $X\in\partial\V$ and let $Z\in\gamma(X)$, so that $|X-Z|\leq(1+a)\dist(Z,\partial\V)$.
If $|X-Z|>\sigma(\partial\V)/C$, then $\dist(Z,\partial\V)\geq \sigma(\partial\V)/C(1+a)$, so 
\[|\nabla u(Z)|\leq {{C_p\|g\|_{L^p(\partial\V)}
\sigma(\partial\V)^{-1/p}}}\]
which has $L^p(\partial\V)$ norm at most $C_p\|g\|_{L^p}$. Let $\tilde N(\nabla u)(X)=\sup\{|\nabla u(Z)|:|X-Z|\leq(1+a)\dist(Z,\partial\V)\leq\sigma(\partial\V)/C\}$ for some $C$ to be chosen later; to bound $\|N(\nabla u)\|_{L^p}$ we need only bound $\|\tilde N(\nabla u)\|_{L^p}$.

As in Definiton~\ref{dfn:domain}, $\partial\V$ may be covered by balls $B_j=B(X_j,r_j)$. Choose one such $j$ and take $X_j=0$. Define
\begin{align*}
\Q(r)&=\psi((-r,r)\times(0,(1+k_1)r))
\\&=\{X\in\R^2:|X\cdot\e^\perp|<r, \phi(X\cdot\e^\perp) <X\cdot\e <\phi(X\cdot\e^\perp)+(1+k_1)r \}.
\end{align*}
So for $\frac32 r_j<r<2r_j$, we have that $\Q_c(r)\subset \V$ and $B(X,r_j/2)\cap\V\subset\Q_c(r)$. Furthermore, we have that $\sigma(\partial\V)\leq Cr_j$.

Then
\[\int_{B_j\cap\partial\V} \tilde N(\nabla u)^p\,d\sigma
\leq \int_{\partial\Q(r)}  N_{\Q(r)}(\nabla u)^p\,d\sigma
\]
for $\frac32 r_j<r<2r_j$. But $\Q(r)$ is a simply connected bounded Lipschitz domain; let $p$ be small enough that $(N)^A_p$ holds in all the $\Q(r)$s, and assume $p\leq 2$. So
\begin{align*}
\int_{B_j\cap\partial\V} \tilde N(\nabla u)^p\,d\sigma
&
\leq \frac{C}{r_j}\int_{3r_j/2}^{2r_j}\int_{\partial\Q(r)}  N_{\Q(r)}(\nabla u)^p\,d\sigma\,dr
\\&
\leq \frac{C}{r_j}\int_{3r_j/2}^{2r_j}\int_{\partial\Q(r)}  |\nu\cdot A\nabla u|^p\,d\sigma\,dr
\\&
\leq 
C \|g\|_{L^p}^p+
\frac{C}{r_j}\int_{\Q(2r_j)}  |\nabla u|^p
\leq 
C \|g\|_{L^p}^p+
\frac{Cr_j}{r_j^p}\left(\int_{\Q(2r_j)}  |\nabla u|^2\right)^{p/2}
\\&\leq 
C \|g\|_{L^p}^p+
\frac{C_p}{r_j^{p-1}}\|g\|_{L^p}^{p}\sigma(\partial\V)^{p-1}
\leq C \|g\|_{L^p}^p.
\end{align*}
Since there are at most $k_2$ such balls, we have that $\int_{\partial\V} \tilde N(\nabla u)^p\,d\sigma\leq C\|g\|_{L^p}^p$, as desired. So $(N)^A_p$ holds in $\V$ for $\V$ any good Lipschitz domain, for $p=p(k_1)$ small enough.

We now pass to the regularity problem $(R)^{\tilde A}_p$. Pick some $g\in L^p(\partial\V)$ with $\int_\omega g=0$ for every connected component $\omega$ of $\partial\V$; if $p>1$ and $\sigma(\omega)<\infty$ this condition is equivalent to requiring $g\in H^1$.

If $p>1$ is small enough, then there is some $u$ such that $\div A\nabla u=0$ in $\V$ and $\nu\cdot A\nabla u=g$ on $\partial\V$.

I claim that $\tilde u$ is defined on $\V$. (\autoref{sec:conjugate} will only guarantee this if $\V$ is simply connected.) We need only show that $\int_\chi \nu\cdot A\nabla u=0$ for all Jordan curves $\chi\subset\V$; we may assume $\chi=\partial\U$ for some simply connected domain $\U$.

But 
\[\int_{\partial\U\cup(\U\cap\partial\V)} \nu\cdot A\nabla u = \int_{\partial(\U\cap\V)} \nu\cdot A\nabla u = \int_{\U\cap \V} \nabla 1\cdot A\nabla u=0\]
by the weak definition of $\nu\cdot A\nabla u$. But $\U\cap\partial\V$ is the union of one or more entire components of $\partial\V$; therefore,
\[\int_{\partial\U} \nu\cdot A\nabla u = -\int_{\U\cap\partial\V} \nu\cdot A\nabla u = -\int_{\U\cap\partial\V} g =0.\]
So $\tilde u$ is well-defined on $\V$. By \autoref{sec:conjugate}, $\div \tilde A\nabla\tilde u=0$ in $\V$.

Recall that $\nabla\tilde u=\Matrix{0&-1\\1&0}A\nabla u$.
Thus, if $\tau=\Matrix{0&-1\\1&0}\nu$, we have that
\begin{align*}
\tau\cdot\nabla\tilde u
&=
\Matrix{0&-1\\1&0}\nu\cdot \Matrix{0&-1\\1&0}A\nabla u
=
\nu\cdot \Matrix{0&1\\-1&0}\Matrix{0&-1\\1&0}A\nabla u
=
\nu\cdot A\nabla u
\end{align*}
and so if $\nu\cdot A\nabla u=g$ on $\partial\V$, then $\partial_\tau\tilde u=g$ on $\partial\V$. Clearly $N(\nabla u)(X)\approx N(\nabla\tilde u)(X)$. So if $(N)^A_p$ holds in $\V$, then there exist solutions to $(R)^{\tilde A}_p$.

So if $(N)^A_p$ holds in $\V$, so does $(R)^{\tilde A}_p$.
\end{proof}

\section{Comparing layer potentials on the two sides of a boundary}

\begin{lem}\label{lem:Kpm} If $f$ is a $L^p$ Lipschitz function, then $\K_+ f(X)-\K_-f(X)=f(X)$.
\end{lem}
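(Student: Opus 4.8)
\textbf{Proof proposal for \lemref{Kpm}.} This is the classical jump-relation for a double-layer-type potential, and the plan is to prove it by the usual ``bump function near the pole'' argument. Recall that $\K_\pm f(X) = \lim_{Z\to X,\,Z\in\gamma_\pm(X)} \D f(Z)$, where $\D f(Z) = \int_{\partial\V} \nu(Y)\cdot A^T(Y)\nabla\Gamma_Z^T(Y)\,f(Y)\,d\sigma(Y)$. The identity $\int_{\partial(B(X,\rho)\cap\V)} \nu\cdot A^T\nabla\Gamma_Z^T\,d\sigma = 1$ when $Z\in B(X,\rho)\cap\V$ and $= 0$ when $Z\notin \overline{B(X,\rho)\cap\V}$ (a consequence of~(\ref{eqn:Df-Sa}), i.e.\ $\D 1 = 1$ inside and $0$ outside) is the engine that produces the jump of size~$f(X)$. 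So the heart of the matter is to show that, after subtracting off the constant $f(X)$ against this ``mass one'' identity, the remaining integral is continuous across $\partial\V$ at $X$.

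First I would fix a Lipschitz point $X$ of $f$, so that $M(\partial_\tau f)(X)<\infty$ and $|f(Y)-f(X)|\le C(X)|X-Y|$ for $Y$ near $X$; by \lemref{Kwelldef} the limits defining $\K_\pm f(X)$ exist. Pick $\rho>0$ small. For $Z\in\gamma_+(X)$ with $|Z-X|$ small compared to $\rho$, I would split
\[
\D f(Z) = f(X)\int_{\partial(B(X,\rho)\cap\V)}\nu\cdot A^T\nabla\Gamma_Z^T\,d\sigma
+ \int_{\Delta(\rho)}\nu\cdot A^T\nabla\Gamma_Z^T\,(f-f(X))\,d\sigma
- f(X)\int_{\partial B(X,\rho)\cap\V}\nu\cdot A^T\nabla\Gamma_Z^T\,d\sigma
+ \int_{\partial\V\setminus\Delta(\rho)}\nu\cdot A^T\nabla\Gamma_Z^T\,f\,d\sigma,
\]
where $\Delta(\rho) = B(X,\rho)\cap\partial\V$ and the first integral equals $f(X)$ since $Z$ is interior to $B(X,\rho)\cap\V$. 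Exactly the same decomposition applies to $Z'\in\gamma_-(X)$, except that now $Z'$ lies \emph{outside} $\overline{B(X,\rho)\cap\V}$ (for $\rho$ suitably chosen relative to $|Z'-X|$, using the cone geometry), so the corresponding first term equals $0$ rather than $f(X)$. Subtracting, $\D f(Z) - \D f(Z') = f(X) + (\text{error terms})$, where each error term is controlled just as in the proof of \lemref{Kwelldef}: the $\Delta(\rho)$ term is $O(C(X)\rho)$ using $|\nabla\Gamma_Z^T(Y)|\le C/|Z-Y|$ and $|Z-X|\lesssim |Z-Y|$; the $\partial B(X,\rho)$ term is $O(|f(X)|\,\rho^{-\alpha}|Z-Z'|^\alpha)$ using the Hölder kernel bound~(\ref{eqn:KisCZ}); and the $\partial\V\setminus\Delta(\rho)$ term, for $f\in L^p$, is $O(\|f\|_{L^p}\,|Z-Z'|^\alpha \rho^{-\alpha-1/p})$ by~(\ref{eqn:goodCZbound}). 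Letting first $|Z-X|,|Z'-X|\to 0$ (killing the $|Z-Z'|^\alpha$ factors) and then $\rho\to 0$ (killing the $C(X)\rho$ term) gives $\K_+f(X)-\K_-f(X) = f(X)$.

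The one genuine point that needs care — and the step I expect to be the main obstacle — is the geometric bookkeeping that guarantees, for $Z'\in\gamma_-(X)$, that $Z'\notin\overline{B(X,\rho)\cap\V}$, so that the ``mass'' integral against $\Gamma_{Z'}^T$ over $\partial(B(X,\rho)\cap\V)$ vanishes. Since $Z'$ lies in the exterior cone at $X$ and $\partial\V$ is Lipschitz, one has $\dist(Z',\overline\V)\gtrsim |Z'-X|$ with a constant depending only on $a$ and $k_1$; hence choosing the order of limits so that $|Z'-X|\ll\rho$ places $Z'$ safely outside $B(X,\rho)\cap\V$. Dually, the same Lipschitz estimate ensures $Z\in\gamma_+(X)$ with $|Z-X|\ll\rho$ lies in the interior $B(X,\rho)\cap\V$, so its mass integral is $1$. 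Once this is pinned down, combining the two cases yields the stated jump relation. (The reduction to Lipschitz points $X$ suffices because, when $f\in L^p$, $M(\partial_\tau f)(X)<\infty$ for a.e.\ $X$, and \corref{Kmorewelldef} already upgrades the pointwise existence of the limits to a.e.\ $X$ for general $L^p$ data; but for the lemma as stated, with $f$ a Lipschitz $L^p$ function, every $X$ is admissible.)
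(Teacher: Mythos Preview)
Your proposal is correct and follows essentially the same approach as the paper's proof: isolate the ``mass one'' identity for $\D 1$ on a small region near $X$ to produce the jump $f(X)$, then use the Lipschitz continuity of $f$ to show the remainder is $O(\rho)$. The paper organizes the bookkeeping slightly differently (it pairs the two limits symmetrically using both $\Psi_+^\rho=\V_+\cap B(X,\rho)$ and $\Psi_-^\rho=\V_-\cap B(X,\rho)$, so the far-field pieces collapse to a single integral over $\partial B(X,\rho)$ rather than being handled via the H\"older kernel bound on $\nabla\Gamma_Z^T-\nabla\Gamma_{Z'}^T$), but the substance is identical; note also that your ``main obstacle'' is actually trivial, since $Z'\in\gamma_-(X)\subset\V_-=\bar\V^C$ already forces $Z'\notin\overline{B(X,\rho)\cap\V}$ with no quantitative estimate needed.
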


\begin{proof}
Let $\V_{\pm}^\rho=\V_{\pm}\backslash B(X,\rho)$, $\Psi_\pm^\rho=\V_\pm\cap B(X,\rho)$.
Recall that if $\U$ is a bounded domain, and $X\in \U$, then
\[\int_{\partial \U}\nu\cdot A^T\nabla\Gamma_X^T \,d\sigma=1.\]

So, fixing some $\rho>0$ small, letting $\e$ be a vector such that $X\pm t\e$ is in a nontangential cone in $\V_\pm$ for all sufficiently small positive $t$, and extending $f$ in some reasonable fashion to $\R^2$, we have
\begin{align*}
\K_+ f(X)-\K_-f(X) &=
\lim_{t\to 0^+}
\int_{\partial\V} \nu\cdot A^T\nabla\Gamma^T_{X+t\e}f\,d\sigma
-\lim_{t\to 0^+}\int_{\partial\V} \nu\cdot A^T\nabla\Gamma^T_{X-t\e}f\,d\sigma
\\&=
\lim_{t\to 0^+}
 \int_{\partial\V^\rho_+} \nu\cdot A^T\nabla\Gamma^T_{X+t\e}f\,d\sigma
+\lim_{t\to 0^+}\int_{\partial\V^\rho_-} \nu\cdot A^T\nabla\Gamma^T_{X-t\e}f\,d\sigma
\\&\quad
+\lim_{t\to 0^+}\int_{\partial\Psi^\rho_+} \nu\cdot A^T\nabla\Gamma^T_{X+t\e}f\,d\sigma
+\lim_{t\to 0^+}\int_{\partial\Psi^\rho_-} \nu\cdot A^T\nabla\Gamma^T_{X-t\e}f\,d\sigma
\end{align*}
and so
\begin{align*}
\K_+ f(x)-\K_-f(x) &=
-\int_{\partial B(X,\rho)} \nu\cdot A^T\nabla\Gamma^T_{X}f\,d\sigma
\\&\phantom{=f(X)\,}
+\lim_{t\to 0^+}\int_{\partial\Psi^\rho_+} \nu\cdot A^T\nabla\Gamma^T_{X+t\e}f\,d\sigma
+\int_{\partial\Psi^\rho_-} \nu\cdot A^T\nabla\Gamma^T_{X-t\e}f\,d\sigma
\\&=
f(X)
-
\int_{\partial B(X,\rho)} \nu(Y)\cdot A^T(Y)\nabla\Gamma^T_{X}(Y)(f(Y)-f(X))\,d\sigma
\\&\phantom{=f(X)\,}
+\lim_{t\to 0^+}
\int_{\partial\Psi^\rho_+} \nu(Y)\cdot A^T(Y)\nabla\Gamma^T_{X+t\e}(Y)(f(Y)-f(X))\,d\sigma
\\&\phantom{=f(X)\,}
+\lim_{t\to 0^+}\int_{\partial\Psi^\rho_-} \nu(Y)\cdot A^T(Y)\nabla\Gamma^T_{X-t\e}(Y)(f(Y)-f(X))\,d\sigma
\end{align*}
and the integrals are each at most $C\rho\|f'\|_{L^\infty}$. Taking the limit as $\rho\to 0$ yields the desired result.
\end{proof}

Since $\K_\pm$ are bounded as operators on $L^p$ and $H^1$, and Lipschitz functions are dense in those spaces, we have that $\K_+-\K_-$ is the identity on those spaces as well.

\begin{lem}\label{lem:Sfcts} Suppose that $N(\nabla u)\in L^p(\partial\V)$ for $1<p\leq\infty$. Then $u$ is H\"older continuous on $\overline\V$. If in particular $u=\S f$ for some $f\in L^p$, then $u$ is H\"older continuous on all of $\R^2$.\end{lem}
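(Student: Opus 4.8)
The statement to prove is \lemref{Sfcts}: if $N(\nabla u)\in L^p(\partial\V)$ for some $1<p\le\infty$, then $u$ is H\"older continuous on $\overline\V$, and if $u=\S f$ with $f\in L^p$ then $u$ is H\"older continuous on all of $\R^2$.

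\medskip

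The plan is as follows. First I would establish interior H\"older continuity of $u$ on $\V$ itself. Since $\div A\nabla u=0$ in $\V$ and $A(x,t)=A(x)$, estimate (\ref{eqn:gradu}) gives $\sup_{B_{r/2}}|\nabla u|\le C(\dashint_{B_r}|\nabla u|^2)^{1/2}$ for any ball $B_r\subset\V$, and combining this with \lemref{PDE1} and \lemref{PDE4} applied to $u$ (which is legitimate since $u$ solves the equation in $B_r$), one gets local H\"older bounds on $u$ with constants controlled by $\|\nabla u\|_{L^2(B_r)}$. The point of the non-tangential maximal function hypothesis is to upgrade this to a \emph{uniform} modulus of continuity up to the boundary: for $Z\in\V$ with $\delta(Z)=\dist(Z,\partial\V)$, pick $X^*\in\partial\V$ with $|Z-X^*|=\delta(Z)$; then $Z$ (and a fixed fraction of the ball $B(Z,\delta(Z)/2)$) lies in the non-tangential cone $\gamma(X^*)$, so $|\nabla u|\le N(\nabla u)(X^*)$ there, and $\|\nabla u\|_{L^2(B(Z,\delta(Z)/2))}\lesssim \delta(Z)(\dashint_{B(Z,\delta(Z)/2)\cap\partial\V}N(\nabla u)^2\,d\sigma)^{1/2}$ via the Ahlfors--David property and a Whitney-chain/averaging argument. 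Feeding this into \lemref{PDE4} gives $|u(Y)-u(Y')|\le C|Y-Y'|^\alpha$ on each Whitney ball with a constant that does not blow up as we approach $\partial\V$, provided $N(\nabla u)\in L^p$ with $p>1$ so that the relevant averages of $N(\nabla u)$ over boundary balls are finite.

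\medskip

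Second, I would deduce that $u$ extends continuously to $\partial\V$ and the extension is H\"older. Given two points $X_1,X_2\in\overline\V$, connect them by a path that stays at controlled distance from the boundary (using the Lipschitz graph structure from \dfnref{domain}): if $|X_1-X_2|=s$, there is a path of length $\lesssim s$ whose points are within $\lesssim s$ of the boundary, and the local H\"older estimate above, applied along an overlapping chain of Whitney-type balls of radius comparable to the distances involved, telescopes to give $|u(X_1)-u(X_2)|\lesssim s^\alpha$. Passing to the limit defines the boundary values and shows the bound persists on $\overline\V$. This handles the first assertion.

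\medskip

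Third, for $u=\S f$: here $u$ is defined on all of $\R^2$, solves $\div A^T\nabla u=0$ (or the appropriate conjugate equation) in $\R^2\setminus\partial\V$, and by the estimate $|\nabla\S f(X)|\le C_p\|f\|_{L^p}\dist(X,\partial\V)^{-1/p}$ established in the proof of \lemref{Ktranspose} (via (\ref{eqn:goodCZbound})), the gradient is in $L^p$ of the non-tangential maximal function on \emph{both} sides $\V_+$ and $\V_-$; indeed $N_{\V_\pm}(\nabla\S f)\in L^p$ follows from \thmref{NDf}. So the argument of the first two paragraphs applies in $\V_+$ and in $\V_-$ separately, giving H\"older continuity on $\overline{\V_+}$ and on $\overline{\V_-}$. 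It remains to check that the two boundary traces agree, i.e.\ that $\S f$ has no jump across $\partial\V$: this is because $\Gamma_X^T(Y)$ is continuous in $X$ across $\partial\V$ (only $\nu\cdot A^T\nabla\S^T f$ jumps, per $\K_+^t-\K_-^t=f$ in \lemref{Kpm}, not $\S f$ itself), so $\S f$ is globally continuous, and a globally continuous function that is H\"older on each closed side is H\"older on $\R^2$.

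\medskip

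The main obstacle I expect is the second step: carefully producing the uniform (boundary-independent) H\"older modulus. One must make the Whitney-chain / averaging argument rigorous, showing that the $L^2$ norm of $\nabla u$ on a Whitney ball near $X^*\in\partial\V$ is controlled by an average of $N(\nabla u)$ over a comparable boundary ball — this uses the Lipschitz geometry (so that boundary balls have measure comparable to their radius, the Ahlfors--David constant $k_4$) and the fact that Whitney balls at a given scale near a boundary point have bounded overlap and their union covers the corona $\{Z:\delta(Z)\approx \text{radius}\}$. Once that local-to-boundary comparison is in hand, the chaining and the application of \lemref{PDE4} are routine.
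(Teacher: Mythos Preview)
Your plan for the first assertion would eventually work, but it is considerably more involved than the paper's route and introduces the PDE machinery (\lemref{PDE4}, Whitney chains) where none is needed. The paper's proof uses only the hypothesis $N(\nabla u)\in L^p$ and elementary integration: from the definition of $N$ one has the pointwise bound $|\nabla u(Y)|\le C\|N(\nabla u)\|_{L^p}\min(\sigma(\partial\V),\dist(Y,\partial\V))^{-1/p}$, since the boundary set $\{X:|X-Y|<(1+a)\dist(Y,\partial\V)\}$ has measure $\gtrsim\dist(Y,\partial\V)$ and on it $N(\nabla u)\ge|\nabla u(Y)|$. Then, working in a special Lipschitz domain (which suffices locally), the paper simply integrates $|\nabla u|$ along an $L$-shaped path in $\psi$-coordinates: vertically, $|u(\psi(x,t))-u(\psi(x,\tau))|\le\int_\tau^t Cs^{-1/p}\,ds\lesssim|t-\tau|^{1/p'}$; horizontally at fixed height $t$, $\psi(z,t)\in\gamma(\psi(z))$ so $|\nabla u(\psi(z,t))|\le N(\nabla u)(\psi(z))$, and H\"older gives $|u(\psi(x,t))-u(\psi(y,t))|\lesssim|x-y|^{1/p'}\|N(\nabla u)\|_{L^p}$. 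This yields the explicit exponent $1/p'$ rather than the unspecified De~Giorgi--Nash $\alpha$, and avoids chaining altogether. (Incidentally, in your sketch $B(Z,\delta(Z)/2)\cap\partial\V$ is empty, and \lemref{PDE4} controls $u$ in terms of $\|u\|_{L^2}$, not $\|\nabla u\|_{L^2}$; you would need Poincar\'e to bridge that.)

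For the second assertion there is a genuine gap. The sentence ``$\Gamma_X^T(Y)$ is continuous in $X$ across $\partial\V$, so $\S f$ is globally continuous'' does not prove anything: the kernel has a logarithmic singularity as $Y\to X$, so pointwise continuity of the kernel does not pass through the integral without a uniform integrability argument. The paper closes this by a direct quantitative estimate: for $X\in\partial\V$ and a transversal vector $\e$, it bounds $|\Gamma_Y(X+t\e)-\Gamma_Y(X-t\e)|$ by integrating $|\nabla\Gamma_Y|\le C/|\cdot-Y|$ along the segment from $X-t\e$ to $X+t\e$, obtaining $\min\bigl(Ct/|X-Y|,\,C\log(Ct/|X-Y|)\bigr)$; the $L^{p'}(\partial\V)$-norm in $Y$ of this is $\lesssim t^{1/p'}$, so H\"older gives $|\S f(X+t\e)-\S f(X-t\e)|\le C(p)t^{1/p'}\|f\|_{L^p}\to 0$. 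That is the missing step making the two one-sided H\"older extensions match on $\partial\V$.
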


This implies that $\J^t_+ f=\tau\cdot\nabla\S^T f=\J^t_- f$ for all $f\in L^p$; since $L^p$ functions are dense in $H^1$, this must hold in $H^1$ as well.

\begin{proof}
If $N(\nabla u)\in L^p(\partial\V)$, then
\begin{align*}
\min(\sigma(\partial\V),2a\dist(Y,\partial\V))|\nabla u(Y)|^p
&\leq \int_{|Y-X|<(1+a)\dist(Y,\partial\V)}N(\nabla u)^p
\leq\|N(\nabla u)\|_{L^p}^p
\end{align*}
and so
\begin{equation}
\label{eqn:nablau}
|\nabla u(Y)|\leq \frac{C\|N(\nabla u)\|_{L^p}} {\min(\sigma(\partial\V),\dist(Y,\partial\V))^{1/p}}.
\end{equation}

If $\Omega$ is a {special} Lipschitz domain, and $N(\nabla u)\in L^p(\partial\Omega)$, and $0\leq\tau<t$ or $0\geq\tau>t$, then
\begin{align*}
|u(\psi(x,t))-u(\psi(x,\tau))|&\leq
\int_\tau^t|\nabla u(\psi(x,s))|\,ds
\leq
\int_\tau^t C s^{-1/p}\|N(\nabla u)\|_{L^p}\,ds
\\&\leq
C(p) \|N(\nabla u)\|_{L^p}(t^{1/p'}-\tau^{1/p'})
\\&\leq
C(p) \|N(\nabla u)\|_{L^p}|t-\tau|^{1/p'}
\end{align*}
and if $t\neq 0$, then
\begin{align*}
|u(\psi(x,t))-u(\psi(y,t))|
&\leq \left|\int_x^y N(\nabla u)(\psi(z))\sqrt{1+\phi'(z)^2} \,dz\right|
\\&\leq C(p)|x-y|^{1/p'}\|N(\nabla u)\|_{L^p}
\end{align*}
So if $X,Y\in\overline{\Omega}$, then
\begin{equation}
\label{eqn:uholder}
|u(X)-u(Y)|\leq C(p)|X-Y|^{1/p'}\|N(\nabla u)\|_{L^p}.
\end{equation}
Thus, $u$ is H\"older continuous on $\overline{\Omega}$. 

This result also holds locally near the boundary of an arbitrary Lipschitz domain; so $u$ is H\"older continuous on $\overline{\V}$ near the boundary. So, in particular, $u|_{\partial\V}$ exists pointwise (and not just in $L^p$).

By \thmref{NDf}, if $f\in L^p$, then $N(\nabla \S f)\in L^p$ and so $\S f$ is continuous on each of $\overline{\V_+}=\overline\V$ and $\overline{\V_-}=\V^C$; I would like to show that $\S f$ is continuous on $\overline{\V_+}\cup\overline{\V_-}=\R^2$.

Pick some $X\in \partial\V$, $t>0$ small, $\e$ as in the proof of \lemref{Kpm}. Then
\begin{align*}
|\S f(X+t\e)-\S f(X-t\e)|&=
\left|\int_{\partial\V}\left(\Gamma_{Y}(X+t\e)-\Gamma_{Y}(X-t\e)\right)
f(Y)\,d\sigma(Y)\right|
\end{align*}
But
\begin{align*}
|\Gamma_{Y}(X+t\e)-\Gamma_Y(X-t\e)|
&\leq\left|\int_{-t}^t\nabla\Gamma_Y(X+r\e) dr\right|
\leq \left|\int_{0}^{Ct} \frac{C}{\sqrt{|X-Y|^2+r^2}}\,dr\right|
\end{align*}
But that integral is at most $\frac{Ct}{|X-Y|}$, and if $|X-Y|<t$, then
\[\int_{0}^{Ct} \frac{C}{\sqrt{|X-Y|^2+r^2}}\,dr
=\int_{0}^{Ct/|X-Y|} \frac{C}{\sqrt{1+r^2}}\,dr
\leq C\ln \frac{Ct}{|X-Y|}.
\]

So
\begin{multline*}
{|\S f(X+t\e)-\S f(X-t\e)|}
\\\begin{aligned}
&=
\left|\int_{\partial\V}\left(\Gamma_{Y}(X+t\e)-\Gamma_{Y}(X-t\e)\right)
f(Y)\,d\sigma(Y)\right|
\\
&\leq \int_{|X-Y|>|t|} |f(Y)|\frac{C|t|}{|X-Y|}\,d\sigma(Y)
+C\int_{|X-Y|<|t|}|f(Y)| \ln \frac{Ct}{|X-Y|}d\sigma(Y)
\\&\leq
C\|f\|_{L^p}\left(
\left\|\frac{t}{|X-Y|}\right\|_{L^{p'}_Y(\{|X-Y|>t\})}
+
\left\|\ln \left(\frac{Ct}{|X-Y|}\right) \right\|_{L^{p'}_Y(\{|X-Y|<t\})}
\right)
\\&\leq C(p)t^{1/p'}\|f\|_{L^p}
\end{aligned}
\end{multline*}
provided $1<p<\infty$.

So  $u$ is continuous across the boundary, and so is continuous on~$\R^2$.
\end{proof}


\section{Comparing norms of layer potentials}\label{sec:compareKL}

\begin{lem}\label{lem:compareLp} If $(N)^{A_0}_\p$ and $(R)^{A_0}_\p$ hold in $\V$ and~$\overline\V^C$, then for all $f\in L^\p(\partial\V)$,
\[\|(\K^0_+)^t f\|_{L^\p}\approx\|(\J^0)^t f\|_{L^\p}\approx \|(\K^0_-)^t f\|_{L^\p}.\]
\end{lem}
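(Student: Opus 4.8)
\textbf{Proof plan for Lemma~\ref{lem:compareLp}.}

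The plan is to use the solvability of $(N)^{A_0}_\p$ and $(R)^{A_0}_\p$, together with the uniqueness statements for those problems, to pass between the three layer potentials via the single-layer potential $\S^T_0$. First I would recall the identities established in \lemref{Ktranspose} and the surrounding discussion: $(\K^0_\pm)^t f = \mp\,\nu\cdot A_0^T\nabla\S^T_0 f|_{\partial\V_\mp}$ and $(\J^0)^t f = \tau\cdot\nabla\S^T_0 f$, and that by \lemref{Sfcts} the tangential trace $\J^t_+ f = \J^t_- f$ is the same from both sides. So all three quantities are boundary traces of $\nabla u$ where $u=\S^T_0 f$ solves $\div A_0^T\nabla u = 0$ in both $\V_+$ and $\V_-$, with $\|N_{\V_\pm}(\nabla u)\|_{L^\p}\leq C\|f\|_{L^\p}$ by \thmref{NDf} together with the boundedness results of \thmref{patching}.

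The heart of the argument is: given one of the three traces controls $\|f\|_{L^\p}$-worth of data, solve the corresponding boundary value problem for $A_0^T$ in $\V_+$ (or $\V_-$), obtaining a solution $v$ with $\|N(\nabla v)\|_{L^\p}\lesssim\|(\text{that trace})\|_{L^\p}$; then by the uniqueness clause in $(N)^{A_0}_\p$ or $(R)^{A_0}_\p$ conclude $v = \S^T_0 f$ (up to an additive constant, which does not affect $\nabla v$), and hence the \emph{other} traces of $\S^T_0 f$ are bounded by $\|N(\nabla v)\|_{L^\p}$ and so by the original trace. Concretely: to show $\|(\J^0)^t f\|_{L^\p}\lesssim\|(\K^0_+)^t f\|_{L^\p}$, set $g=(\K^0_+)^t f = -\nu\cdot A_0^T\nabla\S^T_0 f|_{\partial\V_-}$, solve $(N)^{A_0^T}_\p$ in $\V_-=\overline\V^C$ with Neumann data $g$ to get $v$ with $\|N_{\V_-}(\nabla v)\|_{L^\p}\leq C\|g\|_{L^\p}$; since $\div A_0^T\nabla(\S^T_0 f)=0$ in $\V_-$, $N_{\V_-}(\nabla\S^T_0 f)\in L^\p$, and $\nu\cdot A_0^T\nabla\S^T_0 f = g$ on $\partial\V_-$, uniqueness gives $\nabla v = \nabla\S^T_0 f$ in $\V_-$, whence $\|(\J^0)^t f\|_{L^\p}=\|\tau\cdot\nabla\S^T_0 f\|_{L^\p}\leq C\|N_{\V_-}(\nabla v)\|_{L^\p}\leq C\|g\|_{L^\p}$. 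The reverse direction $\|(\K^0_+)^t f\|_{L^\p}\lesssim\|(\J^0)^t f\|_{L^\p}$ is symmetric, using $(R)^{A_0^T}_\p$ in $\V_-$ with regularity data $\int\!\J^t f$ (noting $\tau\cdot\nabla\S^T_0 f$ is the tangential derivative of the trace of $\S^T_0 f$, which is continuous by \lemref{Sfcts} and decays by the estimates in \autoref{chap:dfn}, so the antiderivative is well-defined). The equivalence with $\|(\K^0_-)^t f\|_{L^\p}$ follows identically, working in $\V_+=\V$ instead; alternatively one may use \lemref{Kpm}, $(\K^0_+)^t - (\K^0_-)^t = I$ on $L^\p$, to get one inequality for free and then bootstrap.

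The main obstacle I anticipate is bookkeeping around additive constants and the connectedness/integrability hypotheses needed to invoke solvability: the regularity problem for $A_0^T$ requires its boundary data to be a tangential derivative of an $L^\p$ (resp.\ $W^{1,\p}$) function, and $(N)$ requires the Neumann datum to integrate to zero on $\partial\V$ (and, when $\partial\V$ is compact, the single-layer potential's far-field behavior enters). These are handled by the estimates in the layer-potential section of \autoref{chap:dfn} (e.g.\ $\lim_{|X|\to\infty}\S f = 0$ when $f\in H^1$, and the continuity of $\S^T_0 f$ across $\partial\V$), but one must check that the trace of $\S^T_0 f$ is a legitimate datum for $(R)$ and that $(\K^0_\pm)^t f$ has vanishing integral over each component of $\partial\V$ — the latter being exactly what makes it a valid Neumann datum. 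Once these compatibility points are verified, each of the four implications is a two-line application of existence plus uniqueness, and the chain of $\approx$'s closes up.
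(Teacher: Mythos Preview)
Your proposal is correct and follows essentially the same route as the paper: apply the a priori estimates built into $(N)^{A_0}_\p$ and $(R)^{A_0}_\p$ (in both $\V_+$ and $\V_-$) to $u=\S^T_0 f$, then use the two-sidedness of $(\J^0)^t f$ from \lemref{Sfcts} to chain the equivalences. The paper's version is more compressed---it simply quotes the a priori bound $\|N(\nabla u)\|_{L^\p}\leq C\|\text{trace}\|_{L^\p}$ (which is exactly existence plus uniqueness) and the trivial reverse $|\text{trace}|\leq CN(\nabla u)$, rather than explicitly re-solving and identifying via uniqueness, so the bookkeeping you anticipate never surfaces.
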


\begin{proof} By definition of $(N)_\p^{A_0}$, $(R)_\p^{A_0}$, there is some constant $C$ depending on $k_i$,~$\lambda$,~$\Lambda$, such that if $\div A_0\nabla u=0$ in $\V$ then
\begin{align*}
\| N(\nabla u)\|_{L^\p(\partial\V)}&\leq
C(\p)\|\nu\cdot A_0^T\nabla u\|_{L^\p(\partial\V)},\text{ and }
\\
\| N(\nabla u)\|_{L^\p(\partial\V)}&\leq
C(\p)\|\tau\cdot\nabla u\|_{L^\p(\partial\V)}.
\end{align*}

Since $|\nu\cdot A_0^T\nabla u|\leq \Lambda N(\nabla u)$ and $|\tau\cdot \nabla u|\leq N(\nabla u)$, this means that
\[\|\nu\cdot A_0^T\nabla u\|_{L^\p(\partial\V_\pm)}\approx
\|\tau\cdot\nabla u\|_{L^\p(\partial\V_\pm)}.\]
But by \lemref{Sfcts}, $\S^T_0 f$ is continuous on $\R^2$, so $\tau\cdot\nabla \S^T_0 f$ must be the same on $\partial\V_+$ and $\partial\V_-$, and so
\[\|(\K^0_+)^t f\|_{L^\p}\approx\|(\J^0)^t f\|_{L^\p}\approx \|(\K^0_-)^t f\|_{L^\p}.\]
\end{proof}


\begin{lem}\label{lem:compareH1} If (\ref{eqn:NinL1}) holds, then for all $f\in H^1$,
\[\|(\K^0_+)^t f\|_{H^1}\approx\|(\J^0)^t f\|_{H^1}\approx \|(\K^0_-)^t f\|_{H^1}.\]
\end{lem}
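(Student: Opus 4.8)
\textbf{Proof proposal for Lemma~\ref{lem:compareH1}.} The strategy mirrors the proof of \lemref{compareLp}, but with the $L^p$ estimates replaced by the $H^1$-control hypothesis~(\ref{eqn:NinL1}) together with \lemref{inH1}. First I would reduce to atoms: it suffices to show that if $f$ is an $H^1(\partial\V)$ atom, then $(\K^0_+)^t f$, $(\J^0)^t f$, and $(\K^0_-)^t f$ all lie in $H^1(\partial\V)$ with norm bounded by a constant depending only on $\lambda$, $\Lambda$, and the Lipschitz constants of $\V$, and conversely that each of the three is controlled by the others. So fix an atom $f$ supported in $B(X_0,R)\cap\partial\V$ with $\|f\|_{L^\infty}\le 1/R$ and $\int f\,d\sigma=0$; without loss of generality $X_0=0$.

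Next I would invoke the hypothesis~(\ref{eqn:NinL1}) applied to $u=\S^T_0 f$: since $\nu\cdot A_0^T\nabla\S^T_0 f=-(\K^0_-)^t f$ and $\partial_\tau\S^T_0 f=(\J^0)^t f$, and since (by the discussion preceding \lemref{Kwelldef} or by a direct estimate as in \corref{NSf}) we have $\|(\K^0_\pm)^t f\|_{L^p}$, $\|(\J^0)^t f\|_{L^p}\le C_p R^{1/p-1}$ for $1<p<\infty$, together with the decay $|\nabla\S^T_0 f(X)|\le CR^\alpha/|X|^{1+\alpha}$ for $|X|>2R$ (exactly as in the proof of \thmref{H1patching}), the three conditions of \lemref{inH1} are verified for each of $(\K^0_-)^t f$ and $(\J^0)^t f$. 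The vanishing-mean condition $\int(\K^0_\pm)^t f\,d\sigma=\int(\J^0)^t f\,d\sigma=0$ is established exactly as in the proof of \thmref{H1patching}: $\int_\boundary\tau\cdot\nabla\S^T_0 f\,d\sigma=0$ by continuity of $\S^T_0 f$ (\lemref{Sfcts}), $\int_\boundary\nu\cdot A_0^T\nabla\S^T_0 f\,d\sigma=0$ since $\div A_0^T\nabla\S^T_0 f=0$ off the support, and then $\int(\K^0_+)^t f\,d\sigma=\int(\K^0_-)^t f\,d\sigma+\int f\,d\sigma=0$ by \lemref{Kpm}; for the special Lipschitz case one uses the cutoff argument with $\eta$ supported in $B(X_0,2r)$ and lets $r\to\infty$, exactly as before. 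Thus all three operators map atoms into $H^1(\partial\V)$ boundedly, hence extend to bounded operators $H^1\mapsto H^1$.

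For the comparability, I would run the same argument as in \lemref{compareLp}: the hypothesis~(\ref{eqn:NinL1}), applied to the solution $u$ of the Neumann problem with data an atom, gives $\|N(\nabla u)\|_{L^1}\le C$ whenever $\nu\cdot A_0\nabla u$ or $\partial_\tau u$ is an atom (this is exactly the content of \lemref{compareLp} transported to $H^1$ via (\ref{eqn:NinL1}) and the atomic decomposition). Since $|\nu\cdot A_0^T\nabla u|\le\Lambda N(\nabla u)$ and $|\tau\cdot\nabla u|\le N(\nabla u)$ pointwise, and since the $H^1$ norm of a function bounded by an $L^1$-controlled non-tangential maximal function of a solution is controlled (again packaging $N(\nabla u)\in L^1$ with the vanishing mean and the decay estimate into \lemref{inH1}), one gets $\|\nu\cdot A_0^T\nabla u\|_{H^1(\partial\V_\pm)}\approx\|\tau\cdot\nabla u\|_{H^1(\partial\V_\pm)}$; and by \lemref{Sfcts} the tangential derivative $\tau\cdot\nabla\S^T_0 f$ agrees on $\partial\V_+$ and $\partial\V_-$, which forces
\[
\|(\K^0_+)^t f\|_{H^1}\approx\|(\J^0)^t f\|_{H^1}\approx\|(\K^0_-)^t f\|_{H^1}.
\]
The main obstacle I anticipate is the last comparability step rather than the boundedness: one must be careful that the inequality $\|g\|_{H^1}\lesssim\|N(\nabla u)\|_{L^1}$ (when $g$ is a boundary trace of $\nabla u$, e.g. $g=\partial_\tau u$) really does follow from \lemref{inH1}, which requires checking the three quantitative hypotheses of that lemma (the $L^p$ smallness, the vanishing integral, and the weighted $L^1$ bound) for $g$ itself, using that $u$ solves the PDE with atomic Neumann or regularity data — essentially reproving the $H^1$ half of the $(N)$/$(R)$ estimate from its $L^1$-maximal-function form. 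This is where \autoref{chap:H1} (and \thmref{h1}, referenced as providing Condition~(\ref{invertcondIV})) does the real work, so here I would simply cite it.
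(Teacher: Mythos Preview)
Your reduction ``it suffices to show comparability for $f$ an atom'' is where the argument breaks. Boundedness of a linear operator on $H^1$ can be checked on atoms, but a two-sided comparability like $\|(\J^0)^t f\|_{H^1}\lesssim\|(\K^0_-)^t f\|_{H^1}$ cannot: if $f=\sum_i\lambda_i a_i$, then even knowing $\|(\J^0)^t a_i\|_{H^1}\le C\|(\K^0_-)^t a_i\|_{H^1}$ for every atom only yields $\|(\J^0)^t f\|_{H^1}\le C\sum_i|\lambda_i|\,\|(\K^0_-)^t a_i\|_{H^1}$, and there is no reason this last sum is controlled by $\|(\K^0_-)^t f\|_{H^1}$ (cancellation can make the latter much smaller). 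Your third paragraph reflects this confusion: you invoke (\ref{eqn:NinL1}) for ``the solution $u$ of the Neumann problem with data an atom,'' but then apply the conclusion to $u=\S^T_0 f$, whose Neumann data $(\K^0_-)^t f$ is \emph{not} an atom even when $f$ is.

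The paper's fix is to decompose the \emph{image} rather than the input. For general $f\in H^1$, use \thmref{H1patching} to write $(\K^0)^t f=\sum_i\lambda_i a_i$ with $\sum_i|\lambda_i|=\|(\K^0)^t f\|_{H^1}$. For each atom $a_i$ solve the Neumann problem $\nu\cdot A_0^T\nabla u_i=a_i$; now (\ref{eqn:NinL1}) applies directly to each $u_i$, and together with the $L^p$ bound and the vanishing mean (argued as in \thmref{H1patching}), \lemref{inH1} gives $\|\tau\cdot\nabla u_i\|_{H^1}\le C$. By uniqueness $\nabla(\S^T_0 f)=\sum_i\lambda_i\nabla u_i$, so $\|(\J^0)^t f\|_{H^1}\le\sum_i|\lambda_i|\,\|\tau\cdot\nabla u_i\|_{H^1}\le C\|(\K^0)^t f\|_{H^1}$. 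The reverse inequality comes from decomposing $(\J^0)^t f$ into atoms and running the same argument with the regularity problem. Your second paragraph (boundedness via \lemref{inH1}) and your verification of the mean-zero condition are fine but simply reprove \thmref{H1patching}; the missing idea is this atomic decomposition of the output.
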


\begin{proof} Let $f\in H^1$.
By \thmref{H1patching}, $(\K^0)^t f\in H^1$, so $(\K^0)^t f = \sum_i \lambda_i a_i$ for $H^1$ atoms $a_i$ and constants $\lambda_i$ with $\sum |\lambda_i|=\|(\K^0)^t f\|_{H^1}$.

Let $u=\S^T f$, so that $(\K^0)^t f = \nu\cdot A^T\nabla u$. Then $(\J^0)^t f=\tau\cdot \nabla u$.

Since $(N)^{A_0}_\p$ holds in $\V$, we can write $u=\sum_i \lambda_i u_i$ where $\nu\cdot A_0^T\nabla u_i=a_i$. Then by assumption,
\[\int_{\partial\V} N(\nabla u_i)(X)(1+|X|/r_i)^\alpha\,d\sigma(X)\leq C,
\quad \|N(\nabla u_i)\|_{L^\p}\leq Cr_i^{1/\p-1}.\]


If $\partial\V$ is bounded, then $\int_{\partial\V}\tau\cdot \nabla u_i=0$. Otherwise, we are working in a special Lipschitz domain. Since $N(\nabla u)\in L^1$, we must have that $\lim_{R\to\infty} \int_{|X|<R} \tau\cdot\nabla u\,d\sigma$ exists. But since $N(\nabla u_i)(X) (1+|X|/r_i)^\alpha\in L^1$, for any $\epsilon>0$ and any $R>0$ there must be some $x_1,x_2$ with $x_1<-R,$ $R<x_2$, $|x_1|\approx|x_2|$, and $N(\nabla u)(\psi(x_i))<\epsilon/|x_i|^{1+\alpha}$. 

So by integrating $\tau\cdot\nabla u$ on the boundary of the domain $\psi((x_1,x_2)\times(0,C|x_i|))$, we see that $|u(\psi(x_2))-u(\psi(x_1))|<C |x_i| \epsilon/ |x_i|^{1+\alpha}$; this may be made arbitrarily small by making $\epsilon$ small or $R$ large.

So by \lemref{inH1}, $\|\tau\cdot \nabla u_i\|_{H^1}\leq C$; therefore, 
\[\|\J^t f\|_{H^1}=\left\|\tau\cdot\sum_i \lambda_i u_i\right\|_{H^1}\leq C\sum_i\lambda_i = C\|\K^t f\|_{H^1}.\]

Similarly, we may say that $(\J^0)^t f=\sum_i \lambda_i a_i$; as in the proof of \thmref{H1patching}, $\int \nu\cdot A_0^T\nabla u_i=0$ and so $\|\K^t f\|_{H^1}\leq C\|\J^t f\|_{H^1}$.
\end{proof}

%
%
%

\section{Some elementary analysis}

\begin{thm}\label{thm:nearonto}
Let $G,G'$ be two bounded linear operators from a Banach space $B$ to itself, and suppose that $\|f\|_B\leq C \|Gf\|_B$, $G$ is a bijection, and $\|G-G'\|_{B\mapsto B}\leq\epsilon$. If $\epsilon<1/C$, then $G'$ is also a bijection, and its inverse has norm at most $C/(1-C\epsilon)$.
\end{thm}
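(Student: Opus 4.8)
The plan is to run the standard Neumann-series perturbation argument. The hypothesis $\|f\|_B\le C\|Gf\|_B$ together with the surjectivity of $G$ says precisely that $G$ is invertible with $\|G^{-1}\|_{B\mapsto B}\le C$. So the first step is to record this: given $g\in B$, write $f=G^{-1}g$, and then $\|G^{-1}g\|_B=\|f\|_B\le C\|Gf\|_B=C\|g\|_B$.

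Next I would factor $G'$ through $G$. Write $G'=G-(G-G')=G\bigl(I-G^{-1}(G-G')\bigr)$, where $I$ is the identity on $B$. Since $\|G^{-1}(G-G')\|_{B\mapsto B}\le \|G^{-1}\|\,\|G-G'\|\le C\epsilon<1$ by the assumption $\epsilon<1/C$, the Neumann series $\sum_{k=0}^\infty \bigl(G^{-1}(G-G')\bigr)^k$ converges in operator norm and defines a bounded inverse of $I-G^{-1}(G-G')$, with norm at most $\sum_k (C\epsilon)^k=1/(1-C\epsilon)$.

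Finally I would assemble the conclusion: $G'$ is a composition of the bijection $G$ with the bijection $I-G^{-1}(G-G')$, hence a bijection, and
\[(G')^{-1}=\bigl(I-G^{-1}(G-G')\bigr)^{-1}G^{-1},\qquad
\|(G')^{-1}\|_{B\mapsto B}\le \frac{1}{1-C\epsilon}\cdot C=\frac{C}{1-C\epsilon},\]
as claimed. (Equivalently, and perhaps cleaner to present, one can solve $G'f=g$ directly by rewriting it as the fixed-point equation $f=G^{-1}g+G^{-1}(G-G')f$, whose right-hand side is a contraction of $B$ with constant $C\epsilon<1$; the unique fixed point gives both injectivity and surjectivity, and iterating from $f_0=0$ yields the bound $\|f\|_B\le \frac{C}{1-C\epsilon}\|g\|_B$.) There is no real obstacle here — the only thing to be careful about is keeping the constants straight, i.e.\ that the relevant contraction constant is $\|G^{-1}\|\,\|G-G'\|\le C\epsilon$ and that this is what forces the hypothesis $\epsilon<1/C$.
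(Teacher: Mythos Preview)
Your argument is correct and is the standard Neumann-series perturbation proof. The paper takes a different, more hands-on route: it first derives the lower bound $\|f\|\le \frac{C}{1-C\epsilon}\|G'f\|$ directly from the triangle inequality (without invoking $G^{-1}$ or any series), which gives injectivity and closed range; then it proves surjectivity by contradiction, picking $f_0\notin G'B$, normalizing so that $\dist(f_2,G'B)\ge 1/\rho$, and using $Gh=f_2$ to force $\|G-G'\|\ge 1/(C\rho)$. Your factorization $G'=G(I-G^{-1}(G-G'))$ is shorter and gives bijectivity and the inverse bound in one stroke; the paper's argument avoids the Neumann series machinery and makes the closed-range step explicit, but is otherwise just a longer path to the same conclusion.
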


\begin{proof}
First, if $f\neq 0$ and $G'f=0$, then
\[\|f\|\leq C\|Gf\|=C\|(G-G')f\|\leq C\epsilon\|f\|\]
and so $\epsilon\geq 1/C$; conversely, if $\|G-G'\|<1/C$, then $G'$ is one-to-one.

Next, if $\epsilon<1/C$, then
\[\|f\|=\frac{C}{1-C\epsilon} \left(\frac{1}{C}-\epsilon\right)\|f\|
\leq \frac{C}{1-C\epsilon}\left( \|G f\|-\|(G-G')f\|\right)
\leq \frac{C}{1-C\epsilon} \|G'f\|\]
and so $G'$ satisfies the same sort of useful inequality as $G$; in particular, if $(G')^{-1}$ exists it has norm at most $C/(1-C\epsilon)$. Furthermore, if $G'g_n\to f$ then $\{g_n\}$ is a Cauchy sequence; since $B$ is a Banach space $g_n\to g$ for some $g$, and so $G'g=f$. Thus, $G'$ has closed range.

Suppose $G'$ is not onto. Let $f_0\in B-G'B$, and let  $\eta=\inf\left\{\|f_0-G'b\|:b\in B\right\}$; since $G'B$ is closed, $\eta$ is positive. Let $b_0$ be such that $\|f_0-G'b_0\|\leq \rho\eta$, $\rho>1$. Let $f_1=f_0-G'b_0$, $f_2=f_1/\|f_1\|$. Then $\|f_1\|\leq \rho\eta$, and if $b\in B$, then \[\|f_1-G'b\|=\|f_0-G'(b+b_0)\|\geq \eta\geq \frac{1}{\rho}\|f_1\|,\] so
\[\|f_2-G'b\|\geq \frac{1}{\rho}\]
for all $b\in B$.

Let $G h=f_2$. Then $\|G'h-Gh\|=\|f_2-G'h\|\geq \frac{1}{\rho}$
and so
\[\|G'-G\|\geq \frac{1}{\rho\|h\|}\geq \frac{1}{\rho C\|G h\|}
=\frac{1}{\rho C\|f_2\|}=\frac{1}{C \rho}.\]

Thus, if $\|G'-G\|<\frac{1}{C}$, then $G'$ is also one-to-one and onto. If I further suppose that $\|G'-G\|<\frac{1}{2C}$, then $\|(G')^{-1}\|\leq 2C$.
\end{proof}


\chapter{Boundary data in \texorpdfstring{$H^1$}{H1} for the real case}
\label{chap:H1}
We have shown that $\J^t$, $\K^t$ are invertible on $H^1$ provided that the following theorem holds:

\begin{thm}\label{thm:h1} Let $\V$ be a good Lipschitz domain. Suppose that $a$ is an atom of $H^1(\partial\Omega)$, that is, $\|a\|_{L^\infty}\leq 1/r,$ $\int a=0$, and $\supp a\subset B(X_0,r)\cap\partial\Omega$ for some $X_0\in\partial\Omega$, $r>0$.

Suppose that $\div A\nabla u=0$ in $\V$, and $A$ satisfies the conditions of \thmref{big}.

Assume that there is some $1<p<\infty$ such that 
$(N)^{A}_{p}$, $(R)^{A}_{p}$ $(D)^A_{p'}$, $(N)^{\tilde A}_{p}$, $(R)^{\tilde A}_{p}$ and $(D)^{\tilde A}_{p'}$ hold in all good Lipschitz domains whose constants are no bigger than some $C=C(\lambda,\Lambda,\V)$ (to be chosen later).

Assume that $u$ satisfies the conditions of \thmref{NRunique}, and that either 
\begin{enumerate}

\item\label{thm:Neumann} $\nu\cdot A\nabla u=a$ on $\partial\V$, or 
\item\label{thm:regularity} $\partial_\tau u=a$ on $\partial\V$.

\end{enumerate}
Then there is some $C,\alpha$ depending only on the ellipticity constants of $A$ such that for some $X_0\in\supp a$
\[\int_{\partial\V} N (\nabla u)(X)(1+|X-X_0|/r)^\alpha \, d\sigma(X)\leq C.\]
\end{thm}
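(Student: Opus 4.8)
The key decay estimate $\int_{\partial\V} N(\nabla u)(X)(1+|X-X_0|/r)^\alpha\,d\sigma(X)\leq C$ will be proven by splitting $\partial\V$ into the region $B(X_0,Cr)\cap\partial\V$ near the support of the atom and its complement, and summing a geometric series of dyadic annuli $\Delta_k=\partial\V\cap(B(X_0,2^{k+1}r)\setminus B(X_0,2^kr))$ in the far region. On the near region we use the hypothesis that $(N)^A_p$ or $(R)^A_p$ holds in the domain $\V$ itself, together with H\"older's inequality: since $\|a\|_{L^\infty}\leq 1/r$ and $\sigma(\supp a)\leq Cr$, we have $\|a\|_{L^p(\partial\V)}\leq Cr^{1/p-1}$, so $\|N(\nabla u)\|_{L^p}\leq C\|a\|_{L^p}\leq Cr^{1/p-1}$, and hence $\int_{B(X_0,Cr)\cap\partial\V} N(\nabla u)\leq (Cr)^{1/p'}\|N(\nabla u)\|_{L^p}\leq C$. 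The weight $(1+|X-X_0|/r)^\alpha$ is bounded by $C$ on this region, so it contributes nothing.

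\textbf{The far-region estimate.} For $X$ with $|X-X_0|>Cr$, I would show pointwise that $N(\nabla u)(X)\leq C r^\beta/|X-X_0|^{1+\beta}$ for some $\beta>0$ depending on the ellipticity constants. The mechanism is the same one used in \corref{NSf}: away from $\supp a$ the function $u$ solves $\div A\nabla u=0$ with zero Neumann (or regularity) data, so interior estimates (Lemmas~\ref{lem:PDE1}--\ref{lem:PDE4} and~(\ref{eqn:gradu}), extended to boundary estimates via the fact that $u$ vanishes or has vanishing conormal derivative locally) combined with the vanishing moment $\int a=0$ give H\"older-type decay. Concretely, one compares $u$ near a far annulus to the solution with the atom replaced by $0$; writing $u$ against its average and using the cancellation of the atom, one picks up a factor $|X_0-Y|/|X-X_0|$ (for $Y\in\supp a$) exactly as in the chain of inequalities in \corref{NSf}. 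To make this rigorous I would use the representation via layer potentials: $u$ agrees (up to the constants built in) with $\S f$ or $\D f$ where $f$ is related to $a$ through invertibility of $\K^t_\pm$ or $\J^t$ on $L^p$, which we have established earlier in the chapter (\thmref{invertibility} applied with the hypothesized $p$), so that the far-field decay of $\nabla\S f$ and $\D f$ from \corref{NSf} applies directly. Then
\[
\int_{\partial\V\setminus B(X_0,Cr)} N(\nabla u)(X)\left(1+\frac{|X-X_0|}{r}\right)^\alpha d\sigma(X)
\leq \sum_{k}\int_{\Delta_k} \frac{Cr^\beta}{|X-X_0|^{1+\beta}}\cdot\frac{|X-X_0|^\alpha}{r^\alpha}\,d\sigma(X)
\leq C\sum_k 2^{k(\alpha-\beta)}
\]
using $\sigma(\Delta_k)\leq k_4 2^{k+1}r$ (the Ahlfors--David bound), which converges provided we fix $\alpha<\beta$.

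\textbf{Main obstacle.} The delicate point is obtaining the pointwise far-field bound $N(\nabla u)(X)\leq Cr^\beta/|X-X_0|^{1+\beta}$ with constants independent of $r$ and of the smoothness of $A$ (in the a priori reduction $A$ is smooth, but the bound must not see $\|A'\|_{L^\infty}$). This is where the hypothesis ``$(N)^{A}_p,(R)^A_p,(D)^A_{p'}$ and their tilde-analogues hold in all good Lipschitz domains with constants $\leq C(\lambda,\Lambda,\V)$'' does the work: one applies these solvability results in small scaled copies of the domain sitting inside each dyadic annulus, getting local $L^p\to N$ control, then upgrades to pointwise control of $\nabla u$ at the far point $X$ via the interior estimate~(\ref{eqn:gradu}) and uses the vanishing moment of $a$ to extract the extra decay. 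Handling the two cases (Neumann data, $\nu\cdot A\nabla u=a$; regularity data, $\partial_\tau u=a$) requires passing through the conjugate function $\tilde u$ (Section~\ref{sec:conjugate}) in the regularity case, so that $\partial_\tau u=a$ becomes a Neumann condition $\nu\cdot\tilde A\nabla\tilde u=a$ for the conjugate matrix $\tilde A$; this is why the tilde-versions of the solvability hypotheses are assumed. The bookkeeping of which solvability estimate is invoked at which scale, and verifying that the comparison-to-zero-data argument really does produce a genuine H\"older gain rather than merely a logarithm, is the part that needs care; I expect the estimate from \corref{NSf} to serve as the template and to go through with only notational changes.
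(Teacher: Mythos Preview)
Your near-region estimate is fine and matches the paper's opening move. The gap is in the far-region argument.

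You propose to write $u=\S f$ with $f=(\K^t_-)^{-1}a$ (or the analogous $\J^t$ version) and then invoke the decay from \corref{NSf}. But \corref{NSf} requires its input to be an $H^1$ atom: compact support, mean zero, $L^\infty$ bound. The function $f=(\K^t_-)^{-1}a$ is only known to lie in $L^p$; it has no reason to be compactly supported or to have localized cancellation, so the $R^\alpha/|X-X_0|^{1+\alpha}$ decay does not follow. You cannot upgrade to $H^1$-invertibility of $\K^t$ here either, since establishing that invertibility is precisely the purpose of \thmref{h1} (it supplies condition~(\ref{invertcondIV}) of \thmref{invertibility}).

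More fundamentally, a uniform pointwise bound $N(\nabla u)(X)\leq Cr^\beta/|X-X_0|^{1+\beta}$ is simply not available here. The a~priori estimates one can actually prove (paper's \autoref{sec:H1apriori}) give only $|\nabla u(X)|\leq Cr^{1/p'}\dist(X,\partial\V)^{-1-1/p'}$, which blows up as $X$ approaches $\partial\V$ regardless of how far $X$ is from $X_0$. The nontangential maximal function sees this blow-up.

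The paper's route is a bootstrap: one proves the self-improving inequality $I\leq Ch^{1/p}I + C h^{-2+1/p}$ for the weighted integral $I$, then chooses $h$ small and absorbs. The mechanism is to cover each dyadic annulus $\Delta^\pm(R)$ by a box-shaped subdomain $\U_\tau\subset\V$, apply $(N)^A_p$ \emph{inside $\U_\tau$} to bound $\int_{\Delta^\pm}N_{\U_\tau}(\nabla u)$ by $\|\nu\cdot A\nabla u\|_{L^p(\partial\U_\tau)}$, and then \emph{average over the position parameter $\tau$} of the side walls. That averaging converts the side-wall contributions $N(\nabla u)(\psi(R\pm\tau R))$ back into $\int N(\nabla u)$ over a slightly larger annulus, with the small prefactor $h^{1/p}$ coming from the height of the wall. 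This is where the hypothesis ``solvability holds in all domains with controlled constants'' is actually cashed in. A separate argument (reflection across $\partial\Omega$ under the a~priori assumption that $\phi$ has compact support) shows $I<\infty$ so the absorption is legitimate; a limiting argument then removes the compact-support assumption.
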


Throughout this section, we assume without loss of generality that $X_0=0$.

\section{A priori bounds on \texorpdfstring{$u$}{u}} \label{sec:H1apriori}

Suppose that $u$ is a regularity solution in $\V$, $u=f$ on $\partial\V$, with $a=\partial_\tau f$ a $H^1$ atom with support in $B(0,r)\cap\partial\V$.

So $\|Nu\|_{L^{\smash{p'}}}\leq C\|f\|_{L^{\smash{p'}}}\leq C r^{1/p'}$. As in the proof of \lemref{Sfcts}, this implies that
$|u(X)|\leq Cr^{1/p'}\min(\dist(X,\partial\V),\sigma(\partial\V))^{-1/p'}$.
By \lemref{PDE1} and (\ref{eqn:gradu}),
\[|\nabla u(X)|\leq Cr^{1/p'}\dist(X,\partial\V)^{-1}\min(\dist(X,\partial\V),\sigma(\partial\V))^{-1/p'}.\]

Now, suppose that $\div A\nabla u=0,$ $\nu\cdot A\nabla f=a$.
Then as in the proof of \thmref{complement}, $\tilde u$ exists on $\V$ and satisfies $\div \tilde A\nabla\tilde u=0$, $\partial_\tau \tilde u=a$; so $|\nabla u(X)|\approx|\nabla \tilde u(X)|$.

So if $\nu\cdot A\nabla u=a$ or $\partial_\tau u=a$, then
\begin{equation}\label{eqn:H1gradu}
|\nabla u(X)|\leq Cr^{1/p'}\dist(X,\partial\V)^{-1}\min(\dist(X,\partial\V),\sigma(\partial\V))^{-1/p'}\end{equation}

If $\V=\Omega$ is special, we may make some statements about $u$ as well as $\nabla u$. First, $\lim_{t\to\infty} u(\psi(x,t))$ exists for every $x\in\R$; furthermore, they must all be equal. Without loss of generality, we may assume that $\lim_{t\to\infty} u(\psi(x,t))=0$. Then 
\[|u(\psi(x,t))|\leq \int_t^\infty Cr^{1/p'}t^{-1-1/p'}\,dt
\leq C_p r^{1/p'} t^{-1/p'}.\]

If $|x|>2r$ or $t>r$, let $R=|\psi(x,t)|/2$; by applying \lemref{PDE3}, we see that 
\begin{align*}
|u(\psi(x,t))|
&\leq C\left(\dashint_{B(\psi(x,t),R)\cap\Omega} |u|^2\right)^{1/2}
\leq C\left(\dashint_{B(\psi(x,t),R)\cap\Omega} C_p r^{2/p'} s^{-2/p'} dy\,ds\right)^{1/2}
\\&\leq C\left(\frac{C}{R}\dashint_{x-R}^{x+R}\int_{0}^{Cr} C_p r^{2/p'} |s|^{-2/p'}\,ds\,dy\right)^{1/2}
\leq
C_pr^{1/p'}R^{-1/p'}.
\end{align*}

Summarizing, we have that in a special Lipschitz domain,
\begin{equation}\label{eqn:H1special}
|\nabla u(X)|\leq C_p r^{1/p'} \dist(X,\partial\Omega)^{-1/p'},
\quad
|u(X)|\leq C_p r^{1/p'} |X|^{-1/p'}.\end{equation}

\section{A bound on our integral in terms of itself}
\label{sec:realh1:bddiffinite}

\begin{lem} Suppose that $\V$, $p$, $A$, $a$, $r$ satisfy the conditions of \thmref{h1}.

Then for any $h>0$, there is some constant $C$ depending only on $p$, $\lambda$, $\Lambda$ and the Lipschitz constants of $\V$ such that if
\[I=\int_{\partial\V} N (\nabla u)(X)(1+|X-X_0|/r)^\alpha \, d\sigma(X)\]
then $I$ satisfies
\[I\leq C h^{1/p} I + \frac{C}{h^{2-1/p}}.\]
\end{lem}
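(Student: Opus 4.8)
The idea is the standard one for obtaining a self-improving bound on a non-tangential maximal function integral weighted by $(1+|X-X_0|/r)^\alpha$. Write $I$ as a sum of the near contribution $\int_{\partial\V\cap B(0,Cr)} N(\nabla u)\,(1+|X|/r)^\alpha\,d\sigma$ and a far contribution over $\partial\V\setminus B(0,Cr)$, and handle each separately using a parameter $h>0$ that will eventually be chosen small.

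\emph{The near contribution.} On $\partial\V\cap B(0,Cr)$ the weight $(1+|X|/r)^\alpha$ is bounded by a constant, so I must bound $\int_{\partial\V\cap B(0,Cr)} N(\nabla u)\,d\sigma$. Here I split the non-tangential cone: for $X\in\partial\V$, let $N_h(\nabla u)(X)=\sup\{|\nabla u(Y)|:Y\in\gamma(X),\ \dist(Y,\partial\V)<hr\}$ be the part of the cone close to the boundary, and $N^h(\nabla u)(X)$ the part with $\dist(Y,\partial\V)\ge hr$. For the far-from-boundary part, the interior estimate (\ref{eqn:gradu}) together with the a priori bound (\ref{eqn:H1gradu}) gives $|\nabla u(Y)|\le C r^{1/p'}(hr)^{-1}(hr)^{-1/p'}\cdot(\text{something})$ — more precisely, for $Y$ with $\dist(Y,\partial\V)\ge hr$ one gets $|\nabla u(Y)|\le C h^{-2+1/p}r^{-1}\cdot r^{1/p}$ up to Lipschitz constants, so integrating over $\partial\V\cap B(0,Cr)$ (which has $\sigma$-measure $\le Cr$) yields the term $C/h^{2-1/p}$. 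For the close-to-boundary part $N_h(\nabla u)$, I use a Whitney decomposition of $B(0,Cr)\cap\partial\V$ into surface balls $\Delta_j$ of radius $\approx hr$ and apply the solvability hypotheses $(N)^A_p$, $(R)^A_p$ (or rather the local version, via \thmref{NDf} and \lemref{Tstar}) on the associated Whitney-type subdomains to control $\int_{\Delta_j} N_h(\nabla u)^p$ by the $L^p$ norm of the data on a slightly larger ball; summing via Hölder over the $\approx 1/h$ balls and using that $a$ is an atom (so $\|a\|_{L^\infty}\le 1/r$) produces a gain of a power of $h$, giving a bound $\le C h^{1/p} I + C/h^{2-1/p}$ for the near piece. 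The appearance of $I$ on the right comes because the local solvability estimate on each Whitney subdomain produces, besides the data term, a "tail" term controlled by $N(\nabla u)$ at other points weighted exactly by the $(1+|X|/r)^\alpha$ factor.

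\emph{The far contribution.} For $X\in\partial\V\setminus B(0,Cr)$ one uses the decay estimates (\ref{eqn:H1gradu}) and, in the special-domain case, (\ref{eqn:H1special}): $|\nabla u(Y)|\lesssim r^{1/p'}\dist(Y,\partial\V)^{-1-1/p'}$ for $Y$ far from $\supp a$, which for $Y\in\gamma(X)$ with $|X|>Cr$ gives $N(\nabla u)(X)\le C r^{1/p'}|X|^{-1-1/p'}$; then $\int_{\partial\V\setminus B(0,Cr)} N(\nabla u)(X)(|X|/r)^\alpha\,d\sigma(X)\le C\int_{|X|>Cr} r^{1/p'}|X|^{-1-1/p'}(|X|/r)^\alpha\,d\sigma$, which converges (to a constant depending only on $\alpha<1/p'$ and the Ahlfors–David constant $k_4$) provided we only integrate the tail; but since we want a bound homogeneous in the way that allows absorbing, I instead keep part of this as $h^{1/p}I$ as well: on the annulus $Cr<|X|<Cr/h$ I bound crudely by $h^{1/p}$ times $I$ restricted there, and on $|X|>Cr/h$ I use the genuine decay to get $C/h^{2-1/p}$ (or even a better power). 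The bookkeeping is arranged so both leftover pieces fit the form $Ch^{1/p}I+Ch^{-(2-1/p)}$.

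\textbf{Main obstacle.} The delicate point is the Whitney-patching step for the near piece: one must invoke $(N)^A_p$ and $(R)^A_p$ on each small subdomain while keeping the constants uniform (this is why the hypothesis requires solvability in all good Lipschitz domains with constants bounded by $C(\lambda,\Lambda,\V)$), and one must correctly track the tail terms so that they reassemble into a single multiple of $I$ with the small factor $h^{1/p}$ rather than a factor that cannot be absorbed. A secondary subtlety is that $I$ is not yet known to be finite — that is established separately in \autoref{sec:realh1:bddiffinite} by the a priori smoothness and compact-support assumptions — so here one treats $I$ as a possibly-infinite quantity and the inequality $I\le Ch^{1/p}I+Ch^{-(2-1/p)}$ is only useful once finiteness is in hand, after which choosing $h$ small enough that $Ch^{1/p}<1/2$ yields $I\le C$.
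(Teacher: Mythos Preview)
Your proposal has the roles of the near and far pieces essentially reversed, and the far-piece argument rests on a decay estimate that is not available.

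\textbf{The far-piece claim is the real gap.} You assert that for $Y\in\gamma(X)$ with $|X|>Cr$ one has $N(\nabla u)(X)\le C r^{1/p'}|X|^{-1-1/p'}$, citing (\ref{eqn:H1gradu}) and (\ref{eqn:H1special}). But (\ref{eqn:H1gradu}) only gives $|\nabla u(Y)|\le C r^{1/p'}\dist(Y,\partial\V)^{-1-1/p'}$, which blows up as $Y$ approaches $\partial\V$ and therefore says nothing about the supremum over the full cone $\gamma(X)$. A pointwise bound of the form $N(\nabla u)(X)\lesssim |X|^{-1-1/p'}$ is essentially what the lemma is trying to produce; it is not an input. (Such a bound is obtained later, in the Finiteness section, but only under the extra a priori hypothesis that $\phi$ is compactly supported, via reflection---and that argument is used precisely to verify that $I<\infty$ so that the present lemma can be applied.) Your attempted fix---``bound crudely by $h^{1/p}$ times $I$'' on an intermediate annulus---does not work either: there is no mechanism for a small factor $h^{1/p}$ to appear out of a crude bound.

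\textbf{How the paper actually argues.} The near piece $\int_{|X|\le 2r}N(\nabla u)(1+|X|/r)^\alpha\,d\sigma$ is trivial: the weight is $\approx 1$ and H\"older plus $\|N(\nabla u)\|_{L^p}\le C r^{1/p-1}$ gives a bound $\le C$, with no $h$ involved. All of the work, and the $h$-trick, takes place on the far annuli $\Delta^\pm(R)$ with $R=2^j$. There one splits the cone at height $\approx R$: the part with $\dist(Y,\partial\V)\gtrsim R$ is controlled by the a priori interior bound $|\nabla u|\le C/R^{1+1/p'}$. For the part close to the boundary, one applies $(N)^A_p$ on a tall box $\U_\tau\subset\Omega$ whose base is $\Delta^\pm_\tau(R)$; since $R>2r$ the Neumann (or Dirichlet) data vanishes on $\partial\U_\tau\cap\partial\Omega$, so only the two vertical sides of $\U_\tau$ contribute. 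Those sides are split at height $hR$: below $hR$ one bounds $|\nabla u|$ by $N(\nabla u)$ at the two endpoints $\psi(R-\tau R)$, $\psi(2R+\tau R)$ (picking up a factor $(hR)^{1/p}$ from the length); above $hR$ one uses the interior decay $|\nabla u|\le C/(hR)^{1+1/p'}$. Averaging over $\tau\in[1/4,1/2]$ converts the two point evaluations into $\int_{\Delta_{1/2}}N(\nabla u)$, and summing over dyadic $R=2^j$ reassembles these into $C h^{1/p}I$, while the interior-decay contributions sum to $C/h^{1+1/p'}=C/h^{2-1/p}$ (using $\alpha<1/p'$). The point is that the vanishing of the data away from $\supp a$ is what makes the subdomain estimate produce only side terms, which are exactly the terms that can be fed back into~$I$; your Whitney argument near the support does not have this feature.
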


If $\partial\V$ is bounded, we know $N(\nabla u)\in L^p(\partial\V)\supset L^1(\partial\V)$, so $I$ is finite. So by choosing $h$ small enough this lemma will give us \thmref{h1}. If $\V=\Omega$ is a special Lipschitz domain, we will show first that $I$ is finite if $\phi$ is compactly supported, which will yield our desired bound for such $\phi$, and then pass to the case where $\phi$ is an arbitrary Lipschitz function.

\begin{proof} We begin with the case $\V=\Omega$ a special Lipschitz domain.

It suffices to prove this for $r=1$; we may rescale to get the general result.

\allowdisplaybreaks[2]

We will need the following definitions.
\begin{align*}
N_\U f(X)&=\sup\,\{|f(Y)|:|X-Y|<(1+a)\dist(Y,\partial\U)\},
\\
\gamma(X)&=\{Y:|Y-X|<(1+a)\dist(Y,\partial\V)\}\\
\gamma_1(X,R)&=\{Y\in\gamma(X): |Y-X|<R/8\}\\
\gamma_2(X,R)&=\{Y\in\gamma(X): |Y-X|\geq R/8\} \\
m_i(X,R)&=\sup\,\{|\nabla u(Y)|:Y\in\gamma_i(X)\}\displaybreak[0]\\
N_\U f(X)&=\sup\,\{|f(Y)|:|X-Y|<(1+a)\dist(Y,\partial\U)\},
\\
Nf(x)&=N_{\Omega}f(\psi(x))
=\sup_{\gamma(\psi(x))}|f|\\
\Lambda^+(R) &= (R,2R),\quad \Lambda^-(R) = (-2R,-R)\\
\Lambda^\pm_\tau(R) &=\cup_{x\in\Lambda^\pm(R)}(x-\tau R,x+\tau R)\\
\Delta^\pm_\tau(R) &=\psi(\Lambda^\pm_\tau(R))\\
\U^\pm_\tau(R)&=\left\{\psi(x,s):x\in\Lambda^\pm_\tau(R),0<s<R\left(1+\tau\|\phi'\|_{L^\infty}\right)\right\}
\end{align*}

\allowdisplaybreaks[0]

We will usually omit the $R$. Note the following:
\begin{itemize}
\item if $x\in\Lambda^\pm$ then $m_1(\psi(x))\leq N_{\U^\pm_\tau}(\nabla u)(x)$ for $\tau\geq\frac{1}{4}$,
\item if $Y\in\gamma_2(X,R)$, then $\dist(Y,\partial\Omega)\geq \frac{1}{1+a} |Y-X|\geq \frac{R}{8+8a}$.
\end{itemize}

Let $I=
\int_{\partial\Omega}N(\nabla u)(X) (|X|+1)^\alpha \,d\sigma(X)$. Then
\[I\leq C\int_{\psi((-2,2))} N(\nabla u)(X)\,d\sigma(X)+C\sum_{j=1}^\infty 2^{j\alpha} \int_{\Delta^+(2^j)\cup\Delta^-(2^j)} N(\nabla u)(X)\,d\sigma(X).\]

We can bound the first term easily:
\[\int_{\psi((-2,2))}N(\nabla u)(X)\,d\sigma(X)
\leq C \|N(\nabla u)\|_{L^{p}}
\leq C.\]

We just need to bound $\sum_j 2^{j\alpha} \int_{\Delta^\pm(2^j)} N(\nabla u)(X)\,d\sigma(X).$ Pick some $2^j=R$; we seek a bound on $\int_{\Delta^\pm(R)} N(\nabla u).$ We consider only $\Delta=\Delta^+(R)$ and the Neumann problem; the case $\Delta=\Delta^-(R)$ and the regularity problem is similar.

By our a priori bounds on $u$, if $X\in\Delta=\Delta^+(R)$, then $m_2(X)\leq \frac{C}{R^{1+1/p'}}.$
%
If $\tau\geq\frac{1}{4}$, then 
\begin{align*}
\int_{\Delta(R)}m_1&\leq \int_{\partial \U_\tau} N_{\U_\tau}(\nabla u) \,d\sigma
\leq C R \dashint_{\partial \U_\tau} N_{\U_\tau}(\nabla u) \,d\sigma
\\&\leq  C R \left(\dashint_{\partial \U_\tau} N_{\U_\tau}(\nabla u)^{p} \,d\sigma\right)^{1/p}
\leq  C R \left(\dashint_{\partial \U_\tau} 
|\nu\cdot A\nabla u|^{p} \,d\sigma\right)^{1/p}
\\&=  C R^{1-1/p} \left(\int_{\partial \U_\tau} 
|\nu\cdot A\nabla u|^{p} \,d\sigma\right)^{1/p}
\end{align*}
But if we let $\eta_-(t)=\psi(R-\tau R,t)$, $\eta_+(t)=\psi(2R+\tau R,t)$, then by (\ref{eqn:H1gradu})
\begin{align*}
\int_{\partial \U_\tau} |\nu\cdot A\nabla u|^{p} \,d\sigma
&=
\int_{\eta_+(t)\cup\eta_-(t), t<hR} |\nu\cdot A\nabla u|^{p} \,d\sigma
+\int_{\eta_+(t)\cup\eta_-(t), t\geq hR} |\nu\cdot A\nabla u|^{p} \,d\sigma
\\&\leq
hR
N(\nabla u)(\psi(R-\tau R))^{p}
+hR
N(\nabla u)(\psi(2R+\tau R))^{p}
+\frac{CR}{(hR)^{p+p/p'}}.
\end{align*}
 
So, if we take $h<1/2$ and $1/4\leq\tau\leq 1/2$, we have that
\begin{multline*}
{\left(\int_{\partial\U_\tau}|\nu\cdot A\nabla u(y,s)|^{p} \right)^{1/p}}
\\\begin{aligned}
&\leq
h^{1/p}R^{1/p} \bigl(N(\nabla u)(\psi(R-\tau R))+N(\nabla u)(\psi(2R+\tau R))\bigr)
+\frac{C}{h^{1+1/p'}R^{2/p'}}
\end{aligned}\end{multline*}
Taking the integral from $\tau=1/4$ to $\tau=1/2$, we get that
\begin{align*}
\int_{\Delta} m_1
&\leq \int_{1/4}^{1/2}C R^{1-1/p}\left(\int_{\partial\U_\tau} |\nu\cdot A\nabla u|^{p} d\sigma\right)^{1/p}\,d\tau
\\
&\leq \int_{1/4}^{1/2}C R\,h^{1/p}
\bigl(N(\nabla u)(\psi(R-\tau R))+N(\nabla u)(\psi(2R+\tau R)) \bigr)\,d\tau
+\frac{C}{h^{1+1/p'}R^{2/p'}}
\\
&\leq Ch^{1/p}\int_{1/4}^{1/2} \bigl(N(\nabla u)(\psi(R-\tau R))+N(\nabla u)(\psi(2R+\tau R)) \bigr) R \,d\tau
+\frac{C}{h^{1+1/p'}R^{2/p'}}
\\
&\leq Ch^{1/p}\int_{\Delta_{1/2}}N(\nabla u)
+\frac{C}{h^{1+1/p'}R^{2/p'}}.
\end{align*}

Therefore,
\begin{align*}
\int_{\Delta}N(\nabla u)&\leq \int_{\Delta} m_1+\int_{\Delta} m_2
\leq
\frac{C}{R^{1/p'}}
+
Ch^{1/p}\int_{\Delta_{1/2}}N(\nabla u)
+
\frac{C}{h^{1+1/p'}R^{2/p'}}
\\&
\leq
Ch^{1/p}\int_{\Delta_{1/2}}N(\nabla u)
+
\frac{C}{h^{1+1/p'}R^{1/p'}}
\end{align*}

So
\begin{align*}
\int N(\nabla u)(1+|x|^\alpha)
&\leq C +\sum_{j=0}^\infty 2^{j\alpha}\int_{\Delta(2^j)} N(\nabla u)
\\&\leq C 
+\sum_{j=0}^\infty Ch^{1/p}\int_{\Delta_{1/2}(2^j)}2^{j\alpha}N(\nabla u) 
+\sum_{j=0}^\infty 2^{j\alpha}\frac{C}{h^{1+1/p'}2^{j/p'}}
\\&\leq C 
+Ch^{1/p}\int(1+|x|)^\alpha N(\nabla u) 
+\sum_{j=0}^\infty \frac{C}{h^{1+1/p'}} 2^{j(\alpha-1/p')}.
\end{align*}

This completes the argument for $\V=\Omega$ a special Lipschitz domain.
Suppose $\partial\V$ is bounded.

If $\dist(X,\partial\Omega)\geq \sigma(\partial\V)/C$, then  $|\nabla u(X)|\leq C r^{1/p'}\sigma(\partial\V)^{-1-1/p'}.$ So 
\begin{align*}
\int_{\partial\V} m_2(X,\sigma(\partial\V)/C) (1+|X|/r)^\alpha\,d\sigma
&\leq
\int_{\partial\V} C r^{1/p'}\sigma(\partial\V)^{-1-1/p'} (1+|X|/r)^\alpha\,d\sigma
\\&\leq C r^{1/p'-\alpha}\sigma(\partial\V)^{-1-1/p'}
\int_{\partial\V} (1+|X|)^\alpha\,d\sigma
\\&\leq C\left(\frac{r}{\sigma(\partial\V)}\right)^{1/p'-\alpha}
\end{align*}
which is bounded provided $\alpha<1/p'$.

If $r>\sigma(\partial\V)/C$, then $\|a\|_{L^p}\leq C\sigma(\partial\V)^{1/p-1}$. Then
\begin{multline*}
\int_{\boundary} N(\nabla u)(X) (1+|X|/r)^\alpha\,d\sigma(X)
\leq \int_{\boundary} N(\nabla u)(X) \,d\sigma(X)
\\\leq \sigma(\boundary)^{1-1/p}\left(\int_{\boundary} N(\nabla u)^p \,d\sigma\right)^{1/p}
\leq \sigma(\boundary)^{1-1/p} C\sigma(\partial\V)^{1/p-1}\leq C
\end{multline*}
and so we may assume that $\supp a$ is small.

We can cover $\partial\V$ with $k_2$ balls of radius $r_j$, where $\sigma(\partial\V)\leq C r_j$, as in \dfnref{domain}. We have that $\supp a$ is contained in one of the boundary cylinders; we may write $\supp a\subset \psi((-r,r))$ and require that $\psi(-2r,2r)\subset\partial\Omega_j\cap\partial\V$.

The above argument for special Lipschitz domains also shows that, if $\div A\nabla u=0$ in $\psi((-2R_0,2R_0)\times(0,2R_0))$, with $\|N(\nabla u)\|_{L^p}\leq C r^{-1/p'}$ and $\supp \nu\cdot A\nabla u\cap \psi(-2R_0,2R_0)\subset\psi(-r,r)$, then 
\[\int_{\psi(-R_0,R_0)} N(\nabla u)(X) (1+|X|/r)^\alpha\,d\sigma
\leq
C+Ch\int_{\psi(-2R_0,2R_0)} N(\nabla u)(X) (1+|X|/r)^\alpha\,d\sigma.
\]
We may apply this argument to each of the boundary cylinders and sum to get a bound for all of $\partial\V$, as desired.
\end{proof}

\section{Finiteness}

Recall that $\Omega=\{X\in\R^2:\phi(X\cdot\e^\perp)<X\cdot \e\}$. We made the a priori assumption that there is som

Redefine $A$ on $\Omega^C$ so that $A(\psi(x,-t))=A(\psi(x,t))$.
In $\Omega^C$, define
\begin{itemize}
\item $u(\psi(x,t))=u(\psi(x,-t))$, for the Neumann problem
\item $u(x,t)=-u(\psi(x,-t))$, for the regularity problem.
\end{itemize}

Then $\div A\nabla u=0$ in $\Omega$ and in $\{\psi(x,t):|x|>R_0,t\neq 0\}$.

If the regularity problem holds, $u=0$ on $\partial\Omega\cap B(0,r)^C$ and $\partial\Omega^C\cap B(0,r)^C$. If the Neumann problem holds, $\nu\cdot A\nabla u=0$ on $\partial\Omega\cap B(0,r)^C$ and on $\partial\Omega^C\cap B(0,r)^C$. In either case, we have that $\div A\nabla u=0$ in $\{\psi(x,t):|x|>R_0$ or $t>0\}$.

If $|X|>CR_0$, then by \autoref{sec:H1apriori},
\begin{align*}|\nabla u(X)|
&\leq C\left(\dashint_{B(X,|X|/4)}|\nabla u|^{2}\right)^{1/2}
\leq \frac{C}{|X|}\left(\dashint_{B(X,|X|/2)}|u|^{2}\right)^{1/2}
\leq \frac{C}{|X|^{1+1/p'}}.
\end{align*}

Since $N(\nabla u)\in L^{p}$, we have that 
\[\int_{|X|< CR_0,X\in\partial\Omega} N(\nabla u)(X) (1+|X|/r)^\alpha\,d\sigma(X)\]
is finite. But $N(\nabla u)(X)\leq C/|X|^{1+1/p'}$; so if $\alpha<1/p'$, then 
\[\int_{|X|\geq  CR_0,X\in\partial\Omega} N(\nabla u)(X) (1+|X|/r)^\alpha\,d\sigma(X)\]
is finite as well and we are done.

We now remove the assumption on $\phi$.
Recall that $\Omega=\{X\in\R^2:\phi(X\cdot\e^\perp)<X\cdot \e\}$. Assume $\phi(0)=0$, and let $\phi_R=\phi$ on $(-R,R)$ and let $\phi_R=0$ on $(-3R,3R)^C$.
Let $\Omega_R=\{X\in\R^2:\phi_R(X\cdot\e^\perp)<X\cdot \e\}$.

Let $\nu\cdot A\nabla u_R=a$ or $\tau\cdot\nabla u_R=a$ on $\partial\Omega\cap\partial\Omega_R$, $0$ on $\partial\Omega_R\backslash\partial\Omega$.

Suppose that $|Y|$ is small compared to $R,$ $S$ with $R<S$. Let $\delta=\dist(Y,\partial\Omega)$.
Then $\div A\nabla (u_S-u_R)=0$ in $\Omega_S\cap\Omega_R$, and $\nu\cdot A\nabla (u_S-u_R)=0$ or $\tau\cdot A\nabla (u_S-u_R)=0$ on $\psi((-R,R))$.

By \autoref{sec:H1apriori}, if $|X|>2r$ then $|u_R(\psi_R(x,t))|<Cr^{1/p'}t^{-1/p'}$.
So 
\begin{align*}
|u_S(Y)-u_R(Y)|^2&\leq \frac{C}{R^2}\int_{B(Y,R/2)} |u_S-u_R|^2
\leq \frac{C}{R}\int_0^R Cr^{2/p'}t^{-2/p'} dt
\leq C_p \frac{r^{2/p'}}{R^{2/p'}}.
\end{align*}
Therefore, by (\ref{eqn:gradu}), if $R\gg |Y|$ then
\[|\nabla u_S(Y)-\nabla u_R(Y)|\leq \frac{C_p r^{2/p'}} {R^{2/p'}\dist(Y,\partial\Omega)}.\]

Define $\check u=\lim_{R\to \infty} u_R$. Then clearly $\div A\nabla\check u=0$ in $\Omega$, $\nu\cdot A\nabla\check u=a$ or $\tau\cdot\nabla \check u=a$ on $\partial\Omega$, and $\int N(\nabla \check u)(1+|X|/R_0)^\alpha\leq C$. By \thmref{NRunique}, we have $u=\check u$, and so we are done.

%
%
%
%
%
%
%

\chapter{Interpolation}
\label{chap:interpolation}
We have established that for $\V$, $A$ and $A_0$ as in \thmref{big}, there exist some $\epsilon_0>0$, $p_0>1$ depending only on $\lambda$, $\Lambda$ and $k_i$ such that, if $\|A-A_0\|_{L^\infty}<\epsilon_0$, then
\begin{itemize}
\item $\K_\pm,\J,\K_\pm^t,\J^t$, are bounded $L^p(\partial\V)\mapsto L^p(\partial\V)$ for all $1<p<\infty$.
\item $\K_\pm^t,\J_\pm^t$ are invertible on $L^{p_0}(\partial\V)$ with bounded inverse.
\item $\K_\pm^t,\J_\pm^t$ are bounded and invertible on $H^1(\partial\V)$ with bounded inverse.
\end{itemize}

We also know that, for any given $p$, if $\K_\pm^t$ is invertible with bounded inverse on $L^p(\partial\V)$ then $(N)_p^A$ holds in~$\V$, and if $\J^t$ is invertible with bounded inverse on $L^p(\partial\V)$ then $(R)_p^A$ holds in~$\V$. We now interpolate to complete the proof of \thmref{big}.

\begin{thm} Suppose that $\|A-A_0\|_{L^\infty}<\epsilon_0$, where $\epsilon_0$ is as above. Then if $1<p<p_0$, then 
\[\|f\|_{L^p}\leq C_p\|\J^t f\|_{L^p},\quad\|f\|_{L^p}\leq C_p\|\K_\pm^t f\|_{L^p}\]
for all $f\in L^p(\partial\V)$.
\end{thm}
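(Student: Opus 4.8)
The plan is to obtain the bound on $L^p$ for $1<p<p_0$ by interpolating between the known estimate on $L^{p_0}$ and the known estimate on $H^1$. Concretely, I have at my disposal that $\J^t$ and $\K^t_\pm$ are bounded and invertible with bounded inverse on $L^{p_0}(\partial\V)$ (giving $\|f\|_{L^{p_0}}\le C\|\J^t f\|_{L^{p_0}}$, etc.), and that they are bounded and invertible with bounded inverse on $H^1(\partial\V)$ (giving $\|f\|_{H^1}\le C\|\J^t f\|_{H^1}$). The operators $(\J^t)^{-1}$ and $(\K^t_\pm)^{-1}$ are therefore well-defined linear operators that are simultaneously bounded $L^{p_0}\to L^{p_0}$ and $H^1\to H^1$. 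Since the complex interpolation spaces between $H^1(\partial\V)$ and $L^{p_0}(\partial\V)$ are the $L^p(\partial\V)$ spaces for $1<p<p_0$ (with the appropriate $\theta$, i.e.\ $1/p = 1-\theta + \theta/p_0$), the Riesz--Thorin/Calderón interpolation theorem immediately yields that $(\J^t)^{-1}$ and $(\K^t_\pm)^{-1}$ are bounded $L^p\to L^p$ for all such $p$. Unwinding, $\|f\|_{L^p} = \|(\J^t)^{-1}\J^t f\|_{L^p}\le C_p\|\J^t f\|_{L^p}$, and likewise for $\K^t_\pm$, which is exactly the claim.

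First I would record the precise statement of the interpolation fact I need: for a good Lipschitz domain $\V$, the complex interpolation space $[H^1(\partial\V),L^{p_0}(\partial\V)]_\theta$ equals $L^p(\partial\V)$ when $1/p=1-\theta+\theta/p_0$, with equivalence of norms. On $\R$ (hence, by parameterizing $\partial\V$ by arclength locally and patching, on $\partial\V$) this is the classical result that $[H^1,L^q]_\theta = L^p$; I would cite it from a standard reference (e.g.\ the interpolation literature on Hardy spaces). Then I would note that $T:=(\J^t)^{-1}$ is a genuine bounded linear operator on each of the two endpoint spaces — this is precisely what "invertible with bounded inverse on $L^{p_0}$" and "invertible with bounded inverse on $H^1$" mean — so the hypotheses of the interpolation theorem are met, and it produces $\|T\|_{L^p\to L^p}\le C_p$.

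The second step is the trivial algebraic manipulation: for $f\in L^p(\partial\V)$ with $1<p<p_0$, write $f = T(\J^t f)$ (valid since $\J^t$ is a bijection on $L^p$, which follows because $T$ is its two-sided inverse on $L^p$ after interpolation — one should check that the $L^{p_0}$ and $H^1$ inverses agree on the overlap $L^{p_0}\cap H^1$, which they do since both equal the restriction of the $L^2$-level operator, and this identification propagates to $L^p$ by density), so $\|f\|_{L^p}\le \|T\|_{L^p\to L^p}\|\J^t f\|_{L^p}\le C_p\|\J^t f\|_{L^p}$. The identical argument with $T=(\K^t_\pm)^{-1}$ gives $\|f\|_{L^p}\le C_p\|\K^t_\pm f\|_{L^p}$.

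The main obstacle is the bookkeeping around consistency of the inverses: one must be sure that the inverse of $\J^t$ built on $L^{p_0}$ and the inverse built on $H^1$ really are restrictions of a single operator (so that interpolation applies to \emph{that} operator), and then that the interpolated operator is still a genuine two-sided inverse of $\J^t$ on $L^p$ rather than merely a bounded map. Both points are handled by working on the dense common subspace $L^{p_0}\cap H^1$ (or $L^2\cap(\text{smooth})$), where $\J^t$ and all its various inverses are literally the same maps, and then extending by density and boundedness; but it is worth stating explicitly. Everything else — the identification of the interpolation scale and the final one-line estimate — is routine.
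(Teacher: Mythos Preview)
Your approach is correct, but it differs from the paper's. The paper does not invoke the abstract identification $[H^1,L^{p_0}]_\theta=L^p$; instead it proves the $L^p$ bound on $(\J^t)^{-1}$ by hand via a Calder\'on--Zygmund decomposition. Given $g\in C^\infty_0$, the paper splits $g=b+a$ at each level $\alpha$ using $E_\alpha=\{Mg>\alpha\}$, with $b$ controlled in $L^{p_0}$ and $a=\sum a_i$ a sum of mean-zero pieces controlled in $H^1$, then uses only that $(\J^t)^{-1}$ is bounded $L^{p_0}\to L^{p_0}$ and $H^1\to L^1$ to run a Marcinkiewicz-style level-set computation. This is essentially a direct proof of the real-interpolation step, and it avoids citing the Hardy-space interpolation theorem on $\partial\V$.

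What each approach buys: yours is shorter and more conceptual, and you get bijectivity of $\J^t$ on $L^p$ in one stroke (the interpolated operator is automatically a two-sided inverse by density). The paper's argument is more self-contained---it needs no outside interpolation theorem for Hardy spaces on boundaries of Lipschitz domains---and it uses the slightly weaker endpoint hypothesis $(\J^t)^{-1}:H^1\to L^1$ rather than $H^1\to H^1$. Your consistency check (that the $L^{p_0}$ and $H^1$ inverses agree on the overlap) is necessary and you flag it correctly; the paper sidesteps this by working with a single operator applied to the two pieces of the decomposition.
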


Note that $\epsilon_0$ is independent of~$p$. Since $L^{p_0}\cap L^p$ is dense in $L^{p}$, we know that $\K^t,\J^t$ are onto; thus, this suffices to establish that they are invertible.

\begin{proof}
We prove this only for $\J^t$ (the proof for $\K_\pm^t$ is identical). It suffices to prove that $\|(\J^t)^{-1} g\|_{L^p}\leq \|g\|_{L^p}$ for all $g\in \C^\infty_0$.

Let $E_\alpha=\{x:M g(x)>\alpha\}$ where $M g(x)$ is the maximal function. (The ``intervals'' $I$ are taken to be arbitrary bounded, connected open subsets of~$\partial\V$.)

Then $E_\alpha$ is an open subset of $\partial\V$, which is one-dimensional; it is therefore a union of countably many disjoint open intervals $Q_i$, and $\sigma(E_\alpha)=\sum_i\sigma(Q_i)<C\|g\|_{L^p}^p/\alpha^p$. Note that $\dashint_{Q_i}|g|=\alpha$ for all~$i$.

Then define $a_i(x)$, $b(x)$ by
\[a_i(x)=\begin{cases}
0,&x\notin Q_i;\\
g(x)-\dashint_{Q_i} g,&x\in Q_i
\end{cases}
\quad
b(x)=\begin{cases}
g(x),&x\notin E_\alpha;\\
\dashint_{Q_i} g,&x\in Q_i
\end{cases}\]
Then $g=b+a$ where $a=\sum_i a_i$.

Now, $b\in L^{p_0}$ with $\|b\|_{L^{p_0}}^{p_0}\leq\|g\|_{L^p}^p \alpha^{p_0-p}$. Define $r=(1+p)/2$, so $1<r<p$. 

We know that $\int a_i=0$, $\supp a_i\subset Q_i$, and 
\[\int |a_i|^r=\int_{Q_i} a_i^r=\int_{Q_i} \left|g-{\textstyle\dashint_{Q_i}} g\right|^r
\leq 2^{r-1}\int_{Q_i} |g|^r+\left|{\textstyle\dashint_{Q_i}}\right|^r
\leq 2^r \int_{Q_i} |g|^r.\]

So
\[\|a_i\|_{H^1}\leq C_r |Q_i|^{1-1/r} \|a_i\|_{L^r}\leq 
C_p|Q_i|^{1/r'}\|g\|_{L^r(Q_i)}.\]
Therefore,
\begin{align*}
\|a\|_{H^1}&\leq C_p\sum_i|Q_i|^{1/r'}\|g\|_{L^r(Q_i)}
\leq C_p\left(\sum_i|Q_i|^{r'/r'}\right)^{1/r'}\left(\sum_i\|g\|_{L^r(Q_i)}^r\right)^{1/r}
\\&\leq
C_p|E_\alpha|^{1/r'} \|g\|_{L^r(E_\alpha)}
\leq
C_p \left(\frac{\|Mg\|_{L^r(E_\alpha)}^r}{\alpha^r}\right)^{1/r'}\|g\|_{L^r(E_\alpha)}
\leq C_p\frac{\|Mg\|_{L^r(E_\alpha)}^r}{\alpha^{r-1}}.
\end{align*}

So
\[\int_{\partial\V} |(\J^t)^{-1} g|^p\,d\sigma=
\int_0^\infty p\alpha^{p-1}
\sigma\{X\in\partial\V:|(\J^t)^{-1} g(X)|>\alpha\}
\,d\alpha.\]
But if $|(\J^t)^{-1} g|>\alpha,$ then either $|(\J^t)^{-1} a|>\alpha/2$ or $|(\J^t)^{-1} b|>\alpha/2$. So
\begin{align*}
\int_{\partial\V} |(\J^t)^{-1} g|^p\,d\sigma
&\leq
\int_0^\infty p 2^{p}\alpha^{p-1}
\left(\sigma\{|(\J^t)^{-1} a(X)|>\alpha\}+\sigma\{|(\J^t)^{-1} b(X)|>\alpha\}\right)
\,d\alpha
\\&\leq
p 2^{p}\int_0^\infty \alpha^{p-1}
\left(\frac{\|(\J^t)^{-1} a\|_{L^1}}{\alpha}
+\frac{\|(\J^t)^{-1} b\|_{L^{p_0}}^{p_0}}{\alpha^{p_0}}\right)
\,d\alpha.
\end{align*}
Since $(\J^t)^{-1}$ is bounded $H^1\mapsto L^1$ and $L^{p_0}\mapsto L^{p_0}$, this means that
\begin{align*}
\int_{\partial\V} |(\J^t)^{-1} g|^p\,d\sigma
&\leq
C_p\int_0^\infty \alpha^{p-1}\left(\frac{\|a\|_{H^1}}{\alpha}+\frac{\|b\|_{L^{p_0}}^{p_0}}{\alpha^{p_0}}\right)d\alpha
\\&\leq
C_p\int_0^\infty 
\alpha^{p-1}\left(\frac{\|Mg\|_{L^r(E_\alpha)}^r}{\alpha^{r}}
+\frac{\|g\|_{L^{p_0}(E_\alpha^C)}^{p_0}}{\alpha^{p_0}}+\sigma(E_\alpha)\right)d\alpha.
\end{align*}
Since $E_\alpha=\{X\in\partial\V:Mg(X)>\alpha\}$, so $\sigma(E_\alpha)\leq {\|Mg\|_{L^r(E_\alpha)}^r}/{\alpha^{r}}$, and since $r<p<p_0$, we can rewrite this as
\begin{align*}
\int_{\partial\V} |(\J^t)^{-1} g|^p\,d\sigma
&\leq
\int_0^\infty \frac{C_p}{\alpha^{r-p+1}}
\int_{M g(X)>\alpha} Mg(X)^r\,d\sigma(X)\,d\alpha
\\&\qquad
+\int_0^\infty \frac{C_p}{\alpha^{p_0-p+1}}
\int_{M g(X)\leq\alpha} |g(X)|^{p_0} \,d\sigma(X)\,d\alpha
\\&=
C_p\int_{\partial\V}\int_0^{M g(X)}
 \alpha^{p-r-1}\,d\alpha\,Mg(X)^r\,d\sigma(X)
\\&\qquad
+C_p\int_{\partial\V}\int_{M g(X)}^\infty 
\alpha^{p-p_0-1} \,d\alpha\, |g(X)|^{p_0} \,d\sigma(X)
\\&=
C_p\int_{\partial\V}
\frac{1}{r-p}M g(X)^{p-r}
Mg(X)^r\,d\sigma(X)
\\&\qquad
+C_p\int_{\partial\V}
\frac{1}{p_0-p}M g(X)^{p-p_0}
|g(X)|^{p_0} \,d\sigma(X)
\\&\leq 
C_p \|M g\|_{L^p}^p\leq C_p\|g\|_{L^p}^p
\end{align*}
as desired.
\end{proof}

\chapter[$BMO$ and square-function estimates]{Boundary data in \texorpdfstring{$BMO$}{BMO} and square-function estimates}
\label{chap:square}
\begin{thm} \label{thm:BMOcarleson}
Suppose that $g\in BMO(\partial\V)$ for some good Lipschitz domain $\V$. Then there exists a unique function $u$ with $\div A\nabla u=0$ in $\V$, $u=g$ on $\partial\V$, and such that
\[
\frac{1}{\sigma(B(X_0,R)\cap\partial\V)}\int_{\V\cap B(X_0,R)}|\nabla u(X)|^2 \dist(X,\partial\V)\,dX
\leq C  \|g\|_{BMO}^2.
\]

\end{thm}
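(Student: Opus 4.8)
\textbf{Proof proposal for Theorem~\ref{thm:BMOcarleson}.}

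The plan is to reduce this theorem to the layer potential machinery already in place, using $L^p$--$BMO$ duality together with the invertibility of $\K_\pm$ on $L^p$ spaces. First, since $g\in BMO(\partial\V)$ is the dual of $H^1(\partial\V)$, and since $\K_+$ is bounded and invertible on $L^{p_0'}(\partial\V)$ (for the large exponent $p_0'$ produced by interpolation in the previous chapter), its adjoint $\K_+^t$ is bounded and invertible on $L^{p_0}(\partial\V)$, and by Theorem~\ref{thm:H1patching} and the interpolation argument $\K_+^t$ is bounded and invertible on $H^1(\partial\V)$ as well. Hence the transpose $\K_+$ acts boundedly and invertibly on $BMO(\partial\V)$ by duality. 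I would then set $u=\D(\K_+^{-1}g)$, so that $\div A\nabla u=0$ in $\V$ and, by the construction of $\K_+$ in \dfnref{layers} and \lemref{Kwelldef}, the non-tangential boundary values of $u$ equal $g$ on $\partial\V$. The remaining task is the Carleson measure bound~(\ref{eqn:uCarleson}).

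The key step is therefore the square-function estimate: for $h=\K_+^{-1}g\in BMO(\partial\V)$, I must show
\[
\frac{1}{\sigma(B(X_0,R)\cap\partial\V)}\int_{B(X_0,R)\cap\V}|\nabla\D h(X)|^2\dist(X,\partial\V)\,dX\leq C\|h\|_{BMO}^2 .
\]
I would prove this first for smooth $A$ and special Lipschitz domains, where $\D h$ can be written in terms of the Calder\'on--Zygmund kernel $K$ of \dfnref{layers}, and then patch to arbitrary good Lipschitz domains exactly as in \thmref{patching}. The standard route is a ``good-$\lambda$'' or duality argument: it suffices to test the Carleson measure against $H^1$ atoms. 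Given an atom $a$ supported in $\Delta=B(X_0,R)\cap\partial\V$, one splits $h=(h-c)\chi_{2\Delta}+(h-c)\chi_{(2\Delta)^c}+c$ for a suitable constant $c=\dashint_\Delta h$; the local piece $(h-c)\chi_{2\Delta}$ lies in $L^2$ with $\|(h-c)\chi_{2\Delta}\|_{L^2}^2\le CR\|h\|_{BMO}^2$, and for it one uses the $L^2\to$ (square function) bound for layer potentials, which follows from the $L^2$-boundedness of $\T$ (\thmref{Tbounded}) together with the classical square-function characterization of $L^2$ for Calder\'on--Zygmund operators (the ``$TB$''-type estimate already invoked in \thmref{useful}). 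The far piece contributes a convergent geometric tail using the Calder\'on--Zygmund kernel bounds~(\ref{eqn:KisCZ}) and the logarithmic growth of $BMO$ averages, exactly as in the well-definedness computation for $\D f$ with $f\in BMO$ carried out in \autoref{chap:dfn}. The constant $c$ contributes nothing since $\D$ annihilates constants (up to a constant, which has zero gradient). Passing from smooth to rough $A$ is then done as in \autoref{chap:apriori} via the a priori estimates.

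The main obstacle I expect is the square-function (Littlewood--Paley) estimate for $\nabla\D h$ in $L^2$ itself, i.e.\ controlling
$\int_\V |\nabla\D h|^2\dist(\cdot,\partial\V)\,dX\le C\|h\|_{L^2}^2$. While this is classical for harmonic functions, here $\div A\nabla u=0$ with complex, merely bounded-measurable (in the rough case) coefficients, so one cannot use conformal mapping or explicit kernels; one must instead exploit that $\nabla\D h(X)$ is itself (a component of) a Calder\'on--Zygmund operator applied to $\partial_\tau h$ via the identity $\nabla\D f=-\int\nabla_X\tilde\Gamma_X^T\,\partial_\tau f\,d\sigma$ from \thmref{NDf}, together with the $t$-independence of $A$ (so that $\partial_t u$ is again a solution, giving interior Caccioppoli and Moser bounds, Lemmas~\ref{lem:PDE1}--\ref{lem:PDE3} and~(\ref{eqn:gradu})), and then appeal to the general square-function theory for such operators. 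Uniqueness of $u$ among solutions satisfying~(\ref{eqn:uCarleson}) is deferred to \autoref{chap:converse} as stated, but it can also be seen here: if $v$ has $v|_{\partial\V}=0$ and satisfies the Carleson bound, then the Carleson condition forces $N_*v\in BMO=0$ in the appropriate sense, hence $v\equiv0$.
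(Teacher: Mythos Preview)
Your overall architecture is exactly the paper's: invert $\K_+$ on $BMO$ by duality with $H^1$, set $u=\D(\K_+^{-1}g)$, and reduce the Carleson bound for $\nabla\D h$ with $h\in BMO$ to an $L^2$ square-function estimate via the local/far/constant decomposition you describe (this is precisely \lemref{squaretoBMO} in the paper, which uses $|J(X,Y)|\le C|X-Y|^{-2}$ and $TI\equiv 0$, i.e.\ $\nabla\D 1=0$). So the reduction step is correct and matches the paper.

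The gap is in the $L^2$ square-function estimate itself, which you correctly flag as the main obstacle but do not actually prove. Two of your proposed routes do not work as stated. First, the David--Journ\'e--Semmes $T(b)$ theorem invoked in \thmref{useful} gives $L^2\to L^2$ boundedness of a singular integral operator; it does \emph{not} give a square-function (Carleson) bound, and $L^2$-boundedness of $\T$ alone is insufficient here. Second, the identity $\nabla\D f=-\int\nabla_X\tilde\Gamma_X^T\,\partial_\tau f\,d\sigma$ controls the square function of $\nabla\D h$ by $\|\partial_\tau h\|_{L^2}$, not $\|h\|_{L^2}$; that is the wrong quantity for the BMO reduction, since after localizing a BMO function you only control $\|(h-c)\chi_{2\Delta}\|_{L^2}$, not its tangential derivative.

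What the paper actually does is apply a \emph{square-function} local $Tb$ theorem (Hofmann's \thmref{hofmann}, generalized to matrix-valued kernels in \thmref{CZmatrixB} and then to Lipschitz domains in \thmref{CZmatrixLip}--\thmref{CZmatrixBdd}). The crucial ingredient is producing, on each special Lipschitz domain, an accretive matrix $B$ (essentially the $B_1$ of \autoref{sec:B1accretive}) for which the operator with kernel $J(X,Y)=B_6^T(X)\,\nabla_X\nabla_Y\Gamma_X^T(Y)\,B_6(Y)^t$ satisfies $TB(X)\equiv 0$ \emph{pointwise}; this is verified in \autoref{sec:BMObacktoD} by recognizing the columns of $TB$ as tangential derivatives of $\nabla_X\Gamma_X^T$ and $\nabla_X\tilde\Gamma_X^T$, which vanish after integration over $\partial\Omega$. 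That exact cancellation is the missing idea in your proposal.
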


This $u$ will turn out to be a layer potential. Recall that $\D f|_{\partial\V}=\K f$, and that $\K^t$ is bounded and invertible on $H^1(\partial\V)$ (\thmref{invertibility}). Therefore by duality, $\K$ is bounded with bounded inverse on $BMO(\partial\V)$.

So if $g\in BMO$, then $g=\D f$ for some $f$ with $\|g\|_{BMO(\partial\V)}\approx \|f\|_{BMO(\partial\V)}$. Thus, to show that there exists a $u$ with $u|_{\partial\V}=g$ and
\[\frac{1}{\sigma(B(X_0,R)\cap\partial\V)}\int_{\V\cap B(X_0,R)}|\nabla u(X)|^2 \dist(X,\partial\V)\,dX
\leq C  \|g\|_{BMO}^2
\]
we need show only that
\begin{equation}\label{eqn:squareBMOpot}
\frac{1}{\sigma(B(X_0,R)\cap\partial\V)}\int_{\V\cap B(X_0,R)}|\nabla \D f(X)|^2 \dist(X,\partial\V)\,dX
\leq C  \|f\|_{BMO}^2
\end{equation}
for all~$f$.

\section{\texorpdfstring{$L^2$ estimates imply $BMO$ estimates}{L2 estimates imply BMO estimates}}
\label{sec:L2toBMO}

In this section, we show that if 
\begin{equation}\label{eqn:squareL2pot}
\int_{\V} |\nabla \D f(X)|^2 \dist(X,\partial\V)\,dX \leq C\|f\|_{L^2}^2
\end{equation}
holds for all $f\in L^2$, then (\ref{eqn:squareBMOpot}) holds for all $f\in BMO$. 

For most of this chapter, it will be convenient to work with more general operators. We have that
\[\nabla \D f(X) = \nabla_X\int_{\partial\V}\left(\nu\cdot A^T \nabla\Gamma_X^T(Y)\right) f(Y)\,d\sigma(Y).\]
So if $TF(X)=\int_{\partial\V} J(X,Y)F(Y)\,d\sigma(Y)$ where \[J(X,Y)=\Matrix{\nabla_X\left(\nu\cdot A^T \nabla\Gamma_X^T(Y)\right)
&\nabla_X\left(\nu\cdot A^T \nabla\Gamma_X^T(Y)\right)},\]
then results for $T$ will imply results for $\nabla \D$.

We have some useful conditions on this $J$. By (\ref{eqn:gradfundsoln}), $|\partial_{y_i}\Gamma^T_X(Y)|\leq\frac{C}{|X-Y|}$; since $\Gamma^T_X(Y)=\Gamma_Y(X)$, we have that $|\partial_{x_i}\Gamma^T_X(Y)|\leq\frac{C}{|X-Y|}$.

But $\partial_{x_i}\Gamma^T_X(Y)$ is a solution in $Y$ to $\div A^T\nabla (\partial_{x_i}\Gamma^T_X)=0$; hence, by (\ref{eqn:gradu}),
\[|\partial_{x_i}\partial_{y_j}\Gamma^T_X(Y)|\leq \frac{C}{|X-Y|^2}.\]
Thus, $|J(X,Y)|\leq \frac{C}{|X-Y|^2}$.

Also, $\D 1(X)$ is constant on each connected component of $\R^2\setminus\partial\V$, so $\nabla \D 1 (X)\equiv 0$; thus, $TI(X)\equiv 0$.

So we prove a lemma for $T$ and $J$ having these nice properties:
\begin{lem}\label{lem:squaretoBMO}
Suppose that $\V\subset\R^{n+1}$ is a good Lipschitz domain, that $TF(X)=\int_{\partial\V} J(X,Y)F(Y)\,d\sigma(Y)$ for 
some (matrix-valued function)~$J$, and that
\[|J(X,Y)|\leq C\frac{\dist(X,\partial\V)^{\alpha-1}}{|X-Y|^{n+\alpha}},\quad
TI(X)\equiv 0,\quad
\int_{\V} |TF(X)|^2 \dist(X,\partial\V)\,dX \leq C\|F\|_{L^2}^2\]
for some $\alpha>0$, for all $X\in\V$, $Y\in\partial\V$, and all $F$ in $L^2(\partial\V\mapsto \C^{2\times 2})$.

Then if $B\in BMO(\partial\V\mapsto \C^{2\times2})$, then $TB(X)$ converges for each $X\in\partial\V$ and
\[\frac{1}{\sigma(B(X_0,R)\cap\partial\V)}\int_{B(X_0,R)\cap\V} |T B(X)|^2 \dist(X,\partial\V)\,dX\leq C\|B\|_{BMO}^2\]
for any $R>0$, $X_0\in\partial\V$.
\end{lem}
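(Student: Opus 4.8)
\textbf{Proof plan for Lemma~\ref{lem:squaretoBMO}.} The standard strategy for passing from an $L^2$ square-function estimate to a Carleson-measure estimate with $BMO$ data is a Whitney/localization argument: fix a surface ball $\Delta_0 = B(X_0,R)\cap\partial\V$ and split $B = (B - c_0) = B_1 + B_2$, where $c_0 = \dashint_{\Delta_0} B$ is the average of $B$ over $\Delta_0$, $B_1 = (B - c_0)\mathbf{1}_{2\Delta_0}$ is the ``local'' part, and $B_2 = (B - c_0)\mathbf{1}_{(2\Delta_0)^C}$ is the ``far'' part. Since $TI \equiv 0$, subtracting the constant $c_0$ is harmless: $T B = T(B - c_0) = TB_1 + TB_2$ on all of $\V$. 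For the local part, $B_1$ is supported on $2\Delta_0$ and, by the standard John--Nirenberg estimate on the space of homogeneous type $\partial\V$, has $\|B_1\|_{L^2(\partial\V)}^2 \leq C\,\sigma(\Delta_0)\,\|B\|_{BMO}^2$. Applying the hypothesized $L^2$ estimate $\int_\V |TF|^2\dist(X,\partial\V)\,dX \leq C\|F\|_{L^2}^2$ directly to $F = B_1$ gives
\[
\int_{B(X_0,R)\cap\V} |TB_1(X)|^2 \dist(X,\partial\V)\,dX
\leq
\int_\V |TB_1(X)|^2 \dist(X,\partial\V)\,dX
\leq C\|B_1\|_{L^2}^2 \leq C\,\sigma(\Delta_0)\,\|B\|_{BMO}^2,
\]
which is exactly the desired bound for the local piece.

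\textbf{The far part.} For $X \in B(X_0,R)\cap\V$ we estimate $|TB_2(X)|$ pointwise using the kernel decay $|J(X,Y)| \leq C\,\dist(X,\partial\V)^{\alpha-1}/|X-Y|^{n+\alpha}$. Writing $\delta(X) = \dist(X,\partial\V)$ and decomposing the region $\{Y \in \partial\V: |Y - X_0| > 2R\}$ into dyadic annuli $2^kR < |Y - X_0| \leq 2^{k+1}R$, $k \geq 1$, on which $|X - Y| \approx 2^k R$ (since $|X - X_0| \leq R$), and using $\dashint_{\Delta_0\cap B(X_0,2^{k+1}R)} |B - c_0| \leq C(k+1)\|B\|_{BMO}$ together with the Ahlfors--David bound $\sigma(B(X_0,2^{k+1}R)\cap\partial\V) \leq C 2^{k+1}R$, one obtains
\[
|TB_2(X)| \leq C\,\delta(X)^{\alpha-1}\|B\|_{BMO}\sum_{k=1}^\infty \frac{(k+1)\,2^{k}R}{(2^kR)^{n+\alpha}}\cdot(2^kR)^{n-1}
= C\,\|B\|_{BMO}\,\delta(X)^{\alpha-1}\sum_{k=1}^\infty \frac{k+1}{(2^kR)^{\alpha}}
\leq \frac{C\,\|B\|_{BMO}}{R^\alpha}\,\delta(X)^{\alpha-1}
\]
(here I am writing $n=1$ for $\partial\V\subset\R^2$, so $\sigma$-measure of an annulus of radius $\rho$ is $\lesssim\rho$, and the exponents are $\delta^{\alpha-1}/\rho^{1+\alpha}$ against $\rho\,d\sigma$; the sum over $k$ converges because $\alpha > 0$). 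This same computation also shows the series defining $TB(X)$ converges at every $X$ (combine with the local part, which converges by the kernel bound near $\Delta_0$ after using the vanishing mean). Then
\[
\int_{B(X_0,R)\cap\V} |TB_2(X)|^2\,\delta(X)\,dX
\leq \frac{C\,\|B\|_{BMO}^2}{R^{2\alpha}}\int_{B(X_0,R)\cap\V} \delta(X)^{2\alpha-1}\,dX.
\]
Slicing $B(X_0,R)\cap\V$ by level sets of $\delta$ (using that $\V$ is Lipschitz, so $\sigma(\{X\in B(X_0,R)\cap\V : \delta(X)\approx s\}) \leq C R s^{n-1}$ for $0 < s < CR$, i.e.\ $\leq CR$ when $n=1$) bounds the last integral by $C\int_0^{CR} s^{2\alpha-1}\,R\,ds \leq C R^{1+2\alpha}$, so the far term is $\leq C R\,\|B\|_{BMO}^2 \leq C\,\sigma(\Delta_0)\,\|B\|_{BMO}^2$. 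Adding the local and far contributions and dividing by $\sigma(\Delta_0) = \sigma(B(X_0,R)\cap\partial\V)$ gives the claim.

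\textbf{Main obstacle.} The only genuinely delicate point is the far-part estimate: one must be careful that the kernel bound $|J(X,Y)|\leq C\delta(X)^{\alpha-1}|X-Y|^{-n-\alpha}$ interacts correctly with the logarithmically growing $BMO$ averages $\dashint|B - c_0| \lesssim (k+1)\|B\|_{BMO}$ over dyadic dilates, and that the resulting series $\sum_k (k+1) 2^{-k\alpha}$ converges — which it does precisely because the kernel has the extra decay $\alpha > 0$ coming from the second derivative bound $|\partial_{x_i}\partial_{y_j}\Gamma_X^T(Y)| \leq C/|X-Y|^2$ established just before the lemma (here $\alpha = 1$, $n = 1$). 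A secondary point is that the John--Nirenberg inequality and the Ahlfors--David / Lipschitz slicing facts are being used on the metric measure space $\partial\V$ rather than on $\R^n$; these are standard for good Lipschitz domains and are used elsewhere in the paper, so I would invoke them without reproving. Everything else is routine dyadic bookkeeping.
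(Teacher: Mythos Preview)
Your proposal is correct and follows essentially the same argument as the paper: subtract the mean using $TI\equiv 0$, handle the local piece via John--Nirenberg plus the $L^2$ hypothesis, and control the far piece pointwise by a dyadic annular decomposition using the kernel bound together with the logarithmic growth $\dashint_{2^k\Delta}|B-B_\Delta|\lesssim (k+1)\|B\|_{BMO}$, summing $\sum_k (k+1)2^{-k\alpha}<\infty$. The only cosmetic differences are that the paper takes the local region to be $\Delta$ itself (with annuli starting at $k=0$) rather than $2\Delta_0$, and it notes explicitly the case where $B(X_0,R)$ swallows all of $\partial\V$ so that $\sigma(2^k\Delta)\leq C\sigma(\Delta)$ for all $k$.
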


Note that if $X\in\V$, then $Y\mapsto J(X,Y)\in L^\infty(\partial\V)$. Furthermore, 
\begin{align*}\int_{\partial\V} |J(X,Y)|\,d\sigma(Y)
&\leq \dist(X,\partial\V)^{\alpha-1}\int_{\partial\V} \frac{1}{|X-Y|^{n+\alpha}}\,d\sigma(Y)
\leq C\dist(X,\partial\V)^{\alpha-1}.
\end{align*}
So we know $TF(X)$ converges for all $X\in\V$ and all $F\in L^p$, $1\leq p\leq\infty$.

\begin{proof}
Recall from basic $BMO$ theory that, if $B\in BMO$ and $\Delta=B(X_0,R)\cap\partial\V$ is any surface ball, then 
\begin{align*}
\dashint_{2^{k+1}\Delta} |B-B_{\Delta}|\,d\sigma
&\leq C (k+1)\|B\|_{BMO}
\end{align*}
where $B_\Delta=\dashint_\Delta B$, and $2^k\Delta(X_0,r)=\Delta(X_0,2^kr)$.

Furthermore, by the John-Nirenberg inequality,
\[\int_{\Delta} |B-B_{\Delta}|^2\leq C\sigma(\Delta) \|B\|_{BMO}^2.\]

Fix some surface ball $\Delta=B(X_0,R)\cap\partial\V$. Now, $TB=T(B-B_{\Delta})$, and so without loss of generality $B_{\Delta}=0$. So
\[ T B=T(B\chi_{\Delta})+\sum_{k=0}^\infty T(B\chi_{2^{k+1}\Delta\setminus 2^k \Delta}).
\]
But
\[\int_{\V} | T (B\chi_{\Delta})(X)|^2 \dist(X,\partial\V)\,dX \leq C\|B\chi_\Delta\|_{L^2(\partial\V)}^2\leq C \sigma(\Delta) \|B\|_{BMO}^2.
\]

If $X\in B(X_0,R)$, then
\begin{align*}
| T(B\chi_{2^{k+1}\Delta\setminus2^k \Delta})(X)| 
&= \left|\int_{2^{k+1}\Delta \setminus 2^k \Delta} J(X,Y) B(Y)\,d\sigma(Y)\right|\\
&\leq 
\int_{2^{k+1}\Delta \setminus 2^k \Delta} \frac{\dist(X,\partial\V)^{\alpha-1}}{|X-Y|^{n+\alpha}} |B(Y)|\,dy\\
&\leq 
\frac{\dist(X,\partial\V)^{\alpha-1}}
{|2^k R|^{n+\alpha}}
\int_{2^{k+1}\Delta} |B(Y)|\,d\sigma(Y)
\\&\leq 
\frac{\dist(X,\partial\V)^{\alpha-1}}
{|2^k R|^{n+\alpha}}
C (k+1)\sigma(2^{k+1}\Delta) \|B\|_{BMO}
.
\end{align*}
If $\sigma(\Delta)<c(n)R^n$, then $B(X_0,R)$ contains \ifmulticonnect an entire boundary component of \fi $\partial\V$; so $\sigma(2^k\Delta)\leq \sigma(\partial\V)\leq C\sigma(\Delta)$ for all $k$. Otherwise, $\sigma(2^{k+1}\Delta)\leq C (2^k R)^n\leq C 2^{kn}\sigma(\Delta)$. In either case,
\[\sigma(2^{k+1}\Delta)\leq C 2^{kn}\sigma(\Delta)\]
and so
\begin{align*} 
\left|\sum_{k=0}^\infty
 T(B(\chi_{2^{k+1}\Delta\setminus2^k \Delta}))(X)
\right|
\leq 
C \|B\|_{BMO}
\frac{\dist(X,\partial\V)^{\alpha-1}\sigma(\Delta)}{R^{n+\alpha}}
\sum_{k=0}^\infty
\frac{(k+1)}{2^{k\alpha}}
.
\end{align*}

Therefore, if $\Q=B(X_0,R)\cap\V$ and $\beta=\sigma(\Delta) \|B\|_{BMO}^2$, we have that
\begin{align*}
\int_{\Q} \left|
 T B(X)\right|^2 \dist(X,\partial\V)\,dX
&\leq 
C\beta
+C \beta\sigma(\Delta)
\int_{\Q} 
\frac{\dist(X,\partial\V)^{2\alpha-1}}{R^{2n+2\alpha}}
\,dX
\\&\leq 
C\beta
+C \beta\sigma(\Delta)
\int_{B(X_0,R)} 
\frac{|X|^{2\alpha-1}}{R^{2n+2\alpha}}
\,dX
\leq 
C\beta
+C \beta
\frac{\sigma(\Delta)}{R^{n}}
\\&\leq C\beta = C\sigma(\Delta) \|B\|_{BMO}^2.
\end{align*}
This concludes the proof.
\end{proof}

Therefore, to prove \thmref{BMOcarleson}, we need only  establish~(\ref{eqn:squareL2pot}).

\section{Layer potentials on the half-plane} 

In this section, we establish a sufficient condition for \lemref{squaretoBMO} to hold if $\V=\R^{2}_+$. In the next two sections, we will work with general operators $T$ and kernels $J$; in \autoref{sec:BMObacktoD}, we will return to the Dirichlet problem and~$\nabla\D$.

From \cite[Theorem~1.1]{hofmann}, we have
\begin{thm}\label{thm:hofmann}
Let $\theta_t f(x) = \int_{\R^n} \psi_t(x,y) f(y)\,dy$, where
\begin{align}
|\psi_t(x,y)|&\leq C\frac{t^\alpha}{(t+|x-y|)^{n+\alpha}}\\
|\psi_t(x,y)-\psi_t(x+h,y)|&\leq C\frac{|h|^\alpha}{(t+|x-y|)^{n+\alpha}}\\
|\psi_t(x,y)-\psi_t(x,y+h)|&\leq C\frac{|h|^\alpha}{(t+|x-y|)^{n+\alpha}}
\end{align}
whenever $|h|\leq t/2$. 

Suppose that there exists a function $b:\R^n\mapsto\C$ such that for some constants $\lambda$,~$\Lambda$, $C_0>0$,
\begin{equation}\label{eqn:CZscalar}
\lambda\leq \Re b(x)\leq|b(x)|\leq\Lambda\quad\text{and}\quad
\dashint_{Q}\int_0^\infty |\theta_t b(x)|^2 \, \frac{dt}{t} dx\leq C_0
\end{equation}
for every $x\in\R^n$ and every dyadic cube $Q\subset\R^n$.

Then for all $f\in L^2(\R^n)$ we have that
\begin{equation}
\int_{\R^{n+1}_+}|\theta_t f(x)|^2\frac{dx\,dt}{t}\leq C(\lambda,\Lambda,C_0)\|f\|_{L^2(\R^n)}^2.
\end{equation}
\end{thm}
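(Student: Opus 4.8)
\textbf{Proof proposal for Theorem~\ref{thm:hofmann}.}

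The statement to be proven is precisely Theorem~1.1 of \cite{hofmann}, the local $Tb$-theorem for square functions, so the ``proof'' here is really a citation with a brief indication of the method. Nonetheless, let me sketch the argument one would give if forced to reproduce it. The plan is to reduce the square-function bound to a Carleson-measure estimate via a stopping-time decomposition adapted to the accretive function~$b$, and then invoke the Carleson embedding theorem. First I would fix $f\in L^2(\R^n)$ and a dyadic cube; the goal is to control $\iint |\theta_t f|^2\,dx\,dt/t$. The idea is to write, on each Whitney region associated to a dyadic cube $Q$, the function $f$ as a correction built out of the average $f_Q$ and a piece involving $b$; since $b$ is accretive, $\dashint_Q b$ is bounded below, so one can ``divide by $b$'' in an averaged sense. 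Concretely, on the Whitney box $Q\times(\ell(Q)/2,\ell(Q))$ one compares $\theta_t f$ with $(\theta_t b)\,(f_Q/(\dashint_Q b))$ plus an error, using the kernel bounds and H\"older continuity of $\psi_t$ to estimate the error term by a sum over neighbors controlled by the standard square-function machinery (a ``good-$\lambda$'' or direct telescoping argument).

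The key steps, in order, would be: (i) decompose $\R^{n+1}_+$ into Whitney boxes over dyadic cubes, and on each box replace $f$ by its local average so that the main term becomes $\theta_t(b)$ times a bounded multiplier; (ii) use the hypothesis \eqref{eqn:CZscalar}, namely that $\dashint_Q \int_0^\infty |\theta_t b|^2\,dt/t\,dx\le C_0$ uniformly in $Q$, to deduce that $|\theta_t b(x)|^2\,dx\,dt/t$ is a Carleson measure with norm $\lesssim C_0$; (iii) apply the Carleson embedding theorem to turn $\iint |\theta_t b(x)|^2 |g(x,t)|^2\,dx\,dt/t$ into $\lesssim C_0\,\|N_*g\|_{L^2}^2$ for suitable slowly-varying $g$ (here $g$ encodes the dyadic averages of $f$, whose nontangential maximal function is controlled by $Mf$ and hence by $\|f\|_{L^2}$); (iv) estimate the error terms from step~(i) by the kernel H\"older bounds, getting a bound by the ``standard'' square function of $f$, which is $L^2$-bounded by Littlewood--Paley theory since $\int \psi_t(x,y)\,dy$ need not vanish but $\theta_t$ still has enough cancellation after subtracting $\theta_t(1)$ times a constant. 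One must be slightly careful that $\theta_t 1$ is not assumed to vanish; this is handled exactly because the main term is arranged in terms of $\theta_t b$ rather than $\theta_t f$ directly, and the accretivity of $b$ is what makes the substitution $f\rightsquigarrow (f_Q/b_Q)\,b$ legitimate and invertible.

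The main obstacle is step~(i)/(iii): organizing the passage from $\theta_t f$ to $\theta_t b$ times a bounded, slowly-varying coefficient, and controlling the resulting error so that it is absorbed rather than merely bounded. This is the technical heart of any local $Tb$ argument, and it requires the stopping-time construction (selecting, within each dyadic cube, the maximal subcubes where the running average of $\Re b$ drops below a threshold, which by accretivity have small total measure) together with a careful telescoping of $\theta_t f$ across scales. All of this is carried out in detail in \cite[\S2--\S3]{hofmann}; since the hypotheses of the theorem as stated here match those of \cite[Theorem~1.1]{hofmann} verbatim (the kernel size and regularity bounds, the pointwise accretivity $\lambda\le\Re b\le|b|\le\Lambda$, and the Carleson-type testing condition on $\theta_t b$), the conclusion follows immediately by invoking that result, and I would simply cite it rather than reproduce the proof.
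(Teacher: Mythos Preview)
Your proposal is correct and matches the paper's approach: the paper does not prove this theorem at all but simply cites it as \cite[Theorem~1.1]{hofmann}, noting only that the cited version is more general (allowing a system of test functions $b_Q$ indexed by dyadic cubes rather than a single~$b$). Your sketch of the stopping-time/Carleson-embedding argument goes well beyond what the paper provides, but since the paper treats this as a black box from the literature, a bare citation is all that is required here.
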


(In \cite{hofmann}, this is presented in more generality; the test function $b$ is replaced by a system of test functions $b_Q$ indexed by dyadic cubes~$Q$. We do not need that generality here.)


%
%

We will need this theorem to hold if our kernel and test function take  values in $\C^{2\times 2}$ rather than in $\C$. It is trivial to generalize to the following simple matrix-valued version:
\begin{thm}\label{thm:CZmatrixI}
Let $T_t F(x) = \int_{\R^n} J_t(x,y) F(y)\,dy$, where $J_t(x,y)$, $F(y)$, and $T_t F(x)$ are all square matrices. Assume that $J$ satisfies
\begin{align}
|J_t(x,y)|&\leq C\frac{t^\alpha}{t(t+|x-y|)^{n+\alpha}}\label{eqn:Jsize}\\
|J_t(x,y)-J_t(x+h,y)|&\leq C\frac{|h|^\alpha}{t(t+|x-y|)^{n+\alpha}}\label{eqn:Jx}\\
|J_t(x,y)-J_t(x,y+h)|&\leq C\frac{|h|^\alpha}{t(t+|x-y|)^{n+\alpha}}\label{eqn:Jy}
\end{align}
whenever $|h|\leq t/2$.

Suppose that there exists a function $b:\R^n\mapsto\C$ such that for some constants $\lambda$,~$\Lambda$, $C_0>0$,
\begin{equation}\label{eqn:CZmatrixI}
\lambda\leq \Re b(x)\leq|b(x)|\leq\Lambda\quad\text{and}\quad
\dashint_{Q}\int_0^\infty |T_t (Ib)(x)|^2 t\, {dt}\, dx\leq C_0
\end{equation}
for every $x\in\R^n$ and every dyadic cube $Q\subset\R^n$.

Then
\begin{equation}\label{eqn:squareL2}
\iint_{\R^{n+1}_+}|T_t F(x)|^2t\,dx\,dt\leq C(\lambda,\Lambda,C_0)\|F\|_{L^2(\R^n)}^2\end{equation}
for any $L^2$ matrix-valued function~$F$.
\end{thm}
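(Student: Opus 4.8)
The plan is to reduce the matrix-valued statement in \thmref{CZmatrixI} to the scalar statement in \thmref{hofmann} by decomposing everything into scalar components. First I would observe that any square-matrix-valued kernel $J_t(x,y)$ with entries $(J_t)_{ij}(x,y)$ gives rise to scalar kernels $\psi_t^{ij}(x,y) = t(J_t)_{ij}(x,y)$, and the size/smoothness bounds (\ref{eqn:Jsize})--(\ref{eqn:Jy}) on $J_t$ translate directly (entrywise) into exactly the hypotheses of \thmref{hofmann} on each $\psi_t^{ij}$, since the norm of a matrix controls and is controlled by the norms of its entries (up to the dimension-dependent constant, here a fixed constant since we work with $2\times 2$ matrices).

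The subtlety is the testing/accretivity condition (\ref{eqn:CZmatrixI}): it is stated in terms of $T_t(Ib)$ for a \emph{scalar} function $b$, whereas \thmref{hofmann} wants $\theta_t b$ for each scalar kernel $\theta_t = \psi_t^{ij}$. The point is that $Ib$ is the diagonal matrix $b I$, so $T_t(Ib)(x) = \int J_t(x,y) b(y)\,dy$ has $(i,j)$ entry $\int (J_t)_{ij}(x,y) b(y)\,dy = \frac{1}{t}\theta_t^{ij} b(x)$. Hence $|T_t(Ib)(x)|^2 \approx \frac{1}{t^2}\sum_{ij}|\theta_t^{ij} b(x)|^2$, and the Carleson condition $\dashint_Q \int_0^\infty |T_t(Ib)(x)|^2 t\,dt\,dx \le C_0$ is equivalent (up to the fixed constant) to $\dashint_Q \int_0^\infty |\theta_t^{ij} b(x)|^2 \frac{dt}{t}\,dx \le C_0'$ for each pair $(i,j)$. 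Combined with $\lambda \le \Re b \le |b| \le \Lambda$, this is precisely (\ref{eqn:CZscalar}) for each scalar kernel $\theta_t^{ij}$.

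So the steps are: (1) set $\theta_t^{ij} := t (J_t)_{ij}$ and verify the three scalar kernel bounds from (\ref{eqn:Jsize})--(\ref{eqn:Jy}); (2) rewrite the matrix accretivity hypothesis (\ref{eqn:CZmatrixI}) as the scalar accretivity hypothesis (\ref{eqn:CZscalar}) for each $\theta_t^{ij}$, using $T_t(Ib) = \frac{1}{t}(\theta_t^{ij} b)_{ij}$; (3) apply \thmref{hofmann} once for each of the (finitely many) pairs $(i,j)$ to conclude $\int_{\R^{n+1}_+} |\theta_t^{ij} f^{(j)}(x)|^2 \frac{dx\,dt}{t} \le C \|f^{(j)}\|_{L^2}^2$ for scalar functions $f^{(j)}$; (4) for a matrix-valued $F$ with columns $F_{\cdot k}$, expand $(T_t F)_{ik}(x) = \sum_j \int (J_t)_{ij}(x,y) F_{jk}(y)\,dy = \frac{1}{t}\sum_j \theta_t^{ij} F_{jk}(x)$, so $|T_t F(x)|^2 \le \frac{C}{t^2} \sum_{i,k}|\sum_j \theta_t^{ij}F_{jk}(x)|^2$, and sum the estimates over all finitely many indices to get $\iint_{\R^{n+1}_+} |T_t F(x)|^2 t\,dx\,dt \le C\|F\|_{L^2}^2$, which is (\ref{eqn:squareL2}).

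I do not expect any real obstacle here: the whole content is the bookkeeping that "matrix-valued with a diagonal test function" is the same as "finitely many scalar problems with a common scalar test function," and the scalar problem is exactly \thmref{hofmann}. The only thing to be slightly careful about is keeping track of the factor $t$ versus $1/t$ in the normalization (the paper writes $|T_t(Ib)|^2 t\,dt$ rather than $|\theta_t b|^2\,dt/t$), but this is absorbed precisely by the factor $t$ relating $\theta_t$ and $J_t$, so the two Carleson conditions match without loss.
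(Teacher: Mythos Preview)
Your proposal is correct and is precisely the argument the paper has in mind: the paper does not spell out a proof but introduces the theorem with ``It is trivial to generalize to the following simple matrix-valued version,'' and your entrywise reduction to \thmref{hofmann} via $\theta_t^{ij}=t(J_t)_{ij}$ is exactly that trivial generalization. The bookkeeping with the $t$ versus $1/t$ normalization and the observation that $T_t(Ib)$ decouples into the scalar quantities $\theta_t^{ij}b$ are both handled correctly.
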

Note that $t\,dx\,dt=\dist(X,\partial\R^2_+)\,dX$; this is why we use it instead of $\frac{dx\,dt}{t}$.

So if $T(Ib)\in BMO$ for some good scalar function~$b$, and a few other conditions hold, then (\ref{eqn:squareL2}) holds. The problem is to generalize, so that we need only show $T(B)\in BMO$ for $B$ in some larger class of test matrices.

Fortunately, \cite{hofmann} proves \thmref{hofmann} from a similar theorem with the identity function replacing the $b_Q$s; we may prove a useful matrix-valued theorem from \thmref{CZmatrixI} using the same techniques.

The conditions on $B$ analagous to (\ref{eqn:CZmatrixI}) will turn out to be the same as the conditions on $B_1$ required by \thmref{useful}, that is, there exist constants $C_0$, $C_1>0$ and a smooth, compactly supported function $v$ with $\int v=1$ such that, if we define $v_t(x)=\frac{1}{t^n} v\left(\frac{x}{t}\right)$, then
\begin{equation}\label{eqn:matrixintinverse}
\sup_{x\in\R^n}|B(x)|\leq C_0,\quad
\sup_{x\in\R^n,\>t\in\R^+}\left|\left(v_t\star B(x)\right)^{-1}\right|\leq C_1.
\end{equation}

\begin{thm}\label{thm:CZmatrixB} Suppose that $B:\R^n\mapsto \C^{2\times 2}$ is a bounded matrix-valued function which satisfies~(\ref{eqn:matrixintinverse}).

Let $T_t F(x) = \int_{\R^n} J_t(x,y) F(y)\,dy$, where $J_t(x,y)$, $F(y)$, and $T_t F(x)$ are all square matrices, and $J$ satisfies (\ref{eqn:Jsize}--\ref{eqn:Jy}).

Suppose that for each dyadic cube $Q$,
\begin{equation}
\int_Q \int_0^{l(Q)} |T_t B(x)|^2t\,dt\,dx
\leq C_2|Q|.\end{equation}
\label{eqn:CZmatrixB}
Then
\[\iint_{\R^{n+1}_+}|T_t F(x)|^2t\,dx\,dt\leq C(C_0,C_1,C_2)\|F\|_{L^2(\R^n)}^2.\]
\end{thm}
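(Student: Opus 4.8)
The idea is to reduce Theorem~\ref{thm:CZmatrixB} to the already-available Theorem~\ref{thm:CZmatrixI} by a ``sectorial decomposition'' / good-$\lambda$ type argument exactly as in \cite{hofmann}, replacing the single scalar test function $b$ there with the matrix test function $B$ here. The point is that the hypothesis~(\ref{eqn:matrixintinverse}) on $B$ is precisely what makes the averaged matrices $v_t\star B(x)$ uniformly invertible, so one can ``solve for the identity'' at every scale and location. First I would recall the quasi-orthogonality machinery: introduce a smooth compactly supported approximate identity $P_t$ built from the $v$ in~(\ref{eqn:matrixintinverse}), with $Q_t = t\,\partial_t P_t$, so that $\int_0^\infty Q_t^2\,\frac{dt}{t} = I$ in the strong $L^2$ sense (Calder\'on reproducing formula). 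The operator $T_t$ is a ``$t$-approximation'' in the sense of Theorem~\ref{thm:CZmatrixI}: its kernel $J_t$ satisfies the size and H\"older bounds~(\ref{eqn:Jsize}--\ref{eqn:Jy}). The standard Littlewood--Paley/Schur estimate then shows that $\|T_t P_s F\|_{L^2}$ decays like $\min(s/t,t/s)^{\delta}$ off the diagonal $s\approx t$, for some $\delta>0$ depending on $\alpha$ and the kernel constants; this is the quasi-orthogonality that reduces matters to the ``diagonal'' piece.

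The key algebraic step is the following: write $F(x) = (v_t\star B(x))^{-1}\,(v_t\star B(x))\,F(x)$ and hence, at each scale $t$,
\[
T_t F(x) = T_t\!\big( B(\cdot)\,M_t(x)\big)(x) + \big(T_t F(x) - T_t(B(\cdot)M_t(x))(x)\big),
\]
where $M_t(x) = (v_t\star B(x))^{-1}(v_t\star B(x))F(x)$ is, morally, a ``frozen'' version of $(v_t\star B)^{-1}(v_t\star B)F \approx F$. More precisely, following \cite{hofmann}, one decomposes $T_t F = R_t F + (T_t B)\,N_t F$, where $N_t F(x) := (v_t\star B(x))^{-1} (v_t\star F)(x)$ is a bounded ``averaging'' operator (bounded on $L^2$ uniformly in $t$ by~(\ref{eqn:matrixintinverse}), since $v_t\star$ is and the inverse is bounded), and $R_t F := T_t F - (T_t B) N_t F$ is an error term whose kernel one checks still satisfies~(\ref{eqn:Jsize}--\ref{eqn:Jy}) \emph{and} annihilates constants: $R_t(I) = T_t(I) - (T_t B)(v_t\star B)^{-1}(v_t\star I) = T_t I - T_t B\cdot(v_t\star B)^{-1} = 0$ once one also knows $T_t I$ is controlled, or more cleanly one arranges $R_t$ so that $R_t B = 0$ by construction. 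The contribution of $(T_t B) N_t F$ to $\iint |T_t F|^2 t\,dt\,dx$ is handled by the Carleson-measure hypothesis~(\ref{eqn:CZmatrixB}) together with Carleson's embedding theorem: $|T_t B(x)|^2 t\,dt\,dx$ is a Carleson measure with constant $C_2$, and $\sup_t |N_t F(x)| \lesssim M F(x)$ (Hardy--Littlewood maximal function, using boundedness of $(v_t\star B)^{-1}$), so
\[
\iint_{\R^{n+1}_+} |T_t B(x)|^2 |N_t F(x)|^2 t\,dt\,dx \;\lesssim\; C_2 \int_{\R^n} (MF)^2 \;\lesssim\; C_2 \|F\|_{L^2}^2 .
\]
The contribution of the error term $R_t F$ is estimated by Theorem~\ref{thm:CZmatrixI}: $R_t$ has a kernel of the right type and $R_t$ kills the test object, so its square function is $L^2$-bounded with constant depending only on $C_0,C_1$ and the kernel constants. (One technical point: Theorem~\ref{thm:CZmatrixI} is stated with a scalar test function $b$; but taking $b\equiv 1$, i.e.\ $B = I$, it gives exactly that any $t$-approximation annihilating constants has $L^2$-bounded square function. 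The general $B$ is absorbed into the splitting, not into the hypothesis of Theorem~\ref{thm:CZmatrixI}.)

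The main obstacle I anticipate is the bookkeeping needed to verify that the error operator $R_t$ genuinely retains the Calder\'on--Zygmund kernel bounds~(\ref{eqn:Jsize}--\ref{eqn:Jy}) after subtracting $(T_t B) N_t F$: the kernel of $(T_t B)(v_t\star B)^{-1}(v_t\star \cdot)$ is $(T_t B(x))(v_t\star B(x))^{-1} v_t(x-y)$, and one must check its size, smoothness in $x$, and smoothness in $y$ using~(\ref{eqn:Jsize}--\ref{eqn:Jy}) for $J_t$, the smoothness of $v$, and the uniform bound and Lipschitz-in-scale control of $(v_t\star B)^{-1}$ coming from~(\ref{eqn:matrixintinverse}) and $|B|\le C_0$. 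This is precisely the computation carried out in \cite{hofmann} in the scalar case, and it goes through verbatim in the matrix case since matrix multiplication is bilinear and bounded; I would simply indicate that the estimates of \cite{hofmann} apply mutatis mutandis, noting where noncommutativity is harmless (it is: every product that appears is evaluated at a single point or integrated against a scalar kernel). Once $R_t$ is certified as a legitimate $t$-approximation with $R_t(\text{const})=0$, Theorem~\ref{thm:CZmatrixI} applies and, combining the two pieces via the triangle inequality in $L^2(t\,dt\,dx)$, one obtains $\iint |T_t F|^2 t\,dt\,dx \le C(C_0,C_1,C_2)\|F\|_{L^2}^2$, as desired.
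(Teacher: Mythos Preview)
Your decomposition $T_t F = (T_t B)\,N_t F + R_t F$ with $N_t F(x)=(v_t\!*\!B(x))^{-1}(v_t\!*\!F)(x)$ is correct, and the Carleson-embedding bound on the first piece is fine. The gap is in the second piece. You have $R_t B = 0$, but you do \emph{not} have $R_t I = 0$: in fact $R_t I(x) = T_t I(x) - (T_t B(x))(v_t\!*\!B(x))^{-1}$, which is neither zero nor known to satisfy a Carleson bound at this stage. Theorem~\ref{thm:CZmatrixI} (with $b\equiv 1$) requires exactly that $R_t I$ be Carleson, and ``$R_t B = 0$ with matrix $B$'' does not substitute for this---the theorem only accepts scalar test functions. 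Your parenthetical claim that ``the general $B$ is absorbed into the splitting'' is where the argument breaks: the splitting absorbed $B$ into the \emph{first} piece, not into the hypothesis you need for $R_t$.

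The paper's route avoids this circularity by working in the other direction: rather than decomposing $T_t F$, it verifies the hypothesis of Theorem~\ref{thm:CZmatrixI} for $T$ itself with $b\equiv 1$, i.e.\ it shows $|T_t I(x)|^2\,t\,dt\,dx$ is Carleson. Since $|T_t I|\le C_1\,|T_t I\cdot(v_t\!*\!B)|$, it suffices to control $T_t I(x)\,(v_t\!*\!B)(x)$. A one-line computation gives
\[
T_t I(x)\,(v_t\!*\!B)(x) \;=\; \check T_t B(x) \;+\; T_t B(x),
\qquad
\check J_t(x,y)\;:=\;\Bigl(t^{-n}\!\!\int\! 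J_t(x,z)\,dz\Bigr)\,v\Bigl(\frac{x-y}{t}\Bigr)\;-\;J_t(x,y).
\]
The point is that $\check T_t I = 0$ \emph{exactly}, because $\int v = 1$. One checks $\check J_t$ still satisfies (\ref{eqn:Jsize})--(\ref{eqn:Jy}); then Theorem~\ref{thm:CZmatrixI} applied to $\check T$ gives its $L^2$ square-function bound, and Lemma~\ref{lem:squaretoBMO} (the $L^2\Rightarrow$ Carleson step, which you did not invoke) converts this into a Carleson bound on $\check T_t B$ since $B\in L^\infty\subset BMO$. Adding the hypothesized Carleson bound on $T_t B$ yields the Carleson bound on $T_t I$. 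The missing ingredients in your write-up are the auxiliary operator $\check T$ with $\check T I=0$ on the nose, and the use of Lemma~\ref{lem:squaretoBMO} to close the loop.
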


\begin{proof} Without loss of generality take $C_2=1$.
It suffices to establish (\ref{eqn:CZmatrixI}) for $b\equiv 1$.

As before, $TF(X)$ converges for $X\in\V$ and $F\in L^p(\partial\V)$, $1\leq p\leq\infty$. In particular, $\left|\int J_t(x,z)\,dz\right|\leq C/t$.

Let $A_t B(x)=v_t* B(x)$. Then
\[|T_t I(x)|=|T_t I(x) A_t B(x) (A_t B(x))^{-1}|\leq C|T_t I(x) A_t B(x)|.\]

So we may write
\begin{align*}
T_t I(x) A_t B(x)
&=\int J_t(x,z)\,dz \int \frac{1}{t^n}v\left(\frac{x-y}{t}\right) B(y)\,dy
\\&
=\int J_t(x,z)\,dz \int \frac{1}{t^n}v\left(\frac{x-y}{t}\right) B(y)\,dy
-T_t B(x)
+T_t B(x)
\\&
= \int 
\left(\frac{\int J_t(x,z)\,dz}{t^n}v\left(\frac{x-y}{t}\right)-J_t(x,y)\right) B(y)\,dy
+T_t B(x).
\end{align*}

Let $\check T_t F(x)=\int \check J_t(x,y) F(y)\,dy$, where 
\[\check J_t(x,y)=\left(\frac{\int J_t(x,z)\,dz}{t^n}v\left(\frac{x-y}{t}\right)-J_t(x,y)\right).\] 
We have that
\begin{align*}
{ \int_0^{l(Q)} |T_t I(x)|^2 t\,dt}
&\leq C\int_0^{l(Q)} |T_t I(x)A_t B(x)|^2 t\,dt
\leq C \int_0^{l(Q)} |\check T_t B(x)+T_t B(x)|^2 t\,dt
\\&\leq C \int_0^{l(Q)} |\check T_t B(x)|^2 t\,dt
       +C \int_0^{l(Q)} |T_t B(x)|^2 t\,dt
\end{align*}
and so
\begin{align*}
\int_Q\int_0^{l(Q)} |T_t I(x)|^2 t\,dtdx
&\leq C \int_Q\int_0^{l(Q)} |\check T_t B(x)|^2 t\,ddx
+C |Q|.
\end{align*}
So we need only show that 
\[\int_Q \int_0^{l(Q)} |\check T_t B(x)|^2 t\,dtdx\leq C|Q|.\] 
We will do this by applying \thmref{CZmatrixI} and \lemref{squaretoBMO} to~$\check T$.

By assumption, $J_t$ satisfies (\ref{eqn:Jsize}--\ref{eqn:Jy}). We need to show that $\check J_t$ does as well; it suffices to prove this for $J_t(x,y)+\check J_t(x,y)=\frac{\int J_t(x,z)\,dz}{t^n}v\left(\frac{x-y}{t}\right)$.

Recall that $v_t(x-y)=0$ if $|x-y|>Ct$.
If $|x-y|<Ct$, then
\begin{align*}
|\check J_t(x,y)+J_t(x,y)|
&\leq\left|\frac{C}{t^{n+1}}v\left(\frac{x-y}{t}\right)\right|
\leq \frac{C}{t^{n+1}}
\\&
\leq \frac{C}{t(t+|x-y|)^n}
\leq \frac{C}{t(t+|x-y|)^n}\left(\frac{Ct}{t+|x-y|}\right)^\alpha
.\end{align*}

If $|y-y'|<t/2$, then since $0<\alpha\leq 1$,
\begin{multline*}
\left|\frac{\int J_t(x,z)\,dz}{t^{n}}v\left(\frac{x-y}{t}\right)-\frac{\int J_t(x,z)\,dz}{t^{n}}v\left(\frac{x-y'}{t}\right)\right|
\\
\begin{aligned}&\leq \frac{C}{t^{n+1}}\left|v\left(\frac{x-y}{t}\right)-v\left(\frac{x-y'}{t}\right)\right|
\leq \frac{1}{t^{n+2}}\|\nabla v\|_{L^\infty}|y-y'|
\\&\leq C\frac{|y-y'|/(t+|x-y|)}{t(t+|x-y|)^n}
\leq C\frac{\big(|y-y'|/(t+|x-y|)\big)^\alpha}{t(t+|x-y|)^n}
= C\frac{|y-y'|^\alpha}{t(t+|x-y|)^{n+\alpha}}.
\end{aligned}
\end{multline*}
If $|x-x'|<t/2$, then 
\begin{multline*}
\left|\frac{\int J_t(x,z)\,dz}{t^{n}}v\left(\frac{x-y}{t}\right)-\frac{\int J_t(x',z)\,dz}{t^{n}}v\left(\frac{x'-y}{t}\right)\right|
\\
\begin{aligned}
&\leq
\left|\frac{\int J_t(x,z)\,dz}{t^{n}}v\left(\frac{x-y}{t}\right)-\frac{\int J_t(x',z)\,dz}{t^{n}}v\left(\frac{x-y}{t}\right)\right|
\\&\qquad+\left|\frac{\int J_t(x',z)\,dz}{t^{n}}v\left(\frac{x-y}{t}\right)-\frac{\int J_t(x',z)\,dz}{t^{n}}v\left(\frac{x'-y}{t}\right)\right|
\end{aligned}
\end{multline*}
The second term is at most $\frac{C|x-x'|^\alpha}{t(t+|x-y|)^{n+\alpha}}$ as before. The first term is $0$ if $|x-y|>Ct$; otherwise, it is at most
\[\frac{\|v\|_{L^\infty}}{t^n}\int |J_t(x,z)-J_t(x',z)|\,dz
\leq \frac{C}{t^n}\int \frac{|x-x'|^\alpha}{t(t+|x-z|)^{n+\alpha}}\,dz
\leq \frac{C|x-x'|^\alpha}{t^{n+1+\alpha}}
\]
which is at most $\frac{C|x-x'|^\alpha}{t (t+|x-y|)^{n+\alpha}}$ if $|x-y|<Ct$. Thus $\check J_t$ satisfies (\ref{eqn:Jsize}--\ref{eqn:Jy}).

Also, since $\int v=1$, we have that
\begin{align*}
\check T_t I(x)&=\int 
\left(\frac{\int J_t(x,z)\,dz}{t^n}v\left(\frac{x-y}{t}\right)-J_t(x,y)\right)\,dy
\\&=\int J_t(x,z)\,dz \int \frac{1}{t^n}v\left(\frac{x-y}{t}\right)\,dy
-\int J_t(x,y)\,dy
=0.
\end{align*}

So by \thmref{CZmatrixI},
\[\int_{\R^{n+1}_+} |\check T_t F(x)|^2 t\,dx\,dt \leq C\|F\|^2_{L^2(\R^n)}\]
for any $L^2$ matrix-valued function $F$, and so by \lemref{squaretoBMO}, we have that
\[\int_Q \int_0^{l(Q)} |\check T_t B(x)|^2 t\,dt\,dx
\leq C\|B\|_{BMO}^2 |Q|.\]
This completes the proof.
\end{proof}

\section{Layer potentials on Lipschitz domains}

We now wish to generalize the results of the previous section to arbitrary good Lipschitz domains~$\V$; this will provide a sufficient condition for (\ref{eqn:squareL2pot}) to hold.

We begin with special Lipschitz domains.

\begin{thm}\label{thm:CZmatrixLip}
Supppose that $\Omega\subset\R^2$ is a special Lipschitz domain with Lipschitz constant $k_1$, and that $J:\R^2\times\R^2\mapsto \C^{2\times 2}$ satisfies
\begin{align*}
|J(X,Y)| &\leq C\frac{\dist(X,\partial\Omega)^{\alpha}}{\dist(X,\partial\Omega)|X-Y|^{1+\alpha}},\\
|J(X,Y)-J(X',Y)| &\leq C\frac{|X-X'|^\alpha}{\dist(X,\partial\Omega)|X-Y|^{1+\alpha}},\\
|J(X,Y)-J(X,Y')| &\leq C\frac{|Y-Y'|^\alpha}{\dist(X,\partial\Omega)|X-Y|^{1+\alpha}}.
\end{align*}
Define $T F(X) = \int_{\partial\Omega} J(X,Y) F(Y)\,d\sigma(Y).$

Let $B:\partial\Omega\mapsto \C^{2\times 2}$ satisfy
\begin{equation}\label{eqn:generalintinverse}
\sup_{x\in\partial\Omega}|B(x)|\leq C_0,\quad
\sup_{\Delta\subset\partial\Omega\text{ connected}}\left|\left(\dashint_{\Delta} B\right)^{-1}\right|\leq C_1.
\end{equation}

Suppose that for any $X_0\in\partial\Omega$ and any $R>0$,
\begin{equation}\label{eqn:TBmatrixLip}
\int_{B(X_0,R)\cap\Omega} |T B(X)|^2 \dist(X,\partial\Omega)\,dX
\leq C_2\sigma(B(X_0,R)\cap\partial\Omega).\end{equation}
Then for every $F\in L^2(\partial\Omega\mapsto \C^{2\times 2})$,
\[\int_{\Omega}|T F(X)|^2 \dist(X,\partial\Omega)\,dX\leq C(C_0,C_1,C_2,k_1)\|F\|_{L^2(\partial\Omega)}^2.\]
\end{thm}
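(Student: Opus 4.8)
The plan is to transfer the problem from the special Lipschitz domain $\Omega$ to the half-plane $\R^2_+=\{(x,t):t>0\}$ via the bi-Lipschitz change of variables $\psi$ (as in \eqnref{psi}), and then invoke \thmref{CZmatrixB}. First I would push the kernel $J$ and the matrix $B$ forward: set $\hat J_t(x,y) = J(\psi(x,t),\psi(y))\sqrt{1+\phi'(y)^2}$ so that $\widehat T_t F(x) := \int_\R \hat J_t(x,y)F(y)\,dy$ corresponds to $T F(\psi(x,t))$ on $\Omega$ after the identification of functions on $\partial\Omega$ with functions on $\R$ by arc-length. Because $\psi$ is bi-Lipschitz with constant controlled by $k_1$, one has $|\psi(x,t)-\psi(y)| \approx |x-y|+t$ and $\dist(\psi(x,t),\partial\Omega)\approx t$; feeding these comparisons into the three hypotheses on $J$ shows that $\hat J_t$ satisfies \eqnref{Jsize}--\eqnref{Jy} with $n=1$ and with constants depending only on $C$, $\alpha$, and $k_1$. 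This is mostly routine but must be done carefully: the Hölder-in-$Y$ bound on $J$ is used to control $\hat J_t(x,y)-\hat J_t(x,y')$, and one picks up the factor $\sqrt{1+\phi'(y)^2}$, which is bounded and Lipschitz-continuous off a null set — since $\phi'$ need not be continuous, one should phrase the $y$-regularity of $\hat J_t$ using the fact that $y\mapsto\psi(y)$ is Lipschitz and $J(X,\cdot)$ is Hölder, rather than differentiating.

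Next I would check that $B$ pushes forward to a matrix $\widehat B(x)=B(\psi(x))$ satisfying the accretivity hypothesis \eqnref{matrixintinverse} of \thmref{CZmatrixB}. The bound $\|\widehat B\|_{L^\infty}\le C_0$ is immediate. For the averaged-inverse condition, the hypothesis \eqnref{generalintinverse} gives a uniform bound on $|(\dashint_\Delta B)^{-1}|$ over connected $\Delta\subset\partial\Omega$; arc-length intervals $I\subset\R$ correspond exactly to connected subsets of $\partial\Omega$, so $\dashint_I \widehat B \approx \dashint_{\psi(I)} B$ up to the bounded factor $\sqrt{1+(\phi')^2}$, and hence $\dashint_I\widehat B$ is invertible with uniformly bounded inverse. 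To turn this average into the \emph{smooth} average required by \thmref{CZmatrixB}, I would invoke \lemref{Bisaccretive} (with $n=1$): a matrix whose interval-averages have uniformly bounded inverse admits a smooth bump $v$ with $v_t\star\widehat B$ uniformly invertible. Thus \eqnref{matrixintinverse} holds for $\widehat B$ with constants depending on $C_0$, $C_1$, $k_1$.

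The third ingredient is the Carleson-box bound \eqnref{CZmatrixB} for $\widehat T_t\widehat B$. For a dyadic cube (interval) $Q\subset\R$ with $l(Q)=R$, take $X_0=\psi(x_Q)$ where $x_Q$ is the center of $Q$; then $\psi(Q)\subset B(X_0,CR)\cap\partial\Omega$ and $\{\psi(x,t):x\in Q,\,0<t<R\}\subset B(X_0,C'R)\cap\Omega$ for constants depending on $k_1$. Changing variables in \eqnref{TBmatrixLip} (the Jacobian of $(x,t)\mapsto\psi(x,t)$ is $\sqrt{1+\phi'(x)^2}\approx 1$, and $\dist(\psi(x,t),\partial\Omega)\approx t$) yields
\[
\int_Q\int_0^{l(Q)} |\widehat T_t\widehat B(x)|^2\,t\,dt\,dx
\le C\int_{B(X_0,C'R)\cap\Omega}|TB(X)|^2\dist(X,\partial\Omega)\,dX
\le C\,C_2\,\sigma(B(X_0,C'R)\cap\partial\Omega)\le C\,C_2\,|Q|,
\]
using the Ahlfors-David bound $\sigma(B(X_0,\rho)\cap\partial\Omega)\le k_4\rho$. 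Now \thmref{CZmatrixB} applies to $\widehat T_t$, $\hat J_t$, $\widehat B$ and gives $\iint_{\R^2_+}|\widehat T_t F(x)|^2 t\,dx\,dt\le C\|F\|_{L^2(\R)}^2$ for all $L^2$ matrix-valued $F$; undoing the change of variables (again using $\dist(\psi(x,t),\partial\Omega)\approx t$ and the bounded Jacobian) converts this back to $\int_\Omega |TF(X)|^2\dist(X,\partial\Omega)\,dX\le C\|F\|_{L^2(\partial\Omega)}^2$, with $C$ depending only on $C_0,C_1,C_2,k_1$.

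The main obstacle I anticipate is the low regularity of $\psi$: since $\phi$ is merely Lipschitz, $\phi'$ exists only a.e.\ and is not continuous, so the forward kernel $\hat J_t(x,y)$ inherits only limited smoothness in $y$ through the factor $\sqrt{1+\phi'(y)^2}$. The resolution is that \thmref{CZmatrixB}'s hypotheses \eqnref{Jx}--\eqnref{Jy} ask for Hölder continuity of $J_t(x,y)$ in the \emph{first} variable and in the \emph{second} variable, and the second-variable estimate can be arranged to fall entirely on $J(X,\cdot)$ (which is genuinely Hölder by hypothesis) composed with the Lipschitz map $y\mapsto\psi(y)$ — so the troublesome factor $\sqrt{1+\phi'(y)^2}$ enters only as an $L^\infty$ multiplier, which is harmless for \eqnref{Jsize} and for the $y$-difference estimate once one writes $\hat J_t(x,y)-\hat J_t(x,y') = (J(\psi(x,t),\psi(y))-J(\psi(x,t),\psi(y')))\sqrt{1+\phi'(y)^2} + J(\psi(x,t),\psi(y'))(\sqrt{1+\phi'(y)^2}-\sqrt{1+\phi'(y')^2})$ and notes the second term need not be small pointwise — hence one should instead absorb the bounded non-smooth factor into $F$, i.e.\ apply \thmref{CZmatrixB} to the kernel $J(\psi(x,t),\psi(y))$ and the function $\sqrt{1+\phi'(\cdot)^2}\,F(\cdot)$, whose $L^2$ norm is comparable to $\|F\|_{L^2}$. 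With that reorganization every kernel that actually needs Hölder regularity is smooth off the diagonal, and the argument goes through.
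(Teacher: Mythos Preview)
Your approach is correct and matches the paper's: pull back via $\psi$, invoke \lemref{Bisaccretive}, verify the Carleson-box bound by change of variables, and apply \thmref{CZmatrixB}. The paper avoids your detour by placing the Jacobian $\sqrt{1+\phi'(y)^2}$ in the test matrix from the start, defining $\check B(y)=B(\psi(y))\sqrt{1+\phi'(y)^2}$ and $J_t(x,y)=J(\psi(x,t),\psi(y))$; note that after your final-paragraph reorganization you must likewise replace $\widehat B$ by $\check B$ in paragraphs two and three (so that $T_t\check B(x)=TB(\psi(x,t))$ exactly and the interval average $\dashint_a^b\check B$ becomes a scalar multiple of $\dashint_{\psi([a,b])}B\,d\sigma$), but this only simplifies both checks.
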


\begin{proof}
This follows from \thmref{CZmatrixB} by a change of variables.
Define the matrix-valued function $\check B(x)=B(\psi(x))\sqrt{1+\phi'(x)^2}$; then $\sup_{x\in\partial\V} |B(x)|\leq C_0(1+\|\phi'\|_{L^\infty})$, and $\sup_{a,b\in\R}\left|\left(\dashint_{a}^b B\right)^{-1}\right|\leq C_1(1+\|\phi'\|_{L^\infty})$.
By \lemref{Bisaccretive}, this means that $\check B$ satisfies (\ref{eqn:matrixintinverse}). 

Let $J_t(x,y)=J(\psi(x,t),\psi(y))$. Then $J_t$ satisfies \mbox{(\ref{eqn:Jsize}--\ref{eqn:Jy})}. If we let $T_t F(x)=\int_\R J_t(x,y) F(y)\,dy$, then 
\begin{align*}
T_t \check B(x)
&=\int_\R J_t(x,y) \check B(y)\,dy
=\int_\R J(\psi(x,t),\psi(y)) B(\psi(y))\sqrt{1+\phi'(y)^2}\,dy
\\&=\int_{\partial\Omega}J(\psi(x,t),Y) B(Y)\,d\sigma(Y)
=T B(\psi(x,t))
\end{align*}
and so if $Q\subset\R$ is an interval and $x_0\in Q$, $X_0=\psi(x_0)$, then
\begin{align*}
\int_Q \int_0^{l(Q)} |T_t B(x)|^2 t\,dt\,dx
&=
\int_Q \int_0^{l(Q)} |T B(\psi(x,t))|^2 t\,dt\,dx
\\&\leq
\int_{\psi(Q\times(0,l(Q)))} |T B(X)|^2 C\dist(X,\partial\Omega)\,dX
\\&\leq
\int_{B(X_0,Cl(Q))\cap\Omega} |T B(X)|^2 C\dist(X,\partial\Omega)\,dX
\\&\leq C C_2 l(Q)
\end{align*}
Thus, by \thmref{CZmatrixB}, we must have that
\[\iint_{\R^2_+} |T_t F(x)|^2 t\,dt\,dx\leq C\|F\|_{L^2}\]
for all $F\in L^2(\R\mapsto\C^{2\times 2})$. But since $T_t (\sqrt{1+(\phi')^2}F)(x)= T F(\psi(x,t))$, we must have that
\[\int_\Omega |T F(X)|^2 \dist(X,\partial\Omega)\,dX
\leq C\|F\|_{L^2(\partial\Omega)}\]
for all $F\in L^2(\partial\Omega\mapsto \C^{2\times 2})$, as desired.
\end{proof}

We now wish to move to good Lipschitz domains with compact boundary.
\begin{thm}\label{thm:CZmatrixBdd}
Supppose that $J:\R^2\times\R^2\mapsto \C^{2\times 2}$ satisfies
\begin{align}
|J(X,Y)| &\leq C\frac{1}{|X-Y|^{2}},\label{eqn:JsizeLip}\\
|J(X,Y)-J(X',Y)| &\leq C\frac{|X-X'|^\alpha}{|X-Y|^{2+\alpha}},\label{eqn:JxLip}\\
|J(X,Y)-J(X,Y')| &\leq C\frac{|Y-Y'|^\alpha}{|X-Y|^{2+\alpha}}.\label{eqn:JyLip}
\end{align}
Define $T_\U F(X) = \int_{\partial\U} J(X,Y) F(Y)\,d\sigma(Y).$

Assume that for each special Lipschitz domain $\Omega\subset\R^2$ with Lipschitz constant $k_\Omega$, there exists some $B_\Omega:\partial\Omega\mapsto \C^{2\times 2}$ which satisfies (\ref{eqn:generalintinverse}) and (\ref{eqn:TBmatrixLip}) with constants depending only on~$k_\Omega$.

Then if $\V\subset \R^2$ is a good Lipschitz domain with Lipschitz constants $k_i$, and if $F\in L^2(\partial\V\mapsto \C^{2\times 2})$, we have that
\[\int_{\V}|T_\V F(X)|^2 \dist(X,\partial\V)\,dX\leq C(k_i)\|F\|_{L^2(\partial\V)}^2.\]
\end{thm}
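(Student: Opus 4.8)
The plan is to reduce \thmref{CZmatrixBdd} to \thmref{CZmatrixLip} by the same localization/patching argument used in the proof of \thmref{patching} (boundedness of $\T$ on good Lipschitz domains). The point is that the square-function estimate $\int_\V |T_\V F|^2 \dist(X,\partial\V)\,dX\leq C\|F\|_{L^2}^2$ is, like $L^p$-boundedness, a local-plus-tail statement once we cover $\partial\V$ by finitely many boundary cylinders.

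First I would invoke \dfnref{domain}: cover $\partial\V$ by at most $k_2$ balls $B(X_i,r_i)$ with $B(X_i,2r_i)\cap\V=B(X_i,2r_i)\cap\Omega_i$ for special Lipschitz domains $\Omega_i$ with Lipschitz constants controlled by $k_1$. Take a partition of unity $\sum_i\eta_i=1$ on $\partial\V$ with $\supp\eta_i\subset B(X_i,r_i)\cap\partial\V$, and write $F=\sum_i F_i$ with $F_i=F\eta_i$. By the triangle inequality in $L^2(\V,\dist(\cdot,\partial\V)\,dX)$ it suffices to bound each $\int_\V |T_\V F_i|^2\dist(X,\partial\V)\,dX$ by $C\|F_i\|_{L^2}^2$. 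I would split this integral over $\V\cap B(X_i,2r_i)$ and over $\V\setminus B(X_i,2r_i)$. On the near piece, $T_\V F_i=T_{\Omega_i}F_i$ since $F_i$ is supported on $\partial\V\cap\partial\Omega_i$ and the kernel $J$ is the same; so $\int_{\V\cap B(X_i,2r_i)} |T_{\Omega_i}F_i|^2\dist(X,\partial\V)\,dX\leq \int_{\Omega_i}|T_{\Omega_i}F_i|^2\dist(X,\partial\Omega_i)\,dX$ (using $\dist(X,\partial\V)\le\dist(X,\partial\Omega_i)$ near $X_i$, up to harmless constants depending on $k_i$), and this is $\leq C\|F_i\|_{L^2}^2$ by \thmref{CZmatrixLip} applied to $\Omega_i$ with its test matrix $B_{\Omega_i}$. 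For the far piece, $|X-X_i|>2r_i$ forces $|X-Y|\approx|X-X_i|$ for $Y\in\supp F_i$, so by (\ref{eqn:JsizeLip}) and Hölder, $|T_\V F_i(X)|\leq \frac{C}{|X-X_i|^2}\int_{\partial\V}|F_i|\,d\sigma\leq \frac{Cr_i^{1/2}}{|X-X_i|^2}\|F_i\|_{L^2}$; then
\[
\int_{\V\setminus B(X_i,2r_i)} |T_\V F_i(X)|^2\dist(X,\partial\V)\,dX
\leq C r_i\|F_i\|_{L^2}^2\int_{|X-X_i|>2r_i}\frac{\dist(X,\partial\V)}{|X-X_i|^4}\,dX
\leq C\|F_i\|_{L^2}^2,
\]
since $\dist(X,\partial\V)\leq|X-X_i|$ and $\int_{|X-X_i|>2r_i}|X-X_i|^{-3}\,dX\leq C/r_i$ in $\R^2$. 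Summing over the $k_2$ indices gives the claim.

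The main obstacle is the far-piece tail: one must be careful that the weight $\dist(X,\partial\V)$ does not spoil convergence, but since $\partial\V$ either has compact boundary or is a special domain, $\dist(X,\partial\V)\lesssim|X-X_i|$ everywhere, which is exactly enough to beat the $|X-X_i|^{-4}$ from the squared kernel in two dimensions; the integral converges and is $O(1/r_i)$, cancelling the $r_i$ from the $L^1\hookrightarrow L^2$ Hölder step. A secondary point to check is that when $\V$ has noncompact (special) boundary it is itself one of the $\Omega_i$ and \thmref{CZmatrixLip} applies directly, so the patching is only needed in the compact-boundary case; in that case one should also note that $\diam\V$ plays no role, exactly as remarked after the proof of \thmref{patching}. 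Finally, existence of the test matrices $B_{\Omega_i}$ is hypothesized in the statement, so no construction is required here — that construction (from $B_1$ and \lemref{Bisaccretive}) is the business of the next section.
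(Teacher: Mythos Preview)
Your proposal is correct and follows essentially the same argument as the paper: handle the special case directly via \thmref{CZmatrixLip}, then for compact $\partial\V$ decompose $F=\sum_i F_i$ along the boundary cylinders, control the near piece by comparing $\dist(\cdot,\partial\V)$ with $\dist(\cdot,\partial\Omega_i)$ and invoking \thmref{CZmatrixLip}, and dispatch the far piece by the kernel size bound plus the tail integral $\int_{|X-X_i|>cr_i}|X-X_i|^{-3}\,dX\lesssim 1/r_i$. The only cosmetic difference is that the paper splits at radius $\frac{3}{2}r_i$ rather than $2r_i$ so as to make the distance comparison $\dist(X,\partial\V)\leq 3\dist(X,\partial\Omega_i)$ explicit; your ``up to harmless constants'' covers the same point.
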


\begin{proof} This follows trivially from the previous theorem if $\V$ is a special Lipschitz domain; thus we may assume that $\partial\V$ is bounded. By \dfnref{domain}, there are $k_2$ special Lipschitz domains $\Omega_i$, with Lipschitz constant at most $k_1$, such that 
\[\partial\V\subset \bigcup_{i=1}^{k_2} \partial\Omega_i\cap B(X_i,r_i)\]
where $r_i>0$ and $X_i\in\partial\V$, with $\Omega_i\cap B(X_i,2r_i) = \V\cap B(X_i,2r_i)$.

So we may write $F=\sum_{i=1}^{k_2} F_i$, where $F_i(X)=0$ outside of $B(X_i,r_i)$, and $\sum_{i}|F_i(X)|=|F(X)|$ for all $X\in\partial\V$.


Pick some $i$ and note that $T_\V F_i\equiv T_i F_i$.
By \thmref{CZmatrixLip}, 
\[\int_{\Omega_i}|T_i F_i(X)|^2 \dist(X,\partial\Omega_i)\,dX\leq C(k_1)\|F_i\|_{L^2(\partial\Omega_i)}^2.\]

Note that if $\dist(X,\partial\Omega_i)\neq \dist (X,\partial\V)$, then since $\partial\Omega_i\cap B(X_i,2r_i)=\partial\V\cap B(X_i,2r_i)$ we must have that $\dist(X,B(X_i,3r_i/2)^C)\leq\dist(X,\partial\Omega_i)$. So if $X\in B(X_i,3r_i/2)$ then
\[\dist(X,\partial\V)\leq \frac{3}{2}r_i\leq 3\dist(X,B(X_i,3r_i/2)^C)\leq 3\dist(X,\partial\Omega_i).\] So
\begin{multline*}
\int_{\V\cap B(X_i,3r_i/2)}|T_i F_i(X)|^2 \dist(X,\partial\V)\,dX
\\
\begin{aligned}
&\leq \int_{\Omega_i\cap B(X_i,3r_i/2)}|T_i F_i(X)|^2 3\dist(X,\partial\Omega_i)\,dX
\\
&\leq 3\int_{\Omega_i}|T_i F_i(X)|^2 \dist(X,\partial\Omega_i)\,dX
\leq C(k_1)\|F_i\|_{L^2(\partial\Omega_i)}^2.
\end{aligned}
\end{multline*}

Conversely, suppose that $X\notin B(X_i,\frac{3}{2}r_i)$. Recall that $r_i\approx \sigma(\partial\V)$. Then
\begin{align*}
|T_\V F_i(X)| 
&= \left|\int_{B(X_i,r_i)\cap\partial\V} J(X,Y) F_i(Y)\,d\sigma(Y)\right|
\leq \int_{B(X_i,r_i)\cap\partial\V} \frac{C}{|X-Y|^2}\left|F_i(Y)\right|\,d\sigma(Y)
\\&\leq \frac{C}{|X-X_i|^2} \int_{\partial\V} \left|F_i(Y)\right|\,d\sigma(Y)
\leq \frac{C}{|X-X_i|^2}\sqrt{\sigma(\partial\V)} \|F_i\|_{L^2(\partial\V)}.
\end{align*}

So
\begin{multline*}
\int_{\V\setminus B(X_i,3r_i/2)} |T_\V F_i(X)|^2\dist(X,\partial\V)\,dX \\
\leq 
\int_{\V\setminus B(X_i,3r_i/2)} \frac{C}{|X-X_i|^3}{\sigma(\partial\V)} \|F_i\|_{L^2(\partial\V)}^2\,dX 
\\
\leq 
C\|F_i\|_{L^2(\partial\V)}^2\sigma(\partial\V)
\frac{C}{r_i}
=C\|F_i\|_{L^2(\partial\V)}^2.
\end{multline*}
Putting these together, we see that 
\[\int_\V |T_\V F_i(X)|^2\dist(X,\partial\V)\,dX \leq C \|F_i\|_{L^2}^2.\]

So
\begin{align*}
\int_\V |T_\V F(X)|^2\dist(X,\partial\V)\,dX
&=\int_\V \Big|\sum_{i=1}^{k_2} T_\V F_i(X)\Big|^2\dist(X,\partial\V)\,dX
\\&\leq k_2\sum_{i=1}^{k_2} \int_\V |T_\V F_i(X)|^2\dist(X,\partial\V)\,dX
\\&\leq C \sum_{i=1}^{k_2}\|F_i\|_{L^2}^2 \leq C\|F\|_{L^2}^2.
\end{align*}
This completes the proof.
\end{proof}

\section{The Dirichlet problem on a Lipschitz domain}\label{sec:BMObacktoD}

We now return to the Dirichlet problem. We want to show that
\begin{equation}
\label{eqn:nablaD}
\int_\V |\nabla \D f(X)|^2 \dist(X,\partial\Omega)\,dX\leq C\|f\|_{L^2(\partial\Omega)}^2
\end{equation}
for $\V$ a good Lipschitz domain.

Recall that $\D f(X) = \int_{\partial\Omega} \nu\cdot A^T\nabla\Gamma_X^T f\,d\sigma$. So
\begin{align*}
\nabla\D f(X) 
&=\nabla_X\int_{\partial\V} \nu\cdot A^T\nabla\Gamma_X^T f\,d\sigma
\\&=\int_{\partial\V} 
\Matrix{
	\partial_{x_1}\partial_{y_1}\Gamma^T_X(Y) & \partial_{x_1}\partial_{y_2}\Gamma^T_X(Y) \\
	\partial_{x_2}\partial_{y_1}\Gamma^T_X(Y) & \partial_{x_2}\partial_{y_2}\Gamma^T_X(Y) }
( A(Y) \nu(Y) ) f(Y)\,d\sigma(Y)
\end{align*}

We would like to apply \thmref{CZmatrixBdd} to this expression. The obvious candidate for $J(X,Y)$ is
\[\Matrix{
	\partial_{x_1}\partial_{y_1}\Gamma^T_X(Y) & \partial_{x_1}\partial_{y_2}\Gamma^T_X(Y) \\
	\partial_{x_2}\partial_{y_1}\Gamma^T_X(Y) & \partial_{x_2}\partial_{y_2}\Gamma^T_X(Y) }.\]
Unfortunately, this matrix is not H\"older continuous; however, a nearby matrix is.

Recall that
\begin{align*}
B_6(Y)=B_6^A(Y)=\Matrix{a_{11}(Y)&a_{21}(Y)\\0&1},\quad 
B_6^T(X)=\Matrix{a_{11}(X)&a_{12}(X)\\0&1}.
\end{align*}
Define
\[
J(X,Y)=
B_6^T(X)\Matrix{
	\partial_{x_1}\partial_{y_1}\Gamma^T_X(Y) & \partial_{x_1}\partial_{y_2}\Gamma^T_X(Y) \\
	\partial_{x_2}\partial_{y_1}\Gamma^T_X(Y) & \partial_{x_2}\partial_{y_2}\Gamma^T_X(Y) }B_6(Y)^t\]
and let $TF(X)=\int_{\partial\Omega} J(X,Y) F(Y)\,d\sigma(Y)$ as usual. Then \[\nabla \D f(X) =
B_6^T(X)^{-1}\int_{\partial\Omega} J(X,Y)
B_6(Y)^{-1}( A(Y) \nu(Y) ) f(Y)\,d\sigma(Y)
\]
so if we let $F(Y)= B_6(Y)^{-1}A(Y)\Matrix{\nu(Y) &\nu(Y)} f(Y)$, then $|\nabla\D f(X)|\leq C |TF(X)|$ and $|F(Y)|\leq C|f(Y)|$; thus, we need only show that $T$ satisfies the conditions of \thmref{CZmatrixBdd} to establish (\ref{eqn:squareL2pot}), which by \lemref{squaretoBMO} will prove \thmref{BMOcarleson}.

We begin with the conditions on $J$.
As in \autoref{sec:L2toBMO}, $|J(X,Y)|\leq C/|X-Y|^2$.
We may write
\[
J(X,Y)=
\Matrix{
	\partial_{y_1}\partial_{x_2} \tilde\Gamma_X^T(Y)&
	\partial_{y_2}\partial_{x_2} \tilde\Gamma_X^T(Y)
\\
	\partial_{x_2}\partial_{y_1}\Gamma^T_X(Y) & \partial_{x_2}\partial_{y_2}\Gamma^T_X(Y) }B_6(Y)^t.\]
Since all of the $X$-derivatives are now in terms of $x_2$, and $A(X)$ is independent of $x_2$, we have that each component is a solution to an elliptic equation in $X$. Thus, $J(X,Y)$ must be H\"older continuous in~$X$, and in fact by \lemref{PDE4} must satisfy
\[|J(X',Y)-J(X,Y)|\leq C\frac{|X-X'|^\alpha}{|X-Y|^{2+\alpha}}\]
for all $|X-X'|<\frac{1}{2}|X-Y|$.

Similarly,
\[|J(X,Y')-J(X,Y)|\leq C\frac{|Y-Y'|^\alpha}{|X-Y|^{2+\alpha}}\]
for all $|Y-Y'|<\frac{1}{2}|X-Y|$. 

So (\ref{eqn:JsizeLip}--\ref{eqn:JyLip}) hold. Fix some special Lipschitz domain $\Omega$; we need to find a $B=B_\Omega$ which satisfies (\ref{eqn:generalintinverse}) and~(\ref{eqn:TBmatrixLip}).

Let 
\[B(Y)=(B_6(Y)^t)^{-1}\Matrix{A(Y)\nu(Y) &\tau(Y)}.\] 
(This is a slight adjustment of the $B_1$ of~(\ref{eqn:B1def}).)
From \autoref{sec:B1accretive}, we know that $B$ satisfies (\ref{eqn:generalintinverse}). Furthermore,
\begin{align*}
\left(B_6^T(X)^t\right)^{-1}T B (X)
&=\left(B_6^T(X)^t\right)^{-1}\int J(X,Y) B(Y)\,d\sigma(Y)
\\
&=
\int_{\partial\Omega}
\Matrix{
\partial_{x_1}\partial_{y_1} \Gamma^T_{X}&
\partial_{x_1}\partial_{y_2} \Gamma^T_{X}\\
\partial_{x_2}\partial_{y_1} \Gamma^T_{X}&
\partial_{x_2}\partial_{y_2} \Gamma^T_{X}
}
\Matrix{\vphantom{\nabla\Gamma^T_(}A\nu &\tau}\,d\sigma(Y)
\\&=
\int
\Matrix{
\nabla_X\nu\cdot A^T\nabla\Gamma^T_{X}(Y)&
\nabla_X\partial_\tau\Gamma^T_{X}(Y)}
\,d\sigma(Y)
\\&=
\int
\Matrix{
\nabla_X\partial_\tau\tilde\Gamma^T_{{X}}(Y)&
\nabla_X\partial_\tau\Gamma^T_{{X}}(Y)}
\,d\sigma(Y)
\\&=
\int\partial_\tau
\Matrix{
\nabla_X\tilde\Gamma^T_{{X}}(Y)&
\nabla_X\Gamma^T_{{X}}(Y)}
\,d\sigma(Y)
\end{align*}
which equals 0, since $\nabla_X \Gamma^T_X(Y),\nabla_X \tilde\Gamma^T_X(Y)$ go to zero as $|X-Y|\to\infty$. Thus, $B$ satisfies (\ref{eqn:TBmatrixLip}) as well and (\ref{eqn:nablaD}) is proven.

\chapter{Removing the smoothness assumptions}
\label{chap:apriori}
We have proven (the existence halves of) Theorems~\ref{thm:big} and~\ref{thm:BMO} only under the assumption that $A$, $A_0$ are smooth.

\begin{thm}\label{thm:apriori:NR} Fix some $\Lambda$, $\lambda$ and~$\epsilon_0$. Let $\V$ be a Lipschitz domain, and suppose that solutions to $(N)^A_p$, $(R)^A_p$, $(N)^A_1$, and~$(R)^A_1$ exist for all matrix-valued functions~$A$ satisfying (\ref{eqn:elliptic}), $\|\Im A\|_{L^\infty}<\epsilon_0$, and which in addition are smooth and satisfy $A(x)\equiv I$ for large $|x|$. 

Let $A$ be a matrix-valued function that satisfies (\ref{eqn:elliptic}) and $\|\Im A\|_{L^\infty}<\epsilon_0$, but not smoothness or $A(x)\equiv I$ for large~$x$. Then there exist solutions to $(N)_p^A$, $(R)_p^A$, $(N)_1^A$, and~$(R)_1^A$ in~$\V$.
\end{thm}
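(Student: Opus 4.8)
The plan is to approximate the rough matrix $A$ by smooth matrices $A_k$ that agree with $A_0$ (hence with the smooth hypotheses) up to normalization, solve the boundary value problems for each $A_k$ using the already-established smooth case, and then pass to the limit. First I would fix $A$ satisfying \eqref{eqn:elliptic} with $\|\Im A\|_{L^\infty}<\epsilon_0$, and choose smooth matrices $A_k$ with $\|A_k-A\|_{L^\infty}\to 0$, $A_k\equiv I$ for $|x|$ large, and uniform ellipticity and $\|\Im A_k\|_{L^\infty}<\epsilon_0$ (for instance by mollifying $A$ in the $x$-variable and then truncating to $I$ far out, adjusting the mollification scale so the ellipticity constants are preserved). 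By hypothesis, for each $k$ and each boundary datum $g\in L^p$ (resp.\ $f\in L^p$, resp.\ $g$ or $f\in H^1$) there is a solution $u_k$ to $(N)^{A_k}_p$ (resp.\ $(R)^{A_k}_p$, etc.) in $\V$, with $\|N(\nabla u_k)\|_{L^p}\leq C\|g\|_{L^p}$ where $C$ depends only on $\lambda,\Lambda,\epsilon_0$ and the Lipschitz constants of $\V$ — crucially \emph{not} on $k$, since the constants produced in Theorems~\ref{thm:big} and the interpolation chapter are uniform in the class of admissible matrices. The same uniform bound holds for the $H^1$ versions.

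The next step is a compactness and convergence argument. From the uniform bound $\|N(\nabla u_k)\|_{L^p}\leq C\|g\|$ and the interior estimate \eqref{eqn:gradu}, the $u_k$ (normalized suitably, e.g.\ so that $u_k$ vanishes at a fixed interior point, or using that $\nabla u_k$ is what is controlled) are uniformly bounded in $W^{1,2}_{loc}(\V)$; by Caccioppoli and the De Giorgi--Nash--Moser-type estimates available here (Lemmas~\ref{lem:PDE1}--\ref{lem:PDE4}, valid for complex $A$ in $\R^2$), a subsequence converges locally uniformly, together with its gradient weakly in $L^2_{loc}$, to some $u$ with $\div A\nabla u=0$ in $\V$ (the convergence $A_k\nabla u_k\to A\nabla u$ weakly is justified because $A_k\to A$ in $L^\infty$ and $\nabla u_k\rightharpoonup\nabla u$ in $L^2_{loc}$). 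Fatou's lemma applied to $\|N(\nabla u_k)\|_{L^p}$ gives $\|N(\nabla u)\|_{L^p}\leq C\|g\|_{L^p}$, and the analogous statement in $L^1$ for the $H^1$ problems. What remains is to check that $u$ attains the right boundary data: $\nu\cdot A\nabla u=g$ weakly on $\partial\V$, or $\partial_\tau u=g$. For the Neumann condition this is the cleanest: each $u_k$ satisfies $\int_\V A_k\nabla u_k\cdot\nabla\eta=\int_{\partial\V}g\eta$ for all $\eta\in C^\infty_0(\R^2)$, and the left side converges to $\int_\V A\nabla u\cdot\nabla\eta$ by the weak convergence of $\nabla u_k$ plus $L^\infty$-convergence of $A_k$ and a uniform-integrability/dominated-convergence argument using the $N(\nabla u_k)$ bound near $\partial\V$. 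For the regularity condition $\partial_\tau u=g$, one uses instead that $u_k\to u$ non-tangentially (which follows from local uniform convergence up to the boundary, since $\|N(\nabla u_k)\|_{L^p}$ uniform gives equicontinuity of $u_k$ up to $\partial\V$ as in \lemref{Sfcts}) so that the tangential derivatives of the boundary traces converge in the sense of distributions on $\partial\V$.

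I expect the main obstacle to be the boundary-value convergence near $\partial\V$, specifically controlling the $u_k$ and their gradients \emph{uniformly up to the boundary} so that the traces converge in the correct topology. The uniform $L^p$ bound on $N(\nabla u_k)$ is the right tool, but extracting from it the equicontinuity of $u_k|_{\overline\V}$ and the convergence of $\nu\cdot A_k\nabla u_k$ as functionals on $\partial\V$ requires care: one needs a uniform modulus-of-continuity estimate of the type \eqref{eqn:uholder}, which holds with constant depending only on $p$ and $\|N(\nabla u_k)\|_{L^p}$, hence uniform in $k$. A secondary technical point is the normalization/uniqueness: solutions to $(N)$ and $(R)$ are only unique up to additive constants before imposing the control, so one must normalize the $u_k$ consistently (subtract $u_k$'s average over a fixed boundary ball, say) before taking limits; uniqueness of the limit solution with the stated control then follows from \thmref{NRunique} (invoked via the hypotheses of \thmref{h1}), which is established independently of smoothness. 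Once these are in place, the theorem follows, and the same scheme, applied verbatim with \thmref{BMOcarleson} in place of the $L^p$ existence statements and with the Carleson-measure norm in place of $\|N(\nabla u)\|_{L^p}$, handles the $BMO$ case needed for \thmref{BMO}.
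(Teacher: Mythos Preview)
Your compactness scheme is natural, but it rests on a step that fails for genuinely rough coefficients: you assert that one can produce smooth $A_k$ with $\|A_k-A\|_{L^\infty}\to 0$ by mollifying in the $x$-variable. Mollification of a merely bounded measurable function converges a.e.\ and in every $L^q_{loc}$ with $q<\infty$, but \emph{not} in $L^\infty$ unless $A$ is already uniformly continuous. Without $L^\infty$ convergence, the passage to the limit in $\int_\V A_k\nabla u_k\cdot\nabla\eta$ breaks down: you only have $\nabla u_k\rightharpoonup\nabla u$ weakly in $L^2_{loc}$ and $A_k\to A$ a.e., and the product of a weakly convergent sequence with an a.e.\ convergent bounded sequence need not converge weakly. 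The same obstruction blocks the identification $\div A\nabla u=0$ for the limit, and the Fatou step for $N(\nabla u)$ becomes delicate once you lose pointwise convergence of $\nabla u_k$ on compact subsets.

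The paper sidesteps this by reversing the order of construction. It first builds the solution $u$ for the rough $A$ directly, via the Lax--Milgram type Lemmas~1.1--1.2 of \cite{Rule}, which already give $\nabla u\in L^2(\V)$ (globally, with the right boundary data). Only then does it introduce smooth approximants $A^\eta$, chosen so that $\|A-A^\eta\|<\eta$ \emph{except on a set $E_\eta$ of small measure}; this is exactly what mollification provides. The comparison is an energy estimate: testing $\div A\nabla u=\div A^\eta\nabla u^\eta$ against $\overline{u-u^\eta}$ yields
\[
\int_\V |\nabla u-\nabla u^\eta|^2 \;\lesssim\; \eta\,\|\nabla u\|_{L^2}^2 \;+\; \|\nabla u\|_{L^2(\V_\eta)}\bigl(\|\nabla u\|_{L^2}+\|\nabla u^\eta\|_{L^2}\bigr),
\]
where the bad set contributes through $\|\nabla u\|_{L^2(\V_\eta)}$ for the \emph{fixed} function $u$, which tends to $0$ with $|E_\eta|$. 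This gives $|\nabla u(Y)-\nabla u^\eta(Y)|\le o(\eta)/\dist(Y,\partial\V)$ via \eqref{eqn:gradu}, and the nontangential bound for $u$ is then read off from the known bound for $u^\eta$. The key point is that the $(A-A^\eta)$ error is measured against $\nabla u$, already known to lie in $L^2$, rather than against the varying $\nabla u_k$; this is what makes a.e.\ (rather than uniform) convergence of the coefficients sufficient.
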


\begin{proof}
First, by \cite[Lemma~1.1]{Rule}, if $f\in W^{1,2}\cap L^{7/6}\cap L^{17/6}(\partial\V)$, then there is some $u$ with $\div A\nabla u=0$, $\Tr u=f$ and 
\[\|\nabla u\|_{L^2} + \left\|\frac{u(X)}{1+|X|}\right\|_{L^2}\leq C(\|f\|_{W^{1,2}}+\|f\|_{L^{7/6}}+\|f\|_{L^{17/6}}).\]

Similarly, by \cite[Lemma~1.2]{Rule}, if $g_0\in H^1(\partial\V)$ then there exists some $u$ with $\div A\nabla u=0$, $\nu\cdot A\nabla u=g_0$ and $\|\nabla u\|_{L^2(\V)} \leq C\|g_0\|_{H^1(\partial\V)}$.

So there exist regularity or Neumann solutions for boundary data in (dense subspaces of) $H^1$ or $L^p$. We need only show that they have the desired bounds on their non-tangential maximal functions.

Let $u$ be the Neumann or regularity solution given above to $\nu\cdot A\nabla u=g_0$ or $\partial_\tau u=g_0$, where $g_0\in H^1_{at}$.

Assume $\eta$ is small, and let $A^\eta(x)$ be a smooth matrix-valued function with $\|A-A^\eta\|<\eta$, except for a set $E_\eta$ with $|E_\eta\cap (-1/\eta,1/\eta)|<\eta$, where here $|\cdot |$ denotes Lebesgue measure in $\R^1$. (We are still working with matrices which are independent of the second variable.) Let $\V_\eta=\{(x,t): x\in E_\eta,\allowbreak\>(x,t)\in\V\}$. 

For convenience, let $A_\eta=A^\eta$, and assume that $A^\eta$ satisfies all of the conditions of the theorem, so $(N)_p^{A_\eta}$, $(R)_p^{A_\eta}$, $(N)_1^{A_\eta}$, and~$(R)_1^{A_\eta}$ hold in~$\V$.

Take $u^\eta$ to be the solution to $(N)_p^{A_\eta}$ and $(N)_1^{A_\eta}$, or $(R)_p^{A_\eta}$ and $(R)_1^{A_\eta}$, developed in the rest of this paper with the same boundary data as~$u$.

I claim that
\[\dashint_{B(Y,r)} |\nabla u-\nabla u^\eta|^2\leq \frac{o(\eta)}{r^2}\] where $o(\eta)\to 0$ as $\eta\to 0$.

Suppose my claim is true. By (\ref{eqn:gradu}), if $Y\in\gamma(X)$ then
\begin{align*}
|\nabla u(Y)|^2& \leq C\dashint_{B(Y,\dist(Y,\partial\V)/2)} |\nabla u|^2
\\&\leq C\dashint_{B(Y,\dist(Y,\partial\V)/2)} |\nabla u-\nabla u^\eta|^2+C\dashint_{B(Y,\dist(Y,\partial\V)/2)} |\nabla u^\eta|^2
\\&\leq C\frac{o(\eta)}{\dist(Y,\partial\V)^2}+CN(\nabla u^\eta)(X)^2.
\end{align*}

Therefore, if we let 
\[N_\delta f (X) =\sup\{|f(Y)|: (1+|X|^2)\delta<|X-Y|<(1+a)\dist(Y,\partial\V)\},\]
then $N_\delta(\nabla u)(X)\leq \frac{o(\eta)}{\delta(1+|X|)^2}+C N (\nabla u^\eta)(X)$. So $\|N_\delta(\nabla u)\|_{L^p}\leq C\frac{o(\eta)}{\delta} + C\|N(\nabla u^\eta)\|_{L^p}$ for all $1\leq p\leq \infty$. (The decay in $X$ is necessary to ensure that this is true for $\partial\V$ not compact, that is, $\V=\Omega$ a special Lipschitz domain.)

But 
$\|N(\nabla u^\eta)\|_{L^1}\leq C\|g_0\|_{H^1}$, $\|N(\nabla u^\eta)\|_{L^p}\leq C\|g_0\|_{L^p}$ for $1<p<p_0$. By taking the limit as $\eta\to 0$, we see that $\|N_\delta(\nabla u)\|_{L^p}\leq C_p \|g_0\|_{L^p}$ and $\|N_\delta(\nabla u)\|_{L^1}\leq C\|g_0\|_{H^1}$ uniformly in $\delta$; thus these inequalities must hold for $N(\nabla u)$ as well.

So I need only show that
\[\dashint_{B(Y,r)} |\nabla u-\nabla u^\eta|^2\leq \frac{o(\eta)}{r^2}.\] 

Fix some choice of $\eta$. 
We have that $u^\eta$ is a Neumann or regularity solution with boundary data in $H^1$ or $H^1\cap L^p$. Since our solutions are of the form $\S((\K^t_-)^{-1} g_0)$, the $(N)^{A_\eta}_1$ and the $(N)^{A_\eta}_p$ solutions are the same. So $N(\nabla u^\eta)\in L^1(\partial\V)$.

If $\V$ is bounded or special, then by \lemref{L2loc},  $\nabla u^\eta\in L^2(\V)$, uniformly in~$\eta$.

If $\V^C$ is bounded, recall that $u^\eta(X)=\S^\eta(P_\eta^{-1} g_0)$ where $P_\eta = (\J^{A_\eta})^t$ or $(\K^{A_\eta}_-)^t$. Since both these operators are invertible on $H^1$ we have that $u^\eta=\S^\eta f$ for some $f\in H^1$. But then $f=\partial_\tau F$ for some $F$ with $\|F\|_{L^\infty}\leq \|f\|_{L^1}\leq C\|g_0\|_{H^1}$; then
\[|u^\eta(X)| = |\S^\eta(\partial_\tau F)(X)|
=\left|\int_{\partial\V} \partial_\tau\Gamma_X^T F\,d\sigma\right|
\leq C\|g_0\|_{H^1}\sigma(\partial\V)/\dist(X,\partial\V)\]
and so by \lemref{PDE1} and (\ref{eqn:gradu}) we have that $|\nabla u(X)|\leq C\|g_0\|_{H^1}\sigma(\partial\V)/\dist(X,\partial\V)^2$. So in particular $\nabla u^\eta$ is still in $L^2$ uniformly in $\eta$.

Pick some $R_0$ is large enough that $B(Y,r)\subset B(0,R_0)$ and let $R\geq R_0$. Then
\begin{align*}
\int_{B(0,R_0)\cap\V} 
|\nabla u-\nabla  u^\eta|^2
&\leq
C\Re \int_{\V\cap B(0,R)} (\nabla \overline u-\nabla \overline{u^\eta})\cdot A^\eta(\nabla u-\nabla u^\eta)\\
&=
C\Re \int_{\V\cap B(0,R)} \nabla (\overline u-\overline{u^\eta})\cdot ( A\nabla u- A^\eta\nabla u^\eta)
\\&\qquad+\frac{C}{r^2}\Re \int_{\V\cap B(0,R)} 
(\nabla \overline u-\nabla \overline{u^\eta})(A^\eta-A)\nabla u
\\
&=
C\Re 
\int_{B(0,R)\cap \partial\V} \Tr (\overline{u-u^\eta})(\nu\cdot A\nabla u -\nu\cdot A^\eta\nabla u^\eta )\,d\sigma
\\&\qquad+
C\Re 
\int_{\V\cap \partial B(0,R)} \Tr (\overline{u-u^\eta})(\nu\cdot A\nabla u -\nu\cdot A^\eta\nabla u^\eta )\,d\sigma
\\&\qquad+
C\Re \int_{\V\cap B(0,R)} 
(\nabla \overline u-\nabla \overline{u^\eta})(A^\eta-A)\nabla u.
\end{align*}
But
\begin{align*}
\left|\Re \int_{\V} 
(\nabla \overline u-\nabla \overline{u^\eta})(A^\eta-A)\nabla u\right|
&\leq
C\eta (\|\nabla u\|_{L^2(\V)}^2+\|\nabla u^\eta\|_{L^2(\V)}^2)
\\&\qquad
+C\|\nabla u\|_{L^2(V_\eta)}(\|\nabla u\|_{L^2(\V)}+\|\nabla u^\eta\|_{L^2(\V)})
\end{align*}

So the third term is at most $o(\eta)$, independently of $R$. Since $u$, $u^\eta$ are Neumann or Dirichlet solutions with the same boundary data on $\partial\V$, the first term is 0 for all~$R$.

So for $R_0$ large enough and $R\geq R_0$,
\begin{multline*}
\int_{B(0,R_0)\cap\V} |\nabla u-\nabla u^\eta|^2
\leq
o(\eta) + C\Re \int_{\V\cap\partial B(0,R)}
\overline{(u-u^\eta)}(\nu\cdot A\nabla u - \nu\cdot A^\eta\nabla u^\eta)\,d\sigma 
\end{multline*}
By averaging our inequality over a range of $R$, we get that
\begin{align*}
\int_{B(0,R_0)\cap\V} |\nabla u-\nabla u^\eta|^2
&\leq
o(\eta) + \frac{C}{R_0}\int_{R_0}^{2R_0}\int_{\V\cap\partial B(0,R)}
|u-u^\eta| |A\nabla u - A^\eta\nabla u^\eta|\,d\sigma \,dR
\\&\leq
o(\eta) + \frac{C}{R_0}\int_{\V\cap B(0,2R_0)\setminus B(0,R_0)}
|u-u^\eta| |A\nabla u - A^\eta\nabla u^\eta|
\end{align*}
But $A\nabla u-A^\eta\nabla u^\eta\in L^2(\V)$. So 
\[\frac{C}{R_0}\int_{\V\cap B(0,2R_0)\setminus B(0,R_0)}
|A\nabla u - A^\eta\nabla u^\eta|
\leq C\|\nabla u\|_{L^2(\V\setminus B(0,R_0))}+ C\|\nabla u^\eta\|_{L^2(\V\setminus B(0,R_0))}
\]
which goes to 0 as $R_0\to\infty$; so we need only show that $u$, $u^\eta$ are bounded on $\V\setminus B(0,R_0)$ for large $R_0$. 

This is trivially true if $\V$ is bounded. If $\V^C$ is bounded, then $u^\eta(X) = \S^\eta f^\eta(X)$ where $\|f^\eta\|_{H^1}\leq C \|g_0\|_{H^1}$; then $|u^\eta(X)|\leq C \|f^\eta\|_{L^1} \diam\partial\V/\dist(X,\partial\V)$ and so $u^\eta$ is bounded (and in fact goes to 0).

If $\V=\Omega$ is special, then $N(\nabla u^\eta)\in L^1(\partial\Omega)$. So 
\[|u^\eta(\psi(x,t))-u(\psi(y,t))|\leq \int_x^y N(\nabla u^\eta)(\psi(z))\,dz \leq \|N(\nabla u^\eta)\|_{L^1}\]
for any $0<t$ and any $x<y$. Furthermore, for any $t>0$, there is some $x_t$ such that $N(\nabla u^\eta)(\psi(x_t))<\|N(\nabla u^\eta)\|_{L^1}/(1+t)$.

Then
\begin{align*}
|u^\eta(\psi(x,t))-u^\eta(\psi(0,1))|
&\leq 
|u^\eta(\psi(x,t))-u^\eta(\psi(x_t,t))|
+|u^\eta(\psi(x_t,t))-u^\eta(\psi(x_t,1))|
\\&\qquad+|u^\eta(\psi(x_t,1)-u^\eta(\psi(0,1)|
\\&\leq 2\|N(\nabla u^\eta)\|_{L^1}+ |t-1| N(\nabla u^\eta)(\psi(x_t))
\leq 3\|N(\nabla u^\eta)\|_{L^1}.\end{align*}
So $u^\eta$ is in fact bounded on $\Omega$. We need only bound $u(X)$ for large~$X$.

If we are dealing with the regularity problem, then $u(X)/(1+|X|)\in L^2$ and so by \lemref{PDE3} if $X$ is large enough then $|u(X)|\leq 1$.

If we are dealing with the Neumann problem, then by the Poincar\'e inequality
\[\dashint_{\V\cap B(0,2R)\setminus B(0,R/2)} |u-u_R|^2\leq C\|\nabla u\|_{L^2(\V\setminus B(0,R/2))}^2\] where $u_R=\dashint_{\V\cap B(0,2R)\setminus B(0,R/2)} u$; by applying \lemref{PDE3}, since $\nu\cdot A\nabla u=0$ on $\partial\V\setminus B(0,R)$, we see that $|u(X)-u_R|<\epsilon$ for all $X\in\V\cap B(0,2R)\setminus B(0,R)$, for all $R$ sufficently large.

But since Neumann solutions are only defined up to additive constants we may take $u_R=0$; this lets us write
\[
\int_{B(0,R_0)\cap\V} |\nabla u-\nabla u^\eta|^2
\leq o(\eta)
+ C(u,u^\eta)
\left(\|\nabla u\|_{L^2(\V\setminus B(0,R_0))}+ \|\nabla u^\eta\|_{L^2(\V\setminus B(0,R_0))}\right)
.\]
By taking the limit as $R_0\to \infty$ we see that
\begin{equation}\label{eqn:nabladifference}
\int_{\V} |\nabla u-\nabla u^\eta|^2
\leq o(\eta).
\end{equation}
This completes the proof.
\end{proof}

\begin{thm}\label{thm:apriori:Dp} Fix some $\Lambda$, $\lambda$ and~$\epsilon_0$. Let $\V$ be a Lipschitz domain, and suppose that solutions to $(D)^A_p$ and~$(R)^A_{p'}$, $(R)^A_1$ exist for all matrix-valued functions~$A$ satisfying (\ref{eqn:elliptic}), $\|\Im A\|_{L^\infty}<\epsilon_0$, and which in addition are smooth and satisfy $A(x)\equiv I$ for large $|x|$. 

Let $A$ be a matrix-valued function that satisfies (\ref{eqn:elliptic}) and $\|\Im A\|_{L^\infty}<\epsilon_0$, but not smoothness or $A(x)\equiv I$ for large~$x$. Then there exist solutions to $(D)_p^A$ in~$\V$.
\end{thm}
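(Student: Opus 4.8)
The strategy mirrors the proof of \thmref{apriori:NR}: we approximate the rough matrix $A$ by smooth matrices $A^\eta$ that agree with $A$ off a small set, solve $(D)^{A_\eta}_p$ using the hypothesis, and pass to the limit. The new difficulty here is that the Dirichlet problem controls $Nu$ rather than $N(\nabla u)$, so we must compare $u$ and $u^\eta$ in $L^2_{loc}$ directly rather than their gradients, and we no longer have $\nabla u\in L^2(\V)$ from a Neumann-type energy estimate. The point of including $(R)^A_{p'}$ and $(R)^A_1$ among the smooth hypotheses is precisely to represent the solution via layer potentials so that the approximate solutions can be controlled uniformly.

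\textbf{Step 1: Existence and uniform bounds for the approximate solutions.} Fix $f$ in a dense subclass of $L^p(\partial\V)$ (say $W^{1,2}\cap L^{7/6}\cap L^{17/6}$, as in \cite[Lemma~1.1]{Rule}), so that there is some $u$ with $\div A\nabla u=0$, $\Tr u=f$, and $\nabla u\in L^2(\V)$ with $u(X)/(1+|X|)\in L^2(\V)$. Choose smooth $A^\eta$ with $\|A-A^\eta\|_{L^\infty}<\eta$ off a set $E_\eta$ of small measure and with $A^\eta\equiv I$ for large $|x|$; let $u^\eta$ be the $(D)^{A_\eta}_p$ solution with the same data $f$ (which by the smooth hypothesis also coincides with the $\S$-representation coming from $(R)^{A_\eta}_{p'}$, so $Nu^\eta\in L^p$ uniformly and, when $\V^C$ is bounded, $u^\eta$ decays at infinity). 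We need $\|Nu^\eta\|_{L^p}\leq C\|f\|_{L^p}$ uniformly in $\eta$; this is exactly the smooth hypothesis.

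\textbf{Step 2: The key estimate $\dashint_{B(Y,r)}|\nabla u-\nabla u^\eta|^2\leq o(\eta)/r^2$.} This is carried out exactly as in the proof of \thmref{apriori:NR}: write $\int_{\V\cap B(0,R)}(\nabla\bar u-\nabla\overline{u^\eta})\cdot A^\eta(\nabla u-\nabla u^\eta)$, integrate by parts, and observe that the boundary integral over $\partial\V\cap B(0,R)$ vanishes because $u-u^\eta=0$ on $\partial\V$, that the error term involving $(A^\eta-A)\nabla u$ is $o(\eta)$ because it is supported essentially on $\V_\eta$ and $\nabla u\in L^2(\V)$, and that the integral over $\V\cap\partial B(0,R)$ can be made small by averaging $R\in(R_0,2R_0)$ and letting $R_0\to\infty$, using $\nabla u,\nabla u^\eta\in L^2(\V)$ near infinity together with boundedness of $u-u^\eta$ far from the boundary (for $\V^C$ bounded, decay of both $u$ and $u^\eta$; for $\V=\Omega$ special, $u/(1+|X|)\in L^2$ plus a Caccioppoli/De Giorgi argument via \lemref{PDE3} to bound $u^\eta$). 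One must be slightly careful establishing that $\nabla u^\eta\in L^2(\V)$ uniformly when $\V$ is special or $\V^C$ is bounded: use the $\S$-representation $u^\eta=\S^\eta f^\eta$ from $(R)^{A_\eta}_{p'}$ and the pointwise decay $|\nabla\S^\eta f^\eta(X)|\lesssim\dist(X,\partial\V)^{-1/p'}$, combined with \lemref{PDE1} and (\ref{eqn:gradu}), exactly as in \thmref{apriori:NR}.

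\textbf{Step 3: Transferring to $Nu$ and concluding.} From Step 2, for $Y\in\gamma(X)$ we get $|\nabla u(Y)|^2\lesssim o(\eta)/\dist(Y,\partial\V)^2+CN(\nabla u^\eta)(X)^2$; but for the Dirichlet problem we instead integrate: for $Y$, $Z$ in a non-tangential cone with $|Y-Z|$ comparable to their distances to the boundary, $|u(Y)-u(Z)|\leq\int|\nabla u|\lesssim (o(\eta)^{1/2}+\text{(controlled term)})$, so by integrating $\nabla u - \nabla u^\eta$ along cone-paths and using \lemref{PDE3} for $u^\eta$ near the boundary one obtains $|u(Y)-u^\eta(Y^*)|\lesssim o(\eta)^{1/2}/(1+|Y|)+$ (something controlled by $Nu^\eta$), hence a truncated maximal function bound $\|N_\delta u\|_{L^p}\leq C o(\eta)^{1/2}/\delta+C\|Nu^\eta\|_{L^p}\leq \text{\{small\}}+C\|f\|_{L^p}$ uniformly in $\delta$. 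Letting $\eta\to0$ then $\delta\to0$ and invoking \lemref{NTM} gives $\|Nu\|_{L^p}\leq C\|f\|_{L^p}$. By density (since $Nu$ controls $u$ uniformly) solutions exist for all $f\in L^p$, with uniqueness supplied by \thmref{maxprinciple}-type arguments already in place for $(D)^A_p$.

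\textbf{Main obstacle.} The delicate point is controlling $u$ and $u^\eta$ at infinity for $\V=\Omega$ special: unlike the Neumann/regularity case, where $N(\nabla u)\in L^1$ forces $u$ to be bounded along the boundary, here we only have $u(X)/(1+|X|)\in L^2$ a priori, so one must work harder — combining the $\S^\eta$-representation, the decay of $\nabla\S^\eta f^\eta$, and De Giorgi--Nash--Moser estimates (\lemref{PDE3}) — to show that $|u^\eta|$ and $|u-u^\eta|$ are bounded (or grow sublinearly) on $\Omega\setminus B(0,R_0)$, so that the flux integral over $\partial B(0,R)$ genuinely vanishes in the limit. This is where most of the real work lies; everything else is a routine adaptation of \thmref{apriori:NR}.
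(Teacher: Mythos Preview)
Your Steps~1 and~2 are essentially the paper's approach: the same approximation $A^\eta$, the same variational $u$ from \cite[Lemma~1.1]{Rule}, and the same global estimate $\int_\V|\nabla u-\nabla u^\eta|^2\leq o(\eta)$ carried over verbatim from \thmref{apriori:NR} (indeed the paper simply cites (\ref{eqn:nabladifference}), noting via \thmref{mixedunique} that the Dirichlet and regularity solutions coincide, so the work of \thmref{apriori:NR} applies unchanged).

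Step~3 is where you diverge, and the path-integration argument you sketch has a gap. Integrating $|\nabla(u-u^\eta)|$ along a cone path from $Y$ down to the boundary does not follow from the global $L^2$ bound on $\nabla(u-u^\eta)$: you would need control of a \emph{line} integral, and the claimed decay $o(\eta)^{1/2}/(1+|Y|)$ is not justified. Moreover $u-u^\eta$ solves no single elliptic equation, so you cannot apply \lemref{PDE3} to the difference directly.

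The paper's route is both simpler and avoids your ``main obstacle'' entirely. Apply \lemref{PDE3} to $u$ itself (which \emph{is} a solution): $|u(Y)|^2\leq C\dashint_{B(Y,r)}|u|^2\leq C\dashint_{B(Y,r)}|u-u^\eta|^2+CN(u^\eta)(X)^2$. So it suffices to show $\int_{B(Y,r)}|u-u^\eta|^2\leq C(R)\,o(\eta)$ for $B(Y,r)\subset B(0,R)$ --- the constant is allowed to depend on $R$, since one works with the doubly truncated $N_{\delta,R}u$ and sends $\eta\to0$ before $\delta\to0$, $R\to\infty$. For this, fix a compact $\W\subset\V$ containing $B(0,2R)\cap\V$ and apply the Poincar\'e inequality: since $\Tr(u-u^\eta)=0$ on $\partial\W\cap\partial\V$,
\[
\|u-u^\eta\|_{L^2(\W)}\leq C(\W)\|\nabla(u-u^\eta)\|_{L^2(\W)}\leq C(\W)\,o(\eta)^{1/2}
\]
by (\ref{eqn:nabladifference}). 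This sidesteps all the delicate analysis at infinity you flag: the behavior of $u$, $u^\eta$ outside $B(0,R)$ is irrelevant once $C(R)$ dependence is permitted.
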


\begin{proof}
Suppose that $f$ is smooth and compactly supported on $\partial\V$. Construct $u$, $A^\eta$, $u^\eta$ as in the proof of \thmref{apriori:NR}, with $\Tr u=\Tr u^\eta=f$, $\div A\nabla u = \div A^\eta\nabla u^\eta=0$.

As in the proof of \thmref{apriori:NR}, by using \lemref{PDE3} instead of (\ref{eqn:gradu}) it suffices to show that 
\[\int_{B(Y,r)} |u-u^\eta|^2\leq o(\eta).\]
In fact, it suffices to show that $\int_{B(Y,r)} |u-u^\eta|^2\leq C(R)o(\eta)$ for any $B(Y,r)\subset B(0,R)$; we then get a uniform bound on the $L^p$ norm of $N_{\delta,R}u(X)=\sup\{|u(Y)|:\delta<|X-Y|<(1+a)\dist(Y,\partial\V)<R\}$
which becomes a bound on $\|Nu\|_{L^p}$ as before.

Fix $\W\subset\V$ compact with $B(0,2R)\cap\V\subset\W$ for $R$ large. Then by the Poincar\'e inequality,
\[\left\|v-{\textstyle\dashint_{\partial\W\cap\partial\V} \Tr v}\right\|_{L^2(\W)}\leq C(\W)\|\nabla v\|_{L^2(\W)}\]
for all functions $v\in {W}^{1,2}(\W)$.

$u$ is constructed in this theorem the same way it was in \thmref{apriori:NR}. We will show (\thmref{mixedunique}) that Dirichlet solutions and regularity solutions with the same boundary data are equal. So (\ref{eqn:nabladifference}) applies, and so since $\Tr u=\Tr u^\eta$ on $\partial\V$,
\[\|u-u^\eta\|_{L^2(\W)}
\leq C(\W)\|\nabla u-\nabla u^\eta\|_{L^2(\W)}
\leq C(\W)\|\nabla u-\nabla u^\eta\|_{L^2(\V)}
\leq o(\eta)
.\]
This completes the proof.
\end{proof}

\begin{thm}\label{thm:apriori:BMOcpt} Fix some $\Lambda$, $\lambda$ and~$\epsilon_0$. Let $\V$ be a good Lipschitz domain, and suppose that solutions to $(R)^A_p$ and \thmref{BMO} exist for all matrix-valued functions~$A$ satisfying (\ref{eqn:elliptic}), $\|\Im A\|_{L^\infty}<\epsilon_0$, and which in addition are smooth and satisfy $A(x)\equiv I$ for large $|x|$. 

Let $A$ be a matrix-valued function that satisfies (\ref{eqn:elliptic}) and $\|\Im A\|_{L^\infty}<\epsilon_0$, but not smoothness or $A(x)\equiv I$ for large~$x$. Then solutions to \thmref{BMO} with compactly supported boundary data exist for $A$ in~$\V$.
\end{thm}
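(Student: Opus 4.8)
\textbf{Proof proposal for \thmref{apriori:BMOcpt}.}

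The plan is to mimic the structure of the proof of \thmref{apriori:Dp}, replacing the $L^p$ non-tangential maximal function estimate with the Carleson measure estimate~(\ref{eqn:uCarleson}). Fix a good Lipschitz domain $\V$ and a rough matrix $A$ with $\|\Im A\|_{L^\infty}<\epsilon_0$. Let $f$ be a compactly supported boundary datum in $BMO(\partial\V)$ (or, by density arguments that follow, in a convenient dense subclass such as smooth compactly supported functions; note $BMO$ duality with $H^1$ means compactly supported $f$ have $\|f\|_{BMO}<\infty$). We first construct an energy solution $u$ with $\Tr u = f$ and $\div A\nabla u=0$, exactly as in \thmref{apriori:NR} and \thmref{apriori:Dp}, using \cite[Lemma~1.1]{Rule}; this $u$ has $\nabla u\in L^2(\V)$ and $u(X)/(1+|X|)\in L^2$ when $\V$ is special. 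Then, for small $\eta$, we produce a smooth $A^\eta$ agreeing with $A$ off a small bad set $E_\eta$ with $|E_\eta\cap(-1/\eta,1/\eta)|<\eta$, and $A^\eta\equiv I$ for large $|x|$, and let $u^\eta$ be the solution of \thmref{BMO} for $A^\eta$ with the same boundary data $f$. The hypotheses of the theorem guarantee $u^\eta$ exists and satisfies the Carleson bound with constant $C\|f\|_{BMO}$, uniformly in $\eta$.

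The core step is the energy comparison $\int_\V |\nabla u-\nabla u^\eta|^2\leq o(\eta)$. This is carried out verbatim as in \thmref{apriori:NR}: one integrates $(\nabla\bar u-\nabla\bar u^\eta)\cdot A^\eta(\nabla u-\nabla u^\eta)$ over $\V\cap B(0,R)$, integrates by parts, and finds a boundary term on $\partial\V$ that vanishes because $u$ and $u^\eta$ have the same trace, a term on $\partial B(0,R)$ controlled by averaging in $R$ and using $\nabla u,\nabla u^\eta\in L^2$ together with boundedness of $u,u^\eta$ far from $\partial\V$, and a term $\int_\V(\nabla\bar u-\nabla\bar u^\eta)(A^\eta-A)\nabla u$ bounded by $C\eta(\|\nabla u\|_{L^2}^2+\|\nabla u^\eta\|_{L^2}^2)+C\|\nabla u\|_{L^2(E_\eta\times\R)}(\|\nabla u\|_{L^2}+\|\nabla u^\eta\|_{L^2})=o(\eta)$. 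Here I need $u^\eta$ bounded on $\V\setminus B(0,R_0)$ uniformly in $\eta$; for $\V^C$ bounded this follows since $u^\eta$ is a layer potential of an $H^1$ density (the $BMO$ solution of \thmref{BMO} is $\D((\K^t)^{-1}f)$ with $\|(\K^t)^{-1}f\|_{BMO}\approx\|f\|_{BMO}$, and compactly supported $BMO$ data lets one localize), while for $\V$ bounded it is automatic; for special $\V$ one argues as in \thmref{apriori:NR} using local boundedness.

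Finally, I convert the energy bound~(\ref{eqn:nabladifference}) into the Carleson bound for $u$. Given $X_0\in\partial\V$ and $R>0$, write
\[
\int_{B(X_0,R)\cap\V}|\nabla u|^2\dist(X,\partial\V)\,dX
\leq 2\int_{B(X_0,R)\cap\V}|\nabla u-\nabla u^\eta|^2\dist(X,\partial\V)\,dX
+2\int_{B(X_0,R)\cap\V}|\nabla u^\eta|^2\dist(X,\partial\V)\,dX.
\]
The second integral is at most $C\|f\|_{BMO}^2\,\sigma(B(X_0,R)\cap\partial\V)$ by hypothesis, uniformly in $\eta$. For the first, since $\dist(X,\partial\V)\leq CR$ on $B(X_0,R)\cap\V$, it is bounded by $CR\int_\V|\nabla u-\nabla u^\eta|^2\leq CR\,o(\eta)$, which tends to $0$ as $\eta\to0$ with $X_0,R$ fixed. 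Letting $\eta\to0$ gives
\[
\frac{1}{\sigma(B(X_0,R)\cap\partial\V)}\int_{B(X_0,R)\cap\V}|\nabla u(X)|^2\dist(X,\partial\V)\,dX\leq C\|f\|_{BMO}^2,
\]
which is~(\ref{eqn:uCarleson}). It remains to check $u=f$ on $\partial\V$ in the non-tangential sense; this follows because $u$ is an energy solution with trace $f$ and, near the boundary, the local estimates of Chapter~\ref{chap:PDEbasic} together with the Carleson bound force non-tangential convergence to $f$ a.e.\ (the standard argument that a Carleson-measure solution with energy data attains its trace non-tangentially). The main obstacle I anticipate is the uniform-in-$\eta$ boundedness of $u^\eta$ away from $\partial\V$ in the special-domain case: the $BMO$ solution is not in $L^\infty$ globally, so one must use the compact support of $f$ and the decay of $\nabla_X\Gamma_X^T(Y)$ to control $u^\eta$ on $\V\setminus B(0,R_0)$, exactly paralleling the delicate far-field arguments in the proof of \thmref{apriori:NR}.
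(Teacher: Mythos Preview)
Your strategy matches the paper's: build the energy/regularity solution $u$ for rough $A$ with smooth compactly supported data $f$, approximate $A$ by smooth $A^\eta$ with solutions $u^\eta$, invoke the energy comparison (\ref{eqn:nabladifference}) from \thmref{apriori:NR}, and transfer the Carleson bound from $u^\eta$ to $u$. Your treatment of the difference term is in fact cleaner than the paper's: you bound $\int_{B(X_0,R)\cap\V}|\nabla u-\nabla u^\eta|^2\dist(\cdot,\partial\V)\leq R\int_\V|\nabla u-\nabla u^\eta|^2\leq R\,o(\eta)$ directly, whereas the paper asserts a pointwise estimate $|\nabla u-\nabla u^\eta|\leq o(\eta)/\dist(\cdot,\partial\V)$ and introduces a truncation to $\V_\delta=\{X:\dist(X,\partial\V)>\delta\}$, sending first $\eta\to0$ and then $\delta\to0$. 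On the far-field obstacle you flag: the paper sidesteps it by taking $u^\eta$ to be the \emph{regularity} solution for $A^\eta$ (exactly the $u^\eta$ of \thmref{apriori:NR}); for smooth $f$ this coincides with the BMO solution by uniqueness, so the far-field control is already in hand and (\ref{eqn:nabladifference}) applies verbatim.

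The one place your sketch is genuinely incomplete is the passage from smooth to arbitrary compactly supported $f$. Smooth functions are \emph{not} dense in $BMO$ in the $BMO$ norm, so ``density arguments'' is not enough as stated. The paper instead exploits that compactly supported $f$ lies in $L^p$: choose smooth compactly supported $f_n\to f$ in $L^p$ with $\|f_n\|_{BMO}\leq C\|f\|_{BMO}$, and use $(D)^A_p$ for rough $A$ (already available by \thmref{apriori:Dp}) to get $|\nabla u_n(X)-\nabla u_m(X)|\leq C\|f_n-f_m\|_{L^p}\dist(X,\partial\V)^{-1-1/p}$, hence almost-uniform convergence of $u_n$ and $\nabla u_n$ on compact subsets of $\V$. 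The uniform Carleson bound for $u_n$ (constant $C\|f\|_{BMO}^2$, not $C\|f_n\|_{BMO}^2$) then passes to the limit, again via a $\V_\delta$ truncation. You should make this step explicit; the rest of your argument is fine.
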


\begin{proof}
First take $f$ to be a smooth, compactly supported function on~$\partial\V$. We may construct a $u$ with $\div A\nabla u=0$ in~$\V$, $\Tr u = f$ and $\|N(\nabla u)\|_{L^{p}}\leq C\|\partial_\tau f\|_{L^{p}}$. We need only show that (\ref{eqn:uCarleson}) holds.

Define $A^\eta$, $u^\eta$ as before. Then as in \thmref{apriori:NR},
\[|\nabla u(Y)-\nabla u^\eta(Y)| \leq \frac{o(\eta)}{\dist(Y,\partial\V)}.\]

So if $X\in\partial\V$ and $R>0$, and if $\V_\delta=\{X\in\V:\dist(X,\partial\V)>\delta\}$, then
\begin{multline*}
\frac{1}{R}\int_{B(X,R)\cap\V_\delta} |\nabla u(Y)|^2\dist(Y,\partial\V)\,dY
\\\begin{aligned}
&\leq
\frac{2}{R}\int_{B(X,R)\cap\V_\delta}
\left(|\nabla u(Y)-\nabla u^\eta(Y)|^2
+ |\nabla u^\eta(Y)|^2\right)
\dist(Y,\partial\V)\,dY
\\&\leq
\frac{2}{R}\int_{B(X,R)\cap\V_\delta} \frac{o(\eta)}{\dist(Y,\partial\V)}\,dY
+C\|f\|_{BMO}^2
\end{aligned}
\end{multline*}
and so by letting $\eta\to 0$,
\[\frac{1}{R}\int_{B(X,R)\cap\V_\delta} |\nabla u(Y)|^2\dist(Y,\partial\V)\,dY\leq C\|f\|_{BMO}^2\]
uniformly in $\delta$; by letting $\delta\to 0$ we recover \thmref{BMO} for smooth, compactly supported boundary data.

We now consider moving to nonsmooth, compactly supported boundary data. (I did not do this explicitly when proving Theorems~\ref{thm:apriori:NR} and~\ref{thm:apriori:Dp} because smooth functions are dense in $L^p$; they are not dense in $BMO$.) 

Since $f$ is compactly supported, $f\in L^p$ for $1\leq p<\infty$. Let $p$ be large enough that $(D)^A_p$ holds in~$\V$. Let $f_n\to f$ in $L^p$, $f_n$ smooth and compactly supported, with $\|f_n\|_{BMO}\leq C\|f\|_{BMO}$. Let $u_n$ be the solution with boundary data $f_n$ as above.

%
%
%

Then $\|N(u_n-u_m)\|_{L^p}\leq C\|f_n-f_m\|_{L^p}$, so \[|u_n(X)-u_m(X)|\leq \frac{C\|f_n-f_m\|_{L^p}}{\dist(X,\partial\V)^{1/p}},\quad|\nabla u_n(X)-\nabla u_m(X)|\leq \frac{C\|f_n-f_m\|_{L^p}}{\dist(X,\partial\V)^{1+1/p}}.\]
Thus, $\{u_n\}$ converges almost uniformly to some $u$ with the right boundary data; we see that for any $X\in\partial\V$ and any $R>0$,
\begin{multline*}
\frac{1}{R}\int_{B(X,R)\cap\V_\delta} |\nabla u(Y)|^2\dist(Y,\partial\V)\,dY
\\\leq
\frac{2}{R}\int_{B(X,R)\cap\V_\delta}
\left(|\nabla u(Y)-\nabla u_n(Y)|^2
+ |\nabla u_n(Y)|^2\right)
\dist(Y,\partial\V)\,dY
\leq C\|f\|_{BMO}
\end{multline*}
for $n$ large enough; by letting $\delta\to 0$ we complete the proof.

We now want to pass to non-compactly supported $f$.
\end{proof}

\chapter{Converses and uniqueness}
\label{chap:converse}
We have show that, if $\V$ is a bounded or special Lipschitz domain and $f$ is a function defined on $\partial\V$, then there is some $u$ with $\div A\nabla u=0$ in $\V$ and such that
\begin{itemize}
\item If $f\in H^1(\partial\V)$, then $\nu\cdot A\nabla u=f$ (or $\tau\cdot\nabla u=f$) on $\partial\V$ and $N(\nabla u)\in L^1(\partial\V)$.
\item If $f\in L^p(\partial\V)\cap H^1(\partial\V)$ for $p>1$ small enough,  then $\nu\cdot A\nabla u=f$ (or $\tau\cdot\nabla u=f$) on $\partial\V$ and $N(\nabla u)\in L^p(\partial\V)$. 
\item If $f\in L^p(\partial\V)$ for $p<\infty$ large enough,  then $u=f$ on $\partial\V$ and $N(u)\in L^p(\partial\V)$. 
\item If $f\in BMO(\partial\V)$, then $u=f$ on $\partial\V$ and (\ref{eqn:uCarleson}) holds.
\end{itemize}

We wish to prove the converses, and to show that such $u$ are unique. 

Recall \lemref{NTM}: if $Nu$ (or $N(\nabla u)$) is in $L^p$ for $1\leq p\leq\infty$, then we may widen the apertures of our nontangential cones as much as we like; in particular we may widen them so that
$\psi(x,t)\in\gamma(\psi(x))$, where $\psi$ is as in (\ref{eqn:psi}) for any of the special Lipschitz domains $\Omega_i$ in \dfnref{domain}.

We begin with a lemma for functions whose gradients are in $L^p$:

\begin{lem}\label{lem:L2loc} Suppose that $N(\nabla u)\in L^p(\partial\V)$ for some $1\leq p\leq \infty$. Then $\nabla u\in L^{2}_{loc}(\bar\V)$ and $u\in L^\infty_{loc}(\bar\V)$. 

Furthermore, if $\U\subset\bar \V$ is compact, we may bound $\|u\|_{L^\infty(\U)}$ or $\|\nabla u\|_{L^2(\U)}$ depending only on $\U$, $\V$ and $\|N(\nabla u)\|_{L^p}$. In particular, if $\V=\Omega$ is a special Lipschitz domain, then
\[\int_{B(X_0,R)\cap\Omega}|\nabla u|^2 \leq C_p R^{2/p'}\|N(\nabla u)\|_{L^p},\]
\[\int_{B(X_0,R)\cap\Omega}|\nabla u|^2 \leq C \|N(\nabla u)\|_{L^1}.\]
\end{lem}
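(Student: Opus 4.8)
The plan is to prove \lemref{L2loc} by a covering argument, reducing everything to the model situation of a vertical strip in a special Lipschitz domain, and then estimating $\nabla u$ and $u$ on that strip using only the pointwise bound $|\nabla u(Y)|\leq N(\nabla u)(X)$ whenever $Y$ lies in a (widened) nontangential cone with vertex $X\in\partial\V$. First I would invoke \lemref{NTM} to widen the apertures of the cones $\gamma$ so that, for each of the special Lipschitz domains $\Omega_i$ in \dfnref{domain}, every point $\psi(x,t)$ with $0<t<C r_i$ and $x$ in the relevant parameter interval lies in $\gamma(\psi(x))$; this costs only a constant factor in $\|N(\nabla u)\|_{L^p}$. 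Then, since any compact $\U\subset\bar\V$ is covered by finitely many of the boundary cylinders $R_j$ together with a compact subset of the interior $\V$ (on which $\nabla u$ is bounded by interior estimates applied to $N(\nabla u)$, which is finite a.e.), it suffices to bound $\nabla u$ and $u$ on a single cylinder $\psi(I\times(0,h))$.

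On such a cylinder I would argue as follows. For fixed $t>0$, the point $\psi(x,t)$ is in $\gamma(\psi(x))$ (after the widening), so $|\nabla u(\psi(x,t))|\leq N(\nabla u)(\psi(x))$. Fubini in the $(x,t)$ coordinates then gives, for the special domain $\Omega$,
\[
\int_{B(X_0,R)\cap\Omega}|\nabla u|^2
\leq C\int_{I}\int_0^{CR} N(\nabla u)(\psi(x))^2\,dt\,dx
\leq C R\int_{I} N(\nabla u)(\psi(x))^2\,dx,
\]
where $I$ is the parameter interval corresponding to $B(X_0,R)\cap\partial\Omega$, so $|I|\leq CR$. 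For $1\leq p<\infty$, Hölder's inequality in $x$ bounds $\int_I N(\nabla u)^2\,dx\leq |I|^{1-2/p}\|N(\nabla u)\|_{L^p}^{2/p}\cdot(\dots)$ — more cleanly, when $p\le 2$ one has $\int_I N(\nabla u)^2 \le \|N(\nabla u)\|_{L^p}^{p}\,(\sup_I N(\nabla u))^{2-p}$, but the intended statement is obtained most directly by writing $\int_{B(X_0,R)\cap\Omega}|\nabla u|^2\leq C\int_0^{CR}\|N(\nabla u)\|_{L^p(\partial\Omega)}\,(2R)^{1/p'}\,\frac{dt}{t^{0}}$; I would simply carry out the Hölder estimate $\int_I N(\nabla u)^2\,dx\leq \|N(\nabla u)\|_{L^p(\partial\Omega)}^2\,|I|^{1-2/p}\cdot|I|^{2/p-1+?}$ carefully to land on $C_p R^{2/p'}\|N(\nabla u)\|_{L^p}$ (for $p\geq 2$; for $p<2$ the same bound holds a fortiori by restricting to a bounded piece, since the statement only claims a local bound). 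For $p=1$ the cleanest route is: for each $t$, $\int_I|\nabla u(\psi(x,t))|\,dx\leq\int_I N(\nabla u)(\psi(x))\,dx\leq\|N(\nabla u)\|_{L^1}$, and since $|\nabla u(\psi(x,t))|\leq N(\nabla u)(\psi(x))$ is also bounded pointwise and the strip has bounded height, $\int_{B(X_0,R)\cap\Omega}|\nabla u|^2\leq C\|N(\nabla u)\|_{L^1}$ follows by combining the $L^1$ bound in $x$ with a uniform-in-$t$ pointwise bound near $\partial\Omega$; I would make this precise using that on a fixed bounded cylinder $\dist(\psi(x,t),\partial\Omega)\approx t$ and the cone bound gives decay.

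The $L^\infty_{loc}$ bound on $u$ itself then comes by integrating the gradient along vertical and horizontal segments, exactly as in the proof of \lemref{Sfcts}: fixing a base point $X_1\in\bar\V$ and any $X$ in the compact set, $|u(X)-u(X_1)|$ is bounded by the line integral of $|\nabla u|$ along a polygonal path staying in $\bar\V$, and each segment integral is controlled by $\int_0^h N(\nabla u)(\psi(x))\,dt$-type expressions, hence by $\|N(\nabla u)\|_{L^p}$ times a constant depending on the geometry; this shows $u\in L^\infty_{loc}(\bar\V)$ with the stated dependence. I expect the main obstacle to be purely bookkeeping: getting the powers of $R$ and the Hölder exponents to match the claimed $C_p R^{2/p'}$ precisely, and handling the patching so that the interior piece (where $\nabla u$ is controlled by interior regularity applied to the finite function $N(\nabla u)$ rather than by a cone bound) is glued cleanly to the boundary cylinders. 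None of this is deep, but the $p=1$ endpoint and the uniformity in $\eta$ needed downstream (in \thmref{apriori:NR}) require that every constant be tracked as depending only on $\U$, $\V$, and $\|N(\nabla u)\|_{L^p}$, which I would state explicitly at each step.
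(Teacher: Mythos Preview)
Your reduction to a strip in a special Lipschitz domain and your treatment of $u\in L^\infty_{loc}$ by path-integrating $\nabla u$ are both fine and essentially match the paper. The gap is in the $L^2$ estimate for $p<2$, and especially at $p=1$.

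Your computation bounds $|\nabla u(\psi(x,t))|^2\le N(\nabla u)(\psi(x))^2$ and then tries H\"older in $x$. For $p\ge 2$ this gives exactly $CR^{2/p'}\|N(\nabla u)\|_{L^p}^2$, but for $p<2$ there is no reason $N(\nabla u)^2$ is locally integrable, and your ``a fortiori'' remark does not rescue this. The paper instead splits $|\nabla u|^2=|\nabla u|\cdot|\nabla u|$ and controls one factor by the pointwise decay bound (\ref{eqn:nablau}), namely $|\nabla u(\psi(x,t))|\le C\|N(\nabla u)\|_{L^p}\,t^{-1/p}$, and the other by $N(\nabla u)(\psi(x))$; then $\int_0^R t^{-1/p}\,dt$ is finite for $p>1$ and H\"older in $x$ finishes.

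At $p=1$ this splitting also fails, since the decay is $t^{-1}$, which is not integrable; your ``combine the $L^1$ bound in $x$ with a uniform-in-$t$ pointwise bound'' cannot be made precise for exactly this reason. The paper uses a genuinely different idea here: a level-set argument. Setting $E(\alpha)=\{|\nabla u|>\alpha\}$ and $e(\alpha)=\{N(\nabla u)>\alpha\}$, one shows geometrically that $E(\alpha)\subset\psi\bigl(e(\alpha)\times(0,C\sigma(e(\alpha)))\bigr)$, hence $|E(\alpha)|\le C\sigma(e(\alpha))^2$. Then
\[
\int_\Omega|\nabla u|^2=\int_0^\infty 2\alpha\,|E(\alpha)|\,d\alpha
\le C\int_0^\infty 2\alpha\,\sigma(e(\alpha))^2\,d\alpha
\le C\|N(\nabla u)\|_{L^1}\int_0^\infty \sigma(e(\alpha))\,d\alpha
= C\|N(\nabla u)\|_{L^1}^2.
\]
This is the missing ingredient in your proposal.
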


\begin{proof}
First, note that if $N(\nabla u)\in L^p(\partial\V)$ for $p\geq 1$, then by (\ref{eqn:nablau}),
\[|\nabla u(X)|\leq C\|N(\nabla u)\|_{L^p} \min(\sigma(\partial\V),\dist(X,\partial\V))^{-1/p}\]
and so $\nabla u\in L^\infty_{loc}(\V)\supset L^2_{loc}(\V)$. We need this result for~$\overline\V$.

By looking at sufficiently small neighborhoods of points in $\partial\V$, we see that it suffices to establish that $\nabla u\in L^2_{loc}(\Omega)$ for special Lipschitz domains $\Omega$.

If $p>1$, then
\begin{align*}
\int_{0}^R\int_{x_0}^{x_0+R} |\nabla u(\psi(x,t))|^2\,dx\,dt
&\leq C\int_{0}^R\int_{x_0}^{x_0+R} |\nabla u(\psi(x,t))|t^{-1/p}\,dx\,dt
\\&\leq C_p R^{1/p'}\int_{x_0}^{x_0+R} |N(\nabla u)(\psi(x))|\,dx
\\&\leq C_p R^{2/p'}\|N(\nabla u)\|_{L^p}
\end{align*}
and so $\nabla u\in L^2_{loc}(\bar \Omega)$.

If $N(\nabla u)\in L^1(\partial\Omega)$, define
\[
E(\alpha)=\{X\in\Omega:|\nabla u|>\alpha\},
\quad
e(\alpha)=\{X\in\partial\Omega:N(\nabla u)>\alpha\}.
\]
Note that $\alpha\,\sigma(e(\alpha))\leq \|N(\nabla u)\|_{L^1}$.

If $X\in E(\alpha)$, then $X\notin\gamma(Y)$ for any $Y\in\partial\Omega\setminus e(\alpha)$, and if $X^*\in\partial\Omega$ is the closest point to $X$, then $X^*\in e(\alpha)$. So
\[\dist(X,\partial\Omega)+\frac{1}{2}\sigma(e(\alpha))
\geq \dist(X,\partial\Omega\setminus e(\alpha))
\geq (1+a)\dist(X,\partial\Omega)\]
and so $\dist(X,\partial\Omega)\leq C\sigma(e(\alpha))$.
So $E(\alpha)\subset\psi(\R\times(0, C\sigma(e(\alpha)) ))$.

Now, we want to bound $|E(\alpha)|$. But since $\psi(x,t)\in E(\alpha)$ implies that $\psi(x)\in e(\alpha)$, we have that $E(\alpha)\subset\psi(e(\alpha)\times(0, C\sigma(e(\alpha)) ))$
which implies that
\[|E(\alpha)|\leq  C\sigma(e(\alpha))^2\]
where $|\cdot|$ is used for Lebesgue measure in $\R^2$ and $\sigma$ is used for surface measure.

Now, 
\begin{align*}
\int_{\Omega} |\nabla u|^2
&=\int_0^\infty 2\alpha |E(\alpha)|\,d\alpha
\leq C\int_0^\infty 2\alpha \,\sigma(e(\alpha))^2\,d\alpha
\\&\leq C\|N(\nabla u)\|_{L^1}\int_0^\infty  \sigma(e(\alpha))\,d\alpha
= C\|N(\nabla u)\|_{L^1}^2.
\end{align*}

We now must establish that $u\in L^\infty_{loc}$. If $p>1$ then $u$ is H\"older continuous in $\bar\V$. Otherwise, $u$ is continuous on compact subsets of $\V$ because $\nabla u$ is bounded; we need only look at a small neighborhood of the boundary, and so we need only consider $\V=\Omega$ a special Lipschitz domain.

For some $X_0=\psi(x_0)\in\partial\Omega$, $N(\nabla u)(X_0)$ is finite. Then for any $t$, $|u(\psi(x_0,t))-u(X_0)|\leq tN(\nabla u)(X_0)$ is finite.

Now, for any $y\in\R$ and any $s$,
\begin{align*}
|u(\psi(y,s))-u(X_0)|
&\leq |u(\psi(y,s))-u(\psi(x_0,s))|+|u(\psi(x_0,s))-u(\psi(x_0))|
\\&\leq s N(\nabla u)(X_0) + \int_{\partial\Omega+s\e} |\nabla u|\,d\sigma
\\&\leq s N(\nabla u)(X_0) + \|N(\nabla u)\|_{L^1}.
\end{align*}
Thus we are done.
\end{proof}

\section{Uniqueness for the Neumann and regularity problems}

\begin{thm} \label{thm:NRunique} Suppose that $\div A\nabla u=0$ in $\V$ for some Lipschitz domain~$\V$. Assume that either $\nu\cdot A\nabla u=0$ on $\partial\V$ or $u\equiv C$ on $\partial\V$ for some constant~$C$. 

If $\V$ is a bounded Lipschitz domain and $N(\nabla u)\in L^p(\partial\V)$ for $1\leq p\leq \infty$, then $u$ is a constant.

If $\V=\Omega$ is a special Lipschitz domain, then there is some $p_0>1$ depending only on ellipticity and the Lipschitz constant of $\Omega$ such that, if $N(\nabla u)\in L^p(\partial\Omega)$ for some $1\leq p\leq p_0$, then $u$ is a constant.

If $\R^2\setminus\V$ is bounded, $N(\nabla u)\in L^p(\partial\V)$ for $1\leq p\leq \infty$ and $\lim_{|X|\to\infty} u(X)$ exists, then $u$ is a constant.
\end{thm}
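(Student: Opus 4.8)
\textbf{Proof proposal for Theorem~\ref{thm:NRunique}.} The plan is to treat the three cases (bounded $\V$, special $\Omega$, exterior $\R^2\setminus\V$ bounded) by the same energy argument: show $\int_\V \nabla\bar u\cdot A\nabla u=0$, and then use ellipticity (\ref{eqn:elliptic}) to conclude $\nabla u\equiv 0$, hence $u$ is constant. The point is that the integration by parts is justified once we control the boundary terms, and \lemref{L2loc} gives us exactly the local integrability we need to make the computations legitimate.

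First, for $\V$ bounded: by \lemref{L2loc}, $\nabla u\in L^2(\V)$. Exhaust $\V$ by smooth domains $\W_j\nearrow\V$ (or directly use that $\V$ is Lipschitz and $\nabla u\in L^2$), and write
\[\int_{\V}\nabla\bar u\cdot A\nabla u = \int_{\partial\V}\bar u\,\nu\cdot A\nabla u\,d\sigma.\]
If $\nu\cdot A\nabla u=0$ on $\partial\V$ the right side vanishes; if $u\equiv C$ on $\partial\V$, then $\int_{\partial\V}\bar u\,\nu\cdot A\nabla u=\bar C\int_{\partial\V}\nu\cdot A\nabla u=\bar C\int_\V\nabla 1\cdot A\nabla u=0$ by the weak definition of $\div A\nabla u=0$. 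Either way the energy is zero, so $\Re\int_\V\bar\eta\cdot A\eta\geq\lambda\int_\V|\nabla u|^2$ forces $\nabla u\equiv0$. For the exterior case, $\lim_{|X|\to\infty}u(X)$ exists, and by (\ref{eqn:nablau}) combined with \lemref{PDE1} and (\ref{eqn:gradu}) we get decay $|\nabla u(X)|\leq C/|X|^{1+\epsilon}$ away from a large ball (using $N(\nabla u)\in L^p$ and the fact that far from $\partial\V$ we have $u$ close to its limit, then applying interior estimates), so the boundary term at infinity, $\int_{\partial B(0,R)}\bar u\,\nu\cdot A\nabla u\,d\sigma$, tends to zero along a sequence $R\to\infty$; the boundary term on $\partial\V$ vanishes as before, and we again conclude $\nabla u\equiv0$.

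The special Lipschitz domain case is the main obstacle, because $\partial\Omega$ is unbounded and we have no a priori decay on $u$ itself — only $N(\nabla u)\in L^p$. Here the idea is to use the a priori bounds on solutions in special domains: by \eqref{eqn:H1special}-type reasoning (or directly from (\ref{eqn:nablau}) and a telescoping argument along the boundary as in the proof of \lemref{L2loc} and \corref{NSf}), if $N(\nabla u)\in L^p(\partial\Omega)$ for $p$ small then $u$ grows slowly and $\nabla u$ has integrable-in-$R$ behavior on large annuli; then integrate $\nabla\bar u\cdot A\nabla u$ over $\Omega\cap\psi((-R,R)\times(0,R(1+\|\phi'\|_\infty)))$, and show that the contributions from the three ``far'' faces of this box go to zero along a sequence $R\to\infty$, using Cauchy--Schwarz with the $L^2_{loc}$ bound $\int_{B(X_0,R)\cap\Omega}|\nabla u|^2\leq C_pR^{2/p'}\|N(\nabla u)\|_{L^p}$ from \lemref{L2loc}, together with the pointwise bound on $N(\nabla u)$. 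To make the growth of $u$ itself controllable we may first subtract a constant: since $N(\nabla u)\in L^1\subset L^p$, the argument in \lemref{L2loc} shows $|u(\psi(y,s))-u(\psi(x_0,s))|\leq\|N(\nabla u)\|_{L^1}$, giving a uniform modulus of growth; combined with $|u(\psi(x_0,s))-u(\psi(x_0))|\leq s\,N(\nabla u)(\psi(x_0))$ we get $|u|$ bounded by $C(1+s)$ near any vertical line, which is enough when paired with the decay of $\nabla u$ on the far faces — provided $p$ is small enough that the exponents work out, which is where the restriction $p\leq p_0$ enters. Finally, alternatively (and perhaps more cleanly), one can invoke uniqueness already established via layer potentials: since $(N)^A_p$ and $(R)^A_p$ hold with the uniqueness clause in their statements, a solution $u$ with $\nu\cdot A\nabla u=0$ (resp.\ $\partial_\tau u=0$) and $N(\nabla u)\in L^p$ must coincide with the trivial solution $u\equiv0$ (resp.\ $u\equiv C$) — but this is circular if \thmref{NRunique} is needed to prove $(N)^A_p$, so the energy argument above is the one to carry out.
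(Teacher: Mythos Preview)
Your treatment of the bounded and exterior cases follows the paper's energy approach, though with some imprecisions (the claimed decay $|\nabla u(X)|\leq C/|X|^{1+\epsilon}$ in the exterior case is too strong; what actually holds, via \lemref{PDE1} applied to $u-u_\infty$, is $|\nabla u(X)|=o(1)/|X|$, which suffices). The paper is more careful about justifying the integration by parts---in the Neumann case it approximates $u$ by $\eta\in C_0^\infty$ in $W^{1,2}$, and in the regularity case it works on the subdomain $\V_\delta=\{X\in\V:\dist(X,\partial\V)>\delta\}$ after first showing $|u|<\epsilon$ on $\V\setminus\V_\delta$---but your sketch is heading in the right direction.

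For the special Lipschitz domain, however, your direct energy argument on a box $\psi((-R,R)\times(0,CR))$ has a genuine gap. The boundary terms on the far faces do \emph{not} vanish. By \eqref{eqn:uholder}, $|u|$ can grow like $|X|^{1/p'}$, while \eqref{eqn:nablau} gives only $|\nabla u|\leq C\dist(\cdot,\partial\Omega)^{-1/p}$; on the top face this yields a contribution of order $R\cdot R^{1/p'}\cdot R^{-1/p}=R^{2/p'}$, which grows for $p>1$ and is merely bounded for $p=1$. Choosing $R$ so that $N(\nabla u)(\psi(\pm R))$ is small helps on the side faces but not on the top, and for $p>1$ there is no reason $\int_\Omega|\nabla u|^2$ should be finite at all (\lemref{L2loc} only gives $\int_{B(X_0,R)\cap\Omega}|\nabla u|^2\leq CR^{2/p'}$), so the energy method cannot close.

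The paper instead does exactly what you dismissed as circular, and it is not circular. Pick $R$ with $N(\nabla u)(\psi(\pm R))\leq\epsilon R^{-1/p}$ (possible since $N(\nabla u)\in L^p$), and let $\Omega_R=\psi((-R,R)\times(0,CR))$. On $\partial\Omega_R$, the Neumann (resp.\ tangential) data has $L^p$ norm at most $C\epsilon$: it vanishes on $\partial\Omega_R\cap\partial\Omega$ and is controlled by $N(\nabla u)(\psi(\pm R))$ on the remaining faces. Now apply the \emph{bounded}-domain case of this very theorem (just proven) together with the a priori estimate from $(N)^A_p$ or $(R)^A_p$ in $\Omega_R$ to get $\|N_{\Omega_R}(\nabla u)\|_{L^p}\leq C\epsilon$, hence $|\nabla u(X)|\leq C\epsilon\dist(X,\partial\Omega)^{-1/p}$ for $X$ well inside $\Omega_R$; letting $R\to\infty$ and $\epsilon\to 0$ gives $\nabla u\equiv 0$. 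There is no circularity because what is needed from $(N)^A_p$/$(R)^A_p$ in bounded subdomains is only the \emph{existence-with-estimate} part, which is established via the layer potential machinery independently of \thmref{NRunique}; the \emph{uniqueness} clause in those problems is what \thmref{NRunique} supplies. This also explains the restriction $p\leq p_0$: it is precisely the range in which the bounded-domain a priori estimates are available.
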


In the regularity case we may assume without loss of generality that $u\equiv 0$ on $\partial\V$. 

\begin{proof} We begin with domains $\V$ with compact boundary.

Fix some $\zeta>0$ and define $R_0$ as follows. If $\V$ is bounded, simply let $R_0$ be large enough that $\V\subset B(0,R_0)$.
Otherwise, $u_\infty=\lim_{|X|\to\infty} u(X)$ exists, and so there is some $R_0>0$ such that if $|X|>R_0$, then $|u(X)-u_\infty|<\zeta$ and so if $R>4R_0$, then 
\[\int_{B(0,2R)\setminus B(0,R)}|\nabla u|^2
\leq\frac{C}{R^2}\int_{B(0,4R)\setminus B(0,R/2)}|u-u_\infty|^2
\leq C\zeta^2.\]

Consider the Neumann case $\nu\cdot A\nabla u=0$ first. By \lemref{L2loc} $\nabla u\in W^{1,2}(\bar\V\cap B(0,R))$. So for any $\epsilon>0$ there is some $\eta\in C^\infty_0(B(0,2R))$ such that $\|\nabla \eta - \nabla u\|_{W^{1,2}(\V\cap B(0,R))} < \epsilon$. (See \cite[p.~252]{Evans}.)  We may further require that 
\[\|\nabla \eta\|_{L^2(B(0,2R)\setminus B(0,R))}\leq C\|\nabla u\|_{L^2(B(0,2R)\setminus B(0,R))}+C\|u\|_{L^2(B(0,2R)\setminus B(0,R))}/R.\]

But by the weak definition of $\nu\cdot A\nabla u=0$, we have that
\[\int_{\V\cap B(0,2R)} \nabla\bar\eta\cdot A\nabla u=0\]
and so
\begin{align*}
\left|\int_{\V\cap B(0,2R)} \nabla\bar u\cdot A\nabla u\right|
&=\left|\int_{\V\cap B(0,2R)} (\nabla\bar u-\nabla\bar\eta)\cdot A\nabla u\right|
\\&\leq
\int_{\V\cap B(0,R)} |\nabla\bar u-\nabla\bar\eta| |A\nabla u|
\\&\qquad+
\int_{\V\cap B(0,2R)\setminus B(0,R)}(|\nabla \eta|+|\nabla u|)|\nabla u|
\\&
\leq 
C\epsilon \|\nabla u\|_{L^2({\V\cap B(0,R)})}
+ C\zeta^2.
\end{align*}
We first take the limit as $\epsilon\to 0$ to eliminate the first term; we then take the limit as $R\to \infty$, which forces $\zeta\to 0$, which eliminates the second term. Thus by ellipticity of $A$ $\nabla u=0$ almost everywhere, so $u$ must be a constant.

Next, consider the regularity case for $\partial\V$ bounded. If $N(\nabla u)\in L^p$ then $N(\nabla u)$ is finite a.e., and if $N(\nabla u)(X)$ is finite then $\lim_{Z\to X \text{ n.t.}} u(Z)$ exists; so we have that $\lim_{Z\to X \text{ n.t.}} u(Z)=0$ a.e.\ $\partial\V$.

We will again show that $\int_{\V}\nabla\bar u\cdot A\nabla u=0$, but instead of approximating $u$ by smooth functions $\eta$, we will work on subsets of $\V$.

Pick any $\epsilon>0$. I want to show that there is some $\delta>0$ such that if $\dist(X,\partial\Omega)<\delta$ then $|u(X)|<\epsilon$.

Since $N(\nabla u)\in L^p(\partial\V)$, there is some ${\delta'}$ such that \[\int_{B(X_0,C{\delta'})\cap\partial\V} |N(\nabla u)|\,d\sigma <\epsilon/2\] for all $X_0\in\partial\V$, for some $C$ to be chosen later.

Cover $\{X\in\V:\dist(X,\partial\V)<{\delta'}/2\}$ by finitely many balls $B_i=B(X_i,{\delta'})$, and require that $N(\nabla u)(X_i)$ is finite, and that $\lim_{Z\to X_i\text{ n.t.}} u(Z)=0$ for each $X_i$. Assume $\delta'$ is small enough that $B(X_i,2C\delta')\cap\V=B(X_i,2C\delta')\cap\Omega_i$ for some special Lipschitz domains $\Omega_i$.

As in the proof of \lemref{L2loc}, if $X=\psi(x,s)$ is in $B(X_i,\delta')\cap\V$, then
\[|u(X)-u(X_i)|\leq sN(\nabla u)(X_i)+\|N(\nabla u)\|_{L^1({B(X_0,C{\delta'})\cap\partial\V})}.\]
Let $\delta_i=\epsilon/C N(\nabla u)(X_i)$, and let $\delta=\min(\delta',\min_i\delta_i)$.

Pick some $\epsilon>0$. 
Let $\V_\delta=\{X\in\V:\dist(X,\partial\V)>\delta\}$. Note that if $\delta$ is small enough, then $\partial\V_\delta$ lies in the boundary cylinders for $\V$, and so its Lipschitz constants are controlled by those of~$\V$.

Then $u$ is continuous with bounded gradient on $\V_\delta$. So we have that
\[\int_{\V_\delta\cap B(0,R)} \nabla \bar u\cdot A\nabla u =\int_{\partial\V_\delta \cup\partial B(0,R)} \bar u \, \nu\cdot A\nabla u\,d\sigma.\]
But since $N(\nabla u)\in L^1(\partial\V)$, we have that $\nabla u\in L^1(\partial\V_\delta)$ independently of $\delta$. Since $|u|<\epsilon$ on $\partial\V_\delta$, $|u|<\zeta$, $|\nabla u|<C\zeta/R$ on $\partial B(0,R)$, we have that this goes to 0 as $\delta\to 0$ and $R\to\infty$; therefore $\int_{\V} \nabla\bar u\cdot A\nabla u=0$, as desired.

We now consider the case where $\V=\Omega$ is a special Lipschitz domain. Since $N(\nabla u)\in L^p$, for any fixed $\epsilon$, $R_0>0$, there must be some $R>R_0$ such that $N(\nabla u)(\psi(\pm R))\leq \epsilon R^{-1/p}$.

Pick some $R_0$, $\epsilon$, and let $\Omega_R=\psi((-R,R)\times(0,CR))$, where $C$ is large enough that $\partial\Omega_R\subset \gamma(\psi(R))\cup\gamma(\psi(-R))$. Then 
\[\|\nu\cdot A\nabla u\|_{L^p(\partial\V_{R,\epsilon})}\leq C\epsilon
\text{ or }\|\tau\cdot \nabla u\|_{L^p(\partial\V_{R,\epsilon})}\leq C\epsilon\]
(depending on whether $\nu\cdot A\nabla u=0$ or $\tau\cdot\nabla u=0$ on $\partial\Omega$).

Then since uniqueness holds in bounded Lipschitz domains, we must have that if $p$ is small enough, $\|N(\nabla u)\|_{L^p(\partial\Omega_R)}\leq C\epsilon$. So $|\nabla u(X)|\leq C\epsilon \dist(X,\partial\Omega)^{-1/p}$ for all $|X|\leq R_0/C$; by taking the limits as $R_0\to\infty$ and $\epsilon\to 0$, we see that $\nabla u\equiv 0$, as desired.
%
%
%
\end{proof}

\section{$L^p$ uniqueness for the Dirichlet problem}

\begin{thm}\label{thm:Dpunique} Let $\V$ be a good Lipschitz domain. 
Assume that $p<\infty$ is large enough that $(R)^{A^T}_{p'}$ holds in all bounded Lipschitz domains with constants at most $C(k_i)$, where the $k_i$ are the Lipschitz constants of~$\V$.

Then if $\div A\nabla u=0$ in $\V$,
$Nu\in L^p(\partial\V)$, and $u\equiv 0$ on $\partial\V$, then $u\equiv 0$ in~$\V$. 
\end{thm}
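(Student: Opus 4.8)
\textbf{Proof proposal for Theorem~\ref{thm:Dpunique}.} The plan is to use duality against the regularity problem for $A^T$, mimicking the standard argument for real coefficients (e.g.\ as in \cite{verchota}). Suppose $\div A\nabla u=0$ in $\V$, $Nu\in L^p(\partial\V)$, and $u=0$ on $\partial\V$. I want to show $\langle u, g\rangle_\V$-type pairings vanish in a suitable sense, so first I would fix a test function. Given $g\in C^\infty_0(\partial\V)$ with $\int_\omega g\,d\sigma=0$ on each component (so $g\in H^1$, hence $g\in L^{p'}$ for the relevant $p'$), let $v$ be the solution of $(R)^{A^T}_{p'}$ with $\partial_\tau v = \partial_\tau F$ where $F$ is a primitive of $g$; more precisely I would instead test against the \emph{Neumann} data: pick $w$ with $\div A^T\nabla w=0$ in $\V$, $\nu\cdot A^T\nabla w = g$ on $\partial\V$, and $N(\nabla w)\in L^{p'}(\partial\V)$, which exists because $(N)^{A^T}_{p'}$ follows from $(R)^{A^T}_{p'}$ (or can be taken directly; the hypothesis on bounded subdomains will be used through the patching/complement arguments of \thmref{complement} exactly as in the existence theory).

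The core computation is Green's formula. On the truncated domain $\V_\delta\cap B(0,R)$ (with $\V_\delta=\{X:\dist(X,\partial\V)>\delta\}$, which for $\delta$ small has Lipschitz character controlled by that of $\V$), I would write
\[
\int_{\V_\delta\cap B(0,R)} \nabla \bar w\cdot A\nabla u - \nabla\bar u\cdot A^T\nabla w \;=\; 0,
\]
since both integrands equal $\div(\cdot)$ of the same vector field up to the PDEs, and then convert to boundary integrals over $\partial\V_\delta$ and $\partial B(0,R)\cap\V$. The interior pieces cancel, leaving
\[
\int_{\partial\V_\delta} \bigl(\bar w\,\nu\cdot A\nabla u - \bar u\,\nu\cdot A^T\nabla w\bigr)\,d\sigma
\;=\; -\!\int_{\partial B(0,R)\cap\V}(\cdots)\,d\sigma.
\]
As $\delta\to 0$, the non-tangential convergence $u\to 0$ a.e.\ on $\partial\V$ together with $Nu\in L^p$, $N(\nabla w)\in L^{p'}$ (so $\nu\cdot A^T\nabla w$ and $\nu\cdot A\nabla u$ have non-tangential traces in $L^{p'}$, $L^{\cdot}$ respectively, or at least the relevant products are dominated by $Nu\cdot N(\nabla w)\in L^1$ via \lemref{L2loc} and \lemref{NTM}) forces the $\bar u\,\nu\cdot A^T\nabla w$ term to converge to $\int_{\partial\V} 0\cdot g = 0$, and the $\bar w\,\nu\cdot A\nabla u$ term to the pairing of $\Tr w$ against the (a priori distributional) conormal derivative of $u$; but since $u=0$ on $\partial\V$ and $w$ is H\"older up to the boundary (\lemref{Sfcts}), a second application of Green's formula on $\V_\delta$ against $w$ alone shows this term also vanishes. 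The boundary-at-infinity term is handled as in \thmref{NRunique}: if $\partial\V$ is bounded it is absent for $R$ large; if $\V=\Omega$ is special, one chooses a sequence $R\to\infty$ along which $Nu(\psi(\pm R))$ and $N(\nabla w)(\psi(\pm R))$ decay like $R^{-1/p}$, $R^{-1/p'}$ respectively (possible since $Nu\in L^p$, $N(\nabla w)\in L^{p'}$), making the $\partial B(0,R)$ integral $o(1)$.

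The upshot is $\int_{\partial\V} u\, \overline{g}\,d\sigma$-type quantities vanish — more precisely, that the distributional conormal derivative of $u$ pairs to zero against all such $g$, and then that $u$ itself, recovered from its conormal derivative via the representation $u=\D(\Tr u)-\S(\nu\cdot A\nabla u)$ (equation~(\ref{eqn:Df-Sa}), valid once we know $\nabla u\in L^2_{loc}(\bar\V)$ by \lemref{L2loc} and $u$ continuous up to the boundary near $\partial\V$ by \lemref{Sfcts}), reduces to $u=-\S(\nu\cdot A\nabla u)$; feeding $\nu\cdot A\nabla u=0$ (which is what the vanishing pairing against a dense set of $g$ gives, using invertibility of $(\K^{A^T}_{\pm})^t$ or $(\J^{A^T})^t$ on $L^{p'}$) yields $u\equiv 0$. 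Alternatively, and more cleanly, once I know $\langle \nu\cdot A\nabla u, \Tr w\rangle=0$ for $w$ ranging over all $(N)^{A^T}_{p'}$-solutions and these $\Tr w$ exhaust (a dense subset of) the relevant boundary space by the solvability established earlier, I conclude $\nu\cdot A\nabla u=0$, then uniqueness for $(N)^A_p$ (\thmref{NRunique}) forces $u$ constant, and $u=0$ on $\partial\V$ forces $u\equiv0$.

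\textbf{Main obstacle.} The delicate point is the passage $\delta\to 0$ in Green's formula: justifying that the boundary integrals over $\partial\V_\delta$ converge to the expected boundary pairings on $\partial\V$ when $u,w$ are only known through non-tangential maximal function bounds (not classical smoothness up to $\partial\V$). This requires combining \lemref{L2loc} (to get $\nabla u,\nabla w\in L^2_{loc}(\bar\V)$ and thus well-defined traces of the conormal derivatives on $\partial\V_\delta$ uniformly in $\delta$), \lemref{NTM} (to widen cones so $\partial\V_\delta$-traces are dominated by the non-tangential maximal functions along fixed cones), and a dominated-convergence argument with majorant $Nu\cdot N(\nabla w)\in L^1(\partial\V)$ — plus, in the special-domain case, the careful choice of the radii $R$ exactly as in \thmref{NRunique}. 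The rest is bookkeeping with the representation formula and the already-established invertibility of the layer potentials.
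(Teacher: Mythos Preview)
Your duality strategy has a genuine gap: you are trying to extract information about $\nu\cdot A\nabla u$ on $\partial\V$, but the hypothesis gives you only $Nu\in L^p(\partial\V)$, not $N(\nabla u)\in L^p$. With only $Nu\in L^p$ you have no uniform control on $\nabla u$ near $\partial\V$, so neither the weak conormal derivative $\nu\cdot A\nabla u$ nor the passage $\int_{\partial\V_\delta}\bar w\,\nu\cdot A\nabla u\to\langle\nu\cdot A\nabla u,\Tr w\rangle$ can be justified. Your Green identity on $\V_\delta$ does give $\int_{\partial\V_\delta}(\bar w\,\nu\cdot A\nabla u-\bar u\,\nu\cdot A^T\nabla w)=0$, and the second term does go to zero by dominated convergence with majorant $Nu\cdot N(\nabla w)\in L^1$; but all this tells you is that $\int_{\partial\V_\delta}\bar w\,\nu\cdot A\nabla u\to 0$, which is uninformative without knowing that these integrals converge to a pairing on $\partial\V$. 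Your ``second application of Green's formula'' is circular: it reproduces the same identity. And even if you could somehow conclude $\nu\cdot A\nabla u=0$, invoking \thmref{NRunique} at the end requires $N(\nabla u)\in L^p$, which you do not have. The representation (\ref{eqn:Df-Sa}) likewise needs $\nabla u$ bounded (or at least integrable) up to the boundary, which again is unavailable.

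The paper's proof avoids this problem by never touching $\nabla u$ near $\partial\V$. It works on $\V_\delta$ and builds, for each fixed $X\in\V_\delta$, a ``Green's function'' $\Gamma_X^T-\Phi_X$, where $\Phi_X$ is the $(R)^{A^T}_{p'}$ solution in $\V_\delta$ with $\Phi_X=\Gamma_X^T$ on $\partial\V_\delta$. A direct computation with the fundamental solution (using that $u$ and $\nabla u$ are bounded on $\overline{\V_\delta}$, which is interior to $\V$) yields
\[
u(X)=\int_{\partial\V_\delta} u\,\nu\cdot A^T\nabla(\Gamma_X^T-\Phi_X)\,d\sigma.
\]
The point is that the kernel here lies in $L^{p'}(\partial\V_\delta)$ with norm $\lesssim \sigma(\partial\V_\delta)^{1/p'}/\dist(X,\partial\V_\delta)$, precisely because $(R)^{A^T}_{p'}$ controls $N(\nabla\Phi_X)$ by $\|\partial_\tau\Gamma_X^T\|_{L^{p'}}$. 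H\"older then gives $|u(X)|\le C\|u\|_{L^p(\partial\V_\delta)}\cdot(\text{fixed})$, and $\|u\|_{L^p(\partial\V_\delta)}\to 0$ by dominated convergence (majorant $Nu\in L^p$, pointwise limit $0$). This uses only $u$, never $\nabla u$, on the approximate boundaries. The special-domain case is then reduced to the bounded case by the same large-subdomain trick as in \thmref{NRunique}.
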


\begin{proof}
Suppose first that $\partial\V$ is bounded, and define $\V_\delta=\{X\in\V:\dist(X,\partial\V)>\delta\}$ as before.

By the dominated convergence theorem, for any $\epsilon>0$ we can find some $\delta_0$ such that, if $\delta<\delta_0$, then $\|u\|_{L^p(\partial\V_\delta)}<\epsilon$.

Let $f_\delta=u|_{\partial\V_\delta}$. By continuity of $u$, we know that $f_\delta$ is bounded (if large).

Pick $R$ large enough that $\partial\V\subset B(0,R/2)$. (This will be irrelevant in the case where $\V$ is also bounded.)
Let $v_\delta (X) = u(X)\eta_\delta(X)$, where $\eta_\delta\equiv 1$ on $\V_\delta\cap B(0,R)$ and $\eta_\delta\in C^\infty_0(\V_{\delta/4}\cap B(0,2R))$. We may extend $v_\delta$ by 0; we then have that $v_\delta$ is continuous on $\R^2$, compactly supported, and has a bounded gradient.

Therefore, if $X\in\V_\delta$, then
\begin{align*}
u(X) &= v_\delta(X) = -\int \nabla v_\delta\cdot A^T\nabla\Gamma_X^T
\\&=-\int_{\V_\delta} A\nabla v_\delta\cdot \nabla\Gamma_X^T
-\int_{\V_\delta^C} \nabla v_\delta\cdot A^T\nabla\Gamma_X^T
\\&=-\int_{\partial\V_\delta} \Gamma_X^T \nu\cdot A\nabla v_\delta\,d\sigma
+\int_{\partial\V_\delta} v_\delta \nu\cdot A^T\nabla\Gamma_X^T\,d\sigma
\end{align*}
Since $|\nabla\Gamma_X(Y)|\leq \frac{C}{|X-Y|}$, we have that on $\partial\V_\delta$, $\tau\cdot\nabla\Gamma_X$ is bounded and in~$H^1$. Let $\Phi_X$ be the solution to $(R)_{p'}^{A^T}$ in $\V_\delta$ with boundary data $\Phi_X=\Gamma_X^T$. This means that $N(\nabla \Phi_X)\in L^{p'}$ (and $\lim_{Y\to\infty}|\Phi_X(Y)|$ exists), and so $\Phi_X$ is bounded in $\V_\delta$ and $\nabla\Phi_X\in L^2_{loc}$.

Therefore, 
\begin{align*}
u(X)&=-\int_{\partial\V_\delta} \Phi_X \nu\cdot A\nabla v_\delta\,d\sigma
+\int_{\partial\V_\delta} u \nu\cdot A^T\nabla\Gamma_X^T\,d\sigma
\\&=-\int_{\V_\delta} \nabla\Phi_X \cdot A\nabla v_\delta
+\int_{\partial\V_\delta} u \nu\cdot A^T\nabla\Gamma_X^T\,d\sigma
\\&=-\int_{\partial\V_\delta} v_\delta \nu \cdot A^T\nabla\Phi_X\,d\sigma
+\int_{\partial\V_\delta} u \nu\cdot A^T\nabla\Gamma_X^T\,d\sigma
\\&=\int_{\partial\V_\delta} u \, \nu\cdot A^T\nabla(\Gamma_X^T-\Phi_X)\,d\sigma
\end{align*}
Now, we know that $|\nabla\Gamma_X^T(Y)|\leq \frac{C}{|X-Y|}$, so $\|\nu\cdot A^T\nabla\Gamma_X^T\|_{L^{p'}}\leq C\frac{\sigma(\partial\V_\delta)^{1/p'}}{\dist(X,\partial\V_\delta)}.$
But since $\Phi_X$ is a regularity solution, the same applies to it; therefore, \[|u(X)|\leq C\|f_\delta\|_{L^p}\frac{\sigma(\partial\V_\delta)^{1/p'}}{\dist(X,\partial\V_\delta)}\] and so taking the limit as $\delta\to 0$ yields that $u(X)=0$, as desired.

If $\V=\Omega$ is a special Lipschitz domain, then we may proceed as in the proof of \thmref{NRunique}, by constructing a large finite subdomain with $\|Nu\|_{L^p}$ small on its boundary. If $p$ is large enough that $(D)_{p}^{A}$ holds in all such subdomains, we see that $u$ must be small pointwise, as desired. 
\end{proof}

\section{Mixed uniqueness}

In this section, we show that Dirichlet and regularity solutions are the same.

\begin{thm} \label{thm:mixedunique} Suppose that $\V$ is a bounded or special Lipschitz domain.

Let $p$ be as in \thmref{Dpunique}, $q=p'$.
If $\div A\nabla u=0=\div A\nabla v$ in~$\V$ and  $u|_{\partial\V}=f=v|_{\partial\V}$, and if $Nu\in L^p$, $N(\nabla v)\in L^q$, then $u\equiv v$.

If $\partial\V$ is bounded but $\V$ is not, we as usual must further require that $\lim_{X\to\infty} v(X)$ exists.
\end{thm}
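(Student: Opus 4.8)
The plan is to set $w = u - v$, so that $\div A\nabla w = 0$ in $\V$ and $w|_{\partial\V} = 0$, and to show $w \equiv 0$. Since $Nu \in L^p$ and $N(\nabla v) \in L^q$ with $q = p'$, we have $Nw \le Nu + C\,\sigma(\partial\V)^{1/q}\,\|N(\nabla v)\|_{L^q} \cdot(\text{something local})$; more carefully, $N(\nabla v)\in L^q$ gives (via \lemref{Sfcts}, eqn.~(\ref{eqn:uholder})) that $v$ is H\"older continuous on $\overline\V$ and in particular bounded on compact subsets, and by \lemref{L2loc} that $\nabla v\in L^2_{loc}(\overline\V)$; also $Nv\in L^q\subset L^p$ locally near the boundary since $q<p$ (after truncating as in \autoref{chap:apriori}). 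The upshot is that $w$ is a function with $\div A\nabla w=0$, $w|_{\partial\V}=0$, and $Nw\in L^p$ — precisely the hypotheses of \thmref{Dpunique}. So if I can legitimately apply \thmref{Dpunique} to $w$, I am done.

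The key steps, in order, are: (1) Verify that $w=u-v$ inherits $Nw\in L^p(\partial\V)$. From $Nu\in L^p$ this is immediate for the $u$-part; for the $v$-part I use that $N(\nabla v)\in L^q(\partial\V)$ and $v|_{\partial\V}=f=u|_{\partial\V}$, together with the bound (\ref{eqn:nablau}) $|\nabla v(Y)|\le C\|N(\nabla v)\|_{L^q}\min(\sigma(\partial\V),\dist(Y,\partial\V))^{-1/q}$, integrating along a path from the boundary to get $|v(Y) - f(Y^*)|\lesssim \dist(Y,\partial\V)^{1/q'}$ near the boundary and the $Nv\lesssim Nu + (\text{convergent boundary integral of }N(\nabla v))$ type estimate; since $q'>p'$... wait, I want $Nv$ in $L^p$ near the boundary, and I have $N(\nabla v)\in L^q$ with $q=p'<p$, so locally $L^q\hookrightarrow L^p$ fails — but near the boundary, on a set of finite measure, $q<p$ gives the wrong direction, so instead I bound $Nv$ directly: $v$ is bounded on a neighborhood of $\partial\V$ by H\"older continuity, hence $Nv\in L^\infty_{loc}$ there and $Nv\in L^p(\partial\V)$ by compactness (or, for special domains, using the decay estimate (\ref{eqn:H1special})-style bounds that $N(\nabla v)\in L^q$ forces). (2) Check the boundary condition: $w=f-f=0$ n.t.\ a.e.\ on $\partial\V$. (3) In the case $\partial\V$ bounded but $\V$ unbounded, verify $\lim_{X\to\infty}w(X)$ exists: $\lim_{X\to\infty}v(X)$ exists by hypothesis, and $\lim_{X\to\infty}u(X)$ exists since $Nu\in L^p$ forces $u$ to be small at infinity (as in the $\V^C$-bounded reasoning in \autoref{chap:converse}: $u=\D(\cdot)$ or is handled by the decay $|u(X)|\lesssim \|Nu\|_{L^p}\dist(X,\partial\V)^{-1/p}$... actually for $\V^C$ bounded one needs $u\to 0$, which follows from $Nu\in L^p$ and the structure of $u$). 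Then apply \thmref{Dpunique} to conclude $w\equiv 0$.

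The main obstacle I expect is Step (1) for the special-Lipschitz-domain case and, relatedly, the growth/decay at infinity: $Nu\in L^p$ alone does not immediately give pointwise smallness of $u$ at infinity in a special domain, and one must be careful that $w$ genuinely satisfies all hypotheses of \thmref{Dpunique} (which for special domains requires $(D)^A_p$ in all large subdomains and uses the finite-subdomain truncation argument). I would handle this exactly as in the proof of \thmref{Dpunique} itself: fix $\epsilon>0$, pick $R$ large so that $Nw$ is small on the boundary of $\Omega_R = \psi((-R,R)\times(0,CR))$ (possible since $Nw\in L^p$), apply the bounded-domain uniqueness/decay in $\Omega_R$, and let $R\to\infty$. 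The matching of boundary data of $u$ and $v$ is the one genuinely new input, and it is clean because n.t.\ limits are taken a.e.\ and both equal $f$.

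Alternatively, and perhaps more cleanly: combine \thmref{Dpunique} with the observation that $N(\nabla v)\in L^q$ implies (by \lemref{Sfcts}) $v$ is continuous up to $\overline\V$, so $v|_{\partial\V}$ is attained pointwise, not merely in an $L^p$ sense; then $w=u-v$ has $w|_{\partial\V}=0$ in the strong pointwise-n.t.\ sense, $Nw\in L^p(\partial\V)$ after the local boundedness of $v$ is folded in, and \thmref{Dpunique} applies verbatim. This reduces the whole theorem to a citation of \thmref{Dpunique} plus the two routine verifications (local $L^p$ control of $Nw$ and the limit at infinity), so the proof will be short.
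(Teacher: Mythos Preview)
Your approach is correct and matches the paper when $\partial\V$ is compact: there $N(\nabla v)\in L^q$ gives $v\in L^\infty(\V)$ (via \lemref{L2loc} plus, when $\V$ is unbounded, the hypothesis that $\lim_{X\to\infty}v(X)$ exists), hence $Nv\in L^\infty\subset L^p(\partial\V)$, and \thmref{Dpunique} applies to $w=u-v$ exactly as you say.

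For special Lipschitz domains there is a genuine gap. On an unbounded boundary, $N(\nabla v)\in L^{q}$ does \emph{not} force $Nv\in L^p$: (\ref{eqn:uholder}) only gives $|v(X)-v(Y)|\lesssim |X-Y|^{1/q'}=|X-Y|^{1/p}$, which allows $v$ to grow at infinity. Concretely, take $A=I$, $\Omega=\R^2_+$, and $v$ the Poisson extension of any $f$ with $f'\in L^q(\R)$ but $f\notin L^p(\R)$; then $N(\nabla v)\in L^q$ by $(R)_q^I$, yet $Nv\ge |f|\notin L^p$. Your truncation step is then circular: you propose choosing $R$ ``since $Nw\in L^p$,'' but that is precisely what has not been established.

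The paper resolves this without ever forming $w$ globally. It selects $R>R_0$ so that \emph{both} $Nu(\psi(\pm R))<\epsilon R^{-1/p}$ and $N(\nabla v)(\psi(\pm R))<\epsilon R^{-1/q}$ simultaneously (possible since $Nu\in L^p$ and $N(\nabla v)\in L^q$ separately), builds an auxiliary solution $u_R$ on the bounded domain $\Omega_R=\psi((-R,R)\times(0,CR))$ with boundary data equal to $u=v$ on $\partial\Omega\cap\partial\Omega_R$ and a short linear cutoff to zero near the corners, and then compares $u_R$ to $u$ using $(D)^A_p$ on $\Omega_R$ and $u_R$ to $v$ using $(R)^A_q$ (or $(R)^A_1$) on $\Omega_R$. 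Each comparison yields a pointwise $O(\epsilon)$ bound on the corresponding gradient difference, whence $\nabla u=\nabla v$ after $\epsilon\to 0$, $R_0\to\infty$. The idea you are missing is this \emph{separate} exploitation of the Dirichlet estimate for $u$ and the regularity estimate for $v$ on the truncated domain, rather than merging them into a single $L^p$ hypothesis on $Nw$.
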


\begin{proof} By \lemref{L2loc}, $v\in L^\infty_{loc}$; if $\V$ is bounded then $v\in L^\infty$, and if $\V^C$ is bounded then $v$ is bounded on $B(0,R)\cap\V$ for some $R$ so large that $|v-v_\infty|<1$ on $B(0,R)^C$, and so $v$ is globally bounded. But then $N v$ is bounded. Since $\partial\V$ is bounded, $v$ is a Dirichlet solution and we need only apply \thmref{Dpunique}.

If $\V=\Omega$ is a special Lipschitz domain, note that for every $R_0>0$, $\epsilon>0$, there is some $R>R_0$ such that $Nu(\psi(\pm R))<\epsilon R^{-1/p}$, $N(\nabla v)(\pm R)<\epsilon R^{-1/q}$.

Let $\Omega_R=\psi((-R,R)\times(0,CR))$, where $C$ is large enough that $\partial\Omega_R\setminus\partial\Omega\subset\gamma(\psi(R))\cup\gamma(\psi(-R))$.

Define $u_R$ as follows: $\div A\nabla u_R=0$ in $\Omega_R$, $u_R=u=v$ on $\partial\Omega_R\cap\partial\Omega$, $u_R(X)=u(\psi(\pm R))(1-\dist(X,\psi(\pm R)))$ on $B(\psi(R),1)\cap\partial\Omega_R\setminus\partial\Omega$, and $u_R=0$ elsewhere on $\partial\Omega_R$. 

Since $\Omega_R$ is bounded, 
\[\|Nu_R\|_{L^p(\partial \Omega_R)}\leq \|u_R\|_{L^p(\partial \Omega_R)},\quad\|N(\nabla u_R)\|_{L^q(\partial \Omega_R)}\leq \|\partial_\tau u_R\|_{L^q(\partial \Omega_R)}.\]

I claim that as $R_0\to\infty$ and $\epsilon\to 0$, $\nabla u_R(X)$ approaches both $\nabla u(X)$ and $\nabla v(X)$ pointwise (not uniformly); this suffices to show that $\nabla u\equiv\nabla v$, and so $u\equiv v$ up to an additive constant (which must be~0).

First, note that 
\[\|u-u_R\|_{L^p(\partial\Omega_R)}
=\|u-u_R\|_{L^p(\partial\Omega_R\setminus\partial\Omega)}
\leq \left(\epsilon^p/R\right)^{1/p}+\|u\|_{L^p(\partial\Omega_R\setminus\partial\Omega)}\leq C\epsilon,\] and so by \thmref{Dpunique}, $\|N(u-u_R)\|_{L^p}\leq C\epsilon$; therefore, if $X\in\Omega_{R/C}$ for $C$ large enough,
\[|u(X)-u_R(X)|\leq C\epsilon\dist(X,\partial\Omega_R)^{-1/p} =C\epsilon\dist(X,\partial\Omega)^{-1/p}.\]

Therefore, by \lemref{PDE1},
\[|\nabla u(X)-\nabla u_R(X)|\leq C\epsilon\dist(X,\partial\Omega)^{-1-1/p}.\]

Next, note that if $q>1$,
\begin{align*}
\|\partial_\tau v-\partial_\tau u_R\|_{L^q(\partial\Omega_R)}
&=\|\partial_\tau v-\partial_\tau u_R\|_{L^q(\partial\Omega_R\setminus\partial\Omega)}
\\&\leq \left(\epsilon^q/R^{q/p}\right)^{1/q}+\|\partial_\tau v\|_{L^q(\partial\Omega_R\setminus\partial\Omega)}\leq C\epsilon.
\end{align*}
So by \thmref{NRunique}, $\|N(\nabla v-\nabla u_R)\|_{L^q}\leq C\epsilon$.

If $q=1$ then $|\partial_\tau u_R|=|u(\psi(\pm R))|<\epsilon$ on $B(\psi(\pm R),1)\cap\partial\Omega_R\setminus\partial\Omega$ and is zero elsewhere on $\partial\Omega_R\setminus\partial\Omega$. $|\partial_\tau v|<\epsilon/R$ on $\partial\Omega_R\setminus\partial\Omega$, a set of size at most $CR$. So
\begin{align*}
\|\partial_{\tau} u_R-\partial_{\tau} v\|_{H^1(\partial\Omega_R)}
&=\|\partial_{\tau} u_R-\partial_{\tau} v\|_{H^1(\partial\Omega_R\setminus\partial\Omega)}
\\&\leq 
\|\partial_{\tau} u_R\|_{H^1(\partial\Omega_R\setminus\partial\Omega)}
+\|\partial_{\tau} v\|_{H^1(\partial\Omega_R\setminus\partial\Omega)}
\leq C\epsilon.
\end{align*}
So $\|N(\nabla u_R-\nabla v)\|_{L^1(\partial\Omega_R)}$.

In either case, 
\[|\nabla u(X)-\nabla u_R(X)|\leq C\epsilon\dist(X,\partial\Omega_R)^{-1/q} =C\epsilon\dist(X,\partial\Omega)^{-1/q}\]
for $X\in \Omega_{R/C}$.

Thus, by letting $\epsilon\to 0$, we see that $\nabla u(X)=\nabla v(X)$, as desired.
\end{proof}

\section{Square-function converse and uniqueness}

In this section, we will prove the following theorem:
\begin{thm}\label{thm:Carlesonmax} Suppose that $\V$ is a bounded Lipschitz domain, $\div A\nabla u=0$ in $\V$, and
\begin{equation}
\label{eqn:squareconverse}
\sup_{Y_0\in\partial\V,R>0}\frac{1}{\sigma(\partial\V\cap B(Y_0,R))}\int_{\V\cap B(Y_0,R)} |\nabla u(Y)|^2 \dist(Y,\partial\V)\,dY\leq \tilde C^2.
\end{equation}

If $\|A-A_0\|_{L^\infty}$ is small enough, $p$ is large enough that $\K$ is invertible on $L^p(\partial\V)$, and if $X_0\in \V$ with $\dist(X,\partial\V)\approx \sigma(\partial\V)$, then
\[\dashint_{\partial\V} N(u-u(X_0))^p\,d\sigma\leq C\tilde C.\]
\end{thm}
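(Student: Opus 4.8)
\textbf{Proof proposal for \thmref{Carlesonmax}.}

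The plan is to reduce the non-tangential maximal function estimate to a statement about layer potentials, exactly paralleling the way \thmref{BMOcarleson} was established. Since $\|A-A_0\|_{L^\infty}$ is small and $p$ is large enough that $\K$ is invertible on $L^p(\partial\V)$, the boundary trace $g=u|_{\partial\V}$ (which we must first show exists) will satisfy $g\in BMO(\partial\V)$ by the converse half of \thmref{converse}, with $\|g\|_{BMO}\leq C\tilde C$; and by invertibility of $\K^t$ on $H^1$ and duality, $\K$ is invertible on $BMO$, so $g=\D f|_{\partial\V}=\K f$ for some $f\in BMO$ with $\|f\|_{BMO}\approx\|g\|_{BMO}\leq C\tilde C$. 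The first main step is therefore to replace $u$ by the layer potential $\D f$: one shows $u=\D f+(\text{constant})$ in $\V$, using uniqueness of solutions whose gradient generates a Carleson measure (the uniqueness clause of \thmref{BMO}, i.e.\ \thmref{converse}). Subtracting the constant, and using that $\D f(X_0)$ is a fixed value, it suffices to bound $\dashint_{\partial\V} N(\D f - \D f(X_0))^p\,d\sigma$.

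Next I would prove the pointwise bound $N(\D f - \D f(X_0))(X)\le C\,\tilde C$ for a.e.\ $X\in\partial\V$, which immediately gives the desired averaged $L^p$ bound since $\sigma(\partial\V)<\infty$. This is the classical estimate that the non-tangential maximal function of a $BMO$-Dirichlet solution is controlled by the $BMO$ norm of the data, localized to a single scale because $\V$ is bounded with $\dist(X_0,\partial\V)\approx\sigma(\partial\V)$. Concretely, fix $X\in\partial\V$ and $Z\in\gamma(X)$; write $f = f_\Delta + (f-f_\Delta)\chi_{2\Delta} + \sum_{k\ge 1}(f-f_\Delta)\chi_{2^{k+1}\Delta\setminus 2^k\Delta}$ where $\Delta=B(X,\dist(Z,\partial\V))\cap\partial\V$ and $f_\Delta=\dashint_\Delta f$. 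The term $\D f_\Delta$ is the constant $\D 1\cdot f_\Delta$, which is killed by subtracting the value at $X_0$ (using $\D 1\equiv$ const on $\V$, established in the layer-potential chapter). For the local term, one uses the $L^p$-boundedness of $N\circ\D$ (\thmref{NDf}) together with $\|(f-f_\Delta)\chi_{2\Delta}\|_{L^p(\partial\V)}\le C\sigma(\Delta)^{1/p}\|f\|_{BMO}$ from John--Nirenberg, evaluated at the point $X$ via the maximal-function domination of \lemref{Tstar}. For the far annuli, the Calder\'on--Zygmund kernel bound $|\nu\cdot A^T\nabla\Gamma_Z^T(Y)-\nu\cdot A^T\nabla\Gamma_{X_0}^T(Y)|\le C\dist(Z,\partial\V)^\alpha/|X-Y|^{1+\alpha}$ (from \eqref{eqn:KisCZ}, using that $X_0$ is comparably far from $2^k\Delta$ once $\sigma(\partial\V)\approx\dist(X_0,\partial\V)$) combines with $\dashint_{2^{k+1}\Delta}|f-f_\Delta|\le C(k+1)\|f\|_{BMO}$ to give a geometrically convergent sum $\sum_k (k+1)2^{-k\alpha}\|f\|_{BMO}$. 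Taking the supremum over $Z\in\gamma(X)$ yields $N(\D f-\D f(X_0))(X)\le C\|f\|_{BMO}\le C\tilde C$, and integrating over $\partial\V$ finishes the argument.

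The main obstacle is the very first reduction: showing that the given $u$, which is only assumed to satisfy the Carleson-measure condition \eqref{eqn:squareconverse}, actually \emph{has} a boundary trace $g\in BMO$ and \emph{equals} (up to an additive constant) the layer potential $\D f$ associated to that trace. Existence of the trace and the bound $\|g\|_{BMO}\le C\tilde C$ come from the converse direction already recorded in \thmref{converse}; identifying $u$ with $\D f$ then rests on the uniqueness clause in \thmref{BMO}, namely that among functions with $\div A\nabla u=0$, prescribed boundary data, and a Carleson-measure gradient, the solution is unique. Both $u$ and $\D f$ have boundary data $g$ and satisfy the Carleson condition ($\D f$ by \eqref{eqn:squareBMOpot}), so they agree up to a constant; this is exactly the point where the full strength of the $BMO$ theory developed in Chapters~\ref{chap:square} and~\ref{chap:converse} is needed. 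Once that identification is in hand, the remaining estimates are the standard $BMO$-to-$N$ bookkeeping sketched above and present no essential difficulty.
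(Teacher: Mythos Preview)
Your argument has two genuine gaps, one structural and one technical.

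The structural gap is circularity. You invoke the converse in \thmref{converse} (that a Carleson solution has a $BMO$ trace with norm $\le C\tilde C$) and the uniqueness clause of \thmref{BMO} to identify $u$ with $\D f$. But in this paper both of those statements are \emph{consequences} of \thmref{Carlesonmax}: the $BMO$-trace statement is \corref{BMOconverse}, explicitly derived from \thmref{Carlesonmax} and \lemref{squaresubdomain}, and uniqueness is \corref{BMOunique}, which in turn rests on \corref{BMOconverse}. So you cannot use them here. The paper avoids this by never assuming the trace is known to be $BMO$: instead it restricts to the subdomain $\V_\delta$, where $g_\delta=u|_{\partial\V_\delta}$ is automatically $L^\infty$ (since $u$ is smooth away from $\partial\V$), writes $g_\delta=\K^\eta f^\eta_\delta$ for smooth~$A^\eta$, transfers the Carleson bound to $u^\eta_\delta$ via \autoref{chap:apriori}, and then lets $\eta,\delta\to 0$.

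The technical gap is that the pointwise bound $N(\D f-\D f(X_0))(X)\le C\|f\|_{BMO}$ is false. Take $\V$ the unit disk and $f(e^{i\theta})=\log|1-e^{i\theta}|\in BMO$; the harmonic extension is $u(z)=\log|1-z|$, and $N(u-u(0))(1)=+\infty$. In your decomposition the failure occurs in the far annuli: the kernel bound you quote, $|\nu\cdot A^T\nabla\Gamma_Z^T(Y)-\nu\cdot A^T\nabla\Gamma_{X_0}^T(Y)|\le C\dist(Z,\partial\V)^\alpha/|X-Y|^{1+\alpha}$, is not what \eqref{eqn:KisCZ} gives---the Calder\'on--Zygmund estimate has $|Z-X_0|^\alpha$ in the numerator, and $|Z-X_0|$ is of order $\sigma(\partial\V)$, not $\dist(Z,\partial\V)$, so no smallness is gained once $2^k\dist(Z,\partial\V)\ll\sigma(\partial\V)$. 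Summing the annular contributions produces a factor of $\bigl(\log(\sigma(\partial\V)/\dist(Z,\partial\V))\bigr)^2$ that diverges as $Z\to\partial\V$. The paper never attempts a pointwise bound; it obtains the $L^p$ estimate by first proving the $L^2$ inequality $\int_{\partial\W}N(v-v(X_0))^2\,d\sigma\le C\int_\W|\nabla v|^2\dist(\cdot,\partial\W)$ for \emph{real} $A_0$ (quoted from \cite{Dp'}), using this on subdomains $\W_\Delta$ to bound $\|\K_0 f\|_{BMO}$ by the Carleson quantity, perturbing to complex $A$ via analyticity, and only then applying John--Nirenberg to pass from $BMO$ to $L^p$.
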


To prove this theorem, we need the following lemma:
\begin{lem}\label{lem:squaresubdomain} If (\ref{eqn:squareconverse}) holds in $\V$, and $\U\subset\V$ is a bounded Lipschitz domain, then (\ref{eqn:squareconverse}) holds in $\U$ as well, with constants that may depend on the Lipschitz constants of~$\V$.
\end{lem}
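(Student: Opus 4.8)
\textbf{Proof proposal for Lemma~\ref{lem:squaresubdomain}.}

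The plan is to reduce the Carleson-measure bound in $\U$ to the known bound in $\V$ by splitting the domain of integration according to distance to the two boundaries. Fix $Y_0\in\partial\U$ and $R>0$; we want to bound $\int_{\U\cap B(Y_0,R)} |\nabla u|^2\dist(Y,\partial\U)\,dY$ by $C\tilde C^2\,\sigma(\partial\U\cap B(Y_0,R))$. The key observation is that $\U\subset\V$ forces $\dist(Y,\partial\U)\le\dist(Y,\partial\V)$ pointwise, so on the set where these two distances are comparable (say $\dist(Y,\partial\V)\le 2\dist(Y,\partial\U)$, or simply everywhere after we use the cheap bound) the integrand is controlled by the $\V$-integrand. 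Thus the ``outer'' part, where $Y$ is not too close to $\partial\U$ relative to $\partial\V$, is immediately dominated by the corresponding integral over $\V\cap B(Y_0,R')$ for $R'\approx R$, which by (\ref{eqn:squareconverse}) in $\V$ is at most $C\tilde C^2\sigma(\partial\V\cap B(Y_0,R'))\le C\tilde C^2\sigma(\partial\U\cap B(Y_0,R))$ once $R$ is of the order of the diameter of $\U$ or smaller, using Ahlfors--David regularity of both boundaries.

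The real work is the ``inner'' region, where $Y$ is close to $\partial\U$ but far (in relative terms) from $\partial\V$ — that is, near a piece of $\partial\U$ that sits in the interior of $\V$. There $\nabla u$ is a genuine interior gradient of a solution, so I would use the standard interior estimate (\lemref{PDE1} together with (\ref{eqn:gradu})): $|\nabla u(Y)|^2\le \frac{C}{\dist(Y,\partial\V)^2}\dashint_{B(Y,\dist(Y,\partial\V)/2)}|u-c|^2$, or rather the Caccioppoli-type bound $|\nabla u(Y)|^2\dist(Y,\partial\U)\lesssim \dist(Y,\partial\U)\cdot\frac{1}{\dist(Y,\partial\V)^2}\int_{B(Y,\dist(Y,\partial\V)/2)}|\nabla u|^2$. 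One then performs a Whitney decomposition of $\U\cap B(Y_0,R)$ relative to $\partial\U$, groups the Whitney cubes by their (comparable) distance to $\partial\V$, and sums: the cubes at a fixed dyadic scale $2^{-j}\dist(\cdot,\partial\V)\approx$ const tile a controlled portion of a slab, and the bounded overlap of the enlarged balls $B(Y,\dist(Y,\partial\V)/2)$ lets one convert the sum of the local $\int|\nabla u|^2$ into a single integral over a slightly larger region $\V\cap B(Y_0,CR)$ weighted by $\dist(\cdot,\partial\V)$, again with a geometric factor in $j$ that sums. Since $\U$ is Lipschitz with constants controlled by those of $\V$ (this is the hypothesis), the Whitney geometry and the overlap constants depend only on admissible quantities.

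I expect the main obstacle to be the bookkeeping of this inner-region sum: organizing the Whitney cubes of $\U$ so that (i) the weight $\dist(\cdot,\partial\U)$ is correctly accounted for against $\dist(\cdot,\partial\V)$, (ii) the overlaps of the interior balls $B(Y,\frac12\dist(Y,\partial\V))$ are uniformly bounded, and (iii) the resulting geometric series in the scale parameter converges to something proportional to $\sigma(\partial\U\cap B(Y_0,R))$ rather than $\sigma(\partial\V\cap B(Y_0,CR))$ — the two are comparable when $R\lesssim\diam\U$, and for larger $R$ one simply replaces $R$ by $\diam\U$ since $\U$ is bounded, so this case is harmless. Everything else (the size and regularity estimates on $\nabla u$, the passage from $L^2$ averages of $u$ to $L^2$ integrals of $\nabla u$ via \lemref{PDE1}, Ahlfors--David regularity) is quoted directly from results already established in the excerpt.
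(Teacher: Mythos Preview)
Your plan would likely work, but it is far more elaborate than what is needed, and it invokes PDE structure that the lemma does not actually use. The paper's proof is purely a Carleson-measure computation with no Caccioppoli inequality, no Whitney decomposition, and no appeal to the equation $\div A\nabla u=0$ at all. The single pointwise inequality $\dist(Y,\partial\U)\le\dist(Y,\partial\V)$ (immediate from $\U\subset\V$) already handles everything, once one organizes by the \emph{center} $Y_0$ rather than by the points $Y$. Concretely: if $Y_0\in\partial\V$, then the $\U$-integral is dominated term-by-term by the $\V$-integral over the same ball, and Ahlfors--David regularity converts $\sigma(\partial\V\cap B(Y_0,R))$ into $C\sigma(\partial\U\cap B(Y_0,R))$. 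If $Y_0\in\partial\U\setminus\partial\V$, let $Y_0^*$ be the nearest point of $\partial\V$ and split on whether $R$ is small or large compared with $d:=|Y_0-Y_0^*|$. When $R>\tfrac12 d$ one simply enlarges the ball to $B(Y_0^*,3R)$ and uses the trivial inequality again. When $R<\tfrac12 d$, every $Y\in B(Y_0,R)$ satisfies both $\dist(Y,\partial\U)\le R$ and $\dist(Y,\partial\V)\ge d-R\ge\tfrac13(d+R)$, so $\dist(Y,\partial\U)\le \tfrac{3R}{d+R}\dist(Y,\partial\V)$; then the $\U$-integral over $B(Y_0,R)$ is bounded by $\tfrac{CR}{d+R}$ times the $\V$-integral over $B(Y_0^*,d+R)$, and the Carleson hypothesis in $\V$ gives the factor $C\tilde C^2(d+R)$, cancelling to $C\tilde C^2 R$.

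What your route buys is generality in a direction not needed here (it could survive weaker containment hypotheses), at the cost of the delicate overlap bookkeeping you flagged: the ``enlarged balls'' $B(Y,\tfrac12\dist(Y,\partial\V))$ are at the $\partial\V$-scale, not the Whitney-$\partial\U$ scale, so their overlap multiplicity is not uniformly bounded and must be absorbed by the weight. The paper's approach sidesteps this entirely; I would recommend replacing your inner-region argument with the three-case split above.
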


The theorem and lemma together have a corollary:
\begin{cor}\label{cor:BMOconverse} Suppose that $\V$ is a Lipschitz domain, $\div A\nabla u=0$ in $\V$, and $u$ satisfies (\ref{eqn:squareconverse}).

If $\|A-A_0\|_{L^\infty}$ is small enough, then $u|_{\partial\V}\in BMO(\partial\V)$ with $BMO$ norm at most $C\tilde C$.

\end{cor}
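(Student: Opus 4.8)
\textbf{Proof proposal for Corollary~\ref{cor:BMOconverse}.} The plan is to reduce the $BMO$ estimate to a family of localized Dirichlet problems on boundary cylinders, on each of which \thmref{Carlesonmax} applies directly. Recall that $BMO(\partial\V)$ is the dual of $H^1(\partial\V)$, and its norm is comparable to $\sup_{\Delta} \inf_C \frac{1}{\sigma(\Delta)}\int_\Delta |u|_{\partial\V} - C|\,d\sigma$ over connected boundary pieces $\Delta$. So it suffices to show that for each surface ball $\Delta = B(Y_0,R)\cap\partial\V$ there is a constant $C_\Delta$ with $\frac{1}{\sigma(\Delta)}\int_\Delta |u|_{\partial\V} - C_\Delta|\,d\sigma \leq C\tilde C$. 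First I would note that because $N(\nabla u)$ is not assumed to be in any $L^p$ a priori, the very existence of the boundary trace $u|_{\partial\V}$ needs to be established; but \lemref{squaresubdomain} lets me pass to a subdomain $\U\subset\V$ which is a bounded Lipschitz domain containing most of $B(Y_0,R)\cap\V$ near the part of $\partial\V$ where $\Delta$ lives, so that (\ref{eqn:squareconverse}) still holds in $\U$, and then \thmref{Carlesonmax} gives $\dashint_{\partial\U} N(u - u(X_0))^p\,d\sigma \leq C\tilde C$ for a suitable interior point $X_0$. In particular $N(u-u(X_0))\in L^p(\partial\U)$, so the nontangential limit of $u$ exists a.e.\ on $\partial\U$; taking $\U$ to exhaust $\V$ near each boundary point shows $u|_{\partial\V}$ exists a.e.

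Next, given a surface ball $\Delta = B(Y_0,R)\cap\partial\V$, I would choose a bounded Lipschitz domain $\U = \U_\Delta \subset \V$ whose boundary agrees with $\partial\V$ on, say, $B(Y_0,2R)\cap\partial\V$, with $\sigma(\partial\U)\approx R$ and with Lipschitz constants controlled by those of $\V$ — this is exactly the kind of boundary cylinder $\Q(r)$ used repeatedly earlier in the paper (e.g.\ in \thmref{complement} and \dfnref{domain}). Apply \thmref{Carlesonmax} in $\U$: for an interior point $X_0$ of $\U$ with $\dist(X_0,\partial\U)\approx\sigma(\partial\U)\approx R$, we get $\dashint_{\partial\U} N(u - u(X_0))^p\,d\sigma \leq C\tilde C$, where $\tilde C$ is the Carleson constant in $\U$, which by \lemref{squaresubdomain} is $\leq C\tilde C$ with $\tilde C$ now the original constant for $\V$. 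Since $N(u-u(X_0)) \geq |u|_{\partial\V} - u(X_0)|$ pointwise a.e.\ on $\partial\U \supset \Delta$, and $\sigma(\Delta)\leq\sigma(\partial\U)\approx R$, Jensen/Hölder gives
\[
\frac{1}{\sigma(\Delta)}\int_\Delta \bigl|u|_{\partial\V} - u(X_0)\bigr|\,d\sigma
\leq \frac{\sigma(\partial\U)}{\sigma(\Delta)}\,\dashint_{\partial\U}\bigl|u|_{\partial\V} - u(X_0)\bigr|\,d\sigma
\leq C\,\Bigl(\dashint_{\partial\U} N(u-u(X_0))^p\,d\sigma\Bigr)^{1/p}
\leq C\tilde C.
\]
Taking $C_\Delta = u(X_0)$ and the supremum over all $\Delta$ yields $\|u|_{\partial\V}\|_{BMO(\partial\V)}\leq C\tilde C$.

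A couple of technical points have to be checked. One is that the smallness of $\|A-A_0\|_{L^\infty}$ needed in \thmref{Carlesonmax} (to invert $\K$ on $L^p(\partial\U)$ for $p$ large) holds uniformly over the subdomains $\U_\Delta$; this follows because the threshold $\epsilon$ and exponent $p_0'$ from \thmref{invertibility} and the interpolation chapter depend only on $\lambda,\Lambda$ and the Lipschitz constants, which are uniformly controlled for the $\U_\Delta$. The second is the case $\sigma(\Delta)\gtrsim\sigma(\partial\V)$ (or $\V$ special): then one can simply take $\U$ comparable to $\V\cap B(Y_0,CR)$ directly, or in the special case use a large finite cylinder and let it grow, as in \thmref{complement}. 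I expect the main obstacle to be the bookkeeping for constructing the $\U_\Delta$ with uniformly controlled Lipschitz constants \emph{and} simultaneously ensuring $\partial\U_\Delta$ contains enough of $\Delta$ that the pointwise bound $N(u-u(X_0))\geq |u|_{\partial\V}-u(X_0)|$ holds on all of $\Delta$; this is routine but must be done carefully so that the comparison point $X_0$ can be chosen uniformly and so that \lemref{squaresubdomain} applies with the claimed constants. Everything else is a direct citation of \thmref{Carlesonmax}, \lemref{squaresubdomain}, and the duality description of $BMO$.
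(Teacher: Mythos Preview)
Your proof is correct and is precisely the argument the paper has in mind: the paper states the corollary immediately after \thmref{Carlesonmax} and \lemref{squaresubdomain} with the remark that ``the theorem and lemma together have a corollary,'' and your construction of the subdomains $\U_\Delta$, application of \lemref{squaresubdomain}, then of \thmref{Carlesonmax} at each scale, followed by H\"older to pass from the $L^p$ average to the $L^1$ oscillation, is exactly how one unpacks that sentence. Your remarks on the existence of the nontangential limit and on the uniformity of the smallness threshold for $\|A-A_0\|_{L^\infty}$ across the $\U_\Delta$ are the right checks.
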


\begin{proof}[Proof of \lemref{squaresubdomain}] Let $Y_0\in\partial\U$, and let $R>0$. Note that if $Y\in\U$ then $\dist(Y,\partial\U)\leq\dist(Y,\partial\V)$. Since $\U$ is bounded, we may assume that $R\leq \sigma(\partial\U)$, and so $R\leq C\sigma(\partial\U\cap B(Y_0,R))$.

If $Y_0\in\partial\V$, then 
\[\int_{\U\cap B(Y_0,R)} |\nabla u(Y)|^2 \dist(Y,\partial\U)\,dY
\leq \int_{\V\cap B(Y_0,R)} |\nabla u(Y)|^2 \dist(Y,\partial\V)\,dY.\]
Furthermore, $\sigma(\partial\V\cap B(Y_0,R))\leq CR\leq C\sigma(\partial\U\cap B(Y_0,R))$. So we are done.

If $Y_0\in\partial\U\setminus\partial\V$, then
let $Y_0^*\in\partial\V$ with $|Y_0-Y_0^*|=\dist(Y_0,\partial\V)$, and let $R^*=|Y_0-Y_0^*|+R$.

If $R<\frac{1}{2}\dist(Y_0,\partial\V)$, then for all $Y\in B(Y_0,R)$ we have that
\[\frac{1}{R}\dist(Y,\partial\U)\leq 1
\leq \frac{\dist(Y,\partial\V)}{|Y_0-Y_0^*|-R}
\leq 3\frac{\dist(Y,\partial\V)}{R^*}
\]
and so
\begin{align*}
\int_{\U\cap B(Y_0,R)} |\nabla u(Y)|^2 \dist(Y,\partial\U)\,dY
&\leq\int_{\U\cap B(Y_0,R)} |\nabla u(Y)|^2 \frac{1}{R}\dist(Y,\partial\U)\,dY
\\&\leq \frac{C}{R^*}\int_{\V\cap B(Y_0^*,R^*)} |\nabla u(Y)|^2 \dist(Y,\partial\V)\,dY.
\end{align*}
Since ${\sigma(\partial\V\cap B(Y_0,R))}\leq CR\leq C^2{\sigma(\partial\U\cap B(Y_0,R))}$, we are done.

Conversely, if $R>\frac{1}{2}\dist(Y_0,\partial\V)$, then 
\begin{align*}
\int_{\U\cap B(Y_0,R)} |\nabla u(Y)|^2 \dist(Y,\partial\U)\,dY
&\leq \int_{\V\cap B(Y_0^*,3R)} |\nabla u(Y)|^2 \dist(Y,\partial\V)\,dY
\end{align*}
and so we are done.
\end{proof}

\begin{proof}[Proof of \thmref{Carlesonmax}]
%
%

First, assume that $A$ is smooth and that $u=\D f$ for some $f\in BMO(\partial\V)$. Recall that $\K^t$ is bounded and invertible on $H^1$, so $\|\K f\|_{BMO}\approx \|f\|_{BMO}$.

We know from \cite[Section~3]{Dp'} that if $A_0$ is real, and $\div A_0\nabla v=0$ in a bounded Lipschitz domain $\W$, then
\[\int_{\partial\W} |N (v-v(X_0))|^2\,d\sigma(X)\leq C \int_{\W} |\nabla v(X)|^2 \dist(X,\partial\W)\,dX\]
for any $X_0\in\W$ with $\dist(X_0,\partial\W)\geq \frac{1}{C}\sigma(\partial\W)$.

This lets us bound $\|\K_0 f\|_{BMO}$ as follows. Pick some $\Delta\subset\partial\V$, and let $X_\Delta$ be such that $\dist(X_\Delta,\Delta)\approx\dist(X_\Delta,\partial\V)\approx\sigma(\Delta)$.
Let $\W_\Delta\subset\V$ be such that $\Delta\subset\partial\W_\Delta$, the Lipschitz constants of $\W_\Delta$ are controlled by those of~$\V$, and $X_\Delta\in\W_\Delta$ with $\dist(X_\Delta,\partial\W_\Delta)\geq \sigma(\Delta)/C$. Then
\begin{align*}
\left(\dashint_\Delta |\K_0 f-\D_0 f(X_\Delta)|\,d\sigma\right)^2
&\leq \dashint_\Delta |\K_0 f-\D_0 f(X_\Delta)|^2\,d\sigma
\\&\leq \frac{C}{\sigma(\Delta)}\int_{\partial\W_\Delta} N(\D_0 f-\D_0 f(X_\Delta))^2
\\&\leq \frac{C}{\sigma(\Delta)}\int_{\W_\Delta}|\nabla\D_0 f(Y)|^2\dist(Y,\partial\V)\,dY
\\&\leq \frac{C}{\sigma(\Delta)}\int_{B(X_0,C{\sigma(\Delta)})\cap\V}|\nabla\D_0 f(Y)|^2\dist(Y,\partial\V)\,dY. \end{align*}
This implies that
\[\|\K_0 f\|_{BMO(\partial\V)}^2\leq \sup_{X_0\in\partial\V,R>0}
\frac{C}{R}\int_{B(X_0,R)\cap\V}|\nabla\D_0 f(Y)|^2 \dist(Y,\partial\V)\,dY.\]

Recall from \autoref{chap:square} that
\[\sup_{X_0\in\partial\V,R>0}
\frac{1}{R}\int_{B(X_0,R)\cap\V}|\nabla\D f(Y)|^2\dist(Y,\partial\V)\,dY\leq C\|f\|_{BMO}.\]

So by analyticity, if we let $\alpha=\|A-A_0\|_{L^\infty} \|f\|_{BMO}^2\approx \|A-A_0\|_{L^\infty} \|\K f\|_{BMO}^2$,
\begin{align*}
\|\K f\|_{BMO}^2
&\leq \|\K_0 f\|_{BMO}^2 + C\alpha
\\&\leq
\sup_{X_0\in\partial\V,R>0}
\frac{C}{R}\int_{B(X_0,R)\cap\V}|\nabla\D_0 f(Y)|^2\dist(Y,\partial\V)\,dY
+ C\alpha
\\&\leq
\sup_{X_0\in\partial\V,R>0}
\frac{C}{R}\int_{B(X_0,R)\cap\V}|\nabla\D f(Y)|^2\dist(Y,\partial\V)\,dY
+ C\alpha
\\&=
\sup_{X_0\in\partial\V,R>0}
\frac{C}{R}\int_{B(X_0,R)\cap\V}|\nabla u(Y)|^2\dist(Y,\partial\V)\,dY
+ C\alpha
\\&\leq C\tilde C^2 +C\alpha \leq C\tilde C^2 +C\|A-A_0\|_{L^\infty} \|\K f\|_{BMO}^2.
\end{align*}
Thus if $\|A-A_0\|_{L^\infty}$ is small enough, we may hide the last term. Thus, $\K f=u|_{\partial\V}$ is in $BMO$ with norm $C\tilde C$.

But since $\partial\V$ is compact, by the John-Nirenberg inequality (\cite[p.~144]{stein}), if $1\leq p<\infty$, then for $C_f=\dashint_{\partial\V} \K f$ we have that
\[\|\K f-C_f\|_{L^p(\partial\V)}\leq C\|\K f\|_{BMO(\partial\V)}\sigma(\partial\V)^{1/p} \leq C\tilde C\sigma(\partial\V)^{1/p}.\]
So $\|f-C_f\|_{L^p}\leq C\tilde C\sigma(\partial\V)^{1/p}$, and so by \thmref{NDf}, $\|N(\D f-C_f)\|_{L^p}\leq C\tilde C \sigma(\partial\V)^{1/p}$.

We need this inequality with $C_f$ replaced by $\D f(X_0)$.

We know that
$\dashint_{\partial\V} |\K_0 f-\D_0 f(X_0)|\leq C\tilde C$. Take $\D_0 f(X_0)=0$. Then $\left|\dashint_{\partial\V} \K_0 f\right|\leq\dashint_{\partial\V} |\K_0 f|\leq C\tilde C$, so 
\[C\|f\|_{L^p}\leq C\|\K_0 f\|_{L^p}\leq C\tilde C\sigma(\partial\V)^{1/p}+C\|\K_0f-{\textstyle\dashint_{\partial\V}\K_0 f}\|_{L^p}\leq C\tilde C\sigma(\partial\V)^{1/p}.\] 
But $\dist(X_0,\partial\V)\approx \sigma(\partial\V)$ and so
$|\D f(X_0)|\leq \sigma(\partial\V)^{1/p'} \|f\|_{L^p}/\dist(X_0,\partial\V)
\leq C\tilde C$. Thus,
\[\|\K f-\D f(X_0)\|_{L^p(\partial\V)}\leq C\tilde C\sigma(\partial\V)^{1/p}\]
provided $A$ is smooth.

We now move on to more general $u$ and $A$.
%
%
%
%
%
Let $\V_\delta=\{X\in\V:\dist(X,\partial\V)>\delta\}$. By \lemref{squaresubdomain}, $u$ satisfies (\ref{eqn:squareconverse}) in $\V_\delta$. But $\overline\V_\delta\subset\V$, so $\nabla u$ is bounded on $\overline\V_\delta$; thus $g_\delta=u|_{\partial\V_\delta}\in L^\infty\subset BMO$. Furthermore, $\partial_\tau g_\delta$ is bounded.

Let $A^\eta$ be as in \autoref{chap:apriori}, and let $\D^\eta$, $\K^\eta$ be potentials with $A^\eta$ instead of~$A$. 

Then $g_\delta = \K^\eta_{\V_\delta} f^\eta_\delta$ for some $f^\eta_\delta\in BMO\cap L^p$. Let $u^\eta_\delta = \D^\eta_{\V_\delta} f^\eta_\delta$. 

Then $u^\eta_\delta = u$ on $\partial\V_\delta$. Furthermore, $\partial_\tau g_\delta\in L^{q}$ for some $q$ small enough that $(R)^A_q$ holds in~$\V_\delta$.

So if $X\in\partial\V_\delta$ and $R>0$, then
\begin{multline*}
\int_{B(X,R)\cap\V_\delta} |\nabla u^\eta_\delta(X)|^2\dist(X,\partial\V_\delta)\,dX
\\\begin{aligned}&\leq
\int_{B(X,R)\cap\V_{\delta+\epsilon}} |\nabla u^\eta_\delta(X)|^2\dist(X,\partial\V)\,dX
+\int_{\V_{\delta+\epsilon}\setminus \V_\delta} |\nabla u^\eta_\delta(X)|^2\dist(X,\partial\V)\,dX
\\&\leq
2\int_{B(X,R)\cap\V_{\delta+\epsilon}} (|\nabla u^\eta_\delta(X)-\nabla u(X)|^2+|\nabla u(X)|^2)\dist(X,\partial\V)\,dX
\\&\qquad
+\int_{\V_{\delta+\epsilon}\setminus \V_\delta} 
|\nabla u^\eta_\delta(X)|\dist(X,\partial\V)^{1-1/q}\,dX
\\&\leq
2\int_{B(X,R)\cap\V_{\delta+\epsilon}} |\nabla u^\eta_\delta(X)-\nabla u(X)|^2\dist(X,\partial\V)\,dX
\\&\qquad
+C\tilde C R
+\epsilon(\delta+\epsilon)^{1-1/q}\int_{\partial\V_\delta} 
N(\nabla u^\eta_\delta)\,d\sigma
\end{aligned}
\end{multline*}
By taking $\eta$ and $\epsilon$ small enough, and dealing with the first term as in \autoref{chap:apriori}, we have that
\[\frac{1}{R}\int_{B(X,R)\cap\V_\delta} |\nabla u^\eta_\delta(X)|^2\dist(X,\partial\V_\delta)\,dX\leq C\tilde C.
\]
So we have that
\[\dashint_{\partial\V_\delta} N(u_\delta^\eta-u_\delta^\eta(X_0))^p\,d\sigma
\leq C\tilde C.\]

As in \thmref{apriori:Dp}, $u_\delta^\eta(X_0)-u(X_0)\to 0$ as $\eta\to 0$, so 
\[\dashint_{\partial\V_\delta} N(u_\delta^\eta-u(X_0))^p\,d\sigma
\leq C\tilde C.\]
Thus $u_\delta^\eta-u(X_0)|_{\partial\V}\in L^p$ with norm at most $C\tilde C\sigma(\partial\V)^{1/p}$; but $u_\delta^\eta=u$ on $\partial\V_\delta$.

So since $(D)^A_p$ holds in $\V$,
\[\dashint_{\partial\V_\delta} N(u-u(X_0))^p\,d\sigma
\leq C\tilde C.\]
Letting $\delta\to 0$ completes the proof.
%
%
%
%
%
%
\end{proof}

\begin{cor}\label{cor:BMOunique} Suppose that $u$ satisfies (\ref{eqn:squareconverse}) and that $u|_{\partial\V}$ is a constant. Then $u$ is constant.\end{cor}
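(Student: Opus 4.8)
\textbf{Proof proposal for \corref{BMOunique}.}
The plan is to reduce to \thmref{Carlesonmax}, which we have just proved, and then invoke $L^p$-uniqueness for the Dirichlet problem. First I would observe that by hypothesis $u|_{\partial\V}\equiv c$ for some constant $c$; we may subtract $c$ and so assume $u|_{\partial\V}\equiv 0$. Next, pick any point $X_0\in\V$ with $\dist(X_0,\partial\V)\approx\sigma(\partial\V)$, which exists whenever $\V$ is a bounded Lipschitz domain. \thmref{Carlesonmax} then applies: since $\div A\nabla u=0$ in $\V$ and $u$ satisfies the Carleson-measure bound (\ref{eqn:squareconverse}) with some constant $\tilde C$, we conclude, provided $\|A-A_0\|_{L^\infty}$ is small enough (which is part of the standing hypotheses of \thmref{big} and hence of this section), that for $p$ large enough that $\K$ is invertible on $L^p(\partial\V)$,
\[
\dashint_{\partial\V} N(u-u(X_0))^p\,d\sigma\leq C\tilde C.
\]
In particular $N(u-u(X_0))\in L^p(\partial\V)$, so $v:=u-u(X_0)$ is a function with $\div A\nabla v=0$ in $\V$, $Nv\in L^p(\partial\V)$, and $v|_{\partial\V}=u|_{\partial\V}-u(X_0)=-u(X_0)$, which is a constant.

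Now I would apply $L^p$-uniqueness. \thmref{Dpunique} states that if $\div A\nabla w=0$ in $\V$, $Nw\in L^p(\partial\V)$, and $w\equiv 0$ on $\partial\V$, then $w\equiv 0$; here $p$ must be large enough that $(R)^{A^T}_{p'}$ holds in all bounded Lipschitz domains with Lipschitz constants controlled by those of $\V$, which is guaranteed for $p$ large by the results cited in the text (and is compatible with the requirement that $\K$ be invertible on $L^p(\partial\V)$, since both hold for all sufficiently large $p$). Taking $w=v+u(X_0)=u-u(X_0)+u(X_0)$ is circular, so instead I apply \thmref{Dpunique} to $w:=v+u(X_0)-c' $ where $c'$ is chosen so that the boundary data vanishes: concretely, $v+u(X_0)$ has boundary data $0$ on $\partial\V$ (since $v|_{\partial\V}=-u(X_0)$), $\div A\nabla(v+u(X_0))=0$, and $N(v+u(X_0))\le N v + |u(X_0)|\sigma(\partial\V)^{1/p}\cdots$ is in $L^p$ because $\partial\V$ is compact. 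Hence $v+u(X_0)\equiv 0$ in $\V$, i.e.\ $u\equiv u(X_0)$ throughout $\V$, so $u$ is constant, as claimed.

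The one point requiring care — and the main (minor) obstacle — is the bookkeeping of the constants: one must check that a single exponent $p$ can be chosen large enough to make $\K$ invertible on $L^p(\partial\V)$ (needed for \thmref{Carlesonmax}) \emph{and} to make $(R)^{A^T}_{p'}$ hold on the relevant family of subdomains (needed for \thmref{Dpunique}), and simultaneously that $\|A-A_0\|_{L^\infty}$ is below the threshold of \thmref{Carlesonmax}. All three are "for $p$ large" or "for $\epsilon$ small" conditions on quantities depending only on $\lambda,\Lambda$ and the Lipschitz constants of $\V$, so they are jointly satisfiable; this is exactly the regime of \thmref{big}, under whose hypotheses the corollary is stated. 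Everything else is immediate from the two cited theorems, so the proof is short.
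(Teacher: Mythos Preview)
Your argument is correct when $\V$ is bounded, and in that case it coincides with the paper's: once \thmref{Carlesonmax} gives $N(u-u(X_0))\in L^p(\partial\V)$, the constant $u(X_0)$ is in $L^p$ on the compact set $\partial\V$, so $Nu\in L^p$ and \thmref{Dpunique} finishes. (The paper phrases step one as ``$u$ is bounded'' and then invokes \lemref{bddmeansconst}, whose bounded-domain case is exactly \thmref{Dpunique}.) The worry you flag about choosing a common $p$ is real but, as you say, harmless.

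The gap is that the corollary must also cover unbounded $\V$ --- in particular special Lipschitz domains $\Omega$, since uniqueness in \thmref{BMO} is asserted for all good Lipschitz domains. There your argument breaks down twice. First, \thmref{Carlesonmax} is stated only for bounded domains, and there is no point $X_0$ with $\dist(X_0,\partial\Omega)\approx\sigma(\partial\Omega)=\infty$. Second, even after one shows $u$ is bounded on $\Omega$, one cannot jump to \thmref{Dpunique}: boundedness gives $Nu\in L^\infty$, but $L^\infty\not\subset L^p$ on the unbounded set $\partial\Omega$, so the hypothesis $Nu\in L^p$ of \thmref{Dpunique} is unavailable.

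The paper handles this by factoring through \lemref{bddmeansconst}: a \emph{bounded} solution with zero boundary data on a good Lipschitz domain must vanish. Boundedness on a special domain is obtained by applying \thmref{Carlesonmax} to the bounded boxes $\Q(R)=\psi((-R,R)\times(0,CR))$ --- the Carleson bound transfers by \lemref{squaresubdomain}, and since $u=0$ on $\partial\Q(R)\cap\partial\Omega$ the $L^p$ estimate controls $|u(X_0^R)|$ uniformly in $R$. The special-domain case of \lemref{bddmeansconst} then requires a separate argument (smoothing near the corners, the maximum principle of \autoref{chap:maximum} on $\Q(R)$, and a Caccioppoli-type decay) that does not reduce to \thmref{Dpunique}. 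You should extend your write-up to include this case.
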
 

By the previous theorem, we know that $u$ is bounded. We need only prove the following lemma:

\begin{lem}\label{lem:bddmeansconst} If $\V$ is a good Lipschitz domain and $u$ is a bounded solution to $\div A\nabla u=0$ in $\V$,
and if $u\equiv 0$ on $\partial\V$, then $u\equiv 0$ in~$\V$. 
\end{lem}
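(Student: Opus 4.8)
\textbf{Proof proposal for \lemref{bddmeansconst}.}

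The plan is to reduce to a Fatou-type/maximum-principle argument using the tools already developed, rather than constructing a fresh energy estimate. First I would observe that since $u$ is bounded and $\div A\nabla u=0$ in $\V$, the interior estimate (\ref{eqn:gradu}) together with \lemref{PDE1} gives $|\nabla u(X)|\leq C\|u\|_{L^\infty}/\dist(X,\partial\V)$; hence $|\nabla u(X)|^2\dist(X,\partial\V)\leq C\|u\|_{L^\infty}^2/\dist(X,\partial\V)$, which does \emph{not} by itself give the Carleson bound (\ref{eqn:squareconverse}), so a direct appeal to \corref{BMOunique} would be circular. Instead I would argue as in the uniqueness proofs of this chapter: work on the subdomains $\V_\delta=\{X\in\V:\dist(X,\partial\V)>\delta\}$, where $\nabla u$ is bounded and $u$ is continuous up to $\partial\V_\delta$, so that $u|_{\partial\V_\delta}\in L^\infty\subset BMO$ and $\partial_\tau(u|_{\partial\V_\delta})\in L^\infty\subset L^q$ for every $q$.

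The key step is then to show $u|_{\partial\V_\delta}\to 0$ in an appropriate sense as $\delta\to 0$, and to transfer this to an interior bound. Since $u\equiv 0$ on $\partial\V$ means $u$ has non-tangential limit $0$ a.e.\ on $\partial\V$, and since $u$ is bounded (so $Nu\in L^\infty\subset L^p(\partial\V)$ when $\partial\V$ is bounded), $u$ is a genuine $L^p$-Dirichlet solution with boundary data $0$; for $p$ large enough that $(D)^A_p$ holds in $\V$ (and hence \thmref{Dpunique} applies), we conclude directly that $u\equiv 0$. For $\V=\Omega$ a special Lipschitz domain, I would instead proceed exactly as in \thmref{NRunique} and \thmref{Dpunique}: for each $R_0$ and $\epsilon>0$ pick $R>R_0$ with $Nu(\psi(\pm R))<\epsilon$ (possible since $Nu$ is bounded but we actually want smallness; here one uses that $u$ bounded and $u\to 0$ nontangentially a.e.\ forces, by dominated convergence on $\partial\V_\delta$, that $\|u\|_{L^p(\partial\V_\delta)}$ can be made small, combined with the observation that on the "lateral" part $\partial\Omega_R\setminus\partial\Omega$ the contribution is $O(\epsilon)$), form the bounded Lipschitz subdomain $\Omega_R=\psi((-R,R)\times(0,CR))$ with $\partial\Omega_R\setminus\partial\Omega\subset\gamma(\psi(R))\cup\gamma(\psi(-R))$, and invoke $(D)^A_p$ in $\Omega_R$ to get $|u(X)|\leq C\epsilon\,\dist(X,\partial\Omega)^{-1/p}$ for $X\in\Omega_{R/C}$; letting $R_0\to\infty$, $\epsilon\to 0$ yields $u\equiv 0$.

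The main obstacle I anticipate is making rigorous the smallness of the boundary data on the truncated domains in the special-domain case: unlike in \thmref{NRunique}, where one exploits $N(\nabla u)\in L^p$ to find radii $R$ where the nontangential maximal function is pointwise small, here we only know $u$ is bounded, not that $Nu\in L^p(\partial\Omega)$ with finite norm, so there is no a priori decay to exploit. The fix is to use the hypothesis $u\equiv 0$ on $\partial\Omega$ more carefully: the non-tangential limit is $0$ a.e., so $u|_{\partial\Omega_\delta}\to 0$ in, say, $L^p_{loc}(\partial\Omega)$ by dominated convergence (using boundedness of $u$ as the dominating function), and then one localizes the argument of \thmref{Dpunique} to the finite subdomains, where boundedness of $\nabla u$ on $\overline{\Omega_\delta}$ legitimizes the Green's-identity manipulations with the regularity solution $\Phi_X$ of $(R)^{A^T}_{p'}$. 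Once that localization is in place the conclusion follows just as in \thmref{Dpunique}.
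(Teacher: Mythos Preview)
For the case where $\partial\V$ is bounded, your argument agrees with the paper's: $u$ bounded gives $Nu\in L^\infty(\partial\V)\subset L^p(\partial\V)$ since $\partial\V$ is compact, and \thmref{Dpunique} finishes it.

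For the special Lipschitz domain $\Omega$, however, there is a genuine gap. You correctly identify the obstacle: on the lateral and top pieces of $\partial\Omega_R$ you only know $|u|\leq\|u\|_{L^\infty}$, with no smallness. Your proposed fix does not close this gap. Dominated convergence gives $u|_{\partial\Omega_\delta}\to 0$ in $L^p_{loc}$, but that is a statement about the curves parallel to $\partial\Omega$; it says nothing about $u$ on the vertical sides $\psi(\{\pm R\}\times(0,CR))$ or the top $\psi((-R,R)\times\{CR\})$, where the only available information is boundedness. Consequently $\|u\|_{L^p(\partial\Omega_R)}$ is of order $R^{1/p}$, not small, and applying $(D)^A_p$ on $\Omega_R$ yields only the trivial bound.

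The paper's route supplies the missing mechanism. One first regularizes to a solution $u_{R,\epsilon}$ on $\Q(R)=\psi((-R,R)\times(0,R))$ whose tangential derivative is bounded (so \thmref{mixedunique} and \lemref{L2loc} apply), and which is uniformly bounded by the maximum principle of \autoref{chap:maximum}. The crucial quantitative input is the Caccioppoli inequality (\lemref{PDE1}), which, because $u_{R,\epsilon}=0$ on the bottom $\psi((-R,R))$ and $|u_{R,\epsilon}|\leq C$, gives the \emph{uniform} energy bound $\int_{\Q(R/2)}|\nabla u_{R,\epsilon}|^2\leq C$ independent of $R$. A logarithmic pigeonhole then locates some $\rho\in(r,R/2)$, with $R=2r e^{1/\delta}$, for which $\int_{\Omega\cap\partial\Q(\rho)}|\nabla u_{R,\epsilon}|^2\,d\sigma\leq C\delta/\rho$; integrating along the lateral sides from the corners (where $u_{R,\epsilon}=0$) gives $|u_{R,\epsilon}|\leq C\sqrt{\delta}$ on all of $\partial\Q(\rho)$, and the maximum principle pushes this into $\Q(r)$. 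This energy/pigeonhole step is exactly what produces smallness on the lateral boundary out of mere boundedness plus the zero bottom data, and it is absent from your proposal.
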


\begin{proof}
If $\partial\V$ is bounded then the theorem follows trivially from \thmref{Dpunique} as usual. Suppose that $\V=\Omega$ is a special Lipschitz domain. Without loss of generality assume that $\|u\|_{L^\infty}\leq 1$.

If we knew, a priori, that $\nabla u\in L^2_{loc}$, then we could use \lemref{PDE1} to show that $\|\nabla u\|_{L^2(\Omega)}\leq C$. Unfortunately, we do not.

Define $\Q(R)=\psi((-R,R)\times(0,R))$.
For every $\epsilon$, $R>0$, let $u_{R,\epsilon}=u$ on $\partial\Q(R)$, except on $\{X\in\partial\Q(R):0<\dist(X,\partial\Omega)<\epsilon\}$; on this set $u_{R,\epsilon}$ is to increase from $0$ to $u$ smoothly. Take $\div A\nabla u_{R,\epsilon}=0$. By \autoref{chap:maximum}, $\|u_{R,\epsilon}\|_{L^\infty(\Q(R))}\leq C$.

Since $u$ is bounded, we know that $\|N_{\Q(R)}(u-u_{R,\epsilon})\|_{L^p}\leq C\epsilon^{1/p}$; so if we can show that $u_{R,\epsilon}$ is small, then by letting $\epsilon\to 0$ and $R\to\infty$ we will show that $u\equiv 0$.

From \lemref{PDE1} and (\ref{eqn:gradu}), we know that $|\nabla u(X)|\leq C/\dist(X,\partial\V)$. So $|\partial_\tau u_{R,\epsilon}|\leq C/\epsilon$ on $\partial\Q(R)$. So we know from \thmref{mixedunique} that $u_{R,\epsilon}$ must equal the regularity solution, and so by \lemref{L2loc} $\nabla u_{R,\epsilon}\in L^2(\Q(R))$.

Therefore, by \lemref{PDE1}, we know that 
\[\int_{Q(R/2)} |\nabla u_{R,\epsilon}|^2\leq C.\]

Pick some $r>0$ large, $\delta>0$ small, and let $R=2r e^{1/\delta}$,  $\zeta(\rho)=\int_{\Omega\cap\partial Q(\rho)} |\nabla u_{R,\epsilon}|^2\,d\sigma$. Then 
\[\int_{r}^{R/2}\zeta(\rho)\,d\rho =\int_{\Q(R/2)\setminus\Q(\rho)}|\nabla u_{R,\epsilon}|^2\leq C.\]
So since $\int_r^{R/2} \frac{\delta}{\rho}\,d\rho=1$, there must be some $\rho\in (r,R/2)$ such that $\zeta(\rho)\leq C\delta/\rho$. But then 
\begin{align*}
\int_{\Omega\cap\partial \Q(\rho)} |\nabla u_{R,\epsilon}|
&\leq C\rho\dashint_{\Omega\cap\partial \Q(\rho)} |\nabla u_{R,\epsilon}|
\\&\leq C\rho\left(\dashint_{\Omega\cap\partial \Q(\rho)} |\nabla u_{R,\epsilon}|^2\right)^{1/2}
\leq C\sqrt{\rho\zeta(\rho)}\end{align*}
and so $|u_{R,\epsilon}|\leq C\sqrt{\delta}$ on ${\partial\Q(X,\rho)}$. From \autoref{chap:maximum}, this means that $|u_{R,\epsilon}|\leq C\sqrt{\delta}$ on $\Q(r)$, and so by making $\delta$, $\epsilon$ small, we may force $u$ small, as desired.
\end{proof}

We may now use a similar technique to prove a more general version of \thmref{apriori:BMOcpt}.

\begin{thm}\label{thm:apriori:BMO} Suppose that $\Omega$ is a special Lipschitz domain, $A$ satisfies (\ref{eqn:elliptic}), and $f\in BMO(\partial\Omega)$. Then there exists some $\epsilon_0>0$ small such that, if $\|\Im A\|_{L^\infty}<\epsilon_0$, then there is some $u$ a solution to \thmref{BMO} with boundary data~$f$. \end{thm}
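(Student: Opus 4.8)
The plan is to mimic the argument of \thmref{apriori:BMOcpt}, but to handle the fact that in a special Lipschitz domain $\Omega$ the boundary is unbounded, so that $BMO(\partial\Omega)$ functions need not be compactly supported and need not lie in any $L^p$. First I would reduce to the case of compactly supported boundary data by a truncation-and-limit argument: given $f\in BMO(\partial\Omega)$, fix a reference point $X_0\in\Omega$ with $\dist(X_0,\partial\Omega)\approx 1$ and set $f_R = (f - C_R)\chi_{\psi((-R,R))}$ for a suitable sequence of constants $C_R$ chosen (as in the standard $BMO$ localization) so that $\|f_R\|_{BMO(\partial\Omega)}\leq C\|f\|_{BMO(\partial\Omega)}$ uniformly in $R$. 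Each $f_R$ is compactly supported, hence lies in $L^p(\partial\Omega)$ for $1\le p<\infty$, so by \thmref{apriori:BMOcpt} (applied to special Lipschitz domains, which are covered by that theorem's hypotheses once we know existence for smooth coefficients from \thmref{BMOcarleson}) there is a solution $u_R$ with $\div A\nabla u_R=0$ in $\Omega$, $u_R=f_R$ on $\partial\Omega$, and $|\nabla u_R(Y)|^2\dist(Y,\partial\Omega)$ satisfying the Carleson condition (\ref{eqn:uCarleson}) with constant $C\|f_R\|_{BMO}\le C\|f\|_{BMO}$, uniformly in $R$.

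The next step is to pass to the limit $R\to\infty$. The key tool is \corref{BMOconverse} together with \thmref{Carlesonmax}: the Carleson bound on $u_R$ gives, on any compact piece of $\Omega$, a uniform pointwise bound on $u_R-u_R(X_0)$ and on $\nabla u_R$, via the local $L^2$-of-the-gradient estimate and \lemref{PDE1}, \lemref{PDE3}. More precisely, for $u_R-u_S$ on a fixed subdomain $\Q\subset\subset\Omega$ one estimates $\int_\Q|\nabla(u_R-u_S)|^2$ by integrating by parts and using that $u_R-u_S$ vanishes on the part of $\partial\Omega$ where both $f_R$ and $f_S$ equal $f-\text{const}$ (a set that exhausts $\partial\Omega$), plus a boundary term at $\partial\Q$ controlled by the uniform Carleson bound and the decay of $\nabla u_R$ away from $\supp f_R$; one then runs the now-familiar averaging-in-$R_0$ argument from \thmref{apriori:NR} and \thmref{apriori:BMOcpt} to show $\{\nabla u_R\}$ is Cauchy in $L^2_{loc}$ and $\{u_R-u_R(X_0)\}$ converges almost uniformly to some $u$. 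Lower semicontinuity of the Carleson functional under this convergence then yields (\ref{eqn:uCarleson}) for $u$ with constant $C\|f\|_{BMO}$; and the boundary values converge so that $u=f$ on $\partial\Omega$ (non-tangentially a.e.), after fixing the additive constant appropriately.

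Existence being thus established, uniqueness among functions satisfying $\div A\nabla u=0$, $u=f$ on $\partial\Omega$, and (\ref{eqn:uCarleson}) for some $\tilde C$ follows from \corref{BMOunique} and \lemref{bddmeansconst}: if $u_1,u_2$ are two such solutions, then $u_1-u_2$ satisfies (\ref{eqn:squareconverse}) (the local form of the Carleson condition) and has boundary data $0$; \thmref{Carlesonmax} (whose proof, via $\V_\delta$-truncation, applies to special Lipschitz domains since one reduces to bounded subdomains) shows $u_1-u_2$ is bounded, and then \lemref{bddmeansconst} forces $u_1-u_2\equiv 0$.

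The main obstacle I expect is the limit $R\to\infty$: unlike the bounded case, $\Omega$ is unbounded, so the boundary terms at $\partial\Q$ in the energy estimate must be controlled uniformly over an exhausting family of subdomains, and one must check that the constants $C_R$ in the $BMO$-localization can be chosen so that $u_R-u_R(X_0)$ actually converges (rather than drifting by an unbounded additive constant as $R$ grows). This requires carefully tracking the normalization of the additive constant — essentially showing that $u_R(X_0) - C_R$ stays bounded — which is where the quantitative maximum-principle/Carleson estimate of \thmref{Carlesonmax} on large subdomains $\Q(r)$ (as used in \lemref{bddmeansconst}) does the real work.
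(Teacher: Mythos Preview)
Your overall strategy matches the paper exactly: truncate $f$ to compactly supported $f_R = (f - c_R)\chi_{\psi((-R,R))}$ with uniformly bounded $BMO$ norm (the paper takes $c_R=\dashint_{\psi((-R,R))} f$), apply \thmref{apriori:BMOcpt} to obtain solutions $u_R$ with uniform Carleson constant, and pass to the limit $R\to\infty$. You also correctly identify \thmref{Carlesonmax} and \lemref{bddmeansconst} as the tools that control the limit.

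Where your description goes astray is the mechanism for convergence. You propose to show $\{\nabla u_R\}$ is Cauchy in $L^2_{loc}$ via an integration-by-parts energy estimate in the style of \thmref{apriori:NR}, controlling the boundary term by ``decay of $\nabla u_R$ away from $\supp f_R$''. That decay is not available: a Carleson solution only satisfies $|\nabla u_R(X)| \lesssim \|f\|_{BMO}/\dist(X,\partial\Omega)$ at interior points, with no horizontal decay, and the energy arguments of \thmref{apriori:NR}/\thmref{apriori:BMOcpt} relied on having $\nabla u \in L^2(\V)$ globally (from the Lax--Milgram or regularity-solution construction), which one does not have here.

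The paper instead runs an $L^\infty$ argument that is literally the proof of \lemref{bddmeansconst} applied to $v = u_R - u_S - c$ (with $R<S$ and $c$ chosen so $v\equiv 0$ on $\psi((-R,R))$). First, $v$ inherits the Carleson condition on $Q(R)=\psi((-R,R)\times(0,R))$, so \thmref{Carlesonmax} gives $|v|\leq C$ there. Then one approximates by the $v_\epsilon$ of \lemref{bddmeansconst}, obtains $\int_{Q(R/2)}|\nabla v_\epsilon|^2\leq C$ from Caccioppoli, and uses the logarithmic pigeonhole with $R = 2r e^{1/\delta}$ to find a radius $\rho\in(r,R/2)$ where $|v_\epsilon|\leq C\sqrt\delta$ on $\partial Q(\rho)$, hence on $Q(r)$ by the maximum principle. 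Letting $\epsilon\to 0$ gives $|v|\leq C\sqrt\delta$ on $Q(r)$, so $w_R = u_R + \dashint_{\psi((-R,R))} f$ is Cauchy in $L^\infty_{loc}$ and the limit $u$ exists with the correct boundary data and Carleson bound. This is what your final paragraph gestures at, but it \emph{replaces} rather than supplements the energy argument you sketched first.
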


\begin{proof} By \thmref{apriori:BMOcpt}, this is true if $f$ is compactly supported. Without loss of generality take $\|f\|_{BMO}=1$.

Let $f_R = f-\dashint_{\psi((-R,R))}f$ on $\psi((-R,R))$, $f_R$ compactly supported with $BMO$ norm at most~$C$. Let $u_R$ be the solution to \thmref{BMO} with boundary data~$f_R$.

Pick some $\delta>0$ small and some $r>0$ large. Let $R$, $S>2r e^{1/\delta}$ with $R<S$. Let $v=u_R-u_S-C$, where $C$ is a constant chosen such that $v\equiv 0$ on $\psi((-R,R))$. Then by \thmref{Carlesonmax}, $|v|\leq C$ on $\psi((-R,R)\times(0,R))$.

Define $Q(R)=\psi((-R,R)\times(0,R))$, and let $v_\epsilon=v$ on $\partial Q(R)$ except on $\{X\in\partial Q(R):0<\dist(X,\partial\Omega)<\epsilon\}$, as in the proof of \lemref{bddmeansconst}.

Then if $X\in Q(R)$, $|v(X)-v_\epsilon(X)|\leq C\epsilon^{1/p}\dist(X,\partial\Q(R))^{-1/p}$ and $|v_\epsilon(X)|\leq C$, so again $|v_\epsilon|\leq C\sqrt{\delta}$ on $\Q(r)$.

By letting $\epsilon\to 0$, we see that $|v(X)|\leq C\sqrt{\delta}$ on $\Q(r)$. Let $w_R=u_R+\dashint_{\psi((-R,R))}f$. Then $\lim_{R\to\infty} w_R$ exists, and $w_R$ converges almost uniformly; so a limit $u$ must exist, and moreover must have $u|_{\partial\Omega}=f$ and satisfy (\ref{eqn:uCarleson}), as desired.
\end{proof}

\section{The $L^1$ converse}

We consider first the Neumann problem.

\begin{lem} \label{lem:BMOhasCarleson} Suppose that $\V$ is a good Lipschitz domain with connected boundary and that $f\in BMO(\partial\V)$. Then there exists some function $F$ in $W^{1,\infty}_{loc}(\V)$ such that $F\to f$ nontangentially a.e.\ in $\partial\V$, and such that
\[\|\nabla F\|_{\mathcal{C}}=\sup_{X_0\in\partial\V,R>0}
\frac{1}{\sigma({B(X_0,R)\cap\partial\V})}\int_{B(X_0,R)\cap\V} |\nabla F| \leq C\|f\|_{BMO}\]
where the constant $C$ depends only on the Lipschitz constants of~$\V$.

Furthermore, if $X_0\in\V$, then
\[\dashint_{B(X_0,C_1\dist(X_0,\partial\V))\cap\partial\V} |f-F(X_0)|\,d\sigma\leq C(C_1)\|f\|_{BMO}\]
and if $f$ is Lipschitz, we may assume that $F$ is as well. If $\partial\V$ is bounded, then $F-\dashint_{\partial\V} f$ is compactly supported; if $\V=\Omega$ is a special Lipschitz domain and $f$ is compactly supported, then $F$ is as well.
\end{lem}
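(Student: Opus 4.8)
The plan is to construct $F$ as a kind of mollification of a harmonic-type extension of $f$, following the classical construction of a Carleson-measure extension of a $BMO$ function. First I would reduce to the case $\V = \Omega$ a special Lipschitz domain; for bounded $\partial\V$ one covers $\partial\V$ by the finitely many boundary cylinders $R_j$ of \dfnref{domain}, builds $F$ separately in each associated special domain $\Omega_j$ against a partition of unity, and patches. For the special domain, I would pull back to the upper half-plane via $\psi$ so that $f$ becomes a $BMO(\R)$ function $g = f\circ\psi$ (with $\|g\|_{BMO(\R)}\approx\|f\|_{BMO(\partial\V)}$ since arc length is comparable to Lebesgue measure), and then set $F(\psi(x,t)) = g * \varphi_t(x)$ for a fixed smooth compactly supported $\varphi$ with $\int\varphi = 1$, i.e.\ the standard $\varphi_t$-averaging extension.

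The key estimates are all classical $BMO$ facts. The nontangential convergence $F\to f$ a.e.\ follows because $g*\varphi_t(x) \to g(x)$ at every Lebesgue point of $g$, and $BMO\subset L^1_{loc}$ so a.e.\ point is a Lebesgue point; one checks uniformity over the cone $\gamma(\psi(x))$ using that $\psi(x,t)\in\gamma(\psi(x))$ (after widening apertures as in \lemref{NTM}) forces $|x - x'| \lesssim t$. For the Carleson bound on $\nabla F$, compute $\nabla_{(x,t)}\big(g*\varphi_t(x)\big)$; the $t$-derivative hits $\partial_t[\tfrac1t\varphi(\cdot/t)]$, which has mean zero, and the $x$-derivative hits $\tfrac1t\varphi'(\cdot/t)$, also mean zero, so in both cases $\partial F$ equals $(g - g_I)*\theta_t$ for a mean-zero bump $\theta$ at scale $t$, localized to the interval $I$ of length $\sim t$ centered at $x$. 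The standard John--Nirenberg / telescoping estimate then gives $|\nabla F(\psi(x,t))| \leq C\|g\|_{BMO}/t$, and moreover $\int_{B(X_0,R)\cap\Omega}|\nabla F| \lesssim \|g\|_{BMO} R \approx \|f\|_{BMO}\,\sigma(B(X_0,R)\cap\partial\V)$, which is the $\mathcal C$-bound. The averaged estimate $\dashint_{B(X_0,C_1\delta)\cap\partial\V}|f - F(X_0)| \lesssim \|f\|_{BMO}$ with $\delta = \dist(X_0,\partial\V)$ follows since $F(X_0) = g*\varphi_\delta(x_0)$ is a weighted average of $g$ over an interval of length $\sim\delta$ around $x_0$, and two averages of a $BMO$ function over comparable intervals differ by $O(\|g\|_{BMO})$.

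The remaining bookkeeping items are easy: if $f$ is Lipschitz then $g*\varphi_t$ is Lipschitz jointly in $(x,t)$ with constant controlled by $\|g'\|_{L^\infty}$, so $F$ is Lipschitz; if $\partial\V$ is bounded and we have normalized so $\dashint_{\partial\V}f = 0$, then outside a large ball $g*\varphi_t(x)$ is the average of a function that integrates to something small, and combined with the partition-of-unity construction one arranges $F$ to be compactly supported (subtracting $\dashint_{\partial\V}f$ exactly matches the statement); if $\V = \Omega$ is special and $f$ is compactly supported then $g*\varphi_t(x) = 0$ once $x$ is far from $\supp g$ and $t$ is not too large relative to the distance, and one truncates in $t$ with a cutoff to make $F$ genuinely compactly supported without spoiling the Carleson bound (the cutoff derivative lands where $g*\varphi_t$ is already $0$, or is harmless).

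The main obstacle is the patching step for bounded $\partial\V$: the local extensions $F_j$ built in the half-planes associated to different $\Omega_j$ must be glued into a single $F$ on $\V$ that still converges to $f$ globally and whose gradient still satisfies the Carleson condition uniformly. The care needed is exactly as in \thmref{patching}/\thmref{H1patching}: one writes $f = \sum_j \eta_j f$ for a boundary partition of unity subordinate to the $B(X_j,r_j)$, extends each $\eta_j f$ (which is $BMO$ with norm $\lesssim\|f\|_{BMO}$, since multiplication by a Lipschitz bump is bounded on $BMO$ on a fixed scale) by the half-plane construction, and checks that the $\mathcal C$-norm of $\sum_j \nabla F_j$ is controlled because on any ball $B(X_0,R)$ only $O(k_2)$ of the pieces are active near $\partial\V$ and the ``tails'' of the $F_j$ away from their support cylinders have rapidly decaying gradient, contributing a convergent geometric series as in \eqref{eqn:goodCZbound}. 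Getting the constant $C_1$ in the averaged inequality to be uniform across the patches, and checking that nontangential limits survive the gluing, is the part that requires the most attention, but it is a routine adaptation of machinery already in the paper.
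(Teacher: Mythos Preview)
Your reduction to the half-plane via $\psi$ and the partition-of-unity patching for bounded $\partial\V$ are both fine and match the paper's strategy. The gap is in the half-plane construction itself: the mollification extension $F(x,t)=g*\varphi_t(x)$ does \emph{not} in general satisfy $\|\nabla F\|_{\mathcal C}\leq C\|g\|_{BMO}$.

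You correctly observe that $\partial_x F=t^{-1}g*(\varphi')_t$ and $\partial_t F=t^{-1}g*\psi_t$ with $\int\varphi'=\int\psi=0$, and from this the pointwise bound $|\nabla F(x,t)|\leq C\|g\|_{BMO}/t$ is valid. But the ``moreover $\int_{B(X_0,R)\cap\Omega}|\nabla F|\lesssim\|g\|_{BMO}R$'' does not follow, and is false. Take $g_N=\sum_{k=1}^N r_k$ on $[0,1]$, where $r_k$ are the Rademacher functions. By orthogonality $\dashint_I|g_N-(g_N)_I|^2=N-j$ on a dyadic interval of length $2^{-j}$, so $\|g_N\|_{BMO}\approx\sqrt N$. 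On the other hand, for $t\in[2^{-j-1},2^{-j}]$ the convolution $g_N*\theta_t$ is dominated by the single term $r_j*\theta_t$ (the others decay geometrically), which is $\gtrsim c$ on a set of measure $\approx 1$ in $[0,1]$; hence $\int_0^1\int_0^1 t^{-1}|g_N*\theta_t|\,dx\,dt\gtrsim\sum_{j=1}^N\int_{2^{-j-1}}^{2^{-j}}t^{-1}\,dt\approx N$. So for the normalized $\tilde g_N=g_N/\sqrt N$ one has $\|\tilde g_N\|_{BMO}\approx 1$ but $\|\nabla \tilde F_N\|_{\mathcal C}\gtrsim\sqrt N$. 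What \emph{is} true for the mollification is that $|\nabla F|^2\,t\,dx\,dt$ is Carleson (Fefferman--Stein); the $L^1$ Carleson bound is genuinely stronger and requires a different extension.

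The paper instead invokes the Varopoulos extension (\cite{bmo1},\cite{bmo2}): one writes $f=b+\sum_{\omega\in\Omega}\alpha(\omega)\chi_\omega$ with $|\alpha(\omega)|\leq C\|f\|_{BMO}$, $\|b\|_\infty\leq C\|f\|_{BMO}$, and the Carleson packing condition $\sum_{\omega\subset I,\,\omega\in\Omega}|\omega|\leq C|I|$; then $F$ is obtained by lifting each $\chi_\omega$ to the box $\omega\times[0,|\omega|]$ and smoothing. The packing condition is exactly what converts into the $L^1$ Carleson bound on $\nabla F$, and it is this combinatorial input---not John--Nirenberg alone---that you are missing. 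The Lipschitz-data refinement and the averaged estimate $\dashint|f-F(X_0)|\lesssim\|f\|_{BMO}$ are then read off from this dyadic description.
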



%

\begin{proof} We first consider $\R^2_+$; by \cite{bmo2} the first part of this lemma holds in $\R^2_+$, and furthermore, $|\nabla F(x,t)|\leq C\|f\|_{BMO}/t$.



The construction may be summarized as follows: 
as in \cite[Section~2]{bmo1} if $f\in BMO(\R)$ and $I\subset\R$ is a dyadic interval, then there exists a family $\Omega=\Omega(I)$ of dyadic intervals $\omega\subset I$ and a function $\alpha:\Omega\mapsto \C$ such that 
\begin{itemize}
\item $|\alpha(\omega)|\leq C\|f\|_{BMO}$ for all intervals $\omega\in\Omega(I)$,
\item $\sum_{\omega\subset \check I,\,\,\omega\in\Omega(I)} |\omega|\leq C|\check I|$ for all intervals $\check I\subset I$, and
\item $f=b+\dashint_I f + \sum_{\omega\in\Omega} \alpha(\omega) \chi_\omega$ for some function $b$ such that $\|b\|_{L^\infty(I)}\leq C\|f\|_{BMO}$.
\end{itemize}
If $f$ has compact support, we may assume that we are working in some interval $(-2^k,2^k)\supset\supp f$. Otherwise, let $\Omega_k=\Omega((-2^k,2^k))$. If we have constructed $\Omega_k$, then we may construct $\Omega_{k+1}$ by constructing  $\Omega((2^k,2^{k+1}))$ and $\Omega((-2^{k+1},-2^{k}))$ and taking 
\[\Omega_{k+1}=\Omega_{k} \cup \Omega((2^k,2^{k+1})) \cup \Omega((-2^{k+1},-2^{k}))\cup \{(-2^k,2^k)\}\]
and letting $\alpha({(-2^k,2^k)})=\dashint_{-2^k}^{2^k} f - \dashint_{-2^{k+1}}^{2^{k+1}} f$; by basic $BMO$ theory this is at most $2\|f\|_{BMO}$.

Define
\[F_k(x,t)= \sum_{\omega\in\Omega_k} \alpha_\omega \chi_\omega(x)\chi_{[0,\sigma(\omega)]}(t) + \dashint_{-2^k}^{2^k} f.\]
Then $F_{k+1}=F_k$ on $(-2^k,2^k)\times(0,2^k)$; so $\check F=\lim_{k\to\infty} F_k$ exists.

If $I$ is the smallest dyadic interval of length at least $t_0$ that contains $x_0$, then $F_k(x_0,t_0)=\sum_{I\subseteq\omega\in\Omega_k} \alpha_\omega+\dashint_{-2^k}^{2^k} f$.

Let $\eta\in C^\infty_0(B(0,1))$ have integral $1$. Let $\eta_t(X) = t^{-2} \eta(X/t).$  Let $\tilde F(x,t) = \eta_{t/C}\star\check F (x,t)$, $C\geq 4$. By looking at the smoothing of a single interval, we see that $|\nabla F|$ is a Carleson measure.

We may get $F$ from $\tilde F$ by adding a correction term whose $L^\infty$ norm is controlled by the $BMO$ norm of~$f$ (supplied in \cite{bmo2}).

If $f$ is compactly supported, we may construct $\Omega_k$ such that $F_k$ is as well; since $\tilde F$ is a convolution of $F$ with a smooth cutoff, $\tilde F$ is compactly supported. We get from $\tilde F$ to $F$ by adding a $L^\infty$ function; we may multiply it by a smooth cutoff function (with large support) to get $F$ compactly supported.

We have that $|F(X)-\tilde F(X)|\leq C\|f\|_{BMO}$. I claim that $|\tilde F(X)-F_k(X)|\leq C\|f\|_{BMO}$ for all sufficiently large~$k$.

$\tilde F(x,t)$ is an average of the values $F_k$ takes in $B((x,t),t/C)$. If $C$ is large, this ball is small and so there are at most four such values; I claim that they vary by at most $C\|f\|_{BMO}$.

If $I$ is a dyadic interval and $k$ is large enough, then
\[\dashint_I \Bigl|f-\sum_{I\subseteq\omega\in\Omega_k} \alpha_\omega-{\textstyle\dashint_{-2^k}^{2^k} f}\Bigr|
\leq
\dashint_I \Bigl(|b|+\sum_{\omega\subset I} |\alpha(\omega)| |\omega|\Bigr)
\leq
C\|f\|_{BMO}.
\]
But if $I_1\subset I_2$ and $|I_2|=2|I_1|$, then by standard $BMO$ theory $\left|\dashint_{I_2} f-\dashint_{I_1} f\right|\leq 2\|f\|_{BMO}$. This implies that if $|I_1|=|I_2|$ and $I_1$, $I_2$ share an endpoint, then $\left|\dashint_{I_2} f-\dashint_{I_1} f\right|\leq 4\|f\|_{BMO}$. Using these facts, we see that $F_k$ has oscillation no more than $8\|f\|_{BMO}$ on $B((x,t),t/4)$, and so $|\tilde F(x,t)-F_k(x,t)|\leq 8\|f\|_{BMO}$.

Suppose that $X_0=(x_0,t_0)$, and let $I=B(X_0,C_1 t_0)\cap\partial\R^2_+$. If $I=(x_0-r,x_0+r)$, let $\check I=(x_0-r-t_0,x_0+r+t_0)$. Then for $k$ large enough, 
\begin{align*}
\dashint_{I} |f-F(X_0)| 
&\leq
|\tilde F(X_0)-F(X_0)|+
|F_k(X_0)-\tilde F(X_0)|+
\dashint_{I} |f -F_k(X_0)| 
\\&\leq 
C\|f\|_{BMO} +\|b\|_{L^\infty}
+ \dashint_{I} \Bigl|\sum_{\omega\in\Omega_k}\alpha(\omega)\chi_\omega +{\textstyle\dashint_{-2^k}^{2^k} f} -F_k(X_0)\Bigr|
\\&\leq 
C\|f\|_{BMO}+\sum_{\omega\subset\check I}|\alpha(\omega)| \frac{|\omega|}{|\check I|}\frac{|\check I|}{|I|}
+ \dashint_{I} \Bigl|\sum_{|\omega|>t_0}\alpha(\omega)\chi_\omega +{\textstyle\dashint_{-2^k}^{2^k} f} -F_k(X_0)\Bigr|
\\&\leq 
\left(C+C\frac{|\check I|}{|I|}\right)\|f\|_{BMO}
+ \dashint_{I} \Bigl|\sum_{|\omega|>t_0} \alpha(\omega)
(\chi_\omega - \chi_\omega(x_0)\Bigr|
\end{align*}
Let $j$ be such that $2^{j-1}< t_0 \leq 2^{j}$. Then $I$ is contained in the union of at most $C(C_1)$ intervals of length $2^j$. On each of these intervals, $\sum_{|\omega|>t_0} \alpha(\omega)
(\chi_\omega - \chi_\omega(x_0))$ is a constant; it is zero on the interval containing $x_0$, and so must be bounded by $C(C_1) \|f\|_{BMO}$ throughout.

If $f$ is Lipschitz, let $\mu=\|f\|_{BMO}/\|f'\|_{L^\infty}$. 
%
%

Construct $F_k$, $\tilde F$, $F$ as usual. Let $G(x,t)=f(x)\zeta(t)+F(x,t)(1-\zeta(t))$, where $\zeta=1$ on $[0,\mu]$, $\zeta=0$ on $[2\mu,\infty)$, and $|\zeta'|\leq 2/\mu$.

I claim that $G$ satisfies our desired conditions. Clearly, $G\to f$ nontangentially everywhere; we need only show that $G$ is Lipschitz and $|\nabla G|$ is a Carleson measure.

First, note that
\[\nabla G(x,t) = \Matrix{f'(x)\eta(t)\\(f(x)-F(x,t))\eta'(t)} + (1-\eta(t))\nabla F(x,t).\]
Since $|\nabla F(x,t)|\leq C\|f\|_{BMO}/t$, we have that $|(1-\eta(t))\nabla F(x,t)|\leq C \|f'\|_{L^\infty}$; since $\nabla F$ is a Carleson measure, so is $(1-\eta(t))\nabla F(x,t)$.

$|f'(x)\eta(t)|\leq \|f'\|_{L^\infty}$, and 
\[\frac{1}{T}\int_{x}^{x+T} \int_0^T |f'(x)|\eta(t)\,dt\,dx
\leq \|f'\|_{L^\infty} 2\mu = 2\|f\|_{BMO}.\]
So we need consider only the term $(f(x)-F(x,t))\eta'(t).$
Assume $t<2\mu$, and let $I$ be the smallest dyadic interval containing $x$ with $|I|>t$. Then $|f(x)-\dashint_I f|\leq \frac{1}{2} \|f'\|_{L^\infty}|I|\leq 2\|f\|_{BMO}$, and by the above $\left|F(x,t)-\dashint_I f\right|\leq C\|f\|_{BMO}$, and so
\begin{align*}
|f(x)-F(x,t)|
&\leq \left|f(x)-\dashint_I f\right| + \left|F(x,t)-\dashint_I f\right|
\leq C\|f\|_{BMO}.
\end{align*}

This implies that $|(f(x)-F(x,t))\eta'(t)|\leq C\|f\|_{BMO}/\mu=C\|f'\|_{L^\infty}.$ Furthermore,
\[\frac{1}{T}\int_{x}^{x+T} \int_0^T |(f(x)-F(x,t))\eta'(t)|\,dt\,dx
\leq \|f'\|_{L^\infty} 2\mu = 2\|f\|_{BMO}.\]
So $G\to f$ and $\nabla G$ is both bounded and a Carleson measure, as desired.

We now pass to more interesting Lipschitz domains.
If $f\in BMO(\partial\Omega)$ for $\Omega$ an arbitrary special Lipschitz domain, we may construct such an $F$ by letting $g(x)=f(\psi(x))$, letting $G\to g$ with $\nabla G\in\mathcal{C}$ as before, and by letting 
\[F(\psi(x,t)) = G(x,t)
\Rightarrow F(x\e^\perp + t\e) = G(x,t-\phi(x)).\]
Then $F\to f$ and $|\nabla F| \leq C |\nabla G|$, so 
\[\|\nabla F\|_{\mathcal{C}} \leq C\|f\|_{BMO}.\]


If $\V$ is a Lipschitz domain which is good but not special and $f\in BMO(\partial\V)$, we may proceed as in the proof of \cite[Lemma~2.3]{FK}. $\sigma(\partial\V)$ is finite and so $f\in L^1(\partial\V)$; we may assume without loss of generality that $\int_{\partial\V} f=0$.

Let $\Omega_j$, $R_j$, $X_j$ be as in \dfnref{domain}. Let $\{\eta_j\}_{j=1}^{k_2}$ be a set of smooth, compactly supported functions such that $\sum_j\eta_j(X)= 1$ for all $X$ with $\dist(X,\partial\V)\leq \sigma(\partial\V)/C$, $|\nabla\eta_j|\leq C/\sigma(\partial\V)$ and $\supp\eta_j$ is contained in 
\[\tilde R_j=\{X:|(X-X_j)\cdot \e_j^\perp|<\frac{3}{2}r_j,|(X-X_j)\cdot\e_j|\leq \frac{3}{2}(1+k_1)r_j\}.\]

Then $\|f\eta_j\|_{BMO(\partial\V)}\leq C\|f\|_{BMO(\partial\V)}$. Let $F_j:\Omega_j\mapsto\C$ satisfy $F_j\to f\eta_j$ non-tangentially, $|\nabla F_j|$ a Carleson measure. Note that $\check F_j$ is supported in $R_j$. By taking constants large enough, we may ensure that $\tilde F_j$ is as well; by multiplying our bounded correcting function by a smooth cutoff, we see that $F_j$ is supported in $R_j\cap\Omega_j=R_j\cap\V$.

By simply adding the $F_j$s, we get our desired $F$.
\end{proof}

\begin{thm}\label{thm:N1conv}
Suppose that $\div A\nabla u=0$ in $\V$ for some good Lipschitz domain with connected boundary, and assume $N(\nabla u)\in L^1(\partial\V)$.


If $\V$ is bounded, or if $\V=\Omega$ is a special Lipschitz domain, then 
\[\|\nu\cdot A\nabla u\|_{H^1(\partial\V)}\leq C\|N(\nabla u)\|_{L^1(\partial\V)}\]
in the sense that $\left|\int\eta\,\nu\cdot A\nabla u\right|\leq C\|N(\nabla u)\|_{L^1(\partial\V)} \|\eta\|_{BMO(\partial\V)}$ for all $\eta\in C^\infty_0(\R^2_+)$.

If $\partial\V$ is bounded but $\V$ is not, then $\nu\cdot A\nabla u - C_u$ is in $H^1$ for some constant $C_u$ with $|C_u|\leq C\|N(\nabla u)\|_{L^1}/\sigma(\partial\V)$.
\end{thm}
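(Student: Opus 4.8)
\textbf{Proof proposal for Theorem~\ref{thm:N1conv}.}
The plan is to test $\nu\cdot A\nabla u$ against an arbitrary $\eta\in C^\infty_0(\R^2)$ and compare it with a well-chosen harmonic-analysis extension of $\eta|_{\partial\V}$. Since $H^1(\partial\V)$ is the dual of $BMO(\partial\V)$, it suffices to bound $|\int_{\partial\V}\eta\,\nu\cdot A\nabla u\,d\sigma|$ by $C\|N(\nabla u)\|_{L^1}\|\eta|_{\partial\V}\|_{BMO}$ for $\eta$ ranging over a dense class. Given $\eta$, let $f=\eta|_{\partial\V}\in BMO(\partial\V)$ and apply \lemref{BMOhasCarleson} to produce an $F\in W^{1,\infty}_{loc}(\V)$ with $F\to f$ nontangentially a.e., $\|\nabla F\|_{\mathcal C}\le C\|f\|_{BMO}$, and (crucially) $F$ compactly supported or equal to a constant plus a compactly supported function, depending on whether $\V$ is special or has bounded boundary. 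Then I would write, using the weak definition of $\nu\cdot A\nabla u$,
\[
\int_{\partial\V}F\,\nu\cdot A\nabla u\,d\sigma=\int_{\V}\nabla F\cdot A\nabla u,
\]
valid because $F$ is compactly supported (after subtracting the harmless constant in the exterior-domain case, which produces the term $C_u$ with $|C_u|\le C\|N(\nabla u)\|_{L^1}/\sigma(\partial\V)$ coming from $\int_{\partial\V}\nu\cdot A\nabla u = 0$ divided out). Here one must first pass to $\V_\delta=\{X\in\V:\dist(X,\partial\V)>\delta\}$, where $\nabla u$ is bounded and the integration by parts is elementary, and then let $\delta\to0$; \lemref{L2loc} gives $\nabla u\in L^2_{loc}(\overline\V)$ with the quantitative bound $\|\nabla u\|_{L^2(B\cap\V)}\le C\|N(\nabla u)\|_{L^1}$, which controls the boundary terms on $\partial\V_\delta$ and lets the limit go through.

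The main estimate is then a Carleson-measure/area-integral pairing: I want
\[
\left|\int_{\V}\nabla F\cdot A\nabla u\right|\le C\int_{\partial\V}N(\nabla u)(X)\,\|\nabla F\|_{\mathcal C}\,d\sigma(X)\le C\|N(\nabla u)\|_{L^1}\|f\|_{BMO}.
\]
To get this I would dominate $\int_\V|\nabla F||\nabla u|$ by first replacing $|\nabla u(Y)|$ by its nontangential maximal function along the cone through a boundary point and integrating in the ``tent'' fashion: for $Y\in\V$, $|\nabla u(Y)|\le N(\nabla u)(X)$ for every $X\in\partial\V$ whose cone contains $Y$, so
\[
\int_\V|\nabla F(Y)||\nabla u(Y)|\,dY\le \int_{\partial\V}\Bigl(\int_{\gamma(X)}|\nabla F(Y)|\frac{dY}{\dist(Y,\partial\V)}\Bigr)N(\nabla u)(X)\,d\sigma(X),
\]
using that $\{Y:X\in\Gamma(Y)\}$-type Fubini with the fact that $\sigma(\{X\in\partial\V: Y\in\gamma(X)\})\approx\dist(Y,\partial\V)$ for good Lipschitz domains. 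The inner integral is exactly the ``area functional of $\nabla F$'' at $X$, and the Carleson bound $\|\nabla F\|_{\mathcal C}\le C\|f\|_{BMO}$ together with the standard Carleson-measure/area-function duality (e.g.\ via Fefferman's inequality in the half-plane, transported to $\V$ by the bilipschitz change of variables $\psi$ as in \autoref{chap:square}) gives that this inner integral is bounded by $C\|f\|_{BMO}$ pointwise after integrating $N(\nabla u)$. Combining, $|\int_{\partial\V}\eta\,\nu\cdot A\nabla u|\le C\|N(\nabla u)\|_{L^1}\|\eta\|_{BMO}$, which is the claimed $H^1$ bound; the tangential-derivative statement $\tau\cdot\nabla u\in H^1$ then follows by passing to the conjugate $\tilde u$ (\autoref{sec:conjugate}), for which $|\nabla\tilde u|\approx|\nabla u|$ and $\tau\cdot\nabla\tilde u=\nu\cdot A\nabla u$, so the roles of Neumann and regularity data are interchanged.

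The hard part will be making the tent/area-integral Fubini argument rigorous on a general good Lipschitz domain rather than on $\R^2_+$: one must control the overlap of cones, verify $\sigma(\{X:Y\in\gamma(X)\})\approx\dist(Y,\partial\V)$ uniformly, and handle the mismatch between $\dist(Y,\partial\V)$ and $\dist(Y,\partial\Omega_j)$ near the seams of the boundary patches in \dfnref{domain}, exactly the kind of bookkeeping done in the patching proofs of \thmref{patching} and \thmref{CZmatrixBdd}. A secondary technical point is the exterior-domain normalization: because $F$ is only constant-plus-compactly-supported out near infinity, the integration by parts picks up $\dashint_{\partial\V}\nu\cdot A\nabla u$, which is $0$ by the weak formulation applied with test function $1$, but to divide this out cleanly and identify $C_u$ one needs the decay $|\nabla u(X)|\le C\|N(\nabla u)\|_{L^1}\sigma(\partial\V)/\dist(X,\partial\V)^2$ valid far from $\partial\V$, obtained from \lemref{PDE1}, (\ref{eqn:gradu}), and the bound on $\nabla u$ in $L^2$ away from the boundary. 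Once these geometric estimates are in place the rest is a routine combination of the Carleson bound from \lemref{BMOhasCarleson} and the $L^2_{loc}$ bound from \lemref{L2loc}.
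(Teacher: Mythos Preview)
Your overall strategy is exactly the paper's: extend $f=\eta|_{\partial\V}$ via \lemref{BMOhasCarleson} to a Lipschitz $F$ with $\|\nabla F\|_{\mathcal C}\le C\|f\|_{BMO}$, convert the boundary pairing to $\int_\V\nabla F\cdot A\nabla u$ via the weak definition (using \lemref{L2loc}), and then bound $\int_\V|\nabla F|\,|\nabla u|$ by a Carleson-measure/nontangential-maximal pairing. The paper proceeds in precisely this way.

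The gap is in your proposed proof of the pairing inequality. After your Fubini step you need the cone integral $\int_{\gamma(X)}|\nabla F(Y)|\,\frac{dY}{\dist(Y,\partial\V)}$ to be bounded pointwise by $C\|\nabla F\|_{\mathcal C}$, and this is false: in $\R^2_+$ the measure $d\mu=\sum_{k\ge1}2^{-k}\delta_{(0,2^{-k})}$ has Carleson norm $\le1$, but $\int_{\gamma(0)}\frac{d\mu(y,t)}{t}=\sum_k1=\infty$. The correct argument is not Fubini but the level-set/distribution-function proof of the Carleson embedding $\int_\V |H|\,d\mu\le C\|\mu\|_{\mathcal C}\|NH\|_{L^1}$: one writes $\int|H|\,d\mu=\int_0^\infty\mu\{|H|>\alpha\}\,d\alpha$, observes that $\{|H|>\alpha\}$ lies in the tent over $\{NH>\alpha\}$, and uses the Carleson condition to get $\mu\{|H|>\alpha\}\le C\|\mu\|_{\mathcal C}\,\sigma\{NH>\alpha\}$. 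The paper simply invokes this classical inequality (citing \cite[Section~II.2]{stein}), transfers it to special Lipschitz domains by change of variables, and for bounded $\partial\V$ handles the region far from the boundary separately using $|\nabla u(X)|\le C\|N(\nabla u)\|_{L^1}/\sigma(\partial\V)$ there.

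A secondary point: in the exterior case you write that $\dashint_{\partial\V}\nu\cdot A\nabla u=0$ ``by the weak formulation applied with test function $1$,'' but $1$ is not compactly supported in $\V$ when $\V$ is unbounded, so this is exactly where the obstruction lives and $C_u=\frac{1}{\sigma(\partial\V)}\int_{\partial\V}\nu\cdot A\nabla u$ need not vanish. The paper bounds $|C_u|$ directly by testing against a cutoff $\eta\equiv1$ near $\partial\V$ supported in $\{\dist(\cdot,\partial\V)<\epsilon\}$ and applying the Carleson pairing to $\nabla\eta$; then for $f\in BMO$ it shows $\int_{\partial\V}(f-\dashint f)\,\nu\cdot A\nabla u=\int_\V\nabla F\cdot A\nabla u$ with $F-\dashint f$ compactly supported, and the same Carleson inequality finishes.
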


\begin{proof} 
By \lemref{L2loc}, $\nabla u\in L^2_{loc}$, so $\nu\cdot A\nabla u$ exists in the weak sense.
%
%
%

Suppose that $f\in BMO(\partial\V)$ is Lipschitz and compactly supported. By \lemref{BMOhasCarleson}, if $\V$ is bounded or special then there is a compactly supported Lipschitz function $F$ such that $F\to f$ nontangentially, and such that $\|\nabla F\|_{\mathcal{C}}\leq C\|f\|_{BMO}$. We need only show that \[\left|\int_V\nabla F\cdot A\nabla u\right|\leq C\|\nabla F\|_{\mathcal{C}} \|N(\nabla u)\|_{L^1}.\]

We must review some basic theorems about Carleson measures. Let $G$, $H$ be two functions. It is well known (see, for example, \cite[Section~II.2]{stein}) that 
\[\int_{\R^2_+} |G| |H| \leq C \|G\|_{\mathcal{C}} \|NH\|_{L^1(\partial\R^2_+)}.\]
This clearly extends to special Lipschitz domains. If $\V$ is a good but not special Lipschitz domain, then the inequality holds if we integrate not over all of $\V$ but over the $k_2$ boundary cylinders $R_j\cap\V$ of \dfnref{domain}.

If $X$ is not in one of these boundary cylinders, then $\dist(X,\partial\V)\geq \sigma(\partial\V)/C$.
Therefore, if $R>0$ and $X_0\in\partial\V$ then 
\begin{align*}
\int_{B(X_0,R)\cap\V} |G| |H| 
&\leq
C {\|NH\|_{L^1(\partial\V)}} \|G\|_{\mathcal{C}}+ \int_{X\in B(X_0,R),\dist(X,\partial\V)\geq \sigma(\partial\V)/C} |G| |H| 
\\&\leq 
C {\|NH\|_{L^1(\partial\V)}} \|G\|_{\mathcal{C}}+ C\frac{\|NH\|_{L^1(\partial\V)}}{\sigma(\partial\V)}
\int_{B(X_0,R)\cap\V} |G| 
\\&\leq
C {\|NH\|_{L^1(\partial\V)}} \|G\|_{\mathcal{C}} .
\end{align*}

Applying this to $G=\nabla F$ and $H=\nabla u$, we have that 
\[\int_\V |\nabla F| |\nabla u| \leq C\|f\|_{BMO} \|N(\nabla u)\|_{L^1}\]
as desired.

Finally, we consider domains $\V$ whose complements are bounded. Fix some $C^\infty_0$ function $\eta$ with $\eta\equiv 1$ on $\partial\V$, so that 
\[\int_{\partial\V} \nu\cdot A\nabla u=\int_\V \nabla\eta\cdot A\nabla u.\]
Assume that $\supp\eta\subset\{X:\dist(X,\partial\V)<\epsilon\}$ and that $|\nabla\eta|<C/\epsilon$.

Then $\int_\V |\nabla \eta| |A\nabla u|\leq C\|N(\nabla u)\|_{L^1}$. Let $C_u=\frac{1}{\sigma(\partial\V)}\int_{\partial\V}\nu\cdot A\nabla u$. I claim that $\nu\cdot A\nabla u-C_u\in H^1$, that is, that
\[\left|\int_{\partial\V} f \nu\cdot A\nabla u - f C_u \,d\sigma\right| \leq C \|f\|_{BMO} \|N(\nabla u)\|_{L^1}\]
for all functions $f$ Lipschitz on $\partial\V$.

As before, there is some $F\to f$ Lipschitz with $|\nabla F|$ a Carleson measure and $F-\dashint_{\partial\V} f$ compactly supported.

So
\begin{align*}
\int_{\partial\V}  f \nu\cdot A\nabla u - f C_u \,d\sigma
&=\int_{\partial\V}  (f-{\textstyle\dashint_{\partial\V} f\,d\sigma}) \nu\cdot A\nabla u\,d\sigma
+ \dashint_{\partial\V} f\,d\sigma \int_{\partial\V}\nu\cdot A\nabla u
\\&\qquad
- \int_{\partial\V} f \left(\frac{1}{\sigma(\partial\V)}\int_{\partial\V} \nu\cdot A\nabla u\,d\sigma\right) \,d\sigma
\\&=\int_{\partial\V}  (f-{\textstyle\dashint_{\partial\V} f\,d\sigma}) \nu\cdot A\nabla u\,d\sigma
=\int_\V \nabla F\cdot A\nabla u.
\end{align*}
As before, $\Lambda\int_\V |\nabla F| |\nabla u|\leq C\|f\|_{BMO} \|N(\nabla u)\|_{L^1}$. This completes the proof.
\end{proof}

We now move on to the regularity problem.

\begin{thm}\label{thm:R1conv} Suppose that $\div A\nabla u=0$ in $\V$ and $N(\nabla u)\in L^1(\partial\V)$ for some good Lipschitz domain $\V$ with connected boundary. Then $f(X)=\lim_{Z\to X\text{ n.t.}}u(Z)$ exists for almost every $X\in\partial\V$. Furthermore, $\partial_\tau f$ exists in the weak sense and $\|\partial_\tau f\|_{H^1}\leq C\|N(\nabla u)\|_{L^1}$.
\end{thm}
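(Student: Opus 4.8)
\textbf{Proof proposal for \thmref{R1conv}.}

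The plan is to run the argument in close parallel to the proof of \thmref{N1conv}, since $u$ and its conjugate $\tilde u$ are related by $\nabla\tilde u = \bigl(\begin{smallmatrix}0&-1\\1&0\end{smallmatrix}\bigr)A\nabla u$, and $N(\nabla u)(X)\approx N(\nabla \tilde u)(X)$. First I would establish that the nontangential limit $f(X)=\lim_{Z\to X\,\text{n.t.}}u(Z)$ exists a.e. This is already essentially contained in \lemref{Sfcts} and its proof: since $N(\nabla u)\in L^1(\partial\V)\subset L^p_{loc}$-type control is not available, but the estimate $|\nabla u(Y)|\leq C\|N(\nabla u)\|_{L^1}/\min(\sigma(\partial\V),\dist(Y,\partial\V))$ from \lemref{L2loc} together with the level-set argument there shows that $u$ extends continuously (in fact with a modulus of continuity) to $\overline\V$ near $\partial\V$ along nontangential approach, so $f$ exists a.e. (everywhere that $N(\nabla u)(X)$ is finite, which is a.e. since $N(\nabla u)\in L^1$).

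Next I would show $\partial_\tau f$ exists weakly and compute it. The natural definition is $\langle \partial_\tau f, \eta\rangle = -\int_{\partial\V} f\,\partial_\tau\eta\,d\sigma$ for $\eta\in C^\infty_0(\R^2)$. To identify this with a boundary trace of $\nabla u$, I would approximate $\partial\V$ from inside by $\partial\V_\delta=\{\dist(\cdot,\partial\V)>\delta\}$: on $\partial\V_\delta$, $u$ is smooth with $\nabla u\in L^2(\partial\V_\delta)$ uniformly in $\delta$ by \lemref{L2loc}, so $\int_{\partial\V_\delta}\partial_\tau u\,\eta\,d\sigma = -\int_{\partial\V_\delta}u\,\partial_\tau\eta\,d\sigma$ holds exactly; letting $\delta\to0$ and using the nontangential convergence $u\to f$ and the $L^1$ control of $N(\nabla u)$ to pass to the limit gives $\langle\partial_\tau f,\eta\rangle = \lim_{\delta\to0}\int_{\partial\V_\delta}\tau\cdot\nabla u\,\eta\,d\sigma$. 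Thus $\partial_\tau f$ is the weak boundary trace $\tau\cdot\nabla u|_{\partial\V}$.

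For the $H^1$ bound I would pair $\partial_\tau f$ against a Lipschitz, compactly supported $f\in BMO(\partial\V)$ and show $|\langle\partial_\tau u, g\rangle|\leq C\|g\|_{BMO}\|N(\nabla u)\|_{L^1}$. Using \lemref{BMOhasCarleson}, extend $g$ to $G$ on $\V$ with $G\to g$ nontangentially, $G$ Lipschitz and compactly supported, and $\|\nabla G\|_{\mathcal C}\leq C\|g\|_{BMO}$. Then integration by parts (again through $\partial\V_\delta$, or directly via the divergence theorem since $\div A\nabla u=0$) gives $\int_{\partial\V}\partial_\tau u\,g\,d\sigma = \pm\int_{\partial\V}\nu\cdot A\nabla u\, G^{\sharp}\,d\sigma$ where one rewrites $\tau\cdot\nabla u$ via the conjugate — more cleanly, $\langle\partial_\tau u,g\rangle = \int_\V \nabla(\text{suitable extension})\cdot A\nabla u$ reduces exactly to a Carleson-measure pairing $\int_\V|\nabla G||\nabla u|$, which by the Carleson inequality $\int_\V|\nabla G||\nabla u|\leq C\|\nabla G\|_{\mathcal C}\|N(\nabla u)\|_{L^1}$ (as used in \thmref{N1conv}, including the treatment of the region away from the boundary cylinders when $\partial\V$ is bounded) is bounded by $C\|g\|_{BMO}\|N(\nabla u)\|_{L^1}$. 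Since $\|\partial_\tau f\|_{H^1}$ equals the norm of this functional on $BMO$ (Lipschitz compactly supported functions being dense enough for duality), the bound follows. The main obstacle I anticipate is the bookkeeping in the integration-by-parts step: making the passage through $\partial\V_\delta$ rigorous when only $N(\nabla u)\in L^1$ (not $L^2$) is available on $\partial\V$ itself, and correctly handling the conjugate-function identity $\tau\cdot\nabla u = \nu\cdot A\nabla(\text{conjugate})$ so that the pairing becomes a genuine Carleson pairing rather than something requiring more regularity on $u$.
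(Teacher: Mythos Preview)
Your outline is essentially correct, and you identified the right mechanism (the conjugate $\tilde u$), but the paper organizes the argument more efficiently and handles a case you omit.

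Rather than redoing the Carleson-measure pairing from scratch, the paper simply \emph{invokes} \thmref{N1conv} as a black box applied to the conjugate. Concretely: $\div\tilde A\nabla\tilde u=0$ in $\V$ and $N(\nabla\tilde u)\approx N(\nabla u)\in L^1$, so \thmref{N1conv} gives $\nu\cdot\tilde A\nabla\tilde u\in H^1$ immediately. The only remaining work is to verify the pointwise identity $\partial_\tau u=-\nu\cdot\tilde A\nabla\tilde u$ in the weak sense on $\partial\V$, which the paper does by a short direct computation on intervals $I=\psi((x_0,x_1))$ using thin strips $\psi((x_0,x_1)\times(0,h))$. This cleanly resolves the ``bookkeeping obstacle'' you flagged: the integration-by-parts step you worry about is precisely what \thmref{N1conv} already packaged, so there is no need to reopen it.

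There is also a genuine gap in your proposal: you implicitly assume the conjugate $\tilde u$ is globally defined on $\V$. This is automatic when $\V$ is simply connected (bounded $\V$ or special $\Omega$), but when $\bar\V^C$ is bounded, $\V$ is not simply connected and $\tilde u$ may have periods. The paper fixes this by choosing $X\in\V^C$, setting $C_1=\int_{\partial B(X,R)}\nu\cdot A\nabla u\,d\sigma$, and working with $v=u-C_1\Gamma_X$; then $\tilde v$ is single-valued, and since $\partial_\tau\Gamma_X\in H^1(\partial\V)$ with bounded norm, the correction is harmless. Finally, in that same case \thmref{N1conv} only gives $\nu\cdot\tilde A\nabla\tilde v - C_v\in H^1$ for some constant $C_v$; the paper uses $\int_{\partial\V}\partial_\tau v\,d\sigma=0$ to conclude $C_v=0$. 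You would need to add both of these steps.
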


\begin{proof} If $N(\nabla u)(X)$ is finite, then $\lim_{Z\to X\text{ n.t.}} u(X)$ exists; we need only show $\partial_\tau u\in H^1$.

Recall the conjugate $\tilde u$ of \autoref{sec:conjugate}. If $\V$ is simply connected, then $\tilde u$ exists and is continuous on~$\V$. In this case, define $v=u$, $\tilde v=\tilde u$.

If $\V$ is not simply connected, then $\bar\V^C$ is a bounded, simply connected Lipschitz domain. Let $X\notin \V$ with $\dist(X,\partial\V)\geq\sigma(\partial\V)/C$, and let $R$ be large enough that $\partial B(X,R)\subset\V$.

Let $C_1=\int_{\partial B(X,R)} \nu\cdot A\nabla u\,d\sigma$. Let $v=u-C_1\Gamma_X$. Since $\int_{\partial B(X,R)}\nu\cdot A\nabla \Gamma_X\,d\sigma=1$, and since $\int_{\partial\U}\nu\cdot A\nabla v\,d\sigma=0$ for any simply connected domain $\U\subset\V$, we must have that $\int_\omega \nu\cdot A\nabla v\,d\sigma=0$ for any closed path $\omega\subset\V$; thus, $\tilde v$ is well-defined and continuous on~$\V$.

But $\|\partial_\tau \Gamma_X\|_{H^1(\partial\V)}\leq C$; so we need only show that $\|\partial_\tau v\|_{H^1}\leq C$.

Note that $\div \tilde A\nabla\tilde v=0$ in $\V$ and $\|N(\nabla \tilde v)\|_{L^1}\approx \|N(\nabla v)\|_{L^1}$. So by \thmref{N1conv}, $\nu\cdot \tilde A\nabla\tilde v\in H^1(\partial\V)$ in the weak sense.

I claim that $\partial_\tau v = -\nu\cdot \tilde A\nabla\tilde v$ in the weak sense.

It suffices to prove that, if $I\subset\partial\V$ is connected and has boundary points $X_0$, $X_1$, and $N(\nabla v)(X_i)$ is finite, then for the appropriate ordering of $X_0$, $X_1$,
\[v(X_1)-v(X_0) = -\int \chi_I \nu\cdot\tilde A\nabla\tilde v.\]
In fact, we need only prove this for short intervals; thus, it suffices to prove this for $\V=\Omega$ a special Lipschitz domain.

Let $X_i=\psi(x_i)$; then $x_0<x_1$. Pick some $\epsilon>0$. Let $0<h<\epsilon/N(\nabla v)(X_i)$ for $i=0$,~$1$.
Let $\U=\psi((x_0,x_1)\times(0,h))\subset\V$. Then 
\[\int_{\partial\U} \nu\cdot \tilde A\nabla \tilde v=0,\quad \int_{\psi(\{x_i\}\times(0,h))} |\nabla \tilde v|\leq C\epsilon\]
and so
\[\left|\int \chi_I \nu\cdot\tilde A\nabla\tilde v-
\int_{x_0}^{x_1}\nu(\psi(x))\cdot \tilde A(\psi(x,h))\nabla \tilde v(\psi(x,h))\sqrt{1+\phi'(x)^2}\,dx\right|\leq C\epsilon.\]
But $\psi(x,h)\in\V$, and if $X\in\V$ and $Y\in\partial\V$, then
$\tau(Y)\cdot \nabla v(X) = -\nu(Y)\cdot \tilde A(X)\nabla\tilde v(X)$. So
\begin{multline*}
\int_{x_0}^{x_1}-\nu(\psi(x))\cdot \tilde A(\psi(x,h))\nabla \tilde v(\psi(x,h))\sqrt{1+\phi'(x)^2}\,dx
\\=\int_{x_0}^{x_1}\tau(\psi(x))\cdot \nabla v(\psi(x,h))\sqrt{1+\phi'(x)^2}\,dx
= v(\psi(x_1,h))-v(\psi(x_0,h))
.\end{multline*}
But $|v(X_i)-v(\psi(x_i,h))|\leq\epsilon$. So
\[\left|
\int \chi_I \nu\cdot\tilde A\nabla\tilde v
-(v(X_0)-v(X_1))
\right|\leq C\epsilon.\]
By letting $\epsilon\to 0$ we have our desired result; so $\partial_\tau v=\nu\cdot\tilde A\nabla\tilde v$.

If $\V$ is simply connected, we are done because $\nu\cdot \tilde A\nabla \tilde v\in H^1$. Otherwise, we know that $\nu\cdot \tilde A\nabla \tilde v_1-C_v\in H^1$ for some constant $C_v$. But $\int_{\partial\V}\tau\cdot\nabla v\,d\sigma=0$, and so we see that $C_v$ must be 0; thus $\nu\cdot \tilde A\nabla \tilde v_1\in H^1$, as desired.
\end{proof}

\chapter{\texorpdfstring{Boundary data in $L^\infty$: The maximum principle}{The maximum principle}}
\label{chap:maximum}
\begin{thm} Suppose that $u=f$ on $\partial\V$ for some bounded good Lipschitz domain $\V$, and that $\div A\nabla u\equiv 0$ in $\V$. Then $\|u\|_{L^\infty(\V)}\leq C \|f\|_{L^\infty(\partial\V)}$ for some constant $C$ depending on the ellipticity constants of $A$ and the Lipschitz constants of $\V$, but not on~$\sigma(\partial\V)$.
\end{thm}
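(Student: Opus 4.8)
The plan is to represent the solution as a layer potential and apply the $BMO$/Carleson estimates already established, then use the maximum principle's converse statement. First I would observe that since $f\in L^\infty(\partial\V)\subset BMO(\partial\V)$, \thmref{BMOcarleson} (together with the fact that $\K$ is bounded and invertible on $BMO$, hence $f=\D h$ for some $h$ with $\|h\|_{BMO}\approx\|f\|_{BMO}\leq C\|f\|_{L^\infty}$) produces a function $w$ with $\div A\nabla w=0$ in $\V$, $w=f$ on $\partial\V$, and $|\nabla w(X)|^2\dist(X,\partial\V)$ a Carleson measure with constant at most $C\|f\|_{BMO}^2\leq C\|f\|_{L^\infty}^2$. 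By \thmref{Carlesonmax} and \corref{BMOconverse}, this $w$ satisfies, for any $X_0\in\V$ with $\dist(X_0,\partial\V)\approx\sigma(\partial\V)$,
\[
\dashint_{\partial\V} N(w-w(X_0))^p\,d\sigma\leq C\|f\|_{BMO}\leq C\|f\|_{L^\infty},
\]
and in particular $w\in L^\infty(\V)$, since \thmref{converse} and the preceding analysis show that a solution with $BMO$ boundary data whose trace is in $L^\infty$ is itself in $L^\infty$ with $\|w\|_{L^\infty(\V)}\leq C\|f\|_{L^\infty(\partial\V)}$.

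Next I would identify $u$ with $w$. Since $(D)^A_\infty$ holds (uniqueness of $L^\infty$ solutions, as asserted in \thmref{converse}), and $u$ and $w$ both solve $\div A\nabla(\cdot)=0$ in $\V$ with the same boundary data $f$, once we know $u\in L^\infty$ as well we conclude $u=w$; but a priori we are only told $u=f$ on $\partial\V$, not that $u$ is bounded. So the cleaner route is: the solution $w$ constructed above is the \emph{unique} solution among functions satisfying the Carleson condition (\thmref{BMOcarleson}), and it is bounded. To handle an arbitrary $u$ with $u=f$ on $\partial\V$, I would invoke the uniqueness for $(D)^A_\infty$ from \thmref{converse}: since the difference $u-w$ has vanishing nontangential boundary values, it suffices to show $u-w$ is controlled. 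Actually the most economical argument is to note that for bounded $\V$, any $u$ with $\div A\nabla u=0$ and $u|_{\partial\V}=f\in L^\infty$ necessarily lies in $L^p$-based classes for all $p$ (by interior estimates $u\in L^\infty_{loc}$, and near the boundary $u$ agrees nontangentially with a bounded function), so $Nu$ is well-defined; then pick $p$ large enough that $(D)^A_p$ holds, whence $\|Nu\|_{L^p}\leq C\|f\|_{L^p}\leq C\|f\|_{L^\infty}\sigma(\partial\V)^{1/p}$, forcing $u=w$ by \thmref{Dpunique}, and $\|u\|_{L^\infty(\V)}=\|w\|_{L^\infty(\V)}\leq C\|f\|_{L^\infty(\partial\V)}$.

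The main obstacle I anticipate is the passage from the Carleson-measure bound on $w$ to the pointwise bound $\|w\|_{L^\infty(\V)}\leq C\|f\|_{L^\infty}$: the Carleson estimate by itself only gives $w\in BMO$ on the boundary and $L^p$ control of $Nw$ for every finite $p$ with constant growing in $p$, so one must extract the $L^\infty$ bound carefully. For this I would run the argument of \lemref{bddmeansconst} and \thmref{Carlesonmax} in reverse: on each boundary cylinder, comparing $w$ with its values deep inside via the oscillation estimate $\dashint_{B(X_0,CR)\cap\partial\V}|f-w(X)|\leq C\|f\|_{BMO}$ at scale $R\approx\dist(X,\partial\V)$, and combining with $\|f\|_{L^\infty}$ on the boundary, gives $|w(X)|\leq |w(X)-\text{(average of }f\text{ near }X^*)|+\|f\|_{L^\infty}\leq C\|f\|_{L^\infty}$ once one knows the oscillation is genuinely bounded (not merely $BMO$) — which is exactly what the $L^\infty$ boundary data buys, since for $f\in L^\infty$ the averages $\dashint f$ over surface balls are uniformly bounded by $\|f\|_{L^\infty}$. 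A secondary technical point is ensuring the constant is independent of $\sigma(\partial\V)$, which follows because all the estimates invoked (\thmref{BMOcarleson}, \thmref{Carlesonmax}, and the interior bounds) are scale-invariant and depend only on ellipticity and the Lipschitz constants $k_i$.
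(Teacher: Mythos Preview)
Your approach has a circularity problem as written. When you invoke \thmref{converse} to conclude that ``a solution with $BMO$ boundary data whose trace is in $L^\infty$ is itself in $L^\infty$,'' you are quoting precisely the assertion the maximum principle is supposed to establish; in the paper's logical order, that clause of \thmref{converse} is derived \emph{from} the maximum principle chapter, not independently of it. Likewise, the uniqueness input you call on (``$(D)^A_\infty$ holds'') is downstream of the result you are trying to prove. Your claim that an arbitrary $u$ with $u|_{\partial\V}=f\in L^\infty$ automatically has $Nu\in L^p$ is also unjustified: nontangential convergence to a bounded function does not by itself control $Nu$.

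Your fallback sketch---apply \thmref{Carlesonmax} (via \lemref{squaresubdomain}) in a subdomain $\W$ with a given point $Y$ central, and combine the resulting bound on $\dashint_{\partial\W\cap\partial\V}|f-w(Y)|$ with $\|f\|_{L^\infty}$---is a non-circular idea that can be made to work, but you have not actually carried it out, and it rests on one of the deeper results of the paper rather than standing on its own.

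The paper's proof is entirely different and more self-contained. It builds the Green's function $G_X=\Gamma_X-\Phi_X$, where $\Phi_X$ is the $(R)^A_{p'}$ solution with boundary data $\Gamma_X|_{\partial\V}$, and shows that for any $u$ with $Nu\in L^p$ one has the representation $u(X)=\int_{\partial\V}f\,\nu\cdot A\nabla G_X\,d\sigma$. The maximum principle then reduces to proving $\|\nu\cdot A\nabla G_X\|_{L^1(\partial\V)}\leq C$ uniformly in $X$. For $X$ with $\dist(X,\partial\V)\gtrsim\sigma(\partial\V)$ this follows from H\"older and the $L^{p'}$ regularity bound $\|N(\nabla\Phi_X)\|_{L^{p'}}\leq CR^{1/p'-1}$. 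For $X$ near the boundary at distance $R$, one excises a box $\W_r$ of scale $r\approx R$ around $X$: the portion of $\partial\V$ inside $\W_r$ is handled by the $L^{p'}$ bound and H\"older, while on $\U_r=\V\setminus\overline{\W_r}$ the function $G_X$ is a genuine solution with $G_X=0$ on $\partial\V\cap\partial\U_r$, so $\tau\cdot\nabla G_X$ is supported on $\partial\W_r\cap\V$, a set of measure $\sim R$, and bounded there by $C/R+N(\nabla\Phi_X)(Y^r_\pm)$. Thus $\tau\cdot\nabla G_X$ is essentially an $H^1$ atom, and $(R)^A_1$ in $\U_r$ gives $\|\nabla G_X\|_{L^1(\partial\U_r)}\leq C\|\tau\cdot\nabla G_X\|_{H^1(\partial\U_r)}$. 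Averaging over $r\in(R,2R)$ turns the pointwise $N(\nabla\Phi_X)(Y^r_\pm)$ into an $L^1$ average, controlled by the $L^{p'}$ regularity bound. This is an elliptic-measure argument driven by the $H^1$ regularity theory, not the $BMO$/Carleson machinery you propose.
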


This theorem is obviously not true on unbounded domains: consider the harmonic functions $u(x,t)=t$ in $\R^2_+$ or $v(X)=\log |X|$ in $\R^2\setminus B(0,1)$.

For notational convenience, we will instead prove this for $\div A^T\nabla u= 0$. Throughout this section, let $p$ be an exponent such that $(R)^A_p$, $(N)^A_p$, $(D)^{A^T}_{p'}$ hold.

\section{Green's function and a priori bounds}

Let $\V$ be a bounded Lipschitz domain, and assume that $(R)^{A^T}_{p'}$ holds in~$\V$. Choose some $X\in\V$. $\Gamma_X(Y)$ is bounded and continuous with bounded gradient on $\partial\V$, so $\partial_\tau \Gamma_X\in L^{p'}$. Therefore, since $\partial\V$ is compact, there is some $\Phi_X$ with $\div A\nabla\Phi_X=0$ in $\V$ and $\Phi_X=\Gamma_X$ on~$\partial\V$.

Define $G_X=\Gamma_X-\Phi_X$.

\begin{thm} Suppose that $\V$ is a bounded Lipschitz domain, that $(R)^A_{p'}$, $(R)^{A^T}_{p'}$, $(D)^A_p$ hold in~$\V$. Suppose that $\div A\nabla u=0$ in~$\V$, $Nu\in L^p(\partial\V)$, and $u=f$ on $\partial\V$.

Then
\[u(X)=\int_{\partial\V} f\nu\cdot A\nabla G_X\,d\sigma.\]
\end{thm}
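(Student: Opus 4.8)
The plan is to establish the Green's representation formula by the standard argument of integrating the product $\Gamma_X \cdot A\nabla u$ (equivalently $G_X$ against $A\nabla u$) over a slightly shrunken subdomain and passing to the limit, just as in the proof of \thmref{Dpunique}. First I would recall the a priori regularity we have available: since $Nu\in L^p(\partial\V)$, \lemref{L2loc} gives $\nabla u\in L^2_{loc}(\overline\V)$ and $u\in L^\infty_{loc}(\overline\V)$; and since $(R)^{A^T}_{p'}$ holds, the function $\Phi_X$ (hence $G_X=\Gamma_X-\Phi_X$) satisfies $N(\nabla \Phi_X)\in L^{p'}(\partial\V)$, so $\nabla G_X\in L^2_{loc}(\overline\V)$ and $G_X$ is H\"older continuous up to the boundary with $G_X=0$ on $\partial\V$. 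Note also that $G_X$ has the logarithmic singularity of $\Gamma_X$ at $X$.

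Next I would set $\V_\delta=\{Y\in\V:\dist(Y,\partial\V)>\delta\}$ and, fixing $X$ with $\dist(X,\partial\V)>2\delta$, work on $\V_\delta\setminus B(X,\rho)$ for small $\rho$. On this region both $u$ and $G_X$ are $W^{1,2}$ and $\div A^T\nabla G_X=0$ (away from $X$), $\div A\nabla u=0$, so Green's second identity (the same integration-by-parts computation used in \eqref{eqn:Df-Sa} and in the proof of \lemref{Ktranspose}) gives
\[
\int_{\partial(\V_\delta\setminus B(X,\rho))}\big( u\,\nu\cdot A^T\nabla G_X - G_X\,\nu\cdot A\nabla u\big)\,d\sigma = 0,
\]
where I must be careful about which matrix acts on which factor; matching the convention that $\Gamma_X$ here is the fundamental solution for $\div A^T\nabla$ (so that $G_X$ solves the $A$-equation outside its pole after subtracting $\Phi_X$ — I would double-check the transpose bookkeeping against \autoref{sec:symm} and adjust which of $A$, $A^T$ appears). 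The boundary of $\V_\delta\setminus B(X,\rho)$ splits into $\partial\V_\delta$ and $\partial B(X,\rho)$. On $\partial B(X,\rho)$, the standard local analysis of the fundamental solution (the bound $|\nabla\Gamma_X(Y)|\le C/|X-Y|$ from \eqref{eqn:gradfundsoln}, together with continuity of $u$ and boundedness of $\Phi_X$ near $X$) shows that as $\rho\to 0$ the $B(X,\rho)$-integrals converge to $-u(X)$ (from the $\Gamma_X$ part) and to $0$ (the $\Phi_X$ part and the $G_X\,\nu\cdot A\nabla u$ part, since $G_X$ is bounded near $X$ and $\nabla u\in L^2_{loc}$, while $|\partial B(X,\rho)|\to 0$). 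This is exactly the computation certifying $\int_{\partial\V}\nu\cdot A^T\nabla\Gamma_X^T\,d\sigma=1$ in \eqref{eqn:Df-Sa}.

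After sending $\rho\to 0$ I am left with $u(X)=\int_{\partial\V_\delta}\big(u\,\nu\cdot A^T\nabla G_X - G_X\,\nu\cdot A\nabla u\big)\,d\sigma$, and the final step is to let $\delta\to 0$. Here I use that $G_X\to 0$ uniformly on $\partial\V_\delta$ (H\"older continuity of $G_X$ on $\overline\V$ with zero boundary data, as in \lemref{Sfcts}) while $\nabla u\in L^1(\partial\V_\delta)$ uniformly in $\delta$ (this is the $L^1$-trace control coming from $N(\nabla u)\in L^p\subset L^1$, used the same way in the proofs of Theorems~\ref{thm:NRunique} and~\ref{thm:Dpunique}), so the term $\int_{\partial\V_\delta}G_X\,\nu\cdot A\nabla u\,d\sigma\to 0$; simultaneously $u|_{\partial\V_\delta}\to f$ nontangentially and in $L^p$, while $\nu\cdot A^T\nabla G_X$ on $\partial\V_\delta$ converges to $\nu\cdot A^T\nabla G_X$ on $\partial\V$ in $L^{p'}$ (from $N(\nabla G_X)\in L^{p'}$ and the convergence of the approximating surfaces), giving $\int_{\partial\V_\delta}u\,\nu\cdot A^T\nabla G_X\,d\sigma\to\int_{\partial\V}f\,\nu\cdot A\nabla G_X\,d\sigma$ — with, again, the transpose on $A$ to be reconciled with the statement as written. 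The main obstacle I expect is precisely this passage $\delta\to 0$: justifying that the two boundary integrals converge to the claimed limits requires the uniform $L^{p'}$-bound on $N(\nabla G_X)$ near $\partial\V$ and a careful comparison between integration over $\partial\V_\delta$ and over $\partial\V$ (using the bi-Lipschitz correspondence between these surfaces inside the boundary cylinders of \dfnref{domain}, exactly as in \thmref{Dpunique}), plus verifying the convention on $A$ versus $A^T$ so that the pairing in the statement is literally correct. Everything else is the routine Green's-identity computation already carried out several times in the paper.
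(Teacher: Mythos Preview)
Your plan has a genuine gap: you repeatedly assume control on $\nabla u$ near $\partial\V$ that the hypotheses do not provide. The theorem only gives $Nu\in L^p$, \emph{not} $N(\nabla u)\in L^p$. Thus your appeal to \lemref{L2loc} for $\nabla u\in L^2_{loc}(\overline\V)$ is illegitimate (that lemma requires $N(\nabla u)\in L^p$), and more importantly your claim that ``$\nabla u\in L^1(\partial\V_\delta)$ uniformly in $\delta$ from $N(\nabla u)\in L^p$'' is simply false under the stated hypotheses. With only $Nu\in L^p$, the available pointwise bound is $|\nabla u(Y)|\le C\dist(Y,\partial\V)^{-1-1/p}$ (from \lemref{PDE1} applied to $u$), while the H\"older continuity of $G_X$ gives at best $|G_X|\le C\delta^{1/p}$ on $\partial\V_\delta$; the product behaves like $\delta^{-1}$ and the term $\int_{\partial\V_\delta}G_X\,\nu\cdot A\nabla u\,d\sigma$ cannot be shown to vanish as $\delta\to 0$ by your argument.

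The paper avoids this by \emph{not} working directly with $u$. It approximates $f$ in $L^p$ by a smooth $g$ with $\partial_\tau g\in L^\infty$, lets $v$ be the corresponding solution, and uses $(D)^A_p$ together with the $L^{p'}$ bound on $\nabla G_X$ to show that $u(X)-\int_{\partial\V}f\,\nu\cdot A\nabla G_X$ differs from $v(X)-\int_{\partial\V}g\,\nu\cdot A\nabla G_X$ by $O(\epsilon)$. For $v$ one now has genuine gradient control (via $(R)^A_{p'}$ and \thmref{mixedunique}), so $v$ can be extended to a compactly supported $W^{1,2}$ function and the integration-by-parts with $\Gamma_X^T$ and $\Phi_X$ goes through cleanly on all of $\V$, without ever needing to pass $\partial\V_\delta\to\partial\V$ against an uncontrolled $\nabla u$. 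The missing idea in your sketch is precisely this smoothing-and-density step.
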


Proving this for $f=0$ was the core of our proof of \thmref{Dpunique}.

\begin{proof}
Let $R=\dist(X,\partial\V)$.
Recall that $|\nabla\Gamma_X(Y)|\leq \frac{C}{|X-Y|}$. So
\begin{align*}
\int_{\partial\V}|\nabla\Gamma_X(Y)|^{p'}
&\leq
\sum_{k=0}^\infty \int_{(B(X,2^{k+1}R)\setminus B(X,2^kR))\cap\partial\V}
|\nabla\Gamma_X(Y)|^{p'}
\\&\leq
\sum_{k=0}^\infty C 2^k R \frac{C}{2^{k{p'}} R^{p'}}
\leq C R^{1-{p'}}.
\end{align*}
But since $(R)^A_{p'}$ holds in $\V$, and $\tau\cdot\nabla\Gamma_X = \tau\cdot\nabla\Phi_X$, we must have that
\begin{equation}\label{eqn:greenLp}
\|N(\nabla \Phi_X)\|_{L^{p'}(\partial\V)}\leq C R^{1/{p'}-1}.
\end{equation}

Let $g\in L^p(\partial\V)$ be such that $\|f-g\|_{L^p}<\epsilon$ and $\partial_\tau g \in L^{\infty}(\partial\V)\supset L^{p'}(\partial\V)$. Let $v$ be the $(D)^A_p$ solution to $\div A\nabla v=0$ in $\V$, $v=g$ on $\partial\V$.

Then since $(D)^A_p$ holds in $\V$, 
\begin{multline*}
\left| u(X)-\int_{\partial\V} f\nu\cdot A\nabla G_X\,d\sigma\right|
\\\begin{aligned}&\leq | u(X)-v(X)| 
+\left|v(X)-\int_{\partial\V} g\nu\cdot A\nabla G_X\,d\sigma\right|
+\left|\int_{\partial\V} (f-g)\nu\cdot A\nabla G_X\,d\sigma\right|
\\&\leq C\|N(u-v)\|_{L^p}\min(\sigma(\partial\V),\dist(X,\partial\V)^{-1/p})
\\&\qquad
+\left|v(X)-\int_{\partial\V} g\nu\cdot A\nabla G_X\,d\sigma\right|
+\|f-g\|_{L^p}\|\nabla G_X\|_{L^{p'}(\partial\V)}
\\&\leq C\epsilon\min(\sigma(\partial\V),\dist(X,\partial\V)^{-1/p})
+\left|v(X)-\int_{\partial\V} g\nu\cdot A\nabla G_X\,d\sigma\right|.
\end{aligned}
\end{multline*}
So we need only show that the theorem holds for $v$.

By \lemref{L2loc}, $v$ is bounded and $\nabla v\in L^2(\V)$. By (\ref{eqn:gradu}), $\nabla v$ is bounded near~$X$. By \lemref{Sfcts}, $v$ is H\"older continuous on $\bar\V$. We specified that $\partial_\tau v$ was bounded. So we may extend $v$ to a bounded compactly supported function with gradient in $L^2(\R^2)$ whose gradient is bounded near~$X$.

But then
\begin{align*}
v(X) &=  -\int \nabla v\cdot A^T\nabla\Gamma_X^T
=-\int_{\V} A\nabla v\cdot \nabla\Gamma_X^T
-\int_{\V^C} \nabla v\cdot A^T\nabla\Gamma_X^T
\\&=-\int_{\partial\V} \Gamma_X^T \nu\cdot A\nabla v\,d\sigma
+\int_{\partial\V} v \nu\cdot A^T\nabla\Gamma_X^T\,d\sigma
\\&=-\int_{\partial\V} \Phi_X \nu\cdot A\nabla v\,d\sigma
+\int_{\partial\V} v \nu\cdot A^T\nabla\Gamma_X^T\,d\sigma
\\&=-\int_{\V} \nabla\Phi_X \cdot A\nabla v
+\int_{\partial\V} v \nu\cdot A^T\nabla\Gamma_X^T\,d\sigma
\\&=-\int_{\partial\V} v \nu \cdot A^T\nabla\Phi_X\,d\sigma
+\int_{\partial\V} v \nu\cdot A^T\nabla\Gamma_X^T\,d\sigma
=\int_{\partial\V} v \, \nu\cdot A^T\nabla G_X\,d\sigma
\end{align*}
as desired.
\end{proof}

Thus, the maximum principle is equivalent to bounding $\|\nu\cdot A\nabla G_X\|_{L^1(\partial\V)}$, with a bound independent of $X$ and $\sigma(\partial\V)$.

By (\ref{eqn:greenLp}) and H\"older's inequality, $\|\nabla G_X\|_{L^1}$ is bounded for $\dist(X,\partial\V)>\frac{1}{C}\sigma(\partial\V)$; we need only bound $\|\nu\cdot A\nabla G_X\|_{L^1(\partial\V)}$ for $X$ near the boundary.

\section{Bounding the Green's function}

Let $R=\dist(X,\partial\V)$, and assume that $R<\frac{1}{3}\min_i r_i$, where the $r_i$ are as in \dfnref{domain}. Then $X\in B(X_i, (1+1/3)r_i)\cap\Omega_i$ for some $i$.

Then $\Omega_i = \{\psi_i(x,t):t>0\}=\{X\in\R^2:\phi_i((X-X_i)\cdot\e_i^\perp)<(X-X_i)\cdot\e_i\}$ where $\psi(x,t)=t\e_i + x\e_i^\perp+\phi_i(x)\e_i^\perp$. Without loss of generality take $X=\psi(0,t)$ for some positive number $t\approx R$. Let $X^*=\psi(0)$, and let $\W_r=\psi(-r,r)\times (0,2r)$. Let $Y^r_\pm=\psi(\pm r)$. 

If $r<(1-1/3)r_i$ then $\W_r\subset\V$, and if
$r>R$ then $\dist(X,\partial\W_r)\approx \dist(X,\partial\V) \approx\sigma(\partial\W_r)$. By \lemref{NTM}, we may choose the apertures $a$ of non-tangential cones large enough that $\V\cap\partial\W_r\subset \gamma_{a,V}(Y^r_+)\cup \gamma_{a,V}(Y^r_-)$.

For convenience write $\U_r=\V\setminus\bar\W_r$. We need to show that $\int_{\partial\V} |\nu\cdot A\nabla G_X|\leq C$.
Begin by integrating only over $\partial\V\setminus\partial\U_r=\partial\W_r\cap\partial\V.$ We have that
\begin{align*}
\int_{\partial\V\setminus\partial\U_r}|\nabla G_X|\,d\sigma
&\leq 
\sigma(\partial\V\setminus\partial\U_r)^{1-1/p}
\left(\int_{\partial\V\setminus\partial\U_r}|\nabla G_X|^p\,d\sigma\right)^{1/p}
\\&\leq 
\sigma(\partial\V\setminus\partial\U_r)^{1-1/p}
C R^{1/p-1}
\leq C
\end{align*}
since $\sigma(\partial\V\setminus\partial\U_r)\leq\sigma(\W_r)\approx CR$ and $\|\nabla G\|_{L^p(\partial\V)}\leq C R^{1/p-1}$.

Now, consider $\partial\U_r$. Since $X\notin\U_r$, we have that $G_X$ satisfies $\div A\nabla G_X=0$ in $\U_r$; consequently, $G_X$ satisfies all of our useful theorems in $\U_r$. In particular,
$\|\nabla G_X\|_{L^1(\partial\U_r)} \leq C\|\tau\cdot\nabla G_X\|_{H^1(\partial\U_r)}$. So we need only bound $\|\tau\cdot\nabla G_X\|$ in $H^1$.

But $G_X=0$ on $\partial\V$; therefore, we need only consider $\tau\cdot \nabla G_X$ on $\partial\U_r\setminus\partial\V$.

Let $g=\tau\cdot\nabla G_X$. Note that $\int g=0$, and $\sigma(\supp g)\leq CR$. So we need only show that $\|g\|_{L^\infty(\partial\U_r\setminus\partial\V)}\leq C/R$.

Now, $|g(Y)|\leq |\nabla\Gamma_X(Y)|+|\nabla\Phi_X(Y)|$. 
On $\partial\W_r$, 
\[|\nabla\Gamma_X(Y)|\leq \frac{C}{|X-Y|}\leq \frac{C}{\dist(X,\partial\W_r)}\leq \frac{C}{R}.\]
Since $\partial\U_r\setminus \partial\V\subset\gamma(Y^r_+)\cup \gamma(Y^r_-)$, we have that
\[\|g\|_{L^\infty(\partial\U_r\setminus \partial\V)}\leq C/R + N(\nabla\Phi_X)(Y^r_+)+N(\nabla\Phi_X)(Y^r_-).\] 
So we have that $\|g\|_{H^1}\leq C + RN(\nabla\Phi_X)(Y^r_+)+ RN(\nabla\Phi_X)(Y^r_-),$ and therefore that
\[\|\nu\cdot A\nabla G_X\|_{L^1}\leq C + RN(\nabla\Phi_X)(Y^r_+)+ RN(\nabla\Phi_X)(Y^r_-).\]

As in \autoref{sec:realh1:bddiffinite}, we take the average from $r=R$ up to $r=2R\leq (1-1/3) r_i$:
\begin{align*}
\|\nu\cdot A\nabla G_X\|_{L^1}
&\leq 
C + \dashint_{\psi((R,2R))}RN(\nabla\Phi_X)\,d\sigma
 + \dashint_{\psi((-2R,-R))}RN(\nabla\Phi_X)\,d\sigma
\\&\leq 
C + C\dashint_{\psi((-2R,2R))}RN(\nabla\Phi_X)\,d\sigma
\\&\leq 
C 
+ C\left(\dashint_{\psi((-2R,2R))}R^p N(\nabla\Phi_X)^p\,d\sigma\right)^{1/p}
\\&\leq 
C 
+ CR^{1-1/p}\left(\int_{\psi((-2R,2R))} N(\nabla\Phi_X)^p\,d\sigma\right)^{1/p}
\\&\leq C R^{1-1/p} \|N(\nabla \Phi_X)\|_{L^p(\partial\V)}\leq C
\end{align*}
as desired.







\singlespacing

\end{document}